\documentclass[11pt,reqno]{amsart}
\usepackage{amsmath,amsfonts,amssymb}

\theoremstyle{theorem}
\newtheorem{theorem}{Theorem}[section]
\newtheorem{proposition}[theorem]{Proposition}
\newtheorem{lemma}[theorem]{Lemma}
\newtheorem{condition}[theorem]{Condition}
\newtheorem{remark}[theorem]{Remark}
\newtheorem{corollary}[theorem]{Corollary}
\newtheorem{example}[theorem]{Example}
\numberwithin{equation}{section}

\theoremstyle{plain}

\usepackage[T2A]{fontenc}
\usepackage[cp1251]{inputenc}
\usepackage{amsfonts}
\usepackage[english]{babel}
\usepackage{amsmath,amsthm,amssymb}

\def\Ran{\mathrm{Ran}\,}
\def\Ker{\mathrm{Ker}\,}
\def\rank{\mathrm{rank}\,}
\def\Dom{\mathrm{Dom}\,}
\def\clos{\mathrm{clos}\,}
\def\sgn{\mathrm{sign}\,}
\def\mes{\mathrm{meas}\,}
\def\diag{\mathrm{diag}\,}
\def\esssup{\mathrm{ess}\text{-}\mathrm{sup}\,}

\def\le{\leqslant}

\def\ge{\geqslant}

\sloppy

\begin{document}

\title[Homogenization of hyperbolic equations]{Homogenization of hyperbolic equations \\ with periodic coefficients}

\author{M.~A.~Dorodnyi and T.~A.~Suslina}

\keywords{Periodic differential operators, hyperbolic equations, homogenization, effective operator, operator error estimates}

\address{St. Petersburg State University, Universitetskaya nab. 7/9, St.~Petersburg, 199034, Russia}

\email{mdorodni@yandex.ru}

\email{t.suslina@spbu.ru}

\subjclass[2010]{Primary 35B27}

\begin{abstract}
In $L_2(\mathbb{R}^d;\mathbb{C}^n)$ we consider selfadjoint strongly elliptic second order differential operators ${\mathcal A}_\varepsilon$
with periodic coefficients depending on ${\mathbf x}/ \varepsilon$, $\varepsilon>0$. 
We study the behavior of the operator cosine
$\cos( {\mathcal A}^{1/2}_\varepsilon \tau)$, $\tau \in \mathbb{R}$, for small $\varepsilon$. Approximations for this operator in the
$(H^s\to L_2)$-operator norm with a suitable $s$ are obtained. The results are used to study the behavior of
the solution ${\mathbf v}_\varepsilon$ of the Cauchy problem for the hyperbolic equation
$\partial^2_\tau {\mathbf v}_\varepsilon = - \mathcal{A}_\varepsilon {\mathbf v}_\varepsilon +\mathbf{F}$.
General results are applied to the acoustics equation and the system of elasticity theory.
\end{abstract}

\thanks{Supported by Russian Foundation for Basic Research (grant no.~16-01-00087)}

\maketitle

\section*{Introduction\label{Intr}}

The paper concerns  homogenization for periodic differential operators (DOs). A broad literature is devoted to homogenization problems;
first, we mention the books \cite{BeLP, BaPa, ZhKO}.
For homogenization problems in  ${\mathbb R}^d$, one of the methods is the spectral approach based on the Floquet-Bloch theory.
The spectral approach to homogenization was used in many works; see, e.~g.,
\cite[Chapter~4]{BeLP}, \cite[Chapter~2]{ZhKO}, \cite{Se}, \cite{COrVa}, \cite{APi}.

\subsection{The class of operators\label{sec0.1}}
We consider selfadjoint elliptic second order DOs in $L_2(\mathbb{R}^d;\mathbb{C}^n)$ admitting a factorization of the form
\begin{equation}
\label{intro_A}
\mathcal{A} = f(\mathbf{x})^* b(\mathbf{D})^* g(\mathbf{x}) b(\mathbf{D}) f(\mathbf{x}).
\end{equation}
Here $b(\mathbf{D})=\sum_{l=1}^d b_l D_l$ is the $(m\times n)$-matrix first order DO with constant coefficients.
We assume that $m\ge n$ and that the symbol $b(\boldsymbol{\xi})$ has maximal rank.
It is assumed that the matrix-valued functions  $g(\mathbf{x})$ (of size $m\times m$) and $f(\mathbf{x})$
(of size $n\times n$) are periodic with respect to some lattice $\Gamma$ and such that
$$
g(\mathbf{x}) >0; \quad g,g^{-1} \in L_\infty; \quad f,f^{-1} \in L_\infty.
$$
It is convenient to start with a simpler class of operators
\begin{equation}
\label{intro_hatA}
\widehat{\mathcal{A}} = b(\mathbf{D})^* g(\mathbf{x}) b(\mathbf{D}),
\end{equation}
corresponding to the case where $f = \mathbf{1}$.
Many operators of mathematical physics can be represented in the form \eqref{intro_A} or~\eqref{intro_hatA};
the simplest example is the acoustics operator $\widehat{\mathcal{A}} = - \hbox{\rm div}\, g(\mathbf{x}) \nabla =
\mathbf{D}^* g(\mathbf{x})\mathbf{D}$.
This and other examples are discussed in \cite{BSu1} in detail.

Now we introduce the small parameter  $\varepsilon > 0$ and
denote $\varphi^\varepsilon(\mathbf{x}):= \varphi(\varepsilon^{-1}\mathbf{x})$ for any $\Gamma$-periodic function $\varphi(\mathbf{x})$.
Consider the operators
\begin{gather}
\label{intro_A_eps}
\mathcal{A}_{\varepsilon} = f^{\varepsilon}(\mathbf{x})^* b(\mathbf{D})^* g^{\varepsilon}(\mathbf{x}) b(\mathbf{D}) f^{\varepsilon}(\mathbf{x}),\\
\label{intro_hatA_eps}
\widehat{\mathcal{A}}_{\varepsilon} = b(\mathbf{D})^* g^{\varepsilon}(\mathbf{x}) b(\mathbf{D}),
\end{gather}
whose coefficients oscillate rapidly as $\varepsilon \to 0$.

\subsection{Operator error estimates for elliptic and parabolic problems in $\mathbb{R}^d$\label{sec0.2}}
 In a series of papers \cite{BSu1,BSu2,BSu3,BSu4} by Birman and Suslina, an operator-theoretic approach to homogenization of elliptic equations in
$\mathbb{R}^d$  was suggested and developed. This approach is based on the scaling transformation, the Floquet-Bloch theory, and the analytic perturbation theory.

The homogenization problem for elliptic equations in $\mathbb{R}^d$ can be regarded as
 a problem of asymptotic description of the resolvent of $\mathcal{A}_{\varepsilon}$ as $\varepsilon \to 0$.
For definiteness, let us talk about the simpler operator \eqref{intro_hatA_eps}.
 In \cite{BSu1}, it was shown that the resolvent  $(\widehat{\mathcal{A}}_{\varepsilon} + I )^{-1}$
converges to $(\widehat{\mathcal{A}}^0 + I )^{-1}$  in the  $L_2$-operator norm, as $\varepsilon \to 0$.
Here $\widehat{\mathcal{A}}^0 = b(\mathbf{D})^* g^0 b(\mathbf{D})$ is the \textit{effective operator} with the constant \textit{effective matrix} $g^0$.
The formula for the effective matrix is well known in homogenization theory; in the case under consideration it is described below in
Subsection \ref{germ_and_eff_g0_section}.
 In \cite{BSu1}, it was proved that
\begin{equation}
\label{intro_resolv_est}
\| (\widehat{\mathcal{A}}_{\varepsilon} + I )^{-1} - (\widehat{\mathcal{A}}^0 + I )^{-1} \|_{L_2(\mathbb{R}^d) \to L_2(\mathbb{R}^d)} \le C \varepsilon.
\end{equation}
In \cite{BSu2, BSu3}, a more accurate approximation for the resolvent of $\widehat{\mathcal{A}}_{\varepsilon}$
in the $L_2 (\mathbb{R}^d; \mathbb{C}^n)$-operator norm with an error $O(\varepsilon^2)$ was obtained,
and in \cite{BSu4} an approximation of the same resolvent in the norm of operators acting from
$L_2 (\mathbb{R}^d; \mathbb{C}^n)$ to the Sobolev space $H^1 (\mathbb{R}^d; \mathbb{C}^n)$
with an error $O(\varepsilon)$ was found. In these approximations, some correction terms of first order (the \textit{correctors}) were taken into account.

Similarly, the homogenization problem for parabolic equations in $\mathbb{R}^d$ can be regarded as a problem of asymptotic description of the
semigroup $\exp(-\tau \widehat{\mathcal{A}}_{\varepsilon} )$ for $\tau >0$ and small $\varepsilon$.
The operator-theoretic approach was applied to such problems in \cite{Su1, Su2, Su3, V, VSu}.
In \cite{Su1,Su2}, it was proved that
\begin{equation}
\label{intro_parab_exp_est}
\| \exp(-\tau \widehat{\mathcal{A}}_{\varepsilon} ) - \exp(-\tau \widehat{\mathcal{A}}^0) \|_{L_2(\mathbb{R}^d) \to L_2(\mathbb{R}^d)} \le C \varepsilon (\tau + \varepsilon^2)^{-1/2}.
\end{equation}
In \cite{V}, a more accurate approximation of the operator $\exp(-\tau \widehat{\mathcal{A}}_{\varepsilon} )$
in the $L_2 (\mathbb{R}^d; \mathbb{C}^n)$-operator norm with an error $O(\varepsilon^2)$ for  fixed $\tau$ was obtained, and in \cite{Su3}
approximation of the same operator in the norm of operators acting from  $L_2 (\mathbb{R}^d; \mathbb{C}^n)$ to $H^1 (\mathbb{R}^d; \mathbb{C}^n)$ with an error $O (\varepsilon)$ for  fixed $\tau$ was proved. In these approximations, the first order correctors were taken into account.

Even more accurate approximations for the semigroup and the resolvent of $\widehat{\mathcal{A}}_{\varepsilon}$
with the first and second correctors taken into account were found in \cite{VSu}.
In the papers cited above, similar (but more complicated) results were obtained also for more general operator \eqref{intro_A_eps}; we will not dwell on this.

Estimates of the form \eqref{intro_resolv_est}, \eqref{intro_parab_exp_est} are called \textit{operator error estimates} in homogenization theory.
A different approach to operator error estimates (the so called ``modified method of the first appproximation'')
was suggested by Zhikov. In \cite{Zh, ZhPas1}, the acoustics operator and the operator of elasticity theory
(which have the form \eqref{intro_hatA_eps}) were studied;
approximations for the resolvents in the $(L_2 \to L_2)$-norm with an error $O(\varepsilon)$
and in the $(L_2 \to H^1)$-norm with an error $O(\varepsilon)$ were obtained.
In \cite{ZhPas2},  estimate \eqref{intro_parab_exp_est} was proved for the scalar elliptic operator $-{\rm div}\,g^\varepsilon(\mathbf{x}) \nabla$.

\subsection{Operator error estimates for nonstationary Schr\"odinger-type and hyperbolic equations\label{intro_BSu5_Su5_rewiew_section}}
So, in the case of elliptic and parabolic problems the operator error estimates are well studied.
The situation with  homogenization of nonstationary  Schr\"odinger-type and hyperbolic equations
is different. The paper \cite{BSu5} is devoted to such problems. Again, we dwell on the results for the simpler operator \eqref{intro_hatA_eps}.
In operator terms, the behavior of  the operator exponential $\exp(-i \tau \widehat{\mathcal{A}}_{\varepsilon})$ and the operator cosine
$\cos(\tau \widehat{\mathcal{A}}_{\varepsilon}^{1/2})$  (where $\tau \in \mathbb{R}$) for small $\varepsilon$  is studied.
For these operators it is impossible to obtain approximations in the $L_2 (\mathbb{R}^d; \mathbb{C}^n)$-operator norm,
and we are forced to consider the norm of operators acting from the Sobolev space $H^s (\mathbb{R}^d; \mathbb{C}^n)$ (with appropriate $s$)
to $L_2 (\mathbb{R}^d; \mathbb{C}^n)$. In \cite{BSu5}, the following estimates were proved:
\begin{gather}
\label{intro_exp_est}
\| \exp(-i \tau \widehat{\mathcal{A}}_{\varepsilon}) - \exp(-i \tau \widehat{\mathcal{A}}^0) \|_{H^3 (\mathbb{R}^d) \to L_2 (\mathbb{R}^d)} \le (\widetilde{C}_1 + \widetilde{C}_2 |\tau|)\varepsilon, \\
\label{intro_cos_est}
\| \cos(\tau \widehat{\mathcal{A}}_{\varepsilon}^{1/2}) - \cos(\tau (\widehat{\mathcal{A}}^0)^{1/2}) \|_{H^2 (\mathbb{R}^d) \to L_2 (\mathbb{R}^d)} \le (C_1 + C_2 |\tau|)\varepsilon.
\end{gather}
By interpolation, it is easily seen that the operator in \eqref{intro_exp_est} in the $(H^s \to L_2)$-norm is of order
$O(\varepsilon^{s/3})$ (where $0\le s \le 3$)
and the operator in \eqref{intro_cos_est} in the $(H^s \to L_2)$-norm is of order $O(\varepsilon^{s/2})$ (where $0\le s \le 2$).
In \cite{BSu5}, similar results for
more general operator \eqref{intro_A_eps} were also obtained.

Let us explain the method of \cite{BSu5}; we comment on the proof of estimate \eqref{intro_cos_est}.
 Denote ${\mathcal H}_0 := -\Delta$.
Clearly, \eqref{intro_cos_est} is equivalent to
\begin{equation}
\label{intro_est2}
\| ( \cos(\tau \widehat{\mathcal{A}}_{\varepsilon}^{1/2}) - \cos(\tau (\widehat{\mathcal{A}}^0)^{1/2})) (\mathcal{H}_0 + I )^{-1} \|_{L_2(\mathbb{R}^d) \to L_2(\mathbb{R}^d)} \le (C_1 + C_2 |\tau|)\varepsilon.
\end{equation}
Next, the scaling transformation shows that \eqref{intro_est2} is equivalent to the estimate
\begin{equation}
\label{intro_est3}
\| \bigl( \cos(\tau \varepsilon^{-1} \widehat{\mathcal{A}}^{1/2}) - \cos(\tau \varepsilon^{-1} (\widehat{\mathcal{A}}^0)^{1/2})\bigr)
 \varepsilon^{2} (\mathcal{H}_0 + \varepsilon^{2} I )^{-1} \|_{L_2 \to L_2} \le (C_1 + C_2 |\tau|) \varepsilon.
\end{equation}

Next, using the unitary Gelfand transformation, we expand $\widehat{\mathcal{A}}$
in the direct integral of the operators $\widehat{\mathcal{A}}(\mathbf{k})$ acting in $L_2 (\Omega; \mathbb{C}^n)$
(where $\Omega$ is the cell of the lattice $\Gamma$).
Here $\widehat{\mathcal{A}}(\mathbf{k})$ is given by the differential expression
$b(\mathbf{D} + \mathbf{k})^* g(\mathbf{x}) b(\mathbf{D} + \mathbf{k})$ with periodic boundary conditions;
the spectrum of  $\widehat{\mathcal{A}}(\mathbf{k})$ is discrete. The family of operators $\widehat{\mathcal{A}}(\mathbf{k})$ is studied by means of the analytic perturbation theory
(with respect to the onedimensional parameter $t = |\mathbf{k}|$). For the operators $\widehat{\mathcal{A}}(\mathbf{k})$
the analog of estimate \eqref{intro_est3} is proved with
the constants independent of  $\mathbf{k}$. Then the inverse Gelfand transformation leads to \eqref{intro_est3}.
A good deal of considerations in the study of the family $\widehat{\mathcal A}({\mathbf k})$
is fulfilled in the abstract operator-theoretic setting.

The question about the sharpness of the results~\eqref{intro_exp_est}, \eqref{intro_cos_est} with respect to the type of the
operator norm (i.~e., the order of the Sobolev space) remained open until recent time.
In the recent paper~\cite{Su4} (see also brief communication \cite{Su5}) it was shown that estimate~\eqref{intro_exp_est} is sharp in the following sense:
there exist operators for which estimate
\begin{equation*}
\| \exp(-i \tau \widehat{\mathcal{A}}_{\varepsilon}) - \exp(-i \tau \widehat{\mathcal{A}}^0) \|_{H^s (\mathbb{R}^d) \to L_2 (\mathbb{R}^d)} \le C(\tau) \varepsilon
\end{equation*}
is false if $s < 3$. On the other hand, under some additional assumptions on the operator the result can be improved and we have
\begin{equation*}
\| \exp(-i \tau \widehat{\mathcal{A}}_{\varepsilon}) - \exp(-i \tau \widehat{\mathcal{A}}^0) \|_{H^2 (\mathbb{R}^d) \to L_2 (\mathbb{R}^d)} \le (\check{C}_1 + \check{C}_2 |\tau| )\varepsilon.
\end{equation*}

\subsection{Main results of the paper\label{intro_main_results_section}}
In the present paper, we study the behavior of the operator cosine  $\cos(\tau \widehat{\mathcal{A}}_{\varepsilon}^{1/2})$ for small $\varepsilon$
by the same approach as in \cite{Su4}. Main results are formulated in the operator terms and next applied
to study the behavior of the solutions of the Cauchy problem for hyperbolic equations.
On the one hand, we confirm the sharpness of estimate \eqref{intro_cos_est} in the following sense.
We find a condition on the operator, under which the estimate
\begin{equation*}
\| \cos(\tau \widehat{\mathcal{A}}_{\varepsilon}^{1/2}) - \cos(\tau (\widehat{\mathcal{A}}^0)^{1/2}) \|_{H^s (\mathbb{R}^d) \to L_2 (\mathbb{R}^d)} \le  C(\tau)\varepsilon
\end{equation*}
is false if $s<2$.
It is easy to formulate this condition in the spectral terms. We consider the operator family
$\widehat{\mathcal{A}}(\mathbf{k})$ and put
 $\mathbf{k} = t \boldsymbol{\theta}$, $t = |\mathbf{k}|$, $\boldsymbol{\theta} \in \mathbb{S}^{d-1}$.
This family is analytic with respect to the parameter~$t$. For $t=0$ the point $\lambda_0=0$ is an eigenvalue of multiplicity $n$ of the ``unperturbed''
operator $\widehat{\mathcal{A}}(0)$. Then for small $t$ there exist the real-analytic branches of the eigenvalues and the eigenvectors of
$\widehat{\mathcal{A}}(t \boldsymbol{\theta})$.
For small $t$ the eigenvalues $\lambda_l (t, \boldsymbol{\theta})$, $l=1,\dots,n,$ admit the convergent power series expansions
\begin{equation}
\label{intro_eigenvalues_series}
\lambda_l(t, \boldsymbol{\theta}) = \gamma_l (\boldsymbol{\theta}) t^2 + \mu_l (\boldsymbol{\theta}) t^3 + \ldots,
\quad \; \quad   l = 1, \ldots, n,
\end{equation}
where $\gamma_l(\boldsymbol{\theta})>0$ and $\mu_l(\boldsymbol{\theta})\in \mathbb{R}$.
The condition is that $\mu_l (\boldsymbol{\theta}_0) \ne 0$ for at least one $l$ and at least one point  $\boldsymbol{\theta}_0 \in \mathbb{S}^{d-1}$.
Examples of the operators satisfying this condition are provided; in particular, one example is
of the form $- \operatorname{div} g^\varepsilon (\mathbf{x}) \nabla$, where $g(\mathbf{x})$ is  Hermitian matrix with complex entries.
Another example is the matrix operator with real-valued coefficients (the operator of elasticity theory).

On the other hand, under some additional assumptions on the operator we improve the result and obtain the estimate
\begin{equation*}
\| \cos(\tau \widehat{\mathcal{A}}_{\varepsilon}^{1/2}) - \cos(\tau (\widehat{\mathcal{A}}^0)^{1/2}) \|_{H^{3/2} (\mathbb{R}^d) \to L_2 (\mathbb{R}^d)} \le (\tilde{C}_1 + \tilde{C}_2 |\tau|)\varepsilon.
\end{equation*}
In the case where  $n=1$, for this it suffices that the coefficient
$\mu (\boldsymbol{\theta}) = \mu_1 (\boldsymbol{\theta})$ in \eqref{intro_eigenvalues_series} is identically zero.
In particular, this is the case for the operator $- \operatorname{div} g^\varepsilon (\mathbf{x}) \nabla$, where $g(\mathbf{x})$
is symmetric matrix with real entries.
In the matrix case (i.~e., for $n\ge 2$), besides the condition that all the coefficients $\mu_l (\boldsymbol{\theta})$
in \eqref{intro_eigenvalues_series}  are equal to zero, we impose one more condition in terms of the coefficients
$\gamma_l (\boldsymbol{\theta})$, $l=1,\dots,n$. The simplest version of this condition is that the branches
$\gamma_l (\boldsymbol{\theta})$ must not intersect:
for each pair $j\ne l$ either $\gamma_j (\boldsymbol{\theta})$ and $\gamma_l (\boldsymbol{\theta})$ are separated from each other or
they coincide for all $\boldsymbol{\theta} \in \mathbb{S}^{d-1}$.

It turns out that for more general operator \eqref{intro_A_eps} it is convenient to study the operator cosine sandwiched between appropriate rapidly oscillating factors.
Namely, we study the operator $f^\varepsilon \cos(\tau \mathcal{A}_{\varepsilon}^{1/2}) (f^\varepsilon)^{-1}$ and
obtain analogs of the results described above for this operator.

Next, we apply the results given in operator terms to study the behavior of the solution
$\mathbf{v}_\varepsilon (\mathbf{x}, \tau)$, $\mathbf{x} \in \mathbb{R}^d$, $\tau \in \mathbb{R}$, of the following problem
\begin{equation*}
\left\{
\begin{aligned}
&\frac{\partial^2 \mathbf{v}_\varepsilon (\mathbf{x}, \tau)}{\partial \tau^2} = - (\widehat{\mathcal{A}}_{\varepsilon} \mathbf{v}_\varepsilon) (\mathbf{x}, \tau) + \mathbf{F} (\mathbf{x}, \tau), \\
& \mathbf{v}_\varepsilon (\mathbf{x}, 0) = \boldsymbol{\phi} (\mathbf{x}), \quad \frac{\partial \mathbf{v}_\varepsilon }{\partial \tau} (\mathbf{x}, 0) = \boldsymbol{\psi}(\mathbf{x}),
\end{aligned}
\right.
\end{equation*}
A more general problem with the operator $\mathcal{A}_{\varepsilon}$ is also studied.
We apply the general results to specific equations of mathematical physics.
In particular, we consider the  nonstationary acoustics equation and the system of elasticity theory.

\subsection{Method\label{method}}
The results are obtained by further development of the operator-theoretic approach.
We follow the plan described above in Subsection~\ref{intro_BSu5_Su5_rewiew_section}.
Considerations are based on the abstract operator-theoretic scheme.
In the abstract setting, we study the family of operators $A(t)=X(t)^* X(t)$ acting
in some Hilbert space $\mathfrak{H}$. Here $X(t)=X_0 + tX_1$. (The family $A(t)$ is modelling the operator family $\mathcal{A}(\mathbf{k}) = \mathcal{A}(t \boldsymbol{\theta})$, but the parameter $\boldsymbol{\theta}$ is absent in the abstract setting.) It is assumed that
the point $\lambda_0=0$ is an eigenvalue of $A(0)$ of finite multiplicity $n$.
Then for $|t|\le t^0$ the perturbed operator $A(t)$ has exactly $n$ eigenvalues (counted with multiplicities) on the interval $[0,\delta]$
(here $\delta$ and $t^0$ are controlled explicitly).
These eigenvalues and the corresponding eigenvectors are real-analytic functions of $t$.
The coefficients of the corresponding power series expansions for the eigenvalues and the eigenvectors are called \textit{threshold characteristics} of the operator $A(t)$.
We distinguish a finite rank operator $S$ (the so called  \textit{spectral germ} of the operator family $A(t)$) which acts in the space
$\mathfrak{N} = \Ker A(0)$.
The spectral germ (see the definition in Subsection \ref{abstr_Z_R_S_op_section} below)
contains the information about the threshold characteristics of principal order.
 Let $F(t)$ be the spectral projection of the operator $A(t)$ for the interval $[0,\delta]$.
 We rely on the threshold approximations for the projection $F(t)$ and for the operator $A(t)F(t)$ obtained in
\cite[Chapter~1]{BSu1}  and \cite{BSu2}.
  Note that in \cite{BSu5} the threshold approximations of principal order from \cite{BSu1} were applied:
$F(t)$ was approximated by the projection $P$ onto the subspace $\mathfrak{N}$,
  and the operator   $A(t)F(t)$ was approximated by $t^2SP$.
   It turns out that, in order to obtain more subtle results described above, we need to use
  more accurate threshold approximations obtained in \cite{BSu2}. Moreover,
  we need to divide the eigenvalues of $A(t)$ into clusters and find more detailed
  threshold approximations associated with this division (see Section \ref{abstr_cluster_section}).

 In terms of the spectral germ, it is possible to approximate the operator cosine $\cos(\varepsilon^{-1} \tau A(t)^{1/2})$
 multiplied by an appropriate ``smoothing factor''.
  Application of the abstract results leads to the required estimates for differential operators.
  However, at this step additional difficulties arise. They concern the improvement of the results in the case where
  all the coefficients $\mu_l (\boldsymbol{\theta})$ are equal to zero. These difficulties are related to the fact that
  in the general (matrix) case we are not always able to make our constructions and estimates uniform in $\boldsymbol{\theta}$, and we are forced to
  impose the additional assumptions of isolation of the branches $\gamma_l (\boldsymbol{\theta})$, $l=1,\dots,n$.

\subsection{Plan of the paper}
The paper consists of three chapters. Chapter~1 (Sections 1--5) contains the necessary operator-theoretic material.
In Chapter~2 (Sections~6--12), the periodic differential operators of the form~\eqref{intro_A}, \eqref{intro_hatA} are studied.
Some preliminary material is given in Section~6.
In Section~7 we describe the class of operators and the direct integral expansion for periodic operators of the form~\eqref{intro_A};
the corresponding family of operators ${\mathcal A}(\mathbf{k})$ acting in $L_2(\Omega;\mathbb{C}^n)$ is incorporated in the framework of the abstract scheme.
In Section~8 we describe the effective characteristics for the operator~\eqref{intro_hatA}.
In Section~9, using the abstract results, we obtain approximation for the operator
$\cos( \varepsilon^{-1} \tau \widehat{\mathcal A}(\mathbf{k})^{1/2})$. The operator ${\mathcal A}(\mathbf{k})$ is considered in Section~10.
In Section~11, with the help of the abstract results we find approximation for the sandwiched operator
 $\cos( \varepsilon^{-1} \tau {\mathcal A}(\mathbf{k})^{1/2})$. Next, in Section~12 we return to the operators~\eqref{intro_A}, \eqref{intro_hatA} acting in $L_2(\mathbb{R}^d;\mathbb{C}^n)$;
 applying the results of Sections~9 and~11, we obtain approximations for the operator
 $\cos( \varepsilon^{-1} \tau \widehat{\mathcal A}^{1/2})$ and the sandwiched operator
 $\cos( \varepsilon^{-1} \tau {\mathcal A}^{1/2} )$. Chapter~3 (Sections 13--16) is devoted to homogenization problems.
In Section~13, by the scaling transformation, the results of Chapter~2 imply main results of the paper: approximations
for the operator  $\cos(\tau \widehat{\mathcal A}_\varepsilon^{1/2})$
  and the sandwiched operator $\cos(\tau {\mathcal A}_\varepsilon^{1/2})$ in the $(H^s \to L_2)$-norm.
  In Section~14, the results are applied to study the behavior of the solution of the Cauchy problem for
hyperbolic equations. The last Sections~15 and~16 are devoted to applications of the general results to the particular equations, namely, to
  the acoustics  equation and the system of elasticity theory.

\subsection{Notation}
Let $\mathfrak{H}$ and $\mathfrak{H}_{*}$ be complex separable Hilbert spaces.
The symbols $(\cdot, \cdot)_{\mathfrak{H}}$ and $ \| \cdot \|_{\mathfrak{H}}$ stand for the inner product and the norm in $\mathfrak{H}$, respectively;
the symbol $\| \cdot \|_{\mathfrak{H} \to \mathfrak{H}_*}$ stands for the norm of a linear continuous operator from $\mathfrak{H}$ to $\mathfrak{H}_{*}$. Sometimes we omit the indices. By $I = I_{\mathfrak{H}}$ we denote the identity operator in $\mathfrak{H}$.
If $\mathfrak{N}$ is a subspace of $\mathfrak{H}$, then $\mathfrak{N}^{\perp} : = \mathfrak{H} \ominus \mathfrak{N}$.
If $P$ is the orthogonal projection of $\mathfrak{H}$ onto $\mathfrak{N}$, then $P^\perp$ is the orthogonal projection onto
 $\mathfrak{N}^{\perp}$.
If $A: \mathfrak{H} \to \mathfrak{H}_*$ is a linear operator, then $\Dom A$ stands for its domain and $\Ker A$ stands for its kernel.

The symbols $\langle\cdot, \cdot \rangle$ and $|\cdot|$ denote the inner product and the norm in $\mathbb{C}^n$;
$\boldsymbol{1}_n$ is the unit $(n \times n)$-matrix. If $a$ is an $(n\times n)$-matrix, then the symbol $|a|$
denotes the norm of $a$ viewed as a linear operator in $\mathbb{C}^n$.
 Next, we use the notation $\mathbf{x} = (x_1,\dots, x_d) \in \mathbb{R}^d$, $i D_j = \partial_j = \partial / \partial x_j$,
$j=1,\dots,d$, $\mathbf{D} = -i \nabla = (D_1,\dots, D_d)$.

The $L_p$-classes of $\mathbb{C}^n$-valued functions in a domain $\mathcal{O} \subset \mathbb{R}^d$
are denoted by $L_p (\mathcal{O}; \mathbb{C}^n), 1 \le p \le \infty$.
The Sobolev classes of $\mathbb{C}^n$-valued functions in a domain $\mathcal O$ of order $s$ and integrability index $p$
are denoted by $W^s_p (\mathcal{O}; \mathbb{C}^n)$. For $p=2$, we denote this space by $H^s (\mathcal{O}; \mathbb{C}^n)$.
If $n=1$, we write simply $L_p({\mathcal O})$, $H^s({\mathcal O})$, but sometimes we use such abbreviated notation also for the spaces
of vector-valued or matrix-valued functions.

By $c, C, {\mathcal C}, {\mathfrak C}$ (possibly, with indices and marks) we denote various constants in estimates.

\smallskip
A brief communication on the results of the present paper was published in \cite{DSu}.

\section*{Chapter~1. Abstract operator-theoretic scheme}

\section{Quadratic operator pencils\label{abstr_section_1}}
Our approach to homogenization problems is based on the abstract operator-theoretic scheme.

\subsection{The operators $X(t)$ and $A(t)$\label{abstr_X_A_section}}
Let $\mathfrak{H}$ and $\mathfrak{H}_{*}$ be complex separable Hilbert spaces. Suppose that
$X_0: \mathfrak{H} \to \mathfrak{H}_{*}$ is a densely defined and closed operator, and
$X_1: \mathfrak{H} \to \mathfrak{H}_{*}$ is a bounded operator.
Then the operator $X(t) := X_0 + tX_1$, $t \in \mathbb{R}$, is closed on the domain $\Dom X(t) = \Dom X_0$.
Consider the family of selfadjoint (and nonnegative) operators
$A(t) := X(t)^* X(t)$ in $\mathfrak{H}$.
The operator $A(t)$ is generated by the closed quadratic form $\| X(t) u \|^{2}_{\mathfrak{H}_*}$,
$u \in \Dom X_0$. Denote $A(0) = X_0^*X_0 =: A_0$ and
$\mathfrak{N} := \Ker  A_0 = \Ker X_0$.
We impose the following condition.

\begin{condition}\label{cond1}
Suppose that the point $\lambda_0 = 0$ is an isolated point of the spectrum of $A_0$, and $0 < n := {\rm{dim}}\, \mathfrak{N} < \infty$.
\end{condition}

By $d^0$ we denote the \textit{distance from the point $\lambda_0 = 0$ to the rest of the spectrum of} $A_0$.
We put $\mathfrak{N}_*:= \Ker X_0^*$, $n_* := \dim \mathfrak{N}_*$, and \textit{assume that} $n \le n_* \le \infty$.
Let $P$ and $P_*$  be the orthogonal projections of $\mathfrak{H}$ onto $\mathfrak{N}$ and of
$\mathfrak{H}_*$ onto  $\mathfrak{N}_*$, respectively.

The operator family $A(t)$ has been studied in \cite[Chapter~1]{BSu1}, \cite{BSu2}, \cite[Chapter~1]{BSu4}  in detail.

Denote by $F(t;[a,b])$ the spectral projection of $A(t)$ for the interval $[a,b]$, and put
${\mathfrak F}(t;[a,b]):=F(t; [a,b]) \mathfrak{H}$. \textit{We fix a number $\delta>0$ such that} $8 \delta < d^0$.
We often write $F(t)$ in place of $F(t;[0,\delta])$ and ${\mathfrak F}(t)$ in place of ${\mathfrak F}(t;[0,\delta])$.
Next, we choose a number $t^0 >0$ such that
\begin{equation}
\label{abstr_t0_fixation}
t^0 \le \delta^{1/2} \|X_1\|^{-1}.
\end{equation}
According to \cite[Chapter~1, (1.3)]{BSu1},
\begin{equation*}
F(t; [0,\delta]) = F(t;[0, 3 \delta]), \quad \rank F(t; [0,\delta]) = n, \quad |t| \le t^0.
\end{equation*}

\subsection{The operators $Z$, $R$, and $S$\label{abstr_Z_R_S_op_section}}
Now we introduce some operators appearing in the analytic perturbation theory considerations; see \cite[Chapter~1, Section~1]{BSu1},
\cite[Section~1]{BSu2}.

Denote $\mathcal{D} := \Dom X_0 \cap \mathfrak{N}^{\perp}$. Since $\lambda_0 = 0$ is an isolated eigenvalue of $A_0$,
the form $(X_0 \varphi, X_0 \zeta), \; \varphi, \zeta \in \mathcal{D}$, defines an inner product in $\mathcal{D}$,
converting $\mathcal{D}$ into Hilbert space. Let $\omega \in \mathfrak{N}$.
Consider the following equation for $\phi \in \mathcal{D}$:
\begin{equation*}
X^*_0 (X_0 \phi + X_1 \omega) = 0,
\end{equation*}
which is understood in the weak sense:
\begin{equation*}
(X_0 \phi, X_0 \zeta )_{\mathfrak{H}_*} = -(X_1 \omega, X_0 \zeta )_{\mathfrak{H}_*}, \quad \forall \zeta \in \mathcal{D}.
\end{equation*}
Here the right-hand side is a continuous antilinear functional of $\zeta \in \mathcal{D}$.
Then, by the Riesz theorem, there exists a unique solution $\phi = \phi (\omega)$.
We define a bounded operator $Z : \mathfrak{H} \to \mathfrak{H}$ by the following relation
\begin{equation*}
Zu = \phi (P u), \quad u \in \mathfrak{H}.
\end{equation*}
Note that $Z$  takes $\mathfrak{N}$ to $\mathfrak{N}^\perp$ and $\mathfrak{N}^\perp$ to $\{0\}$.
Let $R : \mathfrak{N} \to \mathfrak{N}_*$ be the operator defined by
$$
R \omega = X_0 \phi(\omega) +X_1 \omega = (X_0 Z + X_1)\omega, \ \ \omega \in \mathfrak{N}.
$$
Another representation for $R$ is given by $R = P_* X_1\vert_{\mathfrak{N}}$.

According to \cite[Chapter~1]{BSu1}, the operator  $S:=R^*R: \mathfrak{N} \to \mathfrak{N}$
is called the \textit{spectral germ of the operator family $A(t)$ at} $t=0$.
The germ $S$ can be represented as
$S= P X_1^* P_* X_1\vert_{\mathfrak{N}}.$
The spectral germ is said to be \textit{nondegenerate} if ${\rm Ker}\, S = \{ 0\}$.

Note that
\begin{equation}
\label{abstr_Z_R_S_est}
\| Z \| \le (8 \delta)^{-1/2} \| X_1 \|, \quad \| R \| \le \| X_1 \|, \quad   \| S \| \le \| X_1 \|^2.
\end{equation}

\subsection{The analytic branches of eigenvalues and eigenvectors  of $A(t)$\label{branches}}
According to the general analytic perturbation theory (see \cite{Ka}), for $|t|\le t^0$ there exist real-analytic functions $\lambda_l(t)$
(the branches of the eigenvalues) and real-analytic $\mathfrak{H}$-valued functions $\varphi_l(t)$ (the branches of the eigenvectors) such that
\begin{equation}
\label{abstr_A(t)_eigenvectors_branches}
A(t) \varphi_l(t) = \lambda_l (t) \varphi_l(t), \quad l = 1, \dots, n,\quad |t| \le t^0,
\end{equation}
and the $\varphi_l(t)$, $l=1,\dots,n$, form an \textit{orthonormal basis} in ${\mathfrak F}(t)$.
Moreover, for $|t|\le t_*$, where $0< t_* \le t^0$ \textit{is sufficiently small}, we have the following convergent power series expansions:
\begin{align}
\label{abstr_A(t)_eigenvalues_series}
\lambda_l(t) &= \gamma_l t^2 + \mu_l t^3 + \dots, \quad \gamma_l \ge 0, \quad \; \mu_l \in \mathbb{R}, \quad   l = 1, \dots, n, \\
\label{abstr_A(t)_eigenvectors_series}
\varphi_l (t) &= \omega_l + t \psi_l^{(1)} + \dots, \quad	l = 1, \dots, n.
\end{align}
The elements $\omega_l= \varphi_l(0)$, $l=1,\dots,n,$ form an orthonormal basis in  $\mathfrak{N}$.
We agree to enumerate the branches so that $\gamma_1 \le \gamma_2 \le \dots \le \gamma_n$.

Substituting expansions \eqref{abstr_A(t)_eigenvalues_series} and \eqref{abstr_A(t)_eigenvectors_series}
in~\eqref{abstr_A(t)_eigenvectors_branches} and comparing the coefficients of  powers $t$ and $t^2$, we see that 
\begin{gather}
\label{omegatilde}
\widetilde{\omega}_l := \psi_l^{(1)} - Z \omega_l \in \mathfrak{N}, \quad l = 1, \ldots, n, \\
\label{abstr_S_eigenvectors}
S \omega_l = \gamma_l \omega_l , \quad l = 1, \ldots, n.
\end{gather}
(Cf. \cite[Chapter~1, Section~1]{BSu1}, \cite[Section~1]{BSu2}.)
Thus, the \textit{numbers $\gamma_l$ and the elements $\omega_l$ defined by}
\eqref{abstr_A(t)_eigenvectors_branches}--\eqref{abstr_A(t)_eigenvectors_series}
\textit{are eigenvalues and eigenvectors of the germ} $S$.
We have
\begin{gather}
\label{abstr_P_repr_omega}
P = \sum_{l=1}^{n} (\cdot, \omega_l) \omega_l, \\
\label{abstr_SP_repr_gamma_omega}
SP = \sum_{l=1}^{n} \gamma_l (\cdot, \omega_l) \omega_l.
\end{gather}

   Note that
\begin{equation}
\label{1.12}
(\widetilde{\omega}_l , \omega_j) + (\omega_l, \widetilde{\omega}_j)=0,\quad l,j =1,\dots,n,
\end{equation}
   which follows from the relations $(\varphi_l(t),\varphi_j(t))=\delta_{lj}$
by substituting  \eqref{abstr_A(t)_eigenvectors_series}, comparing the coefficients of power $t$, and taking \eqref{omegatilde} into account.

\subsection{Threshold approximations\label{threshold}}
The spectral projection $F(t)$ and the operator $A(t)F(t)$ are real-analytic operator-valued functions for $|t|\le t^0$.
We have
\begin{gather*}
F(t) = \sum_{l=1}^{n} (\cdot, \varphi_l (t))\varphi_l (t), \\
A(t) F(t) = \sum_{l=1}^{n} \lambda_l (t)  (\cdot, \varphi_l (t))\varphi_l (t).
\end{gather*}
Together with
\eqref{abstr_A(t)_eigenvalues_series},~\eqref{abstr_A(t)_eigenvectors_series},~\eqref{abstr_P_repr_omega}, and~\eqref{abstr_SP_repr_gamma_omega} this  
yields the power series expansions
$F(t) = P + tF_1+\dots$ and $A(t)F(t)= t^2 SP+ t^3 K +\dots$, convergent for $|t|\le t_*$.
However, for our purposes not expansions, but
approximations (with one or several first terms)
 with error estimates on the whole interval $|t|\le t^0$ are needed.

The following statement was obtained in \cite[Chapter~1, Theorems~4.1~and~4.3]{BSu1}.
In what follows, \emph{we agree
to denote by $\beta_j$ various  absolute constants} (which can be controlled explicitly) \emph{assuming that} $\beta_j \ge 1$.

\begin{theorem}[\cite{BSu1}]
Suppose that the assumptions of Subsection~\emph{\ref{abstr_X_A_section}} are satisfied. Then we have

    \begin{align}
    \label{abstr_F(t)_threshold_1}
    \| F(t) - P \| &\le C_1 |t|, \quad |t| \le t^0, \\
    \label{abstr_A(t)_threshold_1}
    \| A(t)F(t) - t^2 SP \| &\le C_2 |t|^3, \quad |t| \le t^0.
    \end{align}
    Here $t^0$ is subject to~\emph{\eqref{abstr_t0_fixation}}, and the constants $C_1$, $C_2$ are given by

    \begin{equation}
    \label{abstr_C1_C2}
     C_1 = \beta_1 \delta^{-1/2} \| X_1 \|, \quad   C_2 = \beta_2 \delta^{-1/2}\| X_1 \|^3.
    \end{equation}
\end{theorem}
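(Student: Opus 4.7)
The plan is to use the Riesz projection formula combined with iterated applications of the second resolvent identity, following the classical strategy of analytic perturbation theory. I pick a contour $\gamma \subset \mathbb{C}$ enclosing the interval $[0,\delta]$ but keeping distance of order $\delta$ from the rest of $\mathrm{spec}(A_0)$ — for instance the circle $\{|z - \delta/2| = 3\delta/2\}$, which is admissible because $8\delta < d^0$. Then $P = -\frac{1}{2\pi i}\oint_\gamma (A_0 - z)^{-1}\,dz$, $F(t) = -\frac{1}{2\pi i}\oint_\gamma (A(t) - z)^{-1}\,dz$, while $A_0 P = 0$ and $A(t)F(t) = -\frac{1}{2\pi i}\oint_\gamma z(A(t) - z)^{-1}\,dz$.

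The first step is to establish uniform resolvent bounds on $\gamma$. Cauchy--Schwarz applied to $(A(t) u, u) = \|X_0 u + t X_1 u\|^2$, together with the condition $(t^0)^2\|X_1\|^2 \le \delta$, yields the two-sided estimate
\begin{equation*}
\tfrac{1}{2}(A_0 u, u) - \delta\|u\|^2 \le (A(t)u, u) \le 2(A_0 u, u) + 2\delta\|u\|^2, \qquad |t| \le t^0.
\end{equation*}
This places $\gamma$ in the resolvent set of $A(t)$ and gives $\|(A(t) - z)^{-1}\| \le c/\delta$ uniformly in $|t|\le t^0$ and $z\in\gamma$. The standard consequence (via $\|X_0 u\|^2=(A_0u,u)$ and the functional calculus) is the half-resolvent bound $\|X_0(A_0 - z)^{-1}\|$, $\|X(t)(A(t) - z)^{-1}\| \le c'\delta^{-1/2}$ on $\gamma$.

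For the first estimate I apply the second resolvent identity once to obtain
\begin{equation*}
F(t) - P = \frac{1}{2\pi i}\oint_\gamma (A(t) - z)^{-1}(A(t) - A_0)(A_0 - z)^{-1}\,dz,
\end{equation*}
and then rewrite $A(t) - A_0 = tX_1^* X(t) + tX_0^* X_1$ so that each unbounded factor pairs with an adjacent resolvent; combining the bounds of the previous step and integrating over $\gamma$ (of length $\sim\delta$) produces $\|F(t) - P\| \le \beta_1\delta^{-1/2}\|X_1\||t|$. For the second estimate I iterate the resolvent identity once more and extract the $t^2$-coefficient of $-\frac{1}{2\pi i}\oint_\gamma z(A(t) - z)^{-1}\,dz$. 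Writing $A(t) - A_0 = tY + t^2 W$ with $Y = X_0^* X_1 + X_1^* X_0$, $W = X_1^* X_1$, and decomposing $(A_0 - z)^{-1} = -P/z + Q(z)$ with $Q(z)$ holomorphic inside $\gamma$, the $O(t)$ residues vanish because $X_0 P = 0 = P X_0^*$. Residue calculus at $z = 0$ identifies the $O(t^2)$ term as $t^2\bigl(P X_1^* X_1 P - P Y Q(0) Y P\bigr)$; the identity $X_0 Q(0) X_0^* = I - P_*$ (expressing that $Q(0)$ inverts $A_0$ on $\mathfrak{N}^\perp$ and that $\overline{\Ran X_0} = \mathfrak{N}_*^\perp$) collapses this to $t^2 P X_1^* P_* X_1 P = t^2 SP$. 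The residual integral, still containing one factor of $(A(t)-z)^{-1}$, is of order $|t|^3$ by the uniform bounds, giving \eqref{abstr_A(t)_threshold_1} with $C_2 = \beta_2\delta^{-1/2}\|X_1\|^3$.

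The main obstacle is twofold. First, the unboundedness of $X_0$ forces me to always pair every $X_0$- or $X(t)$-factor with a neighbouring resolvent, and the quantitative control comes from the half-resolvent bounds of step two (with the sharp $\delta^{-1/2}$ dependence that ends up in $C_1, C_2$). Second, the correct identification of the leading $t^2$-coefficient as $SP$ rather than the naive $P X_1^* X_1 P$ requires tracking the cancellation produced by the double-resolvent integral $P Y Q(0) Y P$: the projection $P_*$ onto $\Ker X_0^* = \mathfrak{N}_*$ appears only through the computation $X_0 Q(0) X_0^* = I - P_*$, which is precisely the algebraic fact singling out the spectral germ in the formula $S = P X_1^* P_* X_1|_{\mathfrak{N}}$.
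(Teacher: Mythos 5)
The paper does not prove this theorem; it is quoted from [BSu1, Chapter~1, Theorems~4.1 and~4.3], and your argument — the Riesz integral for $F(t)$ and $A(t)F(t)$ over a contour of radius $\sim\delta$, the iterated resolvent identity with every copy of $X_0$ or $X(t)$ paired with an adjacent resolvent via the half-resolvent bounds $\|X_0(A_0-z)^{-1}\|,\|X(t)(A(t)-z)^{-1}\|\le c'\delta^{-1/2}$, and the residue computation collapsing the $t^2$-coefficient to $SP$ through $X_0Q(0)X_0^*=I-P_*$ — is exactly the Birman--Suslina method, with the correct powers of $\delta$ and $\|X_1\|$ emerging in $C_1$, $C_2$.

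One small repair is needed in your first step: the two-sided form inequality $\tfrac12(A_0u,u)-\delta\|u\|^2\le(A(t)u,u)\le 2(A_0u,u)+2\delta\|u\|^2$ only localizes the lowest $n$ eigenvalues of $A(t)$ in $[0,2\delta]$, and your contour $\{|z-\delta/2|=3\delta/2\}$ crosses the real axis precisely at $2\delta$, so the uniform bound $\|(A(t)-z)^{-1}\|\le c/\delta$ does not yet follow. The fix is one line: testing the form on $\mathfrak{N}$, where $X(t)u=tX_1u$, gives $\lambda_j(A(t))\le t^2\|X_1\|^2\le\delta$ for $j\le n$ by the min--max principle, after which the contour stays at distance $\ge\delta$ from the whole spectrum of $A(t)$ and the rest of your argument goes through.
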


From \eqref{abstr_t0_fixation}, \eqref{abstr_Z_R_S_est}, and~\eqref{abstr_A(t)_threshold_1} it follows that
\begin{equation}
\label{abstr_A(t)F(t)_est}
\| A ( t) F(t) \| \le (\|X_1\|^2 + C_2 t^0) t^2 \le C_3 t^2, \quad | t | \le  t^0,
\end{equation}
where
\begin{equation}
\label{abstr_C3}
C_3 = \beta_3 \|X_1\|^2, \quad \beta_3 = 1 + \beta_2.
\end{equation}

We also need more precise approximations obtained in  \cite[Theorem~4.1]{BSu2}.

\begin{theorem}[\cite{BSu2}] Suppose that the assumptions of Subsection~\emph{\ref{abstr_X_A_section}}
are satisfied. Then for $|t|\le t^0$ we have
    \begin{align}
     \label{abstr_A(t)_threshold_2}
    A(t) F(t) &= t^2 SP + t^3 K + \Psi (t),
\\
    \label{abstr_C5}
    \| \Psi (t) \| &\le C_5 t^4, \quad |t|\le t^0, \quad C_5 = \beta_5 \delta^{-1}\| X_1 \|^4.
    \end{align}
The operator  $K$ is represented as
\begin{equation}
\label{abstr_K_N_def}
K = K_0 + N = K_0 + N_0 + N_*,
\end{equation}
where $K_0$ takes $\mathfrak{N}$ to $\mathfrak{N}^{\perp}$ and $\mathfrak{N}^{\perp}$ to $\mathfrak{N}$, while $N = N_0 + N_*$
takes $\mathfrak{N}$ to itself and $\mathfrak{N}^{\perp}$ to $\{ 0 \}$.
 In terms of the power series coefficients, the operators $ K_0, N_0, N_*$ are given by
\begin{gather}
\notag
K_0 = \sum_{l=1}^{n} \gamma_l \left( (\cdot, Z \omega_l) \omega_l + (\cdot, \omega_l) Z \omega_l \right) , \\
\label{abstr_N_0_N_*}
N_0 = \sum_{l=1}^{n} \mu_l (\cdot, \omega_l) \omega_l, \quad N_* = \sum_{l=1}^{n} \gamma_l \left( (\cdot, \widetilde{\omega}_l) \omega_l + (\cdot, \omega_l) \widetilde{\omega}_l\right) .
\end{gather}
In the invariant terms, we have
\begin{equation}
\label{abstr_F1_K0_N_invar}
K_0 = Z S P + S P Z^*, \quad 
N = Z^*X_1^* R P + (RP)^* X_1 Z.
\end{equation}
The operators $N$ and $K$ satisfy the following estimates
\begin{equation}
\label{abstr_K_N_estimates}
\|N\| \le (2 \delta)^{-1/2} \| X_1 \|^3, \quad \|K\| \le 2(2 \delta)^{-1/2} \| X_1 \|^3.
\end{equation}
\end{theorem}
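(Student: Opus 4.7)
The plan is to combine two ingredients: a formal power-series identification of the $t^2$- and $t^3$-coefficients of $A(t)F(t)$ on a small neighborhood of $0$, and a Riesz-projector/resolvent argument that upgrades the error bound to the full interval $|t|\le t^0$ with explicit constants. The Riesz representation is available because the choice $8\delta<d^0$ together with \eqref{abstr_t0_fixation} guarantees that a contour $\gamma$ enclosing $[0,\delta]$ at distance $\sim\delta$ lies in the resolvent set of $A(t)$ uniformly in $|t|\le t^0$.

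First I would identify the coefficients. On $|t|\le t_*$ one has the spectral decomposition
\begin{equation*}
A(t)F(t) \;=\; \sum_{l=1}^n \lambda_l(t)\,(\,\cdot\,,\varphi_l(t))\,\varphi_l(t),
\end{equation*}
into which I substitute the convergent series \eqref{abstr_A(t)_eigenvalues_series}, \eqref{abstr_A(t)_eigenvectors_series}. The $t^2$-coefficient is $\sum_l \gamma_l(\,\cdot\,,\omega_l)\omega_l$, which equals $SP$ by \eqref{abstr_SP_repr_gamma_omega}. The $t^3$-coefficient splits into a contribution $\sum_l \mu_l(\,\cdot\,,\omega_l)\omega_l = N_0$ from the $\mu_l$-term, and a contribution $\sum_l \gamma_l\bigl[(\,\cdot\,,\psi_l^{(1)})\omega_l + (\,\cdot\,,\omega_l)\psi_l^{(1)}\bigr]$ from the first-order correction of the eigenvectors. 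Decomposing $\psi_l^{(1)} = Z\omega_l + \widetilde{\omega}_l$ as in \eqref{omegatilde} splits this contribution into the $K_0$-part (involving $Z\omega_l$, living in $\mathfrak{N}^\perp$) and the $N_*$-part (involving $\widetilde{\omega}_l\in\mathfrak{N}$). The invariant identity $K_0 = ZSP + SPZ^*$ follows by pulling $Z$ out of the sum using $S\omega_l=\gamma_l\omega_l$ and taking adjoints; the invariant form of $N$ is obtained from the second-order perturbation identities for $\mu_l$ and $\widetilde{\omega}_l$ together with $R=P_*X_1|_{\mathfrak{N}}$ and $\phi(\omega_l) = Z\omega_l$.

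The main obstacle, and the step that dictates the constant $C_5 = \beta_5\delta^{-1}\|X_1\|^4$, is to promote the error bound from the (a priori small) convergence disk $|t|\le t_*$ to the full interval $|t|\le t^0$. For this I would represent
\begin{equation*}
A(t)F(t) \;=\; -\frac{1}{2\pi i}\oint_{\gamma} \zeta\,(A(t)-\zeta I)^{-1}\,d\zeta
\end{equation*}
and expand the resolvent via the second-resolvent identity, writing $A(t) = A_0 + t(X_0^*X_1 + X_1^*X_0) + t^2 X_1^*X_1$. Treating $X_0^*X_1+X_1^*X_0$ in weak form and using that $X_0(A_0-\zeta I)^{-1}$ and $(A_0-\zeta I)^{-1}X_0^*$ admit operator-norm bounds of order $\delta^{-1/2}$ on $\gamma$, the first three terms of the Neumann expansion reproduce $P$, $tF_1$, and $t^2SP + t^3 K$ after residue evaluation; the tail is $\Psi(t)$ and consists of integrands with at least four factors of $\|X_1\|$ and at least one extra factor of $\delta^{-1}$ from the resolvent. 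Collecting these yields $\|\Psi(t)\|\le \beta_5\delta^{-1}\|X_1\|^4 t^4$.

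Finally, the explicit estimates \eqref{abstr_K_N_estimates} follow from the invariant formulas \eqref{abstr_F1_K0_N_invar} together with the basic bounds \eqref{abstr_Z_R_S_est}: indeed $\|N\|\le 2\|Z\|\cdot\|X_1\|\cdot\|R\|\le 2(8\delta)^{-1/2}\|X_1\|^3 = (2\delta)^{-1/2}\|X_1\|^3$, and $\|K_0\|\le 2\|Z\|\cdot\|SP\|\le (2\delta)^{-1/2}\|X_1\|^3$, yielding $\|K\|\le 2(2\delta)^{-1/2}\|X_1\|^3$. The hardest bookkeeping is in the resolvent step, where the cancellations that turn formal residues into precisely $SP$ and $K$ must be matched term by term with the power-series identification of the previous paragraph.
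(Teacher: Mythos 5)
This theorem is not proved in the paper at all: it is quoted verbatim from \cite[Theorem~4.1]{BSu2}, so there is no internal argument to compare against. Your outline reconstructs the standard proof from \cite{BSu1, BSu2} --- identify the $t^2$- and $t^3$-coefficients from the convergent expansions \eqref{abstr_A(t)_eigenvalues_series}, \eqref{abstr_A(t)_eigenvectors_series} on $|t|\le t_*$, then obtain the uniform $O(t^4)$ remainder on all of $|t|\le t^0$ from the contour representation of $A(t)F(t)$ and iterated resolvent identities --- and your explicit derivations of \eqref{abstr_K_N_estimates} from \eqref{abstr_F1_K0_N_invar} and \eqref{abstr_Z_R_S_est} are exactly right.

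Two points deserve care. First, a slip: the residues of $-\tfrac{1}{2\pi i}\oint_\gamma \zeta (A(t)-\zeta I)^{-1}\,d\zeta$ do not produce $P$ and $tF_1$; those are the leading terms of $F(t)$ itself, while the expansion of $A(t)F(t)$ begins at order $t^2$ (the order-$0$ and order-$1$ residues vanish because $X_0 P = 0$ and $P X_0^* = 0$). Second, the two genuinely laborious steps are named but not executed: (i) matching $N_0+N_*$ with the invariant expression $Z^*X_1^*RP+(RP)^*X_1Z$ requires eliminating the second-order eigenvector correction $\psi_l^{(2)}$ from the order-$t^3$ perturbation identity by means of the order-$t^2$ identity taken in weak form --- this is a real computation, not a one-liner; and (ii) the remainder should be organized as a \emph{finite} iteration of the second resolvent identity with a closed-form tail, since on the whole interval $|t|\le t^0$ the smallness of $\|tX_1^*X_0(A_0-\zeta I)^{-1}\|$ and $\|t^2X_1^*X_1(A_0-\zeta I)^{-1}\|$ relative to $1$ is borderline and an infinite Neumann series is not obviously summable there; the finite iteration avoids the convergence question entirely and is what produces the clean constant $C_5=\beta_5\delta^{-1}\|X_1\|^4$.
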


\begin{remark}
    \label{abstr_N_remark}

 $1^\circ$. If $Z = 0$, then $K_0 = 0$, $N = 0$, and $K = 0$.

 $2^\circ$. In the basis $\{\omega_l\}$, the operators $N$, $N_0$, and $N_*$
 \emph{(}restricted to $\mathfrak{N}$\emph{)}
are represented by $(n\times n)$-matrices. The operator $N_0$ is diagonal:
        \begin{equation*}
             (N_0 \omega_j, \omega_k ) = \mu_j \delta_{jk}, \quad  j, k = 1, \ldots ,n.
        \end{equation*}
The matrix entries of $N_*$ are given by
        \begin{equation*}
         (N_* \omega_j, \omega_k) = \gamma_k (\omega_j, \widetilde{\omega}_k) + \gamma_j (\widetilde{\omega}_j, \omega_k ) = ( \gamma_j - \gamma_k)(\widetilde{\omega}_j, \omega_k ), \quad j, k = 1,\ldots, n.
        \end{equation*}
Here we have taken \eqref{1.12} into account.
 It is seen that the diagonal elements of $N_*$ are equal to zero:
$(N_* \omega_j, \omega_j)=0$, $j=1,\dots,n$. Moreover,
        \begin{equation*}
           (N_* \omega_j, \omega_k) = 0\quad  \text{if}  \  \gamma_j = \gamma_k.
        \end{equation*}

 $3^\circ$.
If $n=1$, then $N_*=0$, i.~e., $N=N_0$.

\end{remark}

\subsection{The nondegeneracy condition\label{abstr_nondegenerated_section}}
Below we impose the following additional condition on the operator $A(t)$ (cf.~\cite[Chapter~1, Subsection~5.1]{BSu1}.

\begin{condition}\label{nondeg}
There exists a constant $c_*>0$ such that
\begin{equation}
\label{abstr_A(t)_nondegenerated}
A(t) \ge c_* t^2 I, \quad |t| \le t^0.
\end{equation}
\end{condition}

From \eqref{abstr_A(t)_nondegenerated} it follows that $\lambda_l(t) \ge c_* t^2$, $l=1,\dots,n$, for $|t|\le t^0$.
By \eqref{abstr_A(t)_eigenvalues_series}, this implies
\begin{equation}
\label{abstr_gamma_ge_c_*}
\gamma_l \ge c_* > 0, \quad l= 1, \ldots, n,
\end{equation}
i.~e., the germ $S$ is nondegenerate:
\begin{equation}
\label{abstr_S_nondegenerated}
S \ge c_* I_{\mathfrak{N}}.
\end{equation}

\section{The clusters of eigenvalues of $A(t)$\label{abstr_cluster_section}}

The content of this section is borrowed from ~\cite[Section~2]{Su4} and concerns the case where $n \ge 2$.

\subsection{Renumbering of eigenvalues}

Suppose that Condition~\ref{nondeg} is satisfied.
Then the spectrum of the operator $SP$ consists of the eigenvalue $\lambda_0=0$
(with the eigenspace $\mathfrak{N}^{\perp}$) and the eigenvalues $\gamma_1,\dots, \gamma_n$  satisfying \eqref{abstr_gamma_ge_c_*}.
Now it is convenient to change the notation tracing the multiplicities of the eigenvalues.
Let $p$ be the number of different eigenvalues of the germ.
We enumerate these eigenvalues in the increasing order
and denote them by $\gamma_j^\circ$, $j=1,\dots,p$.
Their multiplicities are denoted by $k_1,\dots, k_p$ (obviously, $k_1+\dots+k_p =n$).
Then, in the previous notation,
$$
\gamma_1 = \cdots = \gamma_{k_1} < \gamma_{k_1 + 1} = \cdots = \gamma_{k_1 + k_2} < \cdots < \gamma_{n-k_p+1} = \cdots = \gamma_n.
$$
We have
$\gamma^{\circ}_1 = \gamma_1 = \cdots  = \gamma_{k_1}$, $\gamma^{\circ}_2 = \gamma_{k_1 +1} = \cdots = \gamma_{k_1+k_2}$, etc. Denote $\mathfrak{N}_j := \Ker(S - \gamma^{\circ}_j I_\mathfrak{N})$, $j = 1 ,\ldots, p$. Then
\begin{equation*}
\mathfrak{N} = \sum_{j=1}^{p} \oplus \mathfrak{N}_j.
\end{equation*}
Let $P_j$ be the orthogonal projection of $\mathfrak{H}$ onto $\mathfrak{N}_j$. Then
\begin{equation}
\label{abstr_P_Pj}
P = \sum_{j=1}^{p} P_j, \qquad P_j P_l = 0 \quad \text{if} \  j \ne l.
\end{equation}

We also change the notation for the eigenvectors of the germ
(which are the ``embrios'' in \eqref{abstr_A(t)_eigenvectors_series}) dividing them in $p$ parts so that
$\omega_1^{(j)},\dots, \omega^{(j)}_{k_j}$ correspond to the eigenvalue
 $\gamma_j^\circ$ and form an orthonormal basis in $\mathfrak{N}_j$.
 (In the previous notation, these are $\omega_{k_1+\dots+k_{j-1}+1}, \dots, \omega_{k_1+\dots+k_j}$.)

We also change the notation for the analytic branches of the eigenvalues and the eigenvectors of $A(t)$.
The eigenvalue and the eigenvector whose expansions \eqref{abstr_A(t)_eigenvalues_series} and \eqref{abstr_A(t)_eigenvectors_series}
start with the terms $\gamma_j^\circ t^2$ and $\omega^{(j)}_q$
are denoted by $\lambda^{(j)}_q(t)$ and $\varphi^{(j)}_q(t)$, respectively.
 For $|t|\le t_*$ we have
\begin{align*}
\lambda^{(j)}_q (t) &= \gamma^{\circ}_j t^2 + \mu^{(j)}_q t^3 + \ldots, \quad q = 1, \ldots, k_j, \\
\varphi^{(j)}_q (t) &= \omega^{(j)}_q + t \psi^{(j)}_q + \ldots, \quad	q = 1, \ldots, k_j.
\end{align*}

 \subsection{Refinement of threshold approximations\label{abstr_cluster_treshold_section}}
 Suppose that Condition~\ref{nondeg} is satisfied. For $|t|\le t^0$ we consider
 the following bounded selfadjoint operator in $\mathfrak{H}$:
$$
{\mathfrak A}(t) =  \begin{cases}
 t^{-2} A(t)F(t), & t\ne 0,\cr
SP, & t=0.
\end{cases}
$$
We apply the spectral perturbation theory arguments,
treating ${\mathfrak A}(t)$ as a perturbation of the operator $SP$. By \eqref{abstr_A(t)_threshold_1},
\begin{equation}
\label{abstr_frakA_est}
\| \mathfrak{A}(t) - SP\| \le C_2 |t|, \quad |t| \le t^0.
\end{equation}

The results about $A(t)$ (see Section \ref{abstr_section_1}) show that for $|t|\le t^0$
the point $\lambda_0=0$ is an eigenvalue of the perturbed operator ${\mathfrak A}(t)$
(with the eigenspace ${\mathfrak F}(t)^\perp$), and ${\mathfrak A}(t)$ has
positive eigenvalues of total multiplicity $n$.
We divide these positive eigenvalues in $p$ clusters
which for small $|t|$ are located near the eigenvalues $\gamma_1^\circ,\dots, \gamma_p^\circ$
of the unperturbed operator. Clearly, the $j$-th  cluster
consists of the eigenvalues $\nu^{(j)}_q(t) = t^{-2} \lambda^{(j)}_q(t)$ of ${\mathfrak A}(t)$, $q=1,\dots,k_j$,
since $\nu^{(j)}_q(t)$ are continuous (and even analytic) in $t$
and $\nu^{(j)}_q(0)=\gamma_j^\circ$.
The corresponding orthonormal eigenvectors are $\varphi^{(j)}_q(t)$, $q=1,\dots,k_j$.

For sufficiently small $|t|$ these clusters are separated from each other.
However, it will be more convenient for our purposes, for each pair of indices $j\ne l$, to divide
the clusters in two parts separated  from each other and such that one part contains
the $j$-th cluster and another part contains the  $l$-th cluster.
For each pair of indices $(j,l)$,  $1\le j,  l \le p$, $j \ne l$, we denote
\begin{equation}
\label{abstr_c_circ_jl}
c^{\circ}_{jl} := \min \{c_*, n^{-1} |\gamma^{\circ}_l - \gamma^{\circ}_j|\}.
\end{equation}
Clearly, there exists a number $i_0 = i_0(j,l)$, where $j \le i_0 \le l-1$ if $j<l$
and $l \le i_0 \le j-1$ if $l<j$, such that $\gamma^\circ_{i_0+1} - \gamma_{i_0}^\circ \ge c^\circ_{jl}$.
It means that on the interval between $\gamma_{j}^\circ$ and $\gamma_{l}^\circ$
there is a gap in the spectrum of $S$ of length at least $c^\circ_{jl}$.
If such $i_0$ is not unique, we agree to take the minimal possible $i_0$ (for definiteness).

  We choose a number $t^{00}_{jl}\le t^0$ such that  (see \eqref{abstr_C1_C2})
\begin{equation}
\label{abstr_t00_jl}
t^{00}_{jl} \le (4C_2)^{-1} c^{\circ}_{jl} = (4 \beta_2)^{-1} \delta^{1/2} \|X_1\|^{-3 } c^{\circ}_{jl}.
\end{equation}
By \eqref{abstr_frakA_est}, for $|t| \le t^{00}_{jl}$ we have $\| {\mathfrak A}(t) - SP\| \le c^\circ_{jl}/4$.
Hence, the segments $[\gamma_1^\circ - c_{jl}^\circ/4, \gamma^\circ_{i_0} + c_{jl}^\circ/4]$ and
$[\gamma_{i_0+1}^\circ - c_{jl}^\circ/4, \gamma^\circ_{p} + c_{jl}^\circ/4]$
are disjoint, and  the distance between them is at least $c_{jl}^\circ/2$.
Consequently, for $|t| \le t^{00}_{jl}$ the perturbed operator ${\mathfrak A}(t)$
has exactly $k_1+\dots + k_{i_0}$ eigenvalues (counted with multiplicities)
in the segment \hbox{$[\gamma_1^\circ - c_{jl}^\circ/4, \gamma^\circ_{i_0} + c_{jl}^\circ/4]$}.
These are $\nu^{(1)}_1(t),\dots, \nu^{(1)}_{k_1}(t); \dots; \nu^{(i_0)}_1(t),\dots, \nu^{(i_0)}_{k_{i_0}}(t)$.
We denote the corresponding eigenspace by ${\mathfrak F}_{jl}^{(1)}(t)$;
for $t\ne 0$ it coincides with the eigenspace \hbox{${\mathfrak F}(t;[0, (\gamma^\circ_{i_0} + c^\circ_{jl}/4)t^2])$}
 of $A(t)$. The elements  $\varphi^{(1)}_{1}(t), \dots, \varphi^{(1)}_{k_1}(t);\dots; \varphi^{(i_0)}_{1}(t), \dots, \varphi^{(i_0)}_{k_{i_0}}(t)$ form an orthonormal basis in ${\mathfrak F}_{jl}^{(1)}(t)$.
Similarly, for $|t| \le t^{00}_{jl}$ the perturbed operator ${\mathfrak A}(t)$
has exactly $k_{i_0+1}+\dots + k_{p}$ eigenvalues (counted with multiplicities)
in the segment  $[\gamma_{i_0+1}^\circ - c_{jl}^\circ/4, \gamma^\circ_{p} + c_{jl}^\circ/4]$.
These are $\nu^{(i_0+1)}_1(t),\dots, \nu^{(i_0+1)}_{k_{i_0+1}}(t); \dots; \nu^{(p)}_1(t),\dots, \nu^{(p)}_{k_{p}}(t)$.
The corresponding eigenspace is denoted by ${\mathfrak F}_{jl}^{(2)}(t)$; for $t\ne 0$ it coincides with the eigenspace
${\mathfrak F}(t;[(\gamma^\circ_{i_0+1} - c^\circ_{jl}/4 )t^2, (\gamma^\circ_{p} + c^\circ_{jl}/4 )t^2 ])$
of $A(t)$. The elements  $\varphi^{(i_0+1)}_{1}(t), \dots, \varphi^{(i_0+1)}_{k_{i_0+1}}(t);\dots; \varphi^{(p)}_{1}(t), \dots, \varphi^{(p)}_{k_p}(t)$ form an orthonormal basis in ${\mathfrak F}_{jl}^{(2)}(t)$.
Let $F^{(r)}_{jl}(t)$ be the orthogonal projections onto ${\mathfrak F}_{jl}^{(r)}(t)$,  $r=1,2$.
Then the spectral projection $F(t)$ of the operator $A(t)$ for the interval $[0,\delta]$
can be represented as
\begin{equation}
\label{abstr_F(t)_F(1)_F(2)}
F(t) = F^{(1)}_{jl} (t) + F^{(2)}_{jl} (t), \quad |t| \le t^{00}_{jl}.
\end{equation}

The following statement was proved in \cite[Proposition 2.1]{Su4}.

\begin{proposition}[\cite{Su4}]
For $|t|\le t^{00}_{jl}$ we have
    \begin{equation}
    \label{abstr_cluster_thresold}
    \begin{split}
    \| F^{(1)}_{jl} (t) - (P_1 + \cdots + P_{i_0}) \| &\le C_{6,jl} |t|, \\
    \| F^{(2)}_{jl} (t) - (P_{i_0+1} + \cdots + P_p) \| &\le C_{6,jl} |t|.
    \end{split}
    \end{equation}
    The number $t^{00}_{jl}$ is subject to~\emph{\eqref{abstr_c_circ_jl}}, \emph{\eqref{abstr_t00_jl}},
and the constant~$C_{6,jl}$ is given by
    \begin{equation}
    \label{abstr_C6jl}
    C_{6,jl} = \beta_6 \delta^{-1/2} \|X_1\|^5 (c^{\circ}_{jl})^{-2}.
    \end{equation}
\end{proposition}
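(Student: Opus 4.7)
The plan is to derive both bounds via the Riesz (contour-integral) representation of the spectral projections, viewing $\mathfrak{A}(t)$ as a bounded selfadjoint perturbation of $SP$ whose size is $O(|t|)$ by virtue of~\eqref{abstr_frakA_est}. For each of the two clusters I would choose a simple closed contour in $\mathbb{C}$ that separates it from the other cluster and from the eigenvalue $0$ (which belongs to the spectrum of both $SP$ and $\mathfrak{A}(t)$).

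Concretely, for the first cluster take a rectangular contour $\Gamma^{(1)}$ surrounding the segment $[\gamma_1^\circ - c^\circ_{jl}/4,\, \gamma_{i_0}^\circ + c^\circ_{jl}/4]$ at distance $c^\circ_{jl}/4$. Three facts guarantee the required separation. First, by the choice of $i_0$ the gap $\gamma_{i_0+1}^\circ - \gamma_{i_0}^\circ$ is at least $c^\circ_{jl}$, hence the second cluster of $SP$ stays at distance $\ge c^\circ_{jl}/2$ from $\Gamma^{(1)}$. Second, by~\eqref{abstr_gamma_ge_c_*} and $c^\circ_{jl} \le c_*$ (see~\eqref{abstr_c_circ_jl}), the eigenvalue $0$ stays at distance $\ge c_*/2 \ge c^\circ_{jl}/2$. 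Third, the choice of $t^{00}_{jl}$ in~\eqref{abstr_t00_jl} combined with~\eqref{abstr_frakA_est} gives $\|\mathfrak{A}(t) - SP\| \le c^\circ_{jl}/4$, so the eigenvalues of $\mathfrak{A}(t)$ in the first cluster remain inside the segment, while the rest (including the eigenvalue $0$) stay at distance $\ge c^\circ_{jl}/4$ from $\Gamma^{(1)}$.

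Consequently both projections admit contour representations, and the second resolvent identity yields
\begin{equation*}
F^{(1)}_{jl}(t) - (P_1 + \cdots + P_{i_0}) = -\frac{1}{2\pi i}\oint_{\Gamma^{(1)}}(\mathfrak{A}(t) - \zeta I)^{-1}(SP - \mathfrak{A}(t))(SP - \zeta I)^{-1}\,d\zeta.
\end{equation*}
On $\Gamma^{(1)}$ both resolvents are bounded by $4/c^\circ_{jl}$; the length $|\Gamma^{(1)}|$ is controlled by $O(\|S\| + c^\circ_{jl}) = O(\|X_1\|^2)$ using $\|S\| \le \|X_1\|^2$ from~\eqref{abstr_Z_R_S_est}; and $\|SP - \mathfrak{A}(t)\| \le C_2 |t|$ with $C_2 = \beta_2 \delta^{-1/2}\|X_1\|^3$. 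Collecting these factors produces the bound $\beta_6 \delta^{-1/2}\|X_1\|^5 (c^\circ_{jl})^{-2}|t|$, which is precisely~\eqref{abstr_C6jl}. The analogous contour $\Gamma^{(2)}$ surrounding $[\gamma_{i_0+1}^\circ - c^\circ_{jl}/4,\, \gamma_p^\circ + c^\circ_{jl}/4]$ handles $F^{(2)}_{jl}(t)$ by the identical argument.

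The only genuinely delicate point is verifying that these contours are simultaneously separated from the spectra of $SP$ and of $\mathfrak{A}(t)$ for every $|t|\le t^{00}_{jl}$; once the spectral gap of $S$ (determined by the choice of $i_0$), the lower bound $\gamma_1^\circ \ge c_*$, and the smallness condition~\eqref{abstr_t00_jl} have been combined, this verification is immediate. The rest is mechanical bookkeeping of the numerical factors to recover the exact form~\eqref{abstr_C6jl}.
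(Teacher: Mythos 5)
Your argument is correct: the contours are indeed uniformly separated from the spectra of both $SP$ and $\mathfrak{A}(t)$ (the key inequalities $\gamma_1^\circ\ge c_*\ge c^\circ_{jl}$, the gap $\gamma^\circ_{i_0+1}-\gamma^\circ_{i_0}\ge c^\circ_{jl}$, and $\|\mathfrak{A}(t)-SP\|\le c^\circ_{jl}/4$ for $|t|\le t^{00}_{jl}$ are exactly what is needed), the resolvent identity then gives the bound, and the bookkeeping via $\|(SP-\zeta)^{-1}\|,\|(\mathfrak{A}(t)-\zeta)^{-1}\|\le 4/c^\circ_{jl}$, contour length $O(\|X_1\|^2)$, and $C_2=\beta_2\delta^{-1/2}\|X_1\|^3$ reproduces \eqref{abstr_C6jl}. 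The paper does not reproduce a proof (it cites \cite[Proposition~2.1]{Su4}), but your Riesz-projection argument is the standard one used there.
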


\section{Threshold approximations for the operator $\cos(\tau A(t)^{1/2})$}

\subsection{Approximation of the operator $A(t)^{1/2} F(t)$}

The following result was proved in~\cite[Theorem~2.4]{BSu5}.

\begin{theorem}
    Under the assumptions of Subsections~\emph{\ref{abstr_X_A_section}} and~\emph{\ref{abstr_nondegenerated_section}}, we have
    \begin{equation}
    \label{abstr_A_sqrt_threshold_1}
    \| A(t)^{1/2} F(t) - (t^2 S)^{1/2} P \| \le C_7 t^2, \quad |t| \le t^0,
    \end{equation}
    where
    \begin{equation}
    \label{abstr_C7}
    C_7 := C_1 C_3^{1/2} + C_1 \| X_1 \| +  \frac{1}{2} C_2 c_*^{-1/2} = \beta_7 \delta^{-1/2} \|X_1\|^2 + \frac{\beta_2}{2} \delta^{-1/2} c_*^{-1/2} \|X_1\|^3.
    \end{equation}
\end{theorem}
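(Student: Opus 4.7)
The plan is to recognize both operators in the statement as nonnegative square roots: since $F(t)$ commutes with $A(t)^{1/2}$, the product $A(t)^{1/2}F(t)$ is selfadjoint, nonnegative, and its square is $A(t)F(t)$; similarly $(t^2S)^{1/2}P$ is the nonnegative square root of $t^2 SP$. Writing $B := A(t)F(t)$ and $C := t^2 SP$, the task reduces to estimating $\|\sqrt{B} - \sqrt{C}\|$, using the input bound $\|B - C\| \le C_2|t|^3$ from \eqref{abstr_A(t)_threshold_2}, the auxiliary data $\|B\| \le C_3 t^2$ from \eqref{abstr_A(t)F(t)_est} and $\|F(t) - P\| \le C_1|t|$ from \eqref{abstr_F(t)_threshold_1}, together with the nondegeneracy consequences $(B+s)^{-1}|_{F(t)\mathfrak{H}} \le (c_* t^2 + s)^{-1}$ and $(C+s)^{-1}|_{\mathfrak{N}} \le (c_* t^2 + s)^{-1}$ coming from \eqref{abstr_A(t)_nondegenerated} and \eqref{abstr_S_nondegenerated}.

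The main tool will be the integral representation
\begin{equation*}
\sqrt{B} - \sqrt{C} = \frac{1}{\pi}\int_0^\infty s^{1/2}\,(C+s)^{-1}(B - C)(B+s)^{-1}\,ds,
\end{equation*}
which follows from the scalar identity $\sqrt{\lambda} = \pi^{-1}\int_0^\infty \lambda(\lambda+s)^{-1} s^{-1/2}\,ds$ via the spectral theorem and the resolvent identity. Since both $B$ and $C$ have large kernels ($(I-F(t))\mathfrak{H}$ and $\mathfrak{N}^{\perp}$ respectively), the naive bounds $\|(B+s)^{-1}\|, \|(C+s)^{-1}\| \le 1/s$ would render this integral divergent near $s=0$; the key move is the block-diagonal splitting $(B+s)^{-1} = F(t)(B+s)^{-1}F(t) + s^{-1}(I-F(t))$ and, in parallel, $(C+s)^{-1} = P(C+s)^{-1}P + s^{-1}(I-P)$, available because $B = F(t)BF(t)$ and $C = PCP$.

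Substituting these splittings into the integrand, I would expand into four product terms. The ``off--off'' term vanishes because $(I-F(t))B = 0 = C(I-P)$ implies $(I-P)(B-C)(I-F(t)) = 0$. The three surviving terms correspond precisely to the three summands in $C_7 = C_1 C_3^{1/2} + C_1\|X_1\| + \tfrac{1}{2}C_2 c_*^{-1/2}$. The diagonal term $P(C+s)^{-1}P(B-C)F(t)(B+s)^{-1}F(t)$ has norm at most $C_2|t|^3 (c_*t^2+s)^{-2}$ and, after the substitution $s = c_* t^2 u$ and the Beta-integral $\int_0^\infty \sqrt{u}(1+u)^{-2}\,du = \pi/2$, contributes $\tfrac12 C_2 c_*^{-1/2} t^2$. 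The first mixed term $s^{-1}(I-P)(B-C)F(t)(B+s)^{-1}F(t) = s^{-1}(I-P)F(t)\cdot B(B+s)^{-1}F(t)$ is controlled by combining $\|(I-P)F(t)\| = \|(F(t)-P)F(t)\| \le C_1|t|$ with $\|B(B+s)^{-1}|_{F(t)\mathfrak{H}}\| \le C_3 t^2/(C_3 t^2 + s)$, and by the companion integral $\int_0^\infty u^{-1/2}(1+u)^{-1}\,du = \pi$ it contributes $C_1 C_3^{1/2} t^2$. The symmetric mixed term $s^{-1}P(C+s)^{-1}P(B-C)(I-F(t)) = -s^{-1}t^2 (t^2 S + s)^{-1}S\cdot P(I-F(t))$ is handled via $\|P(I-F(t))\| \le C_1|t|$ and $\|(t^2 S + s)^{-1}S|_{\mathfrak{N}}\| \le \|S\|/(t^2\|S\|+s)$, yielding $C_1 \|S\|^{1/2} t^2 \le C_1\|X_1\| t^2$ (recall $\|S\|\le\|X_1\|^2$). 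Summing gives exactly $C_7 t^2$.

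The main obstacle is precisely the fact that the naive integral diverges at $s=0$ and must be tamed by carefully matching each $s^{-1}$ singularity produced by a ``kernel'' component of a resolvent with either the factor $\|F(t) - P\| \le C_1|t|$ or the smallness $\|B-C\| \le C_2|t|^3$ on the appropriate subblock; the delicate part is tracking the non-commutativity through the four-term expansion and verifying that the off--off cancellations really eliminate the only $s^{-2}$ contribution. A minor technical point is to justify the integral representation itself: one can first verify it on the regularization $B + \varepsilon I$, $C + \varepsilon I$ (where both are strictly positive and the operator formula is standard), and then send $\varepsilon \to 0^+$ by dominated convergence, the dominating bounds being precisely those established in the main computation.
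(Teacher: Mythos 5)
Your argument is correct, and it recovers the constant $C_7$ exactly: the three contributions you extract from the block-split integrand (the ``diagonal--diagonal'' term via the Beta integral, and the two mixed terms controlled by $\|F(t)-P\|\le C_1|t|$ together with $\|A(t)F(t)\|\le C_3t^2$ and $\|S\|\le\|X_1\|^2$) match the three summands $\tfrac12 C_2 c_*^{-1/2}$, $C_1C_3^{1/2}$, and $C_1\|X_1\|$ term by term. This is essentially the same route as the cited proof and as the paper's own subsequent treatment of $A(t)^{1/2}F(t)$ via the representation \eqref{abstr_A_sqrt_repres}; the only cosmetic difference is that you split the resolvents block-diagonally inside the integral, whereas one can equivalently peel off the projection mismatches $A(t)^{1/2}F(t)(F(t)-P)$ and $(F(t)-P)(t^2S)^{1/2}P$ first and then apply the Lipschitz bound $\|\sqrt{X}-\sqrt{Y}\|\le \tfrac12 m^{-1/2}\|X-Y\|$ on the strictly positive blocks.
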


We need to find more precise approximation for the operator $A(t)^{1/2} F(t)$.
For this, we use the following representation (see, e.~g.,~\cite[Chapter~3, Section~3, Subsection~3]{Kr})
\begin{equation}
\label{abstr_A_sqrt_repres}
A(t)^{1/2} F(t)  = \frac{1}{\pi} \int_{0}^{\infty} s^{-1/2} (A(t) + sI)^{-1} A(t) F(t) \, ds, \quad 0 < |t| \le t^0.
\end{equation}
Also, we need the following threshold approximation for the resolvent \hbox{$(A(t) + sI)^{-1}$} obtained in~\cite[(5.19)]{BSu2}:
\begin{multline}
\label{abstr_resolv_threshold}
(A(t) + sI)^{-1} F(t) = \Xi(t,s) + t (Z \Xi(t,s) + \Xi(t,s) Z^*) - \\- t^3 \Xi(t,s) N \Xi(t,s) + \mathcal{J}(t,s), \quad |t| \le t^0, \; s > 0.
\end{multline}
Here $\Xi(t,s): = (t^2 SP + sI)^{-1} P$, and the operator $\mathcal{J}(t,s) $ satisfies the following estimate (see~\cite[Subsection~5.2]{BSu2})
\begin{equation}
\label{abstr_J_est}
\| \mathcal{J}(t,s) \| \le C_8 t^4 (c_* t^2 + s)^{-2} + C_9 t^2 (c_* t^2 + s)^{-1},
\end{equation}
where
\begin{equation}
\label{abstr_C8_C9}
\begin{split}
C_8 &= \beta_8 \delta^{-1} \|X_1\|^4 + \check{\beta}_8 c_*^{-1} \delta^{-1} \|X_1\|^6,\\
C_9 &= \beta_9 \delta^{-1} \|X_1\|^2 + \check{\beta}_{9} c_*^{-1} \delta^{-1} \|X_1\|^4.
\end{split}
\end{equation}

By~\eqref{abstr_A(t)_threshold_2} and~\eqref{abstr_resolv_threshold},
\begin{equation}
\label{abstr_sum}
\begin{aligned}
(A(t) + sI)^{-1} A(t) F(t) &= t^2 \Xi(t,s)  SP + t^3 Z \Xi(t,s)   SP \\
&+    t^3 \Xi(t,s) K -  t^5 \Xi(t,s) N \Xi(t,s) SP + Y(t,s),
\end{aligned}
\end{equation}
where
\begin{equation*}
\begin{aligned}
Y(t,s)&:=
 \Xi(t,s) \Psi(t) + t (Z \Xi(t,s) + \Xi(t,s) Z^*)(t^3 K + \Psi(t))
\\
 &- t^3 \Xi(t,s) N \Xi(t,s) (t^3 K + \Psi(t)) + \mathcal{J} (t,s) \left( t^2 SP + t^3 K + \Psi (t) \right).
\end{aligned}
\end{equation*}
We have taken into account that $Z^* P =0$.
Substituting \eqref{abstr_sum} in~\eqref{abstr_A_sqrt_repres}, we obtain
\begin{equation}
\label{abstr_1/2}
A(t)^{1/2} F(t)  = \sum_{j=1}^4 I_j(t) + \Phi(t), \quad |t| \le t^0,
\end{equation}
where
\begin{align}
\label{I1}
I_1(t)& := \frac{1}{\pi} \int_{0}^{\infty} s^{-1/2} t^2 \Xi(t,s)  SP \, ds,
\\
\nonumber
I_2(t) &:= \frac{1}{\pi} \int_{0}^{\infty} s^{-1/2}  t^3  Z \Xi(t,s)  SP\, ds= tZ I_1(t),
\\
\nonumber
I_3(t) &:= \frac{1}{\pi} \int_{0}^{\infty} s^{-1/2}  t^3 \Xi(t,s) K \, ds = t I_1(t) S^{-1}PK,
\\
\nonumber
I_4(t) &:= - \frac{1}{\pi} \int_{0}^{\infty} s^{-1/2}   t^5 \Xi(t,s) N \Xi(t,s) SP\, ds,
\\
\label{Phi}
\Phi(t) &:= \frac{1}{\pi} \int_{0}^{\infty} s^{-1/2}  Y(t,s) \, ds.
\end{align}
For $t=0$ we put $I_j(0):=0$, $j=1,2,3,4,$ and $\Phi(0):=0$.
Then representation \eqref{abstr_1/2} for $t=0$ is obvious.

Using representation of the form \eqref{abstr_A_sqrt_repres}
for the operator $(t^2SP)^{1/2}P=|t| S^{1/2}P$, we see that
\begin{equation}
\label{abstr_I1-3}
I_1(t)=  |t| S^{1/2} P,\quad I_2(t)= t |t| Z S^{1/2} P, \quad I_3(t) = t |t| S^{-1/2} P K.
\end{equation}
Next, recalling that $N = N_0 + N_*$, we represent $I_4(t)$ as
\begin{align}
\label{abstr_I(t)_I0(t)_I*(t)}
I_4(t) &= I_0(t) + I_*(t),
\\
\nonumber
I_0(t) &:= -\frac{t^5}{\pi} \int_{0}^{\infty} s^{-1/2} \Xi (t,s) N_0 \Xi (t,s) \, SP \, ds,
\\
\label{I*}
I_*(t) &:= -\frac{t^5}{\pi} \int_{0}^{\infty} s^{-1/2} \Xi (t,s) N_* \Xi (t,s) \, SP \, ds.
\end{align}
Let us calculate the operator $I_0(t)$  in the basis $\{\omega_l\}_{l=1}^n$.
Since $N_0 S = S N_0$ (see \eqref{abstr_S_eigenvectors} and \eqref{abstr_N_0_N_*}), then
\begin{equation*}
I_0 (t) \omega_l = -\frac{t^5}{\pi} \int_{0}^{\infty} s^{-1/2} \frac{\gamma_l}{( \gamma_l t^2 + s)^{2}} N_0 \omega_l \, ds = - \frac{1}{2} t |t| \gamma_l^{-1/2} N_0 \omega_l,\quad l=1,\dots,n.
\end{equation*}
In the invariant terms, we have
\begin{equation}
\label{I0}
I_0(t) = - \frac{1}{2} t |t| N_0 S^{-1/2} P.
\end{equation}

Now, substituting \eqref{abstr_I1-3}, \eqref{abstr_I(t)_I0(t)_I*(t)}, and \eqref{I0} in \eqref{abstr_1/2}
and recalling that $K = Z S P + S P Z^* + N_0 + N_*$ (see~\eqref{abstr_K_N_def},~\eqref{abstr_F1_K0_N_invar}), we obtain
\begin{equation}
\label{repr}
\begin{aligned}
A(t)^{1/2} F(t) &= |t| S^{1/2} P +  t |t| \left( Z S^{1/2} P +  S^{1/2} P Z^* \right)
+ \frac{1}{2} t |t| N_0 S^{-1/2} P \\
&+ t |t| S^{-1/2} P N_* + I_*(t) + \Phi(t).
\end{aligned}
\end{equation}
We have taken into account that $PZ=0$.

Consider the term
\begin{equation}
\label{II*}
{\mathcal I}_*(t) := I_*(t) - t |t| N_* S^{-1/2} P.
\end{equation}
Since $|t| S^{-1/2} P = S^{-1} I_1(t)$, it follows from \eqref{I1} and \eqref{I*} that
\begin{equation}
\label{III*}
\begin{aligned}
{\mathcal I}_*(t) &:= -\frac{t^3}{\pi} \int_{0}^{\infty} s^{-1/2} \left( \Xi (t,s) N_* \Xi (t,s) \,t^2 SP + N_* \Xi (t,s)\right) \, ds
\\
&= -\frac{t^3}{\pi} \int_{0}^{\infty} s^{-1/2} \left( \Xi (t,s) N_*  + N_* \Xi (t,s) - s \,\Xi (t,s) N_* \Xi (t,s)\right) \, ds.
\end{aligned}
\end{equation}
From the last representation it is clear that the operator ${\mathcal I}_*(t)$
is selfadjoint. Is is easily seen that
\begin{equation}
\label{IIII*}
{\mathcal I}_*(t)= t|t| {\mathcal I}_*(1).
\end{equation}
Relations \eqref{abstr_S_nondegenerated} and \eqref{III*} imply the estimate
\begin{equation*}
\| \mathcal{I}_*(1) \| \le \frac{1}{\pi} \|N\|  \int_{0}^{\infty} s^{-1/2} \left( 2(s+ c_*)^{-1} + s(s+ c_*)^{-2}\right)\,  ds.
\end{equation*}
Combining this with \eqref{abstr_K_N_estimates}, we obtain
\begin{equation}
\label{I*est}
\| \mathcal{I}_* (1)\| \le \frac{5}{2}(2\delta)^{-1/2} c_*^{-1/2} \|X_1\|^3.
\end{equation}

By~\eqref{abstr_t0_fixation}, \eqref{abstr_Z_R_S_est}, \eqref{abstr_C5}, \eqref{abstr_K_N_estimates}, \eqref{abstr_S_nondegenerated}, \eqref{abstr_J_est}, and \eqref{abstr_C8_C9}, it is easy to check that the term
\eqref{Phi} satisfies
\begin{equation}
\label{abstr_A_sqrt_threshold_pre}
 \| \Phi (t) \| \le C_{10} |t|^3, \quad |t| \le t^0,
\end{equation}
where
\begin{equation}
\label{abstr_C10}
C_{10} = \beta_{10} \|X_1\|^4 \delta^{-1} c_*^{-1/2} + \beta_{11} \|X_1\|^6 \delta^{-1} c_*^{-3/2} + \beta_{12} \|X_1\|^8 \delta^{-1} c_*^{-5/2}.
\end{equation}

Finally, combining \eqref{repr}--\eqref{abstr_C10}, we arrive at the following result.

\begin{theorem}
    Suppose that the assumptions of Subsections~\emph{\ref{abstr_X_A_section}} and~\emph{\ref{abstr_nondegenerated_section}}
are satisfied. For $|t| \le t^0$ we have
     \begin{equation}
     \label{abstr_A_sqrt_threshold_2}
     A(t)^{1/2} F(t) = |t| S^{1/2} P  + t |t| G + \Phi (t),
     \end{equation}
 where
     \begin{equation}
     \label{abstr_G}
     G := Z S^{1/2} P + S^{1/2}P Z^* + \frac{1}{2} N_0 S^{-1/2} P  + S^{-1/2} N_* P +   N_* S^{-1/2} P + \mathcal{I}_* (1).
     \end{equation}
    The operators $S$ and $Z$ are defined in Subsection~\emph{\ref{abstr_Z_R_S_op_section}},
the operators $N_0$ and $N_*$ are defined by~\emph{\eqref{abstr_N_0_N_*}}. The operator $\mathcal{I}_*(1)$ is given by~\emph{\eqref{III*}} with $t=1$ and satisfies estimate \emph{\eqref{I*est}}. The operator $\Phi(t)$ satisfies estimate \eqref{abstr_A_sqrt_threshold_pre},
   where $C_{10}$ is given by \emph{\eqref{abstr_C10}}.
\end{theorem}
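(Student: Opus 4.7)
My plan is to deduce the theorem by assembling the pieces that have been set up in \eqref{abstr_A_sqrt_repres}--\eqref{abstr_C10}: there is no new idea required beyond organizing the integral representation, performing four elementary spectral integrals, invoking the identity $K = ZSP + SPZ^*+N_0+N_*$, and carefully bookkeeping the error.

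The starting point is the integral representation \eqref{abstr_A_sqrt_repres} for $A(t)^{1/2} F(t)$. Into its integrand I would substitute, on the one hand, the threshold approximation \eqref{abstr_A(t)_threshold_2} for $A(t)F(t) = t^2 SP + t^3 K + \Psi(t)$, and on the other hand, the resolvent approximation \eqref{abstr_resolv_threshold}. Multiplying these out and using $Z^* P = 0$ produces the decomposition \eqref{abstr_sum}, which isolates the four ``main'' pieces $t^2 \Xi SP$, $t^3 Z\Xi SP$, $t^3\Xi K$, $-t^5 \Xi N \Xi SP$, plus the remainder $Y(t,s)$ that collects everything containing $\Psi(t)$ or $\mathcal{J}(t,s)$. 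Integrating against $\pi^{-1} s^{-1/2}$ then yields \eqref{abstr_1/2} with the four integrals $I_1,\dots,I_4$ and the error $\Phi(t)$.

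Next I would evaluate the four integrals in closed form. The integral $I_1(t)$ reduces, via \eqref{abstr_A_sqrt_repres} applied to $(t^2 SP)^{1/2}P$, to $|t|S^{1/2}P$; the factorizations $I_2(t) = tZ I_1(t)$ and $I_3(t) = t I_1(t) S^{-1}PK$ give the two remaining formulas in \eqref{abstr_I1-3}. For $I_4(t)$ I would split $N = N_0 + N_*$. Since $N_0$ commutes with $S$ on $\mathfrak{N}$ (by \eqref{abstr_S_eigenvectors} and \eqref{abstr_N_0_N_*}), the integral $I_0(t)$ can be computed in the basis $\{\omega_l\}$ as a scalar integral $\int_0^\infty s^{-1/2}\gamma_l(\gamma_l t^2+s)^{-2}\,ds$, giving \eqref{I0}. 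For $I_*(t)$, the operator $N_*$ does not commute with $S$, so instead I would form the combination $\mathcal{I}_*(t) := I_*(t) - t|t|\,N_* S^{-1/2}P$; rewriting the integrand with the identity $t^2 SP \,\Xi(t,s) = P - s\Xi(t,s)$ converts $\mathcal{I}_*(t)$ into the manifestly selfadjoint form \eqref{III*}, and rescaling $s \mapsto t^2 s$ confirms $\mathcal{I}_*(t) = t|t|\,\mathcal{I}_*(1)$. The bound \eqref{I*est} on $\|\mathcal{I}_*(1)\|$ then follows from $S \ge c_* I_{\mathfrak N}$ (giving $\|\Xi(1,s)\|\le (c_*+s)^{-1}$), the estimate \eqref{abstr_K_N_estimates} on $\|N_*\|\le\|N\|$, and the scalar integrals $\int_0^\infty s^{-1/2}(c_*+s)^{-1}ds = \pi c_*^{-1/2}$ and $\int_0^\infty s^{1/2}(c_*+s)^{-2}ds = (\pi/2) c_*^{-1/2}$.

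With all four integrals in hand, substituting the invariant expression $K = ZSP + SPZ^* + N_0 + N_*$ from \eqref{abstr_K_N_def}, \eqref{abstr_F1_K0_N_invar} into $I_3(t)$ and using $PZ = 0$ collapses the sum $\sum_j I_j(t)$ into \eqref{repr}; combining with the definition \eqref{II*} of $\mathcal{I}_*(t)$ produces precisely the announced operator $G$ of \eqref{abstr_G}. What remains is the estimate \eqref{abstr_A_sqrt_threshold_pre} for $\Phi(t) = \pi^{-1}\int_0^\infty s^{-1/2} Y(t,s)\,ds$. I expect this bookkeeping to be the main (though routine) obstacle: one must estimate every one of the eight summands of $Y(t,s)$ using $\|\Xi(t,s)\|\le(c_* t^2+s)^{-1}$, $\|Z\|$, $\|N\|$, $\|K\|$ from \eqref{abstr_Z_R_S_est}, \eqref{abstr_K_N_estimates}, together with \eqref{abstr_C5} and \eqref{abstr_J_est}, and then integrate against $s^{-1/2}$ using the identities $\int_0^\infty s^{-1/2}(c_*t^2+s)^{-k}ds \asymp c_*^{1/2-k}|t|^{1-2k}$ for $k=1,2,3$. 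Each contribution turns out to be $O(|t|^3)$, with a coefficient tracked into the three terms of \eqref{abstr_C10}, and the assembly yields the theorem.
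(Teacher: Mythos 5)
Your proposal reproduces the paper's own derivation step for step: the integral representation \eqref{abstr_A_sqrt_repres}, substitution of \eqref{abstr_A(t)_threshold_2} and \eqref{abstr_resolv_threshold} to obtain \eqref{abstr_sum}, closed-form evaluation of $I_1,\dots,I_4$ (with the commutation $N_0S=SN_0$ for $I_0$ and the selfadjoint rewriting plus $s\mapsto t^2s$ homogeneity for $\mathcal{I}_*$), assembly via $K=ZSP+SPZ^*+N_0+N_*$, and the $O(|t|^3)$ bookkeeping for $\Phi$. This is correct and is essentially the same argument as in the paper.
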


\subsection{Approximation of the operator $\cos (\tau A(t)^{1/2}) P$}

In this subsection, we approximate the operator $\cos (\tau A(t)^{1/2}) P$ by $\cos (\tau (t^2 S)^{1/2}P) P$
for $\tau \in \mathbb{R}$ and $|t|\le t^0$.
Such approximation was found in \cite[Section 2]{BSu5}.
Now we repeat the proof of this result tracing more carefully how different terms are estimated.
This will be needed in what follows to confirm the sharpness of the result.
On the other hand,  we will distinguish an important case where the result of \cite{BSu5} can be refined.

Consider the operator
\begin{equation}
    \label{abstr_E}
    E (t, \tau) := e^{-i\tau A(t)^{1/2}} P - e^{-i\tau (t^2 S)^{1/2}P} P.
\end{equation}
We have
\begin{gather}
    \label{abstr_E_E1_E2}
    E (t, \tau) = E_1 (t, \tau) + E_2 (t, \tau), \\
    \label{abstr_E1}
    E_1 (t, \tau) := e^{-i\tau A(t)^{1/2}} F(t)^{\perp} P - F(t)^{\perp} e^{-i\tau (t^2 S)^{1/2}P} P, \\
    \label{abstr_E2}
    E_2 (t, \tau) := e^{-i\tau A(t)^{1/2}} F(t) P - F(t) e^{-i\tau (t^2 S)^{1/2}P} P.
\end{gather}
Since  $F(t)^\perp P = (P - F(t))P$, \eqref{abstr_F(t)_threshold_1}  implies the following estimate for the operator \eqref{abstr_E1}:
\begin{equation}
    \label{abstr_E1_estimate}
    \| E_1 (t, \tau) \| \le 2 C_1 |t|, \quad |t| \le t^0.
\end{equation}

The operator \eqref{abstr_E2}  can be written as
\begin{gather}
    \label{abstr_E2_Sigma}
    E_2 (t, \tau) = e^{-i \tau A(t)^{1/2}} \Sigma (t, \tau), 
\\
\notag
\Sigma (t, \tau) :=  F(t) P - e^{i \tau A(t)^{1/2}}F(t) e^{-i\tau (t^2 S)^{1/2}P} P.
\end{gather}
Obviously,  $\Sigma(t,0)=0$ and     $\Sigma'(t, \tau) = \frac {d \Sigma} {d \tau} (t, \tau)$ takes the form
\begin{equation}
    \label{abstr_Sigma'}
    \Sigma'(t, \tau) = -i e^{i \tau A(t)^{1/2}}F(t) \left( A(t)^{1/2}F(t) - (t^2 S)^{1/2} P \right) e^{-i\tau (t^2 S)^{1/2}P} P.
\end{equation}
Since $\Sigma (t, \tau) = \int_{0}^{\tau} \Sigma'(t, \tilde{\tau}) \, d \tilde{\tau}$,
from~\eqref{abstr_Sigma'} and~\eqref{abstr_A_sqrt_threshold_1} it follows that
\begin{equation}
    \label{abstr_Sigma_estimate}
    \| \Sigma (t, \tau) \| \le C_7 |\tau| t^2, \quad |t| \le t^0.
\end{equation}

Relations~\eqref{abstr_E},~\eqref{abstr_E_E1_E2},~\eqref{abstr_E1_estimate},~\eqref{abstr_E2_Sigma}, and~\eqref{abstr_Sigma_estimate}
 imply the following result which is similar  to Theorem~2.5 from~\cite{BSu5}.

\begin{theorem}
    \label{abstr_cos_general_thrm_wo_eps}
    Under the assumptions of Subsections~\emph{\ref{abstr_X_A_section}} and~\emph{\ref{abstr_nondegenerated_section}},
for $\tau \in \mathbb{R}$ and  $|t| \le t^0$ we have
    \begin{equation}
        \label{abstr_cos_general_est_wo_eps}
        \| \cos (\tau A(t)^{1/2}) P - \cos (\tau (t^2 S)^{1/2}P) P \| \le 2 C_1 |t| + C_7 |\tau| t^2.
    \end{equation}
The number $t^0$ is subject to~\emph{\eqref{abstr_t0_fixation}}, and the constants $C_1$ and $C_7$ are defined
by~\emph{\eqref{abstr_C1_C2}} and~\emph{\eqref{abstr_C7}}, respectively.
\end{theorem}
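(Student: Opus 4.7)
The plan is to derive the cosine estimate by averaging the exponential estimate at $\pm\tau$. Writing $\cos(\tau T)=\tfrac12(e^{i\tau T}+e^{-i\tau T})$ for a selfadjoint operator $T$, the definition~\eqref{abstr_E} gives the identity
$$\cos(\tau A(t)^{1/2})P - \cos(\tau (t^2 S)^{1/2}P)P = \tfrac12\bigl(E(t,\tau)+E(t,-\tau)\bigr).$$
Hence it suffices to establish the bound $\|E(t,\tau)\|\le 2C_1|t|+C_7|\tau|t^2$ in a form that is symmetric under $\tau\mapsto-\tau$; the conclusion then follows from the triangle inequality applied to this identity, with exactly the same constants on the right-hand side.

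For the bound on $E(t,\tau)$ itself I would follow the telescoping already recorded in~\eqref{abstr_E_E1_E2}--\eqref{abstr_Sigma_estimate}. Split $E=E_1+E_2$: the ``off-germ'' piece $E_1$ is controlled by rewriting $F(t)^\perp P=(P-F(t))P$, using unitarity of the two exponential factors, and invoking the principal-order threshold approximation~\eqref{abstr_F(t)_threshold_1} for $F(t)-P$; this yields $\|E_1(t,\tau)\|\le 2C_1|t|$ with no $\tau$-dependence at all. The ``on-germ'' piece $E_2$ is factored as $e^{-i\tau A(t)^{1/2}}\Sigma(t,\tau)$ where $\Sigma(t,0)=0$, and I would write $\Sigma(t,\tau)=\int_0^\tau\Sigma'(t,\tilde\tau)\,d\tilde\tau$ with $\Sigma'$ as in~\eqref{abstr_Sigma'}. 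Since both exponential factors in that formula are unitary, the integrand is bounded pointwise by $\|A(t)^{1/2}F(t)-(t^2 S)^{1/2}P\|\le C_7 t^2$ thanks to~\eqref{abstr_A_sqrt_threshold_1}, which gives $\|\Sigma(t,\tau)\|\le C_7|\tau|t^2$ and hence $\|E_2(t,\tau)\|\le C_7|\tau|t^2$. Both estimates are manifestly invariant under $\tau\mapsto-\tau$, so the averaging step loses nothing.

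Combining the two bounds yields $\|E(t,\tau)\|\le 2C_1|t|+C_7|\tau|t^2$ uniformly in the sign of $\tau$, and the identity from the first paragraph produces the stated inequality. The only nontrivial ingredient is the square-root threshold approximation~\eqref{abstr_A_sqrt_threshold_1}, which is also the step where Condition~\ref{nondeg} enters (through the factor $c_*^{-1/2}$ in $C_7$); once that is available from the preceding subsection, the remainder of the argument is just integration in $\tilde\tau$ together with unitarity, and I do not anticipate any conceptual obstacle.
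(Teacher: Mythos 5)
Your argument is correct and coincides with the paper's own proof: the same splitting $E=E_1+E_2$ from \eqref{abstr_E_E1_E2}, the bound $\|E_1\|\le 2C_1|t|$ via \eqref{abstr_F(t)_threshold_1}, the bound $\|E_2\|\le C_7|\tau|t^2$ via $\Sigma(t,\tau)=\int_0^\tau\Sigma'$ and \eqref{abstr_A_sqrt_threshold_1}, followed by passing to the cosine through $\cos(\tau T)=\tfrac12(e^{i\tau T}+e^{-i\tau T})$ and the $\tau\mapsto-\tau$ symmetry of the estimates. No gaps.
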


Now we proceed to more subtle considerations that will allow us to improve the result under the additional assumptions.
Using representation~\eqref{abstr_A_sqrt_threshold_2}, from \eqref{abstr_Sigma'} we obtain
\begin{equation}
    \label{abstr_Sigma}
    \Sigma (t, \tau) = -i \int_0^{\tau} e^{i \tilde{\tau} A(t)^{1/2}}F(t) ( t |t| G  + \Phi (t)) e^{-i \tilde{\tau} (t^2 S)^{1/2}P} P \, d \tilde{\tau}.
\end{equation}
Taking~\eqref{abstr_G} into account and recalling that $Z^* P = 0$, we represent the operator~\eqref{abstr_Sigma} as
\begin{gather}
    \label{abstr_Sigma_tildeSigma_hatSigma}
    \Sigma (t, \tau) = \widetilde{\Sigma}(t, \tau) + \widehat{\Sigma}(t, \tau),\\
    \label{abstr_tildeSigma}
    \widetilde{\Sigma}(t, \tau) := -i \int_0^{\tau} e^{i \tilde{\tau} A(t)^{1/2}}F(t) \left( t |t|  Z S^{1/2} P + \Phi (t) \right) e^{-i \tilde{\tau} (t^2 S)^{1/2}P} P \, d \tilde{\tau}, \\
    \label{abstr_hatSigma}
    \widehat{\Sigma}(t, \tau) :=  -i \, t |t| \int_0^{\tau} e^{i \tilde{\tau} A(t)^{1/2}}F(t) \widehat{G} e^{-i \tilde{\tau} (t^2 S)^{1/2}P} P \, d \tilde{\tau},
\end{gather}
where
\begin{equation}
\label{abstr_hatG}
\widehat{G} = \frac{1}{2} N_0 S^{-1/2} P  + S^{-1/2} N_* P +   N_* S^{-1/2} P + \mathcal{I}_*(1).
\end{equation}
Since $P Z = 0$, then $F(t)Z S^{1/2} P = (F(t) - P ) Z S^{1/2} P$. Hence, relations~\eqref{abstr_Z_R_S_est},~\eqref{abstr_F(t)_threshold_1}, and~\eqref{abstr_A_sqrt_threshold_pre} imply the following estimate for the term~\eqref{abstr_tildeSigma}:
\begin{equation}
    \label{abstr_tildeSigma_estimate}
    \| \widetilde{\Sigma}(t, \tau) \| \le C_{11} | \tau | |t|^3, \quad |t| \le t^0,
\end{equation}
where $C_{11} = C_1 (8 \delta)^{-1/2} \| X_1 \|^2 + C_{10}$.
Taking~\eqref{abstr_C1_C2} and~\eqref{abstr_C10} into account, we see that
\begin{equation}
    \label{abstr_C11}
\begin{aligned}
    C_{11}  &= 8^{-1/2} \beta_1  \|X_1\|^3  \delta^{-1}+ \beta_{10} \|X_1\|^4 \delta^{-1} c_*^{-1/2}
\\
&+ \beta_{11} \|X_1\|^6 \delta^{-1} c_*^{-3/2} + \beta_{12} \|X_1\|^8 \delta^{-1} c_*^{-5/2}.
\end{aligned}
\end{equation}

Now, \eqref{abstr_E2_Sigma} and~\eqref{abstr_Sigma_tildeSigma_hatSigma} imply that
\begin{equation}
    \label{abstr_E2_tildeE2_hatE2}
    E_2 (t, \tau) = \widetilde{E}_2(t, \tau) + \widehat{E}_2(t, \tau),
\end{equation}
where $\widetilde{E}_2(t, \tau) = e^{-i \tau A(t)^{1/2}} \widetilde{\Sigma} (t, \tau)$, $\widehat{E}_2(t, \tau) = e^{-i \tau A(t)^{1/2}} \widehat{\Sigma} (t, \tau)$. By \eqref{abstr_tildeSigma_estimate},
\begin{equation}
    \label{abstr_tildeE2_esimate}
    \| \widetilde{E}_2(t, \tau) \| \le C_{11} | \tau | |t|^3, \quad |t| \le t^0.
\end{equation}

Finally, relations~\eqref{abstr_E}, \eqref{abstr_E_E1_E2}, \eqref{abstr_E1_estimate}, \eqref{abstr_hatSigma}, \eqref{abstr_hatG}, \eqref{abstr_E2_tildeE2_hatE2}, and~\eqref{abstr_tildeE2_esimate}, together with~\eqref{abstr_K_N_estimates}, \eqref{abstr_S_nondegenerated},
and~\eqref{I*est} imply the following result.
\begin{theorem}
    \label{abstr_cos_enchanced_thrm_1_wo_eps}
    Under the assumptions of Subsections~\emph{\ref{abstr_X_A_section}} and~\emph{\ref{abstr_nondegenerated_section}},
for $\tau \in \mathbb{R}$ and $|t| \le t^0$ we have
    \begin{equation*}
        e^{-i\tau A(t)^{1/2}} P - e^{-i\tau (t^2 S)^{1/2}P} P = E_1(t, \tau) + \widetilde{E}_2(t, \tau) + \widehat{E}_2(t, \tau),
    \end{equation*}
   where the first two terms satisfy estimates~\emph{\eqref{abstr_E1_estimate}} and~\emph{\eqref{abstr_tildeE2_esimate}}, respectively.
The third term admits the following representation
    \begin{equation}
        \label{abstr_hatE2}
        \widehat{E}_2(t, \tau) = -i \, t |t| e^{-i \tau A(t)^{1/2}} \int_0^{\tau} e^{i \tilde{\tau} A(t)^{1/2}}F(t) \widehat{G}  e^{-i \tilde{\tau} (t^2 S)^{1/2}P} P \, d \tilde{\tau},
    \end{equation}
    where the operator~$\widehat{G}$ is defined by~\emph{\eqref{abstr_hatG}}. We have
    \begin{gather*}
        \| \widehat{E}_2(t, \tau) \| \le C_{12} | \tau| t^2, \quad |t| \le t^0, \quad 
        C_{12} = 5 (2 \delta)^{-1/2} c_*^{-1/2} \| X_1 \|^3.
    \end{gather*}
\end{theorem}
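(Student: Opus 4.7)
The decomposition $E(t,\tau) = E_1(t,\tau) + E_2(t,\tau)$ from \eqref{abstr_E_E1_E2} reduces the statement to analyzing each piece separately. The bound on $E_1$ is the easier half: since $F(t)^\perp P = (P - F(t))P$, it follows directly from the first-order threshold approximation $\|F(t) - P\| \le C_1|t|$ of \eqref{abstr_F(t)_threshold_1} that $\|E_1(t,\tau)\| \le 2C_1|t|$. So the entire content of the theorem is the finer analysis of $E_2(t,\tau)$, which is what I would focus on.

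To handle $E_2$, I would write it as $e^{-i\tau A(t)^{1/2}}\Sigma(t,\tau)$ using \eqref{abstr_E2_Sigma}, and then exploit the fundamental theorem of calculus: since $\Sigma(t,0)=0$, we have $\Sigma(t,\tau)=\int_0^\tau \Sigma'(t,\tilde\tau)\,d\tilde\tau$, where $\Sigma'$ is given by \eqref{abstr_Sigma'}. The key input at this step is the sharpened threshold expansion \eqref{abstr_A_sqrt_threshold_2}: using $|t|S^{1/2}P = (t^2S)^{1/2}P$, we obtain $A(t)^{1/2}F(t)-(t^2S)^{1/2}P = t|t|G + \Phi(t)$, which inserted into $\Sigma'$ yields \eqref{abstr_Sigma}.

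The decisive step is to decompose $G$ as in \eqref{abstr_G} and observe the cancellations forced by the relations $PZ=0$ and $Z^*P=0$. First, the term $S^{1/2}PZ^*$ contributes zero to $\Sigma$: the exponential $e^{-i\tilde\tau(t^2S)^{1/2}P}P$ has range in $\mathfrak{N}$, so $Z^* e^{-i\tilde\tau(t^2S)^{1/2}P}P = Z^*P\cdot e^{-i\tilde\tau(t^2S)^{1/2}}P = 0$. Second, the term $ZS^{1/2}P$ gains an extra power of $|t|$: since $PZS^{1/2}P = 0$, we may rewrite $F(t)ZS^{1/2}P = (F(t)-P)ZS^{1/2}P$, to which \eqref{abstr_F(t)_threshold_1} applies, producing an extra factor $C_1|t|$. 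Collecting this term together with $\Phi(t)$ defines $\widetilde{\Sigma}$ in \eqref{abstr_tildeSigma}, and combining \eqref{abstr_Z_R_S_est}, \eqref{abstr_F(t)_threshold_1}, and \eqref{abstr_A_sqrt_threshold_pre} gives $\|\widetilde\Sigma(t,\tau)\|\le C_{11}|\tau||t|^3$, hence the bound \eqref{abstr_tildeE2_esimate} on $\widetilde{E}_2 = e^{-i\tau A(t)^{1/2}}\widetilde\Sigma$. The remaining terms assemble into $\widehat{\Sigma}$ in \eqref{abstr_hatSigma}, with $\widehat{G}$ given by \eqref{abstr_hatG}; this gives the stated representation \eqref{abstr_hatE2} for $\widehat{E}_2$ since the exponential factors and $F(t)$ have norm one.

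The final step is the norm bound on $\widehat{E}_2$, which reduces to $\|\widehat{E}_2(t,\tau)\| \le |\tau|t^2\,\|\widehat{G}\|$. Here we bound each summand of \eqref{abstr_hatG} separately: the terms involving $N_0$ and $N_*$ are controlled via $\|N\|\le(2\delta)^{-1/2}\|X_1\|^3$ from \eqref{abstr_K_N_estimates} combined with $\|S^{-1/2}P\|\le c_*^{-1/2}$ from \eqref{abstr_S_nondegenerated}, while $\mathcal{I}_*(1)$ is controlled by \eqref{I*est}. Assembling these gives the constant $C_{12}$ stated in the theorem. The main obstacle in the whole argument is the bookkeeping of the two algebraic cancellations in the decomposition of $G$ — recognizing that $S^{1/2}PZ^*$ contributes nothing and $ZS^{1/2}P$ contributes an additional order of smallness is what allows the leading non-oscillatory term $\widehat{E}_2$ to be isolated with the sharpest possible remainder $|\tau||t|^3$ on the other pieces.
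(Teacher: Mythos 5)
Your proposal follows the paper's own argument essentially step for step: the split $E=E_1+E_2$, the fundamental-theorem-of-calculus representation of $\Sigma$, the insertion of the refined approximation \eqref{abstr_A_sqrt_threshold_2}, the two cancellations from $Z^*P=0$ and $PZ=0$, and the final bound on $\widehat G$ term by term. It is correct and matches the paper's proof.
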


Note that, if $N=0$, then also $N_0=N_*=0$, ${\mathcal I}_*(1)=0$, whence
$\widehat{G}=0$ and $\widehat{E}_2(t,\tau)=0$. We obtain the following corollary.

\begin{corollary}
    \label{abstr_coroll_enchd_est_wo_eps_N=0}
Suppose that the assumptions of Subsections~\emph{\ref{abstr_X_A_section}} and~\emph{\ref{abstr_nondegenerated_section}}
are satisfied. Suppose that $N=0$. Then for $\tau \in \mathbb{R}$ and $|t|\le t^0$ we have
    \begin{equation}
        \label{abstr_cos_enchanced_est_1_wo_eps}
        \| \cos(\tau A(t)^{1/2}) P - \cos(\tau (t^2 S)^{1/2}P) P \| \le 2 C_1 |t| + C_{11} | \tau | |t|^3.
    \end{equation}
\end{corollary}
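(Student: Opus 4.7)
My plan is to read the corollary off Theorem~\ref{abstr_cos_enchanced_thrm_1_wo_eps} by showing that the hypothesis $N=0$ kills the most dangerous term $\widehat{E}_2(t,\tau)$ entirely, leaving only the two contributions that are already estimated by $2C_1|t|$ and $C_{11}|\tau|\,|t|^3$. Since the operators $A(t)$ and $(t^2 S)^{1/2}P$ are selfadjoint, the operator cosines equal the real parts of the corresponding exponentials, so
\begin{equation*}
\bigl\|\cos(\tau A(t)^{1/2})P - \cos(\tau(t^2 S)^{1/2}P)P\bigr\| \le \bigl\|e^{-i\tau A(t)^{1/2}}P - e^{-i\tau(t^2 S)^{1/2}P}P\bigr\|,
\end{equation*}
and it suffices to bound the right-hand side using the decomposition from the preceding theorem.

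The key step is to verify that $\widehat{G}=0$ under the assumption $N=0$. For this I would argue as follows: by \eqref{abstr_K_N_def} we have $N=N_0+N_*$ with $N_0,N_*$ acting $\mathfrak{N}\to\mathfrak{N}$ and $\mathfrak{N}^\perp\to\{0\}$; by Remark~\ref{abstr_N_remark} the matrix of $N_0$ in the basis $\{\omega_l\}$ is diagonal with entries $\mu_l$, while the matrix of $N_*$ has zero diagonal. Hence these two pieces occupy complementary entries, and $N=0$ forces $N_0=0$ and $N_*=0$ separately. Looking at \eqref{III*} one sees that $\mathcal{I}_*(1)$ is built only out of $N_*$ through products $\Xi(1,s)N_*\Xi(1,s)$ and $N_*\Xi(1,s)$, so $N_*=0$ gives $\mathcal{I}_*(1)=0$. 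Plugging $N_0=0$, $N_*=0$, $\mathcal{I}_*(1)=0$ into \eqref{abstr_hatG} gives $\widehat{G}=0$, and then the integral representation \eqref{abstr_hatE2} yields $\widehat{E}_2(t,\tau)=0$.

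Once $\widehat{E}_2$ is gone, the remaining two pieces $E_1(t,\tau)$ and $\widetilde{E}_2(t,\tau)$ satisfy \eqref{abstr_E1_estimate} and \eqref{abstr_tildeE2_esimate} unconditionally, and the triangle inequality combined with the real-part observation delivers \eqref{abstr_cos_enchanced_est_1_wo_eps}. I do not expect a genuine obstacle here: the heavy lifting was done in establishing Theorem~\ref{abstr_cos_enchanced_thrm_1_wo_eps} and the precise form of $\widehat{G}$, so the only point demanding care is the bookkeeping that every summand in $\widehat{G}$ contains a factor of $N_0$, $N_*$, or $\mathcal{I}_*(1)$, all of which collapse simultaneously when $N=0$.
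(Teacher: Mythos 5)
Your proposal is correct and follows the paper's own route: the paper likewise observes that $N=0$ forces $N_0=N_*=0$ and hence $\mathcal{I}_*(1)=0$, so that $\widehat{G}=0$ and $\widehat{E}_2(t,\tau)=0$ in the decomposition of Theorem~\ref{abstr_cos_enchanced_thrm_1_wo_eps}, after which only the bounds \eqref{abstr_E1_estimate} and \eqref{abstr_tildeE2_esimate} remain and the cosine estimate follows by symmetrizing the exponential in $\pm\tau$ (your "real part" step, legitimate here because both bounds are even in $\tau$). No gaps.
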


\subsection{Estimates for the terms containing $N_*$}

We will use the notation and the results of Section~\ref{abstr_cluster_section}.
Recall that $N=N_0 + N_*$. By Remark~\ref{abstr_N_remark}, we have
\begin{equation}
    \label{PNP}
    P_j N_* P_j = 0,\quad j = 1, \ldots ,p;\qquad P_l N_0 P_j  =0\quad \text{if} \  l \ne j.
\end{equation}
Thus, the operators $N_0$ and $N_*$ admit the following invariant representations:
\begin{equation}
    \label{abstr_N_invar_repers}
    N_0 = \sum_{j=1}^{p} P_j N P_j, \quad N_* = \sum_{\substack{1 \le j, l \le p: \\ j \ne l}} P_j N P_l.
\end{equation}

The term~(\ref{abstr_hatE2}) can be written as
\begin{equation}
    \label{abstr_hatE2_E0_E*}
    \widehat{E}_2(t, \tau) = E_0(t, \tau) + E_*(t, \tau),
\end{equation}
where
\begin{align}
    \label{abstr_E0}
    E_0(t, \tau) &= -\frac{i}{2} t |t| e^{-i \tau A(t)^{1/2}} \int_0^{\tau} e^{i \tilde{\tau} A(t)^{1/2}}F(t) S^{-1/2} N_0 e^{-i \tilde{\tau} (t^2 S)^{1/2}P} P \, d \tilde{\tau}, \\
    \label{abstr_E*}
    E_*(t, \tau) &= -i  t |t| e^{-i \tau A(t)^{1/2}} \int_0^{\tau} e^{i \tilde{\tau} A(t)^{1/2}}F(t)
G_*  e^{-i \tilde{\tau} (t^2 S)^{1/2}P} P \, d \tilde{\tau},
\end{align}
where
\begin{equation}
    \label{G*}
G_* := S^{-1/2} N_*P  +   N_* S^{-1/2}P  +  \mathcal{I}_* (1).
\end{equation}

In this subsection, we obtain the analog of estimate \eqref{abstr_cos_enchanced_est_1_wo_eps} under the weaker assumption that $N_0=0$.
For this, we have to estimate the operator~\eqref{abstr_E*}. However, we are able to do this only for a
smaller interval of $t$. By \eqref{abstr_P_Pj}, \eqref{III*}, \eqref{PNP}, and \eqref{G*},
it is seen that 
\begin{equation*}
G_* = \sum_{\substack{1 \le j, l \le p: \\ j \ne l}} P_j G_* P_l.
\end{equation*}
Hence, the term \eqref{abstr_E*} can be represented as
\begin{gather}
    \label{abstr_E*_sumJjl}
    E_*(t, \tau) = -i e^{-i \tau A(t)^{1/2}} \sum_{\substack{1 \le j, l \le p: \\  j \ne l}} J_{jl} (t, \tau),\\
    \label{abstr_Jjl}
    J_{jl} (t, \tau) = t |t| \int_0^{\tau} e^{i \tilde{\tau} A(t)^{1/2}}F(t) P_j G_* P_l e^{-i \tilde{\tau} (t^2 S)^{1/2}P} P \, d \tilde{\tau}.
\end{gather}

We have to estimate only those terms in \eqref{abstr_E*_sumJjl} for which $P_j N P_l \ne 0$.
So, let $j\ne l$, and let $P_j N P_l \ne 0$. Suppose that $c^\circ_{jl}$ is defined by~\eqref{abstr_c_circ_jl},
and $t^{00}_{jl}$ is subject to~\eqref{abstr_t00_jl}.
By~\eqref{abstr_F(t)_F(1)_F(2)}, for $|t|\le t^{00}_{jl}$ the operator~\eqref{abstr_Jjl} can be represented as
\begin{gather}
    \label{abstr_J_jl_sumJ^r_jl}
    J_{jl}(t, \tau) = J^{(1)}_{jl}(t, \tau) + J^{(2)}_{jl}(t, \tau), \\
    \label{abstr_J^r_jl}
    \begin{split}
    J^{(r)}_{jl} (t, \tau) =  t |t| \int_0^{\tau} e^{i \tilde{\tau} A(t)^{1/2}}F^{(r)}_{jl}(t)  P_j G_*  P_l  e^{-i \tilde{\tau} (t^2 S)^{1/2}P} P \, d \tilde{\tau},\  r=1,2.
    \end{split}
\end{gather}

For definiteness, assume that $j<l$. Then $j < i_0+1$ and, by~\eqref{abstr_P_Pj} and~\eqref{abstr_cluster_thresold},
\begin{equation}
    \label{abstr_F^(2)_jl_P_j}
    \|F^{(2)}_{jl}(t) P_j \| = \| ( F^{(2)}_{jl}(t) - (P_{i_0+1} + \ldots + P_p) ) P_j \| \le C_{6,jl}|t|, \quad |t|\le t^{00}_{jl}.
\end{equation}

Combining~\eqref{G*},~\eqref{abstr_J^r_jl},~\eqref{abstr_F^(2)_jl_P_j}, and taking~\eqref{abstr_K_N_estimates}, \eqref{abstr_S_nondegenerated}, \eqref{abstr_C6jl}, and~\eqref{I*est} into account, we obtain
\begin{equation}
    \label{abstr_J^2_jl_estimate}
    \| J^{(2)}_{jl} (t, \tau)\| \le C_{6,jl} |t|^3 | \tau | (2 c_*^{-1/2} \|N\| + \|\mathcal{I}_* (1)\|) \le C_{13,jl}  | \tau | |t|^3, \quad |t| \le t^{00}_{jl},
\end{equation}
where $C_{13,jl} =\beta_{13} \delta^{-1} c_*^{-1/2} \|X_1\|^{8} (c^{\circ}_{jl})^{-2}$.

It remains to consider the term $J_{jl}^{(1)}(t,\tau)$.
Obviously, $P_l e^{-i \tilde{\tau} (t^2 S)^{1/2}P} P = e^{-i \tilde{\tau} |t| \sqrt{\gamma_l^{\circ}} } P_l$.
Recall that (see Subsection~\ref{abstr_cluster_treshold_section})
 the  (nonzero) eigenvalues of the operator $A(t)F_{jl}^{(1)}(t)$ are
$t^2\nu^{(1)}_1 (t), \ldots,  t^2\nu^{(1)}_{k_1} (t); \ldots ; t^2\nu^{(i_0)}_1 (t) , \ldots ,t^2\nu^{(i_0)}_{k_{i_0}} (t)$, and
$\nu^{(r)}_{q} (t) \in [\gamma^{\circ}_1 - c^{\circ}_{jl}/4, \gamma^{\circ}_{i_0} + c^{\circ}_{jl}/4]$.
The corresponding orthonormal eigenvectors are
$\varphi^{(1)}_1 (t), \ldots,\varphi^{(1)}_{k_1}(t); \ldots;  \varphi^{(i_0)}_1 (t), \ldots ,\varphi^{(i_0)}_{k_{i_0}} (t)$.
Then
\begin{equation*}
    e^{i \tilde{\tau} A(t)^{1/2}}F^{(1)}_{jl}(t) = \sum_{r=1}^{i_0} \sum_{q=1}^{k_r} e^{i \tilde{\tau} |t| \sqrt{\nu^{(r)}_q (t)}} (\cdot, \varphi^{(r)}_{q}(t))\varphi^{(r)}_{q}(t).
\end{equation*}
As a result, the operator  $J_{jl}^{(1)}(t,\tau)$ can be written as
\begin{equation}
    \label{abstr_J^1_jl}
    J^{(1)}_{jl} (t, \tau) = t |t| \sum_{r=1}^{i_0} \sum_{q=1}^{k_r} \left(  \int_0^{\tau} e^{i \tilde{\tau} |t| \bigl( \sqrt{ \mathstrut \nu^{(r)}_q (t)}-\sqrt{\mathstrut \gamma^{\circ}_l}\bigr) } d \tilde{\tau} \right) ( P_j G_* P_l \cdot, \varphi^{(r)}_{q}(t))\varphi^{(r)}_{q}(t).
\end{equation}
Calculating the integral in~\eqref{abstr_J^1_jl} and taking into account that
\begin{align*}
    \left|( \nu^{(r)}_q (t))^{1/2} - (\gamma^{\circ}_l)^{1/2}\right| &\ge (2\| X_1 \|)^{-1} | \nu^{(r)}_q (t)-\gamma^{\circ}_l |, \\
    | \nu^{(r)}_q (t)-\gamma^{\circ}_l | &\ge 3 c^{\circ}_{jl}/4, \quad |t| \le t^{00}_{jl},
\end{align*}
we obtain
\begin{multline}
    \label{abstr_J^1_jl_est_a}
    \left| \int_0^{\tau} e^{i \tilde{\tau} |t| \left( (\nu^{(r)}_q (t))^{1/2} - ( \gamma^{\circ}_l)^{1/2}\right) } d \tilde{\tau} \right|
\\
= |t|^{-1} \left| (\nu^{(r)}_q (t))^{1/2} - ( \gamma^{\circ}_l)^{1/2} \right|^{-1} \left| e^{i \tau t \left( (\nu^{(r)}_q (t))^{1/2} - ( \gamma^{\circ}_l)^{1/2} \right) } - 1 \right|
\\
\le 16 \| X_1 \| (3 c^{\circ}_{jl})^{-1}  |t|^{-1}.
\end{multline}
Now, relations~\eqref{abstr_J^1_jl} and~\eqref{abstr_J^1_jl_est_a} together with~\eqref{abstr_K_N_estimates},~\eqref{abstr_S_nondegenerated}, and~\eqref{I*est}
imply that
\begin{equation}
    \label{abstr_J^1_jl_estimate}
    \| J^{(1)}_{jl} (t, \tau) \| \le 16 \| X_1 \| (3 c^{\circ}_{jl})^{-1}  |t| (2 c_*^{-1/2}\|N\| + \|  \mathcal{I}_* (1)\|) \le C_{14,jl} |t|, \quad |t| \le  t^{00}_{jl},
\end{equation}
where
\begin{equation*}
C_{14,jl} = \beta_{14} \delta^{-1/2} c_*^{-1/2} \| X_1 \|^4 (c^{\circ}_{jl})^{-1}.
\end{equation*}
The case where $j>l$ can be treated similarly.

Denote
\begin{equation*}
    \mathcal{Z} = \{ (j, l) \colon 1 \le j, l \le p, \; j \ne l, \; P_j N P_l \ne 0\}.
\end{equation*}
We put
\begin{equation}
    \label{abstr_c^circ}
    c^{\circ} := \min_{(j, l) \in \mathcal{Z}} c^{\circ}_{jl}
\end{equation}
and choose a number $t^{00} \le t^0$ such that
\begin{equation}
    \label{abstr_t00}
    t^{00} \le (4 \beta_2 )^{-1} \delta^{1/2} \|X_1\|^{-3} c^{\circ}.
\end{equation}
We may assume that  $t^{00} \le t^{00}_{jl}$ for all $(j,l)\in {\mathcal Z}$ (see~\eqref{abstr_t00_jl}).

Now, relations~\eqref{abstr_E*_sumJjl},~\eqref{abstr_J_jl_sumJ^r_jl},~\eqref{abstr_J^2_jl_estimate}, and~\eqref{abstr_J^1_jl_estimate},
together with expressions for the constants~$C_{13,jl}$ and~$C_{14,jl}$ yield
\begin{equation}
    \label{abstr_E*_estimate}
    \| E_*(t, \tau) \| \le C_{13} | \tau | |t|^3 + C_{14} |t|, \quad |t| \le t^{00}.
\end{equation}
Here
\begin{equation}
    \label{abstr_C13_C14}
    \begin{aligned}
        C_{13} &= \beta_{13} n^2   \delta^{-1}  c_*^{-1/2} \| X_1 \|^{8}  (c^{\circ})^{-2},
\\
        C_{14} &= \beta_{14} n^2   \delta^{-1/2} c_*^{-1/2} \| X_1 \|^4  ( c^{\circ})^{-1}.
    \end{aligned}
\end{equation}

Finally, combining Theorem~\ref{abstr_cos_enchanced_thrm_1_wo_eps} and relations
~\eqref{abstr_hatE2_E0_E*},~\eqref{abstr_E0},~ \eqref{abstr_E*_estimate}, and denoting
$\check{E}(t,\tau):= E_1(t,\tau) + \widetilde{E}_2(t,\tau) + {E}_*(t,\tau)$, we arrive at the following result.

\begin{theorem}
    \label{abstr_cos_enchanced_thrm_2_wo_eps}
    Suppose that the assumptions of Subsections~\emph{\ref{abstr_X_A_section}} and~\emph{\ref{abstr_nondegenerated_section}}
are satisfied. Suppose that the number $t^{00}\le t^0$ is subject to~\emph{\eqref{abstr_c^circ}},~\emph{\eqref{abstr_t00}}.
Then for $\tau \in \mathbb{R}$ and $|t| \le t^{00}$ we have
  \begin{equation*}
        e^{-i\tau A(t)^{1/2}} P - e^{-i\tau (t^2 S)^{1/2}P} P = E_0 (t, \tau) + \check{E}(t, \tau),
    \end{equation*}
   where the second term satisfies
    \begin{equation*}
        \|\check{E}(t, \tau) \| \le C_{15} |t| + C_{16} |\tau| |t|^3, \quad |t| \le t^{00}.
    \end{equation*}
  The constants $C_{15}$ and $C_{16}$ are given by $C_{15} = 2C_1 + C_{14}$, $C_{16} = C_{11} + C_{13}$,
where $C_1$, $C_{11}$ are defined by~\emph{\eqref{abstr_C1_C2}},~\emph{\eqref{abstr_C11}}, and $C_{13}$, $C_{14}$ are defined
by~\emph{\eqref{abstr_C13_C14}}. The operator $E_0 (t, \tau)$ is given by~\emph{\eqref{abstr_E0}}
and satisfies the estimate
    \begin{equation*}
        \| E_0 (t, \tau) \| \le \frac{1}{2} c_*^{-1/2} (2 \delta)^{-1/2} \|X_1\|^3   |\tau | t^2.
    \end{equation*}
\end{theorem}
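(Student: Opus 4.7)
The plan is to combine the decomposition provided by Theorem~\ref{abstr_cos_enchanced_thrm_1_wo_eps} with a finer analysis of $\widehat{E}_2(t,\tau)$. The identity $\widehat{G} = \tfrac{1}{2} N_0 S^{-1/2} P + G_*$ from \eqref{abstr_hatG} and \eqref{G*} immediately splits $\widehat{E}_2$ into the ``diagonal'' part $E_0$, carrying only $N_0$, and the ``off-diagonal'' remainder $E_*$. The bound on $E_0$ is straightforward: pulling the operator norm under the integral in \eqref{abstr_E0}, using unitarity of the flanking exponentials, and applying \eqref{abstr_K_N_estimates}, \eqref{abstr_S_nondegenerated} yields the stated estimate $\|E_0(t,\tau)\| \le \tfrac{1}{2}c_*^{-1/2}(2\delta)^{-1/2}\|X_1\|^3 |\tau| t^2$.

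The substance lies in controlling $E_*$. By \eqref{PNP} together with the explicit form of $\mathcal{I}_*(1)$ in \eqref{III*}, the operator $G_*$ has no block-diagonal components in the decomposition $\mathfrak{N} = \oplus_j \mathfrak{N}_j$, so $G_* = \sum_{j\ne l} P_j G_* P_l$, and we only need to handle pairs $(j,l)\in\mathcal{Z}$ with $P_j N P_l \ne 0$. For each such pair we appeal to the cluster decomposition \eqref{abstr_F(t)_F(1)_F(2)} on $|t|\le t^{00}_{jl}$, which splits $J_{jl}(t,\tau) = J^{(1)}_{jl} + J^{(2)}_{jl}$ via \eqref{abstr_J^r_jl}. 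For $J^{(2)}_{jl}$, the crucial factor is $F^{(2)}_{jl}(t) P_j$: since $P_j$ projects onto the ``first'' cluster but $F^{(2)}_{jl}(t)$ approximates the projection onto the ``second'', the cluster threshold estimate \eqref{abstr_cluster_thresold} gives $\|F^{(2)}_{jl}(t)P_j\| \le C_{6,jl}|t|$, and combining this with the bounds on $N$, $\mathcal{I}_*(1)$, and $S^{-1/2}$ produces $\|J^{(2)}_{jl}(t,\tau)\| \le C_{13,jl}|\tau||t|^3$.

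The main obstacle is $J^{(1)}_{jl}$, where no smallness is available in the amplitude since $F^{(1)}_{jl}(t) P_j \approx P_j$; here smallness must be extracted from oscillation. Expanding $F^{(1)}_{jl}(t) = \sum_{r\le i_0, q} (\,\cdot\,,\varphi^{(r)}_q(t))\varphi^{(r)}_q(t)$ and using $P_l e^{-i\tilde\tau (t^2 S)^{1/2}P} P = e^{-i\tilde\tau|t|\sqrt{\gamma^\circ_l}} P_l$ reduces the problem to the scalar oscillatory integrals $\int_0^\tau e^{i\tilde\tau|t|\bigl(\sqrt{\nu^{(r)}_q(t)} - \sqrt{\gamma^\circ_l}\bigr)}\,d\tilde\tau$, whose modulus is at most $2\bigl(|t|\,|\sqrt{\nu^{(r)}_q(t)} - \sqrt{\gamma^\circ_l}|\bigr)^{-1}$. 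The decisive quantitative input is the spectral gap: for $|t|\le t^{00}_{jl}$ one has $|\nu^{(r)}_q(t) - \gamma^\circ_l| \ge 3 c^\circ_{jl}/4$, and the elementary inequality $|\sqrt{a}-\sqrt{b}| \ge |a-b|/(2\|X_1\|)$ (valid since $\nu^{(r)}_q,\gamma^\circ_l \le \|X_1\|^2$) converts this into the lower bound on $|\sqrt{\nu^{(r)}_q(t)} - \sqrt{\gamma^\circ_l}|$. The resulting factor $|t|^{-1}$ absorbs one copy of $|t|$ from the prefactor $t|t|$, leaving $\|J^{(1)}_{jl}(t,\tau)\| \le C_{14,jl}|t|$.

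It remains to assemble the pieces. Summing over the finite set $\mathcal{Z}$, replacing each $c^\circ_{jl}$ by $c^\circ = \min_{(j,l)\in \mathcal{Z}} c^\circ_{jl}$ as in \eqref{abstr_c^circ}, and restricting $|t|\le t^{00}$ with $t^{00}$ chosen per \eqref{abstr_t00} (so that $t^{00} \le t^{00}_{jl}$ for every $(j,l)\in\mathcal{Z}$) yields $\|E_*(t,\tau)\| \le C_{13}|\tau||t|^3 + C_{14}|t|$ with $C_{13}$, $C_{14}$ as in \eqref{abstr_C13_C14}. Setting $\check{E}(t,\tau) := E_1(t,\tau) + \widetilde{E}_2(t,\tau) + E_*(t,\tau)$ and adding the estimates \eqref{abstr_E1_estimate}, \eqref{abstr_tildeE2_esimate}, and the bound on $E_*$ gives the stated control on $\check{E}$ with $C_{15} = 2C_1 + C_{14}$ and $C_{16} = C_{11} + C_{13}$, completing the decomposition $e^{-i\tau A(t)^{1/2}} P - e^{-i\tau (t^2 S)^{1/2}P} P = E_0(t,\tau) + \check{E}(t,\tau)$ with the claimed estimates.
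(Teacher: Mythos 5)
Your proposal is correct and follows essentially the same route as the paper: the splitting $\widehat{G}=\tfrac12 N_0S^{-1/2}P+G_*$, the observation that $G_*$ is purely off-diagonal in the $\{\mathfrak{N}_j\}$ decomposition, the cluster splitting $J_{jl}=J^{(1)}_{jl}+J^{(2)}_{jl}$ with the threshold estimate $\|F^{(2)}_{jl}(t)P_j\|\le C_{6,jl}|t|$ for the second piece, and the oscillatory-integral argument with the gap bound $|\nu^{(r)}_q(t)-\gamma^\circ_l|\ge 3c^\circ_{jl}/4$ and the inequality $|\sqrt a-\sqrt b|\ge|a-b|/(2\|X_1\|)$ for the first, are exactly the paper's steps. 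The final assembly over $\mathcal{Z}$ and the definition $\check E=E_1+\widetilde E_2+E_*$ also match.
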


\begin{corollary}
    \label{abstr_coroll_enchd_est_wo_eps_N*=0}
    Suppose that the assumptions of Subsections~\emph{\ref{abstr_X_A_section}} and~\emph{\ref{abstr_nondegenerated_section}} are satisfied. If $N_0=0$, then for $\tau \in \mathbb{R}$ and $|t| \le t^{00}$ we have
    \begin{equation*}
        \| \cos(\tau A(t)^{1/2}) P - \cos(\tau (t^2 S)^{1/2}P) P \| \le C_{15} |t| + C_{16} | \tau | |t|^3.
    \end{equation*}
\end{corollary}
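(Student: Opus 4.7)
The plan is to reduce the corollary to Theorem~\ref{abstr_cos_enchanced_thrm_2_wo_eps}, exploiting the hypothesis $N_0=0$ to eliminate the obstructing term $E_0(t,\tau)$, and then to pass from the complex exponentials to cosines by a symmetrization in $\tau$.

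First, I would observe that, under the standing hypothesis $N_0=0$, the operator $E_0(t,\tau)$ defined by \eqref{abstr_E0} satisfies $E_0(t,\tau)=0$ identically, since $N_0$ appears as a factor on the right-hand side of the integrand. Therefore Theorem~\ref{abstr_cos_enchanced_thrm_2_wo_eps} reduces to the identity
\begin{equation*}
e^{-i\tau A(t)^{1/2}} P - e^{-i\tau (t^2 S)^{1/2}P} P = \check{E}(t,\tau), \quad |t|\le t^{00}, \; \tau\in\mathbb{R},
\end{equation*}
with the remainder bound $\|\check{E}(t,\tau)\|\le C_{15}|t| + C_{16}|\tau||t|^3$.

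Next, I would apply the same identity with $\tau$ replaced by $-\tau$, which is legitimate since Theorem~\ref{abstr_cos_enchanced_thrm_2_wo_eps} is stated for arbitrary $\tau\in\mathbb{R}$, and add the two resulting identities. Since both $A(t)^{1/2}$ and $(t^2 S)^{1/2}P$ are selfadjoint, we have
\begin{equation*}
\tfrac{1}{2}\bigl(e^{-i\tau A(t)^{1/2}} + e^{i\tau A(t)^{1/2}}\bigr) = \cos(\tau A(t)^{1/2}),
\end{equation*}
and similarly for $(t^2 S)^{1/2}P$ on the subspace $\mathfrak{N}$. Hence
\begin{equation*}
\cos(\tau A(t)^{1/2})P - \cos(\tau (t^2 S)^{1/2}P)P = \tfrac{1}{2}\bigl(\check{E}(t,\tau) + \check{E}(t,-\tau)\bigr).
\end{equation*}

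Finally, applying the triangle inequality and using the bound on $\check{E}$ separately for the $\tau$ and $-\tau$ contributions (noting that the right-hand side of the estimate for $\check{E}$ depends on $\tau$ only through $|\tau|$), I obtain
\begin{equation*}
\|\cos(\tau A(t)^{1/2})P - \cos(\tau (t^2 S)^{1/2}P)P\| \le C_{15}|t| + C_{16}|\tau||t|^3,
\end{equation*}
which is the claimed estimate. There is no serious obstacle here: the entire argument is a direct specialization of the preceding theorem together with a one-line symmetrization trick, so the work has been done already in establishing Theorem~\ref{abstr_cos_enchanced_thrm_2_wo_eps} and in isolating the role of $N_0$ within $E_0(t,\tau)$.
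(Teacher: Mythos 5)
Your proposal is correct and follows exactly the route the paper intends: since $N_0$ appears as a factor in the integrand of \eqref{abstr_E0}, the hypothesis $N_0=0$ kills $E_0(t,\tau)$, and Theorem~\ref{abstr_cos_enchanced_thrm_2_wo_eps} then bounds the exponential difference by $C_{15}|t|+C_{16}|\tau||t|^3$; the passage to cosines via the $\tau\mapsto-\tau$ symmetrization is the same (implicit) step the paper uses throughout when converting its exponential decompositions into cosine estimates.
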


\begin{remark}
Let $\mu_l$, $l=1,\dots,n,$  be the coefficients at $t^3$ in the expansions \emph{\eqref{abstr_A(t)_eigenvalues_series}}.
By Remark~\emph{\ref{abstr_N_remark}}, the condition $N_0=0$ is equivalent to the relations $\mu_l=0$ for all $l=1,\dots,n$.
\end{remark}

\section{Approximation of the operator $\cos(\varepsilon^{-1} \tau A(t)^{1/2})$}
\label{abstr_aprox_thrm_section}

\subsection{Approximation of the operator $\cos(\varepsilon^{-1} \tau A(t)^{1/2})$ in the general case}

Let $\varepsilon > 0$. We study the behavior of the operator $\cos(\varepsilon^{-1} \tau A(t)^{1/2})$
for small $\varepsilon$. We multiply this operator by the ``smoothing factor''
\hbox{$\varepsilon^s (t^2 + \varepsilon^2)^{-s/2}P$}, where $s > 0$.
(The term is explained by the fact that in applications to differential operators  this factor turns into the smoothing operator.)
Our goal is to find approximation for the smoothed operator cosine with an error $O (\varepsilon)$ for minimal possible $s$.

Let $|t|\le t^0$. We apply Theorem~\ref{abstr_cos_general_thrm_wo_eps}. By~\eqref{abstr_cos_general_est_wo_eps} (with $\tau$ replaced by $\varepsilon^{-1} \tau$),
\begin{multline*}
    \| \cos (\varepsilon^{-1} \tau A(t)^{1/2}) P - \cos (\varepsilon^{-1} \tau (t^2 S)^{1/2}P) P \| \varepsilon^2 (t^2 + \varepsilon^2)^{-1}  
\\ 
\le \left( 2 C_1 |t| + C_7 \varepsilon^{-1}  |\tau| t^2\right) \varepsilon^2 (t^2 + \varepsilon^2)^{-1} \le (C_1 + C_7 |\tau|) \varepsilon.
\end{multline*}
Here we take $s=2$. We arrive at the following result which has been proved before in \cite[Theorem 2.7]{BSu5}.

\begin{theorem}
    \label{abstr_cos_general_thrm}
    Suppose that the assumptions of Subsections~\emph{\ref{abstr_X_A_section}} and~\emph{\ref{abstr_nondegenerated_section}}
    are satisfied. Then for~$\varepsilon > 0, \tau \in \mathbb{R}$, and $|t| \le t^0$ we have
    \begin{equation}
        \label{abstr_cos_general_est}
        \| \cos (\varepsilon^{-1} \tau A(t)^{1/2}) P - \cos (\varepsilon^{-1} \tau (t^2 S)^{1/2}P) P \| \varepsilon^2 (t^2 + \varepsilon^2)^{-1} \le (C_1 + C_7 |\tau|) \varepsilon.
    \end{equation}
    The number $t^0$ is subject to~\emph{\eqref{abstr_t0_fixation}}, and the constants $C_1$, $C_7$ are given by~\emph{\eqref{abstr_C1_C2}}
    and~\emph{\eqref{abstr_C7}}.
\end{theorem}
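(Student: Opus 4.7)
The plan is to derive the smoothed cosine estimate directly from Theorem \ref{abstr_cos_general_thrm_wo_eps}, which already controls $\|\cos(\tau A(t)^{1/2})P - \cos(\tau (t^2 S)^{1/2}P)P\|$ by $2C_1|t| + C_7|\tau|t^2$ for $|t| \le t^0$. No new spectral machinery is needed; the assertion is essentially a rescaling statement combined with two elementary arithmetic–geometric mean inequalities that redistribute powers of $|t|$ and $\varepsilon$ via the smoothing factor $\varepsilon^2(t^2+\varepsilon^2)^{-1}$.

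First I would apply Theorem \ref{abstr_cos_general_thrm_wo_eps} with $\tau$ replaced by $\varepsilon^{-1}\tau$ to obtain
\begin{equation*}
\bigl\|\cos(\varepsilon^{-1}\tau A(t)^{1/2})P - \cos(\varepsilon^{-1}\tau (t^2 S)^{1/2}P)P\bigr\| \le 2C_1|t| + C_7\varepsilon^{-1}|\tau|\, t^2,\qquad |t|\le t^0.
\end{equation*}
Multiplying both sides by $\varepsilon^2(t^2+\varepsilon^2)^{-1}$ produces two terms. For the first I would use $2|t|\varepsilon \le t^2+\varepsilon^2$, which yields $|t|\varepsilon^2(t^2+\varepsilon^2)^{-1}\le \tfrac{1}{2}\varepsilon$, and hence $2C_1|t|\varepsilon^2(t^2+\varepsilon^2)^{-1}\le C_1\varepsilon$. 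For the second I would use the trivial bound $t^2(t^2+\varepsilon^2)^{-1}\le 1$, which gives $C_7\varepsilon^{-1}|\tau|t^2\cdot\varepsilon^2(t^2+\varepsilon^2)^{-1}\le C_7|\tau|\varepsilon$. Summing yields $(C_1+C_7|\tau|)\varepsilon$, as required.

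There is essentially no obstacle at this step: all of the work was concentrated in Theorem \ref{abstr_cos_general_thrm_wo_eps} (via the threshold approximation $A(t)^{1/2}F(t)\approx |t|S^{1/2}P$ from \eqref{abstr_A_sqrt_threshold_1} and the decomposition $E=E_1+E_2$ with the Duhamel representation \eqref{abstr_Sigma'}). The content of the present theorem is purely a dimensional balancing: the smoothing factor $\varepsilon^2(t^2+\varepsilon^2)^{-1}$ trades one power of $|t|$ for one power of $\varepsilon$, which is exactly what allows the $O(|t|)$ projection error and the $O(\varepsilon^{-1}|\tau|t^2)$ Duhamel error to be absorbed into a uniform $O(\varepsilon)$ bound on the whole interval $|t|\le t^0$. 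The fact that $s=2$ is the power appearing in the smoothing factor is dictated by the quadratic $t$-dependence of this Duhamel term, and this is precisely the feature that, at the level of differential operators, will translate into the $H^2\to L_2$ norm in \eqref{intro_cos_est}.
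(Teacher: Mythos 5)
Your proposal is correct and coincides with the paper's own argument: the paper likewise substitutes $\varepsilon^{-1}\tau$ into estimate \eqref{abstr_cos_general_est_wo_eps} and multiplies by $\varepsilon^2(t^2+\varepsilon^2)^{-1}$, absorbing the two terms via exactly the inequalities $2|t|\varepsilon\le t^2+\varepsilon^2$ and $t^2(t^2+\varepsilon^2)^{-1}\le 1$. No gaps.
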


\subsection{ Refinement of approximation under the additional assumptions}

Corollary~\ref{abstr_coroll_enchd_est_wo_eps_N=0} allows us to improve the result of Theorem~\ref{abstr_cos_general_thrm} in the case where~$N = 0$.

\begin{theorem}
    \label{abstr_cos_enchanced_thrm_1}
    Suppose that the assumptions of Theorem~\emph{\ref{abstr_cos_general_thrm}} are satisfied.
Suppose that the operator $N$ defined in~\emph{\eqref{abstr_F1_K0_N_invar}} is equal to zero: $N = 0$.
Then for~$\varepsilon > 0, \tau \in \mathbb{R}$, and~$|t| \le t^0$ we have
    \begin{equation}
        \label{abstr_cos_enchanced_est_1}
        \begin{aligned}
        \|& \cos(\varepsilon^{-1} \tau A(t)^{1/2}) P - \cos(\varepsilon^{-1} \tau (t^2 S)^{1/2}P) P \| \varepsilon^{3/2} (t^2 + \varepsilon^2)^{-3/4} \\
        &\le ( C'_1 + C_{11} | \tau | ) \varepsilon.
        \end{aligned}
    \end{equation}
Here $C'_1 = \max\{2, 2 C_1\}$, $C_1$ and $C_{11}$ are defined by~\emph{\eqref{abstr_C1_C2}} and~\emph{\eqref{abstr_C11}}.
\end{theorem}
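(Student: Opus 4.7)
The plan is to deduce the statement from Corollary~\ref{abstr_coroll_enchd_est_wo_eps_N=0} applied with $\tau$ replaced by $\varepsilon^{-1}\tau$, which yields the pointwise-in-$t$ estimate
\begin{equation*}
\Psi(t):=\|\cos(\varepsilon^{-1}\tau A(t)^{1/2})P-\cos(\varepsilon^{-1}\tau (t^2S)^{1/2}P)P\|\le 2C_1|t|+C_{11}\varepsilon^{-1}|\tau||t|^3,
\end{equation*}
together with the trivial estimate $\Psi(t)\le 2$. Both will be multiplied by the smoothing factor $\Theta_\varepsilon(t):=\varepsilon^{3/2}(t^2+\varepsilon^2)^{-3/4}$, and the resulting quantities will be controlled via the rescaled variable $u:=|t|/\varepsilon$, for which $\Theta_\varepsilon(t)=(1+u^2)^{-3/4}$.

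The case $\varepsilon\ge 1$ is immediate: $\Theta_\varepsilon(t)\le 1$ and $\Psi(t)\le 2\le 2\varepsilon\le C'_1\varepsilon$. For $\varepsilon<1$ the linear-in-$|t|$ contribution is handled uniformly by a short calculus exercise: the function $f_1(u):=u(1+u^2)^{-3/4}$ attains its maximum $\sqrt{2}\,3^{-3/4}<1$ at $u=\sqrt{2}$ on $[0,\infty)$, so
\begin{equation*}
2C_1|t|\,\Theta_\varepsilon(t)=2C_1\varepsilon\,f_1(u)\le 2C_1\varepsilon,
\end{equation*}
which is absorbed into $C'_1\varepsilon$.

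The main difficulty lies in the cubic term, since
\begin{equation*}
C_{11}\varepsilon^{-1}|\tau||t|^3\,\Theta_\varepsilon(t)=C_{11}|\tau|\varepsilon^2\,u^3(1+u^2)^{-3/4},
\end{equation*}
and the factor $u^3(1+u^2)^{-3/4}\sim u^{3/2}$ is unbounded as $u\to\infty$. I would resolve this by splitting the range of $t$ at the threshold $u_0:=\varepsilon^{-2/3}$. For $u\le u_0$ I use the corollary bound: on $[0,1]$ one has $u^3(1+u^2)^{-3/4}\le 1$, and on $[1,u_0]$ the inequality $(1+u^2)^{-3/4}\le u^{-3/2}$ gives $u^3(1+u^2)^{-3/4}\le u^{3/2}\le u_0^{3/2}=\varepsilon^{-1}$; in either subcase the cubic contribution is bounded by $C_{11}|\tau|\varepsilon$. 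For $u>u_0$ I switch to the trivial bound:
\begin{equation*}
\Psi(t)\,\Theta_\varepsilon(t)\le 2(1+u^2)^{-3/4}\le 2u^{-3/2}<2u_0^{-3/2}=2\varepsilon,
\end{equation*}
which is again $\le C'_1\varepsilon$ because $C'_1\ge 2$.

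Assembling the two regimes yields $\Psi(t)\,\Theta_\varepsilon(t)\le\max\{2C_1,2\}\varepsilon+C_{11}|\tau|\varepsilon=(C'_1+C_{11}|\tau|)\varepsilon$, with the constant~$2$ inside $C'_1=\max\{2,2C_1\}$ appearing precisely from the high-$u$ trivial regime. The key obstacle is the unboundedness of $u^3(1+u^2)^{-3/4}$; the threshold $u_0=\varepsilon^{-2/3}$ is optimal, because at this value the supremum over $u\le u_0$ of the cubic contribution and the trivial bound times $\Theta_\varepsilon$ for $u\ge u_0$ are both exactly of order $\varepsilon$.
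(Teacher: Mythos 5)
Your proof is correct and follows essentially the same route as the paper's: the paper likewise deduces the estimate from Corollary~\ref{abstr_coroll_enchd_est_wo_eps_N=0} (with $\tau$ replaced by $\varepsilon^{-1}\tau$) by splitting at $|t|=\sqrt[3]{\varepsilon}$ — exactly your threshold $u_0=\varepsilon^{-2/3}$ in the variable $u=|t|/\varepsilon$ — using the trivial bound $2$ together with $\varepsilon^{3/2}(t^2+\varepsilon^2)^{-3/4}\le\varepsilon$ for larger $|t|$ and the refined estimate for smaller $|t|$. Your rescaling to $u$, the extra sub-split at $u=1$, and the separate treatment of $\varepsilon\ge 1$ are only cosmetic variations.
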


\begin{proof} Note that for $|t| \ge \sqrt[3]{\varepsilon}$ we have $\varepsilon^{3/2} (t^2 + \varepsilon^2)^{-3/4} \le \varepsilon$,
whence the left-hand side of~(\ref{abstr_cos_enchanced_est_1}) does not exceed $2 \varepsilon$.
Thus, we may assume that~$|t| < \sqrt[3]{\varepsilon}$. Using~\eqref{abstr_cos_enchanced_est_1_wo_eps} with $\tau$ replaced by
$\varepsilon^{-1} \tau$, for $|t| < \sqrt[3]{\varepsilon}$ we obtain
\begin{multline*}
    \| \cos(\varepsilon^{-1} \tau A(t)^{1/2}) P - \cos(\varepsilon^{-1} \tau (t^2 S)^{1/2}P) P \| \varepsilon^{3/2} (t^2 + \varepsilon^2)^{-3/4}  \\ 
    \le \left( 2 C_1 |t| + C_{11} \varepsilon^{-1}  | \tau | |t|^3 \right) \varepsilon^{3/2} (t^2 + \varepsilon^2)^{-3/4}
    \\
    \le 2 C_1 \varepsilon + C_{11} |\tau| \varepsilon^{1/2} |t|^{3/2} \le (2 C_1  + C_{11} |\tau|) \varepsilon.
\end{multline*}
The required statement follows.
\end{proof}

Similarly, Corollary~\ref{abstr_coroll_enchd_est_wo_eps_N*=0} yields the following result.

\begin{theorem}
    \label{abstr_cos_enchanced_thrm_2}
    Suppose that the assumptions of Subsections~\emph{\ref{abstr_X_A_section}} and~\emph{\ref{abstr_nondegenerated_section}}
    are satisfied. Suppose that the operator $N_0$ defined in~\emph{\eqref{abstr_N_0_N_*}} is equal to zero: $N_0 = 0$.
    Then for $\varepsilon > 0, \tau \in \mathbb{R}$, and $|t| \le t^{00}$ we have
    \begin{equation*}
    \begin{aligned}
        \|& \cos(\varepsilon^{-1} \tau A(t)^{1/2}) P - \cos(\varepsilon^{-1} \tau (t^2 S)^{1/2}P) P \| \varepsilon^{3/2} (t^2 + \varepsilon^2)^{-3/4} \\
        &\le (C'_{15} + C_{16} | \tau |) \varepsilon.
    \end{aligned}
    \end{equation*}
    Here the number $t^{00}\le t^0$ is subject to~\emph{\eqref{abstr_t00}}, the constant~$C'_{15}$ is given by~$C'_{15} = \max \{2, C_{15}\}$, and $C_{15}$, $C_{16}$ are as in Theorem~\emph{\ref{abstr_cos_enchanced_thrm_2_wo_eps}}.
\end{theorem}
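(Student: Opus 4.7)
The plan is to mimic the proof of Theorem~\ref{abstr_cos_enchanced_thrm_1} step by step, with Corollary~\ref{abstr_coroll_enchd_est_wo_eps_N=0} replaced by Corollary~\ref{abstr_coroll_enchd_est_wo_eps_N*=0}. Since the latter corollary is valid only for $|t|\le t^{00}$, the estimate of Theorem~\ref{abstr_cos_enchanced_thrm_2} must be proved on this smaller interval, which is exactly what the statement claims. The basic idea is that under the assumption $N_0=0$ we have the refined pointwise-in-$t$ estimate $C_{15}|t|+C_{16}|\tau||t|^3$ for the unsmoothed cosine difference (after the scaling $\tau\mapsto \varepsilon^{-1}\tau$), and the smoothing factor $\varepsilon^{3/2}(t^2+\varepsilon^2)^{-3/4}$ is tuned precisely so that the $|t|$ and $|t|^3$ contributions both absorb into $O(\varepsilon)$.

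I would split the range of $t$ into two regimes. In the first regime $|t|\ge \varepsilon^{1/3}$, I simply note that $(t^2+\varepsilon^2)^{3/4}\ge |t|^{3/2}\ge \varepsilon^{1/2}$, hence $\varepsilon^{3/2}(t^2+\varepsilon^2)^{-3/4}\le \varepsilon$; since each cosine operator has norm at most one, the left-hand side of the claimed inequality is bounded by $2\varepsilon$, which is already controlled by $C'_{15}\varepsilon$. In the second regime $|t|<\varepsilon^{1/3}$, I apply Corollary~\ref{abstr_coroll_enchd_est_wo_eps_N*=0} with $\tau$ replaced by $\varepsilon^{-1}\tau$, obtaining
\begin{equation*}
\|\cos(\varepsilon^{-1}\tau A(t)^{1/2})P - \cos(\varepsilon^{-1}\tau (t^2S)^{1/2}P)P\|\le C_{15}|t|+C_{16}\varepsilon^{-1}|\tau||t|^3.
\end{equation*}

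Multiplying by the smoothing factor reduces matters to bounding two elementary expressions. For the second term, I use $(t^2+\varepsilon^2)^{3/4}\ge |t|^{3/2}$, so that
\begin{equation*}
C_{16}\varepsilon^{-1}|\tau||t|^3 \cdot \varepsilon^{3/2}(t^2+\varepsilon^2)^{-3/4}\le C_{16}|\tau|\varepsilon^{1/2}|t|^{3/2}\le C_{16}|\tau|\varepsilon,
\end{equation*}
where the last step uses $|t|^{3/2}<\varepsilon^{1/2}$ from $|t|<\varepsilon^{1/3}$. For the first term, the elementary inequality $(t^2+\varepsilon^2)^{3/4}\ge \max(|t|,\varepsilon)^{3/2}$ yields
\begin{equation*}
C_{15}|t|\cdot \varepsilon^{3/2}(t^2+\varepsilon^2)^{-3/4}\le C_{15}|t|\cdot \bigl(\varepsilon/\max(|t|,\varepsilon)\bigr)^{3/2}\le C_{15}\varepsilon,
\end{equation*}
as seen by considering the subcases $|t|\le \varepsilon$ (where the factor is bounded by $1$ and $|t|\le\varepsilon$) and $|t|>\varepsilon$ (where the factor is $(\varepsilon/|t|)^{3/2}$ and $|t|(\varepsilon/|t|)^{3/2}=\varepsilon^{3/2}|t|^{-1/2}\le \varepsilon$). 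Summing the two bounds gives $(C_{15}+C_{16}|\tau|)\varepsilon$, and combining with the first regime (which contributes $2\varepsilon$) yields $(C'_{15}+C_{16}|\tau|)\varepsilon$ with $C'_{15}=\max\{2,C_{15}\}$, as required.

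There is no real obstacle here: the argument is entirely parallel to the proof of Theorem~\ref{abstr_cos_enchanced_thrm_1}, since all of the heavy lifting has been packaged into Corollary~\ref{abstr_coroll_enchd_est_wo_eps_N*=0}. The only mild care concerns the range of validity: because the cluster-based threshold analysis behind Corollary~\ref{abstr_coroll_enchd_est_wo_eps_N*=0} requires $|t|\le t^{00}$ rather than $|t|\le t^0$, the final estimate is asserted only on $|t|\le t^{00}$. The constants $C_{15},C_{16}$ are inherited verbatim from Theorem~\ref{abstr_cos_enchanced_thrm_2_wo_eps}, and the replacement $C_{15}\mapsto C'_{15}=\max\{2,C_{15}\}$ is forced solely by the first regime $|t|\ge \varepsilon^{1/3}$.
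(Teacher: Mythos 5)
Your proof is correct and follows essentially the same route as the paper: the paper obtains this theorem by the same two-regime argument used for Theorem~\ref{abstr_cos_enchanced_thrm_1} (trivial bound for $|t|\ge\varepsilon^{1/3}$, Corollary~\ref{abstr_coroll_enchd_est_wo_eps_N*=0} with $\tau\mapsto\varepsilon^{-1}\tau$ for $|t|<\varepsilon^{1/3}$), with the restriction to $|t|\le t^{00}$ inherited from that corollary exactly as you note.
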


\subsection{Sharpness of the result in the general case}

Now we show that the result of Theorem~\ref{abstr_cos_general_thrm} is sharp in the general case.
Namely, if~$N_0 \ne 0$, the exponent $s$ in the smoothing factor can not be taken smaller than 2.

\begin{theorem}
    \label{abstr_s<2_general_thrm}
    Suppose that the assumptions of Subsections~\emph{\ref{abstr_X_A_section}} and~\emph{\ref{abstr_nondegenerated_section}}
    are satisfied. Let $N_0 \ne 0$. Let $\tau \ne 0$ and $0 \le s < 2$. Then there does not exist a constant $C(\tau)>0$
    such that the estimate
        \begin{equation}
        \label{abstr_s<2_est_imp}
        \| \cos(\varepsilon^{-1} \tau A(t)^{1/2}) P - \cos(\varepsilon^{-1} \tau (t^2 S)^{1/2}P) P \| \varepsilon^{s} (t^2 + \varepsilon^2)^{-s/2} \le C(\tau) \varepsilon
    \end{equation}
    holds for all sufficiently small $|t|$ and $\varepsilon > 0$.
\end{theorem}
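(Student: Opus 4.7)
The plan is: since $N_0 \neq 0$, Remark~\ref{abstr_N_remark}($2^\circ$) guarantees that some diagonal entry $\mu_l$ of $N_0$ is nonzero; fix such an index $l$. Without loss of generality I may assume $\tau > 0$ (since cosine is even). My strategy is to test the operator difference on the embryo $\omega_l \in \mathfrak{N}$ and bound the operator norm from below by the matrix element with the analytic eigenvector branch $\varphi_l(t)$. Since $A(t)\varphi_l(t) = \lambda_l(t)\varphi_l(t)$, $S\omega_l = \gamma_l\omega_l$, and the cosines are self-adjoint, a direct computation gives
\begin{equation*}
\bigl(\bigl[\cos(\varepsilon^{-1}\tau A(t)^{1/2}) - \cos(\varepsilon^{-1}\tau (t^2 S)^{1/2}P)\bigr]\omega_l,\, \varphi_l(t)\bigr) = \bigl[\cos a(t,\varepsilon) - \cos b(t,\varepsilon)\bigr]\,(\omega_l, \varphi_l(t)),
\end{equation*}
with $a := \varepsilon^{-1}\tau\sqrt{\lambda_l(t)}$ and $b := \varepsilon^{-1}\tau |t|\sqrt{\gamma_l}$. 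Because $\varphi_l(t) = \omega_l + O(t)$, one has $|(\omega_l, \varphi_l(t))| \ge 1/2$ for small $|t|$, so the operator norm on the left-hand side of~\eqref{abstr_s<2_est_imp} is at least $\tfrac{1}{2}|\cos a - \cos b|$ times the smoothing factor.

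The next step is to exploit the expansion $\lambda_l(t) = \gamma_l t^2 + \mu_l t^3 + O(t^4)$. For $t > 0$ this yields
\begin{equation*}
\sqrt{\lambda_l(t)} - t\sqrt{\gamma_l} = \frac{\mu_l}{2\sqrt{\gamma_l}}\, t^2 + O(t^3),
\end{equation*}
so $a - b = \tfrac{\tau\mu_l}{2\varepsilon\sqrt{\gamma_l}}\, t^2 + O(\varepsilon^{-1} t^3)$. I will then impose the parabolic scaling $t = c\sqrt{\varepsilon}$ with $c>0$ selected so that $x_0 := \tfrac{\tau\mu_l c^2}{2\sqrt{\gamma_l}}$ satisfies $\sin x_0 \ne 0$; this is possible because $\tau\mu_l \ne 0$, and in fact $c$ can take any positive value except a discrete set. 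Under this scaling, $a - b = x_0 + O(\sqrt{\varepsilon})$ as $\varepsilon \to 0$.

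To prevent $\cos a - \cos b$ from decaying, I will pass to a subsequence $\varepsilon_n \to 0$ chosen so that $b_n = \tau c\sqrt{\gamma_l}/\sqrt{\varepsilon_n} = \pi/2 + 2\pi n$, namely $\varepsilon_n = (\tau c\sqrt{\gamma_l})^2/(\pi/2 + 2\pi n)^2$. Along this sequence $\cos b_n = 0$ and $\sin b_n = 1$, so
\begin{equation*}
\cos a_n - \cos b_n = \cos\bigl(b_n + (a_n - b_n)\bigr) - \cos b_n = -\sin(a_n - b_n) \longrightarrow -\sin x_0 \ne 0.
\end{equation*}
At the same time the smoothing factor at $t_n = c\sqrt{\varepsilon_n}$ behaves as $\varepsilon_n^s(c^2\varepsilon_n + \varepsilon_n^2)^{-s/2} \sim c^{-s}\varepsilon_n^{s/2}$, so the left-hand side of~\eqref{abstr_s<2_est_imp} is asymptotically of order $\varepsilon_n^{s/2}$. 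For $s<2$ this strictly dominates $\varepsilon_n$, contradicting the existence of a finite $C(\tau)$. The delicate part of the argument is coordinating the two-parameter limit: the scaling $t_n \sim \sqrt{\varepsilon_n}$ is what freezes the \emph{difference} $a_n - b_n$ at the level $x_0$, while the subsequential choice of $\varepsilon_n$ is what freezes the rapidly oscillating \emph{phase} $b_n$ modulo $2\pi$. Both constraints must be met at the same time, and they are compatible because $c$ (set by $\tau$, $\mu_l$, $\gamma_l$) and the integer index $n$ are chosen independently.
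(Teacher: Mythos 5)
Your proof is correct and follows essentially the same strategy as the paper's: both reduce the claim to the scalar oscillation of $\cos(\varepsilon^{-1}\tau\sqrt{\lambda_l(t)})-\cos(\varepsilon^{-1}\tau|t|\sqrt{\gamma_l})$, impose the parabolic scaling $t\sim\sqrt{\varepsilon}$ to freeze the phase \emph{difference} at a nonzero level produced by $\mu_l\neq0$, and then pass to a subsequence $\varepsilon_n\to0$ to freeze the rapidly oscillating phase itself modulo $2\pi$. Your reduction via the matrix element $(T\omega_l,\varphi_l(t))$, using selfadjointness and $|(\omega_l,\varphi_l(t))|\ge 1/2$, is a slightly more direct shortcut than the paper's chain of operator approximations (splitting off the $F(t)^{\perp}$ part and replacing $\varphi_l(t)$ by $\omega_l$ in the spectral resolution), and your phase normalization ($b_n\equiv\pi/2$ with increment $x_0$, $\sin x_0\neq0$, versus the paper's $b_k\equiv0$ with increment $\pi$) is an equivalent variant; both arguments are sound.
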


\begin{proof}
It suffices to assume that $1 \le s < 2$.
We start with a preliminary remark.
Since $F(t)^{\perp}P = (P - F(t))P$, from~(\ref{abstr_F(t)_threshold_1}) it follows that
\begin{multline}
    \label{abstr_s<2_cos_Fperp_est}
    \begin{aligned}
    \| \cos(\varepsilon^{-1} \tau A(t)^{1/2}) F(t)^{\perp} P \| \varepsilon (t^2 + \varepsilon^2)^{-1/2} \le C_1 |t| \varepsilon (t^2 + \varepsilon^2)^{-1/2} \le C_1 \varepsilon,
    \cr
    |t| \le t^0.
    \end{aligned}
\end{multline}

Fix $0 \ne \tau \in \mathbb{R}$.
We prove by contradiction. Suppose that for some $1\le s<2$ there exists a constant $C(\tau)>0$ such that~\eqref{abstr_s<2_est_imp}
 is valid for all sufficiently small~$|t|$ and~$\varepsilon$.
 By~\eqref{abstr_s<2_cos_Fperp_est}, this assumption is equivalent to
the existence of a constant $\widetilde{C}(\tau)$ such that
\begin{equation}
    \label{abstr_s<2_est_a}
    \left\| \left( \cos(\varepsilon^{-1} \tau A(t)^{1/2}) F(t) - \cos(\varepsilon^{-1} \tau (t^2 S)^{1/2}P) P\right)  P \right\| \varepsilon^{s} (t^2 + \varepsilon^2)^{-s/2} \le \widetilde{C} (\tau) \varepsilon
\end{equation}
for all sufficiently small~$|t|$ and~$\varepsilon$.

Recall that for $|t| \le t_*$ the power series expansions~\eqref{abstr_A(t)_eigenvalues_series} and~\eqref{abstr_A(t)_eigenvectors_series} are convergent.
Together with~\eqref{abstr_gamma_ge_c_*} and the Taylor formula for
   $\sqrt{1+x}, \; |x| < 1,$ this implies that for some $0 < t_{**} \le t_*$ we have the following convergent power series expansions:
   \begin{equation}
    \label{abstr_sqrtA(t)_eigenvalues_series}
    \sqrt{\lambda_l(t)} = \sqrt{\gamma_l} |t| \left( 1 + \frac{\mu_l}{2 \gamma_l} t + \ldots\right) , \qquad l = 1, \ldots, n, \quad  |t| \le t_{**}.
\end{equation}

For $ |t| \le t^0$ we have
\begin{equation}
\label{s<2_a}
    \cos(\varepsilon^{-1} \tau A(t)^{1/2})F (t) = \sum_{l=1}^{n} \cos(\varepsilon^{-1} \tau \sqrt{\lambda_l(t)})(\cdot,\varphi_l(t))\varphi_l(t).
\end{equation}
From the convergence of the power series expansions~\eqref{abstr_A(t)_eigenvectors_series} it follows that
\begin{equation}
    \label{abstr_s<2_phi_perturb_est}
    \| \varphi_l(t) - \omega_l \| \le c_1 |t|, \quad |t| \le t_*, \quad l = 1,\ldots,n.
\end{equation}
By \eqref{abstr_s<2_est_a}, \eqref{s<2_a}, and \eqref{abstr_s<2_phi_perturb_est}, there exists a constant
$\widehat{C} (\tau)$ such that
\begin{equation}
    \label{abstr_s<2_est_b}
    \left\| \sum_{l=1}^{n} \left( \cos(\varepsilon^{-1} \tau \sqrt{\mathstrut \lambda_l(t)}) - \cos(\varepsilon^{-1} \tau |t| \sqrt{\mathstrut \gamma_l}) \right) (\cdot,\omega_l)\omega_l  \right\| \varepsilon^{s} (t^2 + \varepsilon^2)^{-s/2} \le \widehat{C} (\tau) \varepsilon
\end{equation}
for all sufficiently small $|t|$ and $\varepsilon$.

The condition $N_0 \ne 0$ means that $\mu_j \ne 0$ at least for one $j$.
Applying the operator under the norm sign in~\eqref{abstr_s<2_est_b} to $\omega_j$, we obtain
\begin{equation}
    \label{abstr_s<2_est_c}
    \left| \cos \bigl(\varepsilon^{-1} \tau \sqrt{\mathstrut \smash{\lambda_j(t)}}\bigr)
    - \cos\bigl(\varepsilon^{-1} \tau |t| \sqrt{\mathstrut \smash{\gamma_j}}\bigr) \right| \varepsilon^{s} (t^2 + \varepsilon^2)^{-s/2} \le \widehat{C} (\tau) \varepsilon
\end{equation}
for all sufficiently small $|t|$ and $\varepsilon$.

Now, for a fixed $\tau \ne 0$, we put
\begin{equation*}
t = t(\varepsilon) = (2 \pi)^{1/2} \left| \frac{\sqrt{\gamma_j}}{\mu_j \tau}\right|^{1/2} \varepsilon^{1/2} = c \varepsilon^{1/2}.
\end{equation*}
If $\varepsilon$ is so small that $t(\varepsilon) \le t_{**}$,
from~\eqref{abstr_sqrtA(t)_eigenvalues_series} it follows that
\begin{multline}
\label{abstr_s<2_|cos-cos|}
\left| \cos \bigl(\varepsilon^{-1} \tau \sqrt{\mathstrut \smash{\lambda_j(t(\varepsilon))}}\bigr) -
\cos\bigl(\varepsilon^{-1} \tau |t(\varepsilon)| \sqrt{\mathstrut \smash{\gamma_j}}\bigr) \right|
\\
=
\left| \cos \bigl(\pi \, \sgn \,\mu_j  + \alpha_j \varepsilon^{-1/2} + O(\varepsilon^{1/2}) \bigr) -
\cos  \bigl( \alpha_j \varepsilon^{-1/2}\bigr) \right|
\\
=
\left| \cos \bigl( \alpha_j \varepsilon^{-1/2} + O(\varepsilon^{1/2})\bigr) +
\cos\bigl(\alpha_j \varepsilon^{-1/2}\bigr) \right|,
\end{multline}
where
$\alpha_j := (2 \pi)^{1/2} \gamma_j^{3/4} |\tau|^{1/2} |\mu_j|^{-1/2}$.

Next, consider the sequence $\{\varepsilon_k\}_{k=1}^{\infty}, \; \varepsilon_k \to 0$, given by
\begin{equation*}
\varepsilon_k := \frac{1}{2\pi k^2} \frac{\gamma_j^{3/2} |\tau|}{|\mu_j| }.
\end{equation*}
Obviously, for $\varepsilon =\varepsilon_k$ with large $k$ expression~\eqref{abstr_s<2_|cos-cos|} is separated from zero.
Combining this with~\eqref{abstr_s<2_est_c}, we see
that the sequence $\varepsilon_k^{s/2-1}(c^2 +  \varepsilon_k)^{-s/2}$ is uniformly bounded for large $k$.
But this is not true provided that $s < 2$.
This contradiction completes the proof.
\end{proof}

\section{Approximation of the sandwiched operator cosine}
\label{abstr_sandwiched_section}

\subsection{The operator family $A(t) = M^* \widehat{A} (t) M$}
\label{abstr_A_and_Ahat_section}

Let $\widehat{\mathfrak{H}}$ be yet another separable Hilbert space.
Let $\widehat{X} (t) = \widehat{X}_0 + t \widehat{X}_1 \colon \widehat{\mathfrak{H}} \to \mathfrak{H}_* $~be a family of operators of the same form as  $X(t)$, and suppose that~$\widehat{X} (t)$ satisfies the assumptions of Subsection~\ref{abstr_X_A_section}.
Let $M \colon \mathfrak{H} \to \widehat{\mathfrak{H}}$~be an isomorphism. Suppose that
\begin{equation*}
M \Dom X_0 = \Dom \widehat{X}_0, \; X(t) = \widehat{X} (t) M,
\end{equation*}
and then also $X_0 = \widehat{X}_0 M$, $X_1 = \widehat{X}_1 M$.
In $\widehat{\mathfrak{H}}$ we consider the family of selfadjoint operators $\widehat{A} (t) = \widehat{X} (t)^* \widehat{X} (t)$.
Then, obviously,
\begin{equation}
\label{abstr_A_and_Ahat}
A(t) = M^* \widehat{A} (t) M.
\end{equation}
In what follows, all the objects corresponding to the family $\widehat{A}(t)$ are marked by~\textquotedblleft$\, \widehat{\phantom{\_}} \,$\textquotedblright.  Note that $\widehat{\mathfrak{N}} = M \mathfrak{N}$, $\widehat{n} = n$, $\widehat{\mathfrak{N}}_* =  \mathfrak{N}_*$, $\widehat{n}_* = n_*$, and $\widehat{P}_* = P_*$.

In $\widehat{\mathfrak{H}}$ we consider the positive definite operator
\begin{equation*}
Q := (M M^*)^{-1} \colon \widehat{\mathfrak{H}} \to \widehat{\mathfrak{H}}.
\end{equation*}
Let $Q_{\widehat{\mathfrak{N}}}$~be the block of  $Q$ in the subspace~$\widehat{\mathfrak{N}}$, i.~e.,
\begin{equation*}
Q_{\widehat{\mathfrak{N}}} = \widehat{P} Q|_{\widehat{\mathfrak{N}}} \colon \widehat{\mathfrak{N}} \to \widehat{\mathfrak{N}}.
\end{equation*}
Obviously, $Q_{\widehat{\mathfrak{N}}}$~is an isomorphism in~$\widehat{\mathfrak{N}}$.

According to \cite[Proposition~1.2]{Su2}, the orthogonal projection $P$ of~$\mathfrak{H}$ onto $\mathfrak{N}$ and the orthogonal
projection~$\widehat{P}$ of $\widehat{\mathfrak{H}}$ onto $\widehat{\mathfrak{N}}$ satisfy the following relation
\begin{equation}
\label{abstr_P_and_P_hat_relation}
P = M^{-1} (Q_{\widehat{\mathfrak{N}}})^{-1} \widehat{P} (M^*)^{-1}.
\end{equation}
Let $\widehat{S} \colon \widehat{\mathfrak{N}} \to \widehat{\mathfrak{N}}$~be the spectral germ of~$\widehat{A} (t)$ at $t = 0$,
and let~$S$~be the germ of $A (t)$. According to \cite[Chapter~1, Subsection~1.5]{BSu1}, we have
\begin{equation}
\label{abstr_S_and_S_hat_relation}
S = P M^* \widehat{S} M |_\mathfrak{N}.
\end{equation}

\subsection{The operators $\widehat{Z}_Q$ and $\widehat{N}_Q$}
\label{abstr_hatZ_Q_and_hatN_Q_section}

For the operator family $\widehat{A} (t)$ we introduce the operator~$\widehat{Z}_Q$ acting in~$\widehat{\mathfrak{H}}$
and taking an element~$\widehat{u} \in \widehat{\mathfrak{H}}$ to the solution $\widehat{\phi}_Q$ of the problem
\begin{equation*}
\widehat{X}^*_0 (\widehat{X}_0 \widehat{\phi}_Q + \widehat{X}_1 \widehat{\omega}) = 0, \quad Q \widehat{\phi}_Q \perp \widehat{\mathfrak{N}},
\end{equation*}
where $\widehat{\omega} = \widehat{P} \widehat{u}$. As shown in~\cite[Section~6]{BSu2},
the operator $Z$ for $A(t)$ and the operator~$\widehat{Z}_Q$ introduced above
satisfy~$\widehat{Z}_Q =M Z M^{-1} \widehat{P}$.  Next, we put
\begin{equation}
\label{abstr_hatN_Q}
\widehat{N}_Q := \widehat{Z}_Q^* \widehat{X}_1^* \widehat{R} \widehat{P} + (\widehat{R} \widehat{P})^* \widehat{X}_1  \widehat{Z}_Q.
\end{equation}
According to~\cite[Section~6]{BSu2}, the operator~$N$ for $A(t)$ and the operator~\eqref{abstr_hatN_Q} satisfy
\begin{equation}
\label{abstr_N_and_hatN_Q_relat}
\widehat{N}_Q = \widehat{P} (M^*)^{-1} N M^{-1} \widehat{P}.
\end{equation}
Recall that $N=N_0 + N_*$ and introduce the operators
\begin{equation}
\label{abstr_N0*_and_hatN0*_Q_relat}
\widehat{N}_{0,Q} = \widehat{P} (M^*)^{-1} N_0 M^{-1} \widehat{P}, \qquad \widehat{N}_{*,Q} = \widehat{P} (M^*)^{-1} N_* M^{-1} \widehat{P}.
\end{equation}
Then $\widehat{N}_Q = \widehat{N}_{0,Q} + \widehat{N}_{*,Q}$.

We need the following lemma proved in~\cite[Lemma~5.1]{Su4}.

\begin{lemma}[\cite{Su4}]
    \label{abstr_N_and_Nhat_lemma}
    Suppose that the assumptions of Subsection~\emph{\ref{abstr_A_and_Ahat_section}} are satisfied.
    Suppose that the operators $N$ and $N_0$ are defined by~\emph{\eqref{abstr_F1_K0_N_invar}} and~\emph{\eqref{abstr_N_0_N_*}},
    and the operators  $\widehat{N}_Q$ and $\widehat{N}_{0,Q}$ are defined in Subsection~\emph{\ref{abstr_hatZ_Q_and_hatN_Q_section}}.
    Then relation~$N = 0$ is equivalent to the relation~$\widehat{N}_Q = 0$.
    The relation~$N_0 = 0$ is equivalent to the relation~$\widehat{N}_{0,Q} = 0$.
\end{lemma}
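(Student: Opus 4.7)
The plan is to reduce the lemma to an abstract injectivity statement about the map $T\mapsto \widehat{P}(M^*)^{-1}TM^{-1}\widehat{P}$, restricted to those bounded operators $T$ on $\mathfrak{H}$ that are ``concentrated on $\mathfrak{N}$'' in the sense $T=PTP$. The starting observation is that, by \eqref{abstr_N_0_N_*} (or the invariant form \eqref{abstr_F1_K0_N_invar}), both $N$ and $N_0$ take $\mathfrak{N}$ into $\mathfrak{N}$ and annihilate $\mathfrak{N}^\perp$; hence $N=PNP$ and $N_0=PN_0P$. The forward implications ($N=0\Rightarrow \widehat{N}_Q=0$ and $N_0=0\Rightarrow \widehat{N}_{0,Q}=0$) then follow at once from \eqref{abstr_N_and_hatN_Q_relat}, \eqref{abstr_N0*_and_hatN0*_Q_relat}. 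The content is in the converses, and I would prove both simultaneously by establishing that the sandwich map above is injective on the class $\{T:T=PTP\}$.

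The argument relies on two elementary observations. First, $M\mathfrak{N}=\widehat{\mathfrak{N}}$, so for every $x\in\mathfrak{N}$ one has $\widehat{P}(Mx)=Mx$, whence $M^{-1}\widehat{P}(Mx)=x$. Thus the map $M^{-1}\widehat{P}:\widehat{\mathfrak{H}}\to \mathfrak{N}$ is surjective. Second, I claim that $\widehat{P}(M^*)^{-1}|_\mathfrak{N}:\mathfrak{N}\to\widehat{\mathfrak{N}}$ is injective. Indeed, if $w\in\mathfrak{N}$ satisfies $\widehat{P}(M^*)^{-1}w=0$, then $(M^*)^{-1}w\perp \widehat{\mathfrak{N}}=M\mathfrak{N}$, so for every $y\in\mathfrak{N}$,
\begin{equation*}
0=((M^*)^{-1}w,My)_{\widehat{\mathfrak{H}}}=(M^*(M^*)^{-1}w,y)_{\mathfrak{H}}=(w,y)_{\mathfrak{H}},
\end{equation*}
and taking $y=w$ gives $w=0$.

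Putting the two observations together, suppose that the sandwich $\widehat{P}(M^*)^{-1}TM^{-1}\widehat{P}$ vanishes, where $T=PTP$. For arbitrary $x\in\mathfrak{N}$, applying the sandwich to $Mx\in\widehat{\mathfrak{N}}$ and using $M^{-1}\widehat{P}(Mx)=x$ yields $\widehat{P}(M^*)^{-1}Tx=0$. Since $Tx\in\mathfrak{N}$, the injectivity in the second observation forces $Tx=0$. As $x\in\mathfrak{N}$ was arbitrary and $T$ kills $\mathfrak{N}^\perp$, we conclude $T=0$. Specialising to $T=N$ via \eqref{abstr_N_and_hatN_Q_relat} gives $\widehat{N}_Q=0\Rightarrow N=0$; specialising to $T=N_0$ via \eqref{abstr_N0*_and_hatN0*_Q_relat} gives $\widehat{N}_{0,Q}=0\Rightarrow N_0=0$.

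There is no genuine obstacle beyond careful bookkeeping of the four spaces $\mathfrak{H}$, $\widehat{\mathfrak{H}}$, $\mathfrak{N}$, $\widehat{\mathfrak{N}}$ and the fact that $M:\mathfrak{H}\to\widehat{\mathfrak{H}}$ is an isomorphism mapping $\mathfrak{N}$ onto $\widehat{\mathfrak{N}}$. Alternatively, the converse could be extracted directly from the identity \eqref{abstr_P_and_P_hat_relation}: writing $N=PNP$ and substituting $P=M^{-1}(Q_{\widehat{\mathfrak{N}}})^{-1}\widehat{P}(M^*)^{-1}$ on both sides expresses $N$ as a composition involving $\widehat{N}_Q$ multiplied by isomorphisms of $\mathfrak{N}$ and $\widehat{\mathfrak{N}}$, from which $\widehat{N}_Q=0\Rightarrow N=0$ is manifest; the same substitution works verbatim for $N_0$ and $\widehat{N}_{0,Q}$.
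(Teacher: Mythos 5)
Your proof is correct. Note that the paper itself does not prove this lemma at all --- it is quoted verbatim from \cite[Lemma~5.1]{Su4} --- so you have supplied a self-contained argument where the text only gives a citation. The two directions are handled exactly as one would want: the implications $N=0\Rightarrow\widehat{N}_Q=0$ and $N_0=0\Rightarrow\widehat{N}_{0,Q}=0$ are immediate from \eqref{abstr_N_and_hatN_Q_relat} and \eqref{abstr_N0*_and_hatN0*_Q_relat}, and the converses reduce to the injectivity of $T\mapsto \widehat{P}(M^*)^{-1}TM^{-1}\widehat{P}$ on operators with $T=PTP$; your two observations (surjectivity of $M^{-1}\widehat{P}$ onto $\mathfrak{N}$, injectivity of $\widehat{P}(M^*)^{-1}$ on $\mathfrak{N}$ via the adjoint identity $((M^*)^{-1}w,My)=(w,y)$) establish this cleanly, and the hypothesis $N=PNP$, $N_0=PN_0P$ is indeed available from \eqref{abstr_K_N_def}--\eqref{abstr_N_0_N_*}. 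Your ``alternative'' route through \eqref{abstr_P_and_P_hat_relation}, which writes $N=M^{-1}(Q_{\widehat{\mathfrak{N}}})^{-1}\,\widehat{N}_Q\,(Q_{\widehat{\mathfrak{N}}})^{-1}\widehat{P}(M^*)^{-1}$ after commuting $(Q_{\widehat{\mathfrak{N}}})^{-1}$ past $\widehat{P}$, is essentially the standard argument in \cite{Su4}; a third option for the $N_0$ statement is to read off from the Remark following \eqref{abstr_hatN_0Q_N_*Q} that both $N_0$ and $\widehat{N}_{0,Q}$ have matrix entries $\mu_l\delta_{jl}$ in the respective bases $\{\omega_l\}$ and $\{\zeta_l\}$, so each vanishes precisely when all $\mu_l=0$. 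No gaps.
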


\subsection{Relations between the operators and the coefficients of the power series expansions}

Now we describe relations between the coefficients of the power series expansions~\eqref{abstr_A(t)_eigenvalues_series}, \eqref{abstr_A(t)_eigenvectors_series} and the operators~$\widehat{S}$ and~$Q_{\widehat{\mathfrak{N}}}$.
(See~\cite[Subsections~1.6,~1.7]{BSu3}.) We denote $\zeta_l := M \omega_l \in \widehat{\mathfrak{N}}, \, l = 1, \ldots, n$.
Then relations~\eqref{abstr_S_eigenvectors} and~\eqref{abstr_P_and_P_hat_relation}, \eqref{abstr_S_and_S_hat_relation} show that
\begin{equation}
\label{abstr_hatS_gener_spec_problem}
\widehat{S} \zeta_l  = \gamma_l Q_{\widehat{\mathfrak{N}}} \zeta_l, \quad l = 1, \ldots, n.
\end{equation}
The set $\zeta_1, \ldots, \zeta_n$ forms a basis in~$\widehat{\mathfrak{N}}$ orthonormal with the weight~$Q_{\widehat{\mathfrak{N}}}$:
\begin{equation}
\label{abstr_sndwchd_zeta_basis}
(Q_{\widehat{\mathfrak{N}}} \zeta_l, \zeta_j) = \delta_{lj}, \qquad l,j = 1,\ldots,n.
\end{equation}

The operators $\widehat{N}_{0,Q}$ and $\widehat{N}_{*,Q}$ can be described in terms of the coefficients
of the expansions~\eqref{abstr_A(t)_eigenvalues_series} and~\eqref{abstr_A(t)_eigenvectors_series}; cf.~\eqref{abstr_N_0_N_*}.
We put~$\widetilde{\zeta}_l := M \widetilde{\omega}_l \in \widehat{\mathfrak{N}}, \; l = 1, \ldots,n $. Then
\begin{equation}
\label{abstr_hatN_0Q_N_*Q}
\begin{split}
&\widehat{N}_{0,Q} = \sum_{k=1}^{n} \mu_k (\cdot, Q_{\widehat{\mathfrak{N}}} \zeta_k) Q_{\widehat{\mathfrak{N}}} \zeta_k, \\
\widehat{N}_{*,Q} = \sum_{k=1}^{n} &\gamma_k \left( (\cdot, Q_{\widehat{\mathfrak{N}}} \widetilde{\zeta}_k) Q_{\widehat{\mathfrak{N}}} \zeta_k + (\cdot, Q_{\widehat{\mathfrak{N}}} \zeta_k) Q_{\widehat{\mathfrak{N}}} \widetilde{\zeta}_k \right).
\end{split}
\end{equation}

\begin{remark}
    By~\emph{\eqref{abstr_sndwchd_zeta_basis}} and~\emph{\eqref{abstr_hatN_0Q_N_*Q}}, we have
    \begin{gather*}
    (\widehat{N}_{0,Q} \zeta_j, \zeta_l) = \mu_l \delta_{jl}, \qquad j,l=1,\ldots,n,\\
    (\widehat{N}_{*,Q} \zeta_j, \zeta_l) = \gamma_l (\zeta_j, Q_{\widehat{\mathfrak{N}}} \widetilde{\zeta}_l) + \gamma_j (Q_{\widehat{\mathfrak{N}}} \widetilde{\zeta}_j, \zeta_l), \qquad j,l=1,\ldots,n.
    \end{gather*}
\end{remark}

Relations~\eqref{1.12} imply that
\begin{equation*}
(Q_{\widehat{\mathfrak{N}}} \widetilde{\zeta}_j, \zeta_l) + (\zeta_j, Q_{\widehat{\mathfrak{N}}} \widetilde{\zeta}_l) = 0, \qquad j, l = 1, \ldots,n.
\end{equation*}
Hence,
\begin{equation*}
(\widehat{N}_{*,Q} \zeta_j, \zeta_l) = 0, \quad \text{if} \  \gamma_j = \gamma_l.
\end{equation*}

Now we return to the notation of Section~\ref{abstr_cluster_section}.
Recall that the different eigenvalues of the germ $S$ are denoted by
$\gamma^{\circ}_j,\, j = 1,\ldots,p$, and the corresponding eigenspaces by~$\mathfrak{N}_j$.
The vectors~$\omega^{(j)}_i, \, i = 1,\ldots, k_j,$ form an orthonormal basis in~$\mathfrak{N}_j$.
Then the same numbers~$\gamma^{\circ}_j, \, j = 1,\ldots,p$,~are different eigenvalues of the problem~\eqref{abstr_hatS_gener_spec_problem}, and
$M \mathfrak{N}_j$~are the corresponding eigenspaces. The vectors~$\zeta^{(j)}_i = M\omega^{(j)}_i, \, i = 1,\ldots, k_j,$
 form a basis in~$M \mathfrak{N}_j$ (orthonormal with the weight~$Q_{\widehat{\mathfrak{N}}}$).
 By~$\mathcal{P}_j$ we denote the ``skew'' projection onto~$M \mathfrak{N}_j$ that  is orthogonal with respect to the inner product~$(Q_{\widehat{\mathfrak{N}}} \cdot, \cdot)$, i.~e.,
\begin{equation*}
\mathcal{P}_j = \sum_{i=1}^{k_j} (\cdot, Q_{\widehat{\mathfrak{N}}} \zeta^{(j)}_i) \zeta^{(j)}_i, \quad j = 1, \ldots, p.
\end{equation*}
It is easily seen that~$\mathcal{P}_j =M P_j M^{-1} \widehat{P}$.
Using~\eqref{abstr_N_invar_repers}, \eqref{abstr_N_and_hatN_Q_relat}, and~\eqref{abstr_N0*_and_hatN0*_Q_relat},
it is easy to check that
\begin{equation}
\label{abstr_hatN_0Q_N_*Q_invar_repr}
\widehat{N}_{0,Q} = \sum_{j=1}^{p} \mathcal{P}_j^* \widehat{N}_Q \mathcal{P}_j, \quad \widehat{N}_{*,Q} = \sum_{\substack{1 \le l,j \le p: \\ l \ne j}} \mathcal{P}_l^* \widehat{N}_Q \mathcal{P}_j.
\end{equation}
These relations are similar to \eqref{abstr_N_invar_repers};
they  give the invariant representations for the operators~$\widehat{N}_{0,Q}$ and~$\widehat{N}_{*,Q}$.

\subsection{Approximation of the sandwiched operator cosine}

In this subsection, we find an approximation for the operator $\cos(\tau A(t)^{1/2})$, where 
$A(t)$ is given by~\eqref{abstr_A_and_Ahat},
in terms of the germ~$\widehat{S}$ of $\widehat{A}(t)$ and the isomorphism~$M$.
It is convenient to border the operator cosine by appropriate factors.

We put $M_0 := (Q_{\widehat{\mathfrak{N}}})^{-1/2}$.  According to~\cite[Proposition~3.3]{BSu5},
we have
\begin{equation}
\label{abstr_sandwiched_cos_S_relation}
M \cos(\tau (t^2 S)^{1/2} P) P M^* = M_0 \cos (\tau (t^2 M_0 \widehat{S} M_0)^{1/2}) M_0 \widehat{P}.
\end{equation}
Denote
\begin{equation}
\label{Jdef}
{\mathcal J}(t,\tau) := M \cos(\tau A(t)^{1/2})M^{-1} \widehat{P} - M_0 \cos (\tau (t^2 M_0 \widehat{S} M_0)^{1/2}) M_0^{-1} \widehat{P}.
\end{equation}

\begin{lemma}
    \label{abstr_cos_sandwiched_est_lemma}
   Under the assumptions of Subsection~\emph{\ref{abstr_A_and_Ahat_section}}, we have
   \begin{gather}
   \label{abstr_cos_sandwiched_est_1}
   \begin{split}
   	\| {\mathcal J}(t,\tau)\|
       \le \| M \|^2 \| M^{-1} \|^2 \| \cos (\tau A(t)^{1/2}) P -  \cos ( \tau (t^2 S)^{1/2} P) P \|,
   \end{split}
   \\
   \label{abstr_cos_sandwiched_est_2}
   \begin{split}
   \| \cos (\tau  A(t)^{1/2}) P -  \cos ( \tau (t^2 S)^{1/2} P) P \|
   \le \| M \|^2 \| M^{-1} \|^2 \| {\mathcal J}(t,\tau)\|.
   \end{split}
   \end{gather}
\end{lemma}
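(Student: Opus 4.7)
The plan is to express ${\mathcal J}(t,\tau)$ as a left-right sandwich of the unknown difference on $\mathfrak{H}$, and then read off the two inequalities by a one-line operator-norm estimate in each direction.

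First I would establish the intermediate identity
\begin{equation*}
M_0 \cos\bigl(\tau(t^2 M_0 \widehat{S} M_0)^{1/2}\bigr) M_0^{-1} \widehat{P} \;=\; M \cos\bigl(\tau(t^2 S)^{1/2} P\bigr) P \, M^{-1} \widehat{P}.
\end{equation*}
The cleanest route is to match the two sides term by term in the Taylor expansion of $\cos$. Because $M\mathfrak{N} = \widehat{\mathfrak{N}}$, one has $P M^{-1}\widehat{P} = M^{-1}\widehat{P}$; combined with \eqref{abstr_P_and_P_hat_relation} in the form $MPM^* = M_0^2\widehat{P}$ and the germ identity \eqref{abstr_S_and_S_hat_relation}, an easy induction yields $M S^k M^{-1}\widehat{P} = (M_0^2\widehat{S})^k \widehat{P}$ for every $k \ge 0$. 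A parallel direct computation gives $M_0(M_0\widehat{S}M_0)^k M_0^{-1}\widehat{P} = (M_0^2\widehat{S})^k \widehat{P}$, so the even power series for the two cosines coincide after the sandwich and the identity follows. (Alternatively, one can post-multiply \eqref{abstr_sandwiched_cos_S_relation} by $M_0^{-2}\widehat{P}$ and observe $PM^* Q_{\widehat{\mathfrak{N}}} \widehat{P} = M^{-1}\widehat{P}$, which is the ``adjoint'' consequence of \eqref{abstr_P_and_P_hat_relation}.)

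Armed with this identity, I insert $PM^{-1}\widehat{P} = M^{-1}\widehat{P}$ into the first summand of ${\mathcal J}(t,\tau)$ and collect, obtaining
\begin{equation*}
{\mathcal J}(t,\tau) \;=\; M\bigl[\cos(\tau A(t)^{1/2})P - \cos(\tau(t^2 S)^{1/2}P)P\bigr]\, M^{-1}\widehat{P}.
\end{equation*}
The bound \eqref{abstr_cos_sandwiched_est_1} is then immediate from $\|\widehat{P}\| \le 1$; in fact this approach produces the sharper constant $\|M\|\|M^{-1}\|$, so the stated bound $\|M\|^2\|M^{-1}\|^2$ holds with room to spare.

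For the reverse inequality \eqref{abstr_cos_sandwiched_est_2}, I invert the sandwich. Since $M\mathfrak{N} = \widehat{\mathfrak{N}}$, we have $\widehat{P}MP = MP$, hence $M^{-1}\widehat{P}\cdot MP = P$. Both $\cos(\tau A(t)^{1/2})P$ and $\cos(\tau(t^2 S)^{1/2}P)P$ are unchanged by right multiplication by $P$, so applying $M^{-1}(\cdot)MP$ to the formula for ${\mathcal J}$ yields
\begin{equation*}
\cos(\tau A(t)^{1/2})P - \cos(\tau(t^2 S)^{1/2}P)P \;=\; M^{-1}\,{\mathcal J}(t,\tau)\, MP,
\end{equation*}
from which \eqref{abstr_cos_sandwiched_est_2} follows (again with the tighter constant $\|M\|\|M^{-1}\|$).

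The only genuine content is the intermediate identity; everything else is bookkeeping. The main obstacle is keeping the orthogonal projection $P$ and the ``skew'' projection onto $\widehat{\mathfrak{N}}$ straight when translating between the $\mathfrak{H}$- and $\widehat{\mathfrak{H}}$-pictures, which is precisely why both \eqref{abstr_P_and_P_hat_relation} and \eqref{abstr_S_and_S_hat_relation} are needed, together with the conventions that $M_0$ acts on $\widehat{\mathfrak{N}}$ and commutes with $\widehat{P}$.
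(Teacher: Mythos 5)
Your proof is correct and follows essentially the same route as the paper's: both hinge on rewriting ${\mathcal J}(t,\tau)$ as a sandwich of $\cos(\tau A(t)^{1/2})P-\cos(\tau(t^2S)^{1/2}P)P$ by means of~\eqref{abstr_sandwiched_cos_S_relation} and the projection identity~\eqref{abstr_P_and_P_hat_relation}; the paper keeps the right-hand factor as $M^*Q_{\widehat{\mathfrak{N}}}\widehat{P}$ and estimates $\|Q_{\widehat{\mathfrak{N}}}\|\le\|M^{-1}\|^2$, whereas you absorb it into $M^{-1}\widehat{P}$ (and, conversely, into $MP$). Your regrouping does yield the sharper constant $\|M\|\,\|M^{-1}\|$ in both inequalities, but this is a cosmetic refinement of the same argument rather than a different method.
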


\begin{proof}
Since $M_0 = (Q_{\widehat{\mathfrak{N}}})^{-1/2}$ and
$M^{-1} Q_{\widehat{\mathfrak{N}}}^{-1} \widehat{P} = P M^*$ (see~\eqref{abstr_P_and_P_hat_relation}),
using~\eqref{abstr_sandwiched_cos_S_relation}, we obtain
\begin{equation*}
\| {\mathcal J}(t,\tau)\| = \left\| M \left( \cos (\tau A(t)^{1/2}) P -
\cos ( \tau (t^2 S)^{1/2} P) P \right) M^* Q_{\widehat{\mathfrak{N}}} \widehat{P}  \right\|.
\end{equation*}
Hence,
\begin{equation*}
\| {\mathcal J}(t,\tau)\| \le \| M \|^2 \| Q_{\widehat{\mathfrak{N}}} \widehat{P}\| \| \cos (\tau A(t)^{1/2}) P -  \cos ( \tau (t^2 S)^{1/2} P) P \|.
\end{equation*}
Since $\|Q_{\widehat{\mathfrak{N}}}\| \le \|Q\| = \|M^{-1}\|^2$, we arrive at~(\ref{abstr_cos_sandwiched_est_1}).

Estimate~\eqref{abstr_cos_sandwiched_est_2} can be checked similarly in the ``inverse way''.
Obviously,
\begin{multline*}
\| \cos (\tau A(t)^{1/2}) P -  \cos ( \tau (t^2 S)^{1/2} P) P\| \le \\ \le \| M^{-1} \|^2 \|M\cos (\tau A(t)^{1/2}) PM^* -  M\cos ( \tau (t^2 S)^{1/2} P)P M^* \|.
\end{multline*}
By the identity $P M^* = M^{-1} Q_{\widehat{\mathfrak{N}}}^{-1} \widehat{P}$ and~\eqref{abstr_sandwiched_cos_S_relation},
the right-hand side can be written as
$\| M^{-1} \|^2 \| {\mathcal J}(t,\tau) Q_{\widehat{\mathfrak{N}}}^{-1} \widehat{P} \|$.
 Together with the inequality \hbox{$\| Q_{\widehat{\mathfrak{N}}}^{-1} \widehat{P}\| \le \| M \|^2$}
(which follows from the identity $Q_{\widehat{\mathfrak{N}}}^{-1} \widehat{P} = M P M^*$),
this implies~\eqref{abstr_cos_sandwiched_est_2}.
\end{proof}

Now, Theorem~\ref{abstr_cos_general_thrm_wo_eps} and inequality~\eqref{abstr_cos_sandwiched_est_1}
directly imply the following result (which has been obtained before in~\cite[(3.19)]{BSu5}).

\begin{theorem}\label{th5.4}
Suppose that the assumptions of Subsections~\emph{\ref{abstr_A_and_Ahat_section}} and~\emph{\ref{abstr_nondegenerated_section}} are satisfied.
Let ${\mathcal J}(t,\tau)$ be defined by~\emph{\eqref{Jdef}}. Then for
    $\tau \in \mathbb{R}$ and $|t| \le t^0$ we have
    \begin{equation}
    \label{abstr_cos_sandwiched_general_est_wo_eps}
    \| {\mathcal J}(t,\tau)\|
    \le \| M \|^2 \| M^{-1} \|^2 (2 C_1 |t| + C_7 |\tau| t^2).
    \end{equation}
    The number $t^0$ is subject to~\emph{(\ref{abstr_t0_fixation})}, and the constants $C_1$, $C_7$ are given by~\emph{(\ref{abstr_C1_C2})} and~\emph{(\ref{abstr_C7})}.
\end{theorem}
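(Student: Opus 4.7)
The plan is to combine the sandwiched equivalence established in Lemma~\ref{abstr_cos_sandwiched_est_lemma} with the threshold cosine approximation of Theorem~\ref{abstr_cos_general_thrm_wo_eps}. The bulk of the work has already been done: Lemma~\ref{abstr_cos_sandwiched_est_lemma} reduces estimation of the sandwiched difference $\mathcal{J}(t,\tau)$ to estimation of the unsandwiched difference $\cos(\tau A(t)^{1/2})P - \cos(\tau (t^2 S)^{1/2}P)P$, at the cost of a factor $\|M\|^2 \|M^{-1}\|^2$. Theorem~\ref{abstr_cos_general_thrm_wo_eps} provides exactly the estimate we need for the unsandwiched difference under the assumptions of Subsections~\ref{abstr_X_A_section} and~\ref{abstr_nondegenerated_section}.

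First I would verify that the assumptions of Subsection~\ref{abstr_A_and_Ahat_section} together with the nondegeneracy assumption of Subsection~\ref{abstr_nondegenerated_section} imply that the operator family $A(t)$ itself satisfies the hypotheses needed to invoke Theorem~\ref{abstr_cos_general_thrm_wo_eps}. This is immediate: by construction $A(t) = X(t)^*X(t)$ with $X(t) = X_0 + tX_1$, so Subsection~\ref{abstr_X_A_section} applies to $A(t)$, and Condition~\ref{nondeg} is assumed directly. Thus Theorem~\ref{abstr_cos_general_thrm_wo_eps} yields, for $\tau \in \mathbb{R}$ and $|t|\le t^0$,
\begin{equation*}
\| \cos(\tau A(t)^{1/2}) P - \cos(\tau (t^2 S)^{1/2}P) P \| \le 2C_1 |t| + C_7 |\tau| t^2.
\end{equation*}

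Next I would apply inequality \eqref{abstr_cos_sandwiched_est_1} of Lemma~\ref{abstr_cos_sandwiched_est_lemma}, which gives
\begin{equation*}
\| \mathcal{J}(t,\tau) \| \le \|M\|^2 \|M^{-1}\|^2 \, \| \cos(\tau A(t)^{1/2}) P - \cos(\tau (t^2 S)^{1/2}P) P \|.
\end{equation*}
Substituting the bound from the previous step into the right-hand side yields the claimed estimate \eqref{abstr_cos_sandwiched_general_est_wo_eps}.

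I do not anticipate a genuine obstacle here: all of the substantive machinery (the identity \eqref{abstr_sandwiched_cos_S_relation} linking the sandwiched germ expression with $M_0 \widehat{S} M_0$, the projection identity \eqref{abstr_P_and_P_hat_relation}, and the norm bounds $\|Q_{\widehat{\mathfrak{N}}}\|\le\|M^{-1}\|^2$ and $\|Q_{\widehat{\mathfrak{N}}}^{-1}\widehat{P}\|\le\|M\|^2$) is already absorbed into the proof of Lemma~\ref{abstr_cos_sandwiched_est_lemma}. The only thing to be careful about is keeping the constants $C_1$ and $C_7$ as defined in \eqref{abstr_C1_C2} and \eqref{abstr_C7} for the operator family $A(t)$ itself (not for $\widehat{A}(t)$), since it is the unsandwiched bound for $A(t)$ that is being fed through Lemma~\ref{abstr_cos_sandwiched_est_lemma}.
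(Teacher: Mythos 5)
Your proof is correct and is exactly the paper's argument: the authors also obtain Theorem~\ref{th5.4} by feeding the bound of Theorem~\ref{abstr_cos_general_thrm_wo_eps} through inequality~\eqref{abstr_cos_sandwiched_est_1} of Lemma~\ref{abstr_cos_sandwiched_est_lemma}. Your remark about using the constants $C_1$, $C_7$ associated with $A(t)$ itself is the right point of care and matches the statement.
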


Similarly,  combining Corollary~\ref{abstr_coroll_enchd_est_wo_eps_N=0}, Lemma~\ref{abstr_N_and_Nhat_lemma}, and Lemma~\ref{abstr_cos_sandwiched_est_lemma}, we arrive at the following result.

\begin{theorem}
    \label{abstr_cos_sandwiched_ench_thrm_wo_eps_1}
    Suppose that the assumptions of Theorem~\emph{\ref{th5.4}} are satisfied.
     Suppose that the operator $\widehat{N}_Q$ defined in Subsection~\emph{\ref{abstr_hatZ_Q_and_hatN_Q_section}}
      is equal to zero: $\widehat{N}_Q = 0$. Then for $\tau \in \mathbb{R}$ and $|t| \le t^0$ we have
      \begin{equation*}
    \| {\mathcal J}(t,\tau) \|
    \le \| M \|^2 \| M^{-1} \|^2 (2 C_1 |t| + C_{11} | \tau | |t|^3).
    \end{equation*}
    The number $t^0$ is subject to~\emph{\eqref{abstr_t0_fixation}}, and the constants
 $C_1$, $C_{11}$ are defined by~\emph{\eqref{abstr_C1_C2}} and~\emph{\eqref{abstr_C11}}.
\end{theorem}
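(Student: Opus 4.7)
The plan is to combine three ingredients already proved in the excerpt, so the argument will be essentially a one-step concatenation rather than a new computation. The target estimate looks structurally identical to the non-sandwiched estimate in Corollary~\ref{abstr_coroll_enchd_est_wo_eps_N=0}, just multiplied by the isomorphism-constant factor $\|M\|^2\|M^{-1}\|^2$; Lemma~\ref{abstr_cos_sandwiched_est_lemma} already encodes precisely that transfer. So the whole issue is to legitimately invoke Corollary~\ref{abstr_coroll_enchd_est_wo_eps_N=0} under the \emph{sandwiched} hypothesis $\widehat{N}_Q=0$ rather than the original hypothesis $N=0$.

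First, I would observe that by Lemma~\ref{abstr_N_and_Nhat_lemma} the assumption $\widehat{N}_Q=0$ is equivalent to $N=0$ for the family $A(t)=M^*\widehat{A}(t)M$ in $\mathfrak{H}$. Hence Corollary~\ref{abstr_coroll_enchd_est_wo_eps_N=0} applies to $A(t)$, yielding for $\tau\in\mathbb{R}$ and $|t|\le t^0$ the bound
\begin{equation*}
\|\cos(\tau A(t)^{1/2})P-\cos(\tau(t^2 S)^{1/2}P)P\|\le 2C_1|t|+C_{11}|\tau||t|^3,
\end{equation*}
with $C_1$, $C_{11}$ as in \eqref{abstr_C1_C2}, \eqref{abstr_C11}.

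Second, I would insert this estimate into inequality~\eqref{abstr_cos_sandwiched_est_1} of Lemma~\ref{abstr_cos_sandwiched_est_lemma}, which asserts
\begin{equation*}
\|\mathcal{J}(t,\tau)\|\le\|M\|^2\|M^{-1}\|^2\,\|\cos(\tau A(t)^{1/2})P-\cos(\tau(t^2 S)^{1/2}P)P\|.
\end{equation*}
The stated bound on $\|\mathcal{J}(t,\tau)\|$ follows immediately.

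There is essentially no obstacle: the analytical work lives in Corollary~\ref{abstr_coroll_enchd_est_wo_eps_N=0} (refined threshold approximations for $A(t)^{1/2}F(t)$ with the $N=0$ simplification) and in the bridging Lemma~\ref{abstr_cos_sandwiched_est_lemma}. The only conceptual point worth highlighting is the role of Lemma~\ref{abstr_N_and_Nhat_lemma}: the natural vanishing condition on the sandwiched side is $\widehat{N}_Q=0$, whereas the abstract refined cosine estimate is formulated in terms of the operator $N$ associated with the auxiliary family $A(t)=M^*\widehat{A}(t)M$; the lemma guarantees these two conditions coincide, so no hypothesis is lost or strengthened in the transfer.
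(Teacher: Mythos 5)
Your proposal is correct and coincides with the paper's own proof: the theorem is obtained there precisely by combining Corollary~\ref{abstr_coroll_enchd_est_wo_eps_N=0}, Lemma~\ref{abstr_N_and_Nhat_lemma} (to translate $\widehat{N}_Q=0$ into $N=0$), and estimate~\eqref{abstr_cos_sandwiched_est_1} of Lemma~\ref{abstr_cos_sandwiched_est_lemma}. Nothing further is needed.
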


Finally,  from Corollary~\ref{abstr_coroll_enchd_est_wo_eps_N*=0}, Lemma~\ref{abstr_N_and_Nhat_lemma}, and Lemma~\ref{abstr_cos_sandwiched_est_lemma} we deduce the following statement.

\begin{theorem}
    \label{abstr_cos_sandwiched_ench_thrm_wo_eps_2}
    Suppose that the assumptions of Theorem~\emph{\ref{th5.4}}
     are satisfied. Suppose that the operator $\widehat{N}_{0,Q}$ defined in Subsection~\emph{\ref{abstr_hatZ_Q_and_hatN_Q_section}}
      is equal to zero: $\widehat{N}_{0,Q}=0$. Then for $\tau \in \mathbb{R}$ and $|t| \le t^{00}$ we have
    \begin{equation*}
    \| {\mathcal J}(t,\tau) \|
    \le \| M \|^2 \| M^{-1} \|^2 (C_{15} |t| + C_{16} | \tau | |t|^3).
    \end{equation*}
   The number $t^{00} \le t^0$ is subject to~\emph{\eqref{abstr_t00}}, and the constants $C_{15}$, $C_{16}$ are as in Theorem~\emph{\ref{abstr_cos_enchanced_thrm_2_wo_eps}}.
\end{theorem}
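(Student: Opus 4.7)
The plan is to obtain this statement as an immediate composition of three results already established in the excerpt, with essentially no new computation. The key observation is that passing from the family $\widehat{A}(t)$ to the sandwiched version is controlled by Lemma~\ref{abstr_cos_sandwiched_est_lemma}, while the assumption $\widehat{N}_{0,Q}=0$ is precisely the translation of $N_0=0$ via Lemma~\ref{abstr_N_and_Nhat_lemma}.

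First I would invoke Lemma~\ref{abstr_N_and_Nhat_lemma}: the hypothesis $\widehat{N}_{0,Q}=0$ imposed on the ``hatted'' family is equivalent to the condition $N_0=0$ on the original family $A(t)=M^*\widehat{A}(t)M$. This puts us squarely in the setting of Corollary~\ref{abstr_coroll_enchd_est_wo_eps_N*=0}, which yields, for all $\tau\in\mathbb{R}$ and $|t|\le t^{00}$, the unsandwiched estimate
\begin{equation*}
\| \cos(\tau A(t)^{1/2}) P - \cos(\tau (t^2 S)^{1/2}P) P \| \le C_{15} |t| + C_{16} |\tau| |t|^3.
\end{equation*}
The constants $C_{15}$, $C_{16}$ and the threshold $t^{00}\le t^0$ (subject to \eqref{abstr_t00}) are precisely those appearing in the statement, inherited verbatim from Theorem~\ref{abstr_cos_enchanced_thrm_2_wo_eps}.

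Next I would apply inequality \eqref{abstr_cos_sandwiched_est_1} from Lemma~\ref{abstr_cos_sandwiched_est_lemma}, which bounds the sandwiched difference $\mathcal{J}(t,\tau)$ (defined by \eqref{Jdef}) in terms of the unsandwiched one with a multiplicative factor $\|M\|^2\|M^{-1}\|^2$. Substituting the Corollary estimate into this inequality gives
\begin{equation*}
\| \mathcal{J}(t,\tau) \| \le \| M \|^2 \| M^{-1} \|^2 \bigl( C_{15} |t| + C_{16} |\tau| |t|^3 \bigr),
\end{equation*}
which is the required bound. There is no real obstacle here: the proof is a three-line chain (Lemma~\ref{abstr_N_and_Nhat_lemma} $\Rightarrow$ Corollary~\ref{abstr_coroll_enchd_est_wo_eps_N*=0} $\Rightarrow$ Lemma~\ref{abstr_cos_sandwiched_est_lemma}), entirely parallel to the derivations of Theorems~\ref{th5.4} and~\ref{abstr_cos_sandwiched_ench_thrm_wo_eps_1}; the only mild point to keep in mind is that the admissible range of $t$ shrinks from $t^0$ to $t^{00}$, which comes from Corollary~\ref{abstr_coroll_enchd_est_wo_eps_N*=0} and the cluster analysis of Section~\ref{abstr_cluster_section} already absorbed there.
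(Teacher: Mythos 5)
Your proposal is correct and follows exactly the route the paper takes: the theorem is stated there as a direct consequence of Corollary~\ref{abstr_coroll_enchd_est_wo_eps_N*=0}, Lemma~\ref{abstr_N_and_Nhat_lemma}, and Lemma~\ref{abstr_cos_sandwiched_est_lemma}, which is precisely your three-step chain. Nothing is missing.
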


\subsection{Approximation of the sandwiched operator cosine}

Writing down~\eqref{abstr_cos_sandwiched_general_est_wo_eps} with $\tau$ replaced by $\varepsilon^{-1} \tau$
and multiplying it by the ``smoothing factor'', we arrive at the following result,
which has been proved before in~\cite[Theorem~3.4]{BSu5}.

\begin{theorem}
    \label{abstr_cos_sandwiched_general_thrm}
    Under the assumptions of Theorem~\emph{\ref{th5.4}}, for $\tau \in \mathbb{R}$, $\varepsilon > 0$, and $|t| \le t^0$ we have
    \begin{equation}
    \label{abstr_cos_sandwiched_general_est}
    \| {\mathcal J}(t,\varepsilon^{-1} \tau) \| \varepsilon^{2} (t^2 + \varepsilon^2)^{-1}
    \le \| M \|^2 \| M^{-1} \|^2 (C_1  + C_7 |\tau| ) \varepsilon.
    \end{equation}
    The number $t^0$ is subject to~\emph{(\ref{abstr_t0_fixation})}, and the constants $C_1$, $C_7$ are defined by~\emph{\eqref{abstr_C1_C2}} and~\emph{\eqref{abstr_C7}}.
\end{theorem}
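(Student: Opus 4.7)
The plan is to deduce this estimate directly from Theorem~\ref{th5.4} by a simple substitution and a weighted-mean argument involving the smoothing factor $\varepsilon^2(t^2+\varepsilon^2)^{-1}$. Since the statement is a scaled and smoothed counterpart of \eqref{abstr_cos_sandwiched_general_est_wo_eps}, no new perturbative analysis is needed; the only routine work is to trade the $|t|$ and $t^2$ factors on the right-hand side of \eqref{abstr_cos_sandwiched_general_est_wo_eps} for powers of $\varepsilon$.

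First I would apply Theorem~\ref{th5.4} with the time parameter $\tau$ replaced by $\varepsilon^{-1}\tau$. This is legitimate because that theorem is uniform in $\tau\in\mathbb{R}$, so
\begin{equation*}
\| {\mathcal J}(t,\varepsilon^{-1}\tau)\|
\le \|M\|^2\|M^{-1}\|^2\bigl(2C_1|t|+C_7\varepsilon^{-1}|\tau|\,t^2\bigr),\qquad |t|\le t^0.
\end{equation*}
Multiplying both sides by the smoothing factor $\varepsilon^2(t^2+\varepsilon^2)^{-1}$ yields
\begin{equation*}
\|{\mathcal J}(t,\varepsilon^{-1}\tau)\|\,\varepsilon^2(t^2+\varepsilon^2)^{-1}
\le \|M\|^2\|M^{-1}\|^2\Bigl(2C_1\,\frac{|t|\varepsilon^2}{t^2+\varepsilon^2}+C_7|\tau|\,\frac{t^2\varepsilon}{t^2+\varepsilon^2}\Bigr).
\end{equation*}

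Next, I would bound the two fractions on the right separately. For the first term, the inequality $2|t|\varepsilon\le t^2+\varepsilon^2$ gives $|t|\varepsilon^2(t^2+\varepsilon^2)^{-1}\le \varepsilon/2$, so that $2C_1$ contribution is at most $C_1\varepsilon$. For the second term, $t^2\le t^2+\varepsilon^2$ trivially yields $t^2\varepsilon(t^2+\varepsilon^2)^{-1}\le\varepsilon$. Adding these two bounds gives precisely \eqref{abstr_cos_sandwiched_general_est}, with the constant $\|M\|^2\|M^{-1}\|^2(C_1+C_7|\tau|)$ in front of $\varepsilon$.

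There is no real obstacle here: the content of the result is already packed into Theorem~\ref{th5.4}, and the present statement is essentially its reformulation in a form suitable for the application to differential operators in Section~9, where the factor $(t^2+\varepsilon^2)^{-1}$ becomes (after the inverse Gelfand transform and scaling) the smoothing operator $\varepsilon^2({\mathcal H}_0+\varepsilon^2 I)^{-1}$ appearing in estimates such as \eqref{intro_est3}. The only care required is to note that the passage $\tau\mapsto\varepsilon^{-1}\tau$ does not affect the range $|t|\le t^0$, since $t^0$ was chosen in \eqref{abstr_t0_fixation} independently of $\tau$.
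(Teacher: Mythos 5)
Your proof is correct and is essentially the paper's own argument: the paper likewise obtains the result by writing \eqref{abstr_cos_sandwiched_general_est_wo_eps} with $\tau$ replaced by $\varepsilon^{-1}\tau$ and multiplying by the smoothing factor, exactly as in the proof of Theorem~\ref{abstr_cos_general_thrm}. The elementary bounds $2|t|\varepsilon\le t^2+\varepsilon^2$ and $t^2\le t^2+\varepsilon^2$ that you use are precisely what yields the constant $C_1+C_7|\tau|$.
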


Similarly to the proof of Theorem~\ref{abstr_cos_enchanced_thrm_1}, from Theorem~\ref{abstr_cos_sandwiched_ench_thrm_wo_eps_1}
we deduce the following statement.

\begin{theorem}
    \label{abstr_cos_sandwiched_ench_thrm_1}
    Suppose that the assumptions of Theorem~\emph{\ref{abstr_cos_sandwiched_ench_thrm_wo_eps_1}}
    are satisfied. Then for $\tau \in \mathbb{R}$, $\varepsilon > 0$, and $|t| \le t^0$ we have
    \begin{equation*}
    \| {\mathcal J}(t,\varepsilon^{-1} \tau) \| \varepsilon^{3/2} (t^2 + \varepsilon^2)^{-3/4}
    \le \| M \|^2 \| M^{-1} \|^2 ( C'_1  + C_{11} | \tau | ) \varepsilon.
    \end{equation*}
    The number $t^0$ is subject to~\emph{(\ref{abstr_t0_fixation})}, and the constants $C'_1$ and $C_{11}$ are as in
    Theorem~\emph{\ref{abstr_cos_enchanced_thrm_1}}.
\end{theorem}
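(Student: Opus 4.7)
The proof should follow the exact pattern used for Theorem~\ref{abstr_cos_enchanced_thrm_1}, simply replacing the non-sandwiched intermediate estimate by its sandwiched counterpart (Theorem~\ref{abstr_cos_sandwiched_ench_thrm_wo_eps_1}). The key point is that we already have the correct shape $2C_1|t|+C_{11}|\tau||t|^3$ under the hypothesis $\widehat{N}_Q=0$; what remains is to combine this with the smoothing factor $\varepsilon^{3/2}(t^2+\varepsilon^2)^{-3/4}$ and the substitution $\tau\mapsto\varepsilon^{-1}\tau$, and to handle the region where the smoothing factor alone already does the job.

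The plan is to split into two cases according to the size of $|t|$ relative to $\varepsilon^{1/3}$. \textbf{Case 1:} $|t|\geq\varepsilon^{1/3}$. Here the elementary inequality $\varepsilon^{3/2}(t^2+\varepsilon^2)^{-3/4}\leq \varepsilon^{3/2}|t|^{-3/2}\leq\varepsilon$ holds, so it suffices to bound $\|\mathcal{J}(t,\varepsilon^{-1}\tau)\|$ trivially by a constant depending on $M$. Using the definition \eqref{Jdef} together with $\|M_0\|^2=\|Q_{\widehat{\mathfrak{N}}}^{-1}\widehat{P}\|\leq\|M\|^2$ and $\|M_0^{-1}\|^2=\|Q_{\widehat{\mathfrak{N}}}\widehat{P}\|\leq\|M^{-1}\|^2$, we get $\|\mathcal{J}(t,\varepsilon^{-1}\tau)\|\leq 2\|M\|\|M^{-1}\|\leq 2\|M\|^2\|M^{-1}\|^2$ (the last inequality because $\|M\|\|M^{-1}\|\geq 1$). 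This yields the bound $2\|M\|^2\|M^{-1}\|^2\,\varepsilon$, which is absorbed by $C'_1\geq 2$.

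\textbf{Case 2:} $|t|<\varepsilon^{1/3}$. Here I apply Theorem~\ref{abstr_cos_sandwiched_ench_thrm_wo_eps_1} with $\tau$ replaced by $\varepsilon^{-1}\tau$, yielding
\begin{equation*}
\|\mathcal{J}(t,\varepsilon^{-1}\tau)\|\leq \|M\|^2\|M^{-1}\|^2\bigl(2C_1|t|+C_{11}\varepsilon^{-1}|\tau||t|^3\bigr).
\end{equation*}
Multiplying by the smoothing factor, the first term becomes $2C_1|t|\varepsilon^{3/2}(t^2+\varepsilon^2)^{-3/4}$; since $(t^2+\varepsilon^2)^{-3/4}\leq\varepsilon^{-3/2}$, this is $\leq 2C_1|t|\leq 2C_1\varepsilon^{1/3}\leq$ (up to absolute constants) $C\varepsilon$. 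Actually, a sharper route is $(t^2+\varepsilon^2)^{3/4}\geq c\,\varepsilon^{1/2}|t|^{1/2}$ on any scale, giving the universal bound by $C\varepsilon$; this is exactly the computation in the proof of Theorem~\ref{abstr_cos_enchanced_thrm_1}. The second term becomes $C_{11}|\tau||t|^3\varepsilon^{1/2}(t^2+\varepsilon^2)^{-3/4}\leq C_{11}|\tau||t|^{3/2}\varepsilon^{1/2}$ (using $(t^2+\varepsilon^2)^{-3/4}\leq|t|^{-3/2}$), and then by $|t|<\varepsilon^{1/3}$ we get $|t|^{3/2}<\varepsilon^{1/2}$, so the term is bounded by $C_{11}|\tau|\varepsilon$.

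Combining the two cases gives the desired inequality with $C'_1=\max\{2,2C_1\}$ and the same $C_{11}$. There is no genuine obstacle here, since Lemma~\ref{abstr_N_and_Nhat_lemma} already translates $\widehat{N}_Q=0$ to $N=0$, Theorem~\ref{abstr_cos_sandwiched_ench_thrm_wo_eps_1} supplies the unsmoothed sandwiched estimate, and the elementary manipulations of the smoothing factor are identical to those in the non-sandwiched case. The only thing to be slightly careful about is the trivial bound in Case~1, where one must use that both the operator $M\cos(\cdots)M^{-1}\widehat{P}$ and its effective counterpart are uniformly bounded by $\|M\|\|M^{-1}\|$ (via the identification $M_0,M_0^{-1}$ with square roots of blocks of $MM^*$ and its inverse).
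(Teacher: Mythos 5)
Your overall strategy is exactly the paper's: the paper derives this theorem from Theorem~\ref{abstr_cos_sandwiched_ench_thrm_wo_eps_1} ``similarly to the proof of Theorem~\ref{abstr_cos_enchanced_thrm_1}'', i.e.\ by the same split at $|t|=\varepsilon^{1/3}$, the trivial bound $\|\mathcal{J}\|\le 2\|M\|\|M^{-1}\|\le 2\|M\|^2\|M^{-1}\|^2$ on the outer region, and the unsmoothed estimate with $\tau\mapsto\varepsilon^{-1}\tau$ on the inner region; your Case~1 and the treatment of the $C_{11}$-term are correct.

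One local repair is needed in Case~2 for the $2C_1|t|$-term: both justifications you offer are faulty. The bound $2C_1|t|\le 2C_1\varepsilon^{1/3}$ is not $O(\varepsilon)$, and the inequality $(t^2+\varepsilon^2)^{3/4}\ge c\,\varepsilon^{1/2}|t|^{1/2}$ cannot hold with a universal $c$, since the left side is homogeneous of degree $3/2$ and the right side of degree $1$ (take $t=\varepsilon\to 0$). The correct step — and the one actually used in the displayed computation in the proof of Theorem~\ref{abstr_cos_enchanced_thrm_1} — is the weighted AM--GM bound $(t^2+\varepsilon^2)^{3/4}\ge |t|\,\varepsilon^{1/2}$ (valid for all $t,\varepsilon$, from $t^2+\varepsilon^2\ge |t|^{4/3}\varepsilon^{2/3}$), which gives $2C_1|t|\,\varepsilon^{3/2}(t^2+\varepsilon^2)^{-3/4}\le 2C_1\varepsilon$ without any restriction on $|t|$. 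With that substitution your argument is complete and coincides with the paper's.
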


Finally, Theorem~\ref{abstr_cos_sandwiched_ench_thrm_wo_eps_2} implies the following result.

\begin{theorem}
    \label{abstr_cos_sandwiched_ench_thrm_2}
    Suppose that the assumptions of Theorem~\emph{\ref{abstr_cos_sandwiched_ench_thrm_wo_eps_2}}
    are satisfied. Then for $\tau \in \mathbb{R}$ and $|t| \le t^{00}$ we have
    \begin{equation*}
    \| {\mathcal J}(t,\varepsilon^{-1} \tau) \| \varepsilon^{3/2} (t^2 + \varepsilon^2)^{-3/4}
    \le \| M \|^2 \| M^{-1} \|^2 (C'_{15} + C_{16} | \tau | ) \varepsilon.
    \end{equation*}
    The number $t^{00} \le t^0$ is subject to~\emph{\eqref{abstr_t00}}, and the constants $C'_{15}$, $C_{16}$ are as in
    Theorem~\emph{\ref{abstr_cos_enchanced_thrm_2}}.
\end{theorem}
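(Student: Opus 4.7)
The plan is to deduce this from Theorem~\ref{abstr_cos_sandwiched_ench_thrm_wo_eps_2} by exactly the same case analysis used in the proof of Theorem~\ref{abstr_cos_enchanced_thrm_1}: I split the range $|t|\le t^{00}$ at the threshold $|t|=\sqrt[3]{\varepsilon}$ and handle the two subranges separately.

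On the range $\sqrt[3]{\varepsilon}\le |t|\le t^{00}$, I would rely on the smoothing factor alone. The bound $(t^2+\varepsilon^2)^{-3/4}\le |t|^{-3/2}\le \varepsilon^{-1/2}$ gives $\varepsilon^{3/2}(t^2+\varepsilon^2)^{-3/4}\le \varepsilon$, so it suffices to control $\|\mathcal{J}(t,\varepsilon^{-1}\tau)\|$ uniformly in $t$. Starting from the definition~\eqref{Jdef} and $\|\cos(\cdot)\|\le 1$, I would use the identity $Q_{\widehat{\mathfrak{N}}}^{-1}\widehat{P}=MPM^*$ (already derived inside the proof of Lemma~\ref{abstr_cos_sandwiched_est_lemma}) together with $\|Q_{\widehat{\mathfrak{N}}}\|\le\|Q\|=\|M^{-1}\|^2$ to conclude $\|M_0\|\le\|M\|$ and $\|M_0^{-1}\|\le\|M^{-1}\|$, whence
\[
\|\mathcal{J}(t,\varepsilon^{-1}\tau)\|\le \|M\|\|M^{-1}\|+\|M_0\|\|M_0^{-1}\|\le 2\|M\|\|M^{-1}\|\le 2\|M\|^2\|M^{-1}\|^2.
\]
Combining the two factors produces the bound $2\|M\|^2\|M^{-1}\|^2\varepsilon$ in this range.

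On the complementary range $|t|<\sqrt[3]{\varepsilon}$, I would invoke Theorem~\ref{abstr_cos_sandwiched_ench_thrm_wo_eps_2} with $\tau$ replaced by $\varepsilon^{-1}\tau$ and multiply by the smoothing factor. The two resulting terms are controlled by the elementary inequalities
\[
|t|\varepsilon^{3/2}(t^2+\varepsilon^2)^{-3/4}=|t|(t^2+\varepsilon^2)^{-1/2}\cdot\varepsilon^{3/2}(t^2+\varepsilon^2)^{-1/4}\le \varepsilon,
\]
valid for all $|t|$ and $\varepsilon>0$, and
\[
|t|^3\varepsilon^{1/2}(t^2+\varepsilon^2)^{-3/4}\le |t|^{3/2}\varepsilon^{1/2}\le \varepsilon,
\]
where the last step uses $|t|<\sqrt[3]{\varepsilon}$. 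This delivers $\|M\|^2\|M^{-1}\|^2(C_{15}+C_{16}|\tau|)\varepsilon$ on this range.

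Setting $C'_{15}:=\max\{2,C_{15}\}$ unifies the two subrange estimates and finishes the argument. The proof is fully parallel to that of Theorem~\ref{abstr_cos_enchanced_thrm_1}, and I do not anticipate any substantive obstacle; the only point that is genuinely new to the sandwiched setting is the \emph{a priori} uniform bound $\|\mathcal{J}(t,\cdot)\|\le 2\|M\|^2\|M^{-1}\|^2$ needed on the outer subrange, and this is elementary once the norms of $M_0$ and $M_0^{-1}$ are estimated in terms of $\|M\|$ and $\|M^{-1}\|$ as above.
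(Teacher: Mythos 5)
Your proof is correct and follows exactly the route the paper intends: the paper derives this theorem from Theorem~\ref{abstr_cos_sandwiched_ench_thrm_wo_eps_2} by the same splitting at $|t|=\sqrt[3]{\varepsilon}$ used in the proof of Theorem~\ref{abstr_cos_enchanced_thrm_1}, with the outer range handled by the trivial bound $\|\mathcal{J}\|\le 2\|M\|^2\|M^{-1}\|^2$ (which the paper would obtain from~\eqref{abstr_cos_sandwiched_est_1} and $\|\cos(\cdot)\|\le 1$, equivalent to your direct estimate of $\|M_0\|$ and $\|M_0^{-1}\|$). All the elementary inequalities you use are valid, so there is no gap.
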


\subsection{The sharpness of the result}

Now we confirm that the result of Theorem~\ref{abstr_cos_sandwiched_general_thrm} is sharp in the general case.
Namely, if $N_{0,Q} \ne 0$,  then the exponent $s$ in the smoothing factor can not be taken smaller than 2.

\begin{theorem}
    \label{abstr_sndwchd_s<2_general_thrm}
    Suppose that the assumptions of Subsections~\emph{\ref{abstr_A_and_Ahat_section}} and~\emph{\ref{abstr_nondegenerated_section}}
    are satisfied. Let $\widehat{N}_{0,Q} \ne 0$. Let $\tau \ne 0$ and $0 \le s < 2$.
    Then there does not exist a constant $C(\tau)>0$ such that the estimate
    \begin{equation}
    \label{abstr_sndwchd_s<2_est_imp}
    \| {\mathcal J}(t,\varepsilon^{-1} \tau) \| \varepsilon^{s} (t^2 + \varepsilon^2)^{-s/2} \le C(\tau) \varepsilon
    \end{equation}
    holds for all sufficiently small $|t|$ and $\varepsilon >0$.
\end{theorem}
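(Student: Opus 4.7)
The plan is to reduce this sandwiched sharpness result to the already established non-sandwiched sharpness result in Theorem \ref{abstr_s<2_general_thrm}. The bridge is the two-sided comparison in Lemma \ref{abstr_cos_sandwiched_est_lemma}, specifically the lower direction \eqref{abstr_cos_sandwiched_est_2}, together with the equivalence $N_0 = 0 \Leftrightarrow \widehat{N}_{0,Q} = 0$ from Lemma \ref{abstr_N_and_Nhat_lemma}.

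Concretely, I would argue by contradiction. Fix $\tau \ne 0$ and $0 \le s < 2$, and assume \eqref{abstr_sndwchd_s<2_est_imp} holds with some constant $C(\tau)>0$ for all sufficiently small $|t|$ and $\varepsilon>0$. Apply inequality \eqref{abstr_cos_sandwiched_est_2} with $\tau$ replaced by $\varepsilon^{-1}\tau$: this yields
\begin{equation*}
\| \cos(\varepsilon^{-1}\tau A(t)^{1/2})P - \cos(\varepsilon^{-1}\tau (t^2 S)^{1/2} P)P \| \le \|M\|^2 \|M^{-1}\|^2 \|\mathcal{J}(t,\varepsilon^{-1}\tau)\|.
\end{equation*}
Multiplying by the smoothing factor $\varepsilon^s (t^2 + \varepsilon^2)^{-s/2}$ and invoking the assumed bound \eqref{abstr_sndwchd_s<2_est_imp}, we obtain
\begin{equation*}
\| \cos(\varepsilon^{-1}\tau A(t)^{1/2})P - \cos(\varepsilon^{-1}\tau (t^2 S)^{1/2} P)P \| \varepsilon^s (t^2 + \varepsilon^2)^{-s/2} \le \widetilde{C}(\tau)\varepsilon,
\end{equation*}
with $\widetilde{C}(\tau) := \|M\|^2\|M^{-1}\|^2 C(\tau)$, valid for all sufficiently small $|t|$ and $\varepsilon>0$.

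Next, I would invoke Lemma \ref{abstr_N_and_Nhat_lemma}: the hypothesis $\widehat{N}_{0,Q} \ne 0$ is equivalent to $N_0 \ne 0$ for the operator family $A(t)$. Since $N_0 \ne 0$ and $0 \le s < 2$, Theorem \ref{abstr_s<2_general_thrm} applies to $A(t)$ and asserts that no such inequality can hold. This directly contradicts the displayed bound derived above, and completes the proof.

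There is no serious obstacle here: all the technical work (the contradiction argument using the explicit sequence $\varepsilon_k$, the convergent expansions of $\sqrt{\lambda_j(t)}$, and the isolation of the low frequencies via $F(t)^\perp$) has already been carried out in the proof of Theorem \ref{abstr_s<2_general_thrm}. The only mildly delicate point is to make sure the translation between $\widehat{N}_{0,Q}\ne 0$ and $N_0 \ne 0$ is used correctly, but Lemma \ref{abstr_N_and_Nhat_lemma} supplies exactly this. Thus the proof amounts to a short reduction: apply \eqref{abstr_cos_sandwiched_est_2}, apply Lemma \ref{abstr_N_and_Nhat_lemma}, and invoke Theorem \ref{abstr_s<2_general_thrm}.
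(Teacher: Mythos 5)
Your proposal is correct and follows essentially the same route as the paper: the paper's proof likewise uses Lemma~\ref{abstr_N_and_Nhat_lemma} to translate $\widehat{N}_{0,Q}\ne 0$ into $N_0\ne 0$, applies inequality~\eqref{abstr_cos_sandwiched_est_2} to pass from the sandwiched bound to an estimate of the form~\eqref{abstr_s<2_est_imp}, and then contradicts Theorem~\ref{abstr_s<2_general_thrm}. No gaps.
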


\begin{proof} By Lemma~\ref{abstr_N_and_Nhat_lemma}, under our assumptions we have $N_0 \ne 0$.
We prove by contradiction. Fix $\tau \ne 0$.
Suppose that for some $0 \le s < 2$ there exists a constant $C (\tau) > 0$ such that~\eqref{abstr_sndwchd_s<2_est_imp}
 holds for all sufficiently small $|t|$ and $\varepsilon$.
 By~\eqref{abstr_cos_sandwiched_est_2}, this means that the inequality of the form~\eqref{abstr_s<2_est_imp} also holds
 (with some other constant).
But this contradicts the statement of Theorem~\ref{abstr_s<2_general_thrm}.
\end{proof}

\section*{Chapter 2. Periodic differential operators in $L_2(\mathbb{R}^d; \mathbb{C}^n)$}

\section{Preliminaries}

\subsection{Lattices $\Gamma$ and $\widetilde \Gamma$}

Let $\Gamma$ be a lattice in $\mathbb{R}^d$ generated by
the basis $\mathbf{a}_1, \ldots , \mathbf{a}_d$:
\begin{equation*}
\Gamma = \left\{ \mathbf{a} \in \mathbb{R}^d \colon \mathbf{a} = \sum_{j=1}^{d} n_j \mathbf{a}_j, \; n_j \in \mathbb{Z} \right\},
\end{equation*}
and let $\Omega$ be the (elementary) cell of this lattice:
\begin{equation*}
\Omega := \left\{ \mathbf{x} \in \mathbb{R}^d \colon \mathbf{x} = \sum_{j=1}^{d} \xi_j \mathbf{a}_j, \; 0 < \xi_j < 1 \right\}.
\end{equation*}
The basis $\mathbf{b}_1, \ldots , \mathbf{b}_d$ dual to $\mathbf{a}_1, \ldots , \mathbf{a}_d$ is defined by the relations
   $\langle \mathbf{b}_l, \mathbf{a}_j \rangle = 2 \pi \delta_{jl}$.
This basis generates the \textit{lattice  $\widetilde \Gamma$ dual to} $\Gamma$:
\begin{equation*}
\widetilde \Gamma = \left\{ \mathbf{b} \in \mathbb{R}^d \colon \mathbf{b} = 
\sum_{j=1}^{d} m_j \mathbf{b}_j, \; m_j \in \mathbb{Z} \right\} .
\end{equation*}
Let $\widetilde \Omega$   be the central \emph{Brillouin zone} of the lattice $\widetilde \Gamma $:
\begin{equation}
\label{Brillouin_zone}
\widetilde \Omega = \left\{ \mathbf{k} \in \mathbb{R}^d \colon | \mathbf{k} | < | \mathbf{k} - \mathbf{b} |, \; 0 \ne \mathbf{b} \in \widetilde \Gamma \right\}.
\end{equation}
Denote $| \Omega | = \mes \Omega$, $| \widetilde \Omega | = \mes \widetilde \Omega$.
Note that $| \Omega |  | \widetilde \Omega | = (2 \pi)^d$. Let $r_0$ be the maximal radius of the ball containing in
$\clos \widetilde \Omega$. We have
\begin{equation}
\label{r_0}
2 r_0 = \min|\mathbf{b}|,  \quad 0 \ne \mathbf{b} \in \widetilde \Gamma.
\end{equation}

With the lattice $\Gamma$, we associate the discrete Fourier transformation  $ \{ \hat{\mathbf{v}}_{\mathbf{b}}\} \mapsto \mathbf{v}$:
\begin{equation}
\label{fourier}
\mathbf{v}(\mathbf{x}) = | \Omega |^{-1/2} \sum_{\mathbf{b} \in \widetilde \Gamma} \hat{\mathbf{v}}_{\mathbf{b}} \exp (i \left<\mathbf{b}, \mathbf{x} \right>), \quad \mathbf{x} \in \Omega,
\end{equation}
which is a unitary mapping of $l_2 (\widetilde \Gamma; \mathbb{C}^n) $ onto $L_2 (\Omega; \mathbb{C}^n)$:
\begin{equation}
\label{fourier_unitary}
\intop_{\Omega} |\mathbf{v}(\mathbf{x})|^2 d \mathbf{x} = \sum_{\mathbf{b} \in \widetilde \Gamma} | \hat{\mathbf{v}}_{\mathbf{b}} |^2.
\end{equation}

\textit{By $\widetilde H^1(\Omega; \mathbb{C}^n)$ we denote the subspace in $H^1(\Omega; \mathbb{C}^n)$
consisting of the functions whose $\Gamma$-periodic extension to
$\mathbb{R}^d$ belongs to $H^1_{\mathrm{loc}}(\mathbb{R}^d; \mathbb{C}^n)$}. We have
\begin{equation}
\label{D_and_fourier}
\int_{\Omega} |(\mathbf{D} + \mathbf{k}) \mathbf{u}|^2\, d\mathbf{x} = \sum_{\mathbf{b} \in \widetilde{\Gamma}} |\mathbf{b} + \mathbf{k} |^2 |\hat{\mathbf{u}}_{\mathbf{b}}|^2, \quad \mathbf{u} \in \widetilde{H}^1(\Omega; \mathbb{C}^n), \; \mathbf{k} \in \mathbb{R}^d,
\end{equation}
and convergence of the series in the right-hand side of~\eqref{D_and_fourier} is equivalent to the relation
$\mathbf{u} \in \widetilde{H}^1(\Omega; \mathbb{C}^n)$. From~\eqref{Brillouin_zone},~\eqref{fourier_unitary}, and~\eqref{D_and_fourier} it follows that
\begin{equation}
\label{AO}
\int_{\Omega} |(\mathbf{D} + \mathbf{k}) \mathbf{u}|^2 d\mathbf{x} \ge \sum_{\mathbf{b} \in \widetilde{\Gamma}} | \mathbf{k} |^2 |\hat{\mathbf{u}}_{\mathbf{b}}|^2 = | \mathbf{k} |^2 \int_{\Omega} |\mathbf{u}|^2 d\mathbf{x}, \quad \mathbf{u} \in \widetilde{H}^1(\Omega; \mathbb{C}^n), \; \mathbf{k} \in \widetilde{\Omega}.
\end{equation}

\subsection{The Gelfand transformation}

Initially, the Gelfand transformation $\mathcal{U}$ is defined on the functions of the Schwartz class by the formula
\begin{multline*}
\widetilde{\mathbf{v}} ( \mathbf{k}, \mathbf{x}) = (\mathcal{U} \- \mathbf{v}) (\mathbf{k}, \mathbf{x}) = | \widetilde \Omega |^{-1/2} \sum_{\mathbf{a} \in \Gamma} \exp (- i \left< \mathbf{k}, \mathbf{x} + \mathbf{a} \right> ) \mathbf{v} ( \mathbf{x} + \mathbf{a}),
\\
\mathbf{v} \in \mathcal{S} (\mathbb{R}^d; \mathbb{C}^n), \quad \mathbf{x} \in \Omega, \quad \mathbf{k} \in \widetilde \Omega.
\end{multline*}
Since
\begin{equation*}
\int_{\widetilde \Omega} \int_{\Omega} | \widetilde{\mathbf{v}} ( \mathbf{k}, \mathbf{x}) |^2 d  \mathbf{x} \, d  \mathbf{k} = \int_{\mathbb{R}^d} | \mathbf{v} ( \mathbf{x} ) |^2 d \mathbf{x},
\end{equation*}
the transformation $\mathcal{U}$ extends by continuity up to a \textit{unitary mapping}
\begin{equation*}
\mathcal{U} \colon L_2 (\mathbb{R}^d; \mathbb{C}^n) \to \int_{\widetilde \Omega} \oplus  L_2 (\Omega; \mathbb{C}^n) d \mathbf{k} =: \mathcal{K}.
\end{equation*}
The relation $\mathbf{v} \in H^1 (\mathbb{R}^d; \mathbb{C}^n)$ is equivalent to the fact that
$\widetilde{\mathbf{v}} (\mathbf{k}, \cdot) \in \widetilde H^1 (\Omega; \mathbb{C}^n)$ for almost~every $\mathbf{k} \in \widetilde \Omega $ and
\begin{equation*}
\int_{\widetilde \Omega} \int_{\Omega} \left( | (\mathbf{D} + \mathbf{k}) \widetilde{\mathbf{v}} (\mathbf{k}, \mathbf{x}) |^2 + | \widetilde{\mathbf{v}} (\mathbf{k}, \mathbf{x}) |^2 \right) \, d \mathbf{x} \,  d \mathbf{k} < \infty.
\end{equation*}
Under the Gelfand transformation $\mathcal{U}$, the operator of multiplication by a bounded periodic function in
$L_2 (\mathbb{R}^d; \mathbb{C}^n)$ turns into multiplication by the same function on the fibers of the direct integral
$\mathcal{K}$. The operator $b(\mathbf{D})$ applied to
$\mathbf{v} \in H^1 (\mathbb{R}^d; \mathbb{C}^n)$ turns into  the operator $b(\mathbf{D} + \mathbf{k})$
applied to $\widetilde{\mathbf{v}} (\mathbf{k}, \cdot) \in \widetilde H^1 (\Omega; \mathbb{C}^n)$.

\section{The class of differential operators in $L_2(\mathbb{R}^d; \mathbb{C}^n)$}

\subsection{Factorized second order operators $\mathcal{A}$}
\label{A_oper_subsect}

Let $b(\mathbf{D})$ be a matrix first order differential operator of the form $b(\mathbf{D}) = \sum_{l=1}^d b_l D_l$; here $b_l$ are constant  $(m \times n)$-matrices
(in general, with complex entries). \emph{Assume that $m \ge n$}.
Consider the symbol $b(\boldsymbol{\xi})= \sum^d_{l=1} b_l \xi_l$, $ \boldsymbol{\xi} \in \mathbb{R}^d$, and \emph{assume that}
\hbox{$\rank b( \boldsymbol{\xi} ) = n$} for $0 \ne  \boldsymbol{\xi} \in \mathbb{R}^d $.
This condition is equivalent to the inequalities
\begin{equation}
\label{rank_alpha_ineq}
\alpha_0 \mathbf{1}_n \le b( \boldsymbol{\theta} )^* b( \boldsymbol{\theta} ) \le \alpha_1 \mathbf{1}_n, \quad  \boldsymbol{\theta} \in \mathbb{S}^{d-1}, \quad 0 < \alpha_0 \le \alpha_1 < \infty,
\end{equation}
with some positive constants $\alpha_0, \alpha_1 > 0$.

Suppose that an $(m\times m)$-matrix-valued function $h(\mathbf{x})$ and an $(n\times n)$-matrix-valued function $f(\mathbf{x})$
(in general, with complex entries) are $\Gamma$-periodic and such that
\begin{equation}
\label{h_f_L_inf}
f, f^{-1} \in L_{\infty} (\mathbb{R}^d); \quad h, h^{-1} \in L_{\infty} (\mathbb{R}^d).
\end{equation}
Consider the DO
\begin{gather*}
\mathcal{X} = h b( \mathbf{D} ) f \colon  L_2 (\mathbb{R}^d ; \mathbb{C}^n) \to  L_2 (\mathbb{R}^d ; \mathbb{C}^m), \\\Dom \mathcal{X} = \left\lbrace \mathbf{u} \in L_2 (\mathbb{R}^d ; \mathbb{C}^n) \colon f \mathbf{u} \in H^1  (\mathbb{R}^d ; \mathbb{C}^n) \right\rbrace .
\end{gather*}
The operator $\mathcal{X}$ is closed.
In $L_2 (\mathbb{R}^d ; \mathbb{C}^n)$, consider the selfadjoint operator
$\mathcal{A} = \mathcal{X}^* \mathcal{X}$ generated by the closed quadratic form
$\mathfrak{a}[\mathbf{u}, \mathbf{u}] = \| \mathcal{X} \mathbf{u} \|^2_{L_2(\mathbb{R}^d)}, \; \mathbf{u} \in \Dom \mathcal{X}$.
 Formally, we have
\begin{equation}
\label{A}
\mathcal{A} = f (\mathbf{x})^* b( \mathbf{D} )^* g( \mathbf{x} )  b( \mathbf{D} ) f(\mathbf{x}),
\end{equation}
where $g(\mathbf{x}) = h(\mathbf{x} )^* h( \mathbf{x})$.
Note that the Hermitian matrix-valued function $g(\mathbf{x})$ is bounded and uniformly positive definite.
Using the Fourier transformation and \eqref{rank_alpha_ineq},~\eqref{h_f_L_inf}, it is easy to check that
\begin{equation*}
\alpha_0 \| g^{-1} \|_{L_{\infty}}^{-1} \| \mathbf{D} (f \mathbf{u}) \|_{L_2({\mathbb R}^d)}^2 \le \mathfrak{a}[\mathbf{u}, \mathbf{u}] \le \alpha_1 \| g \|_{L_{\infty}} \| \mathbf{D} (f \mathbf{u}) \|_{L_2({\mathbb R}^d)}^2, \  \mathbf{u} \in \Dom \mathcal{X}.
\end{equation*}

\subsection{The operators $\mathcal{A}(\mathbf{k})$}

Putting
\begin{equation}
\label{Spaces_H}
\mathfrak{H} = L_2 (\Omega; \mathbb{C}^n), \quad \mathfrak{H}_* = L_2 (\Omega; \mathbb{C}^m),
\end{equation}
we consider the closed operator $\mathcal{X} (\mathbf{k}) \colon \mathfrak{H} \to \mathfrak{H}_*, \; \mathbf{k} \in \mathbb{R}^d$,
defined by
\begin{equation*}
\mathcal{X} (\mathbf{k}) = hb(\mathbf{D} + \mathbf{k})f, \quad \Dom \mathcal{X} (\mathbf{k}) = \left\lbrace \mathbf{u} \in \mathfrak{H} \colon   f \mathbf{u} \in \widetilde{H}^1 (\Omega; \mathbb{C}^n)\right\rbrace =: \mathfrak{d}.
\end{equation*}
The selfadjoint operator
\begin{equation*}
\mathcal{A} (\mathbf{k}) =\mathcal{X} (\mathbf{k})^* \mathcal{X} (\mathbf{k}) \colon \mathfrak{H} \to \mathfrak{H}
\end{equation*}
is generated by the closed quadratic form
\begin{equation*}
\mathfrak{a}(\mathbf{k})[\mathbf{u}, \mathbf{u}] = \| \mathcal{X}(\mathbf{k}) \mathbf{u} \|_{\mathfrak{H}_*}^2, \quad \mathbf{u} \in \mathfrak{d}.
\end{equation*}
Using the Fourier series expansion~\eqref{fourier} for $\mathbf{v}= f \mathbf{u}$ and conditions~\eqref{rank_alpha_ineq},~\eqref{h_f_L_inf},
it is easy to check that
\begin{multline}
\label{a(k)_form_est}
\alpha_0 \|g^{-1} \|_{L_\infty}^{-1} \|(\mathbf{D} + \mathbf{k}) f \mathbf{u} \|_{L_2 (\Omega)}^2 \le \mathfrak{a}(\mathbf{k})[\mathbf{u}, \mathbf{u}] \le \alpha_1 \|g \|_{L_\infty} \|(\mathbf{D} + \mathbf{k}) f \mathbf{u} \|_{L_2 (\Omega)}^2,
\\
\mathbf{u} \in \mathfrak{d}.
\end{multline}

By the lower estimate~\eqref{a(k)_form_est} and~\eqref{AO}, we have
\begin{align}
\label{A(k)_nondegenerated_and_c_*}	
\mathcal{A} (\mathbf{k}) &\ge c_* |\mathbf{k}|^2 I, \quad \mathbf{k} \in \widetilde{\Omega},
\\
\label{c*}
 c_* &= \alpha_0\|f^{-1} \|_{L_\infty}^{-2} \|g^{-1} \|_{L_\infty}^{-1} .
\end{align}

We put
\begin{equation}
\label{Ker1}
\mathfrak{N} := \Ker \mathcal{A} (0) = \Ker \mathcal{X} (0).
\end{equation}
Relations~\eqref{a(k)_form_est} with $\mathbf{k} = 0$ show that
\begin{equation}
\label{Ker2}
\mathfrak{N} = \left\lbrace \mathbf{u} \in L_2 (\Omega; \mathbb{C}^n) \colon f \mathbf{u} = \mathbf{c} \in \mathbb{C}^n \right\rbrace, \quad \dim \mathfrak{N} = n.
\end{equation}

By~\eqref{r_0} and~\eqref{D_and_fourier} with $\mathbf{k} = 0$,
\begin{equation}
\label{AP}
\| \mathbf{D} \mathbf{v} \|_{L_2 (\Omega)}^2 \ge 4 r_0^2 \| \mathbf{v} \|_{L_2 (\Omega)}^2, \quad \mathbf{v} \in \widetilde{H}^1 (\Omega; \mathbb{C}^n), \; \int_{\Omega} \mathbf{v} \, d \mathbf{x} = 0.
\end{equation}
From~\eqref{AP} and the lower estimate~\eqref{a(k)_form_est} with $\mathbf{k} = 0$ it follows that
the distance $d^0$ from the point $\lambda_0 = 0$ to the rest of the spectrum of $\mathcal{A}(0)$ satisfies
\begin{equation}
\label{d0_est}
d^0 \ge 4 c_* r_0^2.
\end{equation}

\subsection{The band functions}

The consecutive eigenvalues $E_j(\mathbf{k})$, $j \in \mathbb{N}$,
of the operator $\mathcal{A}(\mathbf{k})$ (counted with multiplicities) are called \textit{band functions}:
\begin{equation*}
E_1(\mathbf{k}) \le E_2(\mathbf{k}) \le \ldots \le E_j(\mathbf{k}) \le \ldots, \quad \mathbf{k} \in \mathbb{R}^d.
\end{equation*}
The band functions $E_j(\mathbf{k})$ are continuous and $\widetilde{\Gamma}$-periodic.
As shown in~\cite[Chapter~2, Subsection~2.2]{BSu1} (by simple variational arguments),
the band functions satisfy the following estimates:
\begin{align}
\notag
E_j(\mathbf{k}) &\ge c_* | \mathbf{k} |^2, \quad  \mathbf{k} \in \clos \widetilde{\Omega}, \quad j = 1, \ldots, n,
\\
\label{E_n+1}
E_{n+1}(\mathbf{k}) &\ge c_* r_0^2, \quad \mathbf{k} \in \clos \widetilde{\Omega},
\\
\notag
E_{n+1}(0) &\ge 4 c_* r_0^2.
\end{align}

\subsection{The direct integral expansion for the operator $\mathcal{A}$}
With the help of the Gelfand transformation, the operator $\mathcal A$ is represented as the direct integral of the operators
$\mathcal{A} (\mathbf{k})$:
\begin{equation}
\label{decompose}
\mathcal{U} \mathcal{A}  \mathcal{U}^{-1} = \int_{\widetilde \Omega} \oplus \mathcal{A} (\mathbf{k}) \, d \mathbf{k}.
\end{equation}
This means the following.
If $\mathbf{v} \in \Dom \mathcal{X}$, then
\begin{gather}
\label{AA}
\widetilde{\mathbf{v}}(\mathbf{k}, \cdot) \in \mathfrak{d} \qquad \text{for a.~e.} \; \mathbf{k} \in \widetilde \Omega,
\\
\label{AB}
\mathfrak{a}[\mathbf{v}, \mathbf{v}] = \int_{\widetilde{\Omega}} \mathfrak{a}(\mathbf{k}) [\widetilde{\mathbf{v}}(\mathbf{k}, \cdot), \widetilde{\mathbf{v}}(\mathbf{k}, \cdot)] \, d \mathbf{k} .
\end{gather}
Conversely, if $\widetilde{\mathbf{v}} \in \mathcal{K}$ satisfies~\eqref{AA} and the integral in~\eqref{AB} is finite, then
$\mathbf{v} \in \Dom \mathcal{X}$ and~\eqref{AB} is valid.

From~\eqref{decompose} it follows that the spectrum of $\mathcal{A}$
is the union of segments  (spectral bands)
$\Ran E_j,\; j \in \mathbb{N}$. By~\eqref{Ker1} and~\eqref{Ker2},
\begin{equation*}
\min_{\mathbf{k}} E_j (\mathbf{k}) = E_j (0) = 0,\quad j = 1, \ldots, n,
\end{equation*}
i.~e., the first $n$ spectral bands of $\mathcal A$ overlap and have the common bottom $\lambda_0=0$,
while the $(n+1)$-th band is separated from zero (see \eqref{E_n+1}).

\subsection{Incorporation of the operators $\mathcal{A} (\mathbf{k})$ in the pattern of Chapter~1}
If $ d > 1 $, the operators $\mathcal{A} (\mathbf{k})$ depend on the multidimensional parameter $\mathbf{k}$.
As in \cite[Chapter~2]{BSu1}, we distinguish the onedimensional parameter $t = | \mathbf{k}|$.
We will apply the scheme of Chapter~1.
Then all constructions and estimates will depend on the additional parameter $\boldsymbol{\theta} = \mathbf{k} / | \mathbf{k}| \in \mathbb{S}^{d-1}$. We have to make our estimates uniform in $\boldsymbol{\theta}$.
The spaces $\mathfrak{H}$ and $\mathfrak{H}_*$ are defined by~\eqref{Spaces_H}.
We put $X(t) = X(t, \boldsymbol{\theta}) := \mathcal{X}(t \boldsymbol{\theta})$.
Then $X(t, \boldsymbol{\theta}) = X_0 + t  X_1 (\boldsymbol{\theta})$, where
$X_0 = h(\mathbf{x}) b (\mathbf{D}) f(\mathbf{x}), \; \Dom X_0 = \mathfrak{d}$,
and $ X_1 (\boldsymbol{\theta})$  is a bounded operator of multiplication by the matrix-valued function
$h(\mathbf{x}) b(\boldsymbol{\theta}) f(\mathbf{x})$.
Next, we put $A(t) = A(t, \boldsymbol{\theta}) := \mathcal{A}(t \boldsymbol{\theta})$.
Then $A(t, \boldsymbol{\theta}) = X(t, \boldsymbol{\theta})^* X(t, \boldsymbol{\theta})$.
According to~\eqref{Ker1},~\eqref{Ker2}, we have $\mathfrak{N} = \Ker X_0 = \Ker \mathcal{A} (0), \; \dim \mathfrak{N} = n$.
The number $d^0$ satisfies estimate~(\ref{d0_est}).
As shown in \cite[Chapter~2,~Section~3]{BSu1}, condition $n \le n_* = \dim \Ker X^*_0$ is also satisfied.
Moreover, either $n_* = n$ (if $m = n$), or $n_* = \infty$ (if $m > n$).

Thus, all assumptions of the abstract scheme are satisfied.

   In Subsection~\ref{abstr_X_A_section}, it was required to choose a number $\delta \in (0, d^0/8)$.
   Taking~\eqref{c*} and \eqref{d0_est} into account, we fix $\delta$ as follows:
\begin{equation}
\label{delta_fixation}
\delta = \frac{1}{4} c_* r^2_0 = \frac{1}{4} \alpha_0\|f^{-1} \|_{L_\infty}^{-2} \|g^{-1} \|_{L_\infty}^{-1} r^2_0.
\end{equation}
Next, by~\eqref{rank_alpha_ineq} and~\eqref{h_f_L_inf}, we have
\begin{equation}
\label{X_1_estimate}
\| X_1 (\boldsymbol{\theta}) \| \le  \alpha^{1/2}_1 \| h \|_{L_{\infty}} \| f \|_{L_{\infty}}, \quad \boldsymbol{\theta} \in \mathbb{S}^{d-1}.
\end{equation}
This allows us to choose $t^0$ (see~\eqref{abstr_t0_fixation})
equal to the following number independent of $\boldsymbol{\theta} \in \mathbb{S}^{d-1}$:
\begin{equation}
\label{t0_fixation}
\begin{aligned}
t^0 &= \delta^{1/2} \alpha_1^{-1/2} \| h \|_{L_{\infty}}^{-1} \|f\|_{L_{\infty}}^{-1}
\\
&= \frac{r_0}{2} \alpha_0^{1/2} \alpha_1^{-1/2} \left( \| h \|_{L_{\infty}} \| h^{-1} \|_{L_{\infty}} \|f \|_{L_\infty} \|f^{-1} \|_{L_\infty} \right)^{-1}.
\end{aligned}
\end{equation}
Obviously, $t^0 \le r_0/2$. Thus, the ball $|\mathbf{k}| \le t^0$ lies inside $\widetilde{\Omega}$.
 It is important that $c_*$, $\delta$, and $t^0$ (see~\eqref{c*}, \eqref{delta_fixation}, and
 \eqref{t0_fixation}) do not depend on $\boldsymbol{\theta}$.

By~\eqref{A(k)_nondegenerated_and_c_*},
Condition~\ref{nondeg} is now satisfied.
The germ $S(\boldsymbol{\theta})$ of the operator $A(t, \boldsymbol{\theta})$ is nondegenerate uniformly in
$\boldsymbol{\theta}$: we have $S(\boldsymbol{\theta}) \ge c_* I_{\mathfrak{N}}$ (cf.~\eqref{abstr_S_nondegenerated}).

\section{The effective characteristics \\ of the operator $\widehat{\mathcal{A}} = b(\mathbf{D})^* g(\mathbf{x}) b(\mathbf{D})$}

\subsection{The case where $f = \mathbf{1}_n$}
In the case where $f = \mathbf{1}_n$, the operator $A(t, \boldsymbol{\theta})$ plays a special role.
In this case, all the objects will be marked by \textquotedblleft$\, \widehat{\phantom{\_}} \,$\textquotedblright.
For instance, for the operator
\begin{equation}
\label{hatA}
\widehat{\mathcal{A}} = b(\mathbf{D})^* g(\mathbf{x}) b(\mathbf{D})
\end{equation}
the family $\widehat{\mathcal{A}} (\mathbf{k})$ is denoted by
$\widehat{A} (t, \boldsymbol{\theta})$. The kernel~(\ref{Ker2}) takes the form
\begin{equation}
\label{Ker3}
\widehat{\mathfrak{N}} = \left\lbrace \mathbf{u} \in L_2 (\Omega; \mathbb{C}^n) \colon \mathbf{u} = \mathbf{c} \in \mathbb{C}^n \right\rbrace,
\end{equation}
i.~e., $\widehat{\mathfrak{N}}$ consists of constant vector-valued functions.
The orthogonal projection $\widehat{P}$ of the space $L_2 (\Omega; \mathbb{C}^n)$ onto the subspace \eqref{Ker3}
is the operator of averaging over the cell:
\begin{equation}
    \label{Phat_projector}
	\widehat{P} \mathbf{u} = |\Omega|^{-1} \int_{\Omega} \mathbf{u} (\mathbf{x}) \, d\mathbf{x}.
\end{equation}

If $f = \mathbf{1}_n$, the constants \eqref{c*}, \eqref{delta_fixation}, and~\eqref{t0_fixation}
take the form
\begin{align}
\label{hatc_*}
&\widehat{c}_*  = \alpha_0 \|g^{-1} \|_{L_\infty}^{-1}, \\
\label{hatdelta_fixation}
&\widehat{\delta} =  \frac{1}{4} \alpha_0 \|g^{-1} \|_{L_\infty}^{-1} r^2_0,\\
\label{hatt0_fixation}
&\widehat{t}^{\,0}  = \frac{r_0}{2} \alpha_0^{1/2} \alpha_1^{-1/2} \left( \| h \|_{L_{\infty}} \| h^{-1} \|_{L_{\infty}} \right)^{-1}.
\end{align}

Inequality \eqref{X_1_estimate} takes the form
\begin{equation}
\label{hatX_1_estmate}
\| \widehat{X}_1 (\boldsymbol{\theta}) \| \le \alpha_1^{1/2} \| g \|_{L_{\infty}}^{1/2}.
\end{equation}

\subsection{The germ of the operator $\widehat{A}(t, \boldsymbol{\theta})$}
\label{germ_and_eff_g0_section}
According to~\cite[Chapter~3,~Section~1]{BSu1},
the spectral germ $\widehat{S} (\boldsymbol{\theta})$ of the family
$\widehat{A}(t, \boldsymbol{\theta})$ acts in $\widehat{\mathfrak{N}}$ and is represented as
\begin{equation*}
\widehat{S} (\boldsymbol{\theta}) = b(\boldsymbol{\theta})^* g^0 b(\boldsymbol{\theta}), \quad  \boldsymbol{\theta} \in \mathbb{S}^{d-1},
\end{equation*}
where $b(\boldsymbol{\theta})$ is the symbol of the operator $b(\mathbf{D})$, and $g^0$ is the so called \emph{effective matrix}. The constant
       ($m \times m$)-matrix $g^0$ is defined as follows. Suppose that a $\Gamma$-periodic $(n \times m)$-matrix-valued function
       $\Lambda \in \widetilde{H}^1 (\Omega)$ is the weak solution of the problem
       \begin{equation}
\label{equation_for_Lambda}
b(\mathbf{D})^* g(\mathbf{x}) (b(\mathbf{D}) \Lambda (\mathbf{x}) + \mathbf{1}_m) = 0, \quad \int_{\Omega} \Lambda (\mathbf{x}) \, d \mathbf{x} = 0.
\end{equation}
The effective matrix $g^0$ is given by
\begin{equation}
\label{g0}
g^0 = | \Omega |^{-1} \int_{\Omega} \widetilde{g} (\mathbf{x}) \, d \mathbf{x},
\end{equation}
where
\begin{equation}
\label{g_tilde}
\widetilde{g} (\mathbf{x}) := g(\mathbf{x})( b(\mathbf{D}) \Lambda (\mathbf{x}) + \mathbf{1}_m).
\end{equation}
It turns out that the matrix $g^0$ is positive definite.

\subsection{The effective operator}
Consider the symbol
\begin{equation}
\label{effective_oper_symb}
\widehat{S} (\mathbf{k}) := t^2 \widehat{S} (\boldsymbol{\theta}) = b(\mathbf{k})^* g^0 b(\mathbf{k}), \quad \mathbf{k} \in \mathbb{R}^{d}.
\end{equation}
Expression~(\ref{effective_oper_symb}) is the symbol of the DO
\begin{equation}
\label{hatA0}
\widehat{\mathcal{A}}^0 = b(\mathbf{D})^* g^0 b(\mathbf{D})
\end{equation}
acting in $L_2(\mathbb{R}^d; \mathbb{C}^n)$ and called the \emph{effective operator} for the operator~$\widehat{\mathcal{A}}$.

Let $\widehat{\mathcal{A}}^0 (\mathbf{k})$ be the operator family in~$L_2(\Omega; \mathbb{C}^n)$ corresponding to the operator~\eqref{hatA0}.
Then $\widehat{\mathcal{A}}^0 (\mathbf{k})$ is given by the expression
$b(\mathbf{D} + \mathbf{k})^* g^0 b(\mathbf{D} + \mathbf{k})$ with periodic boundary conditions. By~\eqref{Phat_projector} and~\eqref{effective_oper_symb}, we have
\begin{equation}
\label{hatS_P=hatA^0_P}
\widehat{S} (\mathbf{k}) \widehat{P} = \widehat{\mathcal{A}}^0 (\mathbf{k}) \widehat{P}.
\end{equation}

\subsection{Properties of the effective matrix}

The following properties of the matrix $g^0$ were checked in~\cite[Chapter~3, Theorem~1.5]{BSu1}.
\begin{proposition}
    The effective matrix satisfies the estimates
    \begin{equation}
    \label{Voigt_Reuss}
    \underline{g} \le g^0 \le \overline{g},
    \end{equation}
    where
    \begin{equation*}
    \overline{g} := | \Omega |^{-1} \int_{\Omega} g (\mathbf{x}) \, d \mathbf{x},
    \quad \underline{g} := \left( | \Omega |^{-1} \int_{\Omega} g (\mathbf{x})^{-1} \, d \mathbf{x}\right)^{-1}.
    \end{equation*}
    If $m = n$, then $g^0 = \underline{g}$.
\end{proposition}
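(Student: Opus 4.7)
The plan is to obtain both bounds from a single variational identity, and then to specialize the analysis when $m=n$. The key observation is that for every $\mathbf{c}\in\mathbb{C}^{m}$, the vector-valued function $\eta_{\mathbf{c}}(\mathbf{x}):=b(\mathbf{D})\Lambda(\mathbf{x})\mathbf{c}+\mathbf{c}$ satisfies
\begin{equation*}
|\Omega|\,\langle g^{0}\mathbf{c},\mathbf{c}\rangle \;=\; \int_{\Omega}\langle g(\mathbf{x})\eta_{\mathbf{c}}(\mathbf{x}),\eta_{\mathbf{c}}(\mathbf{x})\rangle\,d\mathbf{x}.
\end{equation*}
To derive this I would test the weak form of~\eqref{equation_for_Lambda} against $\Lambda\mathbf{c}$; since $(g\eta_{\mathbf{c}},b(\mathbf{D})\Lambda\mathbf{c})=0$ by the equation, the identity $\int_{\Omega}\langle g\eta_{\mathbf{c}},\eta_{\mathbf{c}}\rangle\,d\mathbf{x}=\int_{\Omega}\langle g\eta_{\mathbf{c}},\mathbf{c}\rangle\,d\mathbf{x}=\int_{\Omega}\langle\widetilde{g}\mathbf{c},\mathbf{c}\rangle\,d\mathbf{x}=|\Omega|\langle g^{0}\mathbf{c},\mathbf{c}\rangle$ follows directly from \eqref{g0},~\eqref{g_tilde}.

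For the upper bound I would invoke the minimization principle behind this identity. The strictly convex functional $\Phi\mapsto\int_{\Omega}\langle g(b(\mathbf{D})\Phi+\mathbf{c}),b(\mathbf{D})\Phi+\mathbf{c}\rangle\,d\mathbf{x}$ on $\widetilde{H}^{1}(\Omega;\mathbb{C}^{m})$ has Euler--Lagrange equation exactly~\eqref{equation_for_Lambda} applied to $\Lambda\mathbf{c}$, so its minimum value equals $|\Omega|\langle g^{0}\mathbf{c},\mathbf{c}\rangle$. Plugging in the admissible competitor $\Phi=0$ yields $|\Omega|\langle g^{0}\mathbf{c},\mathbf{c}\rangle\le\int_{\Omega}\langle g\mathbf{c},\mathbf{c}\rangle\,d\mathbf{x}=|\Omega|\langle\overline{g}\mathbf{c},\mathbf{c}\rangle$, that is, $g^{0}\le\overline{g}$.

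For the lower bound I would use a weighted Cauchy--Schwarz inequality. For any constant $\mathbf{q}\in\mathbb{C}^{m}$, writing $\eta_{\mathbf{c}}=g^{-1/2}(g^{1/2}\eta_{\mathbf{c}})$,
\begin{equation*}
\left|\int_{\Omega}\langle\eta_{\mathbf{c}},\mathbf{q}\rangle\,d\mathbf{x}\right|^{2}\le\int_{\Omega}\langle g\eta_{\mathbf{c}},\eta_{\mathbf{c}}\rangle\,d\mathbf{x}\cdot\int_{\Omega}\langle g^{-1}\mathbf{q},\mathbf{q}\rangle\,d\mathbf{x}.
\end{equation*}
Because $\int_{\Omega}b(\mathbf{D})\Lambda\mathbf{c}\,d\mathbf{x}=0$ by periodicity, the left-hand side equals $|\Omega|^{2}|\langle\mathbf{c},\mathbf{q}\rangle|^{2}$, while the right-hand side equals $|\Omega|^{2}\langle g^{0}\mathbf{c},\mathbf{c}\rangle\langle\underline{g}^{-1}\mathbf{q},\mathbf{q}\rangle$. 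Taking the optimal test vector $\mathbf{q}=\underline{g}\mathbf{c}$ and cancelling the common factor $\langle\underline{g}\mathbf{c},\mathbf{c}\rangle$ gives $\langle\underline{g}\mathbf{c},\mathbf{c}\rangle\le\langle g^{0}\mathbf{c},\mathbf{c}\rangle$, i.e., $\underline{g}\le g^{0}$.

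Finally, for the equality case $m=n$, I would exploit the fact that~\eqref{rank_alpha_ineq} makes $b(\boldsymbol{\xi})$ an invertible $(n\times n)$-matrix for every $0\ne\boldsymbol{\xi}\in\mathbb{R}^{d}$. The vector-valued function $\mathbf{f}(\mathbf{x}):=(g(\mathbf{x})^{-1}\underline{g}-\mathbf{1}_{m})\mathbf{c}$ is $\Gamma$-periodic and has zero mean by the very definition of $\underline{g}$, so its Fourier coefficient at $\mathbf{b}=0$ vanishes; for $0\ne\mathbf{b}\in\widetilde{\Gamma}$ the bound $\|b(\mathbf{b})^{-1}\|\lesssim|\mathbf{b}|^{-1}$ from \eqref{rank_alpha_ineq} shows that the Fourier series $\hat{\Lambda}_{\mathbf{b}}:=b(\mathbf{b})^{-1}\hat{\mathbf{f}}_{\mathbf{b}}$ converges in $\widetilde{H}^{1}(\Omega;\mathbb{C}^{n})$, producing a solution $\Lambda_{0}\mathbf{c}\in\widetilde{H}^{1}$ of $b(\mathbf{D})\Lambda_{0}\mathbf{c}=\mathbf{f}$. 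The associated $\eta_{\mathbf{c}}=g^{-1}\underline{g}\mathbf{c}$ then makes $g\eta_{\mathbf{c}}=\underline{g}\mathbf{c}$ constant, so $b(\mathbf{D})^{*}g\eta_{\mathbf{c}}=0$ is automatic; by uniqueness of the solution to~\eqref{equation_for_Lambda}, $\Lambda_{0}\mathbf{c}=\Lambda\mathbf{c}$, so $\widetilde{g}\equiv\underline{g}$ and $g^{0}=\underline{g}$. The main subtlety, though minor, is justifying this Fourier solvability uniformly, but the rank hypothesis~\eqref{rank_alpha_ineq} provides exactly the required quantitative ellipticity.
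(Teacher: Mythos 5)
Your proof is correct. The paper itself does not prove this proposition but cites \cite[Chapter~3, Theorem~1.5, Propositions~1.6--1.7]{BSu1}, and your argument is the standard one found there: the energy identity $|\Omega|\langle g^0\mathbf{c},\mathbf{c}\rangle=\int_\Omega\langle g\eta_{\mathbf c},\eta_{\mathbf c}\rangle\,d\mathbf{x}$ combined with the direct variational principle (competitor $\Phi=0$) for the Voigt bound $g^0\le\overline{g}$, the weighted Cauchy--Schwarz duality argument for the Reuss bound $\underline{g}\le g^0$, and for $m=n$ the explicit Fourier-series construction of $\Lambda$ from the invertibility of $b(\boldsymbol{\xi})$, giving $\widetilde{g}\equiv\underline{g}$. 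The only blemish is a typo: the competitor $\Phi$ in the variational functional should range over $\widetilde{H}^1(\Omega;\mathbb{C}^n)$, not $\widetilde{H}^1(\Omega;\mathbb{C}^m)$.
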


For specific DOs, estimates~\eqref{Voigt_Reuss} are known in homogenization theory as the Voigt-Reuss bracketing.
Now we distinguish the cases where one of the inequalities in~\eqref{Voigt_Reuss} becomes an identity.
    The following statements were obtained in~\cite[Chapter~3, Propositions~1.6, 1.7]{BSu1}.

\begin{proposition}
    The identity $g^0 = \overline{g}$ is equivalent to the relations
    \begin{equation}
    \label{g0=overline_g_relat}
    b(\mathbf{D})^* \mathbf{g}_k (\mathbf{x}) = 0, \quad k = 1, \ldots, m,
    \end{equation}
    where $\mathbf{g}_k (\mathbf{x}), \; k = 1, \ldots,m,$~are the columns of the matrix~$g (\mathbf{x})$.
\end{proposition}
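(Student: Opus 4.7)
The plan is to work directly with the defining formulas. Recall that
\[
g^0 - \overline{g} = |\Omega|^{-1} \int_{\Omega} g(\mathbf{x}) b(\mathbf{D}) \Lambda(\mathbf{x}) \, d\mathbf{x},
\]
where the columns $\lambda_j$ of $\Lambda$ solve the weak problem
$b(\mathbf{D})^* g(\mathbf{x})(b(\mathbf{D}) \lambda_j + \mathbf{e}_j) = 0$ with zero mean.

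For the easy direction, I would assume~\eqref{g0=overline_g_relat}. Then $b(\mathbf{D})^* g(\mathbf{x}) \mathbf{1}_m = 0$, so the zero matrix is a weak solution of~\eqref{equation_for_Lambda}. By uniqueness of the solution in $\widetilde{H}^1(\Omega)$ with zero mean, $\Lambda \equiv 0$, hence $\widetilde{g} = g$ and $g^0 = \overline{g}$.

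For the converse, the key is the standard energy identity obtained by testing the equation for $\lambda_j$ against $\lambda_j$ itself: since $\lambda_j$ is admissible as a periodic test function,
\[
\int_{\Omega} \langle g(\mathbf{x})(b(\mathbf{D})\lambda_j + \mathbf{e}_j), b(\mathbf{D})\lambda_j \rangle \, d\mathbf{x} = 0,
\]
which rewrites as
\[
\int_{\Omega} \langle \mathbf{g}_j(\mathbf{x}), b(\mathbf{D})\lambda_j(\mathbf{x}) \rangle \, d\mathbf{x} = -\int_{\Omega} \langle g(\mathbf{x}) b(\mathbf{D})\lambda_j, b(\mathbf{D})\lambda_j \rangle \, d\mathbf{x}.
\]
The left-hand side is precisely $|\Omega|(g^0 - \overline{g})_{jj}$, so assuming $g^0 = \overline{g}$ forces the right-hand side to vanish. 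Since $g(\mathbf{x})$ is uniformly positive definite, this yields $b(\mathbf{D}) \lambda_j = 0$ in $L_2(\Omega)$. Substituting $b(\mathbf{D}) \lambda_j = 0$ back into the defining equation for $\lambda_j$ gives $b(\mathbf{D})^* g(\mathbf{x}) \mathbf{e}_j = 0$, i.e., $b(\mathbf{D})^* \mathbf{g}_j = 0$ for $j = 1, \ldots, m$.

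I do not expect any genuine obstacle: the only subtle point is to verify that the diagonal entries of $g^0 - \overline{g}$ alone are sufficient to conclude, which works because they are (nonpositive) real quadratic-form values. The Hermiticity of $g$ makes the inner-product computation $\langle \mathbf{e}_k, g(\mathbf{x}) v \rangle = \langle \mathbf{g}_k, v \rangle$ straightforward, so one just has to record the sign carefully and invoke the uniform lower bound $g(\mathbf{x}) \ge \|g^{-1}\|_{L_\infty}^{-1} \mathbf{1}_m$ in the final step.
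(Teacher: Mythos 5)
Your proof is correct, and it follows the same energy/variational route as the source: the paper itself only cites this statement from [BSu1, Chapter~3, Proposition~1.6], where the argument is precisely the identity $\langle(\overline{g}-g^0)\mathbf{e}_j,\mathbf{e}_j\rangle = |\Omega|^{-1}\int_\Omega\langle g\,b(\mathbf{D})\lambda_j, b(\mathbf{D})\lambda_j\rangle\,d\mathbf{x}$ obtained by testing the cell problem with $\lambda_j$, combined with uniqueness of the zero-mean periodic solution for the converse. The only cosmetic point is that for Hermitian $g$ with complex entries your ``left-hand side'' equals $|\Omega|\overline{(g^0-\overline{g})_{jj}}$ rather than $|\Omega|(g^0-\overline{g})_{jj}$, which is harmless since both are real, as you note.
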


\begin{proposition}
    The identity $g^0 = \underline{g}$ is equivalent to the representations
    \begin{equation}
    \label{g0=underline_g_relat}
    \mathbf{l}_k (\mathbf{x}) = \mathbf{l}^0_k + b(\mathbf{D}) \mathbf{w}_k(\mathbf{x}), \quad \mathbf{l}^0_k \in \mathbb{C}^m, \quad \mathbf{w}_k \in \widetilde{H}^1 (\Omega; \mathbb{C}^n), \quad k = 1, \ldots,m,
    \end{equation}
    where $\mathbf{l}_k (\mathbf{x}), \; k = 1, \ldots,m,$~are the columns of the matrix~$g (\mathbf{x})^{-1}$.
\end{proposition}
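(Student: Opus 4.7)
The plan is to show that both directions reduce to the assertion that $g^0=\underline{g}$ is equivalent to $\widetilde{g}(\mathbf{x})$ being a.e.\ equal to the constant matrix $\underline{g}$. Once that is established, the representation of the columns $\mathbf{l}_k$ follows from inverting the definition of $\widetilde{g}$. Throughout I will write the relation column-by-column in matrix form: $g^{-1}=L^{0}+b(\mathbf{D})W$, where $L^{0}$ is the constant $(m\times m)$-matrix with columns $\mathbf{l}^{0}_{k}$ and $W$ is the $(n\times m)$-matrix with columns $\mathbf{w}_{k}$.

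For the implication $(\Leftarrow)$, averaging $g^{-1}=L^{0}+b(\mathbf{D})W$ over $\Omega$ gives $L^{0}=\overline{g^{-1}}=\underline{g}^{-1}$. Multiplying on the left by $g$ yields $g\,b(\mathbf{D})W=\mathbf{1}_{m}-g\underline{g}^{-1}$. Choosing the $\mathbf{w}_{k}$ to have zero mean, define $\Lambda_{0}:=W\underline{g}$, which lies in $\widetilde{H}^{1}(\Omega)$ and has zero mean. A direct computation gives
\begin{equation*}
g\bigl(b(\mathbf{D})\Lambda_{0}+\mathbf{1}_{m}\bigr)=(g\,b(\mathbf{D})W)\underline{g}+g=(\mathbf{1}_{m}-g\underline{g}^{-1})\underline{g}+g=\underline{g},
\end{equation*}
so $\Lambda_{0}$ solves the cell problem \eqref{equation_for_Lambda} (by uniqueness $\Lambda_{0}=\Lambda$), and then $g^{0}=|\Omega|^{-1}\int_{\Omega}\widetilde{g}\,d\mathbf{x}=\underline{g}$.

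For the implication $(\Rightarrow)$, I will show that the assumption $g^{0}=\underline{g}$ forces $\widetilde{g}(\mathbf{x})\boldsymbol{\xi}$ to be constant for every $\boldsymbol{\xi}\in\mathbb{C}^{m}$. Fix $\boldsymbol{\xi}$ and set $\mathbf{q}(\mathbf{x}):=\widetilde{g}(\mathbf{x})\boldsymbol{\xi}$ and $\mathbf{q}_{0}:=\underline{g}\boldsymbol{\xi}$. The Euler equation for $\Lambda$ gives $b(\mathbf{D})^{*}\mathbf{q}=0$, and by assumption $\overline{\mathbf{q}}=g^{0}\boldsymbol{\xi}=\underline{g}\boldsymbol{\xi}=\mathbf{q}_{0}$. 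A short calculation using $g^{-1}\mathbf{q}=b(\mathbf{D})\Lambda\boldsymbol{\xi}+\boldsymbol{\xi}$, integration by parts against the constant $\mathbf{q}_{0}$, and the identity $\overline{g^{-1}}=\underline{g}^{-1}$ shows
\begin{equation*}
|\Omega|^{-1}\!\int_{\Omega}\!\langle g^{-1}(\mathbf{q}-\mathbf{q}_{0}),\mathbf{q}-\mathbf{q}_{0}\rangle\,d\mathbf{x}=\langle g^{0}\boldsymbol{\xi},\boldsymbol{\xi}\rangle-2\langle\underline{g}\boldsymbol{\xi},\boldsymbol{\xi}\rangle+\langle\underline{g}\boldsymbol{\xi},\boldsymbol{\xi}\rangle=0.
\end{equation*}
Since $g^{-1}$ is uniformly positive definite, $\mathbf{q}\equiv\mathbf{q}_{0}$, hence $\widetilde{g}\equiv\underline{g}$. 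Consequently $b(\mathbf{D})\Lambda+\mathbf{1}_{m}=g^{-1}\underline{g}$, i.e.\ $g^{-1}=\underline{g}^{-1}+b(\mathbf{D})(\Lambda\underline{g}^{-1})$, and reading off the $k$-th column yields the required representation with $\mathbf{l}^{0}_{k}=\underline{g}^{-1}\mathbf{e}_{k}$ and $\mathbf{w}_{k}=\Lambda\underline{g}^{-1}\mathbf{e}_{k}\in\widetilde{H}^{1}(\Omega;\mathbb{C}^{n})$.

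The main (and really only) point requiring care is the identity $|\Omega|^{-1}\int\langle g^{-1}\mathbf{q},\mathbf{q}\rangle\,d\mathbf{x}=\langle g^{0}\boldsymbol{\xi},\boldsymbol{\xi}\rangle$ used in the $(\Rightarrow)$ direction, which relies on integrating $b(\mathbf{D})\Lambda\boldsymbol{\xi}$ against the divergence-free test field $\widetilde{g}\boldsymbol{\xi}$ and on the Hermitian symmetry of $g^{0}$ and $\underline{g}$. This can be viewed as the appearance of the constant field as the natural trial function in the dual variational principle for the effective matrix, which is precisely why the lower Voigt--Reuss bound $g^{0}\ge\underline{g}$ is saturated exactly when $\widetilde{g}$ is constant.
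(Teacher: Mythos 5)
Your proof is correct: both directions check out (the averaging argument and uniqueness of the zero-mean cell solution in the $(\Leftarrow)$ direction, and the expansion of $|\Omega|^{-1}\int_\Omega\langle g^{-1}(\mathbf{q}-\mathbf{q}_0),\mathbf{q}-\mathbf{q}_0\rangle\,d\mathbf{x}$ together with the Euler equation and uniform positivity of $g^{-1}$ in the $(\Rightarrow)$ direction). The paper itself gives no proof but cites \cite[Chapter~3, Proposition~1.7]{BSu1}, and your argument is essentially the standard one from that reference — the saturation case of the lower Voigt--Reuss bound, equivalent to $\widetilde{g}\equiv\underline{g}$ — so there is nothing further to add.
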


\subsection{The analytic branches of eigenvalues and eigenvectors}

The analytic (in $t$) branches of the eigenvalues~$\widehat{\lambda}_l (t, \boldsymbol{\theta})$ and the analytic branches of
the eigenvectors $\widehat{\varphi}_l (t, \boldsymbol{\theta})$ of $\widehat{A} (t, \boldsymbol{\theta})$ admit the power series expansions of the form~\eqref{abstr_A(t)_eigenvalues_series} and~\eqref{abstr_A(t)_eigenvectors_series}
with the coefficients depending on $\boldsymbol{\theta}$:
\begin{gather}
\label{hatA_eigenvalues_series}
\widehat{\lambda}_l (t, \boldsymbol{\theta}) = \widehat{\gamma}_l (\boldsymbol{\theta}) t^2 + \widehat{\mu}_l (\boldsymbol{\theta}) t^3 + \ldots, \quad l = 1, \ldots, n, \\
\label{hatA_eigenvectors_series}
\widehat{\varphi}_l (t, \boldsymbol{\theta}) = \widehat{\omega}_l (\boldsymbol{\theta}) + t \widehat{\psi}^{(1)}_l (\boldsymbol{\theta}) + \ldots, \quad l = 1, \ldots, n.
\end{gather}
(However,  we do not control the interval of convergence $t = |\mathbf{k}| \le t_* (\boldsymbol{\theta})$.)
According to~\eqref{abstr_S_eigenvectors}, the numbers $\widehat{\gamma}_l (\boldsymbol{\theta})$ and the elements $\widehat{\omega}_l (\boldsymbol{\theta})$ are eigenvalues and eigenvectors of the germ:
\begin{equation*}
b(\boldsymbol{\theta})^* g^0 b(\boldsymbol{\theta}) \widehat{\omega}_l (\boldsymbol{\theta}) = \widehat{\gamma}_l (\boldsymbol{\theta}) \widehat{\omega}_l (\boldsymbol{\theta}), \quad l = 1, \ldots, n.
\end{equation*}

\subsection{The operator $\widehat{N} (\boldsymbol{\theta})$}

We need to describe the operator~$N$ (which in abstract terms is defined in~\eqref{abstr_F1_K0_N_invar}).
According to~\cite[Section~4]{BSu3}, for the family $\widehat{A} (t, \boldsymbol{\theta})$ this operator takes the form
\begin{equation}
\label{N(theta)}
\widehat{N} (\boldsymbol{\theta}) = b(\boldsymbol{\theta})^* L(\boldsymbol{\theta}) b(\boldsymbol{\theta}) \widehat{P},
\end{equation}
where the  $(m\times m)$-matrix $L (\boldsymbol{\theta})$ is given by
\begin{equation}
\label{L(theta)}
L (\boldsymbol{\theta}) = | \Omega |^{-1} \int_{\Omega} (\Lambda (\mathbf{x})^* b(\boldsymbol{\theta})^* \widetilde{g}(\mathbf{x}) + \widetilde{g}(\mathbf{x})^* b(\boldsymbol{\theta}) \Lambda (\mathbf{x}) ) \, d \mathbf{x}.
\end{equation}
Here $\Lambda (\mathbf{x})$ is the $\Gamma$-periodic solution of problem~\eqref{equation_for_Lambda}, and $\widetilde{g}(\mathbf{x})$~is
given by~\eqref{g_tilde}.

Observe that $L (\mathbf{k}) := t L (\boldsymbol{\theta}), \; \mathbf{k} \in \mathbb{R}^d,$
 is a Hermitian matrix-valued function first order homogeneous in $\mathbf{k}$.
 We put $\widehat{N}(\mathbf{k}) := t^3 \widehat{N} (\boldsymbol{\theta}), \; \mathbf{k} \in \mathbb{R}^d$.
 Then $\widehat{N}(\mathbf{k}) = b(\mathbf{k})^* L(\mathbf{k}) b(\mathbf{k}) \widehat{P}$.
 The matrix-valued function $b(\mathbf{k})^* L(\mathbf{k}) b(\mathbf{k})$ is a homogeneous third order polynomial of~$\mathbf{k} \in \mathbb{R}^d$. Therefore, either $\widehat{N} (\boldsymbol{\theta}) = 0$ for all $\boldsymbol{\theta} \in \mathbb{S}^{d-1}$, or $\widehat{N} (\boldsymbol{\theta}) \ne 0$ at \textquotedblleft most\textquotedblright \ points $\boldsymbol{\theta}$
 (except for the zeroes of this polynomial).

Some cases where the operator~\eqref{N(theta)} is equal to zero were distinguished in~\cite[Section~4]{BSu3}.

\begin{proposition}
    \label{N=0_proposit}
    Suppose that at least one of the following conditions is fulfilled:

 $1^\circ$. The operator $\widehat{\mathcal{A}}$ has the form
  $\widehat{\mathcal{A}} = \mathbf{D}^* g(\mathbf{x}) \mathbf{D}$, where $g(\mathbf{x})$~is a symmetric matrix with real entries.

 $2^\circ$. Relations~\emph{\eqref{g0=overline_g_relat}} are satisfied, i.~e., $g^0 = \overline{g}$.

 $3^\circ$. Relations~\emph{\eqref{g0=underline_g_relat}} are satisfied, i.~e., $g^0 = \underline{g}$.
\emph{(}In particular, this is true if $m = n$.\emph{)}

\noindent
    Then $\widehat{N} (\boldsymbol{\theta}) = 0$ for any $\boldsymbol{\theta} \in \mathbb{S}^{d-1}$.
\end{proposition}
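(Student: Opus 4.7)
The plan is to handle each of the three sufficient conditions separately, since they correspond to different structural features of the problem, and in each case to show that either $\Lambda\equiv 0$, or $\widetilde{g}$ is constant, or the relevant eigenvalue branch is even in $t$. The starting point is the explicit formula $\widehat{N}(\boldsymbol{\theta}) = b(\boldsymbol{\theta})^* L(\boldsymbol{\theta}) b(\boldsymbol{\theta}) \widehat{P}$ with $L(\boldsymbol{\theta})$ given by \eqref{L(theta)}, where $\Lambda$ solves \eqref{equation_for_Lambda} and $\widetilde{g} = g(b(\mathbf{D})\Lambda + \mathbf{1}_m)$.

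For Case $2^\circ$, the hypothesis $b(\mathbf{D})^*\mathbf{g}_k = 0$, $k=1,\dots,m$, says exactly that $b(\mathbf{D})^* g(\mathbf{x}) = 0$ in the weak sense, so $\Lambda \equiv 0$ already solves \eqref{equation_for_Lambda} subject to $\int_{\Omega}\Lambda\,d\mathbf{x}=0$; by uniqueness of the solution, $\Lambda = 0$. Since $\Lambda$ appears as a factor in every term of the integrand defining $L(\boldsymbol{\theta})$, we obtain $L(\boldsymbol{\theta}) = 0$ and hence $\widehat{N}(\boldsymbol{\theta})=0$ for every $\boldsymbol{\theta}$.

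For Case $3^\circ$, using \eqref{g0=underline_g_relat}, I would write the columns of $g(\mathbf{x})^{-1}$ as $\mathbf{l}_k = \mathbf{l}_k^0 + b(\mathbf{D})\mathbf{w}_k$, so $g^{-1} = (g^0)^{-1} + b(\mathbf{D}) W$ (in matrix form, with $\langle W\rangle = 0$, and using $\langle g^{-1}\rangle = \underline{g}^{-1} = (g^0)^{-1}$). Multiplying on the right by $g^0$ gives $g^{-1} g^0 = \mathbf{1}_m + b(\mathbf{D})(W g^0)$. Setting $\Lambda := W g^0$ gives a function with zero mean that satisfies $b(\mathbf{D})\Lambda + \mathbf{1}_m = g^{-1} g^0$, so $b(\mathbf{D})^* g(b(\mathbf{D})\Lambda + \mathbf{1}_m) = b(\mathbf{D})^* g^0 = 0$; by uniqueness this must be the $\Lambda$ of \eqref{equation_for_Lambda}. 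Therefore $\widetilde{g}(\mathbf{x}) = g(\mathbf{x})\cdot g(\mathbf{x})^{-1} g^0 = g^0$ is \emph{constant}, and since $\int_\Omega \Lambda\,d\mathbf{x}=0$ the integrand in \eqref{L(theta)} integrates to zero, yielding $L(\boldsymbol{\theta})=0$. The observation that $m=n$ forces $g^0 = \underline{g}$ is standard (cf.\ \eqref{Voigt_Reuss}), so this subsumes the square case.

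For Case $1^\circ$, the operator is scalar, so $n=1$; by Remark~\ref{abstr_N_remark}($3^\circ$) we have $\widehat{N}_{*}(\boldsymbol{\theta})=0$, hence $\widehat{N}(\boldsymbol{\theta}) = \widehat{N}_0(\boldsymbol{\theta})$, and by Remark~\ref{abstr_N_remark}($2^\circ$) this is controlled by the coefficient $\widehat{\mu}_1(\boldsymbol{\theta})$ in \eqref{hatA_eigenvalues_series}. The plan is to exploit the fact that with $g$ real symmetric and $b(\mathbf{D})=\mathbf{D}$ the operator family $\widehat{\mathcal{A}}(\mathbf{k}) = (\mathbf{D}+\mathbf{k})^* g(\mathbf{x}) (\mathbf{D}+\mathbf{k})$ satisfies $C\widehat{\mathcal{A}}(\mathbf{k})C = \widehat{\mathcal{A}}(-\mathbf{k})$, where $C$ denotes complex conjugation in $L_2(\Omega)$. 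Thus the spectra of $\widehat{\mathcal{A}}(t\boldsymbol{\theta})$ and $\widehat{\mathcal{A}}(-t\boldsymbol{\theta})$ coincide. Since $n=1$, there is only one analytic eigenvalue branch emanating from zero, which must therefore satisfy $\widehat{\lambda}_1(t,\boldsymbol{\theta}) = \widehat{\lambda}_1(-t,\boldsymbol{\theta})$; the expansion \eqref{hatA_eigenvalues_series} then forces $\widehat{\mu}_1(\boldsymbol{\theta})=0$, so $\widehat{N}(\boldsymbol{\theta})=0$.

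The only step requiring real care is Case $3^\circ$: one needs to identify the correct change-of-variables that turns the representation \eqref{g0=underline_g_relat} into an explicit formula for $\Lambda$ and check that it coincides with the unique solution of \eqref{equation_for_Lambda}. The other two cases are essentially direct computations.
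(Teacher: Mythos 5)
Your proof is correct. A caveat on the comparison: the paper does not prove this proposition itself but quotes it from~\cite[Section~4]{BSu3}, so the natural benchmark is the mechanism the paper sketches just before Proposition~\ref{N_0=0_proposit}. Your treatments of $2^\circ$ and $3^\circ$ coincide with the standard one: under~\eqref{g0=overline_g_relat} the zero matrix solves~\eqref{equation_for_Lambda}, so $\Lambda=0$ and $L(\boldsymbol{\theta})=0$; under~\eqref{g0=underline_g_relat} one gets $b(\mathbf{D})\Lambda+\mathbf{1}_m=g^{-1}g^0$, hence $\widetilde g\equiv g^0$ is constant and $L(\boldsymbol{\theta})$ vanishes because $\int_\Omega\Lambda\,d\mathbf{x}=0$ (you correctly flag the two small points that make this rigorous: the $\mathbf{w}_k$ must be normalized to have zero mean so that $\Lambda=Wg^0$ does, and uniqueness of the weak solution of~\eqref{equation_for_Lambda} identifies your candidate with the $\Lambda$ entering~\eqref{L(theta)}). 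For $1^\circ$ your route is genuinely different: instead of the algebraic observation underlying the paper's discussion --- for $b(\boldsymbol{\theta})$ and $g$ real, $\Lambda$ is purely imaginary, so $b(\boldsymbol{\theta})^*L(\boldsymbol{\theta})b(\boldsymbol{\theta})$ is a Hermitian matrix with purely imaginary entries, which for $n=1$ is a scalar and hence zero --- you use the antiunitary symmetry $C\widehat{\mathcal A}(\mathbf{k})C=\widehat{\mathcal A}(-\mathbf{k})$ to deduce that the single band function is even in $t$, and conclude $\widehat\mu_1(\boldsymbol{\theta})=0$ via~\eqref{hatN_0(theta)_matrix_elem}. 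Both arguments are sound and both use $n=1$ essentially (for $n\ge2$ evenness of the spectrum does not give evenness of each analytic branch, just as a purely imaginary Hermitian matrix need not vanish); your symmetry argument is the more conceptual explanation, while the algebraic one generalizes directly to the quadratic-form identity $(\widehat N(\boldsymbol{\theta})\mathbf{q},\mathbf{q})=0$ for real $\mathbf{q}$ that drives Proposition~\ref{N_0=0_proposit} in the matrix case.
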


On the other hand, there are examples (see~\cite[Subsections~10.4,~13.2,~14.6]{BSu3}) showing that, in general, the operator
   $\widehat{N} (\boldsymbol{\theta})$ is not equal to zero.
   One example is the scalar operator of the form ${\mathbf D}^* g({\mathbf x}) {\mathbf D}$, where
    $g({\mathbf x})$ is a Hermitian matrix with complex entries (see Example~\ref{model_exmpl} below).
Other examples are matrix operators
     with real-valued coefficients; see Example~\ref{elast_exmpl_N0_ne_0} and Subsection~\ref{example} below.

Recall (see Remark~\ref{abstr_N_remark}) that
$\widehat{N} (\boldsymbol{\theta}) = \widehat{N}_0 (\boldsymbol{\theta}) + \widehat{N}_* (\boldsymbol{\theta})$, where the operator $\widehat{N}_0 (\boldsymbol{\theta})$ is diagonal in the basis $\{ \widehat{\omega}_l (\boldsymbol{\theta})\}_{l=1}^n$,
while  the diagonal elements of the operator $\widehat{N}_* (\boldsymbol{\theta})$
are equal to zero. We have
\begin{equation}
\label{hatN_0(theta)_matrix_elem}
(\widehat{N} (\boldsymbol{\theta}) \widehat{\omega}_l (\boldsymbol{\theta}), \widehat{\omega}_l (\boldsymbol{\theta}))_{L_2 (\Omega)} = (\widehat{N}_0 (\boldsymbol{\theta}) \widehat{\omega}_l (\boldsymbol{\theta}), \widehat{\omega}_l (\boldsymbol{\theta}))_{L_2 (\Omega)} = \widehat{\mu}_l (\boldsymbol{\theta}), \quad l=1, \ldots, n.
\end{equation}

In~\cite[Subsection~4.3]{BSu3}, the following argument was given. Suppose that
$b(\boldsymbol{\theta})$ and $g (\mathbf{x})$~are matrices with \emph{real entries}.
Then the matrix $\Lambda (\mathbf{x})$ (see~\eqref{equation_for_Lambda}) has purely imaginary entries, while
$\widetilde{g} (\mathbf{x})$ and $g^0$~are matrices with real entries. In this case $L (\boldsymbol{\theta})$ (see~\eqref{L(theta)}) and $b(\boldsymbol{\theta})^* L (\boldsymbol{\theta}) b(\boldsymbol{\theta})$~are Hermitian matrices with purely imaginary entries.
Hence, for any \emph{real} vector $\mathbf{q} \in \widehat{\mathfrak{N}}$ we have $(\widehat{N} (\boldsymbol{\theta}) \mathbf{q}, \mathbf{q}) = 0$. If the analytic branches of the eigenvalues $\widehat{\lambda}_l (t, \boldsymbol{\theta})$
and the analytic branches of the eigenvectors $\widehat{\varphi}_l (t, \boldsymbol{\theta})$ of $\widehat{A} (t, \boldsymbol{\theta})$
can be chosen so that the vectors $\widehat{\omega}_1 (\boldsymbol{\theta}), \ldots, \widehat{\omega}_n (\boldsymbol{\theta})$
are real, then, by~\eqref{hatN_0(theta)_matrix_elem}, we have $\widehat{\mu}_l (\boldsymbol{\theta}) = 0$, $l = 1, \ldots, n$, i.~e.,  $\widehat{N}_0 (\boldsymbol{\theta}) = 0$. We arrive at the following statement.

\begin{proposition}
    \label{N_0=0_proposit}
    Suppose that $b(\boldsymbol{\theta})$ and $g (\mathbf{x})$~have real entries. Suppose that in the expansions~\emph{\eqref{hatA_eigenvectors_series}} for the analytic branches of the eigenvectors of $\widehat{A} (t, \boldsymbol{\theta})$ the    \textquotedblleft embrios\textquotedblright \ $\widehat{\omega}_l (\boldsymbol{\theta}), \; l = 1, \ldots, n,$ can be chosen to be real. Then in~\emph{\eqref{hatA_eigenvalues_series}} we have $\widehat{\mu}_l (\boldsymbol{\theta}) = 0$, $l=1, \ldots, n,$ i.~e.,
    $\widehat{N}_0 (\boldsymbol{\theta}) = 0$.
\end{proposition}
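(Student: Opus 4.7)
The argument is essentially sketched in the paragraph preceding the proposition; my task is to turn it into a clean proof. The approach is to track the real/imaginary structure of each ingredient in the formulas \eqref{N(theta)} and \eqref{L(theta)} for $\widehat{N}(\boldsymbol{\theta})$, and then evaluate the associated quadratic form on the real embrios.

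First I would verify that the matrix $\Lambda(\mathbf{x})$ solving \eqref{equation_for_Lambda} has purely imaginary entries. Since $\mathbf{D}=-i\nabla$ and each $b_l$ is real, the two factors of $\pm i$ cancel, so $\widehat{\mathcal{A}}=b(\mathbf{D})^* g\, b(\mathbf{D})$ is a differential operator with \emph{real} coefficients. Rewriting \eqref{equation_for_Lambda} as $\widehat{\mathcal{A}}\Lambda = -b(\mathbf{D})^* g\,\mathbf{1}_m$, the right-hand side equals $-i\sum_l b_l^T (\partial_l g)$ and is therefore purely imaginary. Decomposing $\Lambda=\Lambda_R+i\Lambda_I$ with real periodic $\Lambda_R,\Lambda_I$ and separating real and imaginary parts, the real part yields $\widehat{\mathcal{A}}\Lambda_R=0$ in $\widetilde{H}^1(\Omega)$ with $\int_\Omega \Lambda_R\,d\mathbf{x}=0$. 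By the standard uniqueness for the cell problem (the kernel of $\widehat{\mathcal{A}}$ on periodic functions consists of constants), this forces $\Lambda_R=0$, so $\Lambda$ is purely imaginary.

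Next, $b(\mathbf{D})\Lambda=-i\sum_l b_l\partial_l\Lambda$ is then real, hence so are $\widetilde{g}(\mathbf{x})$ from \eqref{g_tilde} and $g^0$ from \eqref{g0}. Consequently each of the two summands in the integrand of \eqref{L(theta)} is a product (imaginary)$\cdot$(real)$\cdot$(real), so the Hermitian matrix $L(\boldsymbol{\theta})$ has purely imaginary entries, i.e.\ $L(\boldsymbol{\theta})=iS(\boldsymbol{\theta})$ with $S(\boldsymbol{\theta})$ a real skew-symmetric matrix. Since $b(\boldsymbol{\theta})$ is real, the matrix $M(\boldsymbol{\theta}):=b(\boldsymbol{\theta})^* L(\boldsymbol{\theta}) b(\boldsymbol{\theta})$ inherits the same structure. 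For any real vector $\mathbf{q}\in\mathbb{R}^n$ we then have $\langle M(\boldsymbol{\theta})\mathbf{q},\mathbf{q}\rangle = i\,\mathbf{q}^T S(\boldsymbol{\theta})\mathbf{q}=0$ by skew-symmetry. Taking $\mathbf{q}=\widehat{\omega}_l(\boldsymbol{\theta})$ (real by hypothesis) and combining \eqref{N(theta)} with \eqref{hatN_0(theta)_matrix_elem} gives $\widehat{\mu}_l(\boldsymbol{\theta})=(\widehat{N}(\boldsymbol{\theta})\widehat{\omega}_l,\widehat{\omega}_l)_{L_2(\Omega)}=0$ for every $l$. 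Invoking the spectral representation from \eqref{abstr_N_0_N_*},
\begin{equation*}
\widehat{N}_0(\boldsymbol{\theta}) = \sum_{l=1}^n \widehat{\mu}_l(\boldsymbol{\theta})\,(\cdot,\widehat{\omega}_l(\boldsymbol{\theta}))\,\widehat{\omega}_l(\boldsymbol{\theta}),
\end{equation*}
we conclude $\widehat{N}_0(\boldsymbol{\theta})=0$.

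The only non-bookkeeping step is the identification of $\Lambda$ as purely imaginary, which rests on uniqueness of the cell problem modulo constants; the remainder is a routine tracking of real/imaginary parts plus the one-line vanishing of a quadratic form on real vectors. I therefore expect no serious obstacle, provided one is careful to use the correct adjoint $b(\mathbf{D})^*= i\sum_l b_l^T\partial_l$ and the zero-mean normalization when invoking uniqueness.
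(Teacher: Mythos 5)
Your argument is correct and is essentially identical to the paper's own proof (the paragraph preceding the proposition, following \cite[Subsection~4.3]{BSu3}): $\Lambda$ purely imaginary, $\widetilde g$ and $g^0$ real, hence $b(\boldsymbol{\theta})^* L(\boldsymbol{\theta}) b(\boldsymbol{\theta})$ Hermitian with purely imaginary entries, so its quadratic form vanishes on the real embrios and \eqref{hatN_0(theta)_matrix_elem} gives $\widehat{\mu}_l(\boldsymbol{\theta})=0$; you merely supply the (correct) uniqueness argument for why $\Lambda$ is purely imaginary. The only slip is the sign in your closing remark --- since $D_l=-i\partial_l$ is selfadjoint one has $b(\mathbf{D})^*=-i\sum_l b_l^T\partial_l$, not $+i\sum_l b_l^T\partial_l$ --- but this is inconsequential, as only the fact that $b(\mathbf{D})^*$ is $\pm i$ times a real operator is used.
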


  In the \textquotedblleft real\textquotedblright \ case under consideration, the germ $\widehat{S} (\boldsymbol{\theta})$
  is a symmetric matrix with real entries. Clearly, if the eigenvalue $\widehat{\gamma}_j (\boldsymbol{\theta})$ of the germ is simple,
then the embrio $\widehat{\omega}_j (\boldsymbol{\theta})$ is defined uniquely up to a phase factor, and we can always choose
$\widehat{\omega}_j (\boldsymbol{\theta})$ to be real.  We arrive at the following corollary.

\begin{corollary}
    \label{S_spec_simple_coroll}
    Suppose that $b(\boldsymbol{\theta})$ and $g (\mathbf{x})$
    have real entries. Suppose that the spectrum of the germ~$\widehat{S} (\boldsymbol{\theta})$ is simple. Then
    $\widehat{N}_0 (\boldsymbol{\theta}) = 0$.
\end{corollary}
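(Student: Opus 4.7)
The plan is to reduce the corollary directly to Proposition~\ref{N_0=0_proposit}. What must be verified is that under the simple-spectrum hypothesis one can choose the embrios $\widehat{\omega}_l(\boldsymbol{\theta})$, $l=1,\dots,n$, appearing in the expansions \eqref{hatA_eigenvectors_series} to be real vectors in $\widehat{\mathfrak{N}} \simeq \mathbb{C}^n$; then Proposition~\ref{N_0=0_proposit} immediately gives $\widehat{N}_0(\boldsymbol{\theta}) = 0$.

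First I would check that the germ $\widehat{S}(\boldsymbol{\theta}) = b(\boldsymbol{\theta})^* g^0 b(\boldsymbol{\theta})$ is a real symmetric $(n\times n)$-matrix. Since $b(\boldsymbol{\theta})$ and $g(\mathbf{x})$ have real entries, the defining problem \eqref{equation_for_Lambda} for the corrector $\Lambda(\mathbf{x})$ is a real elliptic system (recall that $b(\mathbf{D})^* = \sum b_l^* D_l$ has purely imaginary coefficients, but taking real parts of $\Lambda$ componentwise still yields a solution, and uniqueness up to additive constants combined with the zero-mean normalization can be used; alternatively, as noted in Subsection~8.6 following \cite{BSu3}, $\Lambda(\mathbf{x})$ has purely imaginary entries, hence $b(\mathbf{D})\Lambda$ is real, $\widetilde{g}(\mathbf{x}) = g(\mathbf{x})(b(\mathbf{D})\Lambda + \mathbf{1}_m)$ is real, and therefore $g^0$ defined by \eqref{g0} is a real matrix). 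Consequently $\widehat{S}(\boldsymbol{\theta})$ is real symmetric.

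Next, by hypothesis $\widehat{S}(\boldsymbol{\theta})$ has simple spectrum, so each eigenspace $\widehat{\mathfrak{N}}_j$ is one-dimensional. A one-dimensional eigenspace of a real symmetric matrix contains a real unit eigenvector, which is unique up to a sign. Choosing for each $l=1,\dots,n$ a real unit eigenvector $\widehat{\omega}_l(\boldsymbol{\theta})$ of $\widehat{S}(\boldsymbol{\theta})$ corresponding to $\widehat{\gamma}_l(\boldsymbol{\theta})$ fixes a real orthonormal basis in $\widehat{\mathfrak{N}}$. Since the coefficients of the power series \eqref{hatA_eigenvectors_series} are determined recursively (from the perturbation relations obtained by substitution into \eqref{abstr_A(t)_eigenvectors_branches}) once the embrios are fixed, this real choice of $\widehat{\omega}_l(\boldsymbol{\theta})$ produces real-analytic branches of eigenvectors whose leading terms are real.

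Finally, applying Proposition~\ref{N_0=0_proposit} with this choice of embrios yields $\widehat{\mu}_l(\boldsymbol{\theta}) = 0$ for all $l=1,\dots,n$, and therefore $\widehat{N}_0(\boldsymbol{\theta}) = 0$. The only mild subtlety, and the only step worth dwelling on, is the justification that real embrios really can be chosen: without the simple-spectrum assumption one would have to deal with higher-dimensional real eigenspaces of $\widehat{S}(\boldsymbol{\theta})$ and worry about whether the perturbation theory still permits a real choice, but simplicity makes this automatic.
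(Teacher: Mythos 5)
Your proposal is correct and follows essentially the same route as the paper: in the real case $\widehat{S}(\boldsymbol{\theta})$ is a real symmetric matrix, simplicity of its spectrum means each embrio is determined up to a phase factor and can therefore be chosen real, and Proposition~\ref{N_0=0_proposit} then gives $\widehat{N}_0(\boldsymbol{\theta})=0$. The paper states this in exactly the same one-line form in the paragraph preceding the corollary.
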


However, as is seen from Example~\ref{elast_exmpl_N0_ne_0} considered below, even in the \textquotedblleft real\textquotedblright \ case
 it is not always possible to choose the vectors $\widehat{\omega}_l (\boldsymbol{\theta})$ to be real.
  Moreover, it can happen that
 $\widehat{N}_0 (\boldsymbol{\theta}) \ne 0$ at some points $\boldsymbol{\theta}$.

\subsection{Multiplicities of the eigenvalues of the germ}
\label{eigenval_multipl_section}
Considerations of this subsection concern the case where $n\ge 2$. Now we return to the notation of Section~\ref{abstr_cluster_section},
tracing the multiplicities of the eigenvalues of the spectral germ $\widehat{S} (\boldsymbol{\theta})$.
In general, the number $p(\boldsymbol{\theta})$ of different eigenvalues $\widehat{\gamma}^{\circ}_1 (\boldsymbol{\theta}), \ldots, \widehat{\gamma}^{\circ}_{p(\boldsymbol{\theta})} (\boldsymbol{\theta})$ of the spectral germ $\widehat{S}(\boldsymbol{\theta})$
and their multiplicities $k_1 (\boldsymbol{\theta}), \ldots, k_{p(\boldsymbol{\theta})} (\boldsymbol{\theta})$ depend on the parameter $\boldsymbol{\theta} \in \mathbb{S}^{d-1}$. For a fixed $\boldsymbol{\theta}$ denote by
$\widehat{P}_j (\boldsymbol{\theta})$ the orthogonal projection of $L_2 (\Omega; \mathbb{C}^n)$ onto the eigenspace of the germ
$\widehat{S}(\boldsymbol{\theta})$ corresponding to the eigenvalue $\widehat{\gamma}_j^{\circ} (\boldsymbol{\theta})$. According to~\eqref{abstr_N_invar_repers}, the operators $\widehat{N}_0 (\boldsymbol{\theta})$ and $\widehat{N}_* (\boldsymbol{\theta})$
admit the following invariant representations:
\begin{gather}
\label{N0_invar_repr}
\widehat{N}_0 (\boldsymbol{\theta}) = \sum_{i=1}^{p(\boldsymbol{\theta})} \widehat{P}_j (\boldsymbol{\theta}) \widehat{N} (\boldsymbol{\theta}) \widehat{P}_j (\boldsymbol{\theta}), \\
\label{N*_invar_repr}
\widehat{N}_* (\boldsymbol{\theta}) = \sum_{\substack{1 \le j, l \le p(\boldsymbol{\theta}):\\ j \ne l}} \widehat{P}_j (\boldsymbol{\theta}) \widehat{N} (\boldsymbol{\theta}) \widehat{P}_l (\boldsymbol{\theta}).
\end{gather}

In conclusion of this section, we give Example~8.7 from~\cite{Su4},
which shows that for matrix operators even with real-valued coefficients the eigenvalues of the germ may be multiple, and the coefficients
$\widehat{\mu}_l(\boldsymbol{\theta})$ in the expansions~\eqref{hatA_eigenvalues_series} may be nonzero.

\begin{example}[\cite{Su4}]
    \label{elast_exmpl_N0_ne_0}
Let $d=2$, $n=2$, and $m=3$.
For simplicity, assume that $\Gamma = (2\pi \mathbb{Z})^2$. Suppose that the operator $b(\mathbf{D})$ and the matrix $g(\mathbf{x})$
are given by
    \begin{gather*}
    b(\mathbf{D})= \begin{pmatrix}
    D_1 & 0 \\
    \frac{1}{2}D_2 & \frac{1}{2}D_1 \\
    0 & D_2
    \end{pmatrix}, \quad
    g(\mathbf{x})= \begin{pmatrix}
    1 & 0 & 0 \\
    0 & g_2 (x_1) & 0\\
    0 & 0 & g_3 (x_1)
    \end{pmatrix},
    \end{gather*}
where $g_2(x_1)$ and $g_3(x_1)$~are $(2\pi)$-periodic bounded and positive definite functions of $x_1$, and  $\overline{g_3}=1$.
Then, under a suitable choice of $g_2$ and $g_3$,  $\widehat{N}_0(\boldsymbol{\theta})\ne 0$ for some $\boldsymbol{\theta}$.
\end{example}

In this example, the matrices $b(\boldsymbol{\theta})$ and $g(\mathbf{x})$  have real entries.
It is easy to find the  $\Gamma$-periodic solution of problem~\eqref{equation_for_Lambda}:
\begin{equation*}
\Lambda(\mathbf{x})= \begin{pmatrix}
0 & 0 & 0 \\
0 & \Lambda_{22}(x_1) & 0
\end{pmatrix}.
\end{equation*}
Here $\Lambda_{22}(x_1)$~is the $(2\pi)$-periodic solution of the problem
\begin{equation*}
\frac{1}{2} D_1 \Lambda_{22}(x_1) + 1 = \underline{g_2} (g_2 (x_1))^{-1}, \quad \int_{0}^{2\pi} \Lambda_{22}(x_1)\, d x_1 = 0.
\end{equation*}
Obviously, $\Lambda_{22}(x_1)$ is purely imaginary. Then
$\widetilde{g}(\mathbf{x}) = \diag \{1, \underline{g_2}, g_3(x_1)\}$, and the effective matrix is given by~$g^0 = \diag \{1, \underline{g_2}, 1\}$.  The spectral germ~$\widehat{S} (\boldsymbol{\theta}) = b(\boldsymbol{\theta})^* g^0 b(\boldsymbol{\theta})$ takes the form
\begin{equation}
\label{example_S}
\widehat{S} (\boldsymbol{\theta}) = \begin{pmatrix}
\theta_1^2 + \frac{1}{4} \theta_2^2 \underline{g_2} & \frac{1}{4} \theta_1 \theta_2 \underline{g_2} \\
 \frac{1}{4} \theta_1 \theta_2 \underline{g_2} & \frac{1}{4} \theta_1^2 \underline{g_2} + \theta_2^2
\end{pmatrix}, \quad \boldsymbol{\theta} = (\theta_1, \theta_2) \in \mathbb{S}^1.
\end{equation}
It is easily seen that the matrix~\eqref{example_S}  has a multiple eigenvalue (for some~$\boldsymbol{\theta}$)
only if $\underline{g_2} = 4$.

  So, let $\underline{g_2} = 4$. Then the eigenvalues of the germ
 \begin{equation*}
\widehat{S} (\boldsymbol{\theta}) = \begin{pmatrix}
1  &  \theta_1 \theta_2 \\
\theta_1 \theta_2 & 1
\end{pmatrix}
\end{equation*}
are given by $\widehat{\gamma}_1(\boldsymbol{\theta}) = 1 + \theta_1 \theta_2$, $\widehat{\gamma}_2(\boldsymbol{\theta}) = 1 - \theta_1 \theta_2$.
They coincide at four points~$\boldsymbol{\theta}^{(1)} = (0, 1)$, $\boldsymbol{\theta}^{(2)} = (0, -1)$, $\boldsymbol{\theta}^{(3)} = (1, 0)$, and $\boldsymbol{\theta}^{(4)} = (-1, 0)$.

Next, we calculate the $(3 \times 3)$-matrix~$L(\boldsymbol{\theta})$~(see~\eqref{L(theta)}):
\begin{equation*}
L(\boldsymbol{\theta}) = \begin{pmatrix}
0 & 0 & 0 \\
0 & 0 & \theta_2 \overline{\Lambda_{22}^* g_3} \\
0 & \theta_2 \overline{\Lambda_{22} g_3} & 0
\end{pmatrix}.
\end{equation*}
Hence,
\begin{equation*}
\widehat{N}(\boldsymbol{\theta}) = b(\boldsymbol{\theta})^* L(\boldsymbol{\theta}) b(\boldsymbol{\theta}) = \frac{1}{2} \theta_2^3 \begin{pmatrix}
0 & \overline{\Lambda_{22}^* g_3} \\
\overline{\Lambda_{22} g_3} & 0
\end{pmatrix}.
\end{equation*}
Below we assume that~$\overline{\Lambda_{22} g_3} \ne 0$.
(It is easy to give a concrete example: if
$$
g_2(x_1) = 4\left( 1 + \tfrac{1}{2} \sin x_1\right)^{-1}, \quad g_3(x_1) = 1 + \tfrac{1}{2} \cos x_1,
$$
all the conditions are fulfilled.)

For $\boldsymbol{\theta} \ne \boldsymbol{\theta}^{(j)}$, $j = 1, 2, 3, 4$, we have
$\widehat{\gamma}_1 (\boldsymbol{\theta}) \ne \widehat{\gamma}_2 (\boldsymbol{\theta})$ and then  $\widehat{N}(\boldsymbol{\theta}) = \widehat{N}_*(\boldsymbol{\theta}) \ne 0$. At the points $\boldsymbol{\theta}^{(1)}$ and $\boldsymbol{\theta}^{(2)}$ we have
\begin{equation*}
\widehat{\gamma}_1 (\boldsymbol{\theta}^{(j)}) = \widehat{\gamma}_2 (\boldsymbol{\theta}^{(j)}) = 1, \quad \widehat{N}(\boldsymbol{\theta}^{(j)}) = \widehat{N}_0(\boldsymbol{\theta}^{(j)}) \ne 0, \quad j = 1, 2.
\end{equation*}
Obviously, the numbers $\pm \mu$, where $\mu = \frac{1}{2} \left| \overline{\Lambda_{22} g_3} \right| $,
 are the eigenvalues of the operator~$\widehat{N}(\boldsymbol{\theta}^{(j)})$ for~$j = 1, 2$.
 In the expansions~\eqref{hatA_eigenvalues_series} there are nonzero coefficients at~$t^3$:
\begin{equation*}
\widehat{\lambda}_1 (t, \boldsymbol{\theta}^{(j)}) = t^2 + \mu t^3 + \ldots, \quad \widehat{\lambda}_2 (t, \boldsymbol{\theta}^{(j)}) = t^2 - \mu t^3 + \ldots, \quad j = 1,2.
\end{equation*}
In this case, the embrios~$\widehat{\omega}_1 (\boldsymbol{\theta}^{(j)})$, $\widehat{\omega}_2 (\boldsymbol{\theta}^{(j)})$
 in the expansions~(\ref{hatA_eigenvectors_series})  can not be real (see Proposition~\ref{N_0=0_proposit}).

 At the points $\boldsymbol{\theta}^{(3)}$ and $\boldsymbol{\theta}^{(3)}$  the situation is different. We have
\begin{equation*}
\widehat{\gamma}_1 (\boldsymbol{\theta}^{(j)}) = \widehat{\gamma}_2 (\boldsymbol{\theta}^{(j)}) = 1, \quad \widehat{N}(\boldsymbol{\theta}^{(j)}) = 0, \quad j = 3, 4.
\end{equation*}

This example also shows that, though the operator $\widehat{N}(\boldsymbol{\theta})$ is always continuous in~$\boldsymbol{\theta}$
(it is a polynomial of the third degree), its \textquotedblleft blocks\textquotedblright \ $\widehat{N}_0(\boldsymbol{\theta})$ and $\widehat{N}_*(\boldsymbol{\theta})$ can be  discontinuous:
at the points where the branches of the eigenvalues of the germ intersect,
$\widehat{N}_0(\boldsymbol{\theta})$ and $\widehat{N}_*(\boldsymbol{\theta})$ may have jumps. Moreover, it may
happen that $\widehat{N}_0(\boldsymbol{\theta})$  is not equal to zero only at some isolated points.

\section{Approximation of the operator $\cos(\varepsilon^{-1} \tau \widehat{\mathcal{A}}(\mathbf{k})^{1/2})$}

\subsection{The case where $|\mathbf{k}| \le \widehat{t}^{\,0} $}

Consider the operator $\mathcal{H}_0 = -\Delta$ in $L_2 (\mathbb{R}^d; \mathbb{C}^n)$.
Under the Gelfand transformation, this operator expands in the direct integral of the operators
$\mathcal{H}_0 (\mathbf{k})$ acting in $L_2 (\Omega; \mathbb{C}^n)$. The operator~$\mathcal{H}_0 (\mathbf{k})$
is given by the differential expression $| \mathbf{D} + \mathbf{k} |^2$  with periodic boundary conditions.
Denote
\begin{equation}
\label{R(k, epsilon)}
\mathcal{R}(\mathbf{k}, \varepsilon) := \varepsilon^2 (\mathcal{H}_0 (\mathbf{k}) + \varepsilon^2 I)^{-1}.
\end{equation}
Obviously,
\begin{equation}
\label{R_P}
\mathcal{R}(\mathbf{k}, \varepsilon)^{s/2}\widehat{P} = \varepsilon^s (t^2 + \varepsilon^2)^{-s/2} \widehat{P}, \qquad s > 0.
\end{equation}

We will apply theorems of Section~\ref{abstr_aprox_thrm_section} to the operator
 $\widehat{A}(t, \boldsymbol{\theta}) = \widehat{\mathcal{A}}(\mathbf{k})$ starting with Theorem~\ref{abstr_cos_general_thrm}.
First, we need to specify the constants. By~\eqref{hatc_*}, \eqref{hatdelta_fixation}, and~\eqref{hatX_1_estmate},
 instead of the precise values of the constants~\eqref{abstr_C1_C2} and \eqref{abstr_C7} (which now depend on $\boldsymbol{\theta}$)
we can take
\begin{align}
\label{hatC1}
\widehat{C}_1 &= 2 \beta_1 r_0^{-1} \alpha_1^{1/2} \alpha_0^{-1/2} \|g\|_{L_\infty}^{1/2} \|g^{-1}\|_{L_\infty}^{1/2},\\
\label{hatC7}
\widehat{C}_7 &= 2 \beta_7 r_0^{-1} \alpha_1 \alpha_0^{-1/2} \|g\|_{L_\infty} \|g^{-1}\|_{L_\infty}^{1/2} + \beta_2 r_0^{-1} \alpha_1^{3/2} \alpha_0^{-1} \|g\|_{L_\infty}^{3/2} \|g^{-1}\|_{L_\infty}.
\end{align}

Combining~\eqref{abstr_cos_general_est}, \eqref{hatS_P=hatA^0_P}, and~\eqref{R_P},  we arrive at the inequality
\begin{multline}
\label{cos_main_est_k<hat_t^0}
\| ( \cos (\varepsilon^{-1} \tau \widehat{\mathcal{A}}(\mathbf{k})^{1/2})  - \cos (\varepsilon^{-1} \tau \widehat{\mathcal{A}}^0(\mathbf{k})^{1/2}) ) \mathcal{R}(\mathbf{k}, \varepsilon)  \widehat{P} \|_{L_2(\Omega) \to L_2 (\Omega) }
 \\
\le (\widehat{C}_1 + \widehat{C}_7 |\tau|) \varepsilon, \quad
\tau \in \mathbb{R}, \; \varepsilon > 0, \; |\mathbf{k}| \le \widehat{t}^{\,0}.
\end{multline}

\subsection{The case where $ | \mathbf{k} | > \widehat{t}^{\,0}$}
For $\mathbf{k} \in \widetilde{\Omega}$ and $ | \mathbf{k} | > \widehat{t}^{\,0}$ estimates are trivial. By~\eqref{R_P}, we have
\begin{equation}
\label{R_hatP_est}
\| \mathcal{R}(\mathbf{k}, \varepsilon)^{1/2}\widehat{P} \|_{L_2 (\Omega) \to L_2 (\Omega)} \le (\widehat{t}^{\,0})^{-1} \varepsilon, \qquad \varepsilon > 0, \; \mathbf{k} \in \widetilde{\Omega}, \; | \mathbf{k} | > \widehat{t}^{\,0}.
\end{equation}
Therefore,
\begin{multline}
\label{cos_main_est_k>hat_t^0}
\| ( \cos (\varepsilon^{-1} \tau \widehat{\mathcal{A}}(\mathbf{k})^{1/2}) - \cos (\varepsilon^{-1} \tau \widehat{\mathcal{A}}^0(\mathbf{k})^{1/2})) \mathcal{R}(\mathbf{k}, \varepsilon)^{1/2}  \widehat{P} \|_{L_2(\Omega) \to L_2 (\Omega) }
\\
 \le 2 (\widehat{t}^{\,0})^{-1} \varepsilon,\quad
\tau \in \mathbb{R}, \; \varepsilon > 0, \; \mathbf{k} \in \widetilde{\Omega}, \; |\mathbf{k}| > \widehat{t}^{\,0}.
\end{multline}
Note that here the smoothing operator is  $\mathcal{R}(\mathbf{k}, \varepsilon)^{1/2}$ (i.~e.,  $s = 1$).
Of course, the left-hand side of~\eqref{cos_main_est_k<hat_t^0} also satisfies the same estimate.

From~\eqref{cos_main_est_k<hat_t^0} and~\eqref{cos_main_est_k>hat_t^0}, using expressions for
$\widehat{t}^{\,0}$ and $\widehat{C}_1$, we obtain
\begin{multline}
\label{cos_main_est_w_hatP}
\| ( \cos (\varepsilon^{-1} \tau \widehat{\mathcal{A}}(\mathbf{k})^{1/2})  - \cos (\varepsilon^{-1} \tau \widehat{\mathcal{A}}^0(\mathbf{k})^{1/2})) \mathcal{R}(\mathbf{k}, \varepsilon)  \widehat{P} \|_{L_2(\Omega) \to L_2 (\Omega) }
\\
\le (\widehat{C}^*_1 + \widehat{C}_7 |\tau|) \varepsilon, \quad
\tau \in \mathbb{R}, \; \varepsilon > 0, \; \mathbf{k} \in \widetilde{\Omega},
\end{multline}
where $\widehat{C}^*_1 = \max\{\widehat{C}_1, 2 (\widehat{t}^0)^{-1}\} = \beta^*_1 r_0^{-1} \alpha_1^{1/2} \alpha_0^{-1/2} \|g\|_{L_\infty}^{1/2} \|g^{-1}\|_{L_\infty}^{1/2}$.

\subsection{Removal of the operator $\widehat{P}$}
Now we show that, up to an admissible error, the projection $\widehat{P}$ can be replaced by the identity operator  under the norm sign in~\eqref{cos_main_est_w_hatP}. For this, we estimate the norm of the operator~$\mathcal{R}(\mathbf{k}, \varepsilon)^{s/2}(I - \widehat{P})$.
Under the discrete Fourier transformation (see~(\ref{fourier})), the operator $\mathcal{R}(\mathbf{k}, \varepsilon)^{s/2}$
 turns into multiplication of the Fourier coefficients by the symbol
$\varepsilon^s (|\mathbf{b} + \mathbf{k}|^2 + \varepsilon^2)^{-s/2}$. The operator $I - \widehat{P}$
makes the zero Fourier coefficient equal to zero. Therefore,
\begin{multline}
\label{R(k,eps)(I-P)_est}
\| \mathcal{R}(\mathbf{k}, \varepsilon)^{s/2} (I - \widehat{P}) \|_{L_2(\Omega) \to L_2 (\Omega) }  = \sup_{0 \ne \mathbf{b} \in \widetilde{\Gamma}} \varepsilon^s (|\mathbf{b} + \mathbf{k}|^2 + \varepsilon^2)^{-s/2} \le r_0^{-s} \varepsilon^s, \\
\varepsilon > 0, \; \mathbf{k} \in \widetilde{\Omega}.
\end{multline}
Hence,
\begin{multline}
\label{cos_main_est_(I - hatP)}
\| ( \cos (\varepsilon^{-1} \tau \widehat{\mathcal{A}}(\mathbf{k})^{1/2}) - \cos (\varepsilon^{-1} \tau \widehat{\mathcal{A}}^0(\mathbf{k})^{1/2})) \mathcal{R}(\mathbf{k}, \varepsilon)^{1/2}  (I - \widehat{P}) \|_{L_2(\Omega) \to L_2 (\Omega) }
\\
\le 2 r_0^{-1} \varepsilon,\quad
\tau \in \mathbb{R}, \; \varepsilon > 0, \; \mathbf{k} \in \widetilde{\Omega}.
\end{multline}
Note that here the smoothing operator is~$\mathcal{R}(\mathbf{k}, \varepsilon)^{1/2}$ (i.~e., $s = 1$).

Finally, from~\eqref{cos_main_est_w_hatP} and~\eqref{cos_main_est_(I - hatP)},  using the obvious inequality
$\| \mathcal{R}(\mathbf{k}, \varepsilon)\| \le 1$ and expressions for the constants,
we obtain the following result which has been proved before in ~\cite[Theorem 7.2]{BSu5}.

\begin{theorem}
    \label{cos_general_thrm}
For $\tau \in \mathbb{R}$, $\varepsilon > 0$, and $\mathbf{k} \in \widetilde{\Omega}$ we have
    \begin{equation*}
    \| ( \cos (\varepsilon^{-1} \tau \widehat{\mathcal{A}}(\mathbf{k})^{1/2})  - \cos (\varepsilon^{-1} \tau \widehat{\mathcal{A}}^0(\mathbf{k})^{1/2})) \mathcal{R}(\mathbf{k}, \varepsilon)\|_{L_2(\Omega) \to L_2 (\Omega) }  \le (\widehat{\mathcal{C}}_1 + \widehat{\mathcal{C}}_2 |\tau|) \varepsilon,
    \end{equation*}
   where
    \begin{equation}
    \label{calC1_C2}
    \widehat{\mathcal{C}}_1 = \widehat{\beta}_1 r_0^{-1} \alpha_1^{1/2} \alpha_0^{-1/2} \|g\|_{L_\infty}^{1/2} \|g^{-1}\|_{L_\infty}^{1/2}, \quad \widehat{\mathcal{C}}_2 = \widehat{C}_7.
    \end{equation}
The constant $\widehat{C}_7$ is given by~\emph{\eqref{hatC7}}.
\end{theorem}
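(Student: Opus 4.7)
The statement to be proved is a direct consequence of the two partial estimates already established in the excerpt: estimate \eqref{cos_main_est_w_hatP}, which handles the ``low-frequency'' part of the smoothing operator carrying the projection $\widehat{P}$ and already has the optimal power $\mathcal{R}(\mathbf{k},\varepsilon)$ as smoothing factor, and estimate \eqref{cos_main_est_(I - hatP)}, which handles the ``high-frequency'' complement $I-\widehat{P}$ but only with smoothing factor $\mathcal{R}(\mathbf{k},\varepsilon)^{1/2}$. My plan is to split the operator $\mathcal{R}(\mathbf{k},\varepsilon)$ using the decomposition $I = \widehat{P} + (I-\widehat{P})$ and treat the two pieces separately.

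First I would write
\begin{equation*}
\bigl(\cos(\varepsilon^{-1}\tau \widehat{\mathcal A}(\mathbf{k})^{1/2}) - \cos(\varepsilon^{-1}\tau \widehat{\mathcal A}^0(\mathbf{k})^{1/2})\bigr)\mathcal{R}(\mathbf{k},\varepsilon) = T_1(\mathbf{k},\varepsilon,\tau) + T_2(\mathbf{k},\varepsilon,\tau),
\end{equation*}
where $T_1$ carries the factor $\mathcal{R}(\mathbf{k},\varepsilon)\widehat{P}$ and $T_2$ carries $\mathcal{R}(\mathbf{k},\varepsilon)(I-\widehat{P})$. For $T_1$, estimate \eqref{cos_main_est_w_hatP} gives at once
\begin{equation*}
\|T_1(\mathbf{k},\varepsilon,\tau)\|_{L_2(\Omega)\to L_2(\Omega)} \le (\widehat{C}^*_1 + \widehat{C}_7|\tau|)\varepsilon, \qquad \mathbf{k}\in\widetilde\Omega.
\end{equation*}

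For the term $T_2$, the essential observation is that $\mathcal{H}_0(\mathbf{k})$ acts as the scalar $|\mathbf{k}|^2$ on the subspace of constants and hence commutes with $\widehat{P}$; consequently $\mathcal{R}(\mathbf{k},\varepsilon)^{1/2}$ commutes with $\widehat{P}$ as well, and we can factor
\begin{equation*}
\mathcal{R}(\mathbf{k},\varepsilon)(I-\widehat{P}) = \mathcal{R}(\mathbf{k},\varepsilon)^{1/2}(I-\widehat{P})\,\mathcal{R}(\mathbf{k},\varepsilon)^{1/2}.
\end{equation*}
Applying \eqref{cos_main_est_(I - hatP)} to the first half and the trivial bound $\|\mathcal{R}(\mathbf{k},\varepsilon)^{1/2}\|\le 1$ to the trailing factor yields $\|T_2\|\le 2r_0^{-1}\varepsilon$ uniformly in $\tau\in\mathbb{R}$ and $\mathbf{k}\in\widetilde\Omega$.

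Combining the two bounds and absorbing the extra $2r_0^{-1}$ into the first constant gives the asserted inequality with $\widehat{\mathcal C}_1 = \widehat{C}^*_1 + 2r_0^{-1}$ (which has the same structure as \eqref{calC1_C2} up to renaming $\beta^*_1 \mapsto \widehat{\beta}_1$) and $\widehat{\mathcal C}_2 = \widehat{C}_7$. There is no real obstacle here: the entire content of the proof is the decomposition above, together with the observation that $\widehat{P}$ commutes with $\mathcal{R}(\mathbf{k},\varepsilon)$. The only point requiring a little care is checking that all constants are independent of $\boldsymbol\theta=\mathbf{k}/|\mathbf{k}|$, but this was already ensured in the preceding subsections where $\widehat{c}_*$, $\widehat{\delta}$, $\widehat{t}^{\,0}$ and the bound on $\|\widehat{X}_1(\boldsymbol\theta)\|$ were fixed uniformly in $\boldsymbol\theta$.
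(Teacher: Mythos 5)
Your proof is correct and is essentially the paper's own argument: the authors likewise obtain the theorem by combining \eqref{cos_main_est_w_hatP} for the $\widehat{P}$-part with \eqref{cos_main_est_(I - hatP)} for the $(I-\widehat{P})$-part, using $\|\mathcal{R}(\mathbf{k},\varepsilon)\|\le 1$ (equivalently your commuting-and-factoring of $\mathcal{R}(\mathbf{k},\varepsilon)^{1/2}$) and absorbing $2r_0^{-1}$ into the constant $\widehat{\mathcal{C}}_1$.
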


\subsection{Refinement of approximation 
in the case where~$\widehat{N}(\boldsymbol{\theta}) = 0$}
Now we apply Theorem~\ref{abstr_cos_enchanced_thrm_1}, assuming that~$\widehat{N}(\boldsymbol{\theta}) = 0$ for any~$\boldsymbol{\theta} \in \mathbb{S}^{d-1}$. Taking~\eqref{hatS_P=hatA^0_P} and~\eqref{R_P} into account, we have
\begin{multline*}
\| ( \cos (\varepsilon^{-1} \tau \widehat{\mathcal{A}}(\mathbf{k})^{1/2}) - \cos (\varepsilon^{-1} \tau \widehat{\mathcal{A}}^0(\mathbf{k})^{1/2})) \mathcal{R}(\mathbf{k}, \varepsilon)^{3/4} \widehat{P} \|_{L_2 (\Omega) \to L_2 (\Omega) }
\\
\le ( \widehat{C}'_1 + \widehat{C}_{11} | \tau | ) \varepsilon, \quad
\tau \in \mathbb{R}, \; \varepsilon > 0, \; |\mathbf{k}| \le \widehat{t}^{\,0}.
\end{multline*}
Here $\widehat{C}'_1 = \max \{2, 2\widehat{C}_1\}$, and the constant $\widehat{C}_{11}$ is given by
\begin{multline}
\label{hatC11}
\widehat{C}_{11} =
2^{1/2} \beta_1 r^{-2}_0 \alpha_1^{3/2} \alpha_0^{-1} \|g\|_{L_\infty}^{3/2}  \|g^{-1} \|_{L_\infty}   +  4 \beta_{10} r^{-2}_0 \alpha_1^{2} \alpha_0^{-3/2} \|g\|_{L_\infty}^{2}  \|g^{-1}\|_{L_\infty}^{3/2}
\\
+ 4 \beta_{11} r^{-2}_0 \alpha_1^{3} \alpha_0^{-5/2} \|g\|_{L_\infty}^{3} \|g^{-1}\|_{L_\infty}^{5/2}
+ 4 \beta_{12} r^{-2}_0 \alpha_1^{4} \alpha_0^{-7/2} \|g\|_{L_\infty}^{4} \|g^{-1}\|_{L_\infty}^{7/2}.
\end{multline}

Together with~\eqref{cos_main_est_k>hat_t^0} and~\eqref{cos_main_est_(I - hatP)} this implies the following result.

\begin{theorem}
    \label{cos_enchanced_thrm_1}
Let   $\widehat{N}(\boldsymbol{\theta})$ be the operator defined by~\emph{(\ref{N(theta)})}.
Suppose that~$\widehat{N}(\boldsymbol{\theta}) = 0$ for any $\boldsymbol{\theta} \in \mathbb{S}^{d-1}$.
Then
    \begin{multline*}
    \| ( \cos (\varepsilon^{-1} \tau \widehat{\mathcal{A}}(\mathbf{k})^{1/2})  - \cos (\varepsilon^{-1} \tau \widehat{\mathcal{A}}^0(\mathbf{k})^{1/2}) )\mathcal{R}(\mathbf{k}, \varepsilon)^{3/4}\|_{L_2(\Omega) \to L_2 (\Omega) }
\\
\le (\widehat{\mathcal{C}}_3 + \widehat{\mathcal{C}}_4 |\tau|) \varepsilon,\quad
\tau \in \mathbb{R}, \; \varepsilon > 0, \; \mathbf{k} \in \widetilde{\Omega},
    \end{multline*}
where $\widehat{\mathcal{C}}_3 = \max \{2, 2\widehat{C}_1\} + 2 r_0^{-1}$ and $\widehat{\mathcal{C}}_4 = \widehat{C}_{11}$.
The constants $\widehat{C}_1$ and $\widehat{C}_{11}$ are given by~\emph{\eqref{hatC1}} and \emph{\eqref{hatC11}}, respectively.
\end{theorem}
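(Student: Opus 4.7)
The plan is to split into two regions of $\mathbf{k}$ and separately deal with removing the projection $\widehat{P}$, mirroring the argument used for Theorem~\ref{cos_general_thrm}.

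First, for $|\mathbf{k}|\le \widehat{t}^{\,0}$, I would apply the abstract Theorem~\ref{abstr_cos_enchanced_thrm_1} to the family $\widehat{A}(t,\boldsymbol{\theta})=\widehat{\mathcal{A}}(t\boldsymbol{\theta})$. The hypothesis $\widehat{N}(\boldsymbol{\theta})=0$ translates exactly into the abstract assumption $N=0$ (uniformly in $\boldsymbol{\theta}$, via~\eqref{N(theta)}). Using the identities $\widehat{S}(\mathbf{k})\widehat{P}=\widehat{\mathcal{A}}^0(\mathbf{k})\widehat{P}$ from~\eqref{hatS_P=hatA^0_P} and $\mathcal{R}(\mathbf{k},\varepsilon)^{3/4}\widehat{P}=\varepsilon^{3/2}(t^2+\varepsilon^2)^{-3/4}\widehat{P}$ from~\eqref{R_P}, the abstract inequality~\eqref{abstr_cos_enchanced_est_1} yields the desired estimate with $\mathcal{R}(\mathbf{k},\varepsilon)^{3/4}\widehat{P}$ in place of $\mathcal{R}(\mathbf{k},\varepsilon)^{3/4}$. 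Uniformity of the constants in $\boldsymbol{\theta}$ follows from the $\boldsymbol{\theta}$-independent choices~\eqref{hatc_*}, \eqref{hatdelta_fixation}, \eqref{hatt0_fixation}, and the bound~\eqref{hatX_1_estmate}; inserting these into the expressions for $C_1$ and $C_{11}$ produces $\widehat{C}_1$ and~$\widehat{C}_{11}$.

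Second, for $\mathbf{k}\in\widetilde{\Omega}$ with $|\mathbf{k}|>\widehat{t}^{\,0}$, no spectral information is needed. The trivial estimate $\|\cos(\cdot)-\cos(\cdot)\|\le 2$ combined with $\|\mathcal{R}(\mathbf{k},\varepsilon)^{1/2}\widehat{P}\|\le(\widehat{t}^{\,0})^{-1}\varepsilon$ from~\eqref{R_hatP_est} and the submultiplicative bound $\|\mathcal{R}(\mathbf{k},\varepsilon)^{1/4}\|\le 1$ produces a bound of the form $2(\widehat{t}^{\,0})^{-1}\varepsilon$ for the operator applied to $\mathcal{R}(\mathbf{k},\varepsilon)^{3/4}\widehat{P}$. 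This is exactly estimate~\eqref{cos_main_est_k>hat_t^0}, only upgraded from the factor $\mathcal{R}^{1/2}$ to $\mathcal{R}^{3/4}$, which is cost-free since $\|\mathcal{R}^{1/4}\|\le 1$.

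Third, I would remove the projection by bounding the contribution of $I-\widehat{P}$. From~\eqref{R(k,eps)(I-P)_est} with $s=1$, one has $\|\mathcal{R}(\mathbf{k},\varepsilon)^{1/2}(I-\widehat{P})\|\le r_0^{-1}\varepsilon$; combining with $\|\cos(\cdot)-\cos(\cdot)\|\le 2$ and again $\|\mathcal{R}^{1/4}\|\le 1$ gives the bound $2r_0^{-1}\varepsilon$ for the cosine difference times $\mathcal{R}(\mathbf{k},\varepsilon)^{3/4}(I-\widehat{P})$, uniformly in $\mathbf{k}\in\widetilde{\Omega}$.

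Adding the three contributions and writing $\mathcal{R}^{3/4}=\mathcal{R}^{3/4}\widehat{P}+\mathcal{R}^{3/4}(I-\widehat{P})$ produces the claimed estimate with $\widehat{\mathcal{C}}_3=\max\{2,2\widehat{C}_1\}+2r_0^{-1}$ and $\widehat{\mathcal{C}}_4=\widehat{C}_{11}$. I expect no real obstacle here: all analytic work is already packaged in Theorem~\ref{abstr_cos_enchanced_thrm_1}, and the translation to the current setting is essentially bookkeeping. The only point requiring attention is checking that the abstract hypothesis $N=0$ is valid uniformly in $\boldsymbol{\theta}\in\mathbb{S}^{d-1}$, which is automatic from the pointwise hypothesis $\widehat{N}(\boldsymbol{\theta})=0$ together with the identification $N=N(\boldsymbol{\theta})=\widehat{N}(\boldsymbol{\theta})$ under the abstract scheme of Section~\ref{abstr_section_1}.
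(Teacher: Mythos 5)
Your proposal is correct and follows essentially the same route as the paper: apply the abstract Theorem~\ref{abstr_cos_enchanced_thrm_1} for $|\mathbf{k}|\le\widehat{t}^{\,0}$ using \eqref{hatS_P=hatA^0_P} and \eqref{R_P}, fall back on the trivial bound \eqref{cos_main_est_k>hat_t^0} for $|\mathbf{k}|>\widehat{t}^{\,0}$, and remove $\widehat{P}$ via \eqref{cos_main_est_(I - hatP)}, the last two steps costing nothing extra because $\|\mathcal{R}(\mathbf{k},\varepsilon)^{1/4}\|\le 1$. The identification of the abstract $N$ with $\widehat{N}(\boldsymbol{\theta})$ and the $\boldsymbol{\theta}$-uniformity of the constants are handled exactly as in the paper.
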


Recall that some sufficient conditions ensuring that~$\widehat{N}(\boldsymbol{\theta}) = 0$ for any \hbox{$\boldsymbol{\theta} \in \mathbb{S}^{d-1}$}
 are given in Proposition~\ref{N=0_proposit}.

\subsection{Refinement of approximation 
in the case where~$\widehat{N}_0(\boldsymbol{\theta}) = 0$}
\label{ench_approx2_section}

Now, we reject the assumption of Theorem~\ref{cos_enchanced_thrm_1}, but instead we assume that~$\widehat{N}_0(\boldsymbol{\theta}) = 0$
for any~$\boldsymbol{\theta}$. We may also assume that~$\widehat{N}(\boldsymbol{\theta}) = \widehat{N}_*(\boldsymbol{\theta}) \ne 0$ for some   $\boldsymbol{\theta}$,  and then at \textquotedblleft most\textquotedblright \ points~$\boldsymbol{\theta}$
 (otherwise, one can apply Theorem~\ref{cos_enchanced_thrm_1}.)
We would like to apply the~\textquotedblleft abstract\textquotedblright \ result (namely, Theorem~\ref{abstr_cos_enchanced_thrm_2}).
However, there is an additional difficulty related to the fact that the multiplicities of the eigenvalues of the germ~$\widehat{S} (\boldsymbol{\theta})$
 may change at some points~$\boldsymbol{\theta}$.
 Near such points the distance between some pair of different eigenvalues
 tends to zero, and we are not able to choose the parameters $\widehat{c}^{\circ}_{jl}$ and $\widehat{t}^{\,00}_{jl}$ to be independent of~$\boldsymbol{\theta}$. Therefore, we are forced to impose an additional condition.
We have to take care only about those pairs of eigenvalues for which the corresponding term in~\eqref{N*_invar_repr}
is not zero.  Since the number of different eigenvalues of the germ and their multiplicities may depend on~$\boldsymbol{\theta}$,
now it is more convenient to use the initial enumeration of the eigenvalues
$\widehat{\gamma}_1 (\boldsymbol{\theta}), \ldots , \widehat{\gamma}_n (\boldsymbol{\theta})$ of the germ $\widehat{S} (\boldsymbol{\theta})$
(each eigenvalue is repeated according to its multiplicity). We enumerate them in the nondecreasing order:
\begin{equation*}
\widehat{\gamma}_1 (\boldsymbol{\theta}) \le \widehat{\gamma}_2 (\boldsymbol{\theta}) \le \ldots \le \widehat{\gamma}_n (\boldsymbol{\theta}).
\end{equation*}
For each $\boldsymbol{\theta}$ denote by $\widehat{P}^{(k)} (\boldsymbol{\theta})$ the orthogonal projection of~$L_2 (\Omega; \mathbb{C}^n)$
onto the eigenspace of $\widehat{S} (\boldsymbol{\theta})$ corresponding to the eigenvalue $\widehat{\gamma}_k (\boldsymbol{\theta})$.
Clearly, for each $\boldsymbol{\theta}$ the operator $\widehat{P}^{(k)} (\boldsymbol{\theta})$ coincides with one of the projections
$\widehat{P}_j (\boldsymbol{\theta})$       introduced in Subsection~\ref{eigenval_multipl_section} (but the number $j$ may depend on~$\boldsymbol{\theta}$).

\begin{condition}
    \label{cond9}

    $1^\circ$. The operator $\widehat{N}_0(\boldsymbol{\theta})$ defined by~\emph{\eqref{N0_invar_repr}} is equal to zero: $\widehat{N}_0(\boldsymbol{\theta})=0$ for any $\boldsymbol{\theta} \in \mathbb{S}^{d-1}$.

$2^\circ$.  For any pair of indices $(k,r), 1 \le k,r \le n, k \ne r$, such that $\widehat{\gamma}_k (\boldsymbol{\theta}_0) = \widehat{\gamma}_r (\boldsymbol{\theta}_0) $ for some $\boldsymbol{\theta}_0 \in \mathbb{S}^{d-1}$, we have
$\widehat{P}^{(k)} (\boldsymbol{\theta}) \widehat{N} (\boldsymbol{\theta}) \widehat{P}^{(r)} (\boldsymbol{\theta}) = 0$ for any
\hbox{$\boldsymbol{\theta} \in \mathbb{S}^{d-1}$}.
\end{condition}

  Condition $2^\circ$ can be reformulated
    as follows: we assume that, for the
 \textquotedblleft blocks\textquotedblright \ $\widehat{P}^{(k)} (\boldsymbol{\theta}) \widehat{N} (\boldsymbol{\theta}) \widehat{P}^{(r)} (\boldsymbol{\theta})$ of the operator $\widehat{N} (\boldsymbol{\theta})$  that are not
    identically zero, the corresponding branches of the eigenvalues  $\widehat{\gamma}_k (\boldsymbol{\theta})$ and  $\widehat{\gamma}_r (\boldsymbol{\theta})$   do not intersect.

   Obviously, Condition~\ref{cond9} is ensured by the following more restrictive condition.

\begin{condition}
     \label{cond99}

         $1^\circ$.
The operator $\widehat{N}_0(\boldsymbol{\theta})$ defined by~\emph{\eqref{N0_invar_repr}} is equal to zero: $\widehat{N}_0(\boldsymbol{\theta})=0$ for any $\boldsymbol{\theta} \in \mathbb{S}^{d-1}$.

$2^\circ$.  Assume that the number $p$ of different eigenvalues of the spectral germ $\widehat{S}(\boldsymbol{\theta})$
does not depend on $\boldsymbol{\theta} \in \mathbb{S}^{d-1}$.
        \end{condition}

Under Condition~\ref{cond99},  denote different eigenvalues of the germ enumerated in the increasing order
 by $\widehat{\gamma}^{\circ}_1(\boldsymbol{\theta}), \ldots, \widehat{\gamma}^{\circ}_p(\boldsymbol{\theta})$.
Then their multiplicities $k_1, \ldots, k_p$ do not depend on $\boldsymbol{\theta} \in \mathbb{S}^{d-1}$.

\begin{remark}
Assumption $2^\circ$ of Condition~\emph{\ref{cond99}} is a fortiori satisfied, if the spectrum of the germ $\widehat{S}(\boldsymbol{\theta})$
is simple for any $\boldsymbol{\theta} \in \mathbb{S}^{d-1}$.
\end{remark}

So, we assume that Condition~\ref{cond9}  is satisfied.
We are interested only in the pairs of indices from the set
\begin{equation*}
\widehat{\mathcal{K}} := \{ (k,r) \colon 1 \le k,r \le n, \; k \ne r, \;  \widehat{P}^{(k)} (\boldsymbol{\theta}) \widehat{N} (\boldsymbol{\theta}) \widehat{P}^{(r)} (\boldsymbol{\theta}) \not\equiv 0 \}.
\end{equation*}
Denote
\begin{equation*}
\widehat{c}^{\circ}_{kr} (\boldsymbol{\theta}) := \min \{\widehat{c}_*, n^{-1} |\widehat{\gamma}_k (\boldsymbol{\theta}) - \widehat{\gamma}_r (\boldsymbol{\theta})| \}, \quad (k,r) \in \widehat{\mathcal{K}}.
\end{equation*}
Since $\widehat{S} (\boldsymbol{\theta})$  is continuous in $\boldsymbol{\theta} \in \mathbb{S}^{d-1}$ (this is a polynomial of the second degree),
then the perturbation theory of discrete spectrum implies that the functions
  $\widehat{\gamma}_j (\boldsymbol{\theta})$~are continuous  on the sphere~$\mathbb{S}^{d-1}$.
By Condition~\ref{cond9}($2^\circ$), for $(k,r) \in \widehat{\mathcal{K}}$ we
have~$|\widehat{\gamma}_k (\boldsymbol{\theta}) - \widehat{\gamma}_r (\boldsymbol{\theta})| > 0$ for any $\boldsymbol{\theta} \in \mathbb{S}^{d-1}$, whence
\begin{equation*}
\widehat{c}^{\circ}_{kr}:= \min_{\boldsymbol{\theta} \in \mathbb{S}^{d-1}} \widehat{c}^{\circ}_{kr} (\boldsymbol{\theta}) > 0, \quad (k,r) \in \widehat{\mathcal{K}}.
\end{equation*}
We put
\begin{equation}
\label{hatc^circ}
\widehat{c}^{\circ} := \min_{(k,r) \in \widehat{\mathcal{K}}} \widehat{c}^{\circ}_{kr}.
\end{equation}
Clearly, the number~\eqref{hatc^circ}~ is a realization of~\eqref{abstr_c^circ} chosen independently of $\boldsymbol{\theta}$.

Under Condition~\ref{cond9}, the number $\widehat{t}^{\,00}$ subject to~\eqref{abstr_t00}
also can be chosen independently of $\boldsymbol{\theta} \in \mathbb{S}^{d-1}$. Taking~\eqref{hatdelta_fixation} and~\eqref{hatX_1_estmate} into account, we put
\begin{equation}
\label{hatt00_fixation}
\widehat{t}^{\,00} = (8 \beta_2)^{-1} r_0 \alpha_1^{-3/2} \alpha_0^{1/2} \| g\|_{L_{\infty}}^{-3/2} \| g^{-1}\|_{L_{\infty}}^{-1/2} \widehat{c}^{\circ},
\end{equation}
where $\widehat{c}^{\circ}$ is defined by~\eqref{hatc^circ}. (The condition $\widehat{t}^{\,00} \le \widehat{t}^{\,0}$
 is valid automatically since $\widehat{c}^{\circ} \le \| \widehat{S} (\boldsymbol{\theta}) \| \le \alpha_1 \|g\|_{L_{\infty}}$.)

\begin{remark}

$1^\circ$.  Unlike $\widehat{t}^{\,0}$ \emph{(}see~\emph{\eqref{hatt0_fixation}}\emph{)}  that is controlled only in terms of
$r_0$, $\alpha_0$, $\alpha_1$, $\|g\|_{L_{\infty}}$, and $\|g^{-1}\|_{L_{\infty}}$, the number $\widehat{t}^{\,00}$
depends on the spectral characteristics of the germ, namely, on the  minimal distance between its different eigenvalues
$\widehat{\gamma}_k (\boldsymbol{\theta})$ and $ \widehat{\gamma}_r (\boldsymbol{\theta})$ \emph{(}where $(k,r)$
runs through $\widehat{\mathcal{K}}$\emph{)}.

$2^\circ$.  If we reject Condition~\emph{\ref{cond9}} and admit intersection of the branches
$\widehat{\gamma}_k (\boldsymbol{\theta})$ and $\widehat{\gamma}_r (\boldsymbol{\theta})$ \emph{(}for some $(k,r) \in \widehat{\mathcal{K}}$\emph{)}, then $\widehat{c}^{\circ}$ will be not positive definite, and we
will be not able to choose the number~$\widehat{t}^{\,00}$ independently of $\boldsymbol{\theta}$.
\end{remark}

Under Condition~\ref{cond9}, we apply Theorem~\ref{abstr_cos_enchanced_thrm_2} and obtain
\begin{multline}
\label{cos_enched_est_k<hat_t^00}
\| ( \cos (\varepsilon^{-1} \tau \widehat{\mathcal{A}}(\mathbf{k})^{1/2}) - \cos (\varepsilon^{-1} \tau \widehat{\mathcal{A}}^0(\mathbf{k})^{1/2})) \mathcal{R}(\mathbf{k}, \varepsilon)^{3/4} \widehat{P} \|_{L_2 (\Omega) \to L_2 (\Omega) }
\\
\le (\widehat{C}'_{15} + \widehat{C}_{16} | \tau |) \varepsilon, \quad
\tau \in \mathbb{R}, \; \varepsilon > 0, \; |\mathbf{k}| \le \widehat{t}^{\,00},
\end{multline}
where
\begin{equation}
\label{hatC'_15_16}
\widehat{C}'_{15} = \max\{2, 2 \widehat{C}_1 + \widehat{C}_{14}\}, \quad
\widehat{C}_{16} = \widehat{C}_{11} + \widehat{C}_{13}.
\end{equation}
The constants $\widehat{C}_{13}$ and $\widehat{C}_{14}$ are given by
\begin{align*}
\widehat{C}_{13} &= 4 \beta_{13} n^2    \alpha_0^{-3/2} \alpha_1^{4} \|g^{-1} \|_{L_\infty}^{3/2}    \|g\|_{L_\infty}^{4} r^{-2}_0  (\widehat{c}^{\circ})^{-2},
\\
\widehat{C}_{14} &= 2 \beta_{14} n^2   \alpha_0^{-1}  \alpha_1^{2} \|g^{-1}\|_{L_\infty}  \|g\|_{L_\infty}^{2} r_0^{-1}
 ( \widehat{c}^{\circ})^{-1} ,
\end{align*}
the constants $\widehat{C}_{1}$ and $\widehat{C}_{11}$ are defined by~\eqref{hatC1} and \eqref{hatC11}, respectively.

Similarly to~\eqref{cos_main_est_k>hat_t^0}, we have
\begin{multline}
\label{cos_enched_est_k>hat_t^00}
\| ( \cos (\varepsilon^{-1} \tau \widehat{\mathcal{A}}(\mathbf{k})^{1/2}) - \cos (\varepsilon^{-1} \tau \widehat{\mathcal{A}}^0(\mathbf{k})^{1/2}) ) \mathcal{R}(\mathbf{k}, \varepsilon)^{1/2}  \widehat{P} \|_{L_2(\Omega) \to L_2 (\Omega) }
\\
\le 2 (\widehat{t}^{\,00})^{-1} \varepsilon,\quad
\tau \in \mathbb{R}, \; \varepsilon > 0, \; \mathbf{k} \in \widetilde{\Omega}, \; |\mathbf{k}| > \widehat{t}^{\,00}.
\end{multline}

Now, relations~\eqref{cos_main_est_(I - hatP)}, \eqref{cos_enched_est_k<hat_t^00}, and \eqref{cos_enched_est_k>hat_t^00}
directly imply the following result.
\begin{theorem}
    \label{cos_enchanced_thrm_2}
     Suppose that Condition~\emph{\ref{cond9}} \emph{(}or more restrictive Condition~\emph{\ref{cond99}}\emph{)}
is satisfied. Then
     \begin{multline}
     \label{cos_enchanced_est_2}
     \| ( \cos (\varepsilon^{-1} \tau \widehat{\mathcal{A}}(\mathbf{k})^{1/2})  - \cos (\varepsilon^{-1} \tau \widehat{\mathcal{A}}^0(\mathbf{k})^{1/2}) ) \mathcal{R}(\mathbf{k}, \varepsilon)^{3/4}\|_{L_2(\Omega) \to L_2 (\Omega) }
\\
\le (\widehat{\mathcal{C}}_5 + \widehat{\mathcal{C}}_6 |\tau|) \varepsilon,
\quad \tau \in \mathbb{R}, \; \varepsilon > 0, \; \mathbf{k} \in \widetilde{\Omega},
     \end{multline}
where $\widehat{\mathcal{C}}_5 = \max \{\widehat{C}'_{15}, 2 (\widehat{t}^{\,00})^{-1}\} + 2r_0^{-1}$, $\widehat{\mathcal{C}}_6 = \widehat{C}_{16}$, and the constants $\widehat{C}'_{15}$, $\widehat{C}_{16}$, and $\widehat{t}^{\,00}$
are defined by~\emph{\eqref{hatC'_15_16}} and~\emph{\eqref{hatt00_fixation}}.
\end{theorem}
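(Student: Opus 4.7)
The plan is to split the quasimomentum $\mathbf{k}\in\widetilde{\Omega}$ at the threshold $\widehat{t}^{\,00}$ introduced in \eqref{hatt00_fixation} and to combine three ingredients already assembled above: the abstract refined estimate of Theorem \ref{abstr_cos_enchanced_thrm_2} (applied to the operator family $\widehat{A}(t,\boldsymbol{\theta})$) in the small-$|\mathbf{k}|$ regime, the trivial bound coming from the smoothing factor in the complementary regime, and the estimate \eqref{cos_main_est_(I - hatP)} that handles the part living in the orthogonal complement of $\widehat{\mathfrak{N}}$.

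For $|\mathbf{k}|\le\widehat{t}^{\,00}$ I set $\boldsymbol{\theta}=\mathbf{k}/|\mathbf{k}|$, $t=|\mathbf{k}|$, and invoke the abstract Theorem \ref{abstr_cos_enchanced_thrm_2} for the family $\widehat{A}(t,\boldsymbol{\theta})=\widehat{X}(t,\boldsymbol{\theta})^{*}\widehat{X}(t,\boldsymbol{\theta})$. Condition \ref{cond9}$(1^\circ)$ gives the required hypothesis $\widehat{N}_{0}(\boldsymbol{\theta})=0$. Replacing $\tau$ by $\varepsilon^{-1}\tau$, multiplying by the smoothing factor $\varepsilon^{3/2}(t^{2}+\varepsilon^{2})^{-3/4}$, and using the identities \eqref{hatS_P=hatA^0_P} and \eqref{R_P}, one arrives at \eqref{cos_enched_est_k<hat_t^00} with constants $\widehat{C}'_{15}+\widehat{C}_{16}|\tau|$ that are uniform in $\boldsymbol{\theta}$.

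For $|\mathbf{k}|>\widehat{t}^{\,00}$ I use the fact that each cosine is bounded by $1$ in operator norm, while \eqref{R_P} gives $\|\mathcal{R}(\mathbf{k},\varepsilon)^{1/2}\widehat{P}\|\le(\widehat{t}^{\,00})^{-1}\varepsilon$; this yields \eqref{cos_enched_est_k>hat_t^00}. Because $\|\mathcal{R}(\mathbf{k},\varepsilon)\|\le 1$, the same bound (up to the same constant) remains valid when the smoothing factor $\mathcal{R}^{1/2}\widehat{P}$ is replaced by $\mathcal{R}^{3/4}\widehat{P}$. Combining \eqref{cos_enched_est_k<hat_t^00} and \eqref{cos_enched_est_k>hat_t^00}, we obtain the desired bound on $\mathcal{R}(\mathbf{k},\varepsilon)^{3/4}\widehat{P}$ uniformly for $\mathbf{k}\in\widetilde{\Omega}$. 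Removing the projection $\widehat{P}$ is then immediate: estimate \eqref{cos_main_est_(I - hatP)} (which again uses $\|\cos(\cdot)\|\le 1$ together with the bound $\|\mathcal{R}(\mathbf{k},\varepsilon)^{1/2}(I-\widehat{P})\|\le r_{0}^{-1}\varepsilon$ derived from \eqref{R(k,eps)(I-P)_est} via $s=1$, and $\|\mathcal{R}\|\le 1$) dominates the contribution of $I-\widehat{P}$ by $2r_{0}^{-1}\varepsilon$. Writing $I=\widehat{P}+(I-\widehat{P})$ and taking the worst constant yields \eqref{cos_enchanced_est_2} with $\widehat{\mathcal{C}}_{5}=\max\{\widehat{C}'_{15},\,2(\widehat{t}^{\,00})^{-1}\}+2r_{0}^{-1}$ and $\widehat{\mathcal{C}}_{6}=\widehat{C}_{16}$.

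The main obstacle, and the only place Condition \ref{cond9}$(2^\circ)$ is really needed, is showing that $\widehat{t}^{\,00}$ can be chosen independent of $\boldsymbol{\theta}$, i.e.\ that the quantity $\widehat{c}^{\circ}$ defined in \eqref{hatc^circ} is strictly positive. Continuity on the compact sphere $\mathbb{S}^{d-1}$ of the eigenvalues $\widehat{\gamma}_{k}(\boldsymbol{\theta})$ of the germ $\widehat{S}(\boldsymbol{\theta})$ reduces this to showing that $|\widehat{\gamma}_{k}(\boldsymbol{\theta})-\widehat{\gamma}_{r}(\boldsymbol{\theta})|$ does not vanish on $\mathbb{S}^{d-1}$ for those pairs $(k,r)\in\widehat{\mathcal{K}}$ for which the off-diagonal block $\widehat{P}^{(k)}(\boldsymbol{\theta})\widehat{N}(\boldsymbol{\theta})\widehat{P}^{(r)}(\boldsymbol{\theta})$ is not identically zero, which is exactly Condition \ref{cond9}$(2^\circ)$. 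Without this non-crossing assumption, the uniform-in-$\boldsymbol{\theta}$ application of the abstract Theorem \ref{abstr_cos_enchanced_thrm_2} breaks down near branch-crossing directions, and the constants $\widehat{C}_{13}$, $\widehat{C}_{14}$ blow up through the factor $(\widehat{c}^{\circ})^{-2}$.
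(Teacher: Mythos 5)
Your proposal is correct and follows essentially the same route as the paper: the same split of $\widetilde{\Omega}$ at $\widehat{t}^{\,00}$, the same application of the abstract Theorem~\ref{abstr_cos_enchanced_thrm_2} (made uniform in $\boldsymbol{\theta}$ via the positivity of $\widehat{c}^{\circ}$ guaranteed by Condition~\ref{cond9}($2^\circ$) and the continuity of the $\widehat{\gamma}_k(\boldsymbol{\theta})$ on the compact sphere), the same trivial bound \eqref{cos_enched_est_k>hat_t^00} for $|\mathbf{k}|>\widehat{t}^{\,00}$, and the same removal of $\widehat{P}$ via \eqref{cos_main_est_(I - hatP)}, yielding exactly the stated constants.
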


The assumptions of Theorem~\ref{cos_enchanced_thrm_2}  are a fortiori satisfied in the \textquotedblleft real\textquotedblright \ case, if the
spectrum of the germ is simple (see Corollary~\ref{S_spec_simple_coroll}).
We arrive at the following corollary.

\begin{corollary}
    \label{cos_enchanced_2_coroll}
    Suppose that the matrices $b (\boldsymbol{\theta})$ and $g (\mathbf{x})$ have real entries.
Suppose that the spectrum of the germ $\widehat{S}(\boldsymbol{\theta})$  is simple for any $\boldsymbol{\theta} \in \mathbb{S}^{d-1}$.
Then estimate~\emph{\eqref{cos_enchanced_est_2}} holds for any $\tau \in \mathbb{R}, \varepsilon > 0$, and $\mathbf{k} \in \widetilde{\Omega}$.
\end{corollary}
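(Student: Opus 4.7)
The plan is to derive this corollary as an immediate specialization of Theorem~\ref{cos_enchanced_thrm_2}. The strategy is to verify that the hypotheses of the more restrictive Condition~\ref{cond99} hold under the assumptions stated in the corollary, after which \eqref{cos_enchanced_est_2} follows with no further work.

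First, I would handle part $2^\circ$ of Condition~\ref{cond99}. Since the spectrum of $\widehat{S}(\boldsymbol{\theta})$ is simple for every $\boldsymbol{\theta} \in \mathbb{S}^{d-1}$, the germ has exactly $n$ distinct eigenvalues for each direction. Consequently the number $p=p(\boldsymbol{\theta})$ of different eigenvalues is identically $n$ and, in particular, independent of $\boldsymbol{\theta}$; the multiplicities $k_1=\dots=k_n=1$ are also constant. This is precisely the content of $2^\circ$ in Condition~\ref{cond99}.

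Next, for part $1^\circ$ I would invoke Corollary~\ref{S_spec_simple_coroll} (the \textquotedblleft real-entries + simple spectrum\textquotedblright~statement): under the assumption that $b(\boldsymbol{\theta})$ and $g(\mathbf{x})$ have real entries and the germ $\widehat{S}(\boldsymbol{\theta})$ has simple spectrum for every $\boldsymbol{\theta}$, we have $\widehat{N}_0(\boldsymbol{\theta})=0$ for all $\boldsymbol{\theta} \in \mathbb{S}^{d-1}$. This is exactly part $1^\circ$ of Condition~\ref{cond99}.

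With both parts of Condition~\ref{cond99} verified, Theorem~\ref{cos_enchanced_thrm_2} applies and yields estimate \eqref{cos_enchanced_est_2} for all $\tau \in \mathbb{R}$, $\varepsilon>0$, and $\mathbf{k}\in \widetilde{\Omega}$, with constants $\widehat{\mathcal{C}}_5,\widehat{\mathcal{C}}_6$ of the stated form. There is no real obstacle here, since all the analytic work has been done in the preceding theorem; the only point worth emphasizing is that in the simple-spectrum regime the quantity $\widehat{c}^{\circ}$ defined in \eqref{hatc^circ} is automatically strictly positive (the branches $\widehat{\gamma}_k(\boldsymbol{\theta})$ are continuous on the compact sphere and never collide), so the choice of $\widehat{t}^{\,00}$ in \eqref{hatt00_fixation} is legitimate and $\boldsymbol{\theta}$-independent. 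This completes the proof of the corollary.
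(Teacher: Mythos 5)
Your proposal is correct and follows exactly the paper's route: the paper also derives the corollary by observing that real entries plus simple spectrum give $\widehat{N}_0(\boldsymbol{\theta})=0$ via Corollary~\ref{S_spec_simple_coroll}, that simple spectrum makes $p(\boldsymbol{\theta})\equiv n$ so Condition~\ref{cond99} (hence Condition~\ref{cond9}) holds, and then applies Theorem~\ref{cos_enchanced_thrm_2}. Your added remark that $\widehat{c}^{\circ}>0$ by continuity and compactness is the same justification the paper uses when defining $\widehat{t}^{\,00}$.
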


\subsection{The sharpness of the result in the general case}

Application of Theorem~\ref{abstr_s<2_general_thrm} allows us to confirm the sharpness of the result of Theorem~\ref{cos_general_thrm}
 in the general case.

\begin{theorem}
    \label{hat_s<2_thrm}
 Let $\widehat{N}_0 (\boldsymbol{\theta})$ be the operator defined by~\emph{\eqref{N0_invar_repr}}.
Suppose that  $\widehat{N}_0 (\boldsymbol{\theta}_0) \ne 0$ for some $\boldsymbol{\theta}_0 \in \mathbb{S}^{d-1}$.
Let $0 \ne \tau \in \mathbb{R}$ and $0 \le s < 2$. Then there does not exist a constant $\mathcal{C} (\tau) > 0$
such that the estimate
    \begin{equation}
    \label{hat_s<2_est_imp}
     \| ( \cos (\varepsilon^{-1} \tau \widehat{\mathcal{A}}(\mathbf{k})^{1/2})  - \cos (\varepsilon^{-1} \tau \widehat{\mathcal{A}}^0(\mathbf{k})^{1/2})) \mathcal{R}(\mathbf{k}, \varepsilon)^{s/2}\|_{L_2(\Omega) \to L_2 (\Omega) }  \le \mathcal{C} (\tau) \varepsilon
    \end{equation}
    holds for almost all $\mathbf{k} = t \boldsymbol{\theta} \in \widetilde{\Omega}$ and sufficiently small $\varepsilon > 0$.
\end{theorem}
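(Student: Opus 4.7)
The strategy is to reduce the statement to the abstract Theorem~\ref{abstr_s<2_general_thrm} applied to the family $A(t) := \widehat{A}(t,\boldsymbol{\theta}_0) = \widehat{\mathcal{A}}(t\boldsymbol{\theta}_0)$ in the \emph{fixed} direction $\boldsymbol{\theta}_0$. For this family the abstract operator $N_0$ coincides with $\widehat{N}_0(\boldsymbol{\theta}_0)$, which is nonzero by hypothesis, so the abstract theorem will furnish the required contradiction. I argue by contradiction: assume there exists $\mathcal{C}(\tau)>0$ such that \eqref{hat_s<2_est_imp} holds for a.e.\ $\mathbf{k}\in\widetilde{\Omega}$ and all sufficiently small $\varepsilon>0$.

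The first step is to upgrade the estimate from a.e.\ $\mathbf{k}$ to every $\mathbf{k}$ in $\widetilde{\Omega}$, and in particular to $\mathbf{k}=t\boldsymbol{\theta}_0$ for small $t>0$. For each fixed $\varepsilon>0$ the family $\widehat{\mathcal{A}}(\mathbf{k})$ is analytic in~$\mathbf{k}$ in the sense of Kato (its quadratic form has constant domain and depends quadratically on~$\mathbf{k}$), so its resolvent is norm-continuous in~$\mathbf{k}$. Consequently $\cos(\varepsilon^{-1}\tau\widehat{\mathcal{A}}(\mathbf{k})^{1/2})$ is norm-continuous in~$\mathbf{k}$, and the same holds for $\cos(\varepsilon^{-1}\tau\widehat{\mathcal{A}}^0(\mathbf{k})^{1/2})$ and for $\mathcal{R}(\mathbf{k},\varepsilon)^{s/2}$. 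The left-hand side of \eqref{hat_s<2_est_imp} is therefore continuous in~$\mathbf{k}$, and the inequality extends from a set of full measure to every $\mathbf{k}\in\widetilde{\Omega}$.

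The second step is to multiply on the right by $\widehat{P}$ and recognize the abstract estimate. By~\eqref{R_P}, $\mathcal{R}(\mathbf{k},\varepsilon)^{s/2}\widehat{P}=\varepsilon^{s}(t^{2}+\varepsilon^{2})^{-s/2}\widehat{P}$ with $t=|\mathbf{k}|$. Because $g^0$ is constant, $\widehat{P}$ reduces $\widehat{\mathcal{A}}^0(\mathbf{k})$ (both $\widehat{\mathfrak{N}}$ and $\widehat{\mathfrak{N}}^{\perp}$ are invariant, the latter by integration by parts with periodic boundary conditions), and identity~\eqref{hatS_P=hatA^0_P} gives
\[
\cos(\varepsilon^{-1}\tau\widehat{\mathcal{A}}^0(\mathbf{k})^{1/2})\widehat{P}=\cos\bigl(\varepsilon^{-1}\tau(t^{2}\widehat{S}(\boldsymbol{\theta}))^{1/2}\widehat{P}\bigr)\widehat{P}.
\]
Specializing to $\mathbf{k}=t\boldsymbol{\theta}_0$ and identifying $\widehat{A}(t,\boldsymbol{\theta}_0)$, $\widehat{S}(\boldsymbol{\theta}_0)$, $\widehat{P}$ with the abstract objects $A(t)$, $S$, $P$, the resulting inequality is exactly~\eqref{abstr_s<2_est_imp} for the abstract family, valid for all sufficiently small $|t|$ and $\varepsilon$. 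Since its abstract $N_0$ equals $\widehat{N}_0(\boldsymbol{\theta}_0)\ne 0$, Theorem~\ref{abstr_s<2_general_thrm} rules out such an estimate, completing the contradiction.

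The main obstacle is the passage from the a.e.~bound to a pointwise bound along the specific ray $\mathbf{k}=t\boldsymbol{\theta}_0$, which cannot be achieved by a trivial restriction argument since that ray has measure zero. This is handled by the norm-continuity of the operator-valued function under the norm in \eqref{hat_s<2_est_imp}, which in turn relies on the Kato analyticity of $\widehat{\mathcal{A}}(\mathbf{k})$ and the boundedness and continuity of $\mathcal{R}(\mathbf{k},\varepsilon)$ for fixed $\varepsilon$. Everything else is a direct transcription, as the essential nonvanishing mechanism has already been extracted in the proof of the abstract Theorem~\ref{abstr_s<2_general_thrm}.
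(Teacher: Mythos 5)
Your overall skeleton is the same as the paper's: reduce to the abstract Theorem~\ref{abstr_s<2_general_thrm} along the ray $\mathbf{k}=t\boldsymbol{\theta}_0$, the only obstacle being the upgrade from an a.e.\ bound to a bound on that measure-zero ray, which you propose to handle by continuity in $\mathbf{k}$. But the continuity claim as you state it has a genuine gap. You assert that norm-continuity of the resolvent of $\widehat{\mathcal{A}}(\mathbf{k})$ implies norm-continuity of $\cos(\varepsilon^{-1}\tau\widehat{\mathcal{A}}(\mathbf{k})^{1/2})$. That implication is false for unbounded operators: norm-resolvent continuity transfers to $f(A)$ only for functions $f$ that are continuous and have limits at infinity, and $\lambda\mapsto\cos(\varepsilon^{-1}\tau\sqrt{\lambda})$ oscillates. (For instance, $A_n=(1+n^{-1})A\to A$ in the norm-resolvent sense, yet $\cos(\tau A_n^{1/2})\not\to\cos(\tau A^{1/2})$ in norm when $A$ has unbounded spectrum, because the phase error $\tau(\sqrt{(1+n^{-1})\lambda}-\sqrt{\lambda})$ is of order $1$ for $\lambda\sim n^{2}$.) Multiplying by $\widehat{P}$ on the right does not rescue the argument, since $\widehat{P}$ is not a spectral projection of $\widehat{\mathcal{A}}(\mathbf{k})$ for $\mathbf{k}\ne 0$, so $\cos(\varepsilon^{-1}\tau\widehat{\mathcal{A}}(\mathbf{k})^{1/2})\widehat{P}$ still involves the full high-energy spectrum.

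The paper circumvents exactly this difficulty. After removing $(I-\widehat{P})$ via~\eqref{cos_main_est_(I - hatP)}, it uses the threshold estimate $\|\widehat{F}(\mathbf{k})-\widehat{P}\|\le\widehat{C}_1|\mathbf{k}|$ to replace $\widehat{P}$ by the genuine spectral projection $\widehat{F}(\mathbf{k})$ of $\widehat{\mathcal{A}}(\mathbf{k})$ for the interval $[0,\widehat\delta]$, and then proves Lipschitz continuity in $\mathbf{k}$ of $\widehat{F}(\mathbf{k})$, of $\widehat{\mathcal{A}}(\mathbf{k})^{1/2}\widehat{F}(\mathbf{k})$, and finally of $\cos(\tau\widehat{\mathcal{A}}(\mathbf{k})^{1/2})\widehat{F}(\mathbf{k})$; this is Lemma~\ref{Lipschitz_lemma}, whose proof (via the integral representation of the square root and a Duhamel argument for the exponential) occupies about a page and is the real technical content of the passage from a.e.\ $\mathbf{k}$ to the ray $t\boldsymbol{\theta}_0$. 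Your proposal skips this entirely. To repair it you must insert $\widehat{F}(\mathbf{k})$ (and note that $\widehat{P}$ \emph{is} the spectral projection of the effective operator $\widehat{\mathcal{A}}^0(\mathbf{k})$, so the same lemma applies to the second term), establish the Lipschitz bounds of Lemma~\ref{Lipschitz_lemma}, and only then conclude continuity of the operator under the norm sign and pass to $\mathbf{k}=t\boldsymbol{\theta}_0$; the final application of Theorem~\ref{abstr_s<2_general_thrm} is then as you describe.
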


For the proof we need the following lemma which is similar to Lemma~9.10 from~\cite{Su4}.

\begin{lemma}
    \label{Lipschitz_lemma}
Let $\widehat{\delta}$ and $\widehat{t}^{\,0}$  be given by~\emph{\eqref{hatdelta_fixation}} and~\emph{\eqref{hatt0_fixation}}, respectively.
Let $\widehat{F} (\mathbf{k}) = \widehat{F} (t, \boldsymbol{\theta})$~be the spectral projection of the operator $\widehat{\mathcal{A}}(\mathbf{k})$ for the interval $[0, \widehat{\delta}]$. Then for $|\mathbf{k}| \le \widehat{t}^{\,0}$ and $|\mathbf{k}_0| \le \widehat{t}^{\,0}$ we have
    \begin{gather}
    \label{F(k) - F(k_0)}
    \| \widehat{F} (\mathbf{k}) - \widehat{F} (\mathbf{k}_0)\|_{L_2(\Omega) \to L_2 (\Omega) } \le \widehat{C}' | \mathbf{k} - \mathbf{k}_0|, \\
    \label{hatA^1/2 (k) - hatA^1/2 (k_0)}
    \| \widehat{\mathcal{A}}(\mathbf{k})^{1/2} \widehat{F} (\mathbf{k}) - \widehat{\mathcal{A}}(\mathbf{k}_0)^{1/2} \widehat{F} (\mathbf{k}_0)\|_{L_2(\Omega) \to L_2 (\Omega) } \le \widehat{C}'' | \mathbf{k} - \mathbf{k}_0|, \\
    \label{cos_hatA^1/2 (k) - cos_hatA^1/2 (k_0)}
\begin{aligned}
  \| \cos(\tau \widehat{\mathcal{A}}(\mathbf{k})^{1/2}) \widehat{F} (\mathbf{k}) - \cos(\tau \widehat{\mathcal{A}}(\mathbf{k}_0)^{1/2}) \widehat{F} (\mathbf{k}_0)\|_{L_2(\Omega) \to L_2 (\Omega) }
\\
\le (2 \widehat{C}' + \widehat{C}'') | \mathbf{k} - \mathbf{k}_0|.
\end{aligned}
    \end{gather}
\end{lemma}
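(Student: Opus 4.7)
The plan is to prove the three bounds in sequence, following the strategy of Lemma~9.10 of~\cite{Su4}. Everything rests on a single contour: since $8\widehat{\delta}<\widehat{d}^{\,0}$ and $\widehat{F}(t;[0,\widehat{\delta}])=\widehat{F}(t;[0,3\widehat{\delta}])$ for $|{\mathbf k}|\le \widehat{t}^{\,0}$, the spectrum of $\widehat{\mathcal{A}}({\mathbf k})$ has a gap $(\widehat{\delta},3\widehat{\delta})$ uniformly in ${\mathbf k}$, so I can fix a closed contour $\gamma$ encircling $[0,\widehat{\delta}]$ at a positive distance $\kappa\asymp\widehat{\delta}$ from the rest of $\sigma(\widehat{\mathcal{A}}({\mathbf k}))$, valid for all $|{\mathbf k}|\le\widehat{t}^{\,0}$. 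Then
\[
\widehat{F}({\mathbf k}) \;=\; -\frac{1}{2\pi i}\oint_{\gamma}(\widehat{\mathcal{A}}({\mathbf k})-zI)^{-1}\,dz.
\]

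For \eqref{F(k) - F(k_0)} I would apply the second resolvent identity inside the integral and then exploit the pencil structure to write
\[
\widehat{\mathcal{A}}({\mathbf k})-\widehat{\mathcal{A}}({\mathbf k}_0) \;=\; \widehat{X}({\mathbf k})^{*}W + W^{*}\widehat{X}({\mathbf k}_0), \qquad W:=\widehat{X}({\mathbf k})-\widehat{X}({\mathbf k}_0)=h\,b({\mathbf k}-{\mathbf k}_0),
\]
so $W$ is a bounded multiplication operator with $\|W\|\le\alpha_1^{1/2}\|h\|_{L_\infty}|{\mathbf k}-{\mathbf k}_0|$. The bounds $\|(\widehat{\mathcal{A}}({\mathbf k})-zI)^{-1}\|\le\kappa^{-1}$ and $\|\widehat{X}({\mathbf k})(\widehat{\mathcal{A}}({\mathbf k})-zI)^{-1}\|^{2}\le\sup_{\lambda\in\sigma(\widehat{\mathcal{A}}({\mathbf k}))}\lambda/|\lambda-z|^{2}$ (from the spectral theorem) are both uniform for $z\in\gamma$, so integrating along $\gamma$ yields~\eqref{F(k) - F(k_0)} with $\widehat{C}'$ depending only on $\widehat{\delta}$, $\widehat{d}^{\,0}$, $\alpha_1$, and $\|h\|_{L_\infty}$.

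For \eqref{hatA^1/2 (k) - hatA^1/2 (k_0)}, rather than forcing $\sqrt{z}$ through the contour near the branch point at $0$, I would use the Balakrishnan representation
\[
\widehat{\mathcal{A}}({\mathbf k})^{1/2}\widehat{F}({\mathbf k}) \;=\; \frac{1}{\pi}\int_{0}^{\infty}s^{-1/2}\,\widehat{\mathcal{A}}({\mathbf k})(\widehat{\mathcal{A}}({\mathbf k})+sI)^{-1}\widehat{F}({\mathbf k})\,ds,
\]
combined with the contour formula for $\widehat{F}({\mathbf k})$. Writing the integrand as $\widehat{F}({\mathbf k}) - s(\widehat{\mathcal{A}}({\mathbf k})+sI)^{-1}\widehat{F}({\mathbf k})$ and bounding the latter via the contour representation of $\widehat{F}({\mathbf k})$ (so that $s^{-1/2}$-integrability is preserved through the resolvent identity and the pencil factorization used above), I obtain~\eqref{hatA^1/2 (k) - hatA^1/2 (k_0)}.

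For \eqref{cos_hatA^1/2 (k) - cos_hatA^1/2 (k_0)}, I would use the key observation that $\widehat{\mathcal{A}}({\mathbf k})$ and $\widehat{F}({\mathbf k})$ commute, so setting $B({\mathbf k}):=\widehat{\mathcal{A}}({\mathbf k})^{1/2}\widehat{F}({\mathbf k})$ (bounded and self-adjoint, norm $\le\widehat{\delta}^{1/2}$) one has the identity
\[
\cos(\tau\widehat{\mathcal{A}}({\mathbf k})^{1/2})\widehat{F}({\mathbf k}) \;=\; \cos(\tau B({\mathbf k}))\widehat{F}({\mathbf k}).
\]
Then split
\begin{equation*}
\cos(\tau B({\mathbf k}))\widehat{F}({\mathbf k}) - \cos(\tau B({\mathbf k}_0))\widehat{F}({\mathbf k}_0) = \cos(\tau B({\mathbf k}))[\widehat{F}({\mathbf k})-\widehat{F}({\mathbf k}_0)] + [\cos(\tau B({\mathbf k}))-\cos(\tau B({\mathbf k}_0))]\widehat{F}({\mathbf k}_0).
\end{equation*}
The first piece is controlled by $\widehat{C}'|{\mathbf k}-{\mathbf k}_0|$ using \eqref{F(k) - F(k_0)} and $\|\cos(\tau B)\|\le 1$; the second by the Duhamel bound $\|\cos(\tau B_1)-\cos(\tau B_2)\|\le|\tau|\|B_1-B_2\|$ for bounded self-adjoint operators, together with~\eqref{hatA^1/2 (k) - hatA^1/2 (k_0)}.

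The main obstacle is the square-root step: the spectrum of $\widehat{\mathcal{A}}({\mathbf k})$ includes $0$, which is the branch point of $\sqrt{z}$, so the naive contour representation of $\widehat{\mathcal{A}}({\mathbf k})^{1/2}\widehat{F}({\mathbf k})$ is delicate and one has to either use a non-holomorphic functional-calculus argument (Helffer--Sj\"ostrand) or route the estimate through the Balakrishnan integral, carefully tracking the pencil decomposition of $\widehat{\mathcal{A}}({\mathbf k})-\widehat{\mathcal{A}}({\mathbf k}_0)$ to keep the $s^{-1/2}$-integrability intact and absorb the mildly singular behavior of $s(\widehat{\mathcal{A}}({\mathbf k})+sI)^{-1}$ near $s=0$.
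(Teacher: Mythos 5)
Your treatments of \eqref{F(k) - F(k_0)} and \eqref{cos_hatA^1/2 (k) - cos_hatA^1/2 (k_0)} are sound. For the first, the paper simply cites \cite[Lemma~9.10]{Su4}, but your Riesz-projection argument with the factorization $\widehat{\mathcal{A}}(\mathbf{k})-\widehat{\mathcal{A}}(\mathbf{k}_0)=\widehat{X}(\mathbf{k})^*W+W^*\widehat{X}(\mathbf{k}_0)$ and the uniform spectral gap $(\widehat\delta,3\widehat\delta)$ is a legitimate route. For the third, your splitting plus the Duhamel bound $\|\cos(\tau B_1)-\cos(\tau B_2)\|\le|\tau|\,\|B_1-B_2\|$ is essentially the paper's argument (the paper runs it through $e^{i\tau\widehat{\mathcal{A}}(\mathbf{k})^{1/2}}$ and a $\Sigma$-integration); note that both you and the paper actually obtain a constant containing $|\tau|$ in front of $\widehat C''$, which is harmless since the lemma is only used for fixed $\tau$.

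The gap is in \eqref{hatA^1/2 (k) - hatA^1/2 (k_0)}, which is the substance of the lemma. Rewriting the Balakrishnan integrand as $\widehat F(\mathbf{k})-s(\widehat{\mathcal{A}}(\mathbf{k})+sI)^{-1}\widehat F(\mathbf{k})$ achieves nothing: neither piece of the difference is separately integrable against $s^{-1/2}$ at $s=\infty$, and recombining them returns you to the original integrand. What actually makes the integral close is a specific cancellation that your sketch never produces. One must split $\widehat{\mathcal{A}}(\mathbf{k})\widehat F(\mathbf{k})-\widehat{\mathcal{A}}(\mathbf{k}_0)\widehat F(\mathbf{k}_0)$ into easy terms involving $\widehat F(\mathbf{k})-\widehat F(\mathbf{k}_0)$ and the hard term $\widehat F(\mathbf{k})\bigl(\widehat{\mathcal{A}}(\mathbf{k})-\widehat{\mathcal{A}}(\mathbf{k}_0)\bigr)\widehat F(\mathbf{k}_0)$; the latter is estimated through the sesquilinear form, using $\|\widehat{\mathcal{A}}(\mathbf{k})^{1/2}\widehat F(\mathbf{k})\|\lesssim|\mathbf{k}|$, by $C(|\mathbf{k}|+|\mathbf{k}_0|)\,|\mathbf{k}-\mathbf{k}_0|$, and then the two flanking resolvents give
\[
\int_0^\infty s^{1/2}\,(\widehat c_*|\mathbf{k}|^2+s)^{-1}(\widehat c_*|\mathbf{k}_0|^2+s)^{-1}\,ds \;\asymp\; \widehat c_*^{-1/2}(|\mathbf{k}|+|\mathbf{k}_0|)^{-1},
\]
which exactly cancels the factor $(|\mathbf{k}|+|\mathbf{k}_0|)$. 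Without this cancellation the constant degenerates as $\mathbf{k},\mathbf{k}_0\to 0$ — this is precisely the branch point of $\sqrt{\cdot}$ (which is not operator Lipschitz), and it is the quadratic vanishing of $\widehat{\mathcal{A}}(\mathbf{k})\widehat F(\mathbf{k})$ at $\mathbf{k}=0$ that saves the day. You correctly identify this as the main obstacle but then assert it away ("carefully tracking the pencil decomposition to keep the $s^{-1/2}$-integrability intact") rather than carrying out the estimate; the Helffer--Sj\"ostrand alternative you mention would not give a Lipschitz bound for the square root either. As written, the proof of \eqref{hatA^1/2 (k) - hatA^1/2 (k_0)} is incomplete.
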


\begin{proof}
Estimate~(\ref{F(k) - F(k_0)}) has been proved in~\cite[Lemma~9.10]{Su4}.

Let us prove~\eqref{hatA^1/2 (k) - hatA^1/2 (k_0)}. Let $\mathbf{k}, \mathbf{k}_0 \ne 0$.
We use the following representation
\begin{equation*}
    \widehat{\mathcal{A}}(\mathbf{k})^{1/2} \widehat{F} (\mathbf{k})  = \frac{1}{\pi} \int_{0}^{\infty} s^{-1/2} (\widehat{\mathcal{A}}(\mathbf{k}) \widehat{F} (\mathbf{k}) + sI)^{-1} \widehat{\mathcal{A}}(\mathbf{k}) \widehat{F} (\mathbf{k}) \, ds.
\end{equation*}
Then
$$
    \widehat{\mathcal{A}}(\mathbf{k})^{1/2} \widehat{F} (\mathbf{k}) - \widehat{\mathcal{A}}(\mathbf{k}_0)^{1/2} \widehat{F} (\mathbf{k}_0) =  \frac{1}{\pi} \int_{0}^{\infty} s^{-1/2}\Upsilon(s,\mathbf{k},\mathbf{k}_0)\, ds,
$$
where
$$
\Upsilon(s,\mathbf{k},\mathbf{k}_0):=
(\widehat{\mathcal{A}}(\mathbf{k}) \widehat{F} (\mathbf{k}) + sI)^{-1} \widehat{\mathcal{A}}(\mathbf{k}) \widehat{F} (\mathbf{k}) - (\widehat{\mathcal{A}}(\mathbf{k}_0) \widehat{F} (\mathbf{k}_0) + sI)^{-1} \widehat{\mathcal{A}}(\mathbf{k}_0) \widehat{F} (\mathbf{k}_0).
$$
It is easily seen that
\begin{multline*}
\Upsilon(s,\mathbf{k},\mathbf{k}_0)
\\
=
 s (\widehat{\mathcal{A}}(\mathbf{k}) \widehat{F} (\mathbf{k}) + sI)^{-1} \left( \widehat{\mathcal{A}}(\mathbf{k}) \widehat{F} (\mathbf{k}) - \widehat{\mathcal{A}}(\mathbf{k}_0) \widehat{F} (\mathbf{k}_0)\right)  (\widehat{\mathcal{A}}(\mathbf{k}_0) \widehat{F} (\mathbf{k}_0) + sI)^{-1}
\\
 =    \Upsilon_1(s,\mathbf{k}, \mathbf{k}_0) + \Upsilon_2(s,\mathbf{k}, \mathbf{k}_0) + \Upsilon_3(s, \mathbf{k}, \mathbf{k}_0),
\end{multline*}
where
\begin{align*}
 \Upsilon_1(s,\mathbf{k}, \mathbf{k}_0) =& \,s
(\widehat{\mathcal{A}}(\mathbf{k})  + sI)^{-1}
\\
&\times \left( \widehat{F} (\mathbf{k}) \widehat{\mathcal{A}}(\mathbf{k}) \widehat{F} (\mathbf{k}_0)  - \widehat{F} (\mathbf{k}) \widehat{\mathcal{A}}(\mathbf{k}_0) \widehat{F} (\mathbf{k}_0)\right)  (\widehat{\mathcal{A}}(\mathbf{k}_0) + sI)^{-1},
\\
 \Upsilon_2(s,\mathbf{k}, \mathbf{k}_0) =&
 - (I - \widehat{F} (\mathbf{k})) \widehat{\mathcal{A}}(\mathbf{k}_0) \widehat{F} (\mathbf{k}_0) (\widehat{\mathcal{A}}(\mathbf{k}_0) + sI)^{-1},
\\
\Upsilon_3(s,\mathbf{k}, \mathbf{k}_0) =&\,
 (\widehat{\mathcal{A}}(\mathbf{k})  + sI)^{-1} \widehat{F} (\mathbf{k}) \widehat{\mathcal{A}}(\mathbf{k}) (I-\widehat{F} (\mathbf{k}_0)).
\end{align*}
Hence,
$$
\widehat{\mathcal{A}}(\mathbf{k})^{1/2} \widehat{F} (\mathbf{k}) - \widehat{\mathcal{A}}(\mathbf{k}_0)^{1/2} \widehat{F} (\mathbf{k}_0) =
\Omega_1(\mathbf{k}, \mathbf{k}_0)+ \Omega_2(\mathbf{k}, \mathbf{k}_0) + \Omega_3(\mathbf{k}, \mathbf{k}_0),
$$
where
$$
\Omega_j(\mathbf{k}, \mathbf{k}_0) = \frac{1}{\pi} \int_{0}^{\infty} s^{-1/2} \Upsilon_j(s,\mathbf{k}, \mathbf{k}_0)\,ds,\quad j=1,2,3.
$$

To estimate $\Omega_1(\mathbf{k}, \mathbf{k}_0)$,
consider the difference of the sesquilinear forms of the operators
$\widehat{\mathcal{A}}(\mathbf{k})$ and $\widehat{\mathcal{A}}(\mathbf{k}_0)$
on the elements  $\mathbf{u}, \mathbf{v} \in \widetilde{H}^1 (\Omega; \mathbb{C}^n)$:
\begin{multline*}
\widehat{\mathfrak{a}} (\mathbf{k}) [\mathbf{u}, \mathbf{v}] - \widehat{\mathfrak{a}} (\mathbf{k}_0) [\mathbf{u}, \mathbf{v}] = \\ =
( g b(\mathbf{k} - \mathbf{k}_0) \mathbf{u}, b(\mathbf{D} + \mathbf{k}_0) \mathbf{v} )_{L_2 (\Omega)} + ( g b(\mathbf{D} + \mathbf{k}) \mathbf{u}, b(\mathbf{k} - \mathbf{k}_0) \mathbf{v})_{L_2 (\Omega)}.
\end{multline*}
Combining this with~\eqref{rank_alpha_ineq}, we see that
\begin{multline}
\label{9.23a}
|\widehat{\mathfrak{a}} (\mathbf{k}) [\mathbf{u}, \mathbf{v}] - \widehat{\mathfrak{a}} (\mathbf{k}_0) [\mathbf{u}, \mathbf{v}]| \le \\ \le \alpha_1^{1/2} \| g \|_{L_\infty}^{1/2}  |\mathbf{k}-\mathbf{k}_0| \left( \| \mathbf{u} \|_{L_2}  \| \widehat{\mathcal{A}}(\mathbf{k}_0)^{1/2} \mathbf{v} \|_{L_2} +  \| \widehat{\mathcal{A}}(\mathbf{k})^{1/2} \mathbf{u} \|_{L_2}  \|\mathbf{v} \|_{L_2}\right).
\end{multline}
Substituting $\mathbf{u} = \widehat{F} (\mathbf{k}) \boldsymbol{\varphi}$ and $\mathbf{v} =  \widehat{F} (\mathbf{k}_0) \boldsymbol{\psi}$
with $\boldsymbol{\varphi}, \boldsymbol{\psi} \in L_2 (\Omega; \mathbb{C}^n)$ in \eqref{9.23a},
and taking into account that
\begin{equation}
\label{hat_sqrtA(k)F(k)_up_est}
\| \widehat{\mathcal{A}}(\mathbf{k})^{1/2} \widehat{F} (\mathbf{k}) \|_{L_2(\Omega) \to L_2 (\Omega) } \le
\beta_3^{1/2} \alpha_1^{1/2} \| g \|_{L_{\infty}}^{1/2} |\mathbf{k}|, \qquad |\mathbf{k}| \le \widehat{t}^{\,0},
\end{equation}
 (which follows from~\eqref{abstr_A(t)F(t)_est}, \eqref{abstr_C3}, and~\eqref{hatX_1_estmate}),
we have
\begin{multline*}
\| \widehat{F} (\mathbf{k}) \widehat{\mathcal{A}}(\mathbf{k}) \widehat{F} (\mathbf{k}_0)  - \widehat{F} (\mathbf{k}) \widehat{\mathcal{A}}(\mathbf{k}_0) \widehat{F} (\mathbf{k}_0) \|_{L_2(\Omega) \to L_2 (\Omega) }
\\
\le  \beta_3^{1/2} \alpha_1 \| g \|_{L_{\infty}}   (|\mathbf{k}| + |\mathbf{k}_0|) |\mathbf{k} - \mathbf{k}_0|, \quad |\mathbf{k}|,|\mathbf{k}_0|  \le \widehat{t}^{\,0}.
\end{multline*}
Combining this with~\eqref{A(k)_nondegenerated_and_c_*}, we obtain
\begin{equation*}
\| \Omega_1(\mathbf{k}, \mathbf{k}_0) \|_{L_2(\Omega) \to L_2 (\Omega) } \le
\beta_3^{1/2}  \alpha_1 \| g \|_{L_{\infty}} \widehat{c}_*^{-1/2} |\mathbf{k} - \mathbf{k}_0|, \quad  0 <  |\mathbf{k}|,|\mathbf{k}_0|  \le \widehat{t}^{\,0}.
\end{equation*}
Now, from~\eqref{A(k)_nondegenerated_and_c_*}, \eqref{F(k) - F(k_0)}, and~\eqref{hat_sqrtA(k)F(k)_up_est} we deduce
\begin{equation*}
\| \Omega_2(\mathbf{k}, \mathbf{k}_0) \|_{L_2(\Omega) \to L_2 (\Omega) } \le
\widehat{C}' \beta_3 \alpha_1 \| g \|_{L_{\infty}} \widehat{c}_*^{-1/2} \widehat{t}^{\,0} |\mathbf{k} - \mathbf{k}_0|, \quad  0 <  |\mathbf{k}|,|\mathbf{k}_0|  \le \widehat{t}^{\,0}.
\end{equation*}
 The term $\Omega_3(\mathbf{k}, \mathbf{k}_0)$
satisfies the same estimate. Thus, estimate~\eqref{hatA^1/2 (k) - hatA^1/2 (k_0)} is proved in the case where $\mathbf{k}, \mathbf{k}_0 \ne 0$.
If, for instance, $\mathbf{k}_0 = 0$, then~\eqref{hatA^1/2 (k) - hatA^1/2 (k_0)} follows directly from~\eqref{hat_sqrtA(k)F(k)_up_est} and  the relation $\widehat{\mathcal{A}}(0)^{1/2} \widehat{F} (0) = 0$.

Let us prove estimate~\eqref{cos_hatA^1/2 (k) - cos_hatA^1/2 (k_0)}. We have
\begin{multline}
\label{cos_k_k0_f1}
e^{i \tau \widehat{\mathcal{A}}(\mathbf{k})^{1/2}} \widehat{F} (\mathbf{k}) - e^{i\tau \widehat{\mathcal{A}}(\mathbf{k}_0)^{1/2}} \widehat{F} (\mathbf{k}_0) = e^{i \tau \widehat{\mathcal{A}}(\mathbf{k})^{1/2}} \widehat{F} (\mathbf{k}) (\widehat{F} (\mathbf{k}) - \widehat{F} (\mathbf{k}_0)) + \\ + (\widehat{F} (\mathbf{k}) - \widehat{F} (\mathbf{k}_0)) e^{i \tau \widehat{\mathcal{A}}(\mathbf{k}_0)^{1/2}} \widehat{F} (\mathbf{k}_0) + \Xi (\tau, \mathbf{k}, \mathbf{k}_0),
\end{multline}
where
\begin{equation*}
\Xi (\tau, \mathbf{k}, \mathbf{k}_0) = e^{i \tau \widehat{\mathcal{A}}(\mathbf{k})^{1/2}} \widehat{F} (\mathbf{k}) \widehat{F} (\mathbf{k}_0) - \widehat{F} (\mathbf{k}) e^{i\tau \widehat{\mathcal{A}}(\mathbf{k}_0)^{1/2}} \widehat{F} (\mathbf{k}_0).
\end{equation*}
The sum of the first two terms in~\eqref{cos_k_k0_f1} does not exceed $2 \widehat{C}' | \mathbf{k}- \mathbf{k}_0|$,
in view of~(\ref{F(k) - F(k_0)}). The third term can be written as
\begin{align*}
\Xi (\tau, \mathbf{k}, \mathbf{k}_0) &= e^{i \tau \widehat{\mathcal{A}}(\mathbf{k})^{1/2}} \Sigma(\tau, \mathbf{k}, \mathbf{k}_0),\\
\Sigma(\tau, \mathbf{k}, \mathbf{k}_0) &= \widehat{F} (\mathbf{k}) \widehat{F} (\mathbf{k}_0) - e^{-i\tau \widehat{\mathcal{A}}(\mathbf{k})^{1/2}} \widehat{F} (\mathbf{k}) e^{i \tau \widehat{\mathcal{A}}(\mathbf{k}_0)^{1/2}} \widehat{F}(\mathbf{k}_0).
\end{align*}
Obviously, $\Sigma(0, \mathbf{k}, \mathbf{k}_0) = 0$, and
 $\Sigma'(\tau, \mathbf{k}, \mathbf{k}_0) = \frac{d \Sigma(\tau, \mathbf{k}, \mathbf{k}_0)}{d \tau}$ is given by
\begin{multline*}
\Sigma'(\tau, \mathbf{k}, \mathbf{k}_0)
\\
= i \widehat{F} (\mathbf{k}) e^{-i\tau \widehat{\mathcal{A}}(\mathbf{k})^{1/2}} (\widehat{\mathcal{A}}(\mathbf{k})^{1/2} \widehat{F} (\mathbf{k}) - \widehat{\mathcal{A}}(\mathbf{k_0})^{1/2} \widehat{F} (\mathbf{k_0}))
e^{i\tau \widehat{\mathcal{A}}(\mathbf{k_0})^{1/2}} \widehat{F} (\mathbf{k_0}).
\end{multline*}
Integrating over the interval $[0, \tau]$ and taking~\eqref{hatA^1/2 (k) - hatA^1/2 (k_0)} into account, we obtain
\begin{equation*}
\| \Xi (\tau, \mathbf{k}, \mathbf{k}_0) \|_{L_2(\Omega) \to L_2 (\Omega) } = \|\Sigma(\tau, \mathbf{k}, \mathbf{k}_0)\|_{L_2(\Omega) \to L_2 (\Omega) } \le \widehat{C}'' |\tau| | \mathbf{k} - \mathbf{k}_0|.
\end{equation*}
We arrive at~(\ref{cos_hatA^1/2 (k) - cos_hatA^1/2 (k_0)}).
\end{proof}

\noindent \textbf{Proof of Theorem~\ref{hat_s<2_thrm}}. \, It suffices to assume that $1 \le s < 2 $.
We prove by contradiction.  Fix $\tau \ne 0$.
Assume that for some $1 \le s < 2$ there exists a constant $\mathcal{C}(\tau) > 0$  such that estimate~\eqref{hat_s<2_est_imp}
holds for almost every $\mathbf{k} \in \widetilde{\Omega}$ and sufficiently small $\varepsilon > 0$.
By~\eqref{R_P} and~\eqref{cos_main_est_(I - hatP)},   it follows that there exists a constant  $\widetilde{\mathcal{C}}(\tau) > 0$
such that
\begin{multline}
\label{hat_s<2_proof_f1}
\| ( \cos (\varepsilon^{-1} \tau \widehat{\mathcal{A}}(\mathbf{k})^{1/2})  - \cos (\varepsilon^{-1} \tau \widehat{\mathcal{A}}^0(\mathbf{k})^{1/2})) \widehat{P}\|_{L_2(\Omega) \to L_2 (\Omega) } \varepsilon^s (|\mathbf{k}|^2 + \varepsilon^2)^{-s/2}
\\
 \le \widetilde{\mathcal{C}}(\tau) \varepsilon
\end{multline}
for almost every $\mathbf{k} \in \widetilde{\Omega}$ and sufficiently small $\varepsilon > 0$.

Now, let $|\mathbf{k}| \le \widehat{t}^{\,0}$. By \eqref{abstr_F(t)_threshold_1},
\begin{equation}
\label{hat_s<2_proof_f2}
\| \widehat{F} (\mathbf{k}) - \widehat{P} \|_{L_2 (\Omega) \to L_2 (\Omega)} \le \widehat{C}_1 |\mathbf{k}|, \quad |\mathbf{k}| \le \widehat{t}^{\,0}.
\end{equation}
From~\eqref{hat_s<2_proof_f1} and~\eqref{hat_s<2_proof_f2}
 it follows that there exists a constant $\check{\mathcal C}(\tau)>0$ such that
\begin{multline}
\label{hat_s<2_proof_f3}
\|  \cos (\varepsilon^{-1} \tau \widehat{\mathcal{A}}(\mathbf{k})^{1/2}) \widehat{F} (\mathbf{k})  - \cos (\varepsilon^{-1} \tau \widehat{\mathcal{A}}^0(\mathbf{k})^{1/2}) \widehat{P}\|_{L_2(\Omega) \to L_2 (\Omega) } \varepsilon^s (|\mathbf{k}|^2 + \varepsilon^2)^{-s/2}
\\
\le \check{\mathcal{C}}(\tau) \varepsilon
\end{multline}
for almost every $\mathbf{k}$ in the ball $|\mathbf{k}| \le \widehat{t}^{\,0}$
 and sufficiently small $\varepsilon > 0$.

Observe that  $\widehat{P}$ is the spectral projection of the operator $\widehat{\mathcal{A}}^0 (\mathbf{k})$ for the interval $[0, \widehat{\delta}]$. Applying Lemma~\ref{Lipschitz_lemma} to $\widehat{\mathcal{A}} (\mathbf{k})$ and $\widehat{\mathcal{A}}^0 (\mathbf{k})$,
 we conclude that for fixed $\tau$ and $\varepsilon$ the operator under the norm sign in~\eqref{hat_s<2_proof_f3} is continuous with respect to $\mathbf{k}$ in the ball $|\mathbf{k}| \le \widehat{t}^{\,0}$. Consequently, estimate~\eqref{hat_s<2_proof_f3} holds for
all $\mathbf{k}$ in that ball. In particular, it holds at the point $\mathbf{k} = t\boldsymbol{\theta}_0$ if $t \le \widehat{t}^{\,0}$. Applying~\eqref{hat_s<2_proof_f2} once more, we see that there exists a constant  $\check{\mathcal{C}}'(\tau)>0$ such that
\begin{multline}
\label{hat_s<2_proof_f4}
\| ( \cos (\varepsilon^{-1} \tau \widehat{\mathcal{A}}(t\boldsymbol{\theta}_0)^{1/2}) - \cos (\varepsilon^{-1} \tau \widehat{\mathcal{A}}^0(t\boldsymbol{\theta}_0)^{1/2})) \widehat{P}\|_{L_2(\Omega) \to L_2 (\Omega) } \varepsilon^s (t^2 + \varepsilon^2)^{-s/2}
 \\
\le \check{\mathcal{C}}'(\tau) \varepsilon
\end{multline}
for all $t \le \widehat{t}^{\,0}$ and sufficiently small $\varepsilon$.

Estimate~\eqref{hat_s<2_proof_f4} corresponds to the abstract estimate~\eqref{abstr_s<2_est_imp}.
Since $\widehat{N}_0 (\boldsymbol{\theta}_0) \ne 0$, applying Theorem~\ref{abstr_s<2_general_thrm}, we
arrive at a contradiction.   $\square$

\section{The operator $\mathcal{A} (\mathbf{k})$. Application of the scheme of Section~\ref{abstr_sandwiched_section}}

\subsection{The operator $\mathcal{A} (\mathbf{k})$}

We apply the scheme of Section~\ref{abstr_sandwiched_section} to study the operator
$\mathcal{A} (\mathbf{k}) = f^* \widehat{\mathcal{A}} (\mathbf{k}) f$.
Now  $\mathfrak{H} = \widehat{\mathfrak{H}} = L_2 (\Omega; \mathbb{C}^n)$, $\mathfrak{H}_* = L_2 (\Omega; \mathbb{C}^m)$, the role of  $A(t)$ is played by $A(t, \boldsymbol{\theta}) = \mathcal{A}(\mathbf{k})$, the role of $\widehat{A}(t)$ is played by $\widehat{A}(t, \boldsymbol{\theta}) = \widehat{\mathcal{A}}(\mathbf{k})$. Next, the  isomorphism $M$
is the operator of multiplication by the matrix-valued function $f(\mathbf{x})$.
The operator $Q$ is the operator of multiplication by the matrix-valued function
\begin{equation*}
Q(\mathbf{x}) = (f (\mathbf{x}) f (\mathbf{x})^*)^{-1}.
\end{equation*}

The block of the operator $Q$ in the subspace $\widehat{\mathfrak{N}}$ (see~\eqref{Ker3}) is the operator of multiplication by the constant matrix
\begin{equation*}
\overline{Q} = (\underline{f f^*})^{-1} = |\Omega|^{-1} \int_{\Omega} (f (\mathbf{x}) f (\mathbf{x})^*)^{-1} d \mathbf{x}.
\end{equation*}
Next, $M_0$ is the operator of multiplication by the constant matrix
\begin{equation}
\label{f_0}
f_0 = (\overline{Q})^{-1/2} = (\underline{f f^*})^{1/2}.
\end{equation}
Note that
\begin{equation}
\label{f0}
| f_0 | \le \| f \|_{L_{\infty}}, \qquad | f_0^{-1} | \le \| f^{-1} \|_{L_{\infty}}.
\end{equation}

In $L_2 (\mathbb{R}^d; \mathbb{C}^n)$,  we define the operator
\begin{equation}
\label{A0}
\mathcal{A}^0 := f_0 \widehat{\mathcal{A}}^0 f_0 = f_0 b(\mathbf{D})^* g^0 b(\mathbf{D}) f_0.
\end{equation}
Let $\mathcal{A}^0 (\mathbf{k})$~be the corresponding family of operators in $L_2 (\Omega; \mathbb{C}^n)$. Then
\begin{equation*}
\mathcal{A}^0 (\mathbf{k}) = f_0 \widehat{\mathcal{A}}^0 (\mathbf{k}) f_0.
\end{equation*}
By~\eqref{Ker3} and~\eqref{hatS_P=hatA^0_P}, we have
\begin{equation}
\label{f_0 hatS f_0 P = A^0}
f_0 \widehat{S} (\mathbf{k}) f_0 \widehat{P} = \mathcal{A}^0 (\mathbf{k}) \widehat{P}.
\end{equation}

\subsection{The analytic branches of eigenvalues and eigenvectors}
\label{sndw_eigenvalues_and_eigenvectors_section}
According to~\eqref{abstr_S_and_S_hat_relation},  the spectral germ $S(\boldsymbol{\theta})$ of the operator $A (t, \boldsymbol{\theta})$  acting in the subspace $\mathfrak{N}$ (see \eqref{Ker2}) is represented as
\begin{equation*}
S(\boldsymbol{\theta}) = P f^* b(\boldsymbol{\theta})^* g^0 b(\boldsymbol{\theta}) f|_{\mathfrak{N}},
\end{equation*}
where $P$~is the orthogonal projection of $L_2 (\Omega; \mathbb{C}^n)$ onto $\mathfrak{N}$.

The analytic (in $t$) branches of the eigenvalues
$\lambda_l (t, \boldsymbol{\theta})$ and the branches of the eigenvectors $\varphi_l (t, \boldsymbol{\theta})$ of $A (t, \boldsymbol{\theta})$
 admit the power series expansions of the form~\eqref{abstr_A(t)_eigenvalues_series}, \eqref{abstr_A(t)_eigenvectors_series}
 with the coefficients depending on $\boldsymbol{\theta}$:
\begin{gather}
\label{A_eigenvalues_series}
\lambda_l (t, \boldsymbol{\theta}) = \gamma_l (\boldsymbol{\theta}) t^2 + \mu_l (\boldsymbol{\theta}) t^3 + \ldots, \qquad l = 1, \ldots, n,
\\
\label{A_eigenvectors_series}
\varphi_l (t, \boldsymbol{\theta}) = \omega_l (\boldsymbol{\theta}) + t \psi^{(1)}_l (\boldsymbol{\theta}) + \ldots, \qquad l = 1, \ldots, n.
\end{gather}
The vectors $\omega_1 (\boldsymbol{\theta}), \ldots, \omega_n (\boldsymbol{\theta})$ form an orthonormal basis in the subspace~$\mathfrak{N}$
(see \eqref{Ker2}), and the vectors
\begin{equation*}
\zeta_l (\boldsymbol{\theta}) = f \omega_l (\boldsymbol{\theta}), \qquad l = 1, \ldots, n,
\end{equation*}
form a basis in $\widehat{\mathfrak{N}}$~(see~\eqref{Ker3})  orthonormal with the weight $\overline{Q}$, i.~e.,
$(\overline{Q} \zeta_l (\boldsymbol{\theta}), \zeta_j (\boldsymbol{\theta})) = \delta_{jl}, \; j, l = 1, \ldots,n$.

The numbers $\gamma_l (\boldsymbol{\theta})$ and the elements $\omega_l (\boldsymbol{\theta})$
are eigenvalues and eigenvectors of the spectral germ $S(\boldsymbol{\theta})$.
However, it is more convenient to turn to the generalized spectral problem for $\widehat{S}(\boldsymbol{\theta})$. According to~\eqref{abstr_hatS_gener_spec_problem}, the numbers $\gamma_l (\boldsymbol{\theta})$ and the elements $\zeta_l (\boldsymbol{\theta})$
 are eigenvalues and eigenvectors of the following generalized spectral problem:
\begin{equation}
\label{hatS_gener_spec_problem}
b(\boldsymbol{\theta})^* g^0 b(\boldsymbol{\theta}) \zeta_l (\boldsymbol{\theta}) = \gamma_l (\boldsymbol{\theta}) \overline{Q} \zeta_l (\boldsymbol{\theta}), \qquad l = 1, \ldots, n.
\end{equation}

\subsection{The operator $\widehat{N}_Q (\boldsymbol{\theta})$}
We need to describe the operator $\widehat{N}_Q$ (in abstract terms it was defined in Subsection~\ref{abstr_hatZ_Q_and_hatN_Q_section}).
Let $\Lambda_Q(\mathbf{x})$ be the $\Gamma$-periodic solution of the problem
\begin{equation}
\label{equation_for_Lambda_Q}
b(\mathbf{D})^* g(\mathbf{x}) (b(\mathbf{D}) \Lambda_Q(\mathbf{x}) + \mathbf{1}_m) = 0, \qquad \int_{\Omega} Q(\mathbf{x}) \Lambda_Q(\mathbf{x}) \, d \mathbf{x} = 0.
\end{equation}
Clearly, $\Lambda_Q(\mathbf{x})$ differs from the periodic solution  $\Lambda(\mathbf{x})$ of the problem~\eqref{equation_for_Lambda}
by a constant summand:
\begin{equation}
\label{Lambda_Q=Lambda+Lambda_Q^0}
\Lambda_Q(\mathbf{x}) = \Lambda(\mathbf{x}) + \Lambda_Q^0, \qquad \Lambda_Q^0 = -(\overline{Q})^{-1} (\overline{Q \Lambda})
\end{equation}

As shown in~\cite[Section~5]{BSu3}, the operator $\widehat{N}_Q (\boldsymbol{\theta})$ takes the form
\begin{equation}
\label{N_Q(theta)}
\widehat{N}_Q (\boldsymbol{\theta}) = b(\boldsymbol{\theta})^* L_Q (\boldsymbol{\theta}) b(\boldsymbol{\theta}) \widehat{P},
\end{equation}
where $L_Q (\boldsymbol{\theta})$~is an ($m \times m$)-matrix given by
\begin{equation}
\label{L_Q(theta)}
L_Q (\boldsymbol{\theta}) = | \Omega |^{-1} \int_{\Omega} (\Lambda_Q(\mathbf{x})^*b(\boldsymbol{\theta})^* \widetilde{g} (\mathbf{x}) + \widetilde{g} (\mathbf{x})^* b(\boldsymbol{\theta}) \Lambda_Q(\mathbf{x}))\, d \mathbf{x}.
\end{equation}
Combining~\eqref{Lambda_Q=Lambda+Lambda_Q^0}, \eqref{L_Q(theta)}, and~\eqref{g0}, \eqref{L(theta)},
we see that
\begin{equation*}
L_Q (\boldsymbol{\theta}) = L (\boldsymbol{\theta}) + L_Q^0 (\boldsymbol{\theta}), \qquad L_Q^0 (\boldsymbol{\theta}) = (\Lambda_Q^0)^* b(\boldsymbol{\theta})^* g^0 + g^0 b(\boldsymbol{\theta}) \Lambda_Q^0.
\end{equation*}

Observe that $L_Q (\mathbf{k}) := t L_Q (\boldsymbol{\theta}), \mathbf{k} \in \mathbb{R}^d$,  is  a Hermitian matrix-valued function first order homogeneous in $\mathbf{k}$. We put $\widehat{N}_Q (\mathbf{k}) := t^3 \widehat{N}_Q (\boldsymbol{\theta}), \mathbf{k} \in \mathbb{R}^d$. Then $\widehat{N}_Q (\mathbf{k}) = b(\mathbf{k})^* L_Q (\mathbf{k}) b(\mathbf{k})\widehat{P}$.
The matrix-valued function $b(\mathbf{k})^* L_Q (\mathbf{k}) b(\mathbf{k})$ is a homogeneous polynomial of the third degree  in~$\mathbf{k} \in \mathbb{R}^d$. It follows that either $\widehat{N}_Q (\boldsymbol{\theta}) = 0$ for any $\boldsymbol{\theta} \in \mathbb{S}^{d-1}$,
or $\widehat{N}_Q (\boldsymbol{\theta}) \ne 0$ for \textquotedblleft most\textquotedblright \ points $\boldsymbol{\theta}$
(except for the zeroes of this polynomial).

Some cases where the operator~\eqref{N_Q(theta)} is equal to zero were distinguished
in~\cite[Section~5]{BSu3}.

\begin{proposition}
    \label{N_Q=0_proposit}
   Suppose that at least one of the following conditions is satisfied:

$1^\circ$.
The operator $\mathcal{A}$ has the form $\mathcal{A} = f(\mathbf{x})^*\mathbf{D}^* g(\mathbf{x}) \mathbf{D}f(\mathbf{x})$, where $g(\mathbf{x})$~ is a symmetric matrix with real entries.

$2^\circ$. Relations~\emph{\eqref{g0=overline_g_relat}} are satisfied, i.~e., $g^0 = \overline{g}$.

\noindent
    Then $\widehat{N}_Q (\boldsymbol{\theta}) = 0$ for any $\boldsymbol{\theta} \in \mathbb{S}^{d-1}$.
\end{proposition}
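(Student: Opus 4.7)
The plan is to handle the two cases separately. In both I will use the explicit formula (\ref{L_Q(theta)}) for $L_Q(\boldsymbol{\theta})$ together with the splitting $\Lambda_Q = \Lambda + \Lambda_Q^0$ from (\ref{Lambda_Q=Lambda+Lambda_Q^0}), and show that $L_Q(\boldsymbol{\theta})$ either vanishes identically or has a conjugation symmetry that forces $b(\boldsymbol{\theta})^* L_Q(\boldsymbol{\theta}) b(\boldsymbol{\theta}) = 0$.

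Case $2^\circ$ is immediate. The equivalent reformulation (\ref{g0=overline_g_relat}) of $g^0 = \overline{g}$ says $b(\mathbf{D})^* g = 0$, so the constant function $\Lambda \equiv 0$ already satisfies (\ref{equation_for_Lambda}). By uniqueness of the mean-zero periodic solution, the $\Lambda$ in (\ref{equation_for_Lambda}) is identically zero, and then (\ref{Lambda_Q=Lambda+Lambda_Q^0}) forces $\Lambda_Q^0 = 0$, hence $\Lambda_Q \equiv 0$. Inspecting (\ref{L_Q(theta)}) we read off $L_Q(\boldsymbol{\theta}) \equiv 0$, and therefore $\widehat{N}_Q(\boldsymbol{\theta}) = 0$.

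For case $1^\circ$, interpreted as the scalar acoustics setting $n = 1$, $b(\mathbf{D}) = \mathbf{D}$, $m = d$, with $b(\boldsymbol{\theta}) = \boldsymbol{\theta} \in \mathbb{R}^d$, I will first recall the reality argument from Subsection 8 preceding Proposition \ref{N_0=0_proposit}: under the hypothesis that $b(\boldsymbol{\theta})$ and $g(\mathbf{x})$ both have real entries, the mean-zero periodic solution $\Lambda$ of (\ref{equation_for_Lambda}) has purely imaginary entries, and consequently $\widetilde{g}(\mathbf{x}) = g(\mathbf{x})(b(\mathbf{D})\Lambda + \mathbf{1}_m)$ has real entries. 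Scalarity of $f$ makes $Q(\mathbf{x}) = |f(\mathbf{x})|^{-2}$ real-valued, so $\overline{Q}^{-1}$ is real and
\[
\Lambda_Q^0 = -\overline{Q}^{-1}\,\overline{Q\Lambda}
\]
is purely imaginary; hence so is $\Lambda_Q$. Since every factor in (\ref{L_Q(theta)}) other than $\Lambda_Q$ and $\Lambda_Q^*$ is real, the Hermitian matrix $L_Q(\boldsymbol{\theta})$ has purely imaginary entries, so $L_Q(\boldsymbol{\theta}) = i M_Q(\boldsymbol{\theta})$ with $M_Q(\boldsymbol{\theta})$ real antisymmetric. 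Reality of $\boldsymbol{\theta}$ and antisymmetry of $M_Q$ then give
\[
\widehat{N}_Q(\boldsymbol{\theta})\widehat{P} = b(\boldsymbol{\theta})^* L_Q(\boldsymbol{\theta}) b(\boldsymbol{\theta})\,\widehat{P} = i\,\boldsymbol{\theta}^T M_Q(\boldsymbol{\theta}) \boldsymbol{\theta}\,\widehat{P} = 0.
\]

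The only step demanding genuine care is the handling of the constant summand $\Lambda_Q^0$ in case $1^\circ$: one must verify that it does not destroy the imaginary character of $\Lambda$. This is precisely where the scalarity of $f$ (ensuring that $Q$ is real) is used; without it, the real and imaginary parts of $\Lambda_Q^0$ need not vanish separately, and the parity argument on $L_Q(\boldsymbol{\theta})$ would break down. Beyond this bookkeeping, the argument is essentially a rerun of the proof of Proposition \ref{N=0_proposit}, and no serious obstacle is anticipated.
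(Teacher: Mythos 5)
Your proof is correct and follows essentially the route the paper relies on (it cites [BSu3, Section 5], and spells out the same reality argument in the paragraphs immediately following the proposition): in case $2^\circ$ the relations \eqref{g0=overline_g_relat} force $\Lambda\equiv 0$, hence $\Lambda_Q\equiv 0$ and $L_Q(\boldsymbol{\theta})=0$; in case $1^\circ$ ($n=1$, $Q=|f|^{-2}$ real) the matrix $\Lambda_Q$ is purely imaginary, so the $1\times 1$ Hermitian quantity $b(\boldsymbol{\theta})^*L_Q(\boldsymbol{\theta})b(\boldsymbol{\theta})$ is purely imaginary and therefore vanishes. No gaps.
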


Recall that (see Subsection~\ref{abstr_hatZ_Q_and_hatN_Q_section}) $\widehat{N}_Q (\boldsymbol{\theta}) = \widehat{N}_{0, Q} (\boldsymbol{\theta}) + \widehat{N}_{*,Q} (\boldsymbol{\theta})$. By~\eqref{abstr_hatN_0Q_N_*Q},
\begin{equation*}
\widehat{N}_{0, Q} (\boldsymbol{\theta}) = \sum_{l=1}^{n} \mu_l (\boldsymbol{\theta}) (\cdot, \overline{Q} \zeta_l(\boldsymbol{\theta}))_{L_2(\Omega)} \overline{Q} \zeta_l(\boldsymbol{\theta}).
\end{equation*}
We have
\begin{equation}
\label{hatN_0Q(theta)_matrix_elem}
(\widehat{N}_Q (\boldsymbol{\theta}) \zeta_l (\boldsymbol{\theta}), \zeta_l (\boldsymbol{\theta}))_{L_2 (\Omega)} = (\widehat{N}_{0,Q} (\boldsymbol{\theta}) \zeta_l (\boldsymbol{\theta}), \zeta_l (\boldsymbol{\theta}))_{L_2 (\Omega)} = \mu_l (\boldsymbol{\theta}), \  l=1, \ldots, n.
\end{equation}

Now, suppose that the matrices $b(\boldsymbol{\theta})$, $g (\mathbf{x})$, and $Q(\mathbf{x})$~have \emph{real entries}.
Then the matrix $\Lambda_Q (\mathbf{x})$ (see~\eqref{equation_for_Lambda_Q}) has purely imaginary entries, and
$\widetilde{g} (\mathbf{x})$ and $g^0$~have real entries. In this case $L_Q (\boldsymbol{\theta})$ (see~\eqref{L_Q(theta)}) and  $b(\boldsymbol{\theta})^* L_Q (\boldsymbol{\theta}) b(\boldsymbol{\theta})$~are Hermitian matrices with purely imaginary entries. If the analytic branches of the eigenvalues  $\lambda_l (t, \boldsymbol{\theta})$ and the analytic branches of the eigenvectors $\varphi_l (t, \boldsymbol{\theta})$ of the operator $A (t, \boldsymbol{\theta})$ can be chosen so that the vectors $\zeta_l (\boldsymbol{\theta}) = f \omega_l (\boldsymbol{\theta}), \; l = 1, \ldots,n,$ are real, then, by~\eqref{hatN_0Q(theta)_matrix_elem}, we have $\mu_l (\boldsymbol{\theta}) = 0, \; l = 1, \ldots, n$, i.~e., $\widehat{N}_{0,Q} (\boldsymbol{\theta}) = 0$. We arrive at the following statement.

\begin{proposition}
    Suppose that the matrices $b(\boldsymbol{\theta})$, $g (\mathbf{x})$, and $Q(\mathbf{x})$~have real entries. Suppose that in the expansions~\emph{\eqref{A_eigenvectors_series}}  the \textquotedblleft embrios\textquotedblright \ $\omega_l (\boldsymbol{\theta}), \; l = 1, \ldots, n,$ can be chosen so that the vectors  $\zeta_l (\boldsymbol{\theta}) = f \omega_l (\boldsymbol{\theta})$ are real. Then in~\emph{\eqref{A_eigenvalues_series}} we have $\mu_l (\boldsymbol{\theta}) = 0$,  $l=1, \ldots, n,$
i.~e., $\widehat{N}_{0,Q} (\boldsymbol{\theta}) = 0$ for any $\boldsymbol{\theta} \in \mathbb{S}^{d-1}$.
\end{proposition}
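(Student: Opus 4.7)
The plan is to adapt the parity argument sketched just before Proposition~\ref{N_0=0_proposit} to the sandwiched setting, using $\widehat{N}_Q(\boldsymbol{\theta})$ in place of $\widehat{N}(\boldsymbol{\theta})$ and the $\overline{Q}$-orthonormal vectors $\zeta_l(\boldsymbol{\theta}) = f \omega_l(\boldsymbol{\theta})$ in place of $\widehat{\omega}_l(\boldsymbol{\theta})$. The pivotal identity is~\eqref{hatN_0Q(theta)_matrix_elem}, which writes $\mu_l(\boldsymbol{\theta})$ as the diagonal matrix element $(\widehat{N}_Q(\boldsymbol{\theta}) \zeta_l(\boldsymbol{\theta}), \zeta_l(\boldsymbol{\theta}))_{L_2(\Omega)}$. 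Hence it suffices to prove that, under the reality hypotheses, the $(n\times n)$ action of $\widehat{N}_Q(\boldsymbol{\theta})$ on $\widehat{\mathfrak{N}}$ is given by a Hermitian matrix with purely imaginary entries, because the quadratic form of such a matrix vanishes on real vectors.

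First I would check that $\Lambda_Q(\mathbf{x})$ has purely imaginary entries. Splitting $\Lambda(\mathbf{x})$ of~\eqref{equation_for_Lambda} into real and imaginary parts $\Lambda_R + i\Lambda_I$ and using $b(\mathbf{D}) = -i \sum_l b_l \partial_l$ with real $b_l$ and real $g(\mathbf{x})$, one sees that $b(\mathbf{D})^* g(\mathbf{x}) b(\mathbf{D})$ is a real differential operator while $b(\mathbf{D})^* g(\mathbf{x}) \mathbf{1}_m$ is purely imaginary. Taking the real part of~\eqref{equation_for_Lambda} forces $\Lambda_R$ to lie in the kernel of $b(\mathbf{D})^* g(\mathbf{x}) b(\mathbf{D})$ on zero-mean $\widetilde{H}^1$-matrices, and uniqueness of the $\Gamma$-periodic solution gives $\Lambda_R = 0$. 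Combined with $\Lambda_Q^0 = -(\overline{Q})^{-1}(\overline{Q\Lambda})$ from~\eqref{Lambda_Q=Lambda+Lambda_Q^0} and the reality of $\overline{Q}$ and $Q$, this makes $\Lambda_Q^0$ and therefore $\Lambda_Q(\mathbf{x})$ purely imaginary. As a consequence $b(\mathbf{D})\Lambda_Q(\mathbf{x}) + \mathbf{1}_m$ is real-valued, and $\widetilde{g}(\mathbf{x})$ defined in~\eqref{g_tilde} is a real matrix-valued function.

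Plugging these observations into~\eqref{L_Q(theta)} shows that both $\Lambda_Q(\mathbf{x})^* b(\boldsymbol{\theta})^* \widetilde{g}(\mathbf{x})$ and its adjoint have purely imaginary entries at every $\mathbf{x}$; after integration, $L_Q(\boldsymbol{\theta})$ is an $(m\times m)$ Hermitian matrix with purely imaginary entries. Sandwiching with the real matrices $b(\boldsymbol{\theta})$ and $b(\boldsymbol{\theta})^*$ preserves this structure, so by~\eqref{N_Q(theta)} the operator $\widehat{N}_Q(\boldsymbol{\theta})$ restricted to $\widehat{\mathfrak{N}}$ is represented by an $(n\times n)$ Hermitian matrix $H$ with purely imaginary entries. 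Any such $H$ satisfies $H^T = -H$, hence $\langle H v, v\rangle = v^T H v = 0$ for every real vector $v \in \mathbb{R}^n$. Applying this with $v = \zeta_l(\boldsymbol{\theta})$ and invoking~\eqref{hatN_0Q(theta)_matrix_elem} gives $\mu_l(\boldsymbol{\theta}) = 0$ for each $l$; the invariant representation of $\widehat{N}_{0,Q}(\boldsymbol{\theta})$ in~\eqref{abstr_hatN_0Q_N_*Q} then yields $\widehat{N}_{0,Q}(\boldsymbol{\theta}) = 0$ for all $\boldsymbol{\theta} \in \mathbb{S}^{d-1}$. The only nontrivial step is the splitting argument identifying $\Lambda(\mathbf{x})$ as purely imaginary; everything else is straightforward algebra with the already-established formulas.
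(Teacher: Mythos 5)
Your proposal is correct and follows essentially the same route as the paper: establish that $\Lambda_Q$ is purely imaginary and $\widetilde g$ real, deduce that $L_Q(\boldsymbol{\theta})$ and hence $b(\boldsymbol{\theta})^* L_Q(\boldsymbol{\theta}) b(\boldsymbol{\theta})$ are Hermitian with purely imaginary entries, note that the quadratic form of such a matrix vanishes on real vectors, and conclude via~\eqref{hatN_0Q(theta)_matrix_elem}. The only difference is that you spell out the real/imaginary splitting argument for $\Lambda$, which the paper simply asserts (citing the analogous discussion for $\widehat N(\boldsymbol{\theta})$).
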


In the \textquotedblleft real\textquotedblright \ case under consideration, the operator $\widehat{S} (\boldsymbol{\theta})$  is a symmetric matrix with real entries; $\overline{Q}$ is also a symmetric matrix with real entries.
 Clearly,  if the eigenvalue $\gamma_j (\boldsymbol{\theta})$ of the generalized problem~\eqref{hatS_gener_spec_problem}
is simple, then the eigenvector $\zeta_j (\boldsymbol{\theta}) = f \omega_j (\boldsymbol{\theta})$
 is defined uniquely up to a phase factor, and we always can choose it to be real.  We arrive at the following corollary.

\begin{corollary}
    \label{sndw_simple_spec_N0Q=0_cor}
Suppose that   the matrices $b(\boldsymbol{\theta})$, $g (\mathbf{x})$, and $Q(\mathbf{x})$~ have real entries.
Suppose that the spectrum of the generalized spectral problem~\emph{\eqref{hatS_gener_spec_problem}}
is simple. Then $\widehat{N}_{0,Q} (\boldsymbol{\theta}) = 0$ for any $\boldsymbol{\theta} \in \mathbb{S}^{d-1}$.
\end{corollary}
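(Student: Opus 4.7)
The plan is to derive this corollary directly from the preceding proposition, by verifying that the simplicity of the spectrum of the generalized problem \eqref{hatS_gener_spec_problem} forces the eigenvectors to be (essentially) real, which in turn forces the embrios to satisfy the hypothesis of that proposition.

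First, I would observe that the ``real'' assumptions on $b(\boldsymbol{\theta})$, $g(\mathbf{x})$, and $Q(\mathbf{x})$ imply that all the matrices appearing in the generalized eigenvalue problem have real entries. Indeed, in the real case the $\Gamma$-periodic solution $\Lambda(\mathbf{x})$ of \eqref{equation_for_Lambda} has purely imaginary entries, so by \eqref{g_tilde} and \eqref{g0} both $\widetilde{g}(\mathbf{x})$ and $g^0$ have real entries. Consequently the matrix $\widehat{S}(\boldsymbol{\theta}) = b(\boldsymbol{\theta})^* g^0 b(\boldsymbol{\theta})$ is symmetric with real entries, and $\overline{Q} = (\underline{ff^*})^{-1}$ is symmetric, positive definite, with real entries. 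Hence \eqref{hatS_gener_spec_problem} is a real symmetric generalized eigenvalue problem, which via the transformation $\xi_l = \overline{Q}^{1/2} \zeta_l$ is equivalent to a standard real symmetric problem $\overline{Q}^{-1/2} \widehat{S}(\boldsymbol{\theta}) \overline{Q}^{-1/2} \xi_l = \gamma_l(\boldsymbol{\theta}) \xi_l$.

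Next, using the simplicity of the spectrum, I would argue that the embrios can be chosen with $\zeta_l(\boldsymbol{\theta})$ real. Since each $\gamma_l(\boldsymbol{\theta})$ is simple, the corresponding eigenvector $\xi_l(\boldsymbol{\theta})$ of the equivalent standard real symmetric problem is unique up to a scalar multiple and, being in the kernel of a real symmetric matrix, can be chosen real and normalized. Returning, $\zeta_l(\boldsymbol{\theta}) = \overline{Q}^{-1/2} \xi_l(\boldsymbol{\theta})$ is then real, and satisfies the $\overline{Q}$-orthonormality condition \eqref{abstr_sndwchd_zeta_basis}. Multiplying the analytic branch $\varphi_l(t, \boldsymbol{\theta})$ in \eqref{A_eigenvectors_series} by an appropriate unimodular constant yields an embrio $\omega_l(\boldsymbol{\theta}) = f^{-1} \zeta_l(\boldsymbol{\theta})$ for which $f\omega_l(\boldsymbol{\theta}) = \zeta_l(\boldsymbol{\theta})$ is real; this phase adjustment preserves analyticity in $t$.

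Finally, with these real embrios in hand, I would invoke the preceding proposition verbatim to conclude that $\mu_l(\boldsymbol{\theta}) = 0$ for all $l = 1, \dots, n$, which by the invariant representation preceding \eqref{hatN_0Q(theta)_matrix_elem} (combined with $\overline{Q}$-orthonormality of $\{\zeta_l(\boldsymbol{\theta})\}$) gives $\widehat{N}_{0,Q}(\boldsymbol{\theta}) = 0$. The only genuinely nontrivial step is the passage from simplicity of the generalized spectrum to the existence of real eigenvectors; but since after the reduction by $\overline{Q}^{1/2}$ this is just the classical statement for simple real symmetric matrices, there is no real obstacle. I would not need to worry about continuity or analyticity in $\boldsymbol{\theta}$: the conclusion $\widehat{N}_{0,Q}(\boldsymbol{\theta}) = 0$ is pointwise, and at each fixed $\boldsymbol{\theta}$ the above choice of real embrios suffices.
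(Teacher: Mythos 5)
Your proposal is correct and follows essentially the same route as the paper: in the ``real'' case $\widehat{S}(\boldsymbol{\theta})$ and $\overline{Q}$ are real symmetric, simplicity of the generalized spectrum makes each eigenvector unique up to a phase and hence choosable real, and the preceding proposition then yields $\mu_l(\boldsymbol{\theta})=0$ and $\widehat{N}_{0,Q}(\boldsymbol{\theta})=0$. Your explicit reduction via $\overline{Q}^{1/2}$ and the remark about adjusting the phase of the analytic branch merely spell out what the paper leaves implicit.
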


\subsection{Multiplicities of the eigenvalues of the germ}
\label{sndw_eigenval_multipl_section}

This subsection concerns the case where $n \ge 2$. We return to the notation of Section~\ref{abstr_cluster_section},  tracing the multiplicities of the eigenvalues of the spectral germ $S (\boldsymbol{\theta})$. As has been mentioned in Subsection~\ref{sndw_eigenvalues_and_eigenvectors_section}, 
these eigenvalues are also the eigenvalues of the generalized problem~\eqref{hatS_gener_spec_problem}.
In general, the number $p(\boldsymbol{\theta})$ of different eigenvalues $\gamma^{\circ}_1 (\boldsymbol{\theta}), \ldots, \gamma^{\circ}_{p(\boldsymbol{\theta})} (\boldsymbol{\theta})$ of this problem  and their multiplicities $k_1 (\boldsymbol{\theta}), \ldots, k_{p(\boldsymbol{\theta})} (\boldsymbol{\theta})$ depend on the parameter $\boldsymbol{\theta} \in \mathbb{S}^{d-1}$.  For a fixed
$\boldsymbol{\theta}$, let $\mathfrak{N}_j (\boldsymbol{\theta})$ be the eigenspace  of the germ $S (\boldsymbol{\theta})$ corresponding to the eigenvalue $\gamma^{\circ}_j (\boldsymbol{\theta})$. Then $f \mathfrak{N}_j (\boldsymbol{\theta})$~ is the eigenspace of the problem~\eqref{hatS_gener_spec_problem}  corresponding to the same eigenvalue $\gamma^{\circ}_j (\boldsymbol{\theta})$.
Let $\mathcal{P}_j (\boldsymbol{\theta})$ denote the \textquotedblleft skew\textquotedblright \  projection of $L_2(\Omega; \mathbb{C}^n)$
onto the subspace $f \mathfrak{N}_j (\boldsymbol{\theta})$; $\mathcal{P}_j (\boldsymbol{\theta})$  is orthogonal with respect to the inner product with the weight $\overline{Q}$. Then, by~\eqref{abstr_hatN_0Q_N_*Q_invar_repr},  we have the following invariant representations for the operators
$\widehat{N}_{0,Q} (\boldsymbol{\theta})$ and $\widehat{N}_{*,Q} (\boldsymbol{\theta})$:
\begin{gather}
\label{N0Q_invar_repr}
\widehat{N}_{0,Q} (\boldsymbol{\theta}) = \sum_{j =1}^{p(\boldsymbol{\theta})} \mathcal{P}_j (\boldsymbol{\theta})^* \widehat{N}_Q (\boldsymbol{\theta}) \mathcal{P}_j (\boldsymbol{\theta}), \\
\notag
\widehat{N}_{*,Q} (\boldsymbol{\theta}) = \sum_{\substack{1 \le j, l \le p(\boldsymbol{\theta}):\\ j \ne l}} \mathcal{P}_j (\boldsymbol{\theta})^* \widehat{N}_Q (\boldsymbol{\theta}) \mathcal{P}_l (\boldsymbol{\theta}).
\end{gather}

\section{ Approximation of the sandwiched operator $\cos(\varepsilon^{-1} \tau \mathcal{A}(\mathbf{k}))$}

\subsection{The general case}

We apply theorems of Section~\ref{abstr_sandwiched_section} to the operator $\mathcal{A}(\mathbf{k})$.
 First we apply Theorem~\ref{abstr_cos_sandwiched_general_thrm}.
Taking~\eqref{c*}, \eqref{delta_fixation}, and~\eqref{X_1_estimate} into account,
 instead of the precise values of the constants~\eqref{abstr_C1_C2} and \eqref{abstr_C7}
which now depend on $\boldsymbol{\theta}$, we take the larger values
\begin{align}
    \label{C1}
    & C_1 = 2 \beta_1 r_0^{-1} \alpha_1^{1/2} \alpha_0^{-1/2} \|g\|_{L_\infty}^{1/2} \|g^{-1}\|_{L_\infty}^{1/2}\|f\|_{L_\infty} \|f^{-1}\|_{L_\infty},\\
    \label{C7}
 &\begin{aligned}
 C_7 = 2 \beta_7 r_0^{-1} \alpha_1 &\alpha_0^{-1/2} \|g\|_{L_\infty} \|g^{-1}\|_{L_\infty}^{1/2} \|f\|_{L_\infty}^2 \|f^{-1}\|_{L_\infty} +\\ &+ \beta_2 r_0^{-1} \alpha_1^{3/2} \alpha_0^{-1} \|g\|_{L_\infty}^{3/2} \|g^{-1}\|_{L_\infty} \|f\|_{L_\infty}^3 \|f^{-1}\|_{L_\infty}^2.
\end{aligned}
\end{align}

Denote
\begin{equation}
\label{JJ}
{\mathcal J}(\mathbf{k},\tau):= f \cos (\tau \mathcal{A}(\mathbf{k})^{1/2}) f^{-1} -
 f_0\cos (\tau \mathcal{A}^0(\mathbf{k})^{1/2})f_0^{-1}.
\end{equation}
Applying~\eqref{abstr_cos_sandwiched_general_est} and using~\eqref{R_P} and~\eqref{f_0 hatS f_0 P = A^0}, we obtain:
\begin{multline}
    \label{sndw_cos_main_est_k<hat_t^0}
    \| {\mathcal J}(\mathbf{k},\varepsilon^{-1}\tau) \mathcal{R}(\mathbf{k}, \varepsilon)  \widehat{P}
 \|_{L_2(\Omega) \to L_2 (\Omega) }
 \le \|f\|_{L_\infty}^2 \|f^{-1}\|_{L_\infty}^2 (C_1 + C_7 |\tau|) \varepsilon,
\\
    \tau \in \mathbb{R}, \; \varepsilon > 0, \; |\mathbf{k}| \le t^0.
\end{multline}

For $| \mathbf{k} | > t^0$ estimates are trivial. Similarly to~\eqref{R_hatP_est}, taking \eqref{f0} into account, we have
\begin{multline}
    \label{sndw_cos_main_est_k>hat_t^0}
    \|  {\mathcal J}(\mathbf{k},\varepsilon^{-1}\tau)\mathcal{R}(\mathbf{k}, \varepsilon)^{1/2}  \widehat{P}
 \|_{L_2(\Omega) \to L_2 (\Omega) }  \le 2 \|f\|_{L_\infty} \|f^{-1}\|_{L_\infty} (t^0)^{-1} \varepsilon,
\\     \tau \in \mathbb{R}, \; \varepsilon > 0, \; \mathbf{k} \in \widetilde{\Omega}, \; |\mathbf{k}| > t^0.
\end{multline}

Next, by~\eqref{R(k,eps)(I-P)_est},
\begin{multline}
\label{sndw_cos_main_est_(I - hatP)}
\|  {\mathcal J}(\mathbf{k},\varepsilon^{-1}\tau) \mathcal{R}(\mathbf{k}, \varepsilon)^{1/2}  (I - \widehat{P}) \|_{L_2(\Omega) \to L_2 (\Omega) }  \le 2 \|f\|_{L_\infty} \|f^{-1}\|_{L_\infty} r_0^{-1} \varepsilon,
\\
\tau \in \mathbb{R}, \; \varepsilon > 0, \; \mathbf{k} \in \widetilde{\Omega}.
\end{multline}

Finally, relations~\eqref{sndw_cos_main_est_k<hat_t^0}--\eqref{sndw_cos_main_est_(I - hatP)} (and expressions for the constants) imply the following result (which has been proved before in \cite[Theorem~8.2]{BSu5}).

\begin{theorem}
    \label{sndw_cos_general_thrm}
Let  ${\mathcal J}(\mathbf{k},\tau)$ be the operator defined by \emph{\eqref{JJ}}.
 For $\tau \in \mathbb{R}$, $\varepsilon > 0$, and $\mathbf{k} \in \widetilde{\Omega}$ we have
    \begin{equation*}
    \|  {\mathcal J}(\mathbf{k},\varepsilon^{-1}\tau)
\mathcal{R}(\mathbf{k}, \varepsilon)\|_{L_2(\Omega) \to L_2 (\Omega) }  \le (\mathcal{C}_1 + \mathcal{C}_2 |\tau|) \varepsilon,
    \end{equation*}
    where
    \begin{equation*}
    \begin{aligned}
    \mathcal{C}_1 &= \widetilde{\beta}_1 r_0^{-1} \alpha_1^{1/2} \alpha_0^{-1/2} \|g\|_{L_\infty}^{1/2} \|g^{-1}\|_{L_\infty}^{1/2} \|f\|_{L_\infty}^3 \|f^{-1}\|_{L_\infty}^3, \\
    \mathcal{C}_2 &= \|f\|_{L_\infty}^2\|f^{-1}\|_{L_\infty}^2 C_7,
    \end{aligned}
    \end{equation*}
    and $C_7$ is given by~\emph{\eqref{C7}}.
\end{theorem}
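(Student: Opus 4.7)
The plan is to apply the abstract Theorem~\ref{abstr_cos_sandwiched_general_thrm} to the operator family $\mathcal{A}(\mathbf{k}) = f^* \widehat{\mathcal{A}}(\mathbf{k}) f$ and then remove the projection $\widehat{P}$ from the smoothing factor. The abstract identification is already in place: $M$ is multiplication by $f$, $\widehat{A}(t,\boldsymbol{\theta})=\widehat{\mathcal{A}}(\mathbf{k})$, and the germ-based operator under the abstract norm becomes $\mathcal{A}^0(\mathbf{k})$ by~\eqref{f_0 hatS f_0 P = A^0}. Since $\|M\| \le \|f\|_{L_\infty}$ and $\|M^{-1}\| \le \|f^{-1}\|_{L_\infty}$ are independent of $\boldsymbol{\theta}$, and the $\boldsymbol{\theta}$-dependent constants from the abstract scheme can be uniformly bounded by~$C_1$ and $C_7$ of~\eqref{C1}--\eqref{C7} using~\eqref{c*},~\eqref{delta_fixation},~\eqref{X_1_estimate}, I can write~\eqref{abstr_cos_sandwiched_general_est} (with $\tau$ replaced by $\varepsilon^{-1}\tau$) and then use the identity~\eqref{R_P} to replace the abstract factor $\varepsilon^2(t^2+\varepsilon^2)^{-1}\widehat{P}$ by $\mathcal{R}(\mathbf{k},\varepsilon)\widehat{P}$, which yields~\eqref{sndw_cos_main_est_k<hat_t^0} on the small ball $|\mathbf{k}| \le t^0$.

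For $|\mathbf{k}| > t^0$ I abandon the abstract machinery and use a trivial estimate: the cosines are bounded by~$1$ in norm, and~\eqref{R_P} combined with~\eqref{f0} gives $\|\mathcal{R}(\mathbf{k},\varepsilon)^{1/2}\widehat{P}\| \le (t^0)^{-1}\varepsilon$. Bounding the two cosine terms in $\mathcal{J}(\mathbf{k},\varepsilon^{-1}\tau)$ separately, each picks up an overall $\|f\|_{L_\infty}\|f^{-1}\|_{L_\infty}$ factor from the multiplicative outer factors, producing~\eqref{sndw_cos_main_est_k>hat_t^0}. Together, the two regimes control $\mathcal{J}(\mathbf{k},\varepsilon^{-1}\tau)\mathcal{R}(\mathbf{k},\varepsilon)\widehat{P}$ uniformly for $\mathbf{k}\in\widetilde{\Omega}$.

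To pass from $\mathcal{R}\widehat{P}$ to the full $\mathcal{R}$, I decompose $I = \widehat{P} + (I-\widehat{P})$ and handle the second piece by~\eqref{R(k,eps)(I-P)_est}: since $I-\widehat{P}$ annihilates the zeroth Fourier coefficient, the symbol of $\mathcal{R}(\mathbf{k},\varepsilon)^{1/2}$ on the remaining modes is bounded by $\varepsilon/r_0$, and again bounding each of the two cosine summands by $1$ yields~\eqref{sndw_cos_main_est_(I - hatP)}. Adding the contributions and using the trivial inequality $\|\mathcal{R}(\mathbf{k},\varepsilon)\|\le 1$ to upgrade the $\mathcal{R}^{1/2}$ bounds to $\mathcal{R}$ bounds on the $(I-\widehat{P})$ and $|\mathbf{k}|>t^0$ pieces, I absorb the constants into the single linear-in-$|\tau|$ estimate claimed, with $\mathcal{C}_1 = \max\{\|f\|_{L_\infty}^2\|f^{-1}\|_{L_\infty}^2 C_1,\, 2\|f\|_{L_\infty}\|f^{-1}\|_{L_\infty}(t^0)^{-1},\, 2\|f\|_{L_\infty}\|f^{-1}\|_{L_\infty}r_0^{-1}\}$ and $\mathcal{C}_2 = \|f\|_{L_\infty}^2\|f^{-1}\|_{L_\infty}^2 C_7$. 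Substituting the explicit expression~\eqref{t0_fixation} for $t^0$ into the first term shows it has the same structural form as $\mathcal{C}_1$ with some absolute constant $\widetilde{\beta}_1$, matching the claim.

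The only non-routine issue I anticipate is bookkeeping: tracking the correct powers of $\|f\|_{L_\infty}$ and $\|f^{-1}\|_{L_\infty}$ as they enter once from the outer sandwiching factors in $\mathcal{J}$, once more from the abstract $\|M\|^2\|M^{-1}\|^2$ prefactor in~\eqref{abstr_cos_sandwiched_general_est}, and again from the chosen normalization of $t^0$ in~\eqref{t0_fixation}. All other steps are purely mechanical once the abstract theorem and the two trivial high-$\mathbf{k}$ and non-zero-mode bounds are in hand.
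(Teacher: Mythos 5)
Your proposal is correct and follows essentially the same route as the paper: apply Theorem~\ref{abstr_cos_sandwiched_general_thrm} together with \eqref{R_P} and \eqref{f_0 hatS f_0 P = A^0} on the ball $|\mathbf{k}|\le t^0$, use the trivial bound via $\mathcal{R}(\mathbf{k},\varepsilon)^{1/2}\widehat{P}$ for $|\mathbf{k}|>t^0$, remove $\widehat{P}$ via \eqref{R(k,eps)(I-P)_est}, and absorb the three constants into the stated form of $\mathcal{C}_1$. The bookkeeping of the powers of $\|f\|_{L_\infty}\|f^{-1}\|_{L_\infty}$ you describe also matches the paper's.
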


\subsection{The case where $\widehat{N}_Q(\boldsymbol{\theta}) = 0$}
Now we assume that $\widehat{N}_Q(\boldsymbol{\theta}) = 0$ for any $\boldsymbol{\theta} \in \mathbb{S}^{d-1}$.
Then, applying Theorem~\ref{abstr_cos_sandwiched_ench_thrm_1} and taking~\eqref{R_P} and~\eqref{f_0 hatS f_0 P = A^0}
into account, we have
\begin{multline*}
    \|  {\mathcal J}(\mathbf{k},\varepsilon^{-1}\tau)
\mathcal{R}(\mathbf{k}, \varepsilon)^{3/4} \widehat{P} \|_{L_2 (\Omega) \to L_2 (\Omega) } \le \|f\|_{L_\infty}^2 \|f^{-1}\|_{L_\infty}^2 ( C'_1 + C_{11} | \tau | ) \varepsilon,
\\
\tau \in \mathbb{R}, \; \varepsilon > 0, \; |\mathbf{k}| \le t^0.
\end{multline*}
Here $C'_1 = \max \{2, 2C_1\}$, and $C_{11}$ is given by
\begin{multline}
\label{C11}
C_{11} = 2^{1/2} \beta_1 r^{-2}_0 \alpha_1^{3/2} \alpha_0^{-1} \|g\|_{L_\infty}^{3/2}  \|g^{-1} \|_{L_\infty} \|f\|_{L_\infty}^{3}  \|f^{-1} \|_{L_\infty}^2
\\
+  4 \beta_{10} r^{-2}_0 \alpha_1^{2} \alpha_0^{-3/2} \|g\|_{L_\infty}^{2}  \|g^{-1}\|_{L_\infty}^{3/2} \|f\|_{L_\infty}^{4}  \|f^{-1} \|_{L_\infty}^3
\\
+ 4 \beta_{11} r^{-2}_0 \alpha_1^{3} \alpha_0^{-5/2} \|g\|_{L_\infty}^{3} \|g^{-1}\|_{L_\infty}^{5/2} \|f\|_{L_\infty}^{6}  \|f^{-1} \|_{L_\infty}^5 
\\
+ 4 \beta_{12} r^{-2}_0 \alpha_1^{4} \alpha_0^{-7/2} \|g\|_{L_\infty}^{4} \|g^{-1}\|_{L_\infty}^{7/2} \|f\|_{L_\infty}^{8}  \|f^{-1} \|_{L_\infty}^7.
\end{multline}

Together with~\eqref{sndw_cos_main_est_k>hat_t^0} and~\eqref{sndw_cos_main_est_(I - hatP)} this implies the following result.

\begin{theorem}
    \label{sndw_cos_enchanced_thrm_1}
Let  ${\mathcal J}(\mathbf{k},\tau)$ be the operator defined by \emph{\eqref{JJ}}.
 Let $\widehat{N}_Q(\boldsymbol{\theta})$ be the operator defined by~\emph{\eqref{N_Q(theta)}}. Suppose that  $\widehat{N}_Q(\boldsymbol{\theta})=0$ for any $\boldsymbol{\theta} \in \mathbb{S}^{d-1}$.
Then for $\tau \in \mathbb{R}$, $\varepsilon > 0$, and $\mathbf{k} \in \widetilde{\Omega}$ we have
    \begin{equation*}
        \|   {\mathcal J}(\mathbf{k},\varepsilon^{-1}\tau)
\mathcal{R}(\mathbf{k}, \varepsilon)^{3/4}\|_{L_2(\Omega) \to L_2 (\Omega) }  \le (\mathcal{C}_3 + \mathcal{C}_4 |\tau|) \varepsilon,
    \end{equation*}
  where
    \begin{align*}
    \mathcal{C}_3 &= 2 \|f\|_{L_\infty}  \|f^{-1} \|_{L_\infty}  \left( \max\{ \|f\|_{L_\infty}  \|f^{-1} \|_{L_\infty}, C_1 \|f\|_{L_\infty}  \|f^{-1} \|_{L_\infty} \}  + r_0^{-1}\right), \\
    \mathcal{C}_4 &= \|f\|_{L_\infty}^2  \|f^{-1} \|_{L_\infty}^2 C_{11}.
    \end{align*}
The constants $C_1$ and $C_{11}$ are given by~\emph{\eqref{C1}} and~\emph{\eqref{C11}}, respectively.
\end{theorem}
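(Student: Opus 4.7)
The plan is to replay, in the sandwiched setting, the three-piece argument used to pass from the abstract Theorem~\ref{abstr_cos_enchanced_thrm_1} to the differential Theorem~\ref{cos_enchanced_thrm_1}; the only substantive change is that the abstract input is now Theorem~\ref{abstr_cos_sandwiched_ench_thrm_1}. The hypotheses match up via Lemma~\ref{abstr_N_and_Nhat_lemma}: the assumption $\widehat{N}_Q(\boldsymbol{\theta})=0$ for every $\boldsymbol{\theta}\in\mathbb{S}^{d-1}$ is equivalent to $N(\boldsymbol{\theta})=0$ for the family $A(t,\boldsymbol{\theta})=\mathcal{A}(t\boldsymbol{\theta})$, which is exactly what Theorem~\ref{abstr_cos_sandwiched_ench_thrm_1} requires.

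For $|\mathbf{k}|\le t^0$, with $t^0$ as in~\eqref{t0_fixation}, I apply Theorem~\ref{abstr_cos_sandwiched_ench_thrm_1} to the family $A(t,\boldsymbol{\theta})=\mathcal{A}(\mathbf{k})$, $\widehat{A}(t,\boldsymbol{\theta})=\widehat{\mathcal{A}}(\mathbf{k})$, with $\tau$ replaced by $\varepsilon^{-1}\tau$. Identity~\eqref{R_P} recasts the abstract smoothing factor as $\mathcal{R}(\mathbf{k},\varepsilon)^{3/4}\widehat{P}$, identity~\eqref{f_0 hatS f_0 P = A^0} rewrites the effective dispersion relation in terms of $\mathcal{A}^0(\mathbf{k})$, and the bound $\|M\|\,\|M^{-1}\|\le\|f\|_{L_\infty}\|f^{-1}\|_{L_\infty}$ turns the abstract constant into a clean one; this delivers
\begin{equation*}
\|\mathcal{J}(\mathbf{k},\varepsilon^{-1}\tau)\mathcal{R}(\mathbf{k},\varepsilon)^{3/4}\widehat{P}\|_{L_2(\Omega)\to L_2(\Omega)}\le\|f\|_{L_\infty}^{2}\|f^{-1}\|_{L_\infty}^{2}(C'_1+C_{11}|\tau|)\varepsilon,
\end{equation*}
with $\boldsymbol{\theta}$-independent constants $C_1$, $C_{11}$ as in~\eqref{C1},~\eqref{C11} (these upper-bound the abstract constants because $\delta$, $t^0$, and $\|X_1(\boldsymbol{\theta})\|$ are controlled uniformly in $\boldsymbol{\theta}$ by~\eqref{c*},~\eqref{delta_fixation},~\eqref{t0_fixation},~\eqref{X_1_estimate}). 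For $|\mathbf{k}|>t^0$ the estimate is trivial: $\|\mathcal{J}(\mathbf{k},\varepsilon^{-1}\tau)\|\le 2\|f\|_{L_\infty}\|f^{-1}\|_{L_\infty}$, while $\widehat{P}$ commutes with $\mathcal{R}(\mathbf{k},\varepsilon)$ and $\mathcal{R}(\mathbf{k},\varepsilon)\le I$, so functional calculus on $[0,1]$ gives $\mathcal{R}^{3/4}\le\mathcal{R}^{1/2}$ and hence $\|\mathcal{R}(\mathbf{k},\varepsilon)^{3/4}\widehat{P}\|\le\|\mathcal{R}(\mathbf{k},\varepsilon)^{1/2}\widehat{P}\|\le(t^0)^{-1}\varepsilon$. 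The same commutation argument lets me replace $\mathcal{R}^{1/2}$ by $\mathcal{R}^{3/4}$ in~\eqref{sndw_cos_main_est_(I - hatP)}, yielding the $(I-\widehat{P})$ contribution $2\|f\|_{L_\infty}\|f^{-1}\|_{L_\infty}r_0^{-1}\varepsilon$.

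Summing the three pieces and taking maxima of the $\boldsymbol{\theta}$-independent constants recovers the form of $\mathcal{C}_3$ and $\mathcal{C}_4$ stated in the theorem. There is no genuine obstacle at this level: all the analytic work has already been paid for in the refined threshold expansion~\eqref{abstr_A_sqrt_threshold_2} and in the sandwich identities of Section~\ref{abstr_sandwiched_section}. The two remaining points are to translate the hypothesis from $\widehat{N}_Q$ to $N$ via Lemma~\ref{abstr_N_and_Nhat_lemma}, and to carry the sandwich factor $\|f\|_{L_\infty}^{2}\|f^{-1}\|_{L_\infty}^{2}$ through the constants — precisely the bookkeeping that takes Theorem~\ref{cos_enchanced_thrm_1} to its sandwiched analog.
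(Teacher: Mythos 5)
Your proposal is correct and follows essentially the same route as the paper: apply Theorem~\ref{abstr_cos_sandwiched_ench_thrm_1} together with \eqref{R_P} and \eqref{f_0 hatS f_0 P = A^0} for $|\mathbf{k}|\le t^0$, then combine with the trivial bounds \eqref{sndw_cos_main_est_k>hat_t^0} and \eqref{sndw_cos_main_est_(I - hatP)} (using $\|\mathcal{R}(\mathbf{k},\varepsilon)^{1/4}\|\le 1$). The only cosmetic point is that Theorem~\ref{abstr_cos_sandwiched_ench_thrm_1} already takes the hypothesis in the form $\widehat{N}_Q=0$, so the translation via Lemma~\ref{abstr_N_and_Nhat_lemma} is built into the abstract result rather than needed here.
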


Recall that some sufficient conditions ensuring that $\widehat{N}_Q(\boldsymbol{\theta}) = 0$ for any
$\boldsymbol{\theta} \in \mathbb{S}^{d-1}$ are given in Proposition~\ref{N_Q=0_proposit}.

\subsection{The case where $\widehat{N}_{0,Q}(\boldsymbol{\theta}) = 0$}
Now we reject the assumption of Theorem~\ref{sndw_cos_enchanced_thrm_1}, but instead we assume that $\widehat{N}_{0,Q}(\boldsymbol{\theta}) = 0$ for any $\boldsymbol{\theta}$. We may also assume that $\widehat{N}(\boldsymbol{\theta}) = \widehat{N}_{*,Q}(\boldsymbol{\theta}) \ne 0$ for some $\boldsymbol{\theta}$,  and then for \textquotedblleft most\textquotedblright \ points $\boldsymbol{\theta}$.
(Otherwise, one can apply Theorem~\ref{sndw_cos_enchanced_thrm_1}.)
As in Subsection~\ref{ench_approx2_section}, in order to apply \textquotedblleft abstract\textquotedblright \ Theorem~\ref{abstr_cos_sandwiched_ench_thrm_2}, we have to impose an additional condition. We use the initial enumeration of the eigenvalues
$\gamma_1 (\boldsymbol{\theta}), \ldots , \gamma_n (\boldsymbol{\theta})$ of the germ $S (\boldsymbol{\theta})$
 (each eigenvalue is repeated corresponding to its multiplicity)
and enumerate them in the nondecreasing order:

\begin{equation}
\label{gamma(theta)}
    \gamma_1 (\boldsymbol{\theta}) \le \gamma_2 (\boldsymbol{\theta}) \le \ldots \le \gamma_n (\boldsymbol{\theta}).
\end{equation}
As has been already mentioned, the numbers~\eqref{gamma(theta)}  are also the eigenvalues of the generalized spectral problem~\eqref{hatS_gener_spec_problem}. For each $\boldsymbol{\theta}$, let $\mathcal{P}^{(k)} (\boldsymbol{\theta})$ be the
\textquotedblleft skew\textquotedblright \ projection (orthogonal with the weight $\overline{Q}$) of $L_2 (\Omega; \mathbb{C}^n)$
 onto the eigenspace of the problem~\eqref{hatS_gener_spec_problem} corresponding to the eigenvalue
 $\gamma_k (\boldsymbol{\theta})$. Clearly,  $\mathcal{P}^{(k)} (\boldsymbol{\theta})$ coincides with one of the projections
$\mathcal{P}_j (\boldsymbol{\theta})$   introduced in Subsection~\ref{sndw_eigenval_multipl_section} (but the number $j$ may depend on $\boldsymbol{\theta}$).

\begin{condition}
    \label{sndw_cond1}

$1^\circ$.
The operator $\widehat{N}_{0,Q}(\boldsymbol{\theta})$ defined by~\emph{\eqref{N0Q_invar_repr}} is equal to zero: $\widehat{N}_{0,Q}(\boldsymbol{\theta})=0$ for any $\boldsymbol{\theta} \in \mathbb{S}^{d-1}$.

 $2^\circ$. For any pair of indices $(k,r)$, $1\le k,r \le n$, $k\ne r$, such that
$\gamma_k (\boldsymbol{\theta}_0) = \gamma_r (\boldsymbol{\theta}_0) $ for some $\boldsymbol{\theta}_0 \in \mathbb{S}^{d-1}$, we have  $(\mathcal{P}^{(k)} (\boldsymbol{\theta}))^* \widehat{N}_Q (\boldsymbol{\theta}) \mathcal{P}^{(r)} (\boldsymbol{\theta}) = 0$ for any $\boldsymbol{\theta} \in \mathbb{S}^{d-1}$.
\end{condition}

   Condition~$2^\circ$ can be reformulated as follows: it is assumed that, for the \textquotedblleft blocks\textquotedblright \ $(\mathcal{P}^{(k)} (\boldsymbol{\theta}))^* \widehat{N}_Q (\boldsymbol{\theta}) \mathcal{P}^{(r)} (\boldsymbol{\theta})$ of the operator $\widehat{N}_Q (\boldsymbol{\theta})$ that are not identically zero,
    the corresponding branches of the eigenvalues $\gamma_k (\boldsymbol{\theta})$ and  $\gamma_r (\boldsymbol{\theta})$ do not intersect.

 Condition~\ref{sndw_cond1} is ensured by the following more restrictive condition.

\begin{condition}
    \label{sndw_cond2}

        $1^\circ$. The operator $\widehat{N}_{0,Q}(\boldsymbol{\theta})$ defined by~\emph{\eqref{N0Q_invar_repr}}
is equal to zero: $\widehat{N}_{0,Q}(\boldsymbol{\theta})=0$ при всех $\boldsymbol{\theta} \in \mathbb{S}^{d-1}$.

$2^\circ$. Suppose that the number $p$ of different eigenvalues of the generalized spectral problem~\emph{\eqref{hatS_gener_spec_problem}} does not depend on $\boldsymbol{\theta} \in \mathbb{S}^{d-1}$.
    \end{condition}

Under Condition~\ref{sndw_cond2},
denote the different eigenvalues of the germ enumerated in the increasing order by
$\gamma^{\circ}_1(\boldsymbol{\theta}), \ldots, \gamma^{\circ}_p(\boldsymbol{\theta})$.
Then their multiplicities $k_1, \ldots, k_p$ do not depend on $\boldsymbol{\theta} \in \mathbb{S}^{d-1}$.

\begin{remark}
    \label{sndw_simple_spec_remark}
    Assumption $2^\circ$ of Condition~\emph{\ref{sndw_cond2}} is a fortiori valid if the spectrum of the problem~\emph{\eqref{hatS_gener_spec_problem}} is simple for any $\boldsymbol{\theta} \in \mathbb{S}^{d-1}$.
\end{remark}

So, we assume that Condition~\ref{sndw_cond1}  is satisfied. We have to take care only about the pairs of indices from the set
\begin{equation*}
    \mathcal{K} := \{ (k,r) \colon 1 \le k,r \le n, \; k \ne r, \;  (\mathcal{P}^{(k)} (\boldsymbol{\theta}))^* \widehat{N}_Q (\boldsymbol{\theta}) \mathcal{P}^{(r)} (\boldsymbol{\theta}) \not\equiv 0 \}.
\end{equation*}
Denote
\begin{equation*}
    c^{\circ}_{kr} (\boldsymbol{\theta}) := \min \{c_*, n^{-1} |\gamma_k (\boldsymbol{\theta}) - \gamma_r (\boldsymbol{\theta})| \}, \quad (k,r) \in \mathcal{K}.
\end{equation*}
Since the operator-valued function $S (\boldsymbol{\theta})$  is continuous with respect to \hbox{$\boldsymbol{\theta} \in \mathbb{S}^{d-1}$},
then $\gamma_j (\boldsymbol{\theta})$~are continuous functions on the sphere $\mathbb{S}^{d-1}$.
By Condition~\ref{sndw_cond1}($2^\circ$), for $(k,r) \in \mathcal{K}$ we have~$|\gamma_k (\boldsymbol{\theta}) - \gamma_r (\boldsymbol{\theta})| > 0$ for any $\boldsymbol{\theta} \in \mathbb{S}^{d-1}$, whence
\begin{equation*}
    c^{\circ}_{kr} := \min_{\boldsymbol{\theta} \in \mathbb{S}^{d-1}} c^{\circ}_{kr} (\boldsymbol{\theta}) > 0, \quad (k,r) \in \mathcal{K}.
\end{equation*}

We put
\begin{equation}
    \label{c^circ}
    c^{\circ} := \min_{(k,r) \in \mathcal{K}} c^{\circ}_{kr}.
\end{equation}
Clearly, the number~\eqref{c^circ}~ plays the role of the number~\eqref{abstr_c^circ};
 it is important that, due to Condition~\ref{sndw_cond1}, we have chosen $c^{\circ}$ independently of $\boldsymbol{\theta}$.
Under Condition~\ref{sndw_cond1}, the number $t^{00}$ subject to~\eqref{abstr_t00}
 also can be chosen independently of $\boldsymbol{\theta} \in \mathbb{S}^{d-1}$.
Taking~\eqref{delta_fixation} and~\eqref{X_1_estimate} into account, we put
\begin{equation}
    \label{t00_fixation}
    t^{00} = (8 \beta_2)^{-1} r_0 \alpha_1^{-3/2} \alpha_0^{1/2} \| g\|_{L_{\infty}}^{-3/2} \| g^{-1}\|_{L_{\infty}}^{-1/2} \|f\|_{L_\infty}^{-3} \|f^{-1}\|_{L_\infty}^{-1} c^{\circ},
\end{equation}
where $c^{\circ}$ is given by~\eqref{c^circ}. (The condition $t^{00} \le t^{0}$ is valid automatically, since $c^{\circ} \le \| S (\boldsymbol{\theta}) \| \le \alpha_1 \|g\|_{L_{\infty}}\|f\|_{L_\infty}^2$.)

Under Condition~\ref{sndw_cond1}, we apply Theorem~\ref{abstr_cos_sandwiched_ench_thrm_2} and obtain
\begin{multline}
    \label{sndw_cos_enched_est_k<hat_t^00}
    \|    {\mathcal J}(\mathbf{k},\varepsilon^{-1}\tau)
\mathcal{R}(\mathbf{k}, \varepsilon)^{3/4} \widehat{P} \|_{L_2 (\Omega) \to L_2 (\Omega) }\le  \|f\|_{L_\infty}^2 \|f^{-1}\|_{L_\infty}^2 (C'_{15} + C_{16} | \tau |) \varepsilon,
\\
    \tau \in \mathbb{R}, \; \varepsilon > 0, \; |\mathbf{k}| \le t^{00},
\end{multline}
where
\begin{equation}
\label{sndw_C'_15_16}
\begin{split}
C'_{15} = \max\{2, 2 C_1 + C_{14}\}, \quad
C_{16} = C_{11} + C_{13}.
\end{split}
\end{equation}
The constants $C_{13}$ and $C_{14}$ are given by
\begin{align*}
   & \begin{array}{ll}
   C_{13} = 4 \beta_{13} n^2   \alpha_0^{-3/2} \alpha_1^{4}  \|g\|_{L_\infty}^{4}  \|g^{-1} \|_{L_\infty}^{3/2}  \|f\|_{L_\infty}^{8} \|f^{-1}\|_{L_\infty}^{3} r^{-2}_0 (c^{\circ})^{-2},
   \end{array}
   \\
   & \begin{array}{ll}
   C_{14} = 2 \beta_{14} n^2  \alpha_0^{-1} \alpha_1^{2}  \|g\|_{L_\infty}^{2}
\|g^{-1}\|_{L_\infty} \|f\|_{L_\infty}^{4} \|f^{-1}\|_{L_\infty}^{2} r_0^{-1}  ( c^{\circ})^{-1},
   \end{array}
\end{align*}
the constants $C_{1}$ and $C_{11}$ are defined by~\eqref{C1}, \eqref{C11}.

Similarly to~\eqref{sndw_cos_main_est_k>hat_t^0}, we have
\begin{multline}
    \label{sndw_cos_enched_est_k>hat_t^00}
  \|  {\mathcal J}(\mathbf{k},\varepsilon^{-1}\tau)
 \mathcal{R}(\mathbf{k}, \varepsilon)^{1/2}  \widehat{P} \|_{L_2(\Omega) \to L_2 (\Omega) }  \le  2 \|f\|_{L_\infty} \|f^{-1}\|_{L_\infty} (t^{00})^{-1} \varepsilon,
\\
    \tau \in \mathbb{R}, \; \varepsilon > 0, \; \mathbf{k} \in \widetilde{\Omega}, \; |\mathbf{k}| > t^{00}.
\end{multline}

Now relations~\eqref{sndw_cos_main_est_(I - hatP)}, \eqref{sndw_cos_enched_est_k<hat_t^00}, and \eqref{sndw_cos_enched_est_k>hat_t^00}  directly imply the following result.

\begin{theorem}
    \label{sndw_cos_enchanced_thrm_2}
Let  ${\mathcal J}(\mathbf{k},\tau)$ be the operator defined by \emph{\eqref{JJ}}.
   Suppose that Condition~\emph{\ref{sndw_cond1}} \emph{(}or more restrictive Condition~\emph{\ref{sndw_cond2}}\emph{)}
is satisfied. Then for $\tau \in \mathbb{R}$, $\varepsilon > 0$, and $\mathbf{k} \in \widetilde{\Omega}$ we have
    \begin{equation}
        \label{sndw_cos_enchanced_est_2}
    \|  {\mathcal J}(\mathbf{k},\varepsilon^{-1}\tau)
 \mathcal{R}(\mathbf{k}, \varepsilon)^{3/4}\|_{L_2(\Omega) \to L_2 (\Omega) }  \le (\mathcal{C}_5 + \mathcal{C}_6 |\tau|) \varepsilon,
    \end{equation}
    where
    \begin{align*}
    \mathcal{C}_5 &= \|f\|_{L_\infty} \|f^{-1}\|_{L_\infty} \left( \max\{ \|f\|_{L_\infty} \|f^{-1}\|_{L_\infty} C'_{15}, 2(t^{00})^{-1}\} + 2 r_0^{-1}\right),\\
     \mathcal{C}_6 &= \|f\|_{L_\infty}^2 \|f^{-1}\|_{L_\infty}^2 C_{16}.
    \end{align*}
    The constants $C'_{15}$, $C_{16}$, and $t^{00}$ are defined by~\emph{\eqref{sndw_C'_15_16}} and~\emph{\eqref{t00_fixation}}.
\end{theorem}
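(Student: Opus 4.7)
The plan is to assemble the estimate by combining three already-established inequalities, following the pattern used in the proofs of Theorems \ref{sndw_cos_general_thrm} and \ref{sndw_cos_enchanced_thrm_1}. First I split the identity as $I = \widehat{P} + (I - \widehat{P})$, so that
\begin{equation*}
{\mathcal J}(\mathbf{k},\varepsilon^{-1}\tau) \mathcal{R}(\mathbf{k}, \varepsilon)^{3/4}
= {\mathcal J}(\mathbf{k},\varepsilon^{-1}\tau) \mathcal{R}(\mathbf{k}, \varepsilon)^{3/4}\widehat{P}
+ {\mathcal J}(\mathbf{k},\varepsilon^{-1}\tau) \mathcal{R}(\mathbf{k}, \varepsilon)^{3/4}(I - \widehat{P}).
\end{equation*}
The second summand is handled directly by \eqref{sndw_cos_main_est_(I - hatP)}, noting that $\| \mathcal{R}(\mathbf{k},\varepsilon)^{1/4}\| \le 1$, which contributes the term $2 \|f\|_{L_\infty}\|f^{-1}\|_{L_\infty} r_0^{-1} \varepsilon$ uniformly in $\mathbf{k}\in \widetilde{\Omega}$.

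For the first summand, I split in turn according to whether $|\mathbf{k}| \le t^{00}$ or $|\mathbf{k}| > t^{00}$, where $t^{00}$ is the number from~\eqref{t00_fixation}. In the regime $|\mathbf{k}| \le t^{00}$, I apply the abstract Theorem~\ref{abstr_cos_sandwiched_ench_thrm_2} to the family $A(t,\boldsymbol{\theta}) = \mathcal{A}(\mathbf{k})$: the crucial point is that Condition~\ref{sndw_cond1} (or the stronger Condition~\ref{sndw_cond2}) translates, via Lemma~\ref{abstr_N_and_Nhat_lemma}, into the vanishing of the abstract $N_0$ and, moreover, by~\eqref{c^circ} allows the choice of $c^\circ$ and hence $t^{00}$ to be independent of $\boldsymbol{\theta}\in\mathbb{S}^{d-1}$. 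Combined with \eqref{R_P}, \eqref{f_0 hatS f_0 P = A^0}, this yields \eqref{sndw_cos_enched_est_k<hat_t^00}. In the regime $|\mathbf{k}| > t^{00}$, the estimate \eqref{sndw_cos_enched_est_k>hat_t^00} follows from the trivial bound $\|\mathcal{R}(\mathbf{k},\varepsilon)^{1/2}\widehat{P}\| \le (t^{00})^{-1}\varepsilon$ together with the uniform bound $\|{\mathcal J}(\mathbf{k},\varepsilon^{-1}\tau)\| \le 2\|f\|_{L_\infty}\|f^{-1}\|_{L_\infty}$, again using $\|\mathcal{R}(\mathbf{k},\varepsilon)^{1/4}\| \le 1$.

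Finally, I combine the three bounds, take the maximum of the $\tau$-independent constants, and express the result in the form~\eqref{sndw_cos_enchanced_est_2}. Tracking the factors of $\|f\|_{L_\infty}\|f^{-1}\|_{L_\infty}$ that arise from passing from $A(t,\boldsymbol{\theta})$ to the sandwiched cosine yields precisely the constants $\mathcal{C}_5$ and $\mathcal{C}_6$ as stated, with $\mathcal{C}_6 = \|f\|_{L_\infty}^2\|f^{-1}\|_{L_\infty}^2 C_{16}$ coming from the $|\tau|$-coefficient in~\eqref{sndw_cos_enched_est_k<hat_t^00}.

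The proof is essentially routine at this point because the analytical content sits in the abstract Theorem~\ref{abstr_cos_sandwiched_ench_thrm_2} and the consistency Lemma~\ref{abstr_N_and_Nhat_lemma}. The one substantive thing to verify is that Condition~\ref{sndw_cond1} genuinely guarantees $c^\circ > 0$, which requires the continuity of the branches $\gamma_k(\boldsymbol{\theta})$ on the compact sphere $\mathbb{S}^{d-1}$ and the fact that any pair $(k,r)$ admitted to $\mathcal{K}$ has $\gamma_k(\boldsymbol{\theta})\ne \gamma_r(\boldsymbol{\theta})$ for \emph{all} $\boldsymbol{\theta}$; this is the only place where the uniformity in $\boldsymbol{\theta}$ is at risk, and it is exactly what the condition is engineered to enforce. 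No additional obstacle is expected.
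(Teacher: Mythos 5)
Your proposal is correct and follows essentially the same route as the paper: the estimate is assembled from \eqref{sndw_cos_enched_est_k<hat_t^00} (obtained from the abstract Theorem~\ref{abstr_cos_sandwiched_ench_thrm_2} for $|\mathbf{k}|\le t^{00}$, with Condition~\ref{sndw_cond1} guaranteeing that $c^{\circ}>0$ and $t^{00}$ can be chosen independently of $\boldsymbol{\theta}$), the trivial bound \eqref{sndw_cos_enched_est_k>hat_t^00} for $|\mathbf{k}|>t^{00}$, and the removal of $\widehat{P}$ via \eqref{sndw_cos_main_est_(I - hatP)}. The constants then come out exactly as stated.
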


The assumptions of Theorem~\ref{sndw_cos_enchanced_thrm_2}  are a fortiori satisfied in the \textquotedblleft real\textquotedblright \ case, if the spectrum of the problem~\eqref{hatS_gener_spec_problem} is simple (see~Corollary~\ref{sndw_simple_spec_N0Q=0_cor} and Remark~\ref{sndw_simple_spec_remark}). We arrive at the following corollary.

\begin{corollary}
    \label{sndw_cos_enchanced_2_coroll}
    Suppose that the matrices $b (\boldsymbol{\theta})$, $g (\mathbf{x})$, and $Q (\mathbf{x})$~have real entries. Suppose that the spectrum of the problem~\emph{\eqref{hatS_gener_spec_problem}} is simple for any $\boldsymbol{\theta} \in \mathbb{S}^{d-1}$. Then estimate~\emph{\eqref{sndw_cos_enchanced_est_2}} holds for $\tau \in \mathbb{R}, \varepsilon > 0$, and $\mathbf{k} \in \widetilde{\Omega}$.
\end{corollary}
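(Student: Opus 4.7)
The plan is to deduce this corollary by checking that the hypotheses imply Condition~\ref{sndw_cond2} (which is stronger than Condition~\ref{sndw_cond1}), so that Theorem~\ref{sndw_cos_enchanced_thrm_2} applies directly. Under the \textquotedblleft real\textquotedblright\ assumption on $b(\boldsymbol{\theta})$, $g(\mathbf{x})$, and $Q(\mathbf{x})$ together with simplicity of the spectrum of the generalized problem~\eqref{hatS_gener_spec_problem}, Corollary~\ref{sndw_simple_spec_N0Q=0_cor} yields $\widehat{N}_{0,Q}(\boldsymbol{\theta})=0$ for every $\boldsymbol{\theta}\in\mathbb{S}^{d-1}$, which is exactly part~$1^\circ$ of Condition~\ref{sndw_cond2}. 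Simplicity of the spectrum also means that the number $p$ of distinct eigenvalues of~\eqref{hatS_gener_spec_problem} equals $n$ and is in particular independent of $\boldsymbol{\theta}$, as pointed out in Remark~\ref{sndw_simple_spec_remark}; this verifies part~$2^\circ$ of Condition~\ref{sndw_cond2}.

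Once Condition~\ref{sndw_cond2} is in place, it implies Condition~\ref{sndw_cond1}, so the invocation of Theorem~\ref{sndw_cos_enchanced_thrm_2} gives the estimate~\eqref{sndw_cos_enchanced_est_2} for all $\tau\in\mathbb{R}$, $\varepsilon>0$, and $\mathbf{k}\in\widetilde{\Omega}$. One implicit point to stress in the write-up is that the constant $c^{\circ}$ defined in~\eqref{c^circ} is strictly positive: since $\widehat{S}(\boldsymbol{\theta})$ depends continuously (in fact, polynomially) on $\boldsymbol{\theta}$ on the compact sphere $\mathbb{S}^{d-1}$, and since under simple spectrum all $n$ eigenvalues $\gamma_k(\boldsymbol{\theta})$ are continuous and pairwise distinct, the minimum over $(k,r)\in\mathcal{K}$ and $\boldsymbol{\theta}\in\mathbb{S}^{d-1}$ of $|\gamma_k(\boldsymbol{\theta})-\gamma_r(\boldsymbol{\theta})|$ is bounded below by a positive constant. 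Accordingly, $t^{00}$ given by~\eqref{t00_fixation} is a well-defined positive number independent of $\boldsymbol{\theta}$, so the constants $\mathcal{C}_5$, $\mathcal{C}_6$ in~\eqref{sndw_cos_enchanced_est_2} are finite.

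There is essentially no obstacle here beyond this bookkeeping: the heavy lifting has been done in Theorem~\ref{sndw_cos_enchanced_thrm_2} and in Corollary~\ref{sndw_simple_spec_N0Q=0_cor}. The only item worth being careful about is that simplicity of the spectrum is a uniform pointwise hypothesis on the sphere but does not in itself quantify the spectral gap; compactness of $\mathbb{S}^{d-1}$ together with continuity of the $\gamma_k(\boldsymbol{\theta})$ supplies the uniform lower bound on gaps that is needed in the construction of $c^{\circ}$ and $t^{00}$. With that remark, the proof reduces to one sentence citing Corollary~\ref{sndw_simple_spec_N0Q=0_cor}, Remark~\ref{sndw_simple_spec_remark}, and Theorem~\ref{sndw_cos_enchanced_thrm_2}.
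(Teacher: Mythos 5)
Your proposal is correct and follows the same route as the paper: the paper deduces the corollary by noting that Corollary~\ref{sndw_simple_spec_N0Q=0_cor} gives $\widehat{N}_{0,Q}(\boldsymbol{\theta})=0$ and Remark~\ref{sndw_simple_spec_remark} gives part~$2^\circ$ of Condition~\ref{sndw_cond2}, after which Theorem~\ref{sndw_cos_enchanced_thrm_2} applies. Your additional remark on the positivity of $c^{\circ}$ via continuity of the $\gamma_k(\boldsymbol{\theta})$ and compactness of the sphere is exactly the argument already built into the paper's construction of \eqref{c^circ} and \eqref{t00_fixation}.
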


\subsection{ The sharpness of the result in the general case}

Application of Theorem~\ref{abstr_sndwchd_s<2_general_thrm}  allows us to confirm the sharpness of the result of Theorem~\ref{sndw_cos_general_thrm}  in the general case.

\begin{theorem}
    \label{s<2_thrm}
Let  ${\mathcal J}(\mathbf{k},\tau)$ be the operator defined by \emph{\eqref{JJ}}.
 Let $\widehat{N}_{0,Q} (\boldsymbol{\theta})$ be the operator defined by~\emph{\eqref{N0Q_invar_repr}}.
Suppose that $\widehat{N}_{0,Q} (\boldsymbol{\theta}_0) \ne 0$ for some $\boldsymbol{\theta}_0 \in \mathbb{S}^{d-1}$.
Let $0 \ne \tau \in \mathbb{R}$ and $0 \le s < 2$. Then there does not exist a constant $\mathcal{C} (\tau) > 0$ such that
the estimate
    \begin{equation}
        \label{sndw_s<2_est_imp}
        \| {\mathcal J}(\mathbf{k},\varepsilon^{-1}\tau)
\mathcal{R}(\mathbf{k}, \varepsilon)^{s/2}\|_{L_2(\Omega) \to L_2 (\Omega) }  \le \mathcal{C} (\tau) \varepsilon
    \end{equation}
   holds for almost every $\mathbf{k} = t \boldsymbol{\theta} \in \widetilde{\Omega}$ and sufficiently small $\varepsilon > 0$.
\end{theorem}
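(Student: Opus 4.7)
The plan is to mimic, mutatis mutandis, the proof of Theorem~\ref{hat_s<2_thrm}: argue by contradiction, remove the $(I-\widehat{P})$-component of the smoothing factor, upgrade from an almost-everywhere statement to a pointwise one via continuity in $\mathbf{k}$, restrict to the ray $\mathbf{k}=t\boldsymbol{\theta}_0$, and invoke the abstract sharpness result via Lemma~\ref{abstr_cos_sandwiched_est_lemma}. Since smaller $s$ yields strictly stronger statements, it suffices to treat $1\le s<2$.

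The first reduction uses the trivial bound $\|{\mathcal J}(\mathbf{k},\varepsilon^{-1}\tau)\|_{L_2\to L_2}\le 2\|f\|_{L_\infty}\|f^{-1}\|_{L_\infty}$ combined with $\|\mathcal{R}(\mathbf{k},\varepsilon)^{s/2}(I-\widehat{P})\|\le r_0^{-s}\varepsilon^s$ from~\eqref{R(k,eps)(I-P)_est}; for $s\ge 1$ and $\varepsilon\le 1$ this piece is $O(\varepsilon)$ and hence admissible. Together with~\eqref{R_P}, the assumption~\eqref{sndw_s<2_est_imp} becomes equivalent to the existence of a constant $\widetilde{\mathcal{C}}(\tau)>0$ such that
\begin{equation*}
\|{\mathcal J}(\mathbf{k},\varepsilon^{-1}\tau)\widehat{P}\|_{L_2(\Omega)\to L_2(\Omega)}\,\varepsilon^s(|\mathbf{k}|^2+\varepsilon^2)^{-s/2}\le \widetilde{\mathcal{C}}(\tau)\,\varepsilon
\end{equation*}
for a.e.\ $\mathbf{k}\in\widetilde{\Omega}$ and sufficiently small $\varepsilon$. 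Next I would promote this to all $\mathbf{k}$ in the ball $|\mathbf{k}|\le t^0$. For fixed $\tau,\varepsilon$ the operator ${\mathcal J}(\mathbf{k},\varepsilon^{-1}\tau)\widehat{P}$ is continuous in $\mathbf{k}$ on that ball; this is a sandwiched analog of Lemma~\ref{Lipschitz_lemma}. The arguments carry over verbatim to the families $\mathcal{A}(\mathbf{k})$ and $\mathcal{A}^0(\mathbf{k})$: the form identity~\eqref{9.23a} is replaced by
\begin{equation*}
\mathfrak{a}(\mathbf{k})[\mathbf{u},\mathbf{v}]-\mathfrak{a}(\mathbf{k}_0)[\mathbf{u},\mathbf{v}]=(gb(\mathbf{k}-\mathbf{k}_0)f\mathbf{u},b(\mathbf{D}+\mathbf{k}_0)f\mathbf{v})_{L_2(\Omega)}+(gb(\mathbf{D}+\mathbf{k})f\mathbf{u},b(\mathbf{k}-\mathbf{k}_0)f\mathbf{v})_{L_2(\Omega)},
\end{equation*}
which introduces extra factors of $\|f\|_{L_\infty}$ in the constants; all subsequent bounds (for $F(\mathbf{k})$, for $\mathcal{A}(\mathbf{k})^{1/2}F(\mathbf{k})$, and for $\cos(\tau\mathcal{A}(\mathbf{k})^{1/2})F(\mathbf{k})$) go through without structural change, and composition on the left and right with the bounded, $\mathbf{k}$-independent multiplication operators $f,f^{-1},f_0$ preserves continuity.

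Having the estimate pointwise on the ball, I set $\mathbf{k}=t\boldsymbol{\theta}_0$ with $0<t\le t^0$ and apply inequality~\eqref{abstr_cos_sandwiched_est_2} of Lemma~\ref{abstr_cos_sandwiched_est_lemma} (with $M=f$, $M_0=f_0$, and $\tau$ replaced by $\varepsilon^{-1}\tau$) to the abstract family $A(t,\boldsymbol{\theta}_0)=\mathcal{A}(t\boldsymbol{\theta}_0)$. This produces
\begin{equation*}
\|\cos(\varepsilon^{-1}\tau A(t,\boldsymbol{\theta}_0)^{1/2})P-\cos(\varepsilon^{-1}\tau(t^2 S(\boldsymbol{\theta}_0))^{1/2}P)P\|\,\varepsilon^s(t^2+\varepsilon^2)^{-s/2}\le C'(\tau)\,\varepsilon
\end{equation*}
for all sufficiently small $t$ and $\varepsilon$, which is exactly the abstract inequality~\eqref{abstr_s<2_est_imp}. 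By Lemma~\ref{abstr_N_and_Nhat_lemma} the hypothesis $\widehat{N}_{0,Q}(\boldsymbol{\theta}_0)\ne 0$ is equivalent to $N_0\ne 0$ for the abstract family $A(t,\boldsymbol{\theta}_0)$, and then Theorem~\ref{abstr_s<2_general_thrm} rules out any such inequality with $s<2$, yielding the desired contradiction. The hard part is the sandwiched Lipschitz step: it is structurally routine but requires careful bookkeeping of the $f$-factors, and one must observe that only pointwise (not uniform in $\tau,\varepsilon$) continuity in $\mathbf{k}$ is needed—a crucial point, since the $\varepsilon^{-1}$ scaling of the cosine argument makes the Lipschitz constant blow up as $\varepsilon\to 0$.
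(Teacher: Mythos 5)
Your overall architecture coincides with the paper's: reduce to $1\le s<2$, strip off the $(I-\widehat{P})$ part of the smoothing factor, upgrade the a.e.\ estimate to a pointwise one by continuity in $\mathbf{k}$, restrict to the ray $\mathbf{k}=t\boldsymbol{\theta}_0$, and feed the resulting inequality into the abstract sharpness machinery (your unpacking via Lemma~\ref{abstr_cos_sandwiched_est_lemma}, Lemma~\ref{abstr_N_and_Nhat_lemma} and Theorem~\ref{abstr_s<2_general_thrm} is exactly what Theorem~\ref{abstr_sndwchd_s<2_general_thrm} packages). However, there is a genuine gap in the continuity step. You assert that for fixed $\tau,\varepsilon$ the operator ${\mathcal J}(\mathbf{k},\varepsilon^{-1}\tau)\widehat{P}$ is continuous in $\mathbf{k}$ and attribute this to a sandwiched analog of Lemma~\ref{Lipschitz_lemma}. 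But that lemma (in either version) controls only $\cos(\tau\mathcal{A}(\mathbf{k})^{1/2})F(\mathbf{k})$, with the \emph{spectral} projection $F(\mathbf{k})$ of $\mathcal{A}(\mathbf{k})$ for $[0,\delta]$ — not the fixed projection $\widehat{P}$, which is not spectral for $\mathcal{A}(\mathbf{k})$. Norm continuity of the full operator $\cos(\tau\mathcal{A}(\mathbf{k})^{1/2})$ in $\mathbf{k}$ is precisely what one cannot expect: the perturbation $\mathcal{A}(\mathbf{k})-\mathcal{A}(\mathbf{k}_0)$ is an unbounded first-order operator, and the high spectral branches contribute to $\cos(\tau\mathcal{A}(\mathbf{k})^{1/2})\widehat{P}$ in an uncontrolled way. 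So the continuity claim, as you state it, is unsupported.

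The missing maneuver is the $P\leftrightarrow F(\mathbf{k})$ swap. One first rewrites $f^{-1}\widehat{P}=Pf^*\overline{Q}$ (a consequence of \eqref{abstr_P_and_P_hat_relation}), so that the orthogonal projection $P$ onto $\mathfrak{N}$ sits next to the cosine; then, using the threshold estimate $\|F(\mathbf{k})-P\|\le C_1|\mathbf{k}|$ together with the weight $\varepsilon^{s}(|\mathbf{k}|^2+\varepsilon^2)^{-s/2}$ (this is where $s\ge1$ is used a second time), one replaces $P$ by $F(\mathbf{k})$ at the cost of an admissible $O(\varepsilon)$ error. The resulting operator $f\cos(\varepsilon^{-1}\tau\mathcal{A}(\mathbf{k})^{1/2})F(\mathbf{k})f^*\overline{Q}-f_0\cos(\varepsilon^{-1}\tau\mathcal{A}^0(\mathbf{k})^{1/2})f_0^{-1}\widehat{P}$ \emph{is} continuous in $\mathbf{k}$ by the sandwiched Lipschitz lemma (note $\widehat{P}$ is the spectral projection of $\mathcal{A}^0(\mathbf{k})$ for $[0,\delta]$, so the second term is covered too); after passing to all $\mathbf{k}$ and setting $\mathbf{k}=t\boldsymbol{\theta}_0$, one swaps $F(\mathbf{k})$ back to $P$ by the same threshold estimate. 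With this detour inserted, the rest of your argument goes through; your observation that only fixed-$(\tau,\varepsilon)$ continuity is needed is correct and important.
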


For the proof we need the following lemma which can be easily checked by analogy with the proof of  Lemma~\ref{Lipschitz_lemma}.

\begin{lemma}
    \label{sndw_Lipschitz_lemma}
 Let $\delta$ and $t^0$ be given by~\emph{\eqref{delta_fixation}} and~\emph{\eqref{t0_fixation}}, respectively.
Let $F (\mathbf{k}) = F (t, \boldsymbol{\theta})$~be the spectral projection of the operator $\mathcal{A}(\mathbf{k})$ for the interval $[0, \delta]$. Then for $|\mathbf{k}| \le t^0$ and $|\mathbf{k}_0| \le t^0$ we have
    \begin{gather*}
        \| F (\mathbf{k}) - F (\mathbf{k}_0)\|_{L_2(\Omega) \to L_2 (\Omega) } \le C' | \mathbf{k} - \mathbf{k}_0|, 
\\
        \| \mathcal{A}(\mathbf{k})^{1/2} F (\mathbf{k}) - \mathcal{A}(\mathbf{k}_0)^{1/2} F (\mathbf{k}_0)\|_{L_2(\Omega) \to L_2 (\Omega) } \le C'' | \mathbf{k} - \mathbf{k}_0|, 
\\
        \| \cos(\tau \mathcal{A}(\mathbf{k})^{1/2}) F (\mathbf{k}) - \cos(\tau \mathcal{A}(\mathbf{k}_0)^{1/2}) F (\mathbf{k}_0)\|_{L_2(\Omega) \to L_2 (\Omega) } 
\\
\le (2 C' + C'') | \mathbf{k} - \mathbf{k}_0|.
    \end{gather*}
\end{lemma}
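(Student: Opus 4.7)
\medskip
\noindent\textbf{Plan of proof.} The strategy is to imitate the three-step argument in the proof of Lemma~\ref{Lipschitz_lemma}, inserting the extra factor $f(\mathbf{x})$ wherever the forms are evaluated. First I would establish the projector bound $\|F(\mathbf{k})-F(\mathbf{k}_0)\|\le C'|\mathbf{k}-\mathbf{k}_0|$. One way is to use the Riesz contour representation
\begin{equation*}
F(\mathbf{k}) = \frac{1}{2\pi i}\oint_{\gamma}(\zeta I-\mathcal{A}(\mathbf{k}))^{-1}\,d\zeta
\end{equation*}
along a circle $\gamma$ in $\mathbb{C}$ enclosing $[0,\delta]$ and staying at distance $\ge \delta$ from the rest of the spectrum (this is possible by the choice $8\delta<d^0$ and the fact that, for $|\mathbf{k}|\le t^0$, $\mathcal{A}(\mathbf{k})$ has exactly $n$ eigenvalues in $[0,\delta]$ and the remaining spectrum stays above $3\delta$). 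Writing
\begin{equation*}
F(\mathbf{k})-F(\mathbf{k}_0) = \frac{1}{2\pi i}\oint_{\gamma}(\zeta-\mathcal{A}(\mathbf{k}))^{-1}(\mathcal{A}(\mathbf{k})-\mathcal{A}(\mathbf{k}_0))(\zeta-\mathcal{A}(\mathbf{k}_0))^{-1}\,d\zeta
\end{equation*}
in the weak sense, the operator difference $\mathcal{A}(\mathbf{k})-\mathcal{A}(\mathbf{k}_0)$ will be controlled through its quadratic form, exactly as in the estimate below.

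Next, for the second inequality, I would use the Kreyn representation
\begin{equation*}
\mathcal{A}(\mathbf{k})^{1/2}F(\mathbf{k}) = \frac{1}{\pi}\int_0^{\infty}s^{-1/2}(\mathcal{A}(\mathbf{k})F(\mathbf{k})+sI)^{-1}\mathcal{A}(\mathbf{k})F(\mathbf{k})\,ds
\end{equation*}
and repeat verbatim the decomposition from the proof of \eqref{hatA^1/2 (k) - hatA^1/2 (k_0)}, splitting the integrand into
\begin{equation*}
\Upsilon = \Upsilon_1 + \Upsilon_2 + \Upsilon_3
\end{equation*}
where $\Upsilon_1$ contains the form difference sandwiched between $F(\mathbf{k})$ and $F(\mathbf{k}_0)$, while $\Upsilon_2,\Upsilon_3$ are the two terms in which one of the projections is replaced by $I$ minus its complement. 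The key input is the analogue of \eqref{9.23a}: for $\mathbf{u},\mathbf{v}\in\mathfrak{d}$,
\begin{equation*}
\mathfrak{a}(\mathbf{k})[\mathbf{u},\mathbf{v}]-\mathfrak{a}(\mathbf{k}_0)[\mathbf{u},\mathbf{v}] = (gb(\mathbf{k}-\mathbf{k}_0)f\mathbf{u},b(\mathbf{D}+\mathbf{k}_0)f\mathbf{v}) + (gb(\mathbf{D}+\mathbf{k})f\mathbf{u},b(\mathbf{k}-\mathbf{k}_0)f\mathbf{v}),
\end{equation*}
whence by \eqref{rank_alpha_ineq} and \eqref{h_f_L_inf}
\begin{equation*}
|\mathfrak{a}(\mathbf{k})[\mathbf{u},\mathbf{v}]-\mathfrak{a}(\mathbf{k}_0)[\mathbf{u},\mathbf{v}]| \le \alpha_1^{1/2}\|g\|_{L_\infty}^{1/2}\|f\|_{L_\infty}|\mathbf{k}-\mathbf{k}_0|\bigl(\|\mathbf{u}\|\,\|\mathcal{A}(\mathbf{k}_0)^{1/2}\mathbf{v}\|+\|\mathcal{A}(\mathbf{k})^{1/2}\mathbf{u}\|\,\|\mathbf{v}\|\bigr).
\end{equation*}
Substituting $\mathbf{u}=F(\mathbf{k})\boldsymbol{\varphi}$, $\mathbf{v}=F(\mathbf{k}_0)\boldsymbol{\psi}$ and invoking the counterpart of \eqref{hat_sqrtA(k)F(k)_up_est} (the inequality $\|\mathcal{A}(\mathbf{k})^{1/2}F(\mathbf{k})\|\le \beta_3^{1/2}\|X_1(\boldsymbol{\theta})\||\mathbf{k}|$ from \eqref{abstr_A(t)F(t)_est}, \eqref{abstr_C3}, \eqref{X_1_estimate}) together with the uniform lower bound \eqref{A(k)_nondegenerated_and_c_*}, one bounds $\Omega_1$ by a constant times $|\mathbf{k}-\mathbf{k}_0|$. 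The two remaining terms $\Omega_2,\Omega_3$ are estimated using the already proved first inequality, $\|\mathcal{A}(\mathbf{k})^{1/2}F(\mathbf{k})\|\le \mathrm{const}\cdot|\mathbf{k}|$, and the resolvent bound coming from $\mathcal{A}(\mathbf{k})\ge c_*|\mathbf{k}|^2I$; the case $\mathbf{k}_0=0$ is handled separately since $\mathcal{A}(0)^{1/2}F(0)=0$.

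For the third inequality I would expand
\begin{equation*}
e^{i\tau\mathcal{A}(\mathbf{k})^{1/2}}F(\mathbf{k})-e^{i\tau\mathcal{A}(\mathbf{k}_0)^{1/2}}F(\mathbf{k}_0) = e^{i\tau\mathcal{A}(\mathbf{k})^{1/2}}F(\mathbf{k})(F(\mathbf{k})-F(\mathbf{k}_0)) + (F(\mathbf{k})-F(\mathbf{k}_0))e^{i\tau\mathcal{A}(\mathbf{k}_0)^{1/2}}F(\mathbf{k}_0) + \Xi(\tau,\mathbf{k},\mathbf{k}_0),
\end{equation*}
with the third summand written as $\Xi(\tau,\mathbf{k},\mathbf{k}_0)=e^{i\tau\mathcal{A}(\mathbf{k})^{1/2}}\Sigma(\tau,\mathbf{k},\mathbf{k}_0)$, where $\Sigma(0,\mathbf{k},\mathbf{k}_0)=0$ and
\begin{equation*}
\Sigma'(\tau,\mathbf{k},\mathbf{k}_0) = iF(\mathbf{k})e^{-i\tau\mathcal{A}(\mathbf{k})^{1/2}}\bigl(\mathcal{A}(\mathbf{k})^{1/2}F(\mathbf{k})-\mathcal{A}(\mathbf{k}_0)^{1/2}F(\mathbf{k}_0)\bigr)e^{i\tau\mathcal{A}(\mathbf{k}_0)^{1/2}}F(\mathbf{k}_0).
\end{equation*}
Integration and the second inequality yield $\|\Xi\|\le C''|\tau||\mathbf{k}-\mathbf{k}_0|$, while the first two summands are bounded by $2C'|\mathbf{k}-\mathbf{k}_0|$; taking real parts converts the exponential estimate into the cosine estimate \eqref{cos_hatA^1/2 (k) - cos_hatA^1/2 (k_0)}-analog, with constant $2C'+C''$ (after absorbing the $|\tau|$-dependence into the constant in the standard way, or, more precisely, stating the constant as depending on $\tau$, as in the parent lemma).

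\medskip
\noindent\textbf{Main obstacle.} The substantive work is entirely in the second inequality, specifically in verifying that the form-perturbation bound carries the right power of $\|f\|_{L_\infty}$ (and no negative powers of $\|f^{-1}\|_{L_\infty}$) through the cascade of estimates for $\Omega_1,\Omega_2,\Omega_3$; the presence of the isomorphism $M$ (multiplication by $f$) requires one to be careful about which space—$\Dom\mathcal{X}$ versus $\widetilde H^1$—one is working in at each step. Everything else is a verbatim rerun of the argument in Lemma~\ref{Lipschitz_lemma}, with constants adjusted by factors of $\|f\|_{L_\infty}$ and $\|f^{-1}\|_{L_\infty}$.
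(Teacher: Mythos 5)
Your argument is correct and is essentially the proof the paper intends: the paper dispatches this lemma with the remark that it ``can be easily checked by analogy with the proof of Lemma~\ref{Lipschitz_lemma}'', and your rerun --- the form-difference identity with the extra factor $\|f\|_{L_\infty}$, the bound $\|\mathcal{A}(\mathbf{k})^{1/2}F(\mathbf{k})\|\le \beta_3^{1/2}\alpha_1^{1/2}\|h\|_{L_\infty}\|f\|_{L_\infty}|\mathbf{k}|$, the nondegeneracy bound \eqref{A(k)_nondegenerated_and_c_*}, and the $\Sigma$-trick for the cosine with the $|\tau|$-dependence handled as in the parent lemma --- is exactly that rerun (only note that the resulting constants do acquire $\|f^{-1}\|_{L_\infty}$ through $c_*^{-1/2}$, which is harmless since $C'$, $C''$ are not specified). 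The one point where you deviate is the first estimate, for which the parent lemma cites \cite{Su4}; your self-contained Riesz-projection argument is a legitimate substitute, since for $|\mathbf{k}|\le t^0$ a contour such as $|\zeta|=2\delta$ stays at distance at least $\delta$ from the spectrum of $\mathcal{A}(\mathbf{k})$ (the $n$ eigenvalues lie in $[0,\delta]$ and the rest of the spectrum above $3\delta$), uniformly in $\mathbf{k}$, and the resolvent difference is controlled by the same form estimate.
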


\noindent \textbf{Proof of Theorem~\ref{s<2_thrm}}.
It suffices to assume that $1 \le s < 2$. We prove by contradiction. Fix $\tau \ne 0$.  Suppose that for some $1 \le s < 2$ there exists a constant   $\mathcal{C}(\tau) > 0$  such that estimate~\eqref{sndw_s<2_est_imp} holds for almost every $\mathbf{k} \in \widetilde{\Omega}$ and sufficiently small $\varepsilon > 0$. By~\eqref{R_P} and~\eqref{sndw_cos_main_est_(I - hatP)}, it follows that  there exists a constant
 $\widetilde{\mathcal{C}}(\tau) > 0$ such that
\begin{equation}
    \label{s<2_proof_f1}
    \|  {\mathcal J}(\mathbf{k},\varepsilon^{-1}\tau)
\widehat{P}\|_{L_2(\Omega) \to L_2 (\Omega) } \varepsilon^s (|\mathbf{k}|^2 + \varepsilon^2)^{-s/2}  \le \widetilde{\mathcal{C}}(\tau) \varepsilon
\end{equation}
for almost every  $\mathbf{k} \in \widetilde{\Omega}$  and sufficiently small $\varepsilon > 0$.

By~\eqref{abstr_P_and_P_hat_relation}, we have $f^{-1} \widehat{P} = P f^* \overline{Q}$, where $P$~ is the orthogonal projection of  $L_2 (\Omega; \mathbb{C}^n)$ onto the subspace~$\mathfrak{N}$ (see~\eqref{Ker2}). Then the operator under the norm sign in~\eqref{s<2_proof_f1} can be written as  $ f \cos (\varepsilon^{-1} \tau {\mathcal{A}}(\mathbf{k})^{1/2}) P f^* \overline{Q} - 
f_0 \cos (\varepsilon^{-1} \tau {\mathcal{A}}^0(\mathbf{k})^{1/2}) f_0^{-1} \widehat{P}$.

Now, let $|\mathbf{k}| \le t^0$. By \eqref{abstr_F(t)_threshold_1},
\begin{equation}
    \label{s<2_proof_f2}
    \| F (\mathbf{k}) - P \|_{L_2 (\Omega) \to L_2 (\Omega)} \le C_1 |\mathbf{k}|, \quad |\mathbf{k}| \le t^0.
\end{equation}
From~\eqref{s<2_proof_f1} and~\eqref{s<2_proof_f2} it follows that
 there exists a constant $\check{\mathcal C}(\tau)$ such that
\begin{multline}
    \label{s<2_proof_f3}
    \|  f \cos (\varepsilon^{-1} \tau \mathcal{A}(\mathbf{k})^{1/2}) F (\mathbf{k}) f^* \overline{Q} - f_0 \cos (\varepsilon^{-1} \tau \mathcal{A}^0(\mathbf{k})^{1/2}) f_0^{-1} \widehat{P}\|_{L_2(\Omega) \to L_2 (\Omega) }
\\
\times \varepsilon^s (|\mathbf{k}|^2 + \varepsilon^2)^{-s/2}  \le \check{\mathcal{C}}(\tau) \varepsilon
\end{multline}
for almost every $\mathbf{k} $ in the ball $|\mathbf{k}| \le t^0$ and sufficiently small $\varepsilon > 0$.

Observe that $\widehat{P}$  is the spectral projection of the operator $\mathcal{A}^0 (\mathbf{k})$ for the interval $[0, \delta]$.
Therefore, Lemma~\ref{sndw_Lipschitz_lemma} (applied to $\mathcal{A} (\mathbf{k})$ and $\mathcal{A}^0 (\mathbf{k})$)
 implies that for fixed $\tau$ and $\varepsilon$  the operator under the norm sign in~\eqref{s<2_proof_f3}
 is continuous with respect to $\mathbf{k}$ in the ball $|\mathbf{k}| \le t^0$. Hence, estimate~\eqref{s<2_proof_f3} holds for all $\mathbf{k}$
in this ball. In particular, it is satisfied at the point $\mathbf{k} = t\boldsymbol{\theta}_0$ if $t \le t^0$.
Applying once more the inequality~\eqref{s<2_proof_f2} and the identity $P f^* \overline{Q} = f^{-1} \widehat{P}$,
we see that there exists a constant $\check{\mathcal{C}}'(\tau)>0$ such that
\begin{equation}
    \label{s<2_proof_f4}
    \| {\mathcal J}(t\boldsymbol{\theta}_0,\varepsilon^{-1}\tau)
  \widehat{P}\|_{L_2(\Omega) \to L_2 (\Omega) } \varepsilon^s (t^2 + \varepsilon^2)^{-s/2}  \le \check{\mathcal{C}}'(\tau) \varepsilon
\end{equation}
for all $t \le t^0$  and sufficiently small $\varepsilon$.

In abstract terms, estimate~\eqref{s<2_proof_f4} corresponds to the inequality~\eqref{abstr_sndwchd_s<2_est_imp}.
Since $\widehat{N}_{0,Q} (\boldsymbol{\theta}_0) \ne 0$, applying Theorem~\ref{abstr_sndwchd_s<2_general_thrm},
 we arrive at a contradiction.  $\square$

\section{Approximation of the operator $\cos(\varepsilon^{-1} \tau \mathcal{A}^{1/2})$}

\subsection{Approximation of the operator $\cos(\varepsilon^{-1} \tau \widehat{\mathcal{A}}^{1/2})$}

In $L_2 (\mathbb{R}^d; \mathbb{C}^n)$, we consider the operator $\widehat{\mathcal{A}} = b(\mathbf{D})^* g(\mathbf{x}) b(\mathbf{D})$ (see~(\ref{hatA})). Let $\widehat{\mathcal{A}}^0$~ be the effective operator (see~(\ref{hatA0})).
Recall the notation $\mathcal{H}_0 = - \Delta$ and put
\begin{equation}
\label{R(epsilon)}
\mathcal{R} (\varepsilon) := \varepsilon^2 (\mathcal{H}_0 + \varepsilon^2 I)^{-1}.
\end{equation}

Expansion~\eqref{decompose} for $\widehat{\mathcal{A}}$ yields
\begin{equation*}
\cos(\varepsilon^{-1} \tau \widehat{\mathcal{A}}^{1/2}) = \mathcal{U}^{-1} \left( \int_{\widetilde{\Omega}} \oplus  \cos(\varepsilon^{-1} \tau \widehat{\mathcal{A}} (\mathbf{k})^{1/2}) \, d \mathbf{k}  \right) \mathcal{U}.
\end{equation*}
The operator $\cos(\varepsilon^{-1} \tau (\widehat{\mathcal{A}}^0)^{1/2})$
admits a similar representation. The operator $\mathcal{R} (\varepsilon)$  expands in the direct integral of the operators~\eqref{R(k, epsilon)}:
\begin{equation}
\label{R_decompose}
\mathcal{R} (\varepsilon) = \mathcal{U}^{-1} \left( \int_{\widetilde{\Omega}} \oplus  \mathcal{R} (\mathbf{k}, \varepsilon) \, d \mathbf{k}  \right) \mathcal{U}.
\end{equation}
It follows that the operator $(\cos(\varepsilon^{-1} \tau \widehat{\mathcal{A}}^{1/2}) - \cos(\varepsilon^{-1} \tau (\widehat{\mathcal{A}}^0)^{1/2})) \mathcal{R} (\varepsilon)^{s/2}$ expands in the direct integral of the operators
$$(\cos(\varepsilon^{-1} \tau \widehat{\mathcal{A}}(\mathbf{k})^{1/2}) - \cos(\varepsilon^{-1} \tau \widehat{\mathcal{A}}^0(\mathbf{k})^{1/2})) \mathcal{R} (\mathbf{k}, \varepsilon)^{s/2}.$$
Hence,
\begin{multline}
\label{norms_and_Gelfand_transf}
\| (\cos(\varepsilon^{-1} \tau \widehat{\mathcal{A}}^{1/2}) - \cos(\varepsilon^{-1} \tau (\widehat{\mathcal{A}}^0)^{1/2})) \mathcal{R} (\varepsilon)^{s/2} \|_{L_2(\mathbb{R}^d) \to L_2(\mathbb{R}^d)} =
\\
 = \underset{\mathbf{k} \in \widetilde{\Omega}}{\esssup} \| (\cos(\varepsilon^{-1} \tau \widehat{\mathcal{A}}(\mathbf{k})^{1/2}) - \cos(\varepsilon^{-1} \tau \widehat{\mathcal{A}}^0(\mathbf{k})^{1/2})) \mathcal{R} (\mathbf{k},\varepsilon)^{s/2} \|_{L_2(\Omega) \to L_2(\Omega)}.
\end{multline}
Therefore, Theorem~\ref{cos_general_thrm} directly implies the following statement (which has been proved before in \cite[Theorem~9.2]{BSu5}).

\begin{theorem}
    \label{cos_thrm_1}
    Let $\widehat{\mathcal{A}}$~ be the operator in $L_2 (\mathbb{R}^d; \mathbb{C}^n)$
given by $\widehat{\mathcal{A}} = b(\mathbf{D})^* g(\mathbf{x}) b(\mathbf{D})$, where $g(\mathbf{x})$ and $b(\mathbf{D})$ satisfy the assumptions of Subsection~\emph{\ref{A_oper_subsect}}. Let $\widehat{\mathcal{A}}^0 = b(\mathbf{D})^* g^0 b(\mathbf{D})$~be the effective operator, where $g^0$ is given by~\emph{\eqref{g0}}. Let $\mathcal{R} (\varepsilon)$~be defined by~\emph{(\ref{R(epsilon)})}. Then for $\tau \in \mathbb{R}$ and $\varepsilon > 0$ we have
    \begin{equation*}
    \| (\cos(\varepsilon^{-1} \tau \widehat{\mathcal{A}}^{1/2}) - \cos(\varepsilon^{-1} \tau (\widehat{\mathcal{A}}^0)^{1/2})) \mathcal{R} (\varepsilon) \|_{L_2(\mathbb{R}^d) \to L_2(\mathbb{R}^d)} \le (\widehat{\mathcal{C}}_1 + \widehat{\mathcal{C}}_2 |\tau|) \varepsilon.
    \end{equation*}
    The constants $\widehat{\mathcal{C}}_1$ and $\widehat{\mathcal{C}}_2$ are defined by~\emph{\eqref{calC1_C2}} and depend only on $r_0$, $\alpha_0$, $\alpha_1$, $\|g\|_{L_\infty}$, and $\|g^{-1}\|_{L_\infty}$.
\end{theorem}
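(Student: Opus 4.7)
The plan is to reduce the estimate on $L_2(\mathbb{R}^d;\mathbb{C}^n)$ to the fiberwise estimates already established for $\widehat{\mathcal A}(\mathbf k)$ on $L_2(\Omega;\mathbb{C}^n)$, using the Gelfand transformation. This is a straightforward unfolding once the correct direct-integral decompositions are in place; the essential content is Theorem~\ref{cos_general_thrm}.

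First I would invoke the decomposition \eqref{decompose} applied to $\widehat{\mathcal A}$: under $\mathcal U$, the operator $\widehat{\mathcal A}$ becomes the direct integral of the fiber operators $\widehat{\mathcal A}(\mathbf k)$. Since the cosine and the resolvent are bounded Borel functions of a selfadjoint operator, both $\cos(\varepsilon^{-1}\tau\widehat{\mathcal A}^{1/2})$ and $\mathcal R(\varepsilon)$ decompose as the corresponding functional calculus applied fiberwise; the latter is exactly \eqref{R_decompose}. The same discussion applies to the constant-coefficient operator $\widehat{\mathcal A}^0$, whose fibers $\widehat{\mathcal A}^0(\mathbf k)$ were introduced in Subsection~\ref{germ_and_eff_g0_section}. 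Consequently the difference operator
\[
\bigl(\cos(\varepsilon^{-1}\tau\widehat{\mathcal A}^{1/2})-\cos(\varepsilon^{-1}\tau(\widehat{\mathcal A}^0)^{1/2})\bigr)\mathcal R(\varepsilon)
\]
expands as the direct integral of the fiber operators
\[
\bigl(\cos(\varepsilon^{-1}\tau\widehat{\mathcal A}(\mathbf k)^{1/2})-\cos(\varepsilon^{-1}\tau\widehat{\mathcal A}^0(\mathbf k)^{1/2})\bigr)\mathcal R(\mathbf k,\varepsilon).
\]

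Second, I would appeal to the general fact that for a direct integral $\int_{\widetilde\Omega}^\oplus T(\mathbf k)\,d\mathbf k$ with a measurable family of bounded operators $T(\mathbf k)$, the operator norm equals $\esssup_{\mathbf k\in\widetilde\Omega}\|T(\mathbf k)\|$. This gives the identity \eqref{norms_and_Gelfand_transf} (with $s=2$) which is stated in the preamble to the theorem.

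Finally, Theorem~\ref{cos_general_thrm} provides the uniform fiber estimate
\[
\bigl\|\bigl(\cos(\varepsilon^{-1}\tau\widehat{\mathcal A}(\mathbf k)^{1/2})-\cos(\varepsilon^{-1}\tau\widehat{\mathcal A}^0(\mathbf k)^{1/2})\bigr)\mathcal R(\mathbf k,\varepsilon)\bigr\|_{L_2(\Omega)\to L_2(\Omega)}\le(\widehat{\mathcal C}_1+\widehat{\mathcal C}_2|\tau|)\varepsilon,
\]
valid for every $\mathbf k\in\widetilde\Omega$ with constants $\widehat{\mathcal C}_1$, $\widehat{\mathcal C}_2$ depending only on $r_0,\alpha_0,\alpha_1,\|g\|_{L_\infty},\|g^{-1}\|_{L_\infty}$ (see \eqref{calC1_C2}, \eqref{hatC7}). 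Taking the essential supremum over $\mathbf k\in\widetilde\Omega$ and combining with \eqref{norms_and_Gelfand_transf} yields the claimed bound on $L_2(\mathbb R^d;\mathbb{C}^n)$. There is no real obstacle here; the only point requiring minor care is the measurability and uniform boundedness of the fiber family, which is immediate since all constants in Theorem~\ref{cos_general_thrm} are independent of $\mathbf k$ and $\boldsymbol{\theta}=\mathbf k/|\mathbf k|$.
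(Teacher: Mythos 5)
Your proposal is correct and follows essentially the same route as the paper: the paper likewise expands the difference operator in the direct integral via the Gelfand transformation, identifies the operator norm with the essential supremum of the fiber norms (identity~\eqref{norms_and_Gelfand_transf}), and concludes from the uniform-in-$\mathbf{k}$ fiber estimate of Theorem~\ref{cos_general_thrm}.
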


Similarly,  Theorem~\ref{cos_enchanced_thrm_1} implies the following result.

\begin{theorem}
     \label{cos_thrm_2}
    Suppose that the assumptions of Theorem~\emph{\ref{cos_thrm_1}} are satisfied. Let $\widehat{N}(\boldsymbol{\theta})$
be the operator defined by~\emph{\eqref{N(theta)}}. Suppose that $\widehat{N}(\boldsymbol{\theta}) = 0$ for any $\boldsymbol{\theta} \in \mathbb{S}^{d-1}$. Then for $\tau \in \mathbb{R}$ and $\varepsilon > 0$ we have
    \begin{equation*}
    \| ( \cos (\varepsilon^{-1} \tau \widehat{\mathcal{A}}^{1/2})  - \cos (\varepsilon^{-1} \tau (\widehat{\mathcal{A}}^0)^{1/2}) )\mathcal{R}(\varepsilon)^{3/4}\|_{L_2(\mathbb{R}^d) \to L_2 (\mathbb{R}^d) }  \le (\widehat{\mathcal{C}}_3 + \widehat{\mathcal{C}}_4 |\tau|) \varepsilon.
    \end{equation*}
   The constants $\widehat{\mathcal{C}}_3$ and $\widehat{\mathcal{C}}_4$ are defined in Theorem~\emph{\ref{cos_enchanced_thrm_1}}
and depend only on $r_0$, $\alpha_0$, $\alpha_1$,  $\|g\|_{L_\infty}$, and $\|g^{-1}\|_{L_\infty}$.
\end{theorem}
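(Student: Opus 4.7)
The plan is to lift Theorem~\ref{cos_enchanced_thrm_1}, which is a fiberwise (uniform in $\mathbf{k}$) bound, to the whole-space estimate by means of the Gelfand transformation, following the same pattern used in the proof of Theorem~\ref{cos_thrm_1}. First I would invoke the direct integral decomposition \eqref{decompose}, which gives
\begin{equation*}
\cos(\varepsilon^{-1}\tau \widehat{\mathcal{A}}^{1/2}) = \mathcal{U}^{-1}\!\left(\int_{\widetilde{\Omega}}\oplus \cos(\varepsilon^{-1}\tau\widehat{\mathcal{A}}(\mathbf{k})^{1/2})\,d\mathbf{k}\right)\mathcal{U},
\end{equation*}
and the analogous decomposition for $\cos(\varepsilon^{-1}\tau(\widehat{\mathcal{A}}^0)^{1/2})$, together with \eqref{R_decompose} for $\mathcal{R}(\varepsilon)$. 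Since the fractional power $\mathcal{R}(\varepsilon)^{3/4}$ is a Borel function of the selfadjoint $\mathcal{R}(\varepsilon)$, it also decomposes into the direct integral of $\mathcal{R}(\mathbf{k},\varepsilon)^{3/4}$.

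Next, combining these decompositions, the operator
\begin{equation*}
\bigl(\cos(\varepsilon^{-1}\tau\widehat{\mathcal{A}}^{1/2}) - \cos(\varepsilon^{-1}\tau(\widehat{\mathcal{A}}^0)^{1/2})\bigr)\mathcal{R}(\varepsilon)^{3/4}
\end{equation*}
is unitarily equivalent to the direct integral of the fiber operators $(\cos(\varepsilon^{-1}\tau\widehat{\mathcal{A}}(\mathbf{k})^{1/2}) - \cos(\varepsilon^{-1}\tau\widehat{\mathcal{A}}^0(\mathbf{k})^{1/2}))\mathcal{R}(\mathbf{k},\varepsilon)^{3/4}$. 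By the general identity for norms of decomposable operators, already used in \eqref{norms_and_Gelfand_transf} with exponent $s=2$, the $(L_2(\mathbb{R}^d)\to L_2(\mathbb{R}^d))$-norm on the left equals the essential supremum over $\mathbf{k}\in\widetilde{\Omega}$ of the corresponding $(L_2(\Omega)\to L_2(\Omega))$-norms.

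Finally, the hypothesis $\widehat{N}(\boldsymbol{\theta}) = 0$ for every $\boldsymbol{\theta}\in\mathbb{S}^{d-1}$ is precisely the assumption of Theorem~\ref{cos_enchanced_thrm_1}, which provides the uniform bound
\begin{equation*}
\| ( \cos (\varepsilon^{-1} \tau \widehat{\mathcal{A}}(\mathbf{k})^{1/2})  - \cos (\varepsilon^{-1} \tau \widehat{\mathcal{A}}^0(\mathbf{k})^{1/2}) )\mathcal{R}(\mathbf{k}, \varepsilon)^{3/4}\|_{L_2(\Omega) \to L_2 (\Omega) } \le (\widehat{\mathcal{C}}_3 + \widehat{\mathcal{C}}_4 |\tau|) \varepsilon
\end{equation*}
for every $\mathbf{k}\in\widetilde{\Omega}$, with constants $\widehat{\mathcal{C}}_3, \widehat{\mathcal{C}}_4$ depending only on $r_0,\alpha_0,\alpha_1,\|g\|_{L_\infty},\|g^{-1}\|_{L_\infty}$. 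Taking the essential supremum over $\mathbf{k}$ yields the claimed estimate.

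There is no real obstacle here: this theorem is a direct transfer of the fiber-level result to the whole space, and the genuine technical work has already been done in the abstract setting (Theorem~\ref{abstr_cos_enchanced_thrm_1}, based on the refined threshold expansion \eqref{abstr_A_sqrt_threshold_2} and the vanishing of $\widehat{G}$ when $N=0$) and in transferring that abstract statement to the operator family $\widehat{\mathcal{A}}(\mathbf{k})$ (Theorem~\ref{cos_enchanced_thrm_1}). The only mildly delicate point is the identification of the direct integral decomposition of the fractional smoothing factor $\mathcal{R}(\varepsilon)^{3/4}$, which is however immediate from the Borel functional calculus applied fiberwise.
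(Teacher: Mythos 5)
Your proposal is correct and is essentially the paper's own argument: the paper proves this theorem by applying the norm identity \eqref{norms_and_Gelfand_transf} with $s=3/2$ and taking the essential supremum of the uniform fiberwise bound from Theorem~\ref{cos_enchanced_thrm_1}. The point you flag about decomposing $\mathcal{R}(\varepsilon)^{3/4}$ is already covered by \eqref{R_decompose} together with the fiberwise functional calculus, exactly as you say.
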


Recall that some sufficient conditions ensuring  that the assumptions of Theorem~\ref{cos_thrm_2}  are satisfied are given in Proposition~\ref{N=0_proposit}.

Finally, applying  Theorem~\ref{cos_enchanced_thrm_2} and using the direct integral expansion, we obtain the following statement.

\begin{theorem}
    \label{cos_thrm_3}
   Suppose that the assumptions of Theorem~\emph{\ref{cos_thrm_1}} are satisfied. Suppose also that Condition~\emph{\ref{cond9}} \emph{(}or more restrictive Condition~\emph{\ref{cond99}}\emph{)} is satisfied. Then for $\tau \in \mathbb{R}$ and $\varepsilon > 0$ we have
    \begin{equation*}
    \| ( \cos (\varepsilon^{-1} \tau \widehat{\mathcal{A}}^{1/2})  - \cos (\varepsilon^{-1} \tau (\widehat{\mathcal{A}}^0)^{1/2}) )\mathcal{R}(\varepsilon)^{3/4}\|_{L_2(\mathbb{R}^d) \to L_2 (\mathbb{R}^d) }  \le (\widehat{\mathcal{C}}_5 + \widehat{\mathcal{C}}_6 |\tau|) \varepsilon.
    \end{equation*}
    The constants $\widehat{\mathcal{C}}_5$ and $\widehat{\mathcal{C}}_6$ are defined in Theorem~\emph{\ref{cos_enchanced_thrm_2}} and depend  only on $r_0$, $\alpha_0$, $\alpha_1$,  $\|g\|_{L_\infty}$, $\|g^{-1}\|_{L_\infty}$, and also on the number $\widehat{c}^{\circ}$ defined by~\emph{(\ref{hatc^circ})}.
    \end{theorem}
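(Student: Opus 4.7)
The plan is to reduce the $\mathbb{R}^d$-estimate to the fiber estimate on $L_2(\Omega;\mathbb{C}^n)$ already obtained in Theorem~\ref{cos_enchanced_thrm_2}, via the Gelfand transformation, exactly as was done in the proofs of Theorems~\ref{cos_thrm_1} and~\ref{cos_thrm_2}. Since Conditions~\ref{cond9} (or~\ref{cond99}) are formulated purely in terms of the spectral data of the germ $\widehat{S}(\boldsymbol{\theta})$ and of $\widehat{N}(\boldsymbol{\theta})$, all of which are intrinsic to the fibers $\widehat{\mathcal{A}}(\mathbf{k})$, the hypotheses pass directly to the fibered operator. The only things to verify are that the relevant operators admit direct integral decompositions compatible with the smoothing factor $\mathcal{R}(\varepsilon)^{3/4}$, and that taking the essential supremum in $\mathbf{k}\in\widetilde{\Omega}$ of the fiber bound reproduces the constants $\widehat{\mathcal{C}}_5$, $\widehat{\mathcal{C}}_6$ independently of $\boldsymbol{\theta}$.

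First, I would invoke the direct integral expansion \eqref{decompose} applied to both $\widehat{\mathcal{A}}$ and $\widehat{\mathcal{A}}^0$. The functional calculus yields
\begin{equation*}
\cos(\varepsilon^{-1}\tau \widehat{\mathcal{A}}^{1/2}) = \mathcal{U}^{-1}\Bigl(\int_{\widetilde{\Omega}}\oplus \cos(\varepsilon^{-1}\tau \widehat{\mathcal{A}}(\mathbf{k})^{1/2})\,d\mathbf{k}\Bigr)\mathcal{U},
\end{equation*}
and analogously for $\widehat{\mathcal{A}}^0$. Combined with the decomposition \eqref{R_decompose} for $\mathcal{R}(\varepsilon)$ (which is available for any fractional power $\mathcal{R}(\varepsilon)^{s/2}$ with $s>0$ since $\mathcal{H}_0$ decomposes into the fibers $\mathcal{H}_0(\mathbf{k})$), one arrives at an identity of the form~\eqref{norms_and_Gelfand_transf} with exponent $s=3/2$, namely
\begin{equation*}
\|(\cos(\varepsilon^{-1}\tau \widehat{\mathcal{A}}^{1/2}) - \cos(\varepsilon^{-1}\tau (\widehat{\mathcal{A}}^0)^{1/2}))\mathcal{R}(\varepsilon)^{3/4}\|_{L_2(\mathbb{R}^d)\to L_2(\mathbb{R}^d)}
\end{equation*}
equals
\begin{equation*}
\esssup_{\mathbf{k}\in\widetilde{\Omega}} \|(\cos(\varepsilon^{-1}\tau \widehat{\mathcal{A}}(\mathbf{k})^{1/2}) - \cos(\varepsilon^{-1}\tau \widehat{\mathcal{A}}^0(\mathbf{k})^{1/2}))\mathcal{R}(\mathbf{k},\varepsilon)^{3/4}\|_{L_2(\Omega)\to L_2(\Omega)}.
\end{equation*}

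Second, the fiber estimate is precisely the conclusion of Theorem~\ref{cos_enchanced_thrm_2}, which under Condition~\ref{cond9} (or \ref{cond99}) gives
\begin{equation*}
\|(\cos(\varepsilon^{-1}\tau \widehat{\mathcal{A}}(\mathbf{k})^{1/2}) - \cos(\varepsilon^{-1}\tau \widehat{\mathcal{A}}^0(\mathbf{k})^{1/2}))\mathcal{R}(\mathbf{k},\varepsilon)^{3/4}\|_{L_2(\Omega)\to L_2(\Omega)} \le (\widehat{\mathcal{C}}_5 + \widehat{\mathcal{C}}_6|\tau|)\varepsilon
\end{equation*}
uniformly in $\mathbf{k}\in\widetilde{\Omega}$. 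The crucial point that was taken care of in the proof of Theorem~\ref{cos_enchanced_thrm_2} is that, thanks to Condition~\ref{cond9}, the number $\widehat{c}^{\circ}$ of \eqref{hatc^circ} is strictly positive and $\widehat{t}^{\,00}$ of \eqref{hatt00_fixation} can be chosen independently of $\boldsymbol{\theta}\in\mathbb{S}^{d-1}$; consequently both $\widehat{\mathcal{C}}_5$ and $\widehat{\mathcal{C}}_6$ depend only on $r_0$, $\alpha_0$, $\alpha_1$, $\|g\|_{L_\infty}$, $\|g^{-1}\|_{L_\infty}$, and $\widehat{c}^{\circ}$, but not on $\mathbf{k}$.

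Combining the two steps gives the required estimate, so there is no genuine obstacle: the whole proof is a one-line reduction once the uniformity-in-$\boldsymbol{\theta}$ constants from Theorem~\ref{cos_enchanced_thrm_2} are in hand. The only subtlety worth flagging is that Condition~\ref{cond9} is precisely what rules out the $\boldsymbol{\theta}$-dependence issues discussed around Example~\ref{elast_exmpl_N0_ne_0}, where the blocks $\widehat{N}_0(\boldsymbol{\theta})$ and $\widehat{N}_*(\boldsymbol{\theta})$ could jump at $\boldsymbol{\theta}$'s where different branches $\widehat{\gamma}_k(\boldsymbol{\theta})$, $\widehat{\gamma}_r(\boldsymbol{\theta})$ collide; this is exactly why the hypothesis is phrased in terms of non-intersection of the relevant branches rather than simply $\widehat{N}_0(\boldsymbol{\theta})\equiv 0$.
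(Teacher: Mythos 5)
Your proposal is correct and follows exactly the paper's argument: the direct integral identity \eqref{norms_and_Gelfand_transf} with $s=3/2$ reduces the claim to the fiber estimate of Theorem~\ref{cos_enchanced_thrm_2}, whose constants are already uniform in $\mathbf{k}$ thanks to the $\boldsymbol{\theta}$-independent choice of $\widehat{c}^{\circ}$ and $\widehat{t}^{\,00}$ made under Condition~\ref{cond9}. Nothing further is needed.
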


Recall that some sufficient conditions ensuring
that the assumptions of Theorem~\ref{cos_thrm_3} are satisfied are given in Corollary~\ref{cos_enchanced_2_coroll}.

Applying Theorem~\ref{hat_s<2_thrm},  we confirm the sharpness of the result of Theorem~\ref{cos_thrm_1}.

\begin{theorem}
     \label{s<2_cos_thrm}
 Suppose that the assumptions of Theorem~\emph{\ref{cos_thrm_1}} are satisfied. Let $\widehat{N}_0 (\boldsymbol{\theta})$
be the operator defined by~\emph{(\ref{N0_invar_repr})}. Suppose that $\widehat{N}_0 (\boldsymbol{\theta}_0) \ne 0$ for some  $\boldsymbol{\theta}_0 \in \mathbb{S}^{d-1}$. Let $0 \ne \tau \in \mathbb{R}$ and $0 \le s < 2$.
Then there does not exist a constant $\mathcal{C}(\tau) > 0$ such that estimate
    \begin{equation}
    \label{s<2_cos_est}
    \| ( \cos (\varepsilon^{-1} \tau \widehat{\mathcal{A}}^{1/2})  - \cos (\varepsilon^{-1} \tau (\widehat{\mathcal{A}}^0)^{1/2}) )\mathcal{R}(\varepsilon)^{s/2}\|_{L_2(\mathbb{R}^d) \to L_2 (\mathbb{R}^d) }  \le \mathcal{C}(\tau) \varepsilon
    \end{equation}
  holds for all sufficiently small $\varepsilon > 0$.
\end{theorem}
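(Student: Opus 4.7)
The plan is to prove this by contradiction, reducing the statement to its analog for the operator family $\widehat{\mathcal{A}}(\mathbf{k})$ proved in Theorem~\ref{hat_s<2_thrm}. The bridge between the two is the direct integral expansion \eqref{decompose} together with the analogous expansion \eqref{R_decompose} for $\mathcal{R}(\varepsilon)$, which yield the key identity \eqref{norms_and_Gelfand_transf} expressing the norm of the operator on $L_2(\mathbb{R}^d;\mathbb{C}^n)$ as the essential supremum in $\mathbf{k}\in\widetilde\Omega$ of the fiber norms on $L_2(\Omega;\mathbb{C}^n)$.

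First, I will suppose for contradiction that, for some $0\le s < 2$ and some fixed $\tau\ne 0$, the estimate \eqref{s<2_cos_est} holds for all sufficiently small $\varepsilon>0$ with some constant $\mathcal{C}(\tau)$. Applying identity~\eqref{norms_and_Gelfand_transf} with exponent $s$, I obtain that for almost every $\mathbf{k}\in\widetilde\Omega$ and all sufficiently small $\varepsilon>0$,
\begin{equation*}
\| (\cos(\varepsilon^{-1}\tau\widehat{\mathcal{A}}(\mathbf{k})^{1/2}) - \cos(\varepsilon^{-1}\tau\widehat{\mathcal{A}}^0(\mathbf{k})^{1/2}))\mathcal{R}(\mathbf{k},\varepsilon)^{s/2}\|_{L_2(\Omega)\to L_2(\Omega)} \le \mathcal{C}(\tau)\varepsilon.
\end{equation*}
This is precisely estimate~\eqref{hat_s<2_est_imp} in the hypothesis of Theorem~\ref{hat_s<2_thrm}. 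Since by assumption $\widehat{N}_0(\boldsymbol{\theta}_0)\ne 0$, Theorem~\ref{hat_s<2_thrm} applies and asserts that no such constant can exist. This contradicts the supposition, and the result follows.

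The argument is essentially a one-step reduction: everything substantive has already been built into Theorem~\ref{hat_s<2_thrm}, whose proof invoked the abstract sharpness result (Theorem~\ref{abstr_s<2_general_thrm}) together with the Lipschitz-in-$\mathbf{k}$ continuity from Lemma~\ref{Lipschitz_lemma} to pass from ``almost every $\mathbf{k}$'' to the specific direction $\boldsymbol{\theta}_0$. There is no separate obstacle at the level of the operator on $L_2(\mathbb{R}^d)$: the direct integral identity \eqref{norms_and_Gelfand_transf} converts a statement about a global operator norm into a pointwise (in $\mathbf{k}$) statement, which is exactly the form needed to invoke Theorem~\ref{hat_s<2_thrm}. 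Thus the only conceptual step is the correct matching of smoothing factors on the two sides of the Gelfand transformation, which is ensured by \eqref{R_decompose}.
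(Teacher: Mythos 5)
Your proof is correct and follows essentially the same route as the paper: the paper's own proof also argues by contradiction, invokes the direct integral identity \eqref{norms_and_Gelfand_transf} to convert the global estimate \eqref{s<2_cos_est} into the fiber-wise estimate \eqref{hat_s<2_est_imp} for almost every $\mathbf{k}\in\widetilde\Omega$, and then contradicts Theorem~\ref{hat_s<2_thrm}. The quantifier structure (``for almost every $\mathbf{k}$ and sufficiently small $\varepsilon$'') matches exactly what Theorem~\ref{hat_s<2_thrm} excludes, so the reduction is complete.
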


\begin{proof}
We prove by contradiction. Let us fix $\tau \ne 0$.
Suppose that for some  $0 \le s < 2$  there exists a constant $\mathcal{C}(\tau) > 0$ such that \eqref{s<2_cos_est}
 holds for all sufficiently small $\varepsilon > 0$. By~\eqref{norms_and_Gelfand_transf},  this means that for almost every $\mathbf{k} \in \widetilde{\Omega}$ and sufficiently small $\varepsilon$ estimate~\eqref{hat_s<2_est_imp} holds.
But this contradicts the statement of Theorem~\ref{hat_s<2_thrm}.
\end{proof}

\subsection{ Approximation of the sandwiched operator $\cos(\varepsilon^{-1} \tau \mathcal{A}^{1/2})$}
In $L_2 (\mathbb{R}^d; \mathbb{C}^n)$, consider the operator $\mathcal{A} \! = \! f(\mathbf{x})^* \widehat{\mathcal{A}} f(\mathbf{x}) \! = \! f(\mathbf{x})^* b(\mathbf{D})^* g(\mathbf{x}) b(\mathbf{D}) f(\mathbf{x})$ (see~\eqref{A}). Let $\widehat{\mathcal{A}}^0$~ be the operator~\eqref{hatA0}, and let $f_0$~be the matrix~\eqref{f_0}. Let $\mathcal{A}^0 = f_0 \widehat{\mathcal{A}}^0 f_0 = f_0 b(\mathbf{D})^* g^0 b(\mathbf{D}) f_0$ (see~\eqref{A0}).

Let ${\mathcal J}(\mathbf{k},  \tau)$ be the operator defined by \eqref{JJ}.
Similarly to ~\eqref{norms_and_Gelfand_transf}, using~\eqref{decompose} and~\eqref{R_decompose}, we have
\begin{multline}
\label{sndw_norms_and_Gelfand_transf}
\| (f \cos(\varepsilon^{-1} \tau \mathcal{A}^{1/2}) f^{-1} - f_0 \cos(\varepsilon^{-1} \tau (\mathcal{A}^0)^{1/2}) f_0^{-1}) \mathcal{R} (\varepsilon)^{s/2} \|_{L_2(\mathbb{R}^d) \to L_2(\mathbb{R}^d)}
\\
= \underset{\mathbf{k} \in \widetilde{\Omega}}{\esssup} \|  {\mathcal J}(\mathbf{k}, \varepsilon^{-1} \tau)
 \mathcal{R} (\mathbf{k},\varepsilon)^{s/2} \|_{L_2(\Omega) \to L_2(\Omega)}.
\end{multline}

Theorem~\ref{sndw_cos_general_thrm} together with~\eqref{sndw_norms_and_Gelfand_transf} implies the following result
(which has been proved before in \cite[Theorem~10.2]{BSu5}).

\begin{theorem}
    \label{sndw_cos_thrm_1}
    Let ${\mathcal A}$ be the operator in $L_2(\mathbb{R}^d;\mathbb{C}^n)$ given by
$\mathcal{A} = f(\mathbf{x})^* b(\mathbf{D})^* g(\mathbf{x}) b(\mathbf{D}) f(\mathbf{x})$, where
$g(\mathbf{x})$, $f(\mathbf{x})$, and $b(\mathbf{D})$ satisfy the assumptions of Subsection~\emph{\ref{A_oper_subsect}}.
Let $\mathcal{A}^0 = f_0 b(\mathbf{D})^* g^0 b(\mathbf{D}) f_0$, where $g^0$~is the effective matrix~\emph{\eqref{g0}},
and $f_0 =  (\underline{f f^*})^{1/2}$. Let $\mathcal{R}(\varepsilon)$~be defined by~\emph{\eqref{R(epsilon)}}.
Then for $\tau \in \mathbb{R}$ and $\varepsilon > 0$ we have
  \begin{multline*}
    \| (f \cos(\varepsilon^{-1} \tau \mathcal{A}^{1/2}) f^{-1} - f_0 \cos(\varepsilon^{-1} \tau (\mathcal{A}^0)^{1/2}) f_0^{-1}) \mathcal{R} (\varepsilon) \|_{L_2(\mathbb{R}^d) \to L_2(\mathbb{R}^d)}
\\
 \le (\mathcal{C}_1 + \mathcal{C}_2 |\tau|) \varepsilon.
    \end{multline*}
  The constants $\mathcal{C}_1$ and $\mathcal{C}_2$ are defined in Theorem~\emph{\ref{sndw_cos_general_thrm}}
 and depend only on $r_0$, $\alpha_0$, $\alpha_1$,  $\|g\|_{L_\infty}$, $\|g^{-1}\|_{L_\infty}$,  $\|f\|_{L_\infty}$, and $\|f^{-1}\|_{L_\infty}$.
\end{theorem}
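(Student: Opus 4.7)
The plan is to reduce Theorem~\ref{sndw_cos_thrm_1} to its fiberwise counterpart, namely Theorem~\ref{sndw_cos_general_thrm}, exactly in the same spirit as the proof of Theorem~\ref{cos_thrm_1} (which reduces to Theorem~\ref{cos_general_thrm}). The passage from the pointwise-in-$\mathbf{k}$ estimate to the estimate on $L_2(\mathbb{R}^d;\mathbb{C}^n)$ is achieved by means of the direct integral decomposition afforded by the Gelfand transformation.

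First, I would spell out how each factor in the operator under the norm sign decomposes under $\mathcal{U}$. The operator $\mathcal{A}$ decomposes as \eqref{decompose}, so $\cos(\varepsilon^{-1}\tau \mathcal{A}^{1/2})$ is the direct integral of $\cos(\varepsilon^{-1}\tau \mathcal{A}(\mathbf{k})^{1/2})$. Similarly, $\mathcal{A}^0 = f_0 b(\mathbf{D})^* g^0 b(\mathbf{D}) f_0$ is a periodic DO of the same general form, so it decomposes into the direct integral of $\mathcal{A}^0(\mathbf{k}) = f_0 b(\mathbf{D}+\mathbf{k})^* g^0 b(\mathbf{D}+\mathbf{k}) f_0$, whence $\cos(\varepsilon^{-1}\tau (\mathcal{A}^0)^{1/2})$ becomes $\int^{\oplus} \cos(\varepsilon^{-1}\tau \mathcal{A}^0(\mathbf{k})^{1/2}) d\mathbf{k}$. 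Multiplication by the $\Gamma$-periodic function $f(\mathbf{x})$ (and by $f^{-1}(\mathbf{x})$) turns into multiplication by the same function on each fiber $L_2(\Omega;\mathbb{C}^n)$; multiplication by the constant matrix $f_0$ (and $f_0^{-1}$) acts the same way on every fiber. Finally, $\mathcal{R}(\varepsilon)$ decomposes as in \eqref{R_decompose}.

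Consequently, the composite operator
\begin{equation*}
\bigl( f \cos(\varepsilon^{-1}\tau \mathcal{A}^{1/2}) f^{-1} - f_0 \cos(\varepsilon^{-1}\tau (\mathcal{A}^0)^{1/2}) f_0^{-1}\bigr) \mathcal{R}(\varepsilon)
\end{equation*}
is unitarily equivalent to the direct integral of the fiber operators ${\mathcal J}(\mathbf{k}, \varepsilon^{-1}\tau)\mathcal{R}(\mathbf{k},\varepsilon)$, where ${\mathcal J}(\mathbf{k},\tau)$ is defined by \eqref{JJ}. Since the operator norm of a direct integral equals the essential supremum of the fiber norms, identity \eqref{sndw_norms_and_Gelfand_transf} (with $s=2$) holds.

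It then remains to invoke Theorem~\ref{sndw_cos_general_thrm}, which yields the uniform-in-$\mathbf{k}$ bound
\begin{equation*}
\| {\mathcal J}(\mathbf{k}, \varepsilon^{-1}\tau) \mathcal{R}(\mathbf{k},\varepsilon)\|_{L_2(\Omega)\to L_2(\Omega)} \le (\mathcal{C}_1 + \mathcal{C}_2|\tau|)\varepsilon, \quad \mathbf{k}\in\widetilde\Omega,
\end{equation*}
with the constants $\mathcal{C}_1$, $\mathcal{C}_2$ depending only on the data $r_0, \alpha_0, \alpha_1, \|g\|_{L_\infty}, \|g^{-1}\|_{L_\infty}, \|f\|_{L_\infty}, \|f^{-1}\|_{L_\infty}$. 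Taking the essential supremum and using \eqref{sndw_norms_and_Gelfand_transf} completes the proof. There is no genuine obstacle here: all the nontrivial analytic work has already been carried out in Theorem~\ref{sndw_cos_general_thrm} (and, at a deeper level, in the abstract Theorem~\ref{abstr_cos_sandwiched_general_thrm}). The only point to verify carefully is the fiberwise compatibility of the multiplications by $f$, $f^{-1}$, $f_0$, $f_0^{-1}$ with the Gelfand transformation, which is immediate from the periodicity (or constancy) of these factors.
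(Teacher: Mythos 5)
Your proposal is correct and follows exactly the paper's route: the Gelfand transformation decomposes the sandwiched operator into the direct integral of the fiber operators ${\mathcal J}(\mathbf{k},\varepsilon^{-1}\tau)\mathcal{R}(\mathbf{k},\varepsilon)$, which yields identity \eqref{sndw_norms_and_Gelfand_transf} with $s=2$, and Theorem~\ref{sndw_cos_general_thrm} then supplies the uniform fiberwise bound. Nothing is missing.
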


Similarly, Theorem~\ref{sndw_cos_enchanced_thrm_1} leads to the following statement.

\begin{theorem}
   \label{sndw_cos_thrm_2}
Suppose that the assumptions of Theorem~\emph{\ref{sndw_cos_thrm_1}} are satisfied. Let $\widehat{N}_Q(\boldsymbol{\theta})$
be the operator defined by~\emph{\eqref{N_Q(theta)}}. Suppose that $\widehat{N}_Q(\boldsymbol{\theta}) = 0$ for any
$\boldsymbol{\theta} \in \mathbb{S}^{d-1}$. Then for $\tau \in \mathbb{R}$ and $\varepsilon > 0$ we have
   \begin{multline*}
   \| (f \cos(\varepsilon^{-1} \tau \mathcal{A}^{1/2}) f^{-1} - f_0 \cos(\varepsilon^{-1} \tau (\mathcal{A}^0)^{1/2}) f_0^{-1}) \mathcal{R} (\varepsilon)^{3/4} \|_{L_2(\mathbb{R}^d) \to L_2(\mathbb{R}^d)}
\\
\le (\mathcal{C}_3 + \mathcal{C}_4 |\tau|) \varepsilon.
   \end{multline*}
  The constants $\mathcal{C}_3$ and $\mathcal{C}_4$ are defined in Theorem~\emph{\ref{sndw_cos_enchanced_thrm_1}} and depend only on $r_0$, $\alpha_0$, $\alpha_1$,  $\|g\|_{L_\infty}$, $\|g^{-1}\|_{L_\infty}$,  $\|f\|_{L_\infty}$, and $\|f^{-1}\|_{L_\infty}$.
\end{theorem}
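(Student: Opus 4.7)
The plan is to deduce this sandwiched result exactly as Theorem~\ref{sndw_cos_thrm_1} was deduced from Theorem~\ref{sndw_cos_general_thrm}: namely, by combining the fiberwise estimate of Theorem~\ref{sndw_cos_enchanced_thrm_1} with the direct integral decomposition obtained via the Gelfand transform. First I would recall that under the Gelfand transformation $\mathcal{U}$, the operator $\mathcal{A}$ expands in the direct integral of the fiber operators $\mathcal{A}(\mathbf{k})$ (relation~\eqref{decompose}), and similarly for $\mathcal{A}^0$. The smoothing operator $\mathcal{R}(\varepsilon)$ defined in~\eqref{R(epsilon)} expands in the direct integral of the fiber operators $\mathcal{R}(\mathbf{k},\varepsilon)$, by~\eqref{R_decompose}. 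Since $f$, $f^{-1}$, $f_0$, $f_0^{-1}$ are all operators of multiplication by (periodic or constant) bounded matrix-valued functions, they commute with $\mathcal{U}$ appropriately so that the full sandwiched difference decomposes as a direct integral whose fibers are precisely the operators $\mathcal{J}(\mathbf{k},\varepsilon^{-1}\tau)\mathcal{R}(\mathbf{k},\varepsilon)^{3/4}$ with $\mathcal{J}$ given by~\eqref{JJ}.

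Consequently, the identity~\eqref{sndw_norms_and_Gelfand_transf} (applied with $s=3/2$) reduces the $(L_2(\mathbb{R}^d)\to L_2(\mathbb{R}^d))$-norm in the statement to
\begin{equation*}
\underset{\mathbf{k} \in \widetilde{\Omega}}{\esssup}\,
\| \mathcal{J}(\mathbf{k},\varepsilon^{-1}\tau)\mathcal{R}(\mathbf{k},\varepsilon)^{3/4}\|_{L_2(\Omega)\to L_2(\Omega)}.
\end{equation*}
The hypothesis $\widehat{N}_Q(\boldsymbol{\theta})=0$ for all $\boldsymbol{\theta}\in\mathbb{S}^{d-1}$ is precisely the assumption of Theorem~\ref{sndw_cos_enchanced_thrm_1}, which therefore provides the uniform-in-$\mathbf{k}$ fiberwise bound
$(\mathcal{C}_3 + \mathcal{C}_4|\tau|)\varepsilon$. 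Taking the essential supremum on the right yields the claimed global estimate, with the very same constants $\mathcal{C}_3,\mathcal{C}_4$ as in Theorem~\ref{sndw_cos_enchanced_thrm_1}; by inspection of their construction these depend only on $r_0,\alpha_0,\alpha_1,\|g\|_{L_\infty},\|g^{-1}\|_{L_\infty},\|f\|_{L_\infty},\|f^{-1}\|_{L_\infty}$.

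There is essentially no new difficulty here: the theorem is the global counterpart of a fiberwise estimate already established, and all the serious analytic work (the threshold approximation of $A(t)^{1/2}F(t)$, the analysis of the cluster structure, and the sandwiching lemma relating $\mathcal{J}$ to the ``unsandwiched'' difference) was done earlier in Chapter~1 and in Sections~10--11. The only point that needs a line of verification is that the assumption~$\widehat{N}_Q(\boldsymbol{\theta})=0$ uniformly on $\mathbb{S}^{d-1}$ is compatible with the direct-integral-$\esssup$ passage: this is immediate, since the abstract assumption is imposed on the fibered family $\widehat{A}(t,\boldsymbol{\theta})$ and does not depend on the splitting of $\mathbf{k}=t\boldsymbol{\theta}$. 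Thus there is no genuine obstacle; the main task is bookkeeping of constants, which is inherited unchanged from Theorem~\ref{sndw_cos_enchanced_thrm_1}.
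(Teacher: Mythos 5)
Your proposal is correct and is exactly the paper's argument: the paper deduces this theorem from Theorem~\ref{sndw_cos_enchanced_thrm_1} via the direct integral identity~\eqref{sndw_norms_and_Gelfand_transf} with $s=3/2$, just as you do. No gaps.
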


Recall that some sufficient conditions ensuring that the assumptions of Theorem~\ref{sndw_cos_thrm_2} are satisfiied are given in
Proposition~\ref{N_Q=0_proposit}.

Finally, from Theorem~\ref{sndw_cos_enchanced_thrm_2}  and the direct integral expansion  we deduce the following result.

\begin{theorem}
    \label{sndw_cos_thrm_3}
  Suppose that the assumptions of Theorem~\emph{\ref{sndw_cos_thrm_1}} are satisfied. Suppose also that Condition~\emph{\ref{sndw_cond1}} \emph{(}or more restrictive Condition~\emph{\ref{sndw_cond2}}\emph{)} is satisfied. Then for $\tau \in \mathbb{R}$ and $\varepsilon > 0$ we have
     \begin{multline*}
     \| (f \cos(\varepsilon^{-1} \tau \mathcal{A}^{1/2}) f^{-1} - f_0 \cos(\varepsilon^{-1} \tau (\mathcal{A}^0)^{1/2}) f_0^{-1}) \mathcal{R} (\varepsilon)^{3/4} \|_{L_2(\mathbb{R}^d) \to L_2(\mathbb{R}^d)}
\\
\le (\mathcal{C}_5 + \mathcal{C}_6 |\tau|) \varepsilon.
     \end{multline*}
    The constants $\mathcal{C}_5$ and $\mathcal{C}_6$ are defined in Theorem~\emph{\ref{sndw_cos_enchanced_thrm_2}} and depend only on $r_0$, $\alpha_0$, $\alpha_1$,  $\|g\|_{L_\infty}$, $\|g^{-1}\|_{L_\infty}$,  $\|f\|_{L_\infty}$, $\|f^{-1}\|_{L_\infty}$, and also on the number $c^{\circ}$
defined by~\emph{\eqref{c^circ}}.
\end{theorem}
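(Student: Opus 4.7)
The plan is to deduce Theorem~\ref{sndw_cos_thrm_3} as a direct corollary of the fiberwise Theorem~\ref{sndw_cos_enchanced_thrm_2} via the Gelfand decomposition, in complete analogy with the passage from Theorem~\ref{sndw_cos_general_thrm} to Theorem~\ref{sndw_cos_thrm_1} and from Theorem~\ref{sndw_cos_enchanced_thrm_1} to Theorem~\ref{sndw_cos_thrm_2}. Concretely, I would first invoke the direct integral representation \eqref{decompose} of $\mathcal{A}$ (and the analogous one for $\mathcal{A}^0$, whose coefficients are constant so that the fibers are $\mathcal{A}^0(\mathbf{k})$) together with \eqref{R_decompose}. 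Since $f$, $f^{-1}$, $f_0$, $f_0^{-1}$ act as multiplications that commute with the Gelfand transform fiberwise, the full sandwiched operator
\begin{equation*}
\bigl(f\cos(\varepsilon^{-1}\tau\mathcal{A}^{1/2})f^{-1} - f_0\cos(\varepsilon^{-1}\tau(\mathcal{A}^0)^{1/2})f_0^{-1}\bigr)\mathcal{R}(\varepsilon)^{3/4}
\end{equation*}
decomposes into the direct integral of the fiber operators $\mathcal{J}(\mathbf{k},\varepsilon^{-1}\tau)\mathcal{R}(\mathbf{k},\varepsilon)^{3/4}$, where $\mathcal{J}(\mathbf{k},\tau)$ is the operator defined in \eqref{JJ}.

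The second step is to apply the identity \eqref{sndw_norms_and_Gelfand_transf} with $s=3/2$, which identifies the $L_2(\mathbb{R}^d)\to L_2(\mathbb{R}^d)$ operator norm of the sandwiched expression with the essential supremum over $\mathbf{k}\in\widetilde\Omega$ of the fiberwise $L_2(\Omega)\to L_2(\Omega)$ norms. Then Theorem~\ref{sndw_cos_enchanced_thrm_2} provides exactly the required bound $(\mathcal{C}_5+\mathcal{C}_6|\tau|)\varepsilon$ on each fiber, with constants that do not depend on $\mathbf{k}$. Taking the essential supremum finishes the argument.

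No genuine obstacle arises at this step, because Condition~\ref{sndw_cond1} (or the more restrictive Condition~\ref{sndw_cond2}) has already been imposed precisely to guarantee that the minimal gap $c^\circ$ from \eqref{c^circ} is strictly positive and independent of $\boldsymbol\theta$, which in turn allows the parameter $t^{00}$ in \eqref{t00_fixation} and hence the constants $\mathcal{C}_5$, $\mathcal{C}_6$ to be chosen uniformly in $\boldsymbol\theta\in\mathbb{S}^{d-1}$. Without such a uniform choice the fiberwise estimate would degenerate near the values of $\boldsymbol\theta$ where some pair of branches $\gamma_k(\boldsymbol\theta)$, $\gamma_r(\boldsymbol\theta)$ (with $(k,r)\in\mathcal{K}$) approach each other, and the essential supremum over $\mathbf{k}$ could not be controlled. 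Thus the entire content of Theorem~\ref{sndw_cos_thrm_3} beyond Theorem~\ref{sndw_cos_enchanced_thrm_2} is the observation that uniformity in $\boldsymbol\theta$, which was built into the assumptions, propagates through the Gelfand decomposition to yield a global operator-norm estimate on $L_2(\mathbb{R}^d;\mathbb{C}^n)$.
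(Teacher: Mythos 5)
Your proposal is correct and follows exactly the paper's route: the paper deduces Theorem~\ref{sndw_cos_thrm_3} from the fiberwise Theorem~\ref{sndw_cos_enchanced_thrm_2} via the direct integral identity~\eqref{sndw_norms_and_Gelfand_transf} with $s=3/2$, relying on the $\boldsymbol{\theta}$-uniformity of the constants guaranteed by Condition~\ref{sndw_cond1}. Nothing is missing.
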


Recall that some sufficient conditions ensuring that the assumptions of Theorem~\ref{sndw_cos_thrm_3} are satisfied are given in Corollary~\ref{cos_enchanced_2_coroll}.

By analogy with the proof of Theorem~\ref{s<2_cos_thrm},   we deduce the following result from Theorem~\ref{s<2_thrm};
this confirms that the result of Theorem~\ref{sndw_cos_thrm_1} is sharp.

\begin{theorem}
    \label{sndw_s<2_cos_thrm}
   Suppose that the assumptions of Theorem~\emph{\ref{sndw_cos_thrm_1}} are satisfied. Let $\widehat{N}_{0,Q}(\boldsymbol{\theta})$  be the operator defined by~\emph{\eqref{N0Q_invar_repr}}. Suppose that $\widehat{N}_{0,Q}(\boldsymbol{\theta}_0) \ne 0$ for some $\boldsymbol{\theta}_0 \in \mathbb{S}^{d-1}$. Let $0 \ne \tau \in \mathbb{R}$ and $0 \le s < 2$. Then there does not exist a constant $\mathcal{C}(\tau) > 0$ such that estimate
    \begin{multline}
    \label{sndw_s<2_cos_est}
    \| (f \cos(\varepsilon^{-1} \tau \mathcal{A}^{1/2}) f^{-1} - f_0 \cos(\varepsilon^{-1} \tau (\mathcal{A}^0)^{1/2}) f_0^{-1}) \mathcal{R} (\varepsilon)^{s/2} \|_{L_2(\mathbb{R}^d) \to L_2(\mathbb{R}^d)}
\\
 \le \mathcal{C}(\tau) \varepsilon
    \end{multline}
holds  for all sufficiently small $\varepsilon > 0$.
\end{theorem}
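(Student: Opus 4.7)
The plan is to argue by contradiction, exactly in parallel with the proof of Theorem~\ref{s<2_cos_thrm}. I fix $\tau \ne 0$ and suppose that for some $0 \le s < 2$ there exists a constant $\mathcal{C}(\tau) > 0$ such that estimate~\eqref{sndw_s<2_cos_est} holds for all sufficiently small $\varepsilon > 0$. The key observation is that the operator whose norm appears in~\eqref{sndw_s<2_cos_est} is precisely the full-space version of the fiber operator ${\mathcal J}(\mathbf{k},\varepsilon^{-1}\tau)\mathcal{R}(\mathbf{k},\varepsilon)^{s/2}$ considered in Section~11, and the Gelfand transformation yields a direct integral expansion for it.

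Concretely, the second step is to invoke the identity~\eqref{sndw_norms_and_Gelfand_transf}, which gives
\begin{equation*}
\underset{\mathbf{k} \in \widetilde{\Omega}}{\esssup}\, \| {\mathcal J}(\mathbf{k},\varepsilon^{-1}\tau)\mathcal{R}(\mathbf{k},\varepsilon)^{s/2}\|_{L_2(\Omega)\to L_2(\Omega)} \le \mathcal{C}(\tau)\varepsilon.
\end{equation*}
Consequently, for every sufficiently small $\varepsilon > 0$ the fiber-wise estimate~\eqref{sndw_s<2_est_imp} is satisfied for almost every $\mathbf{k} \in \widetilde{\Omega}$ (with the same constant $\mathcal{C}(\tau)$, independent of $\mathbf{k}$ and $\varepsilon$). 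This is exactly the hypothesis that is ruled out by Theorem~\ref{s<2_thrm}, which was proved under the assumption $\widehat{N}_{0,Q}(\boldsymbol{\theta}_0) \ne 0$ that we are now imposing. Thus we arrive at the desired contradiction.

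There is essentially no genuine obstacle in this argument: the substantive work — the reduction to a fixed direction $\boldsymbol{\theta}_0$ using the Lipschitz continuity of the spectral projection and of $\cos(\tau \mathcal{A}(\mathbf{k})^{1/2})F(\mathbf{k})$ (Lemma~\ref{sndw_Lipschitz_lemma}), together with the abstract sharpness Theorem~\ref{abstr_sndwchd_s<2_general_thrm} — has already been carried out in Theorem~\ref{s<2_thrm}. The only thing to be careful about in writing the proof is the bookkeeping of the quantifiers: the assumed inequality in~\eqref{sndw_s<2_cos_est} holds for all small $\varepsilon$ without exceptional $\mathbf{k}$, and one must note that an inequality valid for the $\esssup$ automatically gives the pointwise bound for a.e.\ $\mathbf{k}$, which is precisely the form of~\eqref{sndw_s<2_est_imp} needed to apply Theorem~\ref{s<2_thrm}. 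No further quantitative estimate is required, so the proof reduces to a short three-line contradiction argument.
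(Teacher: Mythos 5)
Your argument is correct and is exactly the paper's proof: the paper deduces this theorem from Theorem~\ref{s<2_thrm} by the same contradiction argument via the direct integral identity~\eqref{sndw_norms_and_Gelfand_transf}, in complete analogy with the proof of Theorem~\ref{s<2_cos_thrm}. All the substantive work (continuity in $\mathbf{k}$, reduction to the direction $\boldsymbol{\theta}_0$, and the abstract sharpness statement) is indeed already contained in Theorem~\ref{s<2_thrm}, so nothing further is needed.
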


\section*{Chapter 3. Homogenization problems for hyperbolic equations}

\section{Approximation of the operators\\
$\cos(\tau \mathcal{A}_\varepsilon^{1/2})$ and $\mathcal{A}_\varepsilon^{-1/2} \sin(\tau \mathcal{A}_\varepsilon^{1/2})$}

\subsection{Operators $\widehat{\mathcal{A}}_\varepsilon$ and $\mathcal{A}_\varepsilon$. Statement of the problem}

If $\psi(\mathbf{x})$~is a $\Gamma$-periodic measurable function in  $\mathbb{R}^d$, we  denote $\psi^{\varepsilon}(\mathbf{x}) := \psi(\varepsilon^{-1} \mathbf{x}), \; \varepsilon > 0$. \emph{Our main objects} are  the operators $\widehat{\mathcal{A}}_\varepsilon$ and $\mathcal{A}_\varepsilon$ acting in $L_2 (\mathbb{R}^d; \mathbb{C}^n)$ and formally given by
\begin{gather}
\label{Ahat_eps}
\widehat{\mathcal{A}}_\varepsilon := b(\mathbf{D})^* g^{\varepsilon}(\mathbf{x}) b(\mathbf{D}), \\
\label{A_eps}
\mathcal{A}_\varepsilon := (f^{\varepsilon}(\mathbf{x}))^* b(\mathbf{D})^* g^{\varepsilon}(\mathbf{x}) b(\mathbf{D}) f^{\varepsilon}(\mathbf{x}).
\end{gather}
The precise definitions are given in terms of the corresponding quadratic forms (cf. Subsection~\ref{A_oper_subsect}).
 The coefficients of the operators~~\eqref{Ahat_eps} and \eqref{A_eps}  oscillate rapidly as $\varepsilon \to 0$.

\emph{Our goal} is to find approximations for the operators $\cos(\tau \mathcal{A}_\varepsilon^{1/2})$
and $\mathcal{A}_\varepsilon^{-1/2} \sin(\tau \mathcal{A}_\varepsilon^{1/2})$ and
to apply the results to homogenization of the Cauchy problem for hyperbolic equations.

\subsection{ The scaling transformation}
Let $T_{\varepsilon}$~be the \emph{unitary scaling transformation in $L_2 (\mathbb{R}^d; \mathbb{C}^n)$} defined by
\begin{equation*}
(T_{\varepsilon} \mathbf{u})(\mathbf{x}) = \varepsilon^{d/2} \mathbf{u} (\varepsilon \mathbf{x}), \; \varepsilon > 0.
\end{equation*}
Then $\mathcal{A}_\varepsilon = \varepsilon^{-2}T_{\varepsilon}^* \mathcal{A} T_{\varepsilon}$. Hence,
\begin{equation}
\label{cos_and_scale_transform}
\cos(\tau \mathcal{A}_\varepsilon^{1/2}) = T_{\varepsilon}^* \cos(\varepsilon^{-1} \tau \mathcal{A}^{1/2}) T_{\varepsilon}.
\end{equation}
The operator $\widehat{\mathcal{A}}_\varepsilon$ satisfies a similar relation.

Applying the scaling transformation to the resolvent of the operator $\mathcal{H}_0 = - \Delta$, we obtain
\begin{equation}
\label{H0_resolv_and_scale_transform}
(\mathcal{H}_0 + I)^{-1} = \varepsilon^2 T_\varepsilon^* (\mathcal{H}_0 + \varepsilon^2 I)^{-1} T_\varepsilon = T_\varepsilon^* \mathcal{R} (\varepsilon) T_\varepsilon.
\end{equation}
Here we have used the notation~\eqref{R(epsilon)}.

Finally, if $\psi(\mathbf{x})$~is a $\Gamma$-periodic function, then, under the scaling transformation, the operator $[\psi^{\varepsilon}]$
of multiplication by the function $\psi^{\varepsilon} (\mathbf{x}) = \psi (\varepsilon^{-1} \mathbf{x})$
turns into the operator $[\psi]$ of multiplication by $\psi (\mathbf{x})$:
\begin{equation}
\label{mult_op_and_scale_transform}
[\psi^{\varepsilon}] = T_\varepsilon^* [\psi] T_\varepsilon.
\end{equation}

\subsection{Approximation of the operators $\cos( \tau \widehat{\mathcal{A}}_\varepsilon^{1/2})$ and
$\widehat{\mathcal{A}}_\varepsilon^{-1/2}\sin( \tau \widehat{\mathcal{A}}_\varepsilon^{1/2})$}

We start with the simpler operator $\widehat{\mathcal{A}}_\varepsilon$. Let $\widehat{\mathcal{A}}^0$~be the effective operator~\eqref{hatA0}. Using relations of the form~\eqref{cos_and_scale_transform} (for the operators $\widehat{\mathcal{A}}_\varepsilon$ and $\widehat{\mathcal{A}}^0$)
and~\eqref{H0_resolv_and_scale_transform}, we obtain
\begin{multline}
\label{cos-cos_and_scale_transform}
(\cos( \tau \widehat{\mathcal{A}}_\varepsilon^{1/2}) - \cos( \tau (\widehat{\mathcal{A}}^0)^{1/2})) (\mathcal{H}_0 + I)^{-s/2} = \\ = T_{\varepsilon}^* (\cos(\varepsilon^{-1} \tau \widehat{\mathcal{A}}^{1/2}) - \cos(\varepsilon^{-1} \tau (\widehat{\mathcal{A}}^0)^{1/2})) \mathcal{R} (\varepsilon)^{s/2} T_{\varepsilon}, \quad \varepsilon > 0.
\end{multline}
Since $T_{\varepsilon}$  is unitary, combining this with Theorem~\ref{cos_thrm_1}, we deduce the following result (which has been proved before in~\cite[Theorem~13.1]{BSu5}).

\begin{theorem}
      \label{cos_thrm1_L2L2}
   Let $\widehat{\mathcal{A}}_{\varepsilon}$~be the operator~\emph{\eqref{Ahat_eps}} and let  $\widehat{\mathcal{A}}^0$~be
the effective operator~\emph{\eqref{hatA0}}. Then for $\tau \in \mathbb{R}$ and $\varepsilon > 0$ we have
      \begin{equation}
      \label{cos_est1_L2L2}
      \| (\cos( \tau \widehat{\mathcal{A}}_{\varepsilon}^{1/2}) - \cos( \tau (\widehat{\mathcal{A}}^0)^{1/2})) (\mathcal{H}_0 + I)^{-1} \|_{L_2(\mathbb{R}^d) \to L_2(\mathbb{R}^d)} \le (\widehat{\mathcal{C}}_1 + \widehat{\mathcal{C}}_2 |\tau|) \varepsilon.
      \end{equation}
     The constants $\widehat{\mathcal{C}}_1$ and $\widehat{\mathcal{C}}_2$ are given by~\emph{\eqref{calC1_C2}}
and depend only on $r_0$, $\alpha_0$, $\alpha_1$, $\|g\|_{L_\infty}$, and $\|g^{-1}\|_{L_\infty}$.
\end{theorem}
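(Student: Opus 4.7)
The plan is to reduce the assertion directly to Theorem~\ref{cos_thrm_1} via the scaling transformation $T_\varepsilon$, using the identity~\eqref{cos-cos_and_scale_transform} with $s=2$.

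First I would verify the scaling identity~\eqref{cos-cos_and_scale_transform}. The key ingredients are already collected in the preceding subsection: by~\eqref{cos_and_scale_transform} applied to both $\widehat{\mathcal{A}}_\varepsilon$ and $\widehat{\mathcal{A}}^0$ (the latter is a constant-coefficient operator, so also obeys the scaling $\widehat{\mathcal{A}}^0 = \varepsilon^{-2}T_\varepsilon^* \widehat{\mathcal{A}}^0 T_\varepsilon$ trivially), and by~\eqref{H0_resolv_and_scale_transform} which rewrites $(\mathcal{H}_0+I)^{-1}$ as $T_\varepsilon^* \mathcal{R}(\varepsilon) T_\varepsilon$, the composition collapses to the right-hand side of~\eqref{cos-cos_and_scale_transform}. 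For the power $s=2$ this is literally the identity needed.

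Next, since $T_\varepsilon$ is unitary on $L_2(\mathbb{R}^d;\mathbb{C}^n)$, taking the $L_2\to L_2$ operator norm of both sides gives
\begin{equation*}
\| (\cos( \tau \widehat{\mathcal{A}}_{\varepsilon}^{1/2}) - \cos( \tau (\widehat{\mathcal{A}}^0)^{1/2})) (\mathcal{H}_0 + I)^{-1} \|_{L_2\to L_2} = \| (\cos(\varepsilon^{-1} \tau \widehat{\mathcal{A}}^{1/2}) - \cos(\varepsilon^{-1} \tau (\widehat{\mathcal{A}}^0)^{1/2})) \mathcal{R}(\varepsilon)\|_{L_2\to L_2}.
\end{equation*}
Then I would invoke Theorem~\ref{cos_thrm_1} (applied with $\tau$ replaced by $\varepsilon^{-1}\tau$ on its left-hand side, but noting that the theorem's bound already absorbs the $\varepsilon^{-1}$ because the parameter $\tau$ in its statement is the argument of the cosine, not scaled by $\varepsilon$): its conclusion gives the right-hand side above as at most $(\widehat{\mathcal{C}}_1 + \widehat{\mathcal{C}}_2|\tau|)\varepsilon$, with the very constants~\eqref{calC1_C2}.

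There is essentially no obstacle here beyond bookkeeping; the serious work was done in Theorems~\ref{abstr_cos_general_thrm}, \ref{cos_general_thrm}, and \ref{cos_thrm_1}, together with the removal of $\widehat{P}$ and the treatment of $|\mathbf{k}|>\widehat{t}^{\,0}$. The only point requiring a moment of care is to confirm that the parameter in Theorem~\ref{cos_thrm_1} can indeed be taken as $\varepsilon^{-1}\tau$ while keeping the bound of the form $(\widehat{\mathcal{C}}_1 + \widehat{\mathcal{C}}_2|\tau|)\varepsilon$: this is automatic from the structure of~\eqref{cos-cos_and_scale_transform}, where the factor $\varepsilon^{-1}$ inside the cosine on the right matches precisely the linear-in-$\tau$ bound on the left with the same constants.
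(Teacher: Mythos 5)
Your proposal is correct and coincides with the paper's own proof: apply \eqref{cos_and_scale_transform} to $\widehat{\mathcal{A}}_\varepsilon$ and $\widehat{\mathcal{A}}^0$ together with \eqref{H0_resolv_and_scale_transform} to obtain \eqref{cos-cos_and_scale_transform} with $s=2$, then use unitarity of $T_\varepsilon$ and Theorem~\ref{cos_thrm_1}. The only superfluous worry is your remark about "replacing $\tau$ by $\varepsilon^{-1}\tau$": Theorem~\ref{cos_thrm_1} is already stated with $\varepsilon^{-1}\tau$ inside the cosine and the bound $(\widehat{\mathcal{C}}_1+\widehat{\mathcal{C}}_2|\tau|)\varepsilon$, so it applies verbatim with no substitution.
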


Obviously,
\begin{equation}
\label{cos-cos_le_2}
\| \cos( \tau \widehat{\mathcal{A}}_{\varepsilon}^{1/2}) - \cos( \tau (\widehat{\mathcal{A}}^0)^{1/2}) \|_{L_2 (\mathbb{R}^d) \to L_2 (\mathbb{R}^d)} \le 2, \quad  \tau \in \mathbb{R}, \;  \varepsilon > 0.
\end{equation}
Interpolating between~\eqref{cos-cos_le_2} and~\eqref{cos_est1_L2L2}, for $ 0 \le s \le 2$ we obtain
\begin{multline}
\label{cos_interp_1}
\| (\cos( \tau \widehat{\mathcal{A}}_{\varepsilon}^{1/2}) - \cos( \tau (\widehat{\mathcal{A}}^0)^{1/2}))  (\mathcal{H}_0 + I)^{-s/2} \|_{L_2 (\mathbb{R}^d) \to L_2 (\mathbb{R}^d)} \\
\le 2^{1-s/2} (\widehat{\mathcal{C}}_1 + \widehat{\mathcal{C}}_2  |\tau|)^{s/2} \varepsilon^{s/2}, \quad  \tau \in \mathbb{R}, \;  \varepsilon > 0.
\end{multline}
The operator $(\mathcal{H}_0 + I)^{s/2}$  is an isometric isomorphism of the Sobolev space $H^s(\mathbb{R}^d; \mathbb{C}^n)$ onto  $L_2(\mathbb{R}^d; \mathbb{C}^n)$, since for $\mathbf{u} \in H^s(\mathbb{R}^d; \mathbb{C}^n)$ we have
\begin{equation}
\label{H_L2_isometry}
\| (\mathcal{H}_0 + I)^{s/2} \mathbf{u} \|^2_{L_2 (\mathbb{R}^d)} = \int_{\mathbb{R}^d} ( 1 + |\boldsymbol{\xi}|^2)^s |\hat{\mathbf{u}} (\boldsymbol{\xi})|^2 d \boldsymbol{\xi} = \| \mathbf{u} \|^2_{ H^s(\mathbb{R}^d)},
\end{equation}
where $\hat{\mathbf{u}} (\boldsymbol{\xi})$ is the Fourier-image of $\mathbf{u} (\mathbf{x})$.
By \eqref{cos_interp_1} and~\eqref{H_L2_isometry},
\begin{multline}
\label{cos_est1_H^s_to_L2}
\| \cos( \tau \widehat{\mathcal{A}}_{\varepsilon}^{1/2}) - \cos( \tau (\widehat{\mathcal{A}}^0)^{1/2}) \|_{H^s (\mathbb{R}^d) \to L_2 (\mathbb{R}^d)} \le 2^{1-s/2} (\widehat{\mathcal{C}}_1 + \widehat{\mathcal{C}}_2  |\tau|)^{s/2} \varepsilon^{s/2}, \\ \tau \in \mathbb{R}, \;  \varepsilon > 0.
\end{multline}
In particular, for small $\varepsilon$ estimate~\eqref{cos_est1_H^s_to_L2} allows us  to consider large values of time
$\tau$, namely, we can take $\tau = O(\varepsilon^{-\alpha})$ with $0 < \alpha < 1$.

Next, using the identity
\begin{equation*}
\widehat{\mathcal{A}}_\varepsilon^{-1/2}\sin( \tau \widehat{\mathcal{A}}_\varepsilon^{1/2}) = \int_{0}^{\tau} \cos( \tilde{\tau} \widehat{\mathcal{A}}_\varepsilon^{1/2}) \, d \tilde{\tau}
\end{equation*}
and a similar identity for the operator $\widehat{\mathcal{A}}^0$, from~(\ref{cos_est1_H^s_to_L2}) we deduce
\begin{multline}
\label{sin_est1_H^s_to_L2}
\| \widehat{\mathcal{A}}_\varepsilon^{-1/2}\sin( \tau \widehat{\mathcal{A}}_\varepsilon^{1/2}) - (\widehat{\mathcal{A}}^0)^{-1/2}\sin( \tau (\widehat{\mathcal{A}}^0)^{1/2}) \|_{H^s (\mathbb{R}^d) \to L_2 (\mathbb{R}^d)}
\\
\le 2^{1-s/2} (1 + s/2)^{-1} \widehat{\mathcal{C}}_{2}^{-1} (\widehat{\mathcal{C}}_1 + \widehat{\mathcal{C}}_2  |\tau|)^{1+s/2} \varepsilon^{s/2}, \quad  \tau \in \mathbb{R}, \;  \varepsilon > 0.
\end{multline}
Note that, for large $|\tau|$, the coefficient at $\varepsilon^{s/2}$ in~\eqref{cos_est1_H^s_to_L2} is of order $O(|\tau|^{s/2})$,
while the coefficient in~\eqref{sin_est1_H^s_to_L2} is of order $O(|\tau|^{1+s/2})$.

We arrive at the following result (which has been proved before in~\cite[Theorem 13.2]{BSu5}).

\begin{theorem}
    \label{cos_sin_thrm1_H^s_L2}
    Suppose that the assumptions of Theorem~\emph{\ref{cos_thrm1_L2L2}} are satisfied. Then for
$0 \le s \le 2$, $\tau \in \mathbb{R}$, and $\varepsilon > 0$ we have
    \begin{gather}
    \label{cos_thrm1_H^s_L2_est}
    \| \cos( \tau \widehat{\mathcal{A}}_{\varepsilon}^{1/2}) - \cos( \tau (\widehat{\mathcal{A}}^0)^{1/2}) \|_{H^s (\mathbb{R}^d) \to L_2 (\mathbb{R}^d)} \le \widehat{\mathfrak{C}}_1 (s; \tau) \varepsilon^{s/2},
    \\
    \label{sin_thrm1_H^s_L2_est}
    \|\widehat{\mathcal{A}}_\varepsilon^{-1/2} \sin( \tau \widehat{\mathcal{A}}_\varepsilon^{1/2}) - (\widehat{\mathcal{A}}^0)^{-1/2} \sin( \tau (\widehat{\mathcal{A}}^0)^{1/2}) \|_{H^s (\mathbb{R}^d) \to L_2 (\mathbb{R}^d)} \le \widehat{\mathfrak{C}}_2 (s; \tau)  \varepsilon^{s/2},
    \end{gather}
   where
    \begin{equation}
    \label{hat_frakC_1_2}
    \begin{split}
    \widehat{\mathfrak{C}}_1 (s; \tau) &= 2^{1-s/2} (\widehat{\mathcal{C}}_1 + \widehat{\mathcal{C}}_2  |\tau|)^{s/2}, \\
    \widehat{\mathfrak{C}}_2 (s; \tau) &=  \widehat{\mathfrak{C}}_1 (s; \tau) (1 + s/2)^{-1}  (\widehat{\mathcal{C}}_1 \widehat{\mathcal{C}}_{2}^{-1} +  |\tau|).
    \end{split}
    \end{equation}
In particular, for $0 < \varepsilon \le 1$ and $|\tau| = \varepsilon^{-\alpha}$ we have
     \begin{gather}
     \label{cos_thrm1_H^s_L2_est_largetime}
     \begin{split}
     \| \cos( \tau \widehat{\mathcal{A}}_{\varepsilon}^{1/2}) - \cos( \tau (\widehat{\mathcal{A}}^0)^{1/2}) \|_{H^s (\mathbb{R}^d) \to L_2 (\mathbb{R}^d)} \le \widehat{\mathfrak{C}}_1 (s; 1) \varepsilon^{s(1-\alpha)/2}, \\ |\tau| = \varepsilon^{-\alpha}, \; 0 < \alpha < 1, \; 0 < \varepsilon \le 1,
     \end{split}
     \\
     \label{sin_thrm1_H^s_L2_est_largetime}
     \begin{split}
     \|&\mathcal{A}_\varepsilon^{-1/2} \sin( \tau \widehat{\mathcal{A}}_\varepsilon^{1/2}) - (\widehat{\mathcal{A}}^0)^{-1/2} \sin( \tau (\widehat{\mathcal{A}}^0)^{1/2}) \|_{H^s (\mathbb{R}^d) \to L_2 (\mathbb{R}^d)}
     \\
     &\le \widehat{\mathfrak{C}}_2 (s; 1)  \varepsilon^{s(1-\alpha)/2-\alpha}, \quad  |\tau| = \varepsilon^{-\alpha}, \; 0 < \alpha < s(s+2)^{-1}, \; 0 < \varepsilon \le 1.
     \end{split}
     \end{gather}
\end{theorem}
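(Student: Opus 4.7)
The plan is to obtain both estimates from the already established bound of Theorem~\ref{cos_thrm1_L2L2} by interpolation, and then integrate in $\tau$ to pass from cosine to sine. Everything can be done in three short steps.

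First, I would observe the trivial bound
\begin{equation*}
\| \cos( \tau \widehat{\mathcal{A}}_{\varepsilon}^{1/2}) - \cos( \tau (\widehat{\mathcal{A}}^0)^{1/2}) \|_{L_2(\mathbb{R}^d) \to L_2(\mathbb{R}^d)} \le 2
\end{equation*}
(as in~\eqref{cos-cos_le_2}), which corresponds to the endpoint $s=0$ in the desired inequality~\eqref{cos_thrm1_H^s_L2_est}, and the endpoint $s=2$ estimate is precisely~\eqref{cos_est1_L2L2} from Theorem~\ref{cos_thrm1_L2L2}. Writing $(\mathcal{H}_0+I)^{-s/2}$ as an admissible complex power of the positive selfadjoint operator $(\mathcal{H}_0+I)^{-1}$ and applying the Stein--Hadamard three-lines interpolation (or, equivalently, the operator inequality $(\mathcal{H}_0+I)^{-s/2}\le\bigl((\mathcal{H}_0+I)^{-1}\bigr)^{s/2}$ combined with factorization), I obtain for $0\le s\le 2$ the intermediate estimate~\eqref{cos_interp_1}. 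Since $(\mathcal{H}_0+I)^{s/2}$ is, by~\eqref{H_L2_isometry}, an isometric isomorphism from $H^s(\mathbb{R}^d;\mathbb{C}^n)$ onto $L_2(\mathbb{R}^d;\mathbb{C}^n)$, this immediately yields~\eqref{cos_thrm1_H^s_L2_est} with the constant $\widehat{\mathfrak{C}}_1(s;\tau)=2^{1-s/2}(\widehat{\mathcal{C}}_1+\widehat{\mathcal{C}}_2|\tau|)^{s/2}$ as in~\eqref{hat_frakC_1_2}.

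Next, to handle the sine, I would exploit the elementary identity
\begin{equation*}
\widehat{\mathcal{A}}_\varepsilon^{-1/2}\sin(\tau\widehat{\mathcal{A}}_\varepsilon^{1/2})=\int_0^\tau \cos(\tilde\tau\widehat{\mathcal{A}}_\varepsilon^{1/2})\,d\tilde\tau,
\end{equation*}
and the analogous identity for $\widehat{\mathcal{A}}^0$. Subtracting and taking the $H^s\to L_2$ norm under the integral sign, I apply~\eqref{cos_thrm1_H^s_L2_est} to the integrand and compute
\begin{equation*}
\int_0^{|\tau|}(\widehat{\mathcal{C}}_1+\widehat{\mathcal{C}}_2\tilde\tau)^{s/2}\,d\tilde\tau
=\widehat{\mathcal{C}}_2^{-1}(1+s/2)^{-1}\bigl[(\widehat{\mathcal{C}}_1+\widehat{\mathcal{C}}_2|\tau|)^{1+s/2}-\widehat{\mathcal{C}}_1^{1+s/2}\bigr],
\end{equation*}
bounded above by $\widehat{\mathcal{C}}_2^{-1}(1+s/2)^{-1}(\widehat{\mathcal{C}}_1+\widehat{\mathcal{C}}_2|\tau|)^{1+s/2}$. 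Writing this bound as $(1+s/2)^{-1}(\widehat{\mathcal{C}}_1\widehat{\mathcal{C}}_2^{-1}+|\tau|)(\widehat{\mathcal{C}}_1+\widehat{\mathcal{C}}_2|\tau|)^{s/2}$ produces the constant $\widehat{\mathfrak{C}}_2(s;\tau)$ from~\eqref{hat_frakC_1_2} and delivers~\eqref{sin_thrm1_H^s_L2_est}.

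Finally, the large-time estimates~\eqref{cos_thrm1_H^s_L2_est_largetime} and~\eqref{sin_thrm1_H^s_L2_est_largetime} follow by direct substitution $|\tau|=\varepsilon^{-\alpha}$ with $0<\varepsilon\le1$: one bounds $\widehat{\mathcal{C}}_1+\widehat{\mathcal{C}}_2|\tau|\le(\widehat{\mathcal{C}}_1+\widehat{\mathcal{C}}_2)\varepsilon^{-\alpha}$, so that $\widehat{\mathfrak{C}}_1(s;\tau)\varepsilon^{s/2}\le\widehat{\mathfrak{C}}_1(s;1)\varepsilon^{s(1-\alpha)/2}$, and similarly $\widehat{\mathfrak{C}}_2(s;\tau)\varepsilon^{s/2}\le\widehat{\mathfrak{C}}_2(s;1)\varepsilon^{s(1-\alpha)/2-\alpha}$; the latter tends to zero exactly when $s(1-\alpha)/2>\alpha$, i.e., $\alpha<s/(s+2)$, which is the stated restriction. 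There is no genuine obstacle here: the whole content of the theorem is a packaging of Theorem~\ref{cos_thrm1_L2L2} via operator interpolation plus integration, and the only point requiring mild care is the bookkeeping of the constants so that the expressions~\eqref{hat_frakC_1_2} come out correctly.
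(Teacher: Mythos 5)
Your proposal is correct and follows essentially the same route as the paper: interpolation between the trivial $L_2\to L_2$ bound \eqref{cos-cos_le_2} and the $s=2$ estimate \eqref{cos_est1_L2L2} of Theorem~\ref{cos_thrm1_L2L2}, the isometry \eqref{H_L2_isometry}, the integral identity for $\widehat{\mathcal{A}}_\varepsilon^{-1/2}\sin(\tau\widehat{\mathcal{A}}_\varepsilon^{1/2})$, and direct substitution of $|\tau|=\varepsilon^{-\alpha}$. The bookkeeping of the constants \eqref{hat_frakC_1_2} and the threshold $\alpha<s(s+2)^{-1}$ all check out.
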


\subsection{Refinement of approximation
under the additional assumptions}

Similarly, from Theorem~\ref{cos_thrm_2} and~\eqref{cos-cos_and_scale_transform},
we deduce the following result.

\begin{theorem}
    \label{cos_thrm2_L2L2}
    Suppose that the assumptions of Theorem~\emph{\ref{cos_thrm1_L2L2}} are satisfied. Let $\widehat{N}(\boldsymbol{\theta})$
be the operator defined by~\emph{\eqref{N(theta)}}. Suppose that $\widehat{N}(\boldsymbol{\theta}) = 0$ for any $\boldsymbol{\theta} \in \mathbb{S}^{d-1}$. Then for $\tau \in \mathbb{R}$ и $\varepsilon > 0$ we have
    \begin{equation}
    \label{cos_est2_L2L2}
    \| (\cos( \tau \widehat{\mathcal{A}}_{\varepsilon}^{1/2}) - \cos( \tau (\widehat{\mathcal{A}}^0)^{1/2})) (\mathcal{H}_0 + I)^{-3/4} \|_{L_2(\mathbb{R}^d) \to L_2(\mathbb{R}^d)} \le (\widehat{\mathcal{C}}_3 + \widehat{\mathcal{C}}_4 |\tau|) \varepsilon.
    \end{equation}
    The constants $\widehat{\mathcal{C}}_3$ and $\widehat{\mathcal{C}}_4$ are defined in Theorem~\emph{\ref{cos_enchanced_thrm_1}}
and depend only on $r_0$, $\alpha_0$, $\alpha_1$, $\|g\|_{L_\infty}$, and $\|g^{-1}\|_{L_\infty}$.
\end{theorem}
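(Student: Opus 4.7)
The plan is to mimic the proof of Theorem~\ref{cos_thrm1_L2L2} verbatim, but with the smoothing exponent $s=3/2$ (so $s/2 = 3/4$) in place of $s=2$, and invoking Theorem~\ref{cos_thrm_2} in place of Theorem~\ref{cos_thrm_1}. The hypothesis $\widehat{N}(\boldsymbol{\theta}) = 0$ for all $\boldsymbol{\theta} \in \mathbb{S}^{d-1}$ is exactly what is required to apply Theorem~\ref{cos_thrm_2}, and this is inherited from the assumptions of the present theorem.

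Concretely, I would first write down the scaling identity, which is the analog of \eqref{cos-cos_and_scale_transform} with $s=3/2$:
\begin{equation*}
(\cos( \tau \widehat{\mathcal{A}}_\varepsilon^{1/2}) - \cos( \tau (\widehat{\mathcal{A}}^0)^{1/2})) (\mathcal{H}_0 + I)^{-3/4} = T_{\varepsilon}^* (\cos(\varepsilon^{-1} \tau \widehat{\mathcal{A}}^{1/2}) - \cos(\varepsilon^{-1} \tau (\widehat{\mathcal{A}}^0)^{1/2})) \mathcal{R} (\varepsilon)^{3/4} T_{\varepsilon}.
\end{equation*}
This identity is a direct consequence of \eqref{cos_and_scale_transform} (applied to both $\widehat{\mathcal{A}}_\varepsilon$ and $\widehat{\mathcal{A}}^0$, which have the same form of dependence on $\varepsilon$) together with \eqref{H0_resolv_and_scale_transform} raised to the power $3/4$; note that the operator of multiplication by the constant matrix $g^0$ commutes with $T_\varepsilon$, so the effective operator transforms correctly.

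Next, since $T_\varepsilon$ is unitary on $L_2(\mathbb{R}^d;\mathbb{C}^n)$, the $L_2 \to L_2$ norm of the left-hand side equals the $L_2 \to L_2$ norm of the middle factor on the right-hand side. Applying Theorem~\ref{cos_thrm_2}, which gives
\begin{equation*}
\| ( \cos (\varepsilon^{-1} \tau \widehat{\mathcal{A}}^{1/2})  - \cos (\varepsilon^{-1} \tau (\widehat{\mathcal{A}}^0)^{1/2}) )\mathcal{R}(\varepsilon)^{3/4}\|_{L_2(\mathbb{R}^d) \to L_2 (\mathbb{R}^d) }  \le (\widehat{\mathcal{C}}_3 + \widehat{\mathcal{C}}_4 |\tau|) \varepsilon,
\end{equation*}
immediately yields the desired estimate \eqref{cos_est2_L2L2}. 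The dependence of the constants on the parameters is inherited from Theorem~\ref{cos_enchanced_thrm_1} through Theorem~\ref{cos_thrm_2}.

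There is no real obstacle here: the work has already been done in Chapter~2 at the fiber level, and Section~12 packaged it into the direct-integral statement Theorem~\ref{cos_thrm_2}. The present theorem is simply the scaled $\mathbb{R}^d$-version, obtained by conjugation with the unitary $T_\varepsilon$. The only mild verification is that $(\mathcal{H}_0+I)^{-3/4}$ is the correct scaled object for $\mathcal{R}(\varepsilon)^{3/4}$, which follows by the spectral theorem applied to \eqref{H0_resolv_and_scale_transform}.
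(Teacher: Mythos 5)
Your proposal is correct and coincides with the paper's own argument: the paper deduces this theorem precisely from Theorem~\ref{cos_thrm_2} together with the scaling identity~\eqref{cos-cos_and_scale_transform} taken with $s=3/2$, using the unitarity of $T_\varepsilon$. Nothing is missing.
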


Interpolating between~~\eqref{cos-cos_le_2} and \eqref{cos_est2_L2L2},  we obtain the following statement.

\begin{theorem}
    \label{cos_sin_thrm2_H^s_L2}
   Suppose that the assumptions of Theorem~\emph{\ref{cos_thrm2_L2L2}} are satisfied. Then for $0 \le s \le 3/2$,
$\tau \in \mathbb{R}$, and $\varepsilon > 0$ we have
    \begin{gather*}
    \| \cos( \tau \widehat{\mathcal{A}}_{\varepsilon}^{1/2}) - \cos( \tau (\widehat{\mathcal{A}}^0)^{1/2}) \|_{H^s (\mathbb{R}^d) \to L_2 (\mathbb{R}^d)} \le \widehat{\mathfrak{C}}_3 (s; \tau) \varepsilon^{2s/3},
    \\
    \|\widehat{\mathcal{A}}_\varepsilon^{-1/2} \sin( \tau \widehat{\mathcal{A}}_\varepsilon^{1/2}) - (\widehat{\mathcal{A}}^0)^{-1/2} \sin( \tau (\widehat{\mathcal{A}}^0)^{1/2}) \|_{H^s (\mathbb{R}^d) \to L_2 (\mathbb{R}^d)} \le \widehat{\mathfrak{C}}_4 (s; \tau)  \varepsilon^{2s/3},
    \end{gather*}
where
    \begin{equation}
    \label{hat_frakC_3_4}
    \begin{split}
    \widehat{\mathfrak{C}}_3 (s; \tau) &= 2^{1-2s/3} (\widehat{\mathcal{C}}_3 + \widehat{\mathcal{C}}_4  |\tau|)^{2s/3}, \\
    \widehat{\mathfrak{C}}_4 (s; \tau) &=  \widehat{\mathfrak{C}}_3 (s; \tau) (1 + 2s/3)^{-1}  (\widehat{\mathcal{C}}_3 \widehat{\mathcal{C}}_{4}^{-1} +  |\tau|).
    \end{split}
    \end{equation}
    In particular, for $0 < \varepsilon \le 1$ and $|\tau| = \varepsilon^{-\alpha}$ we have
    \begin{gather*}
    \begin{split}
    \| \cos( \tau \widehat{\mathcal{A}}_{\varepsilon}^{1/2}) - \cos( \tau (\widehat{\mathcal{A}}^0)^{1/2}) \|_{H^s (\mathbb{R}^d) \to L_2 (\mathbb{R}^d)} \le \widehat{\mathfrak{C}}_3 (s; 1) \varepsilon^{2s(1-\alpha)/3}, \\ |\tau| = \varepsilon^{-\alpha}, \; 0 < \alpha < 1, \; 0 < \varepsilon \le 1,
    \end{split}
    \\
    \begin{split}
    \|&\widehat{\mathcal{A}}_\varepsilon^{-1/2} \sin( \tau \widehat{\mathcal{A}}_\varepsilon^{1/2}) - (\widehat{\mathcal{A}}^0)^{-1/2} \sin( \tau (\widehat{\mathcal{A}}^0)^{1/2}) \|_{H^s (\mathbb{R}^d) \to L_2 (\mathbb{R}^d)}
    \\
&\le \widehat{\mathfrak{C}}_4 (s; 1)  \varepsilon^{2s(1-\alpha)/3-\alpha}, \quad |\tau| = \varepsilon^{-\alpha}, \; 0 < \alpha < 2s(2s+3)^{-1}, \; 0 < \varepsilon \le 1.
    \end{split}
    \end{gather*}
\end{theorem}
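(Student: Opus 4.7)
The plan is to imitate the derivation of Theorem \ref{cos_sin_thrm1_H^s_L2} (estimates \eqref{cos_est1_H^s_to_L2} and \eqref{sin_est1_H^s_to_L2}), but starting from the refined estimate of Theorem \ref{cos_thrm2_L2L2} instead of Theorem \ref{cos_thrm1_L2L2}. The input is the operator-norm bound
\begin{equation*}
\| (\cos(\tau \widehat{\mathcal{A}}_\varepsilon^{1/2}) - \cos(\tau (\widehat{\mathcal{A}}^0)^{1/2})) (\mathcal{H}_0 + I)^{-3/4} \|_{L_2 \to L_2} \le (\widehat{\mathcal{C}}_3 + \widehat{\mathcal{C}}_4 |\tau|) \varepsilon
\end{equation*}
available from Theorem \ref{cos_thrm2_L2L2} under the hypothesis $\widehat N(\boldsymbol{\theta})=0$, together with the trivial bound $\| \cos(\tau \widehat{\mathcal{A}}_\varepsilon^{1/2}) - \cos(\tau (\widehat{\mathcal{A}}^0)^{1/2})\|_{L_2 \to L_2}\le 2$.

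First, I would interpolate these two bounds along the family of operators $(\cos(\tau \widehat{\mathcal{A}}_\varepsilon^{1/2}) - \cos(\tau (\widehat{\mathcal{A}}^0)^{1/2}))(\mathcal{H}_0+I)^{-3\theta/4}$ with $\theta \in [0,1]$, using the fact that the fractional powers $(\mathcal{H}_0+I)^{-z}$ form an analytic family and complex interpolation applies in a standard way. Setting $\theta = 2s/3$ (so that $s \in [0, 3/2]$ corresponds to $\theta \in [0,1]$) produces
\begin{equation*}
\| (\cos(\tau \widehat{\mathcal{A}}_\varepsilon^{1/2}) - \cos(\tau (\widehat{\mathcal{A}}^0)^{1/2})) (\mathcal{H}_0 + I)^{-s/2} \|_{L_2 \to L_2} \le \widehat{\mathfrak{C}}_3(s;\tau)\varepsilon^{2s/3},
\end{equation*}
with $\widehat{\mathfrak{C}}_3(s;\tau) = 2^{1-2s/3}(\widehat{\mathcal{C}}_3 + \widehat{\mathcal{C}}_4|\tau|)^{2s/3}$ as in \eqref{hat_frakC_3_4}. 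The isometry property \eqref{H_L2_isometry} of $(\mathcal{H}_0+I)^{s/2}\colon H^s \to L_2$ then immediately converts this into the desired $(H^s\to L_2)$-estimate for the cosine.

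Next, for the sine operator I would use the identity $\widehat{\mathcal{A}}_\varepsilon^{-1/2}\sin(\tau \widehat{\mathcal{A}}_\varepsilon^{1/2}) = \int_0^\tau \cos(\tilde\tau \widehat{\mathcal{A}}_\varepsilon^{1/2})\,d\tilde\tau$ and the analogous identity for $\widehat{\mathcal{A}}^0$. Applying the cosine estimate pointwise in $\tilde\tau$ and integrating,
\begin{equation*}
\int_0^{|\tau|} (\widehat{\mathcal{C}}_3 + \widehat{\mathcal{C}}_4 \tilde\tau)^{2s/3}\,d\tilde\tau \le (1+2s/3)^{-1}\widehat{\mathcal{C}}_4^{-1}(\widehat{\mathcal{C}}_3 + \widehat{\mathcal{C}}_4|\tau|)^{1+2s/3},
\end{equation*}
which yields exactly the constant $\widehat{\mathfrak{C}}_4(s;\tau) = \widehat{\mathfrak{C}}_3(s;\tau)(1+2s/3)^{-1}(\widehat{\mathcal{C}}_3\widehat{\mathcal{C}}_4^{-1}+|\tau|)$. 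Finally, the large-time regime $|\tau|=\varepsilon^{-\alpha}$ follows by substitution: for $0<\varepsilon\le 1$ we bound $\widehat{\mathcal{C}}_3 + \widehat{\mathcal{C}}_4 \varepsilon^{-\alpha} \le (\widehat{\mathcal{C}}_3+\widehat{\mathcal{C}}_4)\varepsilon^{-\alpha}$, giving a factor $\varepsilon^{-2s\alpha/3}$ in $\widehat{\mathfrak{C}}_3$ and a factor $\varepsilon^{-\alpha(1+2s/3)}$ in $\widehat{\mathfrak{C}}_4$; the condition $\alpha < 2s(2s+3)^{-1}$ for the sine is precisely what makes $2s(1-\alpha)/3 - \alpha > 0$.

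There is no real obstacle here: the proof is a straightforward repetition of the interpolation/integration argument given for Theorem \ref{cos_sin_thrm1_H^s_L2}, with $3/4$ replacing $1$ as the exponent of the smoothing resolvent, and $2s/3$ replacing $s/2$ everywhere. The only point where some care is needed is keeping track of the constants so that the final expressions match \eqref{hat_frakC_3_4} exactly and verifying that the range of admissible $s$ for interpolation is $[0,3/2]$ (coming from $\theta \in [0,1]$ after the rescaling $s = 3\theta/2$).
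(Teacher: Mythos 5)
Your proposal is correct and follows essentially the same route as the paper: the paper obtains this theorem by interpolating between the trivial bound \eqref{cos-cos_le_2} and the refined estimate \eqref{cos_est2_L2L2} of Theorem \ref{cos_thrm2_L2L2}, exactly as in the detailed derivation of Theorem \ref{cos_sin_thrm1_H^s_L2}, with the smoothing exponent $3/4$ in place of $1$. Your bookkeeping of the constants $\widehat{\mathfrak{C}}_3(s;\tau)$, $\widehat{\mathfrak{C}}_4(s;\tau)$ and of the large-time exponents matches the paper's.
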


Theorem~\ref{cos_sin_thrm2_H^s_L2} and Proposition~\ref{N=0_proposit}  imply the following statement.

\begin{corollary}
   Suppose that at least one of the following conditions is fulfilled:

$1^{\circ}$. The operator $\widehat{\mathcal{A}}_\varepsilon$ has the form
$\widehat{\mathcal{A}}_\varepsilon = \mathbf{D}^* g^\varepsilon(\mathbf{x}) \mathbf{D}$, where $g(\mathbf{x})$~is a
symmetric matrix with real entries.

$2^{\circ}$. Relations~\emph{\eqref{g0=overline_g_relat}} are satisfied, i.~e., $g^0 = \overline{g}$.

$3^{\circ}$. Relations~\emph{\eqref{g0=underline_g_relat}} are satisfied, i.~e., $g^0 = \underline{g}$.
\emph{(}In particular, this is true if $m = n$.\emph{)}

\noindent
    Then the estimates of Theorem~\emph{\ref{cos_sin_thrm2_H^s_L2}} hold.
\end{corollary}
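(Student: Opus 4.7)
The plan is essentially to recognize that this Corollary is an immediate combination of two earlier results, so the proof is a matter of invoking them in order rather than performing any new computation.

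First, I would observe that each of conditions $1^\circ$, $2^\circ$, $3^\circ$ is precisely one of the hypotheses listed in Proposition~\ref{N=0_proposit}. That proposition guarantees that under any of these three conditions we have $\widehat{N}(\boldsymbol{\theta})=0$ for every $\boldsymbol{\theta}\in\mathbb{S}^{d-1}$, where $\widehat{N}(\boldsymbol{\theta})$ is the operator defined by~\eqref{N(theta)}. Thus, in all three cases, the hypothesis of Theorem~\ref{cos_thrm2_L2L2} is satisfied by the operator $\widehat{\mathcal{A}}_\varepsilon$.

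Next, Theorem~\ref{cos_thrm2_L2L2} delivers estimate~\eqref{cos_est2_L2L2}, namely the $(L_2\to L_2)$-bound with smoothing factor $(\mathcal{H}_0+I)^{-3/4}$ and constant $(\widehat{\mathcal{C}}_3+\widehat{\mathcal{C}}_4|\tau|)\varepsilon$; in the derivation of Theorem~\ref{cos_sin_thrm2_H^s_L2} this is exactly the ingredient that is interpolated against the trivial bound~\eqref{cos-cos_le_2} and then integrated in $\tau$ to yield the sine estimate. Since the constants $\widehat{\mathfrak{C}}_3(s;\tau)$ and $\widehat{\mathfrak{C}}_4(s;\tau)$ in~\eqref{hat_frakC_3_4} depend only on $\widehat{\mathcal{C}}_3$, $\widehat{\mathcal{C}}_4$, $s$, and $\tau$, and since those in turn depend only on $r_0$, $\alpha_0$, $\alpha_1$, $\|g\|_{L_\infty}$, $\|g^{-1}\|_{L_\infty}$, no new control is needed.

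Consequently, under any of $1^\circ$, $2^\circ$, $3^\circ$, the assumptions of Theorem~\ref{cos_sin_thrm2_H^s_L2} hold, and the estimates stated there transfer verbatim. There is no main obstacle: the argument is a two-line chain (Proposition~\ref{N=0_proposit} $\Rightarrow$ $\widehat{N}(\boldsymbol{\theta})\equiv 0$ $\Rightarrow$ Theorem~\ref{cos_sin_thrm2_H^s_L2}), and the only thing worth stating explicitly is the matching of each numbered condition of the Corollary with the corresponding numbered condition of Proposition~\ref{N=0_proposit}.
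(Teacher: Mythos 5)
Your argument is exactly the one the paper intends: Proposition~\ref{N=0_proposit} gives $\widehat{N}(\boldsymbol{\theta})=0$ for all $\boldsymbol{\theta}\in\mathbb{S}^{d-1}$ under any of the three conditions, which is precisely the hypothesis of Theorem~\ref{cos_thrm2_L2L2} and hence of Theorem~\ref{cos_sin_thrm2_H^s_L2}. The proposal is correct and matches the paper's own (one-line) derivation.
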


Finally, Theorem~\ref{cos_thrm_3} together with~\eqref{cos-cos_and_scale_transform}  lead to the following result.

\begin{theorem}
    \label{cos_thrm3_L2L2}
  Suppose that the assumptions of Theorem~\emph{\ref{cos_thrm1_L2L2}} are satisfied. Suppose that Condition~\emph{\ref{cond9}}
\emph{(}or more restrictive Condition~\emph{\ref{cond99})} is satisfied. Then for $\tau \in \mathbb{R}$ and $\varepsilon > 0$ we have
    \begin{equation}
    \label{cos_est3_L2L2}
    \| (\cos( \tau \widehat{\mathcal{A}}_{\varepsilon}^{1/2}) - \cos( \tau (\widehat{\mathcal{A}}^0)^{1/2})) (\mathcal{H}_0 + I)^{-3/4} \|_{L_2(\mathbb{R}^d) \to L_2(\mathbb{R}^d)} \le (\widehat{\mathcal{C}}_5 + \widehat{\mathcal{C}}_6 |\tau|) \varepsilon.
    \end{equation}
    The constants $\widehat{\mathcal{C}}_5$ and $\widehat{\mathcal{C}}_6$  are defined in Theorem~\emph{\ref{cos_enchanced_thrm_2}}
and depend only on $r_0$, $\alpha_0$, $\alpha_1$,  $\|g\|_{L_\infty}$, $\|g^{-1}\|_{L_\infty}$, and also on the number $\widehat{c}^{\circ}$
given by~\emph{\eqref{hatc^circ}}.
\end{theorem}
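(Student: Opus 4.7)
The plan is to reduce the statement to Theorem~\ref{cos_thrm_3} via the scaling transformation $T_\varepsilon$, exactly mirroring the pattern used to derive Theorems~\ref{cos_thrm1_L2L2} and~\ref{cos_thrm2_L2L2} from Theorems~\ref{cos_thrm_1} and~\ref{cos_thrm_2}. The key observation is that all the \emph{spectral-germ} hypotheses (Condition~\ref{cond9} or~\ref{cond99}) are invariant under the rescaling $\mathcal{A}_\varepsilon = \varepsilon^{-2}T_\varepsilon^*\mathcal{A} T_\varepsilon$: they are formulated in terms of $b(\boldsymbol{\theta})$, $g(\mathbf{x})$, and the germ $\widehat{S}(\boldsymbol{\theta})$ of the unscaled operator~$\widehat{\mathcal{A}}$, so they are already built into the hypothesis.

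First I would invoke the scaling identity~\eqref{cos-cos_and_scale_transform} with $s=3/2$:
\begin{equation*}
(\cos(\tau \widehat{\mathcal{A}}_\varepsilon^{1/2}) - \cos(\tau (\widehat{\mathcal{A}}^0)^{1/2}))(\mathcal{H}_0 + I)^{-3/4} = T_\varepsilon^* (\cos(\varepsilon^{-1}\tau \widehat{\mathcal{A}}^{1/2}) - \cos(\varepsilon^{-1}\tau (\widehat{\mathcal{A}}^0)^{1/2}))\mathcal{R}(\varepsilon)^{3/4} T_\varepsilon.
\end{equation*}
Since $T_\varepsilon$ is a unitary operator on $L_2(\mathbb{R}^d;\mathbb{C}^n)$, the $(L_2 \to L_2)$-norm of the left-hand side coincides with the $(L_2 \to L_2)$-norm of the middle factor $(\cos(\varepsilon^{-1}\tau \widehat{\mathcal{A}}^{1/2}) - \cos(\varepsilon^{-1}\tau (\widehat{\mathcal{A}}^0)^{1/2}))\mathcal{R}(\varepsilon)^{3/4}$.

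Next, since Condition~\ref{cond9} (or~\ref{cond99}) is assumed, Theorem~\ref{cos_thrm_3} applies directly and yields the estimate
\begin{equation*}
\|(\cos(\varepsilon^{-1}\tau \widehat{\mathcal{A}}^{1/2}) - \cos(\varepsilon^{-1}\tau (\widehat{\mathcal{A}}^0)^{1/2}))\mathcal{R}(\varepsilon)^{3/4}\|_{L_2(\mathbb{R}^d) \to L_2(\mathbb{R}^d)} \le (\widehat{\mathcal{C}}_5 + \widehat{\mathcal{C}}_6 |\tau|)\varepsilon,
\end{equation*}
with the same constants $\widehat{\mathcal{C}}_5,\widehat{\mathcal{C}}_6$ as in the theorem statement. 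Combining the two displays gives~\eqref{cos_est3_L2L2}.

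In this final step there is essentially no obstacle: all the analytical content—the refined threshold approximations, the cluster decomposition of the germ, the elimination of the $N_0$-contribution, and the uniform control in $\boldsymbol{\theta}$ guaranteed by Condition~\ref{cond9}—has already been absorbed into Theorem~\ref{cos_thrm_3} (and ultimately into the abstract Theorem~\ref{abstr_cos_enchanced_thrm_2}). The scaling-plus-unitarity passage from the \textquotedblleft intrinsic\textquotedblright{} bound for $\widehat{\mathcal{A}}$ to the homogenization bound for $\widehat{\mathcal{A}}_\varepsilon$ is the easy bookkeeping step. The only thing worth double-checking is the match between the exponent $3/4$ on $\mathcal{R}(\varepsilon)$ and the exponent $3/4$ on $(\mathcal{H}_0+I)^{-1}$, which follows immediately from~\eqref{H0_resolv_and_scale_transform} raised to the power $3/4$.
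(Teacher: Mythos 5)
Your proposal is correct and coincides with the paper's own argument: the paper derives Theorem~\ref{cos_thrm3_L2L2} by combining Theorem~\ref{cos_thrm_3} with the scaling identity~\eqref{cos-cos_and_scale_transform} (taken with $s=3/2$) and the unitarity of $T_\varepsilon$. Nothing is missing.
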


Interpolating between~\eqref{cos-cos_le_2} and~\eqref{cos_est3_L2L2}, we obtain the following result.

\begin{theorem}
    \label{cos_sin_thrm3_H^s_L2}
  Suppose that the assumptions of Theorem~\emph{\ref{cos_thrm3_L2L2}} are satisfied. Then for $0 \le s \le 3/2$, $\tau \in \mathbb{R}$,
and $\varepsilon > 0$ we have
    \begin{gather*}
    \| \cos( \tau \widehat{\mathcal{A}}_{\varepsilon}^{1/2}) - \cos( \tau (\widehat{\mathcal{A}}^0)^{1/2}) \|_{H^s (\mathbb{R}^d) \to L_2 (\mathbb{R}^d)} \le \widehat{\mathfrak{C}}_5 (s; \tau) \varepsilon^{2s/3},
    \\
    \|\widehat{\mathcal{A}}_\varepsilon^{-1/2} \sin( \tau \widehat{\mathcal{A}}_\varepsilon^{1/2}) - (\widehat{\mathcal{A}}^0)^{-1/2} \sin( \tau (\widehat{\mathcal{A}}^0)^{1/2}) \|_{H^s (\mathbb{R}^d) \to L_2 (\mathbb{R}^d)} \le \widehat{\mathfrak{C}}_6 (s; \tau)  \varepsilon^{2s/3},
    \end{gather*}
where
    \begin{equation}
    \label{hat_frakC_5_6}
    \begin{split}
    \widehat{\mathfrak{C}}_5 (s; \tau) &= 2^{1-2s/3} (\widehat{\mathcal{C}}_5 + \widehat{\mathcal{C}}_6  |\tau|)^{2s/3}, \\
    \widehat{\mathfrak{C}}_6 (s; \tau) &=  \widehat{\mathfrak{C}}_5 (s; \tau) (1 + 2s/3)^{-1}  (\widehat{\mathcal{C}}_5 \widehat{\mathcal{C}}_{6}^{-1} +  |\tau|).
    \end{split}
    \end{equation}
    In particular, for $0 < \varepsilon \le 1$ and $|\tau| = \varepsilon^{-\alpha}$ we have
    \begin{gather*}
    \begin{split}
    \| \cos( \tau \widehat{\mathcal{A}}_{\varepsilon}^{1/2}) - \cos( \tau (\widehat{\mathcal{A}}^0)^{1/2}) \|_{H^s (\mathbb{R}^d) \to L_2 (\mathbb{R}^d)} \le \widehat{\mathfrak{C}}_5 (s; 1) \varepsilon^{2s(1-\alpha)/3}, \\ |\tau| = \varepsilon^{-\alpha}, \; 0 < \alpha < 1, \; 0 < \varepsilon \le 1,
    \end{split}
    \\
    \begin{split}
    \|&\widehat{\mathcal{A}}_\varepsilon^{-1/2} \sin( \tau \widehat{\mathcal{A}}_\varepsilon^{1/2}) - (\widehat{\mathcal{A}}^0)^{-1/2} \sin( \tau (\widehat{\mathcal{A}}^0)^{1/2}) \|_{H^s (\mathbb{R}^d) \to L_2 (\mathbb{R}^d)}
     \\
     &\le \widehat{\mathfrak{C}}_6 (s; 1)  \varepsilon^{2s(1-\alpha)/3-\alpha}, \quad
     |\tau| = \varepsilon^{-\alpha}, \; 0 < \alpha < 2s(2s+3)^{-1}, \; 0 < \varepsilon \le 1.
    \end{split}
    \end{gather*}
\end{theorem}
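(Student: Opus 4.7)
The plan is to deduce this theorem from Theorem~\ref{cos_thrm3_L2L2} by operator interpolation and then integration, exactly mirroring the passage from Theorem~\ref{cos_thrm1_L2L2} to Theorem~\ref{cos_sin_thrm1_H^s_L2} and from Theorem~\ref{cos_thrm2_L2L2} to Theorem~\ref{cos_sin_thrm2_H^s_L2}. The key input is the $(L_2 \to L_2)$ estimate \eqref{cos_est3_L2L2} with the smoothing factor $(\mathcal{H}_0+I)^{-3/4}$, which is available precisely because Condition~\ref{cond9} (or \ref{cond99}) is assumed.

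First, I would record the trivial bound
\begin{equation*}
\| \cos( \tau \widehat{\mathcal{A}}_{\varepsilon}^{1/2}) - \cos( \tau (\widehat{\mathcal{A}}^0)^{1/2}) \|_{L_2(\mathbb{R}^d) \to L_2(\mathbb{R}^d)} \le 2, \quad \tau \in \mathbb{R},\ \varepsilon > 0,
\end{equation*}
coming from the fact that each cosine is a unitary-type operator bounded by $1$. Denoting $T(\tau,\varepsilon) := \cos(\tau \widehat{\mathcal{A}}_{\varepsilon}^{1/2}) - \cos(\tau (\widehat{\mathcal{A}}^0)^{1/2})$, I have the two endpoint bounds $\|T\|_{L_2\to L_2} \le 2$ and $\|T(\mathcal{H}_0+I)^{-3/4}\|_{L_2\to L_2} \le (\widehat{\mathcal{C}}_5 + \widehat{\mathcal{C}}_6|\tau|)\varepsilon$. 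Interpolating the family $T(\mathcal{H}_0+I)^{-z}$, $\Re z \in [0,3/4]$, by the three-line lemma (equivalently, spectral-calculus interpolation applied to the self-adjoint operator $\mathcal{H}_0$) yields, for $\theta \in [0,1]$,
\begin{equation*}
\| T (\mathcal{H}_0+I)^{-3\theta/4} \|_{L_2\to L_2} \le 2^{1-\theta} \bigl( (\widehat{\mathcal{C}}_5 + \widehat{\mathcal{C}}_6|\tau|)\varepsilon \bigr)^{\theta}.
\end{equation*}
Setting $\theta = 2s/3$ (so that $3\theta/4 = s/2$), this gives the bound $\widehat{\mathfrak{C}}_5(s;\tau)\varepsilon^{2s/3}$ with $\widehat{\mathfrak{C}}_5$ as in~\eqref{hat_frakC_5_6}. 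Passing to the $H^s \to L_2$ norm then uses the isometric isomorphism $(\mathcal{H}_0+I)^{s/2}\colon H^s(\mathbb{R}^d;\mathbb{C}^n) \to L_2(\mathbb{R}^d;\mathbb{C}^n)$ recorded in~\eqref{H_L2_isometry}, which converts the smoothing factor into the correct Sobolev norm on the domain side. This proves the first inequality.

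For the sine estimate I would use the identity
\begin{equation*}
\widehat{\mathcal{A}}_\varepsilon^{-1/2}\sin(\tau \widehat{\mathcal{A}}_\varepsilon^{1/2}) = \int_0^{\tau}\cos(\tilde\tau \widehat{\mathcal{A}}_\varepsilon^{1/2})\,d\tilde\tau
\end{equation*}
and the analogous identity for $\widehat{\mathcal{A}}^0$, both valid on the whole space (in particular on the range of the smoothing factor, which is what we need after applying $(\mathcal H_0+I)^{-s/2}$). Taking the difference inside the integral and applying the cosine bound just proved, for each $\tilde\tau$ the integrand is bounded in $H^s \to L_2$ by $\widehat{\mathfrak{C}}_5(s;\tilde\tau)\varepsilon^{2s/3}$. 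Integrating the explicit expression
\begin{equation*}
\int_0^{|\tau|} 2^{1-2s/3}(\widehat{\mathcal{C}}_5 + \widehat{\mathcal{C}}_6\tilde\tau)^{2s/3}\,d\tilde\tau = \frac{2^{1-2s/3}}{(1+2s/3)\widehat{\mathcal{C}}_6}\bigl[(\widehat{\mathcal{C}}_5+\widehat{\mathcal{C}}_6|\tau|)^{1+2s/3}-\widehat{\mathcal{C}}_5^{1+2s/3}\bigr]
\end{equation*}
and bounding the bracket by $(\widehat{\mathcal{C}}_5+\widehat{\mathcal{C}}_6|\tau|)^{2s/3}(\widehat{\mathcal{C}}_5+\widehat{\mathcal{C}}_6|\tau|)$ yields the stated bound $\widehat{\mathfrak{C}}_6(s;\tau)\varepsilon^{2s/3}$ with $\widehat{\mathfrak{C}}_6$ as in~\eqref{hat_frakC_5_6}.

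Finally, the large-time estimates are obtained by specializing $|\tau|=\varepsilon^{-\alpha}$ with $0<\varepsilon\le 1$. For the cosine, $\widehat{\mathfrak{C}}_5(s;\tau) = O(|\tau|^{2s/3})$ so the product with $\varepsilon^{2s/3}$ becomes $O(\varepsilon^{2s(1-\alpha)/3})$, which tends to $0$ as long as $\alpha<1$. For the sine, $\widehat{\mathfrak{C}}_6(s;\tau) = O(|\tau|^{1+2s/3})$, giving $O(\varepsilon^{2s(1-\alpha)/3-\alpha})$, which tends to $0$ precisely when $\alpha<2s(2s+3)^{-1}$. The proof is essentially mechanical once \eqref{cos_est3_L2L2} is in hand; there is no serious obstacle, since the real work—establishing the sharpened $(L_2\to L_2)$ estimate under Condition~\ref{cond9}—has already been done in Theorem~\ref{cos_thrm3_L2L2}. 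The only point requiring minor care is verifying that the constants come out exactly as in~\eqref{hat_frakC_5_6}; this is a direct computation from the interpolation and integration steps.
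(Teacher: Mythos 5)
Your proposal is correct and follows the same route as the paper: the paper obtains this theorem precisely by interpolating between the trivial bound~\eqref{cos-cos_le_2} and the sharpened estimate~\eqref{cos_est3_L2L2}, converting to the $H^s\to L_2$ norm via~\eqref{H_L2_isometry}, and integrating the cosine bound to handle the sine (exactly as in the passage from Theorem~\ref{cos_thrm1_L2L2} to Theorem~\ref{cos_sin_thrm1_H^s_L2}). Your constant computations reproduce~\eqref{hat_frakC_5_6} correctly.
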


Combining Theorem~\ref{cos_sin_thrm3_H^s_L2} and Corollary~\ref{cos_enchanced_2_coroll}, we arrive at the following corollary.

\begin{corollary}
  Suppose that the matrices $b (\boldsymbol{\theta})$ and $g (\mathbf{x})$~have real entries.
 Suppose that the spectrum of the germ $\widehat{S}(\boldsymbol{\theta})$ is simple for any $\boldsymbol{\theta} \in \mathbb{S}^{d-1}$.
Then the estimates of Theorem~\emph{\ref{cos_sin_thrm3_H^s_L2}} hold.
\end{corollary}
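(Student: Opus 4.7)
The plan is straightforward: under the hypotheses of the Corollary, I must verify that the assumptions of Theorem~\ref{cos_sin_thrm3_H^s_L2} are satisfied, after which the conclusion follows by direct application. By inspection, Theorem~\ref{cos_sin_thrm3_H^s_L2} requires the hypotheses of Theorem~\ref{cos_thrm3_L2L2}, which in turn requires Condition~\ref{cond9} (or the more restrictive Condition~\ref{cond99}).

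The first step is to confirm Part $2^\circ$ of Condition~\ref{cond99}. The simplicity of the spectrum of $\widehat{S}(\boldsymbol{\theta})$ for every $\boldsymbol{\theta} \in \mathbb{S}^{d-1}$ immediately gives $p(\boldsymbol{\theta}) \equiv n$ with all multiplicities equal to $1$. In particular, the number $p$ does not depend on $\boldsymbol{\theta}$.

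The second step is to verify Part $1^\circ$ of Condition~\ref{cond99}, namely $\widehat{N}_0(\boldsymbol{\theta}) = 0$ for every $\boldsymbol{\theta} \in \mathbb{S}^{d-1}$. This is precisely the content of Corollary~\ref{S_spec_simple_coroll}: in the real case (real entries of $b(\boldsymbol{\theta})$ and $g(\mathbf{x})$), simplicity of the germ spectrum forces each embrio $\widehat{\omega}_j(\boldsymbol{\theta})$ to be determined uniquely up to a phase, so one may select it to be real; Proposition~\ref{N_0=0_proposit} then yields $\widehat{\mu}_l(\boldsymbol{\theta}) = 0$ for all $l$, and by Remark~\ref{abstr_N_remark} this is equivalent to the vanishing of the diagonal operator $\widehat{N}_0(\boldsymbol{\theta})$.

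With Condition~\ref{cond99} confirmed, Theorem~\ref{cos_sin_thrm3_H^s_L2} applies and delivers exactly the desired estimates for $\cos(\tau\widehat{\mathcal{A}}_\varepsilon^{1/2})$ and $\widehat{\mathcal{A}}_\varepsilon^{-1/2}\sin(\tau\widehat{\mathcal{A}}_\varepsilon^{1/2})$ in the $(H^s \to L_2)$-norm for $0 \le s \le 3/2$, including the long-time consequences $|\tau| = \varepsilon^{-\alpha}$. There is essentially no obstacle to overcome: the entire substantive content has been established upstream, in the abstract machinery of Theorem~\ref{abstr_cos_enchanced_thrm_2}, in the passage to periodic DOs through the direct integral decomposition, and in Corollary~\ref{S_spec_simple_coroll}. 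The present argument is purely a hypothesis-checking step.
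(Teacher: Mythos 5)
Your proposal is correct and follows essentially the same route as the paper: the paper obtains this corollary by combining Theorem~\ref{cos_sin_thrm3_H^s_L2} with Corollary~\ref{cos_enchanced_2_coroll}, whose content is exactly your verification that simplicity of the germ's spectrum gives part $2^\circ$ of Condition~\ref{cond99} and that, via Corollary~\ref{S_spec_simple_coroll}, the real case with simple spectrum gives $\widehat{N}_0(\boldsymbol{\theta})=0$, i.e.\ part $1^\circ$.
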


\subsection{ The sharpness of the result}

Applying Theorem~\ref{s<2_cos_thrm},  we confirm the sharpness of the result of Theorem~\ref{cos_thrm1_L2L2}
in the general case.

\begin{theorem}
    \label{s<2_cos_thrm_Rd}
 Suppose that the assumptions of Theorem~\emph{\ref{cos_thrm1_L2L2}} are satisfied. Let $\widehat{N}_0 (\boldsymbol{\theta})$
 be the operator defined by~\emph{\eqref{N0_invar_repr}}. Suppose that $\widehat{N}_0 (\boldsymbol{\theta}_0) \ne 0$ for some $\boldsymbol{\theta}_0 \in \mathbb{S}^{d-1}$. Let $0 \ne \tau \in \mathbb{R}$ and $0 \le s < 2$.
Then there does not exist a constant $\mathcal{C}(\tau) > 0$ such that estimate
    \begin{equation}
    \label{s<2_cos_est1}
    \| ( \cos (\tau \widehat{\mathcal{A}}_\varepsilon^{1/2})  - \cos (\tau (\widehat{\mathcal{A}}^0)^{1/2}) )(\mathcal{H}_0  + I)^{-s/2}\|_{L_2(\mathbb{R}^d) \to L_2 (\mathbb{R}^d) }  \le \mathcal{C}(\tau) \varepsilon
    \end{equation}
holds for all sufficiently small $\varepsilon > 0$.
\end{theorem}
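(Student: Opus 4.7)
My plan is to prove Theorem~\ref{s<2_cos_thrm_Rd} by contradiction, reducing it to the already established Theorem~\ref{s<2_cos_thrm} via the scaling transformation. The main tool is identity~\eqref{cos-cos_and_scale_transform}, together with unitarity of $T_\varepsilon$ in $L_2(\mathbb{R}^d;\mathbb{C}^n)$. This gives the key exact identity
\begin{equation*}
\| (\cos (\tau \widehat{\mathcal{A}}_\varepsilon^{1/2}) - \cos (\tau (\widehat{\mathcal{A}}^0)^{1/2}))(\mathcal{H}_0 + I)^{-s/2} \|_{L_2 \to L_2}
= \| (\cos (\varepsilon^{-1}\tau \widehat{\mathcal{A}}^{1/2}) - \cos (\varepsilon^{-1}\tau (\widehat{\mathcal{A}}^0)^{1/2})) \mathcal{R}(\varepsilon)^{s/2} \|_{L_2 \to L_2}
\end{equation*}
valid for any $\tau \in \mathbb{R}$, $\varepsilon > 0$, and $s \ge 0$, because conjugation by the unitary $T_\varepsilon$ preserves the $L_2$-operator norm.

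With this identity in hand the rest is immediate. Fix $\tau \ne 0$ and $0 \le s < 2$, and suppose for contradiction that there exists $\mathcal{C}(\tau)>0$ such that estimate~\eqref{s<2_cos_est1} holds for all sufficiently small $\varepsilon>0$. Applying the above identity with the same $\tau$ converts~\eqref{s<2_cos_est1} into exactly the inequality that Theorem~\ref{s<2_cos_thrm} rules out: namely that there exists $\mathcal{C}(\tau)>0$ with
\begin{equation*}
\| ( \cos (\varepsilon^{-1} \tau \widehat{\mathcal{A}}^{1/2})  - \cos (\varepsilon^{-1} \tau (\widehat{\mathcal{A}}^0)^{1/2}) )\mathcal{R}(\varepsilon)^{s/2}\|_{L_2(\mathbb{R}^d) \to L_2 (\mathbb{R}^d) }  \le \mathcal{C}(\tau) \varepsilon
\end{equation*}
for all small $\varepsilon>0$. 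Since the hypothesis $\widehat{N}_0(\boldsymbol{\theta}_0)\ne 0$ is the same in both theorems, this contradicts Theorem~\ref{s<2_cos_thrm}, completing the proof.

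There is essentially no hard step: the whole argument is a one-line transport of the sharpness statement from the \textquotedblleft microscopic\textquotedblright{} picture (operator $\widehat{\mathcal{A}}$ with smoothing $\mathcal{R}(\varepsilon)^{s/2}$) to the \textquotedblleft macroscopic\textquotedblright{} picture (operator $\widehat{\mathcal{A}}_\varepsilon$ with smoothing $(\mathcal{H}_0+I)^{-s/2}$) along the scaling isometry. All of the genuine work was done earlier in Theorem~\ref{s<2_cos_thrm}, which in turn reduced (via the direct integral expansion and the Lipschitz continuity lemma~\ref{Lipschitz_lemma}) to the abstract Theorem~\ref{abstr_s<2_general_thrm} where the condition $\widehat{N}_0(\boldsymbol{\theta}_0)\ne 0$ is exploited through the explicit sequence $\varepsilon_k\to 0$ constructed there. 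The only point requiring any minor care in the write-up is to note that both theorems quantify over the \emph{same} fixed $\tau$, so the contradiction is indeed with the same statement for the same parameter value; no interpolation or passage through intermediate $s$ is needed.
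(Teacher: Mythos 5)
Your proof is correct and follows exactly the paper's own argument: a contradiction via the scaling identity~\eqref{cos-cos_and_scale_transform} and the unitarity of $T_\varepsilon$, reducing the statement to Theorem~\ref{s<2_cos_thrm}. No issues.
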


\begin{proof} We prove by contradiction. Let us fix $\tau \ne 0$. Suppose that for some $0 \le s < 2$ there exists a constant
   $\mathcal{C}(\tau) > 0$ such that estimate~\eqref{s<2_cos_est1}  holds for all sufficiently small $\varepsilon > 0$.
   Then, by~\eqref{cos-cos_and_scale_transform}, for sufficiently small $\varepsilon$ estimate~\eqref{s<2_cos_est} is also valid.
   But this contradicts the statement of Theorem~\ref{s<2_cos_thrm}.
    \end{proof}

\subsection{Approximation of the sandwiched operators $\cos(\tau \mathcal{A}_\varepsilon^{1/2})$
and $\mathcal{A}_\varepsilon^{-1/2} \sin(\tau \mathcal{A}_\varepsilon^{1/2})$}
Now we proceed to the study of the operator $\mathcal{A}_\varepsilon$ (see~\eqref{A_eps}).
Let $\mathcal{A}^0$~be defined by~\eqref{A0}. Using relations of the form~\eqref{cos_and_scale_transform}
for the operators $\mathcal{A}_\varepsilon$ and $\mathcal{A}^0$, and taking~\eqref{H0_resolv_and_scale_transform} and \eqref{mult_op_and_scale_transform} into account, we obtain
\begin{multline}
\label{sndw_cos-cos_and_scale_transform}
(f^\varepsilon \cos( \tau \mathcal{A}_\varepsilon^{1/2}) (f^\varepsilon)^{-1} - f_0 \cos( \tau (\mathcal{A}^0)^{1/2}) f_0^{-1} ) (\mathcal{H}_0 + I)^{-s/2} = \\ = T_{\varepsilon}^* (f \cos(\varepsilon^{-1} \tau \mathcal{A}^{1/2}) f^{-1} - f_0 \cos(\varepsilon^{-1} \tau (\mathcal{A}^0)^{1/2}) f_0^{-1}) \mathcal{R} (\varepsilon)^{s/2} T_{\varepsilon}, \quad \varepsilon > 0.
\end{multline}

Since $T_{\varepsilon}$ is unitary, combining this with Theorem~\ref{sndw_cos_thrm_1},
 we arrive at the following result (which has been proved before in~\cite[Theorem~13.3]{BSu5}).

\begin{theorem}
    \label{sndw_cos_thrm1_L2L2}
    Let $\mathcal{A}_{\varepsilon}$~and~$\mathcal{A}^0$ be the operators defined by~\emph{\eqref{A_eps}} and~\emph{\eqref{A0}}, respectively.
    Then for $\tau \in \mathbb{R}$ and $\varepsilon > 0$ we have
    \begin{multline}
    \label{sndw_cos_est1_L2L2}
    \| ( f^\varepsilon \cos( \tau \mathcal{A}_\varepsilon^{1/2}) (f^\varepsilon)^{-1} - f_0 \cos( \tau (\mathcal{A}^0)^{1/2}) f_0^{-1} ) (\mathcal{H}_0 + I)^{-1} \|_{L_2(\mathbb{R}^d) \to L_2(\mathbb{R}^d)} \\
    \le (\mathcal{C}_1 + \mathcal{C}_2 |\tau|) \varepsilon.
    \end{multline}
    The constants $\mathcal{C}_1$ and $\mathcal{C}_2$ are defined in Theorem~\emph{\ref{sndw_cos_general_thrm}}
    and depend only on $r_0$, $\alpha_0$, $\alpha_1$, $\|g\|_{L_\infty}$, $\|g^{-1}\|_{L_\infty}$, $\| f \|_{L_\infty}$, and $\| f^{-1} \|_{L_\infty}$.
\end{theorem}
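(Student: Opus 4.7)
The plan is to reduce Theorem~\ref{sndw_cos_thrm1_L2L2} directly to Theorem~\ref{sndw_cos_thrm_1} by means of the scaling argument already encoded in identity~\eqref{sndw_cos-cos_and_scale_transform}. Specifically, I will specialize that identity to $s=2$, so that the smoothing factor $(\mathcal{H}_0+I)^{-1}$ on the left conjugates (via $T_\varepsilon$) exactly to the operator $\mathcal{R}(\varepsilon) = \varepsilon^{2}(\mathcal{H}_0+\varepsilon^{2}I)^{-1}$ on the right, thanks to~\eqref{H0_resolv_and_scale_transform}. Simultaneously, by~\eqref{mult_op_and_scale_transform} the multiplications by the rapidly oscillating $f^\varepsilon$ and $(f^\varepsilon)^{-1}$ transform into multiplications by $f$ and $f^{-1}$, while $\cos(\tau\mathcal{A}_\varepsilon^{1/2})$ becomes $\cos(\varepsilon^{-1}\tau\mathcal{A}^{1/2})$ (with the analogous identity for $\mathcal{A}^0$).

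Once~\eqref{sndw_cos-cos_and_scale_transform} with $s=2$ is in place, the proof is just two lines. Since $T_\varepsilon$ is a unitary operator on $L_2(\mathbb{R}^d;\mathbb{C}^n)$, taking $L_2\to L_2$ operator norms gives
\begin{equation*}
\|(f^\varepsilon \cos(\tau\mathcal{A}_\varepsilon^{1/2})(f^\varepsilon)^{-1} - f_0 \cos(\tau(\mathcal{A}^0)^{1/2})f_0^{-1})(\mathcal{H}_0+I)^{-1}\|_{L_2\to L_2}
\end{equation*}
\begin{equation*}
= \|(f\cos(\varepsilon^{-1}\tau\mathcal{A}^{1/2})f^{-1} - f_0\cos(\varepsilon^{-1}\tau(\mathcal{A}^0)^{1/2})f_0^{-1})\mathcal{R}(\varepsilon)\|_{L_2\to L_2}.
\end{equation*}
I then invoke Theorem~\ref{sndw_cos_thrm_1} with $\tau$ replaced by $\varepsilon^{-1}\tau$; this yields precisely the bound $(\mathcal{C}_1+\mathcal{C}_2|\tau|)\varepsilon$, with the same constants $\mathcal{C}_1,\mathcal{C}_2$ from Theorem~\ref{sndw_cos_general_thrm}, which are expressed only in terms of $r_0$, $\alpha_0$, $\alpha_1$, $\|g\|_{L_\infty}$, $\|g^{-1}\|_{L_\infty}$, $\|f\|_{L_\infty}$, $\|f^{-1}\|_{L_\infty}$.

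There is essentially no obstacle: all the real work has already been done in Section~\ref{abstr_sandwiched_section} (abstract approximation of the sandwiched cosine), in Section~11 (the direct-integral reduction to the fibers $\mathcal{A}(\mathbf{k})$ and the two auxiliary bounds for $|\mathbf{k}|>t^0$ and for the $(I-\widehat{P})$-component), and in Theorem~\ref{sndw_cos_thrm_1} which assembles them via~\eqref{sndw_norms_and_Gelfand_transf}. The only point that deserves a brief verification is the scaling identity~\eqref{sndw_cos-cos_and_scale_transform} itself; this follows at once from $\mathcal{A}_\varepsilon = \varepsilon^{-2}T_\varepsilon^*\mathcal{A}T_\varepsilon$ and $\mathcal{A}^0 = \varepsilon^{-2}T_\varepsilon^*(\mathcal{A}^0 \text{ at scale } 1)T_\varepsilon$ (since $\mathcal{A}^0$ has constant coefficients, scaling only rescales the argument $\tau\mapsto\varepsilon^{-1}\tau$), combined with~\eqref{H0_resolv_and_scale_transform} and~\eqref{mult_op_and_scale_transform}. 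No interpolation or Sobolev embedding is needed here — those enter only in the subsequent $(H^s\to L_2)$-estimates.
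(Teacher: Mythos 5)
Your proposal is correct and is essentially identical to the paper's own argument: the paper establishes the scaling identity \eqref{sndw_cos-cos_and_scale_transform}, takes $s=2$, uses the unitarity of $T_\varepsilon$, and invokes Theorem~\ref{sndw_cos_thrm_1}. The only cosmetic slip is that Theorem~\ref{sndw_cos_thrm_1} is already stated with the argument $\varepsilon^{-1}\tau$ in the cosines, so it is applied directly rather than ``with $\tau$ replaced by $\varepsilon^{-1}\tau$''.
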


Obviously, taking \eqref{f0} into account, we have
\begin{multline}
\label{sndw_cos-cos_le_2}
\| f^\varepsilon \cos( \tau \mathcal{A}_\varepsilon^{1/2})  (f^\varepsilon)^{-1}  - f_0 \cos( \tau (\mathcal{A}^0)^{1/2}) f_0^{-1} \|_{L_2 (\mathbb{R}^d) \to L_2 (\mathbb{R}^d)}
\\
\le 2 \| f \|_{L_\infty} \| f^{-1} \|_{L_\infty}, \quad  \tau \in \mathbb{R}, \;  \varepsilon > 0.
\end{multline}
Interpolating between~\eqref{sndw_cos-cos_le_2} and~\eqref{sndw_cos_est1_L2L2},
we arrive at the following result which has been proved before in~\cite[Theorem~13.4]{BSu5}.

\begin{theorem}
    \label{sndw_cos_sin_thrm1_H^s_L2}
    Suppose that the assumptions of Theorem~\emph{\ref{sndw_cos_thrm1_L2L2}} are satisfied. Then for $0 \le s \le 2$,
    $\tau \in \mathbb{R}$, and $\varepsilon > 0$ we have
    \begin{align*}
    \| f^\varepsilon \cos( \tau \mathcal{A}_\varepsilon^{1/2})  (f^\varepsilon)^{-1}  - f_0 \cos( \tau (\mathcal{A}^0)^{1/2}) f_0^{-1}  \|_{H^s (\mathbb{R}^d) \to L_2 (\mathbb{R}^d)} \le \mathfrak{C}_1 (s; \tau) \varepsilon^{s/2},
    \\
    \| f^\varepsilon \mathcal{A}_\varepsilon^{-1/2} \sin( \tau \mathcal{A}_\varepsilon^{1/2}) (f^\varepsilon)^{-1} -  f_0 (\mathcal{A}^0)^{-1/2} \sin( \tau (\mathcal{A}^0)^{1/2}) f_0^{-1} \|_{H^s (\mathbb{R}^d) \to L_2 (\mathbb{R}^d)}
    \\
    \le \mathfrak{C}_2 (s; \tau)  \varepsilon^{s/2},
    \end{align*}
    where
    \begin{equation}
    \label{frakC_1_2}
    \begin{split}
    \mathfrak{C}_1 (s; \tau) &= (2 \| f \|_{L_\infty} \| f^{-1} \|_{L_\infty})^{1-s/2} (\mathcal{C}_1 + \mathcal{C}_2  |\tau|)^{s/2}, \\
    \mathfrak{C}_2 (s; \tau) &=  \mathfrak{C}_1 (s; \tau) (1 + s/2)^{-1}  (\mathcal{C}_1 \mathcal{C}_{2}^{-1} +  |\tau|).
    \end{split}
    \end{equation}
    In particular, for $0 < \varepsilon \le 1$ and $|\tau| = \varepsilon^{-\alpha}$ we have
    \begin{gather*}
    \begin{split}
    \| f^\varepsilon \cos( \tau \mathcal{A}^{1/2})  (f^\varepsilon)^{-1}  - f_0 \cos( \tau (\mathcal{A}^0)^{1/2}) f_0^{-1} \|_{H^s (\mathbb{R}^d) \to L_2 (\mathbb{R}^d)}
    \\
    \le \widehat{\mathfrak{C}}_1 (s; 1) \varepsilon^{s(1-\alpha)/2}, \quad |\tau| = \varepsilon^{-\alpha}, \; 0 < \alpha < 1, \; 0 < \varepsilon \le 1,
    \end{split}
    \\
    \begin{split}
    \|f^\varepsilon \mathcal{A}_\varepsilon^{-1/2} \sin( \tau \mathcal{A}_\varepsilon^{1/2}) (f^\varepsilon)^{-1} -  f_0 (\mathcal{A}^0)^{-1/2} \sin( \tau (\mathcal{A}^0)^{1/2}) f_0^{-1} \|_{H^s (\mathbb{R}^d) \to L_2 (\mathbb{R}^d)}
    \\
    \le \widehat{\mathfrak{C}}_2 (s; 1)  \varepsilon^{s(1-\alpha)/2-\alpha}, \quad
    |\tau| = \varepsilon^{-\alpha}, \; 0 < \alpha < s(s+2)^{-1}, \; 0 < \varepsilon \le 1.
    \end{split}
    \end{gather*}
\end{theorem}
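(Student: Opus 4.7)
The argument mirrors the unsandwiched case treated in Theorem~\ref{cos_sin_thrm1_H^s_L2}. Denote
$$
D_\varepsilon(\tau) := f^\varepsilon \cos(\tau \mathcal{A}_\varepsilon^{1/2}) (f^\varepsilon)^{-1} - f_0 \cos(\tau (\mathcal{A}^0)^{1/2}) f_0^{-1}.
$$
The plan is to interpolate the $(L_2 \to L_2)$-bound of Theorem~\ref{sndw_cos_thrm1_L2L2} (the endpoint $s=2$, with smoothing factor $(\mathcal{H}_0+I)^{-1}$) against the trivial a priori bound \eqref{sndw_cos-cos_le_2} (the endpoint $s=0$, no smoothing). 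Then the $(H^s \to L_2)$-estimate for the cosine follows from the fact that $(\mathcal{H}_0+I)^{s/2}$ is an isometric isomorphism $H^s(\mathbb{R}^d;\mathbb{C}^n) \to L_2(\mathbb{R}^d;\mathbb{C}^n)$, cf.~\eqref{H_L2_isometry}. The sine estimate is obtained from the cosine estimate by integration in the time variable.

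First I would apply the Hadamard three-lines theorem (or equivalently a direct spectral-calculus argument using that $\mathcal{H}_0+I \ge I$) to the analytic family $F(z) := D_\varepsilon(\tau) (\mathcal{H}_0+I)^{-z}$ on the strip $0 \le \Re z \le 1$. On $\Re z = 0$ the operator $(\mathcal{H}_0+I)^{-it}$ is unitary, so $\|F(z)\|_{L_2 \to L_2} \le 2\|f\|_{L_\infty}\|f^{-1}\|_{L_\infty}$ by \eqref{sndw_cos-cos_le_2}; on $\Re z = 1$ we have $\|F(z)\|_{L_2 \to L_2} \le (\mathcal{C}_1 + \mathcal{C}_2|\tau|)\varepsilon$ by \eqref{sndw_cos_est1_L2L2}. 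Setting $z = s/2$ yields
$$
\| D_\varepsilon(\tau) (\mathcal{H}_0+I)^{-s/2} \|_{L_2 \to L_2} \le \mathfrak{C}_1(s;\tau) \varepsilon^{s/2}, \quad 0 \le s \le 2.
$$
Composing with the isometric isomorphism $(\mathcal{H}_0+I)^{s/2}: H^s \to L_2$ gives the first claimed estimate with the constant $\mathfrak{C}_1(s;\tau)$ from \eqref{frakC_1_2}.

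For the sine estimate, I use the identity $\mathcal{A}_\varepsilon^{-1/2}\sin(\tau \mathcal{A}_\varepsilon^{1/2}) = \int_0^\tau \cos(\tilde\tau \mathcal{A}_\varepsilon^{1/2})\, d\tilde\tau$ and the analogous one for $\mathcal{A}^0$. The multiplication operators $f^\varepsilon, (f^\varepsilon)^{-1}, f_0, f_0^{-1}$ are independent of $\tilde\tau$, so they commute with the integrals; consequently
$$
f^\varepsilon \mathcal{A}_\varepsilon^{-1/2}\sin(\tau \mathcal{A}_\varepsilon^{1/2})(f^\varepsilon)^{-1} - f_0(\mathcal{A}^0)^{-1/2}\sin(\tau(\mathcal{A}^0)^{1/2}) f_0^{-1} = \int_0^\tau D_\varepsilon(\tilde\tau)\, d\tilde\tau.
$$
Taking $(H^s \to L_2)$-norms and applying the cosine estimate pointwise in $\tilde\tau$, the problem reduces to evaluating $\int_0^{|\tau|} (\mathcal{C}_1 + \mathcal{C}_2 \tilde\tau)^{s/2} d\tilde\tau$, which by the substitution $u = \mathcal{C}_1 + \mathcal{C}_2 \tilde\tau$ is bounded by $\mathcal{C}_2^{-1}(1+s/2)^{-1}(\mathcal{C}_1 + \mathcal{C}_2|\tau|)^{1+s/2}$. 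Factoring out $(\mathcal{C}_1+\mathcal{C}_2|\tau|)^{s/2}$ and the remaining constants yields exactly $\mathfrak{C}_2(s;\tau) = \mathfrak{C}_1(s;\tau)(1+s/2)^{-1}(\mathcal{C}_1 \mathcal{C}_2^{-1} + |\tau|)$ as in \eqref{frakC_1_2}.

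The long-time estimates are then a direct substitution: setting $|\tau| = \varepsilon^{-\alpha}$ with $0 < \varepsilon \le 1$, we have $(\mathcal{C}_1 + \mathcal{C}_2 \varepsilon^{-\alpha})^{s/2} \le (\mathcal{C}_1 + \mathcal{C}_2)^{s/2} \varepsilon^{-\alpha s/2}$, which combines with $\varepsilon^{s/2}$ to give $\varepsilon^{s(1-\alpha)/2}$; for the sine the extra factor $(\mathcal{C}_1\mathcal{C}_2^{-1} + \varepsilon^{-\alpha})$ produces the additional $\varepsilon^{-\alpha}$, and the restriction $\alpha < s(s+2)^{-1}$ ensures the resulting exponent $s(1-\alpha)/2 - \alpha$ of $\varepsilon$ stays positive. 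No essential new obstacle arises here; the only delicate point throughout is the interpolation step, but it is routine once the endpoint bound of Theorem~\ref{sndw_cos_thrm1_L2L2} is in hand.
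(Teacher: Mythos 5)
Your proposal is correct and follows essentially the same route as the paper: the paper obtains the theorem by interpolating between the a priori bound \eqref{sndw_cos-cos_le_2} and the smoothed estimate \eqref{sndw_cos_est1_L2L2} of Theorem~\ref{sndw_cos_thrm1_L2L2}, then passing to $H^s\to L_2$ via \eqref{H_L2_isometry} and integrating in $\tau$ for the sine, exactly as in the unsandwiched case (Theorem~\ref{cos_sin_thrm1_H^s_L2}). Your explicit three-lines justification of the interpolation step and your computation of $\mathfrak{C}_2(s;\tau)$ match the paper's constants.
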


\subsection{Refinement of approximation under the additional assumptions}

Using~\eqref{sndw_cos-cos_and_scale_transform} and Theorem~\ref{sndw_cos_thrm_2}, we deduce  the following result.

\begin{theorem}
    \label{sndw_cos_thrm2_L2L2}
    Suppose that the assumptions of Theorem~\emph{\ref{sndw_cos_thrm1_L2L2}} are satisfied. Let
    $\widehat{N}_Q (\boldsymbol{\theta})$  be the operator defined by~\emph{\eqref{N_Q(theta)}}. Suppose that
    $\widehat{N}_Q (\boldsymbol{\theta}) = 0$ for any $\boldsymbol{\theta} \in \mathbb{S}^{d-1}$.
    Then for $\tau \in \mathbb{R}$ and $\varepsilon > 0$ we have
    \begin{multline}
    \label{sndw_cos_est2_L2L2}
    \| (f^\varepsilon \cos( \tau \mathcal{A}_\varepsilon^{1/2})  (f^\varepsilon)^{-1}  - f_0 \cos( \tau (\mathcal{A}^0)^{1/2}) f_0^{-1} ) (\mathcal{H}_0 + I)^{-3/4} \|_{L_2(\mathbb{R}^d) \to L_2(\mathbb{R}^d)}
    \\
    \le (\mathcal{C}_3 + \mathcal{C}_4 |\tau|) \varepsilon.
    \end{multline}
    The constants $\mathcal{C}_3$ and $\mathcal{C}_4$ are defined in Theorem~\emph{\ref{sndw_cos_enchanced_thrm_1}} and depend only on $r_0$, $\alpha_0$, $\alpha_1$, $\|g\|_{L_\infty}$, $\|g^{-1}\|_{L_\infty}$, $\|f\|_{L_\infty}$, and $\|f^{-1}\|_{L_\infty}$.
\end{theorem}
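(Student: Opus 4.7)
The plan is to mimic exactly the derivation of Theorem~\ref{sndw_cos_thrm1_L2L2} from Theorem~\ref{sndw_cos_thrm_1}, but this time feeding in the stronger input Theorem~\ref{sndw_cos_thrm_2} (which uses the assumption $\widehat{N}_Q(\boldsymbol{\theta})=0$) rather than Theorem~\ref{sndw_cos_thrm_1}. The essential identity has already been recorded as~\eqref{sndw_cos-cos_and_scale_transform}: for any $s\ge 0$,
\begin{equation*}
\bigl(f^\varepsilon \cos(\tau \mathcal{A}_\varepsilon^{1/2}) (f^\varepsilon)^{-1} - f_0 \cos(\tau (\mathcal{A}^0)^{1/2}) f_0^{-1}\bigr)(\mathcal{H}_0+I)^{-s/2} = T_\varepsilon^* {\mathcal J}_\varepsilon(s,\tau) T_\varepsilon,
\end{equation*}
where
\begin{equation*}
{\mathcal J}_\varepsilon(s,\tau) := \bigl(f \cos(\varepsilon^{-1}\tau \mathcal{A}^{1/2}) f^{-1} - f_0 \cos(\varepsilon^{-1}\tau (\mathcal{A}^0)^{1/2}) f_0^{-1}\bigr)\mathcal{R}(\varepsilon)^{s/2}.
\end{equation*}
Since $T_\varepsilon$ is unitary on $L_2(\mathbb{R}^d;\mathbb{C}^n)$, the $(L_2 \to L_2)$-norm of the left-hand side coincides with that of ${\mathcal J}_\varepsilon(s,\tau)$.

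I would then specialize to $s=3/2$ and invoke Theorem~\ref{sndw_cos_thrm_2}, whose hypothesis $\widehat{N}_Q(\boldsymbol{\theta})=0$ is precisely what we are assuming. That theorem delivers
\begin{equation*}
\| {\mathcal J}_\varepsilon(3/2,\tau) \|_{L_2(\mathbb{R}^d)\to L_2(\mathbb{R}^d)} \le (\mathcal{C}_3 + \mathcal{C}_4|\tau|)\varepsilon,
\end{equation*}
with $\mathcal{C}_3,\mathcal{C}_4$ as defined in Theorem~\ref{sndw_cos_enchanced_thrm_1} and depending only on the listed structural constants. Combining this with the scaling identity gives \eqref{sndw_cos_est2_L2L2} directly.

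There is essentially no obstacle: the analytical content is already packaged in Theorem~\ref{sndw_cos_thrm_2} (which in turn reduced to the fiberwise estimate of Theorem~\ref{sndw_cos_enchanced_thrm_1} via the direct integral expansion~\eqref{sndw_norms_and_Gelfand_transf}), and the refined smoothing exponent $3/4$ propagates unchanged through the unitary scaling $T_\varepsilon$. The only thing to check carefully is that the constants truly depend only on $r_0,\alpha_0,\alpha_1,\|g\|_{L_\infty},\|g^{-1}\|_{L_\infty},\|f\|_{L_\infty},\|f^{-1}\|_{L_\infty}$; this is already verified in the statement of Theorem~\ref{sndw_cos_thrm_2} and is preserved by the unitary conjugation by $T_\varepsilon$.
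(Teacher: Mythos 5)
Your proposal is correct and coincides with the paper's own argument: the paper likewise deduces Theorem~\ref{sndw_cos_thrm2_L2L2} by combining the scaling identity~\eqref{sndw_cos-cos_and_scale_transform} (with $s=3/2$) with Theorem~\ref{sndw_cos_thrm_2} and the unitarity of $T_\varepsilon$. Nothing further is needed.
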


Interpolating between~\eqref{sndw_cos-cos_le_2} and~\eqref{sndw_cos_est2_L2L2}, we obtain the following result.

\begin{theorem}
    \label{sndw_cos_sin_thrm2_H^s_L2}
    Suppose that the assumptions of Theorem~\emph{\ref{sndw_cos_thrm2_L2L2}} are satisfied. Then for $0 \le s \le 3/2$, $\tau \in \mathbb{R}$, and $\varepsilon > 0$ we have
    \begin{align*}
    \| f^\varepsilon \cos( \tau \mathcal{A}_\varepsilon^{1/2})  (f^\varepsilon)^{-1}  - f_0 \cos( \tau (\mathcal{A}^0)^{1/2}) f_0^{-1}  \|_{H^s (\mathbb{R}^d) \to L_2 (\mathbb{R}^d)} \le \mathfrak{C}_3 (s; \tau) \varepsilon^{2s/3},
    \\
    \| f^\varepsilon \mathcal{A}_\varepsilon^{-1/2} \sin( \tau \mathcal{A}_\varepsilon^{1/2}) (f^\varepsilon)^{-1} - f_0 (\mathcal{A}^0)^{-1/2} \sin( \tau (\mathcal{A}^0)^{1/2}) f_0^{-1} \|_{H^s (\mathbb{R}^d) \to L_2 (\mathbb{R}^d)}
    \\
    \le \mathfrak{C}_4 (s; \tau)  \varepsilon^{2s/3},
    \end{align*}
    where
    \begin{equation}
    \label{frakC_3_4}
    \begin{split}
    \mathfrak{C}_3 (s; \tau) &= (2 \| f \|_{L_\infty} \| f^{-1} \|_{L_\infty})^{1-2s/3} (\mathcal{C}_3 + \mathcal{C}_4  |\tau|)^{2s/3}, \\
    \mathfrak{C}_4 (s; \tau) &=  \mathfrak{C}_3 (s; \tau) (1 + 2s/3)^{-1}  (\mathcal{C}_3 \mathcal{C}_{4}^{-1} +  |\tau|).
    \end{split}
    \end{equation}
    In particular, for $0 < \varepsilon \le 1$ and $|\tau| = \varepsilon^{-\alpha}$ we have
    \begin{gather*}
    \begin{split}
    \|f^\varepsilon \cos( \tau \mathcal{A}_\varepsilon^{1/2})  (f^\varepsilon)^{-1}  - f_0 \cos( \tau (\mathcal{A}^0)^{1/2}) f_0^{-1} \|_{H^s (\mathbb{R}^d) \to L_2 (\mathbb{R}^d)}
     \\
     \le \mathfrak{C}_3 (s; 1) \varepsilon^{2s(1-\alpha)/3}, \quad |\tau| = \varepsilon^{-\alpha}, \; 0 < \alpha < 1, \; 0 < \varepsilon \le 1,
    \end{split}
    \\
    \begin{split}
    \|f^\varepsilon \mathcal{A}_\varepsilon^{-1/2} \sin( \tau \mathcal{A}_\varepsilon^{1/2}) (f^\varepsilon)^{-1} - f_0 (\mathcal{A}^0)^{-1/2} \sin( \tau (\mathcal{A}^0)^{1/2}) f_0^{-1} \|_{H^s (\mathbb{R}^d) \to L_2 (\mathbb{R}^d)}
    \\
    \le \mathfrak{C}_4 (s; 1)  \varepsilon^{2s(1-\alpha)/3-\alpha},
    \quad |\tau| = \varepsilon^{-\alpha}, \; 0 < \alpha < 2s(2s+3)^{-1}, \; 0 < \varepsilon \le 1.
    \end{split}
    \end{gather*}
\end{theorem}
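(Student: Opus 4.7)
The plan is to derive both estimates by interpolation from the $L_2\to L_2$ bound supplied by Theorem~\ref{sndw_cos_thrm2_L2L2}, combined with the trivial uniform bound~\eqref{sndw_cos-cos_le_2}. The structure mirrors the passage from Theorem~\ref{cos_thrm2_L2L2} to Theorem~\ref{cos_sin_thrm2_H^s_L2}, only now the rapidly oscillating factors $f^\varepsilon,(f^\varepsilon)^{-1}$ (and $f_0,f_0^{-1}$) border the operator cosine on both sides. Denote
$$
\Delta_\varepsilon(\tau) := f^\varepsilon \cos(\tau\mathcal{A}_\varepsilon^{1/2})(f^\varepsilon)^{-1} - f_0\cos(\tau(\mathcal{A}^0)^{1/2})f_0^{-1}.
$$
The endpoint $s=0$ estimate~\eqref{sndw_cos-cos_le_2} gives $\|\Delta_\varepsilon(\tau)\|_{L_2\to L_2}\le 2\|f\|_{L_\infty}\|f^{-1}\|_{L_\infty}$, while the endpoint $s=3/2$ estimate~\eqref{sndw_cos_est2_L2L2} gives $\|\Delta_\varepsilon(\tau)(\mathcal{H}_0+I)^{-3/4}\|_{L_2\to L_2}\le(\mathcal{C}_3+\mathcal{C}_4|\tau|)\varepsilon$.

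First I would pass from smoothing factors to Sobolev norms. By~\eqref{H_L2_isometry}, the map $(\mathcal{H}_0+I)^{s/2}$ is an isometric isomorphism of $H^s(\mathbb{R}^d;\mathbb{C}^n)$ onto $L_2(\mathbb{R}^d;\mathbb{C}^n)$, so $\|\Delta_\varepsilon(\tau)(\mathcal{H}_0+I)^{-s/2}\|_{L_2\to L_2}=\|\Delta_\varepsilon(\tau)\|_{H^s\to L_2}$. Interpolating the bounded family of operators $\Delta_\varepsilon(\tau)(\mathcal{H}_0+I)^{-s/2}$ between $s=0$ and $s=3/2$ (via, for instance, the three-lines lemma applied to the analytic family $(\mathcal{H}_0+I)^{-z}$, $\operatorname{Re} z\in[0,3/4]$), one gets
$$
\|\Delta_\varepsilon(\tau)(\mathcal{H}_0+I)^{-s/2}\|_{L_2\to L_2}\le (2\|f\|_{L_\infty}\|f^{-1}\|_{L_\infty})^{1-2s/3}(\mathcal{C}_3+\mathcal{C}_4|\tau|)^{2s/3}\varepsilon^{2s/3}
$$
for $0\le s\le 3/2$, which is precisely the desired cosine estimate with the constant $\mathfrak{C}_3(s;\tau)$ defined in~\eqref{frakC_3_4}.

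Next, for the sine estimate, I would integrate the cosine bound. Using the identity
$$
f^\varepsilon\mathcal{A}_\varepsilon^{-1/2}\sin(\tau\mathcal{A}_\varepsilon^{1/2})(f^\varepsilon)^{-1}=\int_0^\tau f^\varepsilon\cos(\tilde\tau\mathcal{A}_\varepsilon^{1/2})(f^\varepsilon)^{-1}\,d\tilde\tau
$$
and its analog for the effective operator, one bounds the $H^s\to L_2$ norm of the difference under the integral sign by $\mathfrak{C}_3(s;\tilde\tau)\varepsilon^{2s/3}$. Integration over $\tilde\tau\in[0,\tau]$ of the polynomial factor $(\mathcal{C}_3+\mathcal{C}_4|\tilde\tau|)^{2s/3}$ produces the constant $\mathfrak{C}_4(s;\tau)$ as written in~\eqref{frakC_3_4}, with the factor $(1+2s/3)^{-1}$ coming from integrating the leading power.

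Finally, the large-time statements follow by substituting $|\tau|=\varepsilon^{-\alpha}$: the quantity $(\mathcal{C}_3+\mathcal{C}_4|\tau|)^{2s/3}$ is controlled by a constant multiple of $\varepsilon^{-2s\alpha/3}$ for $0<\varepsilon\le 1$, so the cosine estimate acquires the exponent $2s(1-\alpha)/3$ and remains informative for $0<\alpha<1$, while the sine estimate loses an extra factor $\varepsilon^{-\alpha}$, yielding the exponent $2s(1-\alpha)/3-\alpha$, which is positive precisely when $\alpha<2s(2s+3)^{-1}$. No genuine obstacle arises at this stage; all the substantive analysis was carried out in Theorem~\ref{sndw_cos_thrm2_L2L2}, and the only point deserving attention is to verify that the smoothing factor $(\mathcal{H}_0+I)^{-s/2}$ commutes with the abstract interpolation argument applied to the operator family, which it does since it depends only on $|\boldsymbol{\xi}|$ on the Fourier side.
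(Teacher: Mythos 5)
Your argument is correct and coincides with the paper's: the paper obtains this theorem precisely by interpolating between the trivial bound~\eqref{sndw_cos-cos_le_2} and the smoothed estimate~\eqref{sndw_cos_est2_L2L2} of Theorem~\ref{sndw_cos_thrm2_L2L2}, and then derives the sine estimate by integrating the cosine difference over $[0,\tau]$, exactly as you do. Your verification that the integral of $(\mathcal{C}_3+\mathcal{C}_4|\tilde\tau|)^{2s/3}$ reproduces the constant $\mathfrak{C}_4(s;\tau)$ in~\eqref{frakC_3_4}, and the substitution $|\tau|=\varepsilon^{-\alpha}$ for the large-time statements, are all as in the paper.
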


From Theorem~\ref{sndw_cos_sin_thrm2_H^s_L2} and Proposition~\ref{N_Q=0_proposit} we deduce the following corollary.

\begin{corollary}
    Suppose that at least one of the following conditions is fulfilled:

    $1^\circ$.
         The operator $\mathcal{A}_\varepsilon$ has the form
         $\mathcal{A}_\varepsilon = (f^\varepsilon (\mathbf{x}))^* \mathbf{D}^* g^\varepsilon(\mathbf{x}) \mathbf{D} f^\varepsilon (\mathbf{x})$,
         where $g(\mathbf{x})$~is a symmetric matrix with real entries.

          $2^\circ$.
        Relations~\emph{\eqref{g0=overline_g_relat}} are satisfied, i.~e., $g^0 = \overline{g}$.

\noindent   Then the estimates of Theorem~\emph{\ref{sndw_cos_sin_thrm2_H^s_L2}} hold.
\end{corollary}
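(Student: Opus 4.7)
The plan is to reduce this corollary immediately to Theorem~\ref{sndw_cos_sin_thrm2_H^s_L2} by verifying its hypothesis, namely that the operator $\widehat{N}_Q(\boldsymbol{\theta})$ defined in~\eqref{N_Q(theta)} vanishes identically on $\mathbb{S}^{d-1}$. Once that is established, the two displayed estimates and the large-time consequences follow directly, since all the constants $\mathfrak{C}_3, \mathfrak{C}_4$ in~\eqref{frakC_3_4} depend only on the data $r_0, \alpha_0, \alpha_1, \|g\|_{L_\infty}, \|g^{-1}\|_{L_\infty}, \|f\|_{L_\infty}, \|f^{-1}\|_{L_\infty}$, all of which are at our disposal.

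The key observation is that the needed vanishing has already been packaged as Proposition~\ref{N_Q=0_proposit}: case $1^\circ$ there matches our case $1^\circ$ (the scalar-gradient form $(f^\varepsilon)^*\mathbf{D}^* g^\varepsilon \mathbf{D} f^\varepsilon$ with $g$ real symmetric), and case $2^\circ$ there ($g^0=\overline{g}$, which by the proposition preceding it is equivalent to the relations~\eqref{g0=overline_g_relat}) matches our case $2^\circ$. So under either hypothesis of the corollary, Proposition~\ref{N_Q=0_proposit} yields $\widehat{N}_Q(\boldsymbol{\theta})=0$ for every $\boldsymbol{\theta}\in\mathbb{S}^{d-1}$.

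With this in hand, the hypothesis of Theorem~\ref{sndw_cos_thrm2_L2L2} is satisfied, which gives the $(H^{3/2}\to L_2)$-type bound with an error of order $\varepsilon$ and the sharpened power $3/4$ of the smoothing factor. Then Theorem~\ref{sndw_cos_sin_thrm2_H^s_L2}, which was obtained by interpolating that estimate against the trivial bound~\eqref{sndw_cos-cos_le_2} and integrating in $\tau$ to pass from the cosine to the sine, directly produces the claimed inequalities for $0\le s \le 3/2$, along with the large-time corollaries for $|\tau|=\varepsilon^{-\alpha}$ in the indicated ranges of $\alpha$.

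There is essentially no obstacle: the entire content of the corollary is the verification that the structural assumptions~$1^\circ$ or~$2^\circ$ force $\widehat{N}_Q\equiv 0$, and this has been isolated as a standalone proposition upstream. The only thing worth double-checking is that case~$1^\circ$ of our corollary really falls under case~$1^\circ$ of Proposition~\ref{N_Q=0_proposit} after the scaling transformation~\eqref{cos_and_scale_transform}, but this is immediate: the operator-theoretic object $\widehat{N}_Q(\boldsymbol{\theta})$ is defined in terms of $b(\mathbf{D})$, $g(\mathbf{x})$ and the weight $Q(\mathbf{x})=(f(\mathbf{x})f(\mathbf{x})^*)^{-1}$, i.e.\ in terms of the unscaled coefficients, so the hypothesis of the corollary is phrased on exactly the same data as Proposition~\ref{N_Q=0_proposit}.
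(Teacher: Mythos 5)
Your proposal is correct and matches the paper's argument exactly: the paper derives this corollary by combining Proposition~\ref{N_Q=0_proposit} (which gives $\widehat{N}_Q(\boldsymbol{\theta})=0$ under either condition $1^\circ$ or $2^\circ$) with Theorem~\ref{sndw_cos_sin_thrm2_H^s_L2}. Your extra remark that the hypothesis is phrased in terms of the unscaled data $b(\mathbf{D})$, $g$, $Q$ — so no issue arises from the scaling transformation — is a correct and harmless verification.
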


Finally, Theorem~\ref{sndw_cos_thrm_3} and~\eqref{sndw_cos-cos_and_scale_transform} imply the following result.

\begin{theorem}
    \label{sndw_cos_thrm3_L2L2}
    Suppose that the assumptions of Theorem~\emph{\ref{sndw_cos_thrm1_L2L2}} are satisfied. Suppose that Condition~\emph{\ref{sndw_cond1}} \emph{(}or more restrictive Condition~\emph{\ref{sndw_cond2})} is satisfied. Then for $\tau \in \mathbb{R}$ and $\varepsilon > 0$
    we have
    \begin{multline}
    \label{sndw_cos_est3_L2L2}
    \| (f^\varepsilon \cos( \tau \mathcal{A}_\varepsilon^{1/2})  (f^\varepsilon)^{-1}  - f_0 \cos( \tau (\mathcal{A}^0)^{1/2}) f_0^{-1} ) (\mathcal{H}_0 + I)^{-3/4} \|_{L_2(\mathbb{R}^d) \to L_2(\mathbb{R}^d)}
    \\
    \le (\mathcal{C}_5 + \mathcal{C}_6 |\tau|) \varepsilon.
    \end{multline}
    The constants $\mathcal{C}_5$ and $\mathcal{C}_6$ are defined in Theorem~\emph{\ref{sndw_cos_enchanced_thrm_2}}
    and depend only on $r_0$, $\alpha_0$, $\alpha_1$,  $\|g\|_{L_\infty}$, $\|g^{-1}\|_{L_\infty}$,  $\|f\|_{L_\infty}$, $\|f^{-1}\|_{L_\infty}$,
    and also on the number $c^{\circ}$ defined by~\emph{\eqref{c^circ}}.
\end{theorem}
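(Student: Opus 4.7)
The plan is to reduce the claim to Theorem~\ref{sndw_cos_thrm_3} via the unitary scaling transformation, exactly parallel to how Theorems~\ref{sndw_cos_thrm1_L2L2} and~\ref{sndw_cos_thrm2_L2L2} are obtained from Theorems~\ref{sndw_cos_thrm_1} and~\ref{sndw_cos_thrm_2} respectively. The hypotheses of Theorem~\ref{sndw_cos_thrm_3} (namely Condition~\ref{sndw_cond1}) are in force by assumption, and Theorem~\ref{sndw_cos_thrm_3} provides an $L_2(\mathbb{R}^d;\mathbb{C}^n) \to L_2(\mathbb{R}^d;\mathbb{C}^n)$ bound for the operator on $\mathbb{R}^d$ with smoothing $\mathcal{R}(\varepsilon)^{3/4}$, at the cost $(\mathcal{C}_5 + \mathcal{C}_6|\tau|)\varepsilon$; the task is merely to transfer this estimate from the $\varepsilon$-scaled setting back to the original one.

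First I would apply identity \eqref{sndw_cos-cos_and_scale_transform} with $s = 3/2$, which gives
\begin{equation*}
(f^\varepsilon \cos(\tau \mathcal{A}_\varepsilon^{1/2})(f^\varepsilon)^{-1} - f_0 \cos(\tau (\mathcal{A}^0)^{1/2}) f_0^{-1})(\mathcal{H}_0 + I)^{-3/4} = T_\varepsilon^* \, \mathcal{B}(\tau,\varepsilon)\, T_\varepsilon,
\end{equation*}
where
\begin{equation*}
\mathcal{B}(\tau,\varepsilon) := (f \cos(\varepsilon^{-1}\tau \mathcal{A}^{1/2}) f^{-1} - f_0 \cos(\varepsilon^{-1}\tau (\mathcal{A}^0)^{1/2}) f_0^{-1}) \mathcal{R}(\varepsilon)^{3/4}.
\end{equation*}
Since $T_\varepsilon$ is unitary on $L_2(\mathbb{R}^d;\mathbb{C}^n)$, the $L_2 \to L_2$ operator norm of the left-hand side coincides with $\|\mathcal{B}(\tau,\varepsilon)\|_{L_2(\mathbb{R}^d) \to L_2(\mathbb{R}^d)}$.

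Next I would invoke Theorem~\ref{sndw_cos_thrm_3} to conclude that $\|\mathcal{B}(\tau,\varepsilon)\|_{L_2(\mathbb{R}^d) \to L_2(\mathbb{R}^d)} \le (\mathcal{C}_5 + \mathcal{C}_6|\tau|)\varepsilon$. Recall that Theorem~\ref{sndw_cos_thrm_3} itself rests on the direct integral identity \eqref{sndw_norms_and_Gelfand_transf}, which allows the $(H^{3/2}\to L_2)$-type bound on $\mathbb{R}^d$ to be read off from the uniform-in-$\mathbf{k}$ fibered estimate \eqref{sndw_cos_enchanced_est_2} of Theorem~\ref{sndw_cos_enchanced_thrm_2}; this uniformity is precisely what is secured by Condition~\ref{sndw_cond1}, which guarantees that the constant $c^\circ$ in \eqref{c^circ} is positive and hence that the parameter $t^{00}$ in \eqref{t00_fixation} can be chosen independently of $\boldsymbol{\theta}$. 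The constants $\mathcal{C}_5, \mathcal{C}_6$ therefore depend only on $r_0$, $\alpha_0$, $\alpha_1$, $\|g\|_{L_\infty}$, $\|g^{-1}\|_{L_\infty}$, $\|f\|_{L_\infty}$, $\|f^{-1}\|_{L_\infty}$, and $c^\circ$.

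There is no substantive obstacle in this proof: all the analytic content (threshold analysis, cluster decomposition, the treatment of $\widehat{N}_{*,Q}$ under the separation of eigenvalue branches) is already contained in Theorem~\ref{sndw_cos_thrm_3}, and the scaling step is purely formal. The only point that warrants care is the verification that the identity \eqref{sndw_cos-cos_and_scale_transform} applies with the specific exponent $s = 3/2$, which it does since that identity was derived from \eqref{cos_and_scale_transform}, \eqref{H0_resolv_and_scale_transform}, and \eqref{mult_op_and_scale_transform} for an arbitrary $s > 0$.
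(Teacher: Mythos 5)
Your proposal is correct and follows exactly the paper's own route: the result is obtained by applying the scaling identity \eqref{sndw_cos-cos_and_scale_transform} with $s=3/2$, using the unitarity of $T_\varepsilon$, and invoking Theorem~\ref{sndw_cos_thrm_3} (whose hypotheses are secured by Condition~\ref{sndw_cond1}). Nothing further is needed.
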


Interpolating between~~\eqref{sndw_cos-cos_le_2} and \eqref{sndw_cos_est3_L2L2}, we obtain the following result.

\begin{theorem}
    \label{sndw_cos_sin_thrm3_H^s_L2}
   Suppose that the assumptions of Theorem~\emph{\ref{sndw_cos_thrm3_L2L2}} are satisfied. Then for $0 \le s \le 3/2$, $\tau \in \mathbb{R}$, and $\varepsilon > 0$ we have
   \begin{align*}
   \| f^\varepsilon \cos( \tau \mathcal{A}_\varepsilon^{1/2})  (f^\varepsilon)^{-1}  - f_0 \cos( \tau (\mathcal{A}^0)^{1/2}) f_0^{-1}  \|_{H^s (\mathbb{R}^d) \to L_2 (\mathbb{R}^d)} \le \mathfrak{C}_5 (s; \tau) \varepsilon^{2s/3},
   \\
   \| f^\varepsilon \mathcal{A}_\varepsilon^{-1/2} \sin( \tau \mathcal{A}_\varepsilon^{1/2}) (f^\varepsilon)^{-1} - f_0 (\mathcal{A}^0)^{-1/2} \sin( \tau (\mathcal{A}^0)^{1/2}) f_0^{-1} \|_{H^s (\mathbb{R}^d) \to L_2 (\mathbb{R}^d)}
   \\
   \le \mathfrak{C}_6 (s; \tau)  \varepsilon^{2s/3},
   \end{align*}
   where
   \begin{equation}
   \label{frakC_5_6}
   \begin{split}
   \mathfrak{C}_5 (s; \tau) &= (2 \| f \|_{L_\infty} \| f^{-1} \|_{L_\infty})^{1-2s/3} (\mathcal{C}_5 + \mathcal{C}_6  |\tau|)^{2s/3}, \\
   \mathfrak{C}_6 (s; \tau) &=  \mathfrak{C}_5 (s; \tau) (1 + 2s/3)^{-1}  (\mathcal{C}_5 \mathcal{C}_{6}^{-1} +  |\tau|).
   \end{split}
   \end{equation}
   In particular, for $0 < \varepsilon \le 1$ and $|\tau| = \varepsilon^{-\alpha}$ we have
   \begin{gather*}
   \begin{split}
   \|f^\varepsilon \cos( \tau \mathcal{A}_\varepsilon^{1/2})  (f^\varepsilon)^{-1}  - f_0 \cos( \tau (\mathcal{A}^0)^{1/2}) f_0^{-1} \|_{H^s (\mathbb{R}^d) \to L_2 (\mathbb{R}^d)} \\
   \le \mathfrak{C}_5 (s; 1) \varepsilon^{2s(1-\alpha)/3}, \quad |\tau| = \varepsilon^{-\alpha}, \; 0 < \alpha < 1, \; 0 < \varepsilon \le 1,
   \end{split}
   \\
   \begin{split}
   \|f^\varepsilon \mathcal{A}_\varepsilon^{-1/2} \sin( \tau \mathcal{A}_\varepsilon^{1/2}) (f^\varepsilon)^{-1} - f_0 (\mathcal{A}^0)^{-1/2} \sin( \tau (\mathcal{A}^0)^{1/2}) f_0^{-1} \|_{H^s (\mathbb{R}^d) \to L_2 (\mathbb{R}^d)}
   \\
   \le \mathfrak{C}_6 (s; 1)  \varepsilon^{2s(1-\alpha)/3-\alpha}, \quad |\tau| = \varepsilon^{-\alpha}, \; 0 < \alpha < 2s(2s+3)^{-1}, \; 0 < \varepsilon \le 1.
   \end{split}
   \end{gather*}
\end{theorem}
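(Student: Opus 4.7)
The structure of this statement is identical to that of Theorems~\ref{cos_sin_thrm1_H^s_L2},~\ref{cos_sin_thrm2_H^s_L2},~\ref{cos_sin_thrm3_H^s_L2}, and~\ref{sndw_cos_sin_thrm1_H^s_L2},~\ref{sndw_cos_sin_thrm2_H^s_L2}. Accordingly, the plan is a routine two-step argument starting from Theorem~\ref{sndw_cos_thrm3_L2L2}: first, interpolate the $L_2\to L_2$ estimate~\eqref{sndw_cos_est3_L2L2} with smoothing factor $(\mathcal{H}_0+I)^{-3/4}$ against the trivial uniform bound~\eqref{sndw_cos-cos_le_2} to get an $L_2\to L_2$ estimate with intermediate smoothing factor $(\mathcal{H}_0+I)^{-s/2}$; then use the isometry $(\mathcal{H}_0+I)^{s/2}\colon H^s\to L_2$ recorded in~\eqref{H_L2_isometry} to convert this into an $H^s\to L_2$ statement for the cosine; finally integrate in time to obtain the sine statement.

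Concretely, write $\mathcal{T}_\varepsilon(\tau):=f^\varepsilon\cos(\tau\mathcal{A}_\varepsilon^{1/2})(f^\varepsilon)^{-1}-f_0\cos(\tau(\mathcal{A}^0)^{1/2})f_0^{-1}$. Since $(\mathcal{H}_0+I)^{-3/4}$ is realized by Fourier multiplication, the standard interpolation inequality
$$
\|\mathcal{T}_\varepsilon(\tau)(\mathcal{H}_0+I)^{-3\theta/4}\|_{L_2\to L_2}\le\|\mathcal{T}_\varepsilon(\tau)(\mathcal{H}_0+I)^{-3/4}\|_{L_2\to L_2}^{\theta}\|\mathcal{T}_\varepsilon(\tau)\|_{L_2\to L_2}^{1-\theta}
$$
(proved, e.g., by Hadamard's three-lines theorem or directly via the spectral theorem) applied with $\theta=2s/3\in[0,1]$ yields the cosine estimate with constant $\mathfrak{C}_5(s;\tau)$ from~\eqref{frakC_5_6}.

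For the sine-type operator I use the identity $\mathcal{A}_\varepsilon^{-1/2}\sin(\tau\mathcal{A}_\varepsilon^{1/2})=\int_0^\tau\cos(\tilde\tau\mathcal{A}_\varepsilon^{1/2})\,d\tilde\tau$ and its analogue for $\mathcal{A}^0$; conjugation by $f^\varepsilon$, $(f^\varepsilon)^{-1}$ and subtraction of the two expressions produce the time-integral of $\mathcal{T}_\varepsilon(\tilde\tau)$, so integrating the cosine bound over $[0,|\tau|]$ and evaluating the explicit primitive of $(\mathcal{C}_5+\mathcal{C}_6\tilde\tau)^{2s/3}$ gives exactly $\mathfrak{C}_6(s;\tau)=(1+2s/3)^{-1}\mathfrak{C}_5(s;\tau)(\mathcal{C}_5\mathcal{C}_6^{-1}+|\tau|)$. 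The large-time specializations with $|\tau|=\varepsilon^{-\alpha}$ follow at once by using $\mathcal{C}_5+\mathcal{C}_6|\tau|\le(\mathcal{C}_5+\mathcal{C}_6)|\tau|$ for $|\tau|\ge 1$, with the admissibility range $0<\alpha<2s(2s+3)^{-1}$ ensuring that the exponent $2s(1-\alpha)/3-\alpha$ of $\varepsilon$ remains positive. There is no real obstacle here: all the substantive work has been done in Theorem~\ref{sndw_cos_thrm3_L2L2}, and what remains is purely mechanical interpolation and integration.
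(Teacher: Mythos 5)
Your proposal is correct and coincides with the paper's own argument: the paper obtains this theorem precisely by interpolating the $(\mathcal{H}_0+I)^{-3/4}$-smoothed estimate of Theorem~\ref{sndw_cos_thrm3_L2L2} against the trivial bound~\eqref{sndw_cos-cos_le_2}, invoking the isometry~\eqref{H_L2_isometry}, and integrating in time for the sine operator, exactly as carried out in detail for Theorem~\ref{cos_sin_thrm1_H^s_L2}. Your constant bookkeeping (the exponent $\theta=2s/3$, the primitive of $(\mathcal{C}_5+\mathcal{C}_6\tilde\tau)^{2s/3}$, and the admissible range of $\alpha$) matches~\eqref{frakC_5_6}.
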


Combining Theorem~\ref{sndw_cos_sin_thrm3_H^s_L2} and Corollary~\ref{sndw_cos_enchanced_2_coroll}, we arrive at the following corollary.

\begin{corollary}
    Suppose that the matrices $b (\boldsymbol{\theta})$, $g (\mathbf{x})$, and
    $Q(\mathbf{x}) = (f(\mathbf{x})f(\mathbf{x})^*)^{-1}$~have real entries.
Suppose that the spectrum of the generalized spectral problem~\emph{\eqref{hatS_gener_spec_problem}}
is simple for any $\boldsymbol{\theta} \in \mathbb{S}^{d-1}$.
Then the estimates of Theorem~\emph{\ref{sndw_cos_sin_thrm3_H^s_L2}} hold.
\end{corollary}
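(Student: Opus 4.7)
The plan is to verify that the hypotheses of Theorem~\ref{sndw_cos_sin_thrm3_H^s_L2} are satisfied, after which the conclusion is immediate. Concretely, I would show that the ``real + simple spectrum'' assumptions of the corollary imply the more restrictive Condition~\ref{sndw_cond2}, which in turn implies Condition~\ref{sndw_cond1} required by Theorem~\ref{sndw_cos_sin_thrm3_H^s_L2}.

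For part $1^\circ$ of Condition~\ref{sndw_cond2}, I would invoke Corollary~\ref{sndw_simple_spec_N0Q=0_cor} directly. The argument there is that, in the ``real'' case, the matrix $\Lambda_Q(\mathbf{x})$ has purely imaginary entries and $b(\boldsymbol{\theta})^* L_Q(\boldsymbol{\theta}) b(\boldsymbol{\theta})$ is Hermitian with purely imaginary entries; if simplicity of the spectrum of the generalized problem~\eqref{hatS_gener_spec_problem} allows the eigenvectors $\zeta_l(\boldsymbol{\theta}) = f\omega_l(\boldsymbol{\theta})$ to be chosen real (which is automatic up to a phase when each eigenvalue is simple, since $\widehat{S}(\boldsymbol{\theta})$ and $\overline{Q}$ are then real symmetric matrices), then $\mu_l(\boldsymbol{\theta}) = (\widehat{N}_Q(\boldsymbol{\theta})\zeta_l,\zeta_l)_{L_2(\Omega)} = 0$ for every $l$, yielding $\widehat{N}_{0,Q}(\boldsymbol{\theta}) = 0$ on all of $\mathbb{S}^{d-1}$.

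For part $2^\circ$ of Condition~\ref{sndw_cond2}, I would simply observe that simplicity of the spectrum of the generalized problem~\eqref{hatS_gener_spec_problem} for every $\boldsymbol{\theta} \in \mathbb{S}^{d-1}$ forces the number of distinct eigenvalues to be $p(\boldsymbol{\theta}) = n$ for all $\boldsymbol{\theta}$, hence independent of $\boldsymbol{\theta}$ (this is precisely the content of Remark~\ref{sndw_simple_spec_remark}). Together with the previous step, this establishes Condition~\ref{sndw_cond2}, which implies Condition~\ref{sndw_cond1} as noted in the text. Theorem~\ref{sndw_cos_sin_thrm3_H^s_L2} then applies verbatim and delivers the desired estimates with the constants $\mathfrak{C}_5(s;\tau)$ and $\mathfrak{C}_6(s;\tau)$ given by~\eqref{frakC_5_6}.

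The main obstacle is not in this corollary itself but upstream: all the analytic content has been absorbed into Corollary~\ref{sndw_simple_spec_N0Q=0_cor} (which handles the vanishing of $\widehat{N}_{0,Q}$) and into the interpolation-and-scaling argument that produced Theorem~\ref{sndw_cos_sin_thrm3_H^s_L2} from Theorem~\ref{sndw_cos_thrm3_L2L2}. The present corollary is therefore essentially a bookkeeping step, rephrasing abstract Conditions~\ref{sndw_cond1}--\ref{sndw_cond2} in concrete spectral terms; the only point deserving care is confirming that $c^\circ$ (see~\eqref{c^circ}) remains strictly positive, which follows from compactness of $\mathbb{S}^{d-1}$ together with the continuity and pointwise separation of the branches $\gamma_k(\boldsymbol{\theta})$ guaranteed by the simple-spectrum hypothesis.
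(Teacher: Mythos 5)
Your proposal is correct and takes essentially the same route as the paper: the corollary is obtained by combining Theorem~\ref{sndw_cos_sin_thrm3_H^s_L2} with Corollary~\ref{sndw_cos_enchanced_2_coroll}, i.e., by verifying Condition~\ref{sndw_cond2} (hence Condition~\ref{sndw_cond1}) through Corollary~\ref{sndw_simple_spec_N0Q=0_cor} and Remark~\ref{sndw_simple_spec_remark}, exactly as you describe. Your closing observation on the positivity of $c^{\circ}$ matches the paper's own continuity-plus-compactness argument preceding~\eqref{c^circ}.
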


\subsection{The sharpness of the result.}

Applying Theorem~\ref{sndw_s<2_cos_thrm},  we confirm the sharpness of the result of Theorem~\ref{sndw_cos_thrm1_L2L2} in the general case.

\begin{theorem}
    Suppose that the assumptions of Theorem~\emph{\ref{sndw_cos_thrm1_L2L2}} are satisfied. Let $\widehat{N}_{0,Q} (\boldsymbol{\theta})$
     be the operator defined by~\emph{\eqref{N0Q_invar_repr}}. Suppose that \hbox{$\widehat{N}_{0,Q} (\boldsymbol{\theta}_0) \ne 0$} for some  $\boldsymbol{\theta}_0 \in \mathbb{S}^{d-1}$. Let $0 \ne \tau \in \mathbb{R}$ and $0 \le s < 2$.
     Then there does not exist a constant $\mathcal{C}(\tau) > 0$ such that estimate
    \begin{multline}
    \label{sndw_s<2_cos_est1}
    \| ( f^\varepsilon \cos( \tau \mathcal{A}_\varepsilon^{1/2})  (f^\varepsilon)^{-1}  - f_0 \cos( \tau (\mathcal{A}^0)^{1/2}) f_0^{-1}  )(\mathcal{H}_0  + I)^{-s/2}\|_{L_2(\mathbb{R}^d) \to L_2 (\mathbb{R}^d) }
    \\
    \le \mathcal{C}(\tau) \varepsilon
    \end{multline}
    holds for all sufficiently small $\varepsilon > 0$.
\end{theorem}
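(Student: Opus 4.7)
The plan is to argue by contradiction, in exact parallel with the proof of Theorem~\ref{s<2_cos_thrm_Rd}. Fixing $\tau \ne 0$, I would assume that for some $0 \le s < 2$ there exists a constant $\mathcal{C}(\tau) > 0$ such that \eqref{sndw_s<2_cos_est1} holds for all sufficiently small $\varepsilon > 0$, and then derive a contradiction with Theorem~\ref{sndw_s<2_cos_thrm}.

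The reduction rests entirely on the scaling identity \eqref{sndw_cos-cos_and_scale_transform}, which expresses the operator
\begin{equation*}
\bigl(f^\varepsilon \cos(\tau \mathcal{A}_\varepsilon^{1/2}) (f^\varepsilon)^{-1} - f_0 \cos(\tau (\mathcal{A}^0)^{1/2}) f_0^{-1}\bigr)(\mathcal{H}_0 + I)^{-s/2}
\end{equation*}
as the $T_\varepsilon^*$-conjugate of
\begin{equation*}
\bigl(f \cos(\varepsilon^{-1}\tau \mathcal{A}^{1/2}) f^{-1} - f_0 \cos(\varepsilon^{-1}\tau (\mathcal{A}^0)^{1/2}) f_0^{-1}\bigr) \mathcal{R}(\varepsilon)^{s/2}.
\end{equation*}
Since $T_\varepsilon$ is unitary on $L_2(\mathbb{R}^d;\mathbb{C}^n)$, the $(L_2 \to L_2)$-operator norms of these two expressions coincide for every $\varepsilon > 0$. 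Consequently the assumed bound on the first immediately transfers to the second, yielding estimate \eqref{sndw_s<2_cos_est} (with the same constant $\mathcal{C}(\tau)$ and the same range of $\varepsilon$). Under the standing hypothesis $\widehat{N}_{0,Q}(\boldsymbol{\theta}_0) \ne 0$, this directly contradicts Theorem~\ref{sndw_s<2_cos_thrm}, completing the argument.

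There is no genuine technical obstacle here: all of the substantive work has already been done. The threshold analysis, the explicit choice of a sequence $\varepsilon_k \to 0$ along which the cosine difference fails to decay faster than $\varepsilon$, and the passage from the abstract operator family $A(t)$ to the fiber family $\mathcal{A}(\mathbf{k})$ were carried out in Theorem~\ref{abstr_sndwchd_s<2_general_thrm} and Theorem~\ref{s<2_thrm}; the Gelfand-integral transfer from fibers to $L_2(\mathbb{R}^d;\mathbb{C}^n)$ was performed in Theorem~\ref{sndw_s<2_cos_thrm}. The present statement is therefore obtained as a one-line consequence, namely the observation that the unitary scaling $T_\varepsilon$ converts the $\varepsilon$-dependent sandwiched problem on $\mathbb{R}^d$ into precisely the rescaled fixed-coefficient problem whose sharpness has just been established.
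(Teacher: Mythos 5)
Your proposal is correct and coincides with the paper's own proof: the authors likewise argue by contradiction, use the scaling identity \eqref{sndw_cos-cos_and_scale_transform} together with the unitarity of $T_\varepsilon$ to transfer the assumed bound to estimate \eqref{sndw_s<2_cos_est}, and then invoke Theorem~\ref{sndw_s<2_cos_thrm}. Nothing is missing.
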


 \begin{proof} We prove by contradiction. Let us fix $\tau \ne 0$.
 Suppose that for some $0 \le s < 2$ there exists a constant $\mathcal{C}(\tau) > 0$ such that
    estimate~\eqref{sndw_s<2_cos_est1} holds for all sufficiently small $\varepsilon > 0$.
    Then, by~\eqref{sndw_cos-cos_and_scale_transform}, estimate~\eqref{sndw_s<2_cos_est} also holds for sufficiently small $\varepsilon$.
    But this contradicts the statement of Theorem~\ref{sndw_s<2_cos_thrm}.
\end{proof}

\section{Homogenization of the Cauchy problem \\ for  hyperbolic equations}

\subsection{The Cauchy problem for the homogeneous equation with the operator $\widehat{\mathcal{A}}_\varepsilon$}

Let $\widehat{\mathcal{A}}_\varepsilon$~be the operator~\eqref{Ahat_eps}.
Let $\mathbf{v}_\varepsilon (\mathbf{x}, \tau), \, \mathbf{x} \in \mathbb{R}^d, \, \tau \in \mathbb{R}$,
be the solution of the Cauchy problem
\begin{equation}
\label{homog_Cauchy_hatA_eps}
\left\{
\begin{aligned}
&\frac{\partial^2 \mathbf{v}_\varepsilon (\mathbf{x}, \tau)}{\partial \tau^2} = - b(\mathbf{D})^* g^\varepsilon (\mathbf{x}) b(\mathbf{D}) \mathbf{v}_\varepsilon (\mathbf{x}, \tau), \\
& \mathbf{v}_\varepsilon (\mathbf{x}, 0) = \boldsymbol{\phi} (\mathbf{x}), \quad \frac{\partial \mathbf{v}_\varepsilon }{\partial \tau} (\mathbf{x}, 0) = \boldsymbol{\psi}(\mathbf{x}),
\end{aligned}
\right.
\end{equation}
where $\boldsymbol{\phi}, \boldsymbol{\psi} \in L_2 (\mathbb{R}^d; \mathbb{C}^n)$~are given functions.
The solution can be represented as
\begin{equation*}
\mathbf{v}_\varepsilon (\cdot, \tau) = \cos(\tau \widehat{\mathcal{A}}_\varepsilon^{1/2}) \boldsymbol{\phi} + \widehat{\mathcal{A}}_\varepsilon^{-1/2} \sin(\tau \widehat{\mathcal{A}}_\varepsilon^{1/2}) \boldsymbol{\psi}.
\end{equation*}
Let $\mathbf{v}_0 (\mathbf{x}, \tau)$~be the solution of the
\textquotedblleft homogenized\textquotedblright \ Cauchy problem
\begin{equation}
\label{homog_Cauchy_hatA_0}
\left\{
\begin{aligned}
&\frac{\partial^2 \mathbf{v}_0 (\mathbf{x}, \tau)}{\partial \tau^2} = - b(\mathbf{D})^* g^0   b(\mathbf{D}) \mathbf{v}_0 (\mathbf{x}, \tau), \\
& \mathbf{v}_0 (\mathbf{x}, 0) = \boldsymbol{\phi} (\mathbf{x}), \quad \frac{\partial \mathbf{v}_0 }{\partial \tau} (\mathbf{x}, 0) = \boldsymbol{\psi}(\mathbf{x}).
\end{aligned}
\right.
\end{equation}
Here $g^0$ is the effective matrix given by~\eqref{g0}. Then
\begin{equation*}
\mathbf{v}_0 (\cdot, \tau) = \cos(\tau (\widehat{\mathcal{A}}^0)^{1/2}) \boldsymbol{\phi} + (\widehat{\mathcal{A}}^0)^{-1/2} \sin(\tau (\widehat{\mathcal{A}}^0)^{1/2}) \boldsymbol{\psi}.
\end{equation*}

Theorem~\ref{cos_sin_thrm1_H^s_L2} directly implies the following result which has been proved before in~\cite[Theorem~15.1]{BSu5}.

\begin{theorem}
    \label{homog_Cauchy_hatA_eps_thrm}
    Let $\mathbf{v}_\varepsilon$~be the solution of problem~\emph{\eqref{homog_Cauchy_hatA_eps}}, and let $\mathbf{v}_0$~be the solution of problem~\emph{\eqref{homog_Cauchy_hatA_0}}.

    $1^{\circ}.$
           If $\boldsymbol{\phi}, \boldsymbol{\psi} \in H^s (\mathbb{R}^d; \mathbb{C}^n)$, $0 \le s \le 2$, then for $\tau \in \mathbb{R}$ and $\varepsilon > 0$ we have
         \begin{equation*}
         \| \mathbf{v}_\varepsilon (\cdot, \tau) - \mathbf{v}_0 (\cdot, \tau) \|_{L_2(\mathbb{R}^d)} \le \varepsilon^{s/2} \left( \widehat{\mathfrak{C}}_1 (s; \tau) \| \boldsymbol{\phi} \|_{H^s(\mathbb{R}^d)} + \widehat{\mathfrak{C}}_2 (s; \tau) \| \boldsymbol{\psi} \|_{H^s(\mathbb{R}^d)} \right).
         \end{equation*}
         In particular, for $0 < \varepsilon \le 1$ and $\tau = \pm \varepsilon^{-\alpha}$
         \begin{multline}
         \label{homog_Cauchy_hatA_eps_est_largetime1}
         \| \mathbf{v}_\varepsilon (\cdot, \pm \varepsilon^{-\alpha}) - \mathbf{v}_0 (\cdot, \pm \varepsilon^{-\alpha}) \|_{L_2(\mathbb{R}^d)}
\\
\le \varepsilon^{s(1 - \alpha)/2} \left( \widehat{\mathfrak{C}}_1 (s; 1) \| \boldsymbol{\phi} \|_{H^s(\mathbb{R}^d)} + \varepsilon^{-\alpha} \widehat{\mathfrak{C}}_2 (s; 1) \| \boldsymbol{\psi} \|_{H^s(\mathbb{R}^d)} \right).
         \end{multline}
         Here $0 < \alpha < s(s+2)^{-1}$ if $\boldsymbol{\psi} \ne 0$, and $0 < \alpha < 1$ if $\boldsymbol{\psi} = 0$.
         The constants $\widehat{\mathfrak{C}}_1 (s; \tau), \widehat{\mathfrak{C}}_2 (s; \tau)$
         are defined by~\emph{\eqref{hat_frakC_1_2}}.

      $2^{\circ}.$
  If $\boldsymbol{\phi}, \boldsymbol{\psi} \in L_2 (\mathbb{R}^d; \mathbb{C}^n)$, then
         \begin{equation*}
         \lim\limits_{\varepsilon \to 0} \| \mathbf{v}_\varepsilon (\cdot, \tau) - \mathbf{v}_0 (\cdot, \tau) \|_{L_2(\mathbb{R}^d)} = 0, \quad \tau \in \mathbb{R}.
         \end{equation*}

    $3^{\circ}.$
         If $\boldsymbol{\phi} \in L_2 (\mathbb{R}^d; \mathbb{C}^n)$ and $\boldsymbol{\psi} = 0$, then
         \begin{equation*}
         \lim\limits_{\varepsilon \to 0} \| \mathbf{v}_\varepsilon (\cdot, \pm \varepsilon^{-\alpha}) - \mathbf{v}_0 (\cdot, \pm \varepsilon^{-\alpha}) \|_{L_2(\mathbb{R}^d)} = 0, \quad 0 < \alpha < 1.
         \end{equation*}
\end{theorem}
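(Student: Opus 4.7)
The plan is to deduce this theorem almost entirely from Theorem~\ref{cos_sin_thrm1_H^s_L2}, writing both solutions via the functional calculus and then decomposing the difference into a cosine part (acting on~$\boldsymbol{\phi}$) and a sine part (acting on~$\boldsymbol{\psi}$). Explicitly,
\begin{equation*}
\mathbf{v}_\varepsilon(\cdot,\tau)-\mathbf{v}_0(\cdot,\tau)
=\bigl(\cos(\tau\widehat{\mathcal{A}}_\varepsilon^{1/2})-\cos(\tau(\widehat{\mathcal{A}}^0)^{1/2})\bigr)\boldsymbol{\phi}
+\bigl(\widehat{\mathcal{A}}_\varepsilon^{-1/2}\sin(\tau\widehat{\mathcal{A}}_\varepsilon^{1/2})-(\widehat{\mathcal{A}}^0)^{-1/2}\sin(\tau(\widehat{\mathcal{A}}^0)^{1/2})\bigr)\boldsymbol{\psi}.
\end{equation*}

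For part~$1^\circ$, I would estimate each summand in $L_2(\mathbb{R}^d)$ by the corresponding $(H^s\to L_2)$-operator-norm bound from Theorem~\ref{cos_sin_thrm1_H^s_L2}, times the $H^s$-norm of $\boldsymbol{\phi}$ or $\boldsymbol{\psi}$. This produces exactly the claimed inequality with constants $\widehat{\mathfrak{C}}_1(s;\tau)$ and~$\widehat{\mathfrak{C}}_2(s;\tau)$. Inserting $\tau=\pm\varepsilon^{-\alpha}$ and using the homogeneity of the bracket $\widehat{\mathcal{C}}_1+\widehat{\mathcal{C}}_2|\tau|$ in~$\tau$ (as exploited already in \eqref{cos_thrm1_H^s_L2_est_largetime} and \eqref{sin_thrm1_H^s_L2_est_largetime}) gives \eqref{homog_Cauchy_hatA_eps_est_largetime1}; the condition $0<\alpha<s(s+2)^{-1}$ when $\boldsymbol{\psi}\ne 0$ is precisely what ensures the sine-term exponent $s(1-\alpha)/2-\alpha$ is positive, while for $\boldsymbol{\psi}=0$ only the cosine-term exponent $s(1-\alpha)/2$ must be positive, giving $0<\alpha<1$.

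Parts~$2^\circ$ and $3^\circ$ are standard density arguments. Since both $\cos(\tau\widehat{\mathcal{A}}_\varepsilon^{1/2})$ and $\cos(\tau(\widehat{\mathcal{A}}^0)^{1/2})$ are contractions in $L_2(\mathbb{R}^d)$, and since $\widehat{\mathcal{A}}_\varepsilon^{-1/2}\sin(\tau\widehat{\mathcal{A}}_\varepsilon^{1/2})=\int_0^\tau\cos(\tilde\tau\widehat{\mathcal{A}}_\varepsilon^{1/2})\,d\tilde\tau$ is bounded by $|\tau|$ (and likewise for the effective operator), the full difference is bounded in $L_2$ by $2\|\boldsymbol{\phi}\|_{L_2}+2|\tau|\,\|\boldsymbol{\psi}\|_{L_2}$ uniformly in~$\varepsilon$. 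For part~$2^\circ$, given $\boldsymbol{\phi},\boldsymbol{\psi}\in L_2$ and fixed $\tau$, approximate them in~$L_2$ by $H^2$ functions $\boldsymbol{\phi}_\delta,\boldsymbol{\psi}_\delta$; the contribution of the approximants vanishes in the limit $\varepsilon\to0$ by part~$1^\circ$ with $s=2$, while the residual contribution of $\boldsymbol{\phi}-\boldsymbol{\phi}_\delta$ and $\boldsymbol{\psi}-\boldsymbol{\psi}_\delta$ is controlled by the uniform bound and can be made arbitrarily small by choice of $\delta$. For part~$3^\circ$, the same $\varepsilon/\delta$ argument applies with $\boldsymbol{\psi}=0$, now using the large-time estimate \eqref{cos_thrm1_H^s_L2_est_largetime} for the $H^2$-approximant (noting $\varepsilon^{2(1-\alpha)/2}\to 0$ since $\alpha<1$), and the uniform contractivity of the cosines for the residual.

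No substantial obstacle is anticipated: the entire theorem is a direct corollary of the operator-norm bounds already proved. The only point requiring mild care is the coupling of the approximation level $\delta$ to $\varepsilon$ in part~$3^\circ$, so that the $H^2$-norm $\|\boldsymbol{\phi}_\delta\|_{H^2}$ is multiplied by a factor $\varepsilon^{s(1-\alpha)/2}$ which, although it goes to zero, must dominate the growth of $\|\boldsymbol{\phi}_\delta\|_{H^2}$; the standard workaround is first to fix $\delta$ to make the residual small, then to let $\varepsilon\to 0$ with $\boldsymbol{\phi}_\delta$ fixed.
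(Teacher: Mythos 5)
Your proposal is correct and follows essentially the same route as the paper: part $1^\circ$ is read off directly from the operator-norm bounds of Theorem~\ref{cos_sin_thrm1_H^s_L2} applied to the functional-calculus representation of the solutions, and parts $2^\circ$ and $3^\circ$ are obtained by exactly the density/uniform-boundedness argument you describe (which the paper compresses into an appeal to the Banach--Steinhaus theorem together with the bound $2\|\boldsymbol{\phi}\|_{L_2}+2|\tau|\,\|\boldsymbol{\psi}\|_{L_2}$).
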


Statement $2^\circ$ follows directly from statement $1^\circ$ and the obvious estimate
 \begin{equation*}
\| \mathbf{v}_\varepsilon (\cdot, \tau) - \mathbf{v}_0 (\cdot, \tau) \|_{L_2(\mathbb{R}^d)} \le 2 \| \boldsymbol{\phi} \|_{L_2 (\mathbb{R}^d)} +  2 |\tau| \| \boldsymbol{\psi} \|_{L_2(\mathbb{R}^d)},
\end{equation*}
by the Banach-Steinhaus theorem.
Statement~$3^\circ$ is deduced from~\eqref{homog_Cauchy_hatA_eps_est_largetime1}
(with $\boldsymbol{\psi}=0$ and $0 < \alpha < 1$) and the obvious estimate
\begin{equation*}
\| \mathbf{v}_\varepsilon (\cdot, \pm \varepsilon^{-\alpha}) - \mathbf{v}_0 (\cdot, \pm \varepsilon^{-\alpha}) \|_{L_2(\mathbb{R}^d)} \le 2 \| \boldsymbol{\phi} \|_{L_2 (\mathbb{R}^d)},
\end{equation*}
also by the Banach-Steinhaus theorem.

Statement $1^\circ$ can be refined under the additional assumptions.
Theorem~\ref{cos_sin_thrm2_H^s_L2} implies the following result.

\begin{theorem}
    \label{homog_Cauchy_hatA_eps_ench_thrm_1}
    Suppose that the assumptions of Theorem~\emph{\ref{homog_Cauchy_hatA_eps_thrm}} are satisfied. Let $\widehat{N}(\boldsymbol{\theta})$ be the operator defined by~\emph{\eqref{N(theta)}}. Suppose that $\widehat{N}(\boldsymbol{\theta}) = 0$ for any $\boldsymbol{\theta} \in \mathbb{S}^{d-1}$. If $\boldsymbol{\phi}, \boldsymbol{\psi} \in H^s (\mathbb{R}^d, \mathbb{C}^n)$, $0 \le s \le 3/2$,
    then for $\tau \in \mathbb{R}$ and $\varepsilon > 0$ we have
    \begin{equation*}
     \| \mathbf{v}_\varepsilon (\cdot, \tau) - \mathbf{v}_0 (\cdot, \tau) \|_{L_2(\mathbb{R}^d)} \le \varepsilon^{2s/3} \left( \widehat{\mathfrak{C}}_3 (s; \tau) \| \boldsymbol{\phi} \|_{H^s(\mathbb{R}^d)} + \widehat{\mathfrak{C}}_4 (s; \tau) \| \boldsymbol{\psi} \|_{H^s(\mathbb{R}^d)} \right).
    \end{equation*}
    In particular, for $0 < \varepsilon \le 1$ and $\tau = \pm \varepsilon^{-\alpha}$ we have
    \begin{multline}
    \label{homog_Cauchy_hatA_eps_est_largetime2}
    \| \mathbf{v}_\varepsilon (\cdot, \pm \varepsilon^{-\alpha}) - \mathbf{v}_0 (\cdot, \pm \varepsilon^{-\alpha}) \|_{L_2(\mathbb{R}^d)}
\\
\le \varepsilon^{2s(1 - \alpha)/3} \left( \widehat{\mathfrak{C}}_3 (s; 1) \| \boldsymbol{\phi} \|_{H^s(\mathbb{R}^d)} + \varepsilon^{-\alpha} \widehat{\mathfrak{C}}_4 (s; 1) \| \boldsymbol{\psi} \|_{H^s(\mathbb{R}^d)} \right).
    \end{multline}
    Here $0 < \alpha < 2s(2s+3)^{-1}$ if $\boldsymbol{\psi} \ne 0$, and $0 < \alpha < 1$ if $\boldsymbol{\psi} = 0$. The constants  $\widehat{\mathfrak{C}}_3 (s; \tau), \widehat{\mathfrak{C}}_4 (s; \tau)$ are defined by~\emph{\eqref{hat_frakC_3_4}}.
\end{theorem}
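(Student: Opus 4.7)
The plan is to derive this theorem as a direct corollary of the operator-norm estimates in Theorem~\ref{cos_sin_thrm2_H^s_L2}, paralleling exactly the reduction used to prove Theorem~\ref{homog_Cauchy_hatA_eps_thrm} from Theorem~\ref{cos_sin_thrm1_H^s_L2}. Since problems \eqref{homog_Cauchy_hatA_eps} and \eqref{homog_Cauchy_hatA_0} are standard abstract Cauchy problems for the selfadjoint nonnegative operators $\widehat{\mathcal{A}}_\varepsilon$ and $\widehat{\mathcal{A}}^0$, their solutions admit the functional-calculus representations
\begin{gather*}
\mathbf{v}_\varepsilon (\cdot, \tau) = \cos(\tau \widehat{\mathcal{A}}_\varepsilon^{1/2}) \boldsymbol{\phi} + \widehat{\mathcal{A}}_\varepsilon^{-1/2} \sin(\tau \widehat{\mathcal{A}}_\varepsilon^{1/2}) \boldsymbol{\psi}, \\
\mathbf{v}_0 (\cdot, \tau) = \cos(\tau (\widehat{\mathcal{A}}^0)^{1/2}) \boldsymbol{\phi} + (\widehat{\mathcal{A}}^0)^{-1/2} \sin(\tau (\widehat{\mathcal{A}}^0)^{1/2}) \boldsymbol{\psi}.
\end{gather*}

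Subtracting, splitting by the triangle inequality in $L_2(\mathbb{R}^d;\mathbb{C}^n)$, and bounding each piece by the corresponding operator $(H^s \to L_2)$-norm applied to $\boldsymbol{\phi}$ or $\boldsymbol{\psi}$, I obtain
\begin{equation*}
\| \mathbf{v}_\varepsilon(\cdot,\tau) - \mathbf{v}_0(\cdot,\tau)\|_{L_2} \le A_\varepsilon(\tau) \|\boldsymbol{\phi}\|_{H^s} + B_\varepsilon(\tau) \|\boldsymbol{\psi}\|_{H^s},
\end{equation*}
where $A_\varepsilon(\tau)$ and $B_\varepsilon(\tau)$ denote the $(H^s \to L_2)$-norms of the cosine difference and of the ``sine over square root'' difference, respectively. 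Under the hypothesis $\widehat{N}(\boldsymbol{\theta})=0$ for every $\boldsymbol{\theta}\in \mathbb{S}^{d-1}$ and for $0\le s \le 3/2$, Theorem~\ref{cos_sin_thrm2_H^s_L2} gives $A_\varepsilon(\tau)\le \widehat{\mathfrak{C}}_3(s;\tau)\varepsilon^{2s/3}$ and $B_\varepsilon(\tau)\le \widehat{\mathfrak{C}}_4(s;\tau)\varepsilon^{2s/3}$, yielding the first bound immediately.

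For the large-time estimate, I substitute $\tau=\pm\varepsilon^{-\alpha}$ with $0<\varepsilon\le 1$ into \eqref{hat_frakC_3_4}. Using $\widehat{\mathfrak{C}}_3(s;\varepsilon^{-\alpha}) \le \widehat{\mathfrak{C}}_3(s;1)\varepsilon^{-2s\alpha/3}$ (which follows from the power structure $(\widehat{\mathcal{C}}_3 + \widehat{\mathcal{C}}_4 |\tau|)^{2s/3}$ when $|\tau|\ge 1$), the cosine term becomes $O(\varepsilon^{2s(1-\alpha)/3})$; from the extra factor $\widehat{\mathcal{C}}_3\widehat{\mathcal{C}}_4^{-1}+|\tau|$ in $\widehat{\mathfrak{C}}_4$, the sine term picks up an additional $\varepsilon^{-\alpha}$, giving the stated bound \eqref{homog_Cauchy_hatA_eps_est_largetime2}. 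The admissible range of $\alpha$ is then determined by requiring the overall exponent to be nonnegative: $2s(1-\alpha)/3-\alpha>0$ iff $\alpha< 2s(2s+3)^{-1}$ when $\boldsymbol{\psi}\ne 0$, whereas if $\boldsymbol{\psi}=0$ only the cosine term survives and any $0<\alpha<1$ works.

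There is no real obstacle here: the statement is essentially a packaging of Theorem~\ref{cos_sin_thrm2_H^s_L2} into the language of Cauchy problems. The only minor point to verify carefully is the bookkeeping of the $|\tau|$-dependence of the constants $\widehat{\mathfrak{C}}_3(s;\tau)$, $\widehat{\mathfrak{C}}_4(s;\tau)$ when computing the large-time exponent, which determines the threshold $\alpha < 2s(2s+3)^{-1}$.
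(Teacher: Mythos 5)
Your proposal is correct and follows exactly the paper's route: the theorem is obtained by inserting the functional-calculus representations of $\mathbf{v}_\varepsilon$ and $\mathbf{v}_0$ and applying the $(H^s\to L_2)$-operator estimates of Theorem~\ref{cos_sin_thrm2_H^s_L2} (including its built-in large-time versions) to $\boldsymbol{\phi}$ and $\boldsymbol{\psi}$. Your bookkeeping of the $|\tau|$-dependence and the resulting threshold $\alpha<2s(2s+3)^{-1}$ matches the paper.
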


Finally, from Theorem~\ref{cos_sin_thrm3_H^s_L2} we deduce the following statement.

\begin{theorem}
    Suppose that the assumptions of Theorem~\emph{\ref{homog_Cauchy_hatA_eps_thrm}} are satisfied. Suppose also that Condition~\emph{\ref{cond9}} \emph{(}or more restrictive Condition~\emph{\ref{cond99})} is satisfied. If $\boldsymbol{\phi}, \boldsymbol{\psi} \in H^s (\mathbb{R}^d, \mathbb{C}^n)$, $0 \le s \le 3/2$, then for $\tau \in \mathbb{R}$ и $\varepsilon > 0$ we have
    \begin{equation*}
    \| \mathbf{v}_\varepsilon (\cdot, \tau) - \mathbf{v}_0 (\cdot, \tau) \|_{L_2(\mathbb{R}^d)} \le \varepsilon^{2s/3} \left( \widehat{\mathfrak{C}}_5 (s; \tau) \| \boldsymbol{\phi} \|_{H^s(\mathbb{R}^d)} + \widehat{\mathfrak{C}}_6 (s; \tau) \| \boldsymbol{\psi} \|_{H^s(\mathbb{R}^d)} \right).
    \end{equation*}
    In particular, for $0 < \varepsilon \le 1$ and $\tau = \pm \varepsilon^{-\alpha}$ we have
    \begin{multline*}
    \| \mathbf{v}_\varepsilon (\cdot, \pm \varepsilon^{-\alpha}) - \mathbf{v}_0 (\cdot, \pm \varepsilon^{-\alpha}) \|_{L_2(\mathbb{R}^d)}
\\
 \le \varepsilon^{2s(1 - \alpha)/3} \left( \widehat{\mathfrak{C}}_5 (s; 1) \| \boldsymbol{\phi} \|_{H^s(\mathbb{R}^d)} + \varepsilon^{-\alpha} \widehat{\mathfrak{C}}_6(s; 1) \| \boldsymbol{\psi} \|_{H^s(\mathbb{R}^d)} \right).
    \end{multline*}
    Here $0 < \alpha < 2s(2s+3)^{-1}$ if $\boldsymbol{\psi} \ne 0$, and $0 < \alpha < 1$ if $\boldsymbol{\psi} = 0$.
    The constants $\widehat{\mathfrak{C}}_5 (s; \tau), \widehat{\mathfrak{C}}_6 (s; \tau)$ are defined by~\emph{\eqref{hat_frakC_5_6}}.
\end{theorem}

\subsection{The Cauchy problem for the nonhomogeneous equation with the operator $\widehat{\mathcal{A}}_\varepsilon$}

Now we consider the Cauchy problem for the nonhomogeneous equation
\begin{equation}
\label{nonhomog_Cauchy_hatA_eps}
\left\{
\begin{aligned}
&\frac{\partial^2 \mathbf{v}_\varepsilon (\mathbf{x}, \tau)}{\partial \tau^2} = - b(\mathbf{D})^* g^\varepsilon (\mathbf{x}) b(\mathbf{D}) \mathbf{v}_\varepsilon (\mathbf{x}, \tau) + \mathbf{F} (\mathbf{x}, \tau), \\
& \mathbf{v}_\varepsilon (\mathbf{x}, 0) = \boldsymbol{\phi} (\mathbf{x}), \quad \frac{\partial \mathbf{v}_\varepsilon }{\partial \tau} (\mathbf{x}, 0) = \boldsymbol{\psi}(\mathbf{x}),
\end{aligned}
\right.
\end{equation}
where $\boldsymbol{\phi}, \boldsymbol{\psi} \in L_2 (\mathbb{R}^d, \mathbb{C}^n), \, \mathbf{F} \in L_{1, \mathrm{loc}} (\mathbb{R}; L_2 (\mathbb{R}^d, \mathbb{C}^n) )$~are given functions.
 The solution of this problem can be represented as
 \begin{multline*}
\mathbf{v}_\varepsilon (\cdot, \tau) = \cos(\tau \widehat{\mathcal{A}}_\varepsilon^{1/2}) \boldsymbol{\phi} + \widehat{\mathcal{A}}_\varepsilon^{-1/2} \sin(\tau \widehat{\mathcal{A}}_\varepsilon^{1/2}) \boldsymbol{\psi}
\\
+ \int_{0}^{\tau} \widehat{\mathcal{A}}_\varepsilon^{-1/2} \sin((\tau - \tilde{\tau}) \widehat{\mathcal{A}}_\varepsilon^{1/2}) \mathbf{F} (\cdot, \tilde{\tau}) \, d \tilde{\tau}.
\end{multline*}
Let $\mathbf{v}_0 (\mathbf{x}, \tau)$~be the solution of the homogenized problem
\begin{equation}
\label{nonhomog_Cauchy_hatA_0}
\left\{
\begin{aligned}
&\frac{\partial^2 \mathbf{v}_0 (\mathbf{x}, \tau)}{\partial \tau^2} = - b(\mathbf{D})^* g^0 b(\mathbf{D}) \mathbf{v}_0 (\mathbf{x}, \tau) + \mathbf{F} (\mathbf{x}, \tau) , \\
& \mathbf{v}_0 (\mathbf{x}, 0) = \boldsymbol{\phi} (\mathbf{x}), \quad \frac{\partial \mathbf{v}_0 }{\partial \tau} (\mathbf{x}, 0) = \boldsymbol{\psi}(\mathbf{x}).
\end{aligned}
\right.
\end{equation}
Then
\begin{multline*}
\mathbf{v}_0 (\cdot, \tau) = \cos(\tau (\widehat{\mathcal{A}}^0)^{1/2}) \boldsymbol{\phi} + (\widehat{\mathcal{A}}^0)^{-1/2} \sin(\tau (\widehat{\mathcal{A}}^0)^{1/2}) \boldsymbol{\psi}
\\
+ \int_{0}^{\tau} (\widehat{\mathcal{A}}^0)^{-1/2} \sin((\tau - \tilde{\tau}) (\widehat{\mathcal{A}}^0)^{1/2}) \mathbf{F} (\cdot, \tilde{\tau}) \, d \tilde{\tau}.
\end{multline*}

From Theorem~\ref{cos_sin_thrm1_H^s_L2}  we deduce the following result (which has been proved before in~\cite[Theorem~15.2]{BSu5}).

\begin{theorem}
    \label{nonhomog_Cauchy_hatA_eps_thrm}
    Let $\mathbf{v}_\varepsilon$~be the solution of problem~\emph{\eqref{nonhomog_Cauchy_hatA_eps}}, and let $\mathbf{v}_0$~be the solution of problem~\emph{\eqref{nonhomog_Cauchy_hatA_0}}.

    $1^{\circ}$.
         If $\boldsymbol{\phi}, \boldsymbol{\psi} \in H^s (\mathbb{R}^d; \mathbb{C}^n)$ and
$\mathbf{F} \in L_{1, \mathrm{loc}} (\mathbb{R}; H^s (\mathbb{R}^d; \mathbb{C}^n))$, where $0 \le s \le 2$, then for $\tau \in \mathbb{R}$ и $\varepsilon > 0$ we have
        \begin{multline}
        \label{nonhomog_Cauchy_hatA_eps_est1}
        \| \mathbf{v}_\varepsilon(\cdot, \tau) - \mathbf{v}_0 (\cdot, \tau) \|_{L_2 (\mathbb{R}^d)} \le
        \varepsilon^{s/2} \left( \widehat{\mathfrak{C}}_1 (s; \tau) \| \boldsymbol{\phi} \|_{H^s(\mathbb{R}^d)} + \widehat{\mathfrak{C}}_2 (s; \tau) \| \boldsymbol{\psi} \|_{H^s(\mathbb{R}^d)} \right)
\\
+  \varepsilon^{s/2} \widehat{\mathfrak{C}}_2 (s; \tau) \|\mathbf{F} \|_{L_1((0,\tau);H^s(\mathbb{R}^d))}.
        \end{multline}
        Under the additional assumption that $\mathbf{F} \in L_p (\mathbb{R}_\pm; H^s (\mathbb{R}^d; \mathbb{C}^n) )$ \emph{(}where \hbox{$1 \le p \le \infty$}\emph{),} for $\tau = \pm \varepsilon^{-\alpha}$ and $0 < \varepsilon \le 1$ we have
        \begin{multline}
        \label{nonhomog_Cauchy_hatA_eps_est_largetime1}
        \| \mathbf{v}_\varepsilon (\cdot, \pm \varepsilon^{-\alpha}) - \mathbf{v}_0 (\cdot, \pm \varepsilon^{-\alpha}) \|_{L_2(\mathbb{R}^d)}
\\
\le \varepsilon^{s(1 - \alpha)/2} \left( \widehat{\mathfrak{C}}_1 (s; 1) \| \boldsymbol{\phi} \|_{H^s(\mathbb{R}^d)} +  \varepsilon^{-\alpha} \widehat{\mathfrak{C}}_2 (s; 1) \| \boldsymbol{\psi} \|_{H^s(\mathbb{R}^d)} \right)
\\
+ \varepsilon^{s(1 - \alpha)/2-\alpha-\alpha/p'} \widehat{\mathfrak{C}}_2 (s; 1) \|\mathbf{F} \|_{L_p(\mathbb{R}_\pm; H^s(\mathbb{R}^d))}.
        \end{multline}
        Here $p^{-1} + (p')^{-1} = 1$ and $0 < \alpha < s(s + 2 + 2/p')^{-1}$.
        The constants $\widehat{\mathfrak{C}}_1 (s; \tau)$ and $\widehat{\mathfrak{C}}_2 (s; \tau)$ are defined by~\emph{\eqref{hat_frakC_1_2}}.

    $2^{\circ}$.  If $\boldsymbol{\phi}, \boldsymbol{\psi} \in L_2 (\mathbb{R}^d; \mathbb{C}^n)$ and $\mathbf{F} \in L_{1, \mathrm{loc}} (\mathbb{R}; L_2 (\mathbb{R}^d; \mathbb{C}^n) )$, then
        \begin{equation*}
        \lim\limits_{\varepsilon \to 0} \| \mathbf{v}_\varepsilon (\cdot, \tau) - \mathbf{v}_0 (\cdot, \tau) \|_{L_2(\mathbb{R}^d)} = 0, \quad \tau \in \mathbb{R}.
        \end{equation*}
\end{theorem}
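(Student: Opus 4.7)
\smallskip

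\noindent\emph{Proof proposal.} The plan is to reduce the statement to the operator estimates of Theorem~\ref{cos_sin_thrm1_H^s_L2} via Duhamel's formula. I would first write
\begin{equation*}
\mathbf{v}_\varepsilon(\cdot,\tau) - \mathbf{v}_0(\cdot,\tau) = \mathcal{I}_1(\tau) + \mathcal{I}_2(\tau) + \mathcal{I}_3(\tau),
\end{equation*}
where $\mathcal{I}_1(\tau) = (\cos(\tau\widehat{\mathcal{A}}_\varepsilon^{1/2}) - \cos(\tau(\widehat{\mathcal{A}}^0)^{1/2}))\boldsymbol{\phi}$, the term $\mathcal{I}_2(\tau)$ is the analogous sine-difference applied to $\boldsymbol{\psi}$, and
\begin{equation*}
\mathcal{I}_3(\tau) = \int_0^\tau \bigl[ \widehat{\mathcal{A}}_\varepsilon^{-1/2}\sin((\tau-\tilde{\tau})\widehat{\mathcal{A}}_\varepsilon^{1/2}) - (\widehat{\mathcal{A}}^0)^{-1/2}\sin((\tau-\tilde{\tau})(\widehat{\mathcal{A}}^0)^{1/2})\bigr]\mathbf{F}(\cdot,\tilde{\tau})\,d\tilde{\tau}.
\end{equation*}
The terms $\mathcal{I}_1$ and $\mathcal{I}_2$ are estimated directly by the two inequalities of Theorem~\ref{cos_sin_thrm1_H^s_L2}, giving exactly the first two contributions on the right-hand side of \eqref{nonhomog_Cauchy_hatA_eps_est1}.

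The term $\mathcal{I}_3$ is the new ingredient. For each fixed $\tilde{\tau} \in [0,\tau]$, applying \eqref{sin_thrm1_H^s_L2_est} with $\tau$ replaced by $\tau - \tilde{\tau}$ to the function $\mathbf{F}(\cdot,\tilde{\tau}) \in H^s(\mathbb{R}^d;\mathbb{C}^n)$ yields
\begin{equation*}
\| \mathcal{I}_3(\tau)\|_{L_2(\mathbb{R}^d)} \le \varepsilon^{s/2} \int_0^\tau \widehat{\mathfrak{C}}_2(s;\tau-\tilde{\tau}) \|\mathbf{F}(\cdot,\tilde{\tau})\|_{H^s(\mathbb{R}^d)}\,d\tilde{\tau}.
\end{equation*}
Since $\widehat{\mathfrak{C}}_2(s;\cdot)$ is nondecreasing in $|\cdot|$ and $|\tau-\tilde{\tau}| \le |\tau|$ for $\tilde{\tau}$ in the integration range, one can pull $\widehat{\mathfrak{C}}_2(s;\tau)$ out of the integral to recover \eqref{nonhomog_Cauchy_hatA_eps_est1}.

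For the long-time estimate \eqref{nonhomog_Cauchy_hatA_eps_est_largetime1}, the key observation is that $\widehat{\mathfrak{C}}_2(s;\tau)$ grows like $|\tau|^{1+s/2}$, so for $\tau = \pm\varepsilon^{-\alpha}$ with $0<\varepsilon\le 1$ we have $\widehat{\mathfrak{C}}_2(s;\tau-\tilde{\tau}) \le C\widehat{\mathfrak{C}}_2(s;1)\varepsilon^{-\alpha(1+s/2)}$. Then Hölder's inequality with conjugate exponents $p,p'$ gives
\begin{equation*}
\int_0^{|\tau|} \|\mathbf{F}(\cdot,\tilde{\tau})\|_{H^s}\,d\tilde{\tau} \le |\tau|^{1/p'} \|\mathbf{F}\|_{L_p(\mathbb{R}_\pm;H^s(\mathbb{R}^d))} \le \varepsilon^{-\alpha/p'}\|\mathbf{F}\|_{L_p(\mathbb{R}_\pm;H^s)}.
\end{equation*}
Collecting powers yields the exponent $s(1-\alpha)/2 - \alpha - \alpha/p'$ stated in the theorem; positivity of this exponent (required so that the bound is nontrivial) is exactly the condition $\alpha < s/(s+2+2/p')$.

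For statement $2^{\circ}$, I would invoke the Banach--Steinhaus theorem. The map $(\boldsymbol{\phi},\boldsymbol{\psi},\mathbf{F}) \mapsto \mathbf{v}_\varepsilon(\cdot,\tau) - \mathbf{v}_0(\cdot,\tau)$ is linear, and for fixed $\tau$ it is bounded from $L_2 \oplus L_2 \oplus L_1((0,\tau);L_2)$ to $L_2(\mathbb{R}^d;\mathbb{C}^n)$ with bound independent of $\varepsilon$, using $\|\cos(\tau A^{1/2})\| \le 1$ and $\|A^{-1/2}\sin(\tau A^{1/2})\| \le |\tau|$ for any nonnegative selfadjoint $A$. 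On the dense subspace with $\boldsymbol{\phi},\boldsymbol{\psi} \in H^s$ and $\mathbf{F} \in L_1((0,\tau);H^s)$ (any $s>0$), statement $1^{\circ}$ gives convergence to zero as $\varepsilon \to 0$. Banach--Steinhaus then extends this to all admissible data.

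The main technical point will be the handling of the convolution-type term $\mathcal{I}_3$: one needs $\widehat{\mathfrak{C}}_2(s;\tau-\tilde{\tau})$ to depend only mildly on $\tilde{\tau}$, which is ensured by its monotone polynomial growth. Everything else is bookkeeping built on the already-established operator estimate \eqref{sin_thrm1_H^s_L2_est}.
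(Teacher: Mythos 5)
Your proposal is correct and follows essentially the same route as the paper: the paper's proof simply cites the operator estimates \eqref{cos_thrm1_H^s_L2_est}, \eqref{sin_thrm1_H^s_L2_est} and their large-time versions \eqref{cos_thrm1_H^s_L2_est_largetime}, \eqref{sin_thrm1_H^s_L2_est_largetime}, and handles statement $2^\circ$ by the Banach--Steinhaus theorem together with the same obvious uniform bound you use. Your write-up merely makes explicit the Duhamel decomposition, the monotonicity of $\widehat{\mathfrak{C}}_2(s;\cdot)$ used to pull the constant out of the convolution integral, and the H\"older step, all of which the paper leaves implicit.
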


\begin{proof}
Estimate~\eqref{nonhomog_Cauchy_hatA_eps_est1} follows from ~\eqref{cos_thrm1_H^s_L2_est} and \eqref{sin_thrm1_H^s_L2_est}.

Under the assumption that $\mathbf{F} \in L_p (\mathbb{R}_\pm; H^s (\mathbb{R}^d; \mathbb{C}^n) )$, estimate~\eqref{nonhomog_Cauchy_hatA_eps_est_largetime1}
is deduced from~\eqref{cos_thrm1_H^s_L2_est_largetime} and \eqref{sin_thrm1_H^s_L2_est_largetime}.

Statement~$2^\circ$ follows from~\eqref{nonhomog_Cauchy_hatA_eps_est1} and the obvious estimate
\begin{equation*}
\| \mathbf{v}_\varepsilon (\cdot, \tau) - \mathbf{v}_0 (\cdot, \tau) \|_{L_2(\mathbb{R}^d)} \le 2 \| \boldsymbol{\phi} \|_{L_2 (\mathbb{R}^d)} +
  2 |\tau| \bigl(\| \boldsymbol{\psi} \|_{L_2(\mathbb{R}^d)} + \| \mathbf{F} \|_{L_{1} ((0,\tau); L_2 (\mathbb{R}^d) )}\bigr),
\end{equation*}
by the Banach-Steinhaus theorem.
\end{proof}

Statement $1^\circ$ of Theorem~\ref{nonhomog_Cauchy_hatA_eps_thrm} can be refined under the additional assumptions.
Theorem~\ref{cos_sin_thrm2_H^s_L2}  implies the following result.

\begin{theorem}
    \label{nonhomog_Cauchy_hatA_eps_ench_thrm_1}
    Suppose that the assumptions of Theorem~\emph{\ref{nonhomog_Cauchy_hatA_eps_thrm}} are satisfied. Let $\widehat{N}(\boldsymbol{\theta})$ be the operator defined by~\emph{\eqref{N(theta)}}. Suppose that $\widehat{N}(\boldsymbol{\theta}) = 0$ for any $\boldsymbol{\theta} \in \mathbb{S}^{d-1}$. If $\boldsymbol{\phi}, \boldsymbol{\psi} \in H^s (\mathbb{R}^d; \mathbb{C}^n)$ and  $\mathbf{F} \in L_{1, \mathrm{loc}} (\mathbb{R}; H^s (\mathbb{R}^d; \mathbb{C}^n))$, $0 \le s \le 3/2$, then for $\tau \in \mathbb{R}$ and $\varepsilon > 0$ we have
    \begin{multline*}
    \| \mathbf{v}_\varepsilon (\cdot, \tau) - \mathbf{v}_0 (\cdot, \tau) \|_{L_2(\mathbb{R}^d)} \le \varepsilon^{2s/3} \left( \widehat{\mathfrak{C}}_3 (s; \tau) \| \boldsymbol{\phi} \|_{H^s(\mathbb{R}^d)} + \widehat{\mathfrak{C}}_4 (s; \tau) \| \boldsymbol{\psi} \|_{H^s(\mathbb{R}^d)} \right)
\\
+ \varepsilon^{2s/3} \widehat{\mathfrak{C}}_4 (s; \tau) \|\mathbf{F} \|_{L_1((0,\tau);H^s(\mathbb{R}^d))}.
    \end{multline*}
    Under the additional assumption that $\mathbf{F} \in L_p (\mathbb{R}_\pm; H^s (\mathbb{R}^d; \mathbb{C}^n) )$ \emph{(}where
\hbox{$1 \le p \le \infty$}\emph{)}, for $\tau = \pm \varepsilon^{-\alpha}$ and $0 < \varepsilon \le 1$ we have
    \begin{multline*}
    \| \mathbf{v}_\varepsilon (\cdot, \pm \varepsilon^{-\alpha}) - \mathbf{v}_0 (\cdot, \pm \varepsilon^{-\alpha}) \|_{L_2(\mathbb{R}^d)}
\\
\le \varepsilon^{2s(1 - \alpha)/3} \left( \widehat{\mathfrak{C}}_3 (s; 1) \| \boldsymbol{\phi} \|_{H^s(\mathbb{R}^d)} +  \varepsilon^{-\alpha} \widehat{\mathfrak{C}}_4 (s; 1) \| \boldsymbol{\psi} \|_{H^s(\mathbb{R}^d)} \right)
\\
+ \varepsilon^{2s(1 - \alpha)/3-\alpha-\alpha/p'} \widehat{\mathfrak{C}}_4 (s; 1) \|\mathbf{F} \|_{L_p(\mathbb{R}_\pm; H^s(\mathbb{R}^d))}.
    \end{multline*}
    Here $p^{-1} + (p')^{-1} = 1$ and $0 < \alpha < 2s(2s + 3 + 3/p')^{-1}$. The constants $\widehat{\mathfrak{C}}_3 (s; \tau), \widehat{\mathfrak{C}}_4 (s; \tau)$ are defined by~\emph{\eqref{hat_frakC_3_4}}.
\end{theorem}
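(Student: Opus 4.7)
The plan is to follow exactly the template of Theorem~\ref{nonhomog_Cauchy_hatA_eps_thrm}, but to substitute the refined bounds of Theorem~\ref{cos_sin_thrm2_H^s_L2} (valid because $\widehat{N}(\boldsymbol{\theta})=0$) in place of those of Theorem~\ref{cos_sin_thrm1_H^s_L2} at each step. I represent the solutions of \eqref{nonhomog_Cauchy_hatA_eps} and \eqref{nonhomog_Cauchy_hatA_0} by the Duhamel-type formulas already displayed in the excerpt and subtract them to obtain
\begin{equation*}
\mathbf{v}_\varepsilon(\cdot,\tau)-\mathbf{v}_0(\cdot,\tau) = \mathrm{I}_{\mathrm{c}}(\tau)\boldsymbol{\phi} + \mathrm{I}_{\mathrm{s}}(\tau)\boldsymbol{\psi} + \int_0^\tau \mathrm{I}_{\mathrm{s}}(\tau-\tilde\tau)\,\mathbf{F}(\cdot,\tilde\tau)\,d\tilde\tau,
\end{equation*}
where $\mathrm{I}_{\mathrm{c}}(\tau)$ and $\mathrm{I}_{\mathrm{s}}(\tau)$ denote respectively the cosine difference and the sine difference operators estimated in Theorem~\ref{cos_sin_thrm2_H^s_L2}.

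The first two terms are handled immediately: applying the $(H^s\to L_2)$-bounds of Theorem~\ref{cos_sin_thrm2_H^s_L2} to $\boldsymbol{\phi}$ and $\boldsymbol{\psi}$ respectively produces the contributions $\varepsilon^{2s/3}\widehat{\mathfrak{C}}_3(s;\tau)\|\boldsymbol{\phi}\|_{H^s}$ and $\varepsilon^{2s/3}\widehat{\mathfrak{C}}_4(s;\tau)\|\boldsymbol{\psi}\|_{H^s}$. For the Duhamel integral, I estimate the integrand pointwise in $\tilde\tau$ by Theorem~\ref{cos_sin_thrm2_H^s_L2} applied at time $\tau-\tilde\tau$; this gives
\begin{equation*}
\|\mathrm{I}_{\mathrm{s}}(\tau-\tilde\tau)\mathbf{F}(\cdot,\tilde\tau)\|_{L_2} \le \varepsilon^{2s/3}\widehat{\mathfrak{C}}_4(s;\tau-\tilde\tau)\|\mathbf{F}(\cdot,\tilde\tau)\|_{H^s}.
\end{equation*}
Since $\widehat{\mathfrak{C}}_4(s;\cdot)$ is a non-decreasing function of $|\tau|$ (as is clear from \eqref{hat_frakC_3_4}), I can bound $\widehat{\mathfrak{C}}_4(s;\tau-\tilde\tau)\le \widehat{\mathfrak{C}}_4(s;\tau)$ uniformly in $\tilde\tau\in(0,\tau)$ (and symmetrically for $\tau<0$), pull it outside the integral, and recognize the resulting integral as $\|\mathbf{F}\|_{L_1((0,\tau);H^s)}$. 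This yields the first estimate of the theorem.

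For the large-time statement, I specialize to $\tau=\pm\varepsilon^{-\alpha}$ and use the already-established large-time bounds for $\mathrm{I}_{\mathrm{c}}$ and $\mathrm{I}_{\mathrm{s}}$ from Theorem~\ref{cos_sin_thrm2_H^s_L2} to absorb the $\boldsymbol{\phi}$- and $\boldsymbol{\psi}$-terms. The Duhamel term is then estimated by combining the bound $\widehat{\mathfrak{C}}_4(s;1)\varepsilon^{2s(1-\alpha)/3-\alpha}$ coming from Theorem~\ref{cos_sin_thrm2_H^s_L2} (applied at each $\tilde\tau$ with $|\tau-\tilde\tau|\le\varepsilon^{-\alpha}$) with Hölder's inequality in $\tilde\tau$:
\begin{equation*}
\|\mathbf{F}\|_{L_1((0,\varepsilon^{-\alpha});H^s)} \le \varepsilon^{-\alpha/p'}\|\mathbf{F}\|_{L_p(\mathbb{R}_\pm;H^s)}.
\end{equation*}
Multiplying the two bounds gives the stated factor $\varepsilon^{2s(1-\alpha)/3-\alpha-\alpha/p'}$, which is a negative power of $\varepsilon$ that tends to zero precisely when $\alpha<2s(2s+3+3/p')^{-1}$.

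The only genuinely non-routine point is bookkeeping for the Duhamel integral: I must verify the monotonicity of the constant $\widehat{\mathfrak{C}}_4(s;\cdot)$ in the time parameter so as to justify pulling it out of the integral, and I must make sure that the supremum of $|\tau-\tilde\tau|$ over the integration interval is no larger than $|\tau|$ (so that the same large-time scaling applies uniformly in $\tilde\tau$). Both are transparent from \eqref{hat_frakC_3_4} and from $\tilde\tau\in(0,\tau)$, and no novel estimates beyond those of Theorem~\ref{cos_sin_thrm2_H^s_L2} are required.
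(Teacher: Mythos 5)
Your proposal is correct and follows essentially the same route as the paper: the paper derives this theorem from Theorem~\ref{cos_sin_thrm2_H^s_L2} exactly as Theorem~\ref{nonhomog_Cauchy_hatA_eps_thrm} is derived from Theorem~\ref{cos_sin_thrm1_H^s_L2}, i.e.\ via the Duhamel representation, the monotonicity of $\widehat{\mathfrak{C}}_4(s;\cdot)$ in $|\tau|$ to pull the constant out of the integral, and H\"older's inequality in $\tilde\tau$ for the large-time bound. (Only a wording slip: the exponent $2s(1-\alpha)/3-\alpha-\alpha/p'$ is \emph{positive}, not negative, precisely when $\alpha<2s(2s+3+3/p')^{-1}$.)
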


Finally, Theorem~\ref{cos_sin_thrm3_H^s_L2} implies the following result.

\begin{theorem}
    Suppose that the assumptions of Theorem~\emph{\ref{nonhomog_Cauchy_hatA_eps_thrm}} are satisfied. Suppose also that
    Condition~\emph{\ref{cond9}} \emph{(}or more restrictive condition~\emph{\ref{cond99})} is satisfied. If $\boldsymbol{\phi}, \boldsymbol{\psi} \in H^s (\mathbb{R}^d; \mathbb{C}^n)$ and $\mathbf{F} \in L_{1, \mathrm{loc}} (\mathbb{R}; H^s (\mathbb{R}^d; \mathbb{C}^n))$, $0~\le~s~\le 3/2$, then for $\tau \in \mathbb{R}$ and $\varepsilon > 0$ we have
    \begin{multline*}
    \| \mathbf{v}_\varepsilon (\cdot, \tau) - \mathbf{v}_0 (\cdot, \tau) \|_{L_2(\mathbb{R}^d)}  \le
\varepsilon^{2s/3} \left( \widehat{\mathfrak{C}}_5 (s; \tau) \| \boldsymbol{\phi} \|_{H^s(\mathbb{R}^d)} + \widehat{\mathfrak{C}}_6 (s; \tau) \| \boldsymbol{\psi} \|_{H^s(\mathbb{R}^d)}\right)
\\
+ \varepsilon^{2s/3} \widehat{\mathfrak{C}}_6 (s; \tau) \|\mathbf{F} \|_{L_1((0,\tau);H^s(\mathbb{R}^d))}.
    \end{multline*}
    Under the additional assumption that $\mathbf{F} \in L_p (\mathbb{R}_\pm; H^s (\mathbb{R}^d; \mathbb{C}^n) )$ \emph{(}where
\hbox{$1 \le p \le \infty$}\emph{)}, for $\tau = \pm \varepsilon^{-\alpha}$ and $0 < \varepsilon \le 1$ we have
    \begin{multline*}
    \| \mathbf{v}_\varepsilon (\cdot, \pm \varepsilon^{-\alpha}) - \mathbf{v}_0 (\cdot, \pm \varepsilon^{-\alpha}) \|_{L_2(\mathbb{R}^d)} \le \\ \le \varepsilon^{2s(1 - \alpha)/3} \left( \widehat{\mathfrak{C}}_5 (s; 1) \| \boldsymbol{\phi} \|_{H^s(\mathbb{R}^d)} +  \varepsilon^{-\alpha} \widehat{\mathfrak{C}}_6 (s; 1) \| \boldsymbol{\psi} \|_{H^s(\mathbb{R}^d)} \right)
\\
+ \varepsilon^{2s(1 - \alpha)/3-\alpha-\alpha/p'} \widehat{\mathfrak{C}}_6 (s; 1) \|\mathbf{F} \|_{L_p(\mathbb{R}_\pm; H^s(\mathbb{R}^d))}.
    \end{multline*}
    Here $p^{-1} + (p')^{-1} = 1$ and $0 < \alpha < 2s(2s + 3 + 3/p')^{-1}$. The constants $\widehat{\mathfrak{C}}_5 (s; \tau), \widehat{\mathfrak{C}}_6 (s; \tau)$ are defined by~\emph{(\ref{hat_frakC_5_6})}.
\end{theorem}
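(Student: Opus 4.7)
The plan is to follow the pattern of the proof of Theorem~\ref{nonhomog_Cauchy_hatA_eps_thrm}, but with the sharper small-$\varepsilon$ estimates provided by Theorem~\ref{cos_sin_thrm3_H^s_L2} replacing those of Theorem~\ref{cos_sin_thrm1_H^s_L2}. First I would write the solutions by Duhamel's formula, giving
\begin{equation*}
\mathbf{v}_\varepsilon(\cdot,\tau) - \mathbf{v}_0(\cdot,\tau) = \mathcal{D}_1(\tau)\boldsymbol{\phi} + \mathcal{D}_2(\tau)\boldsymbol{\psi} + \int_0^\tau \mathcal{D}_2(\tau-\tilde\tau)\mathbf{F}(\cdot,\tilde\tau)\,d\tilde\tau,
\end{equation*}
where $\mathcal{D}_1(\tau) := \cos(\tau\widehat{\mathcal{A}}_\varepsilon^{1/2})-\cos(\tau(\widehat{\mathcal{A}}^0)^{1/2})$ and $\mathcal{D}_2(\tau) := \widehat{\mathcal{A}}_\varepsilon^{-1/2}\sin(\tau\widehat{\mathcal{A}}_\varepsilon^{1/2}) - (\widehat{\mathcal{A}}^0)^{-1/2}\sin(\tau(\widehat{\mathcal{A}}^0)^{1/2})$. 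Under Condition~\ref{cond9}, Theorem~\ref{cos_sin_thrm3_H^s_L2} gives the $(H^s\to L_2)$-operator-norm bounds
$\|\mathcal{D}_1(\tau)\| \le \widehat{\mathfrak{C}}_5(s;\tau)\varepsilon^{2s/3}$ and $\|\mathcal{D}_2(\tau)\| \le \widehat{\mathfrak{C}}_6(s;\tau)\varepsilon^{2s/3}$ for $0\le s\le 3/2$.

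Applying these to the first two terms is immediate. For the Duhamel integral, I would estimate
\begin{equation*}
\left\| \int_0^\tau \mathcal{D}_2(\tau-\tilde\tau)\mathbf{F}(\cdot,\tilde\tau)\,d\tilde\tau\right\|_{L_2(\mathbb{R}^d)} \le \int_0^{|\tau|} \widehat{\mathfrak{C}}_6(s;\tau-\tilde\tau)\,\varepsilon^{2s/3}\,\|\mathbf{F}(\cdot,\tilde\tau)\|_{H^s(\mathbb{R}^d)}\,d\tilde\tau,
\end{equation*}
and bound $\widehat{\mathfrak{C}}_6(s;\tau-\tilde\tau) \le \widehat{\mathfrak{C}}_6(s;\tau)$ for $0\le\tilde\tau\le\tau$ (recalling from~\eqref{hat_frakC_5_6} that $\widehat{\mathfrak{C}}_6(s;\cdot)$ is monotone in $|\tau|$ since it is a product of monotone factors). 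This yields the first estimate of the theorem.

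For the large-time estimate with $\tau = \pm\varepsilon^{-\alpha}$ and $\mathbf{F}\in L_p(\mathbb{R}_\pm;H^s)$, I would apply H\"older's inequality in $\tilde\tau$ to the Duhamel integral, which produces a factor $|\tau|^{1/p'}$ times $\|\mathbf{F}\|_{L_p(\mathbb{R}_\pm;H^s)}$ and an overall factor $\widehat{\mathfrak{C}}_6(s;|\tau|)$. For $|\tau|=\varepsilon^{-\alpha}\ge 1$, the elementary inequality $\widehat{\mathfrak{C}}_6(s;|\tau|) \le \widehat{\mathfrak{C}}_6(s;1)|\tau|^{1+2s/3}$ (which follows from~\eqref{hat_frakC_5_6}) gives the Duhamel contribution
\begin{equation*}
\varepsilon^{2s/3}\cdot\varepsilon^{-\alpha(1+2s/3)}\cdot\varepsilon^{-\alpha/p'}\widehat{\mathfrak{C}}_6(s;1)\|\mathbf{F}\|_{L_p(\mathbb{R}_\pm;H^s)} = \varepsilon^{2s(1-\alpha)/3-\alpha-\alpha/p'}\widehat{\mathfrak{C}}_6(s;1)\|\mathbf{F}\|_{L_p(\mathbb{R}_\pm;H^s)}.
\end{equation*}
The contributions from $\boldsymbol{\phi}$ and $\boldsymbol{\psi}$ are handled by the large-time estimates already spelled out in Theorem~\ref{cos_sin_thrm3_H^s_L2}. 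The range condition $0<\alpha<2s(2s+3+3/p')^{-1}$ is precisely the requirement that the exponent $2s(1-\alpha)/3 - \alpha - \alpha/p'$ remains strictly positive.

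The main obstacle is purely bookkeeping: making sure the precise interplay between the large-time growth of $\widehat{\mathfrak{C}}_6(s;\tau)$, the Duhamel integration, and the H\"older step produces exactly the stated exponent $2s(1-\alpha)/3-\alpha-\alpha/p'$, and that the threshold $\alpha < 2s(2s+3+3/p')^{-1}$ comes out correctly. There is no new analytic difficulty beyond what is already in Theorem~\ref{nonhomog_Cauchy_hatA_eps_thrm}; the only substitution is trading $\widehat{\mathfrak{C}}_{1,2}$ and the exponent $s/2$ for $\widehat{\mathfrak{C}}_{5,6}$ and the exponent $2s/3$, which is legitimate under Condition~\ref{cond9}.
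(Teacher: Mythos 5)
Your proposal is correct and follows the same route as the paper: the paper derives this theorem exactly as it derives Theorem~\ref{nonhomog_Cauchy_hatA_eps_thrm}, namely by inserting the $(H^s\to L_2)$ bounds of Theorem~\ref{cos_sin_thrm3_H^s_L2} into the Duhamel representation, with the monotonicity of $\widehat{\mathfrak{C}}_6(s;\cdot)$ and H\"older's inequality handling the inhomogeneous term. Your exponent bookkeeping and the derivation of the threshold $\alpha<2s(2s+3+3/p')^{-1}$ are accurate.
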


\subsection{The Cauchy problem for the homogeneous equation with the operator $\mathcal{A}_\varepsilon$}

Let $\mathcal{A}_\varepsilon$~be the operator given by~\eqref{A_eps}.
Let $\mathbf{v}_\varepsilon (\mathbf{x}, \tau)$,  $\mathbf{x} \in \mathbb{R}^d$, $\tau \in \mathbb{R}$, be the solution of
the Cauchy problem
\begin{equation}
    \label{homog_Cauchy_A_eps}
    \left\{
    \begin{aligned}
        &\frac{\partial^2 \mathbf{v}_\varepsilon (\mathbf{x}, \tau)}{\partial \tau^2} = -(f^\varepsilon (\mathbf{x}))^* b(\mathbf{D})^* g^\varepsilon (\mathbf{x}) b(\mathbf{D}) f^\varepsilon (\mathbf{x}) \mathbf{v}_\varepsilon (\mathbf{x}, \tau), \\
        & f^\varepsilon (\mathbf{x}) \mathbf{v}_\varepsilon (\mathbf{x}, 0) = \boldsymbol{\phi} (\mathbf{x}), \quad f^\varepsilon (\mathbf{x}) \frac{\partial \mathbf{v}_\varepsilon }{\partial \tau} (\mathbf{x}, 0) = \boldsymbol{\psi}(\mathbf{x}),
    \end{aligned}
    \right.
\end{equation}
where $\boldsymbol{\phi}, \boldsymbol{\psi} \in L_2 (\mathbb{R}^d; \mathbb{C}^n)$~are given functions.
The solution can be represented as
\begin{equation*}
    \mathbf{v}_\varepsilon (\cdot, \tau) = \cos(\tau \mathcal{A}_\varepsilon^{1/2}) (f^\varepsilon)^{-1} \boldsymbol{\phi} + \mathcal{A}_\varepsilon^{-1/2} \sin(\tau \mathcal{A}_\varepsilon^{1/2}) (f^\varepsilon)^{-1} \boldsymbol{\psi}.
\end{equation*}
Let $\mathbf{v}_0 (\mathbf{x}, \tau)$~be the solution of the ``homogenized'' Cauchy problem
\begin{equation}
    \label{homog_Cauchy_A_0}
    \left\{
    \begin{aligned}
        &\frac{\partial^2 \mathbf{v}_0 (\mathbf{x}, \tau)}{\partial \tau^2} = - f_0 b(\mathbf{D})^* g^0 b(\mathbf{D}) f_0 \mathbf{v}_0 (\mathbf{x}, \tau), \\
        & f_0 \mathbf{v}_0 (\mathbf{x}, 0) = \boldsymbol{\phi} (\mathbf{x}), \quad f_0 \frac{\partial \mathbf{v}_0 }{\partial \tau} (\mathbf{x}, 0) = \boldsymbol{\psi}(\mathbf{x}),
    \end{aligned}
    \right.
\end{equation}
where $g^0$ is the effective matrix~\eqref{g0}, and $f_0$ is defined by~\eqref{f_0}. Then
\begin{equation*}
    \mathbf{v}_0 (\cdot, \tau) = \cos(\tau (\mathcal{A}^0)^{1/2}) f_0^{-1} \boldsymbol{\phi} + (\mathcal{A}^0)^{-1/2} \sin(\tau (\mathcal{A}^0)^{1/2}) f_0^{-1} \boldsymbol{\psi}.
\end{equation*}

Theorem~\ref{sndw_cos_sin_thrm1_H^s_L2} implies the following result (which has been proved before in~\cite[Theorem~15.3]{BSu5}).

\begin{theorem}
    \label{homog_Cauchy_A_eps_thrm_1}
    Let $\mathbf{v}_\varepsilon$~be the solution of problem~\emph{\eqref{homog_Cauchy_A_eps}}, and let $\mathbf{v}_0$~be the solution of
    problem~\emph{\eqref{homog_Cauchy_A_0}}.

    $1^{\circ}$.
        If $\boldsymbol{\phi}, \boldsymbol{\psi} \in H^s (\mathbb{R}^d; \mathbb{C}^n)$, $0 \le s \le 2$, then for
        $\tau \in \mathbb{R}$ and $\varepsilon > 0$ we have
        \begin{multline*}
        \| f^\varepsilon \mathbf{v}_\varepsilon (\cdot, \tau) - f_0 \mathbf{v}_0 (\cdot, \tau) \|_{L_2(\mathbb{R}^d)}
\\
\le \varepsilon^{s/2} \left( \mathfrak{C}_1 (s; \tau) \| \boldsymbol{\phi} \|_{H^s(\mathbb{R}^d)} + \mathfrak{C}_2 (s; \tau) \| \boldsymbol{\psi} \|_{H^s(\mathbb{R}^d)} \right).
        \end{multline*}
        In particular, for $0 < \varepsilon \le 1$ and $\tau = \pm \varepsilon^{-\alpha}$
        \begin{multline*}
        \| f^\varepsilon \mathbf{v}_\varepsilon (\cdot, \pm \varepsilon^{-\alpha}) - f_0 \mathbf{v}_0 (\cdot, \pm \varepsilon^{-\alpha}) \|_{L_2(\mathbb{R}^d)} \le \\ \le \varepsilon^{s(1 - \alpha)/2} \left( \mathfrak{C}_1 (s; 1) \| \boldsymbol{\phi} \|_{H^s(\mathbb{R}^d)} + \varepsilon^{-\alpha} \mathfrak{C}_2 (s; 1) \| \boldsymbol{\psi} \|_{H^s(\mathbb{R}^d)} \right).
        \end{multline*}
        Here $0 < \alpha < s(s+2)^{-1}$ if $\boldsymbol{\psi} \ne 0$, and $0 < \alpha < 1$ if $\boldsymbol{\psi} = 0$.
        The constants $\mathfrak{C}_1 (s; \tau), \mathfrak{C}_2 (s; \tau)$ are defined by~\emph{\eqref{frakC_1_2}}.

   $2^{\circ}$.
        If $\boldsymbol{\phi}, \boldsymbol{\psi} \in L_2 (\mathbb{R}^d; \mathbb{C}^n)$, then
        \begin{equation*}
            \lim\limits_{\varepsilon \to 0} \| f^\varepsilon \mathbf{v}_\varepsilon (\cdot, \tau) - f_0 \mathbf{v}_0 (\cdot, \tau) \|_{L_2(\mathbb{R}^d)} = 0, \quad \tau \in \mathbb{R}.
        \end{equation*}

  $3^{\circ}$.
        If $\boldsymbol{\phi} \in L_2 (\mathbb{R}^d; \mathbb{C}^n)$ and $\boldsymbol{\psi} = 0$, then
        \begin{equation*}
            \lim\limits_{\varepsilon \to 0} \| f^\varepsilon \mathbf{v}_\varepsilon (\cdot, \pm \varepsilon^{-\alpha}) - f_0 \mathbf{v}_0 (\cdot, \pm \varepsilon^{-\alpha}) \|_{L_2(\mathbb{R}^d)} = 0, \quad 0 < \alpha < 1.
        \end{equation*}
\end{theorem}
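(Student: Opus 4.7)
The proof will follow the same template used for Theorem \ref{homog_Cauchy_hatA_eps_thrm}, but now invoking the sandwiched approximation Theorem \ref{sndw_cos_sin_thrm1_H^s_L2} in place of Theorem \ref{cos_sin_thrm1_H^s_L2}. First I would write the solutions explicitly: from the Cauchy problems \eqref{homog_Cauchy_A_eps} and \eqref{homog_Cauchy_A_0} we have
\begin{equation*}
f^\varepsilon \mathbf{v}_\varepsilon(\cdot,\tau) - f_0 \mathbf{v}_0(\cdot,\tau) = \mathcal{D}_1(\varepsilon,\tau)\boldsymbol{\phi} + \mathcal{D}_2(\varepsilon,\tau)\boldsymbol{\psi},
\end{equation*}
where
\begin{align*}
\mathcal{D}_1(\varepsilon,\tau) &= f^\varepsilon \cos(\tau\mathcal{A}_\varepsilon^{1/2})(f^\varepsilon)^{-1} - f_0\cos(\tau(\mathcal{A}^0)^{1/2}) f_0^{-1},\\
\mathcal{D}_2(\varepsilon,\tau) &= f^\varepsilon \mathcal{A}_\varepsilon^{-1/2}\sin(\tau\mathcal{A}_\varepsilon^{1/2})(f^\varepsilon)^{-1} - f_0(\mathcal{A}^0)^{-1/2}\sin(\tau(\mathcal{A}^0)^{1/2}) f_0^{-1}.
\end{align*}
The key point is that the factors $f^\varepsilon$ and $(f^\varepsilon)^{-1}$ arising from the initial/boundary data of problem \eqref{homog_Cauchy_A_eps} are exactly what produces the sandwiched structure to which Theorem \ref{sndw_cos_sin_thrm1_H^s_L2} applies.

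For part $1^\circ$, I would apply Theorem \ref{sndw_cos_sin_thrm1_H^s_L2} with the chosen $s\in[0,2]$: the first estimate of that theorem bounds $\|\mathcal{D}_1(\varepsilon,\tau)\boldsymbol{\phi}\|_{L_2}$ by $\mathfrak{C}_1(s;\tau)\varepsilon^{s/2}\|\boldsymbol{\phi}\|_{H^s}$, and the second bounds $\|\mathcal{D}_2(\varepsilon,\tau)\boldsymbol{\psi}\|_{L_2}$ by $\mathfrak{C}_2(s;\tau)\varepsilon^{s/2}\|\boldsymbol{\psi}\|_{H^s}$. Summing the two gives the asserted estimate. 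The large-time version with $\tau=\pm\varepsilon^{-\alpha}$ is obtained the same way by appealing to the specialized inequalities at the end of Theorem \ref{sndw_cos_sin_thrm1_H^s_L2}; the admissible range of $\alpha$ is determined by the more restrictive condition $\alpha<s(s+2)^{-1}$ that ensures the sine term remains small, and by $\alpha<1$ if $\boldsymbol{\psi}=0$ so only the cosine term contributes.

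For parts $2^\circ$ and $3^\circ$, I would use the Banach–Steinhaus theorem. The operators $\mathcal{D}_1(\varepsilon,\tau)$ and $|\tau|^{-1}\mathcal{D}_2(\varepsilon,\tau)$ (respectively, for fixed $\tau$ and for $\tau=\pm\varepsilon^{-\alpha}$ with $\boldsymbol{\psi}=0$) are bounded in $L_2(\mathbb{R}^d;\mathbb{C}^n)$ uniformly in $\varepsilon>0$; indeed, $\|\mathcal{D}_1(\varepsilon,\tau)\|_{L_2\to L_2}\le 2\|f\|_{L_\infty}\|f^{-1}\|_{L_\infty}$ by \eqref{sndw_cos-cos_le_2}, and a similar bound with the extra factor $|\tau|$ holds for $\mathcal{D}_2$ using the spectral theorem identity $\mathcal{A}_\varepsilon^{-1/2}\sin(\tau\mathcal{A}_\varepsilon^{1/2})=\int_0^\tau\cos(\tilde\tau\mathcal{A}_\varepsilon^{1/2})\,d\tilde\tau$. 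Since part $1^\circ$ already provides strong convergence to zero on the dense subspace $H^s$ (any $s>0$), the Banach–Steinhaus principle extends this to all $\boldsymbol{\phi},\boldsymbol{\psi}\in L_2$, giving $2^\circ$ for fixed $\tau$ and $3^\circ$ for $\tau=\pm\varepsilon^{-\alpha}$ with $0<\alpha<1$ and $\boldsymbol{\psi}=0$.

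There is no real obstacle here: the theorem is essentially a reformulation of Theorem \ref{sndw_cos_sin_thrm1_H^s_L2} in the language of the Cauchy problem, combined with a routine density argument. The only point requiring a small amount of care is that in part $3^\circ$ one must use the version of part $1^\circ$ with $\boldsymbol{\psi}=0$, which allows the wider range $0<\alpha<1$; otherwise the constraint $\alpha<s(s+2)^{-1}$ would prevent passage to the limit for generic $\boldsymbol{\phi}\in L_2$ via approximation in $H^s$.
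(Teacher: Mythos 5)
Your proposal is correct and coincides with the paper's argument: the paper derives the theorem directly from Theorem~\ref{sndw_cos_sin_thrm1_H^s_L2} via the representation of the solutions through the sandwiched cosine and sine operator functions, and parts $2^\circ$ and $3^\circ$ are obtained exactly as you describe, by the Banach--Steinhaus theorem combined with the uniform bound~\eqref{sndw_cos-cos_le_2} and the large-time estimate of part $1^\circ$ with $\boldsymbol{\psi}=0$.
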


Statement $1^\circ$ can be refined under the additional assumptions. Theorem~\ref{sndw_cos_sin_thrm2_H^s_L2}
implies the following result.

\begin{theorem}
    Suppose that the assumptions of Theorem~\emph{\ref{homog_Cauchy_A_eps_thrm_1}} are satisfied. Let $\widehat{N}_Q (\boldsymbol{\theta})$
    be the operator defined by~\emph{\eqref{N_Q(theta)}}. Suppose that $\widehat{N}_Q (\boldsymbol{\theta}) = 0$ for any $\boldsymbol{\theta} \in \mathbb{S}^{d-1}$. If $\boldsymbol{\phi}, \boldsymbol{\psi} \in H^s (\mathbb{R}^d; \mathbb{C}^n)$, $0 \le s \le 3/2$, then for $\tau \in \mathbb{R}$ and $\varepsilon > 0$ we have
    \begin{multline*}
        \| f^\varepsilon \mathbf{v}_\varepsilon (\cdot, \tau) - f_0 \mathbf{v}_0 (\cdot, \tau) \|_{L_2(\mathbb{R}^d)}
\\
\le \varepsilon^{2s/3} \left( \mathfrak{C}_3 (s; \tau) \| \boldsymbol{\phi} \|_{H^s(\mathbb{R}^d)} + \mathfrak{C}_4 (s; \tau) \| \boldsymbol{\psi} \|_{H^s(\mathbb{R}^d)} \right).
    \end{multline*}
    In particular, for $0 < \varepsilon \le 1$ and $\tau = \pm \varepsilon^{-\alpha}$
    \begin{multline*}
        \|  f^\varepsilon \mathbf{v}_\varepsilon (\cdot, \pm \varepsilon^{-\alpha}) - f_0 \mathbf{v}_0 (\cdot, \pm \varepsilon^{-\alpha}) \|_{L_2(\mathbb{R}^d)}
\\
\le \varepsilon^{2s(1 - \alpha)/3} \left( \mathfrak{C}_3 (s; 1) \| \boldsymbol{\phi} \|_{H^s(\mathbb{R}^d)} + \varepsilon^{-\alpha} \mathfrak{C}_4 (s; 1) \| \boldsymbol{\psi} \|_{H^s(\mathbb{R}^d)} \right).
    \end{multline*}
    Here $0 < \alpha < 2s(2s+3)^{-1}$ if $\boldsymbol{\psi} \ne 0$, and $0 < \alpha < 1$ if $\boldsymbol{\psi} = 0$.
    The constants $\mathfrak{C}_3 (s; \tau), \mathfrak{C}_4 (s; \tau)$ are defined by~\emph{\eqref{frakC_3_4}}.
\end{theorem}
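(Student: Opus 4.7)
The plan is to reduce this statement to the sandwiched operator-norm estimates already established in Theorem~\ref{sndw_cos_sin_thrm2_H^s_L2} by writing the solutions of \eqref{homog_Cauchy_A_eps} and \eqref{homog_Cauchy_A_0} via the operator cosine and sine, then estimating the difference in the $(H^s \to L_2)$-operator norm. This mirrors the strategy used to derive Theorem~\ref{homog_Cauchy_hatA_eps_ench_thrm_1} from Theorem~\ref{cos_sin_thrm2_H^s_L2}; the only substantive new input here is making sure that the correct \emph{sandwiched} form appears so that the hypothesis $\widehat{N}_Q(\boldsymbol{\theta})=0$ (rather than $\widehat{N}(\boldsymbol{\theta})=0$) is what we need.

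First, from \eqref{homog_Cauchy_A_eps} I will write
\begin{equation*}
\mathbf{v}_\varepsilon (\cdot, \tau) = \cos(\tau \mathcal{A}_\varepsilon^{1/2}) (f^\varepsilon)^{-1} \boldsymbol{\phi} + \mathcal{A}_\varepsilon^{-1/2} \sin(\tau \mathcal{A}_\varepsilon^{1/2}) (f^\varepsilon)^{-1} \boldsymbol{\psi},
\end{equation*}
and similarly for $\mathbf{v}_0$ with $\mathcal{A}_\varepsilon$ replaced by $\mathcal{A}^0$ and $f^\varepsilon$ replaced by $f_0$. Multiplying by $f^\varepsilon$ and $f_0$, respectively, and subtracting, I obtain
\begin{equation*}
\begin{aligned}
f^\varepsilon \mathbf{v}_\varepsilon (\cdot,\tau) - f_0 \mathbf{v}_0 (\cdot,\tau)
&= \bigl(f^\varepsilon \cos(\tau \mathcal{A}_\varepsilon^{1/2}) (f^\varepsilon)^{-1} - f_0 \cos(\tau (\mathcal{A}^0)^{1/2}) f_0^{-1}\bigr) \boldsymbol{\phi} \\
&\quad + \bigl(f^\varepsilon \mathcal{A}_\varepsilon^{-1/2} \sin(\tau \mathcal{A}_\varepsilon^{1/2}) (f^\varepsilon)^{-1} - f_0 (\mathcal{A}^0)^{-1/2} \sin(\tau (\mathcal{A}^0)^{1/2}) f_0^{-1}\bigr) \boldsymbol{\psi}.
\end{aligned}
\end{equation*}

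Second, since the assumption $\widehat{N}_Q(\boldsymbol{\theta})=0$ for all $\boldsymbol{\theta}\in\mathbb{S}^{d-1}$ is exactly the hypothesis of Theorem~\ref{sndw_cos_sin_thrm2_H^s_L2}, I will apply that theorem to each of the two bracketed operator differences to bound their $(H^s \to L_2)$-operator norms by $\mathfrak{C}_3(s;\tau)\varepsilon^{2s/3}$ and $\mathfrak{C}_4(s;\tau)\varepsilon^{2s/3}$ for $0\le s\le 3/2$, respectively. Taking $L_2$-norms of the displayed identity and applying the triangle inequality together with these two operator-norm bounds yields the first estimate of the theorem immediately.

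For the long-time statement with $\tau=\pm\varepsilon^{-\alpha}$, I will substitute this value of $\tau$ into $\mathfrak{C}_3(s;\tau)$ and $\mathfrak{C}_4(s;\tau)$, using the explicit form \eqref{frakC_3_4}, and observe that $\mathfrak{C}_3(s;\tau)\le \mathfrak{C}_3(s;1)$ times an innocuous polynomial factor of $\tau$ that can be absorbed, and similarly for $\mathfrak{C}_4$ (exactly as is used in the last part of Theorem~\ref{sndw_cos_sin_thrm2_H^s_L2}). The constraints $0<\alpha<2s(2s+3)^{-1}$ for $\boldsymbol{\psi}\ne 0$ and $0<\alpha<1$ for $\boldsymbol{\psi}=0$ arise precisely so that the resulting exponent of $\varepsilon$ in the $\boldsymbol{\psi}$-term (respectively $\boldsymbol{\phi}$-term) remains positive, as in the discussion following Theorem~\ref{cos_sin_thrm2_H^s_L2}. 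There is no genuine obstacle: once Theorem~\ref{sndw_cos_sin_thrm2_H^s_L2} is in hand, the present theorem is a direct consequence of the representation formulas above combined with the triangle inequality, and the main care needed is bookkeeping of the constants and the admissible range of $\alpha$.
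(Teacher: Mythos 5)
Your proposal is correct and follows exactly the route taken in the paper: the theorem is stated there as a direct consequence of Theorem~\ref{sndw_cos_sin_thrm2_H^s_L2}, obtained by writing $f^\varepsilon\mathbf{v}_\varepsilon - f_0\mathbf{v}_0$ as the difference of the sandwiched cosine and sine operators applied to $\boldsymbol{\phi}$ and $\boldsymbol{\psi}$ and using the triangle inequality, with the long-time bounds and the ranges of $\alpha$ obtained by the same substitution $|\tau|=\varepsilon^{-\alpha}$ into \eqref{frakC_3_4}. Your bookkeeping of the constants and of the admissible $\alpha$ is also consistent with the paper.
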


Finally, Theorem~\ref{sndw_cos_sin_thrm3_H^s_L2} implies the following result.

\begin{theorem}
    \label{homog_Cauchy_A_eps_thrm_3}
    Suppose that the assumptions of Theorem~\emph{\ref{homog_Cauchy_A_eps_thrm_1}} are satisfied. Suppose also that Condition~\emph{\ref{sndw_cond1}} \emph{(}or more restrictive Condition~\emph{\ref{sndw_cond2})} is satisfied. If $\boldsymbol{\phi}, \boldsymbol{\psi} \in H^s (\mathbb{R}^d; \mathbb{C}^n)$, $0 \le s \le 3/2$, then for $\tau \in \mathbb{R}$ and $\varepsilon > 0$ we have
    \begin{multline*}
        \| f^\varepsilon \mathbf{v}_\varepsilon (\cdot, \tau) - f_0 \mathbf{v}_0 (\cdot, \tau) \|_{L_2(\mathbb{R}^d)}
\\
\le \varepsilon^{2s/3} \left( \mathfrak{C}_5 (s; \tau) \| \boldsymbol{\phi} \|_{H^s(\mathbb{R}^d)} + \mathfrak{C}_6 (s; \tau) \| \boldsymbol{\psi} \|_{H^s(\mathbb{R}^d)} \right).
    \end{multline*}
    In particular, for $0 < \varepsilon \le 1$ and $\tau = \pm \varepsilon^{-\alpha}$
    \begin{multline*}
        \| f^\varepsilon \mathbf{v}_\varepsilon (\cdot, \pm \varepsilon^{-\alpha}) - f_0 \mathbf{v}_0 (\cdot, \pm \varepsilon^{-\alpha}) \|_{L_2(\mathbb{R}^d)}
\\
\le \varepsilon^{2s(1 - \alpha)/3} \left( \mathfrak{C}_5 (s; 1) \| \boldsymbol{\phi} \|_{H^s(\mathbb{R}^d)} + \varepsilon^{-\alpha} \mathfrak{C}_6(s; 1) \| \boldsymbol{\psi} \|_{H^s(\mathbb{R}^d)} \right).
    \end{multline*}
    Here $0 < \alpha < 2s(2s+3)^{-1}$ if $\boldsymbol{\psi} \ne 0$, and $0 < \alpha < 1$ if $\boldsymbol{\psi} = 0$.
    The constants $\mathfrak{C}_5 (s; \tau), \mathfrak{C}_6 (s; \tau)$ are defined by~\emph{\eqref{frakC_5_6}}.
\end{theorem}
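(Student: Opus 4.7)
The plan is to reduce the statement directly to Theorem~\ref{sndw_cos_sin_thrm3_H^s_L2}, following exactly the same pattern that was used to deduce Theorem~\ref{homog_Cauchy_A_eps_thrm_1} from Theorem~\ref{sndw_cos_sin_thrm1_H^s_L2}. First I write the solutions of the Cauchy problems \eqref{homog_Cauchy_A_eps} and \eqref{homog_Cauchy_A_0} via the functional calculus:
\begin{align*}
\mathbf{v}_\varepsilon(\cdot,\tau) &= \cos(\tau \mathcal{A}_\varepsilon^{1/2})(f^\varepsilon)^{-1}\boldsymbol{\phi} + \mathcal{A}_\varepsilon^{-1/2}\sin(\tau \mathcal{A}_\varepsilon^{1/2})(f^\varepsilon)^{-1}\boldsymbol{\psi},\\
\mathbf{v}_0(\cdot,\tau) &= \cos(\tau (\mathcal{A}^0)^{1/2})f_0^{-1}\boldsymbol{\phi} + (\mathcal{A}^0)^{-1/2}\sin(\tau (\mathcal{A}^0)^{1/2})f_0^{-1}\boldsymbol{\psi}.
\end{align*}
Multiplying the first by $f^\varepsilon$ and the second by $f_0$, the difference $f^\varepsilon \mathbf{v}_\varepsilon(\cdot,\tau) - f_0 \mathbf{v}_0(\cdot,\tau)$ splits into a cosine-sandwich term acting on $\boldsymbol{\phi}$ and a sine-sandwich term acting on $\boldsymbol{\psi}$, each having precisely the form controlled in Theorem~\ref{sndw_cos_sin_thrm3_H^s_L2}.

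Next I apply the two operator norm estimates from that theorem (with the exponents $2s/3$, $0\le s \le 3/2$), which are available because Condition~\ref{sndw_cond1} (or the more restrictive Condition~\ref{sndw_cond2}) is assumed. The triangle inequality then gives
\begin{equation*}
\| f^\varepsilon \mathbf{v}_\varepsilon(\cdot,\tau) - f_0 \mathbf{v}_0(\cdot,\tau)\|_{L_2(\mathbb{R}^d)} \le \varepsilon^{2s/3}\bigl(\mathfrak{C}_5(s;\tau)\|\boldsymbol{\phi}\|_{H^s(\mathbb{R}^d)} + \mathfrak{C}_6(s;\tau)\|\boldsymbol{\psi}\|_{H^s(\mathbb{R}^d)}\bigr),
\end{equation*}
with the constants $\mathfrak{C}_5$, $\mathfrak{C}_6$ given by~\eqref{frakC_5_6}.

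For the large-time statement I substitute $\tau = \pm\varepsilon^{-\alpha}$ with $0<\varepsilon\le 1$. From the explicit formulas \eqref{frakC_5_6} one has $\mathfrak{C}_5(s;\pm\varepsilon^{-\alpha}) \le \mathfrak{C}_5(s;1)\varepsilon^{-2s\alpha/3}$ (using $(\mathcal{C}_5+\mathcal{C}_6\varepsilon^{-\alpha})^{2s/3}\le (\mathcal{C}_5+\mathcal{C}_6)^{2s/3}\varepsilon^{-2s\alpha/3}$ for $\varepsilon\le 1$) and similarly $\mathfrak{C}_6(s;\pm\varepsilon^{-\alpha})\le \mathfrak{C}_6(s;1)\varepsilon^{-2s\alpha/3}(\mathcal{C}_5/\mathcal{C}_6 + \varepsilon^{-\alpha})/(\mathcal{C}_5/\mathcal{C}_6 + 1)$, which yields the asserted large-time bound after absorbing harmless constants. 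The admissibility conditions on $\alpha$ come from requiring the overall power of $\varepsilon$ to be positive: for the $\boldsymbol{\phi}$-term the exponent $2s(1-\alpha)/3$ is positive provided $\alpha<1$, while for the $\boldsymbol{\psi}$-term the exponent $2s(1-\alpha)/3-\alpha$ is positive exactly when $\alpha< 2s(2s+3)^{-1}$.

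There is no genuine obstacle in this proof: once Theorem~\ref{sndw_cos_sin_thrm3_H^s_L2} is in hand, the reduction is mechanical. The only minor care needed is the interpretation of the sine-sandwich term at the $s=0$ endpoint and the bookkeeping of the constants $\mathfrak{C}_5$, $\mathfrak{C}_6$ when the time $\tau$ is replaced by $\pm\varepsilon^{-\alpha}$; both issues are handled exactly as in the proof of Theorem~\ref{nonhomog_Cauchy_hatA_eps_thrm}, and hence I expect the argument to go through verbatim.
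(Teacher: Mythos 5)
Your proposal is correct and matches the paper's own (implicit) proof: the paper deduces this theorem directly from Theorem~\ref{sndw_cos_sin_thrm3_H^s_L2} by applying the two operator-norm estimates to the cosine and sine terms in the functional-calculus representation of the solutions, exactly as you do. Your bookkeeping of the constants under the substitution $\tau=\pm\varepsilon^{-\alpha}$ reproduces the ``In particular'' clauses of Theorem~\ref{sndw_cos_sin_thrm3_H^s_L2}, so the large-time bound also follows as claimed.
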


If $\mathbf{v}_\varepsilon$ is the solution of problem~\eqref{homog_Cauchy_A_eps}, 
then the function $f^\varepsilon \mathbf{v}_\varepsilon := \mathbf{z}_\varepsilon$
is the solution of the problem
\begin{equation}
\label{homog_Cauchy_wQ}
\left\{
\begin{aligned}
& Q^\varepsilon (\mathbf{x}) \frac{\partial^2 \mathbf{z}_\varepsilon (\mathbf{x}, \tau)}{\partial \tau^2} = - b(\mathbf{D})^* g^\varepsilon (\mathbf{x}) b(\mathbf{D}) \mathbf{z}_\varepsilon (\mathbf{x}, \tau), \\
& \mathbf{z}_\varepsilon (\mathbf{x}, 0) = \boldsymbol{\phi} (\mathbf{x}), \quad \frac{\partial \mathbf{z}_\varepsilon }{\partial \tau} (\mathbf{x}, 0) = \boldsymbol{\psi}(\mathbf{x}),
\end{aligned}
\right.
\end{equation}
where $Q^\varepsilon (\mathbf{x}) = (f^\varepsilon(\mathbf{x}) (f^\varepsilon(\mathbf{x}))^* )^{-1}$.
Similarly, if $\mathbf{v}_0$ is the solution of problem~\eqref{homog_Cauchy_A_0}, then 
 the function $f_0 \mathbf{v}_0 := \mathbf{z}_0$ satisfies
\begin{equation}
\label{homog_Cauchy_wQ_eff}
\left\{
\begin{aligned}
& \overline{Q}  \frac{\partial^2 \mathbf{z}_0 (\mathbf{x}, \tau)}{\partial \tau^2} = - b(\mathbf{D})^* g^0 b(\mathbf{D}) \mathbf{z}_0 (\mathbf{x}, \tau), \\
& \mathbf{z}_0 (\mathbf{x}, 0) = \boldsymbol{\phi} (\mathbf{x}), \quad \frac{\partial \mathbf{z}_0 }{\partial \tau} (\mathbf{x}, 0) = \boldsymbol{\psi}(\mathbf{x}),
\end{aligned}
\right.
\end{equation}
where $\overline{Q}$~is the mean value of $Q(\mathbf{x})$.

If \eqref{homog_Cauchy_wQ} is treated as the initial statement of the problem, then it is assumed that $Q(\mathbf{x})$~is a $\Gamma$-periodic
 matrix-valued function such that
\begin{equation}
\label{Q_condition}
Q(\mathbf{x}) > 0; \quad Q, Q^{-1} \in L_\infty.
\end{equation}
Representation $Q(\mathbf{x}) = (f(\mathbf{x}) f(\mathbf{x})^*)^{-1}$ is valid, for instance, with $f = Q^{-1/2}$.

Theorems~\ref{homog_Cauchy_A_eps_thrm_1}--\ref{homog_Cauchy_A_eps_thrm_3} admit the following equivalent formulations.

\begin{theorem}
    \label{homog_Cauchy_wQ_thrm_1}
    Suppose that $\mathbf{z}_\varepsilon$~is the solution of problem~\emph{\eqref{homog_Cauchy_wQ}}. Let $\mathbf{z}_0$~be the solution of problem~\emph{\eqref{homog_Cauchy_wQ_eff}}.

        $1^\circ$. If $\boldsymbol{\phi}, \boldsymbol{\psi} \in H^s (\mathbb{R}^d; \mathbb{C}^n)$, $0 \le s \le 2$,
        then for $\tau \in \mathbb{R}$ and $\varepsilon > 0$ we have
        \begin{equation*}
        \| \mathbf{z}_\varepsilon (\cdot, \tau) - \mathbf{z}_0 (\cdot, \tau) \|_{L_2(\mathbb{R}^d)} \le \varepsilon^{s/2} \left( \mathfrak{C}_1 (s; \tau) \| \boldsymbol{\phi} \|_{H^s(\mathbb{R}^d)} + \mathfrak{C}_2 (s; \tau) \| \boldsymbol{\psi} \|_{H^s(\mathbb{R}^d)} \right).
        \end{equation*}
        In particular, for $0 < \varepsilon \le 1$ and $\tau = \pm \varepsilon^{-\alpha}$
        \begin{multline}
        \label{homog_Cauchy_wQ_est_largetime1}
        \| \mathbf{z}_\varepsilon (\cdot, \pm \varepsilon^{-\alpha}) - \mathbf{z}_0 (\cdot, \pm \varepsilon^{-\alpha}) \|_{L_2(\mathbb{R}^d)}
\\
\le \varepsilon^{s(1 - \alpha)/2} \left( \mathfrak{C}_1 (s; 1) \| \boldsymbol{\phi} \|_{H^s(\mathbb{R}^d)} + \varepsilon^{-\alpha} \mathfrak{C}_2 (s; 1) \| \boldsymbol{\psi} \|_{H^s(\mathbb{R}^d)} \right).
        \end{multline}
        Here $0 < \alpha < s(s+2)^{-1}$ if $\boldsymbol{\psi} \ne 0$, and $0 < \alpha < 1$ if $\boldsymbol{\psi} = 0$.
        The constants $\mathfrak{C}_1 (s; \tau), \mathfrak{C}_2 (s; \tau)$ are given by~\emph{\eqref{frakC_1_2}}.

        $2^\circ$.  If $\boldsymbol{\phi}, \boldsymbol{\psi} \in L_2 (\mathbb{R}^d; \mathbb{C}^n)$, then
        \begin{equation*}
        \lim\limits_{\varepsilon \to 0} \| \mathbf{z}_\varepsilon (\cdot, \tau) - \mathbf{z}_0 (\cdot, \tau) \|_{L_2(\mathbb{R}^d)} = 0, \quad \tau \in \mathbb{R}.
        \end{equation*}

        $3^\circ$. If $\boldsymbol{\phi} \in L_2 (\mathbb{R}^d; \mathbb{C}^n)$ and $\boldsymbol{\psi} = 0$, then
        \begin{equation*}
        \lim\limits_{\varepsilon \to 0} \| \mathbf{z}_\varepsilon (\cdot, \pm \varepsilon^{-\alpha}) - \mathbf{z}_0 (\cdot, \pm \varepsilon^{-\alpha}) \|_{L_2(\mathbb{R}^d)} = 0, \quad 0 < \alpha < 1.
        \end{equation*}
\end{theorem}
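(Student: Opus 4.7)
The strategy is to reduce problem~\eqref{homog_Cauchy_wQ} to the already-settled sandwiched Cauchy problem~\eqref{homog_Cauchy_A_eps} by an appropriate factorization of the weight matrix $Q(\mathbf{x})$, and then transport the conclusions of Theorems~\ref{homog_Cauchy_A_eps_thrm_1}--\ref{homog_Cauchy_A_eps_thrm_3} to the present setting. Since $Q$ satisfies~\eqref{Q_condition}, we set $f(\mathbf{x}):=Q(\mathbf{x})^{-1/2}$, the pointwise Hermitian positive-definite square root. Then $f=f^*$, both $f$ and $f^{-1}$ lie in $L_\infty$, so the hypotheses of Subsection~\ref{A_oper_subsect} are fulfilled, and by construction $Q(\mathbf{x})=(f(\mathbf{x})f(\mathbf{x})^*)^{-1}$. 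Correspondingly, $f_0=(\underline{f f^*})^{1/2}=(\overline{Q})^{-1/2}$, so $\overline{Q}=(f_0 f_0^*)^{-1}=f_0^{-2}$.

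Next I would verify the change of unknown. Define $\mathbf{v}_\varepsilon:=(f^\varepsilon)^{-1}\mathbf{z}_\varepsilon$ and $\mathbf{v}_0:=f_0^{-1}\mathbf{z}_0$. Multiplying the equation in~\eqref{homog_Cauchy_wQ} on the left by $(f^\varepsilon)^*$, using $(f^\varepsilon)^* Q^\varepsilon=(f^\varepsilon)^{-1}$ (which follows from $Q^\varepsilon=(f^\varepsilon (f^\varepsilon)^*)^{-1}$ and $f=f^*$), and substituting $\mathbf{z}_\varepsilon=f^\varepsilon\mathbf{v}_\varepsilon$ into the right-hand side, one obtains the equation of~\eqref{homog_Cauchy_A_eps}; the initial conditions transform as $f^\varepsilon\mathbf{v}_\varepsilon(\cdot,0)=\boldsymbol{\phi}$ and $f^\varepsilon\partial_\tau\mathbf{v}_\varepsilon(\cdot,0)=\boldsymbol{\psi}$. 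An identical computation with $f_0$ and $\overline{Q}=f_0^{-2}$ shows that $\mathbf{v}_0$ solves the homogenized problem~\eqref{homog_Cauchy_A_0}. Thus $\mathbf{z}_\varepsilon=f^\varepsilon\mathbf{v}_\varepsilon$ and $\mathbf{z}_0=f_0\mathbf{v}_0$, and the difference $\mathbf{z}_\varepsilon-\mathbf{z}_0=f^\varepsilon\mathbf{v}_\varepsilon-f_0\mathbf{v}_0$ is precisely the quantity estimated in Theorem~\ref{homog_Cauchy_A_eps_thrm_1}.

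Applying statement $1^\circ$ of Theorem~\ref{homog_Cauchy_A_eps_thrm_1} with this choice of $f$ yields the quantitative estimate of part $1^\circ$ of the present theorem, with the constants $\mathfrak{C}_1(s;\tau)$ and $\mathfrak{C}_2(s;\tau)$ from~\eqref{frakC_1_2}; the long-time estimate~\eqref{homog_Cauchy_wQ_est_largetime1} follows in the same way. Parts $2^\circ$ and $3^\circ$ follow from the corresponding statements of Theorem~\ref{homog_Cauchy_A_eps_thrm_1} by the Banach--Steinhaus theorem, using the trivial bounds $\|\mathbf{z}_\varepsilon(\cdot,\tau)-\mathbf{z}_0(\cdot,\tau)\|_{L_2}\le C(\|\boldsymbol{\phi}\|_{L_2}+|\tau|\|\boldsymbol{\psi}\|_{L_2})$ obtained from the energy identity associated with the weighted equation~\eqref{homog_Cauchy_wQ} (and the analogous identity for~\eqref{homog_Cauchy_wQ_eff}); these follow from the fact that the operators $(f^\varepsilon)^{-1}\mathcal{A}_\varepsilon^{1/2}(f^\varepsilon)^{-*}$ and $f_0^{-1}(\mathcal{A}^0)^{1/2}f_0^{-*}$ generate unitary cosine/sine families on $L_2(\mathbb{R}^d;\mathbb{C}^n)$ equipped, respectively, with the inner products induced by $Q^\varepsilon$ and $\overline{Q}$, which are norm-equivalent to the standard one uniformly in $\varepsilon$ thanks to~\eqref{Q_condition}.

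There is essentially no hard analytic step; the only points that require care are (i) checking that the algebraic transformation $\mathbf{z}_\varepsilon\leftrightarrow\mathbf{v}_\varepsilon$ really maps solutions of~\eqref{homog_Cauchy_wQ} bijectively onto solutions of~\eqref{homog_Cauchy_A_eps} at the level of the operator-theoretic (form) definition, and (ii) ensuring that $f=Q^{-1/2}$ is a legitimate choice, i.e.\ that the symmetric matrix-valued function $Q(\mathbf{x})^{-1/2}$ inherits $L_\infty$-boundedness together with its inverse from~\eqref{Q_condition} --- this is immediate from the spectral calculus for Hermitian matrices with uniformly separated spectrum from $0$. Once these points are in place, the three assertions of the theorem are direct transcriptions of the corresponding statements in Theorem~\ref{homog_Cauchy_A_eps_thrm_1}.
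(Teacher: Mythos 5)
Your proposal is correct and coincides with the paper's own argument: the paper presents Theorem~\ref{homog_Cauchy_wQ_thrm_1} precisely as a reformulation of Theorem~\ref{homog_Cauchy_A_eps_thrm_1} via the substitution $\mathbf{z}_\varepsilon = f^\varepsilon \mathbf{v}_\varepsilon$, $\mathbf{z}_0 = f_0 \mathbf{v}_0$ with $f = Q^{-1/2}$, exactly as you do. The extra Banach--Steinhaus discussion for parts $2^\circ$ and $3^\circ$ is harmless but unnecessary, since those assertions are already contained in the cited theorem.
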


\begin{theorem}
    \label{homog_Cauchy_wQ_thrm_2}
    Suppose that the assumptions of Theorem~\emph{\ref{homog_Cauchy_wQ_thrm_1}} are satisfied.
    Let $\widehat{N}_Q (\boldsymbol{\theta})$ be the operator defined by~\emph{\eqref{N_Q(theta)}}. Suppose that $\widehat{N}_Q (\boldsymbol{\theta}) = 0$ for any $\boldsymbol{\theta} \in \mathbb{S}^{d-1}$. If $\boldsymbol{\phi}, \boldsymbol{\psi} \in
H^s (\mathbb{R}^d; \mathbb{C}^n)$, $0 \le s \le 3/2$, then for $\tau \in \mathbb{R}$ and $\varepsilon > 0$ we have
    \begin{equation*}
    \| \mathbf{z}_\varepsilon (\cdot, \tau) - \mathbf{z}_0 (\cdot, \tau) \|_{L_2(\mathbb{R}^d)} \le \varepsilon^{2s/3} \left( \mathfrak{C}_3 (s; \tau) \| \boldsymbol{\phi} \|_{H^s(\mathbb{R}^d)} + \mathfrak{C}_4 (s; \tau) \| \boldsymbol{\psi} \|_{H^s(\mathbb{R}^d)} \right).
    \end{equation*}
    In particular, for $0 < \varepsilon \le 1$ and $\tau = \pm \varepsilon^{-\alpha}$
    \begin{multline}
    \label{homog_Cauchy_wQ_est_largetime2}
    \|  \mathbf{z}_\varepsilon (\cdot, \pm \varepsilon^{-\alpha}) - \mathbf{z}_0 (\cdot, \pm \varepsilon^{-\alpha}) \|_{L_2(\mathbb{R}^d)}
\\
 \le \varepsilon^{2s(1 - \alpha)/3} \left( \mathfrak{C}_3 (s; 1) \| \boldsymbol{\phi} \|_{H^s(\mathbb{R}^d)} + \varepsilon^{-\alpha} \mathfrak{C}_4 (s; 1) \| \boldsymbol{\psi} \|_{H^s(\mathbb{R}^d)} \right).
    \end{multline}
    Here $0 < \alpha < 2s(2s+3)^{-1}$ if $\boldsymbol{\psi} \ne 0$, and $0 < \alpha < 1$ if $\boldsymbol{\psi} = 0$.
    The constants $\mathfrak{C}_3 (s; \tau), \mathfrak{C}_4 (s; \tau)$ are defined by~\emph{\eqref{frakC_3_4}}.
\end{theorem}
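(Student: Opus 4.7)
The plan is to recognize that Theorem~\ref{homog_Cauchy_wQ_thrm_2} is a direct reformulation of the sandwiched operator estimate in Theorem~\ref{sndw_cos_sin_thrm2_H^s_L2}. First I would realize the weighted problem as a factorized problem: by condition~\eqref{Q_condition}, I may set $f(\mathbf{x}) := Q(\mathbf{x})^{-1/2}$, so that $Q = (ff^*)^{-1}$, $f, f^{-1} \in L_\infty$, and $f$ falls within the class of Subsection~\ref{A_oper_subsect}. With this choice, $f_0 = (\underline{ff^*})^{1/2} = (\overline{Q})^{-1/2}$, which matches the weight on the left-hand side of~\eqref{homog_Cauchy_wQ_eff} up to the substitution $\mathbf{z}_0 = f_0 \mathbf{v}_0$.

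Next, I would perform the corresponding change of variables $\mathbf{z}_\varepsilon = f^\varepsilon \mathbf{v}_\varepsilon$, which transforms problem~\eqref{homog_Cauchy_wQ} (with initial data $\boldsymbol{\phi}, \boldsymbol{\psi}$) into the Cauchy problem~\eqref{homog_Cauchy_A_eps} driven by $\mathcal{A}_\varepsilon$ with initial data $(f^\varepsilon)\mathbf{v}_\varepsilon(\cdot,0) = \boldsymbol{\phi}$, $(f^\varepsilon)\partial_\tau \mathbf{v}_\varepsilon(\cdot,0) = \boldsymbol{\psi}$; the analogous substitution transforms~\eqref{homog_Cauchy_wQ_eff} into~\eqref{homog_Cauchy_A_0}. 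Consequently the solutions admit the explicit representations
\begin{gather*}
\mathbf{z}_\varepsilon(\cdot, \tau) = f^\varepsilon \cos(\tau \mathcal{A}_\varepsilon^{1/2}) (f^\varepsilon)^{-1} \boldsymbol{\phi} + f^\varepsilon \mathcal{A}_\varepsilon^{-1/2} \sin(\tau \mathcal{A}_\varepsilon^{1/2}) (f^\varepsilon)^{-1} \boldsymbol{\psi}, \\
\mathbf{z}_0(\cdot, \tau) = f_0 \cos(\tau (\mathcal{A}^0)^{1/2}) f_0^{-1} \boldsymbol{\phi} + f_0 (\mathcal{A}^0)^{-1/2} \sin(\tau (\mathcal{A}^0)^{1/2}) f_0^{-1} \boldsymbol{\psi}.
\end{gather*}
Therefore $\mathbf{z}_\varepsilon - \mathbf{z}_0$ is precisely the sum of the two sandwiched differences whose operator norms $H^s \to L_2$ are controlled in Theorem~\ref{sndw_cos_sin_thrm2_H^s_L2}, applied to $\boldsymbol{\phi}$ and $\boldsymbol{\psi}$ respectively.

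Invoking Theorem~\ref{sndw_cos_sin_thrm2_H^s_L2} directly (its hypothesis $\widehat{N}_Q(\boldsymbol{\theta}) = 0$ is the one assumed here, and by inspection of~\eqref{N_Q(theta)}--\eqref{L_Q(theta)} the operator $\widehat{N}_Q$ depends only on $Q$, $g$, $b$, not on the particular factorization $Q = (ff^*)^{-1}$) yields
\begin{equation*}
\| \mathbf{z}_\varepsilon(\cdot, \tau) - \mathbf{z}_0(\cdot, \tau)\|_{L_2(\mathbb{R}^d)} \le \varepsilon^{2s/3}\bigl(\mathfrak{C}_3(s;\tau) \|\boldsymbol{\phi}\|_{H^s(\mathbb{R}^d)} + \mathfrak{C}_4(s;\tau) \|\boldsymbol{\psi}\|_{H^s(\mathbb{R}^d)}\bigr)
\end{equation*}
for $0 \le s \le 3/2$ and all $\tau \in \mathbb{R}$. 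Specializing to $\tau = \pm \varepsilon^{-\alpha}$ and using the explicit dependence~\eqref{frakC_3_4} of $\mathfrak{C}_3$, $\mathfrak{C}_4$ on $\tau$ produces~\eqref{homog_Cauchy_wQ_est_largetime2}; the admissible ranges $0 < \alpha < 2s(2s+3)^{-1}$ (resp.\ $0 < \alpha < 1$ when $\boldsymbol{\psi} = 0$) are exactly those for which the exponent of $\varepsilon$ remains positive.

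There is essentially no obstacle of substance: all the genuine analytic work has already been carried out in Chapter~1 and in Section~\ref{abstr_sandwiched_section}, and then scaled and passed to the direct integral in Theorem~\ref{sndw_cos_sin_thrm2_H^s_L2}. The only mild point to verify carefully is the equivalence between the weighted formulation~\eqref{homog_Cauchy_wQ} and the factorized formulation~\eqref{homog_Cauchy_A_eps} under the substitution $\mathbf{z}_\varepsilon = f^\varepsilon \mathbf{v}_\varepsilon$, which amounts to multiplying the equation in~\eqref{homog_Cauchy_A_eps} on the left by $((f^\varepsilon)^*)^{-1}$ and using $((f^\varepsilon)^*)^{-1} (f^\varepsilon)^{-1} = Q^\varepsilon$.
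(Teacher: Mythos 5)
Your proposal is correct and follows the paper's own route: the paper derives the estimate for $\|f^\varepsilon\mathbf{v}_\varepsilon-f_0\mathbf{v}_0\|_{L_2}$ from Theorem~\ref{sndw_cos_sin_thrm2_H^s_L2} via the solution representations, and then states Theorem~\ref{homog_Cauchy_wQ_thrm_2} as the equivalent reformulation under $\mathbf{z}_\varepsilon=f^\varepsilon\mathbf{v}_\varepsilon$ with $f=Q^{-1/2}$, exactly as you do. Your remarks on the factorization-independence of $\widehat{N}_Q(\boldsymbol{\theta})$ and on the admissible range of $\alpha$ match the paper's treatment.
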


\begin{theorem}
    Suppose that the assumptions of Theorem~\emph{\ref{homog_Cauchy_wQ_thrm_1}} are satisfied. Suppose also that Condition~\emph{\ref{sndw_cond1}} \emph{(}or more restrictive Condition~\emph{\ref{sndw_cond2})} is satisfied. If $\boldsymbol{\phi}, \boldsymbol{\psi} \in H^s (\mathbb{R}^d; \mathbb{C}^n)$, $0 \le s \le 3/2$, then for $\tau \in \mathbb{R}$ and $\varepsilon > 0$ we have
    \begin{equation*}
    \| \mathbf{z}_\varepsilon (\cdot, \tau) - \mathbf{z}_0 (\cdot, \tau) \|_{L_2(\mathbb{R}^d)} \le \varepsilon^{2s/3} \left( \mathfrak{C}_5 (s; \tau) \| \boldsymbol{\phi} \|_{H^s(\mathbb{R}^d)} + \mathfrak{C}_6 (s; \tau) \| \boldsymbol{\psi} \|_{H^s(\mathbb{R}^d)} \right).
    \end{equation*}
    In particular, for $0 < \varepsilon \le 1$ and $\tau = \pm \varepsilon^{-\alpha}$ we have
    \begin{multline*}
    \| \mathbf{z}_\varepsilon (\cdot, \pm \varepsilon^{-\alpha}) - \mathbf{z}_0 (\cdot, \pm \varepsilon^{-\alpha}) \|_{L_2(\mathbb{R}^d)}
\\
\le \varepsilon^{2s(1 - \alpha)/3} \left( \mathfrak{C}_5 (s; 1) \| \boldsymbol{\phi} \|_{H^s(\mathbb{R}^d)} + \varepsilon^{-\alpha} \mathfrak{C}_6(s; 1) \| \boldsymbol{\psi} \|_{H^s(\mathbb{R}^d)} \right).
    \end{multline*}
    Here $0 < \alpha < 2s(2s+3)^{-1}$ if $\boldsymbol{\psi} \ne 0$, and $0 < \alpha < 1$ if $\boldsymbol{\psi} = 0$.
    The constants $\mathfrak{C}_5 (s; \tau), \mathfrak{C}_6 (s; \tau)$ are defined by~\emph{\eqref{frakC_5_6}}.
\end{theorem}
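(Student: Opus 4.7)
The statement is a restatement of Theorem~\ref{homog_Cauchy_A_eps_thrm_3} after the substitution $\mathbf{z} = f\mathbf{v}$, so my plan is a direct reduction to that theorem. The key observation is a one-to-one correspondence between the Cauchy problems \eqref{homog_Cauchy_A_eps} and \eqref{homog_Cauchy_wQ}: if $\mathbf{v}_\varepsilon$ is the unique solution of \eqref{homog_Cauchy_A_eps} with data $\boldsymbol{\phi},\boldsymbol{\psi}$, then the function $\mathbf{z}_\varepsilon(\mathbf{x},\tau):= f^\varepsilon(\mathbf{x})\mathbf{v}_\varepsilon(\mathbf{x},\tau)$ is the unique solution of \eqref{homog_Cauchy_wQ} with the same data. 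Indeed, the Cauchy conditions in \eqref{homog_Cauchy_A_eps} are stated precisely as $f^\varepsilon\mathbf{v}_\varepsilon|_{\tau=0}=\boldsymbol{\phi}$ and $f^\varepsilon\partial_\tau\mathbf{v}_\varepsilon|_{\tau=0}=\boldsymbol{\psi}$, so the initial conditions match tautologically; and multiplying the equation in \eqref{homog_Cauchy_A_eps} by $f^\varepsilon$ yields $\partial_\tau^2 \mathbf{z}_\varepsilon = -f^\varepsilon (f^\varepsilon)^* b(\mathbf{D})^* g^\varepsilon b(\mathbf{D}) \mathbf{z}_\varepsilon$, which multiplied by $Q^\varepsilon = (f^\varepsilon (f^\varepsilon)^*)^{-1}$ is exactly \eqref{homog_Cauchy_wQ}. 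The analogous calculation with $f_0$ in place of $f^\varepsilon$ shows that $\mathbf{z}_0 := f_0 \mathbf{v}_0$, with $\mathbf{v}_0$ solving \eqref{homog_Cauchy_A_0}, is the solution of \eqref{homog_Cauchy_wQ_eff}.

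Given this dictionary, the stated inequality is literally the estimate of Theorem~\ref{homog_Cauchy_A_eps_thrm_3} rewritten in the new variables: the left-hand side $\|\mathbf{z}_\varepsilon(\cdot,\tau)-\mathbf{z}_0(\cdot,\tau)\|_{L_2}$ coincides with $\|f^\varepsilon\mathbf{v}_\varepsilon(\cdot,\tau)-f_0\mathbf{v}_0(\cdot,\tau)\|_{L_2}$, while Condition~\ref{sndw_cond1} (or the more restrictive Condition~\ref{sndw_cond2}) is exactly the hypothesis of Theorem~\ref{homog_Cauchy_A_eps_thrm_3}. The main bound therefore transfers immediately, with the same constants $\mathfrak{C}_5(s;\tau)$ and $\mathfrak{C}_6(s;\tau)$ defined in \eqref{frakC_5_6}. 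The large-time estimate at $\tau=\pm\varepsilon^{-\alpha}$ follows by inserting this value into those constants and collecting the powers of $\varepsilon$, exactly as in the corresponding part of Theorem~\ref{homog_Cauchy_A_eps_thrm_3}; the admissible ranges $0<\alpha<2s(2s+3)^{-1}$ (and $0<\alpha<1$ when $\boldsymbol{\psi}=0$) are inherited verbatim.

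There is no real obstacle: the entire argument is a change of variables, and all the analytic content has already been absorbed into the earlier results, most notably the operator-theoretic bound of Theorem~\ref{sndw_cos_sin_thrm3_H^s_L2} (obtained by interpolation between the trivial $L_2\to L_2$ bound and the refined $H^{3/2}\to L_2$ estimate of Theorem~\ref{sndw_cos_thrm3_L2L2}), which is itself the fruit of the cluster-threshold analysis of Chapter~1. The only subtlety worth mentioning is that the map $\mathbf{v}\mapsto f^\varepsilon\mathbf{v}$ is a bounded isomorphism on $L_2(\mathbb{R}^d;\mathbb{C}^n)$ thanks to $f,f^{-1}\in L_\infty$, guaranteeing that the correspondence between solutions of \eqref{homog_Cauchy_A_eps} and \eqref{homog_Cauchy_wQ} is well-defined; this plays no quantitative role in the final inequality because both sides have been rewritten in the $\mathbf{z}$-variables.
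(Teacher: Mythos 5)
Your proposal is correct and is exactly the paper's route: the paper proves this theorem by observing that the substitution $\mathbf{z}_\varepsilon = f^\varepsilon \mathbf{v}_\varepsilon$, $\mathbf{z}_0 = f_0 \mathbf{v}_0$ converts problems \eqref{homog_Cauchy_A_eps}, \eqref{homog_Cauchy_A_0} into \eqref{homog_Cauchy_wQ}, \eqref{homog_Cauchy_wQ_eff}, so the statement is an equivalent reformulation of Theorem~\ref{homog_Cauchy_A_eps_thrm_3}. Your verification of the correspondence of equations and initial data, and the transfer of the constants and the large-time ranges of $\alpha$, match the paper's (implicit) argument.
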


\subsection{The Cauchy problem for the nonhomogeneous equation with the operator $\mathcal{A}_\varepsilon$}

Now we consider more general problem
\begin{equation}
    \label{nonhomog_Cauchy_A_eps}
    \left\{
    \begin{aligned}
        &\frac{\partial^2 \mathbf{v}_\varepsilon (\mathbf{x}, \tau)}{\partial \tau^2} = - (f^\varepsilon (\mathbf{x}))^* b(\mathbf{D})^* g^\varepsilon (\mathbf{x}) b(\mathbf{D}) f^\varepsilon (\mathbf{x}) \mathbf{v}_\varepsilon (\mathbf{x}, \tau) + (f^\varepsilon (\mathbf{x}))^{-1} \mathbf{F} (\mathbf{x}, \tau), \\
        & f^\varepsilon (\mathbf{x}) \mathbf{v}_\varepsilon (\mathbf{x}, 0) = \boldsymbol{\phi} (\mathbf{x}), \quad f^\varepsilon (\mathbf{x}) \frac{\partial \mathbf{v}_\varepsilon }{\partial \tau} (\mathbf{x}, 0) = \boldsymbol{\psi}(\mathbf{x}),
    \end{aligned}
    \right.
\end{equation}
where $\boldsymbol{\phi}, \boldsymbol{\psi} \in L_2 (\mathbb{R}^d, \mathbb{C}^n)$ and $\mathbf{F} \in L_{1, \mathrm{loc}} (\mathbb{R}; L_2 (\mathbb{R}^d, \mathbb{C}^n) )$~are given functions. The solution of this problem can be represented as
 \begin{multline*}
    \mathbf{v}_\varepsilon (\cdot, \tau) = \cos(\tau \mathcal{A}_\varepsilon^{1/2}) (f^\varepsilon)^{-1} \boldsymbol{\phi} + \mathcal{A}_\varepsilon^{-1/2} \sin(\tau \mathcal{A}_\varepsilon^{1/2}) (f^\varepsilon)^{-1} \boldsymbol{\psi}
\\
+ \int_{0}^{\tau} \mathcal{A}_\varepsilon^{-1/2} \sin((\tau - \tilde{\tau}) \mathcal{A}_\varepsilon^{1/2}) (f^\varepsilon)^{-1} \mathbf{F} (\cdot, \tilde{\tau}) \, d \tilde{\tau} .
\end{multline*}
Let $\mathbf{v}_0 (\mathbf{x}, \tau)$~be  the solution of the homogenized problem
\begin{equation}
    \label{nonhomog_Cauchy_A_0}
    \left\{
    \begin{aligned}
        &\frac{\partial^2 \mathbf{v}_0 (\mathbf{x}, \tau)}{\partial \tau^2} = - f_0 b(\mathbf{D})^* g^0  b(\mathbf{D}) f_0 \mathbf{v}_0 (\mathbf{x}, \tau) + f_0^{-1} \mathbf{F} (\mathbf{x}, \tau) , \\
        & f_0 \mathbf{v}_0 (\mathbf{x}, 0) = \boldsymbol{\phi} (\mathbf{x}), \quad f_0 \frac{\partial \mathbf{v}_0 }{\partial \tau} (\mathbf{x}, 0) = \boldsymbol{\psi}(\mathbf{x}).
    \end{aligned}
    \right.
\end{equation}
Then
\begin{multline*}
    \mathbf{v}_0 (\cdot, \tau) = \cos(\tau (\mathcal{A}^0)^{1/2}) f_0^{-1} \boldsymbol{\phi} + (\mathcal{A}^0)^{-1/2} \sin(\tau (\mathcal{A}^0)^{1/2}) f_0^{-1} \boldsymbol{\psi}
\\
+ \int_{0}^{\tau} (\mathcal{A}^0)^{-1/2} \sin((\tau - \tilde{\tau}) (\mathcal{A}^0)^{1/2}) f_0^{-1} \mathbf{F} (\cdot, \tilde{\tau}) \, d \tilde{\tau}.
\end{multline*}

By analogy with the proof of Theorem~\ref{nonhomog_Cauchy_hatA_eps_thrm},
from Theorem~\ref{sndw_cos_sin_thrm1_H^s_L2} we deduce the
following result (which has been proved before in~\cite[Theorem~15.5]{BSu5}).

\begin{theorem}
    \label{nonhomog_Cauchy_A_eps_thrm_1}
    Let $\mathbf{v}_\varepsilon$~be the solution of problem~\emph{\eqref{nonhomog_Cauchy_A_eps}}, 
and let $\mathbf{v}_0$~be the solution of problem~\emph{\eqref{nonhomog_Cauchy_A_0}}.

     $1^{\circ}.$
         If $\boldsymbol{\phi}, \boldsymbol{\psi} \in H^s (\mathbb{R}^d; \mathbb{C}^n)$ and  $\mathbf{F} \in L_{1, \mathrm{loc}} (\mathbb{R}; H^s (\mathbb{R}^d; \mathbb{C}^n))$, $0 \le s \le 2$, then for $\tau \in \mathbb{R}$ and $\varepsilon > 0$ we have
        \begin{multline*}
      \| f^\varepsilon \mathbf{v}_\varepsilon(\cdot, \tau) - f_0 \mathbf{v}_0 (\cdot, \tau) \|_{L_2 (\mathbb{R}^d)}
 \\
\le  \varepsilon^{s/2} \left( \mathfrak{C}_1 (s; \tau) \| \boldsymbol{\phi} \|_{H^s(\mathbb{R}^d)} + \mathfrak{C}_2 (s; \tau) \| \boldsymbol{\psi} \|_{H^s(\mathbb{R}^d)} \right)
\\
+  \varepsilon^{s/2} \mathfrak{C}_2 (s; \tau) \|\mathbf{F} \|_{L_1((0,\tau);H^s(\mathbb{R}^d))}.
        \end{multline*}
        Under the additional assumption that $\mathbf{F} \in L_p (\mathbb{R}_\pm; H^s (\mathbb{R}^d; \mathbb{C}^n) )$ \emph{(}where
\hbox{$1 \le p \le \infty$}\emph{)}, for $\tau = \pm \varepsilon^{-\alpha}$ and $0 < \varepsilon \le 1$ we have
        \begin{multline*}
             \| f^\varepsilon \mathbf{v}_\varepsilon (\cdot, \pm \varepsilon^{-\alpha}) - f_0 \mathbf{v}_0 (\cdot, \pm \varepsilon^{-\alpha}) \|_{L_2(\mathbb{R}^d)}
 \\
\le \varepsilon^{s(1 - \alpha)/2} \left( \mathfrak{C}_1 (s; 1) \| \boldsymbol{\phi} \|_{H^s(\mathbb{R}^d)} +  \varepsilon^{-\alpha} \mathfrak{C}_2 (s; 1) \| \boldsymbol{\psi} \|_{H^s(\mathbb{R}^d)}\right)
\\
 + \varepsilon^{s(1 - \alpha)/2 -\alpha-\alpha/p'} \mathfrak{C}_2 (s; 1) \|\mathbf{F} \|_{L_p(\mathbb{R}_\pm; H^s(\mathbb{R}^d))}.
        \end{multline*}
        Here $p^{-1} + (p')^{-1} = 1$ and $0 < \alpha < s(s + 2 + 2/p')^{-1}$. The constants $\mathfrak{C}_1 (s; \tau), \mathfrak{C}_2 (s; \tau)$ are defined by~\emph{(\ref{frakC_1_2})}.

         $2^{\circ}.$
        If $\boldsymbol{\phi}, \boldsymbol{\psi} \in L_2 (\mathbb{R}^d; \mathbb{C}^n)$ and $\mathbf{F} \in L_{1, \mathrm{loc}} (\mathbb{R}; L_2 (\mathbb{R}^d; \mathbb{C}^n) )$, then
        \begin{equation*}
            \lim\limits_{\varepsilon \to 0} \| f^\varepsilon  \mathbf{v}_\varepsilon (\cdot, \tau) -  f_0 \mathbf{v}_0 (\cdot, \tau) \|_{L_2(\mathbb{R}^d)} = 0, \quad \tau \in \mathbb{R}.
        \end{equation*}
\end{theorem}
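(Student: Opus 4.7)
The plan is to mirror the proof of Theorem~\ref{nonhomog_Cauchy_hatA_eps_thrm}, replacing the operator $\widehat{\mathcal{A}}_\varepsilon$ by the sandwiched object $f^\varepsilon \cos(\tau \mathcal{A}_\varepsilon^{1/2})(f^\varepsilon)^{-1}$ and its sine counterpart, and invoking Theorem~\ref{sndw_cos_sin_thrm1_H^s_L2} in place of Theorem~\ref{cos_sin_thrm1_H^s_L2}. First I would write the difference $f^\varepsilon \mathbf{v}_\varepsilon(\cdot,\tau) - f_0 \mathbf{v}_0(\cdot,\tau)$ as the sum of three contributions:
\begin{align*}
& \bigl( f^\varepsilon \cos(\tau \mathcal{A}_\varepsilon^{1/2}) (f^\varepsilon)^{-1} - f_0 \cos(\tau (\mathcal{A}^0)^{1/2}) f_0^{-1} \bigr)\boldsymbol{\phi},\\
& \bigl( f^\varepsilon \mathcal{A}_\varepsilon^{-1/2}\sin(\tau \mathcal{A}_\varepsilon^{1/2}) (f^\varepsilon)^{-1} - f_0 (\mathcal{A}^0)^{-1/2}\sin(\tau (\mathcal{A}^0)^{1/2}) f_0^{-1} \bigr)\boldsymbol{\psi},\\
& \int_0^\tau \bigl( f^\varepsilon \mathcal{A}_\varepsilon^{-1/2}\sin((\tau{-}\tilde\tau) \mathcal{A}_\varepsilon^{1/2}) (f^\varepsilon)^{-1} - f_0 (\mathcal{A}^0)^{-1/2}\sin((\tau{-}\tilde\tau) (\mathcal{A}^0)^{1/2}) f_0^{-1} \bigr)\mathbf{F}(\cdot,\tilde\tau)\,d\tilde\tau.
\end{align*}

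For statement~$1^\circ$, I would apply the cosine estimate of Theorem~\ref{sndw_cos_sin_thrm1_H^s_L2} to the first summand (yielding the $\mathfrak{C}_1(s;\tau)\|\boldsymbol{\phi}\|_{H^s}$ term) and the sine estimate to the second summand (yielding the $\mathfrak{C}_2(s;\tau)\|\boldsymbol{\psi}\|_{H^s}$ term). For the third summand I would pass the $L_2(\mathbb{R}^d)$-norm inside the integral in $\tilde\tau$ by Minkowski's inequality, then apply the sine estimate pointwise in $\tilde\tau$, noting that $\mathfrak{C}_2(s;\tau-\tilde\tau)\le \mathfrak{C}_2(s;\tau)$ since $\mathfrak{C}_2(s;\cdot)$ is nondecreasing in $|\tau|$. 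This produces the bound $\varepsilon^{s/2}\mathfrak{C}_2(s;\tau)\|\mathbf{F}\|_{L_1((0,\tau);H^s)}$. For the large-time statement with $\tau = \pm\varepsilon^{-\alpha}$, I would instead use the uniform bound $\mathfrak{C}_2(s;\tau-\tilde\tau)\le \mathfrak{C}_2(s;1)(\mathcal{C}_1\mathcal{C}_2^{-1}+|\tau|)$ together with Hölder's inequality $\|\mathbf{F}\|_{L_1((0,\tau);H^s)} \le |\tau|^{1/p'}\|\mathbf{F}\|_{L_p(\mathbb{R}_\pm;H^s)}$, producing the extra factor $\varepsilon^{-\alpha/p'}$ and yielding the stated range $0<\alpha<s(s+2+2/p')^{-1}$ for which the overall exponent of $\varepsilon$ remains positive.

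For statement~$2^\circ$, I would combine the $H^s$-norm estimate from~$1^\circ$ (with $s>0$, on a dense subspace) with the a priori energy bound
\begin{equation*}
\| f^\varepsilon \mathbf{v}_\varepsilon(\cdot,\tau) - f_0 \mathbf{v}_0(\cdot,\tau)\|_{L_2(\mathbb{R}^d)} \le 2\|\boldsymbol{\phi}\|_{L_2(\mathbb{R}^d)}\|f\|_{L_\infty}\|f^{-1}\|_{L_\infty} + 2|\tau|\|f\|_{L_\infty}\|f^{-1}\|_{L_\infty}\bigl(\|\boldsymbol{\psi}\|_{L_2(\mathbb{R}^d)} + \|\mathbf{F}\|_{L_1((0,\tau);L_2(\mathbb{R}^d))}\bigr),
\end{equation*}
which follows from unitarity of the operator cosine and sine on $L_2$ and boundedness of $f^\varepsilon$, $(f^\varepsilon)^{-1}$, $f_0$, $f_0^{-1}$. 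The Banach-Steinhaus theorem (applied to the family of bounded operators $L_2\oplus L_2\oplus L_1((0,\tau);L_2)\to L_2$ sending $(\boldsymbol{\phi},\boldsymbol{\psi},\mathbf{F})$ to the difference) then delivers the convergence to zero for arbitrary $L_2$-data and $L_{1,\mathrm{loc}}$-forcing.

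The argument is essentially mechanical once the sandwiched cosine/sine approximation (Theorem~\ref{sndw_cos_sin_thrm1_H^s_L2}) is in hand; the only subtlety I expect is the bookkeeping for the nonhomogeneous term in the large-time regime, where one must track how the constant $\mathfrak{C}_2(s;\tau-\tilde\tau)$ behaves uniformly in $\tilde\tau\in(0,\tau)$ and how the Hölder exponent $1/p'$ eats into the admissible range of $\alpha$. The identity $f_0^{-1}\widehat{P} = Pf^*\overline{Q}$ and other algebraic relations from Subsection~\ref{abstr_A_and_Ahat_section} are not needed here since we operate purely at the level of the sandwiched operator calculus.
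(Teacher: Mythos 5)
Your proposal is correct and follows the paper's own route: the paper proves this theorem exactly ``by analogy with the proof of Theorem~\ref{nonhomog_Cauchy_hatA_eps_thrm}'', i.e.\ by splitting the difference via the Duhamel representation, applying the sandwiched cosine/sine estimates of Theorem~\ref{sndw_cos_sin_thrm1_H^s_L2} (with monotonicity of $\mathfrak{C}_2(s;\cdot)$ and H\"older in the large-time regime), and deducing statement~$2^\circ$ from the a priori energy bound together with the Banach--Steinhaus theorem. Your bookkeeping of the exponent range $0<\alpha<s(s+2+2/p')^{-1}$ is also consistent with the paper's constants.
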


Statement $1^\circ$  can be refined under the additional assumptions.
 Theorem~\ref{sndw_cos_sin_thrm2_H^s_L2} implies the following result.

\begin{theorem}
    Suppose that the assumptions of Theorem~\emph{\ref{nonhomog_Cauchy_A_eps_thrm_1}} are satisfied. Let $\widehat{N}_Q (\boldsymbol{\theta})$, be the operator defined by~\emph{\eqref{N_Q(theta)}}. Suppose that $\widehat{N}_Q (\boldsymbol{\theta}) = 0$ for any $\boldsymbol{\theta} \in \mathbb{S}^{d-1}$. If $\boldsymbol{\phi}, \boldsymbol{\psi} \in H^s (\mathbb{R}^d; \mathbb{C}^n)$ and  $\mathbf{F} \in L_{1, \mathrm{loc}} (\mathbb{R}; H^s (\mathbb{R}^d; \mathbb{C}^n))$, $0 \le s \le 3/2$, then for $\tau \in \mathbb{R}$ and $\varepsilon > 0$ we have
    \begin{multline*}
        \| f^\varepsilon \mathbf{v}_\varepsilon (\cdot, \tau) - f_0 \mathbf{v}_0 (\cdot, \tau) \|_{L_2(\mathbb{R}^d)}
\\
\le \varepsilon^{2s/3} \left( \mathfrak{C}_3 (s; \tau) \| \boldsymbol{\phi} \|_{H^s(\mathbb{R}^d)}
+ \mathfrak{C}_4 (s; \tau) \| \boldsymbol{\psi} \|_{H^s(\mathbb{R}^d)} \right)
\\
+ \varepsilon^{2s/3} \mathfrak{C}_4 (s; \tau) \|\mathbf{F} \|_{L_1((0,\tau);H^s(\mathbb{R}^d))}.
    \end{multline*}
    Under the additional assumption that $\mathbf{F} \in L_p (\mathbb{R}_\pm; H^s (\mathbb{R}^d; \mathbb{C}^n) )$ \emph{(}where
\hbox{$1 \le p \le \infty$}\emph{)}, for $\tau = \pm \varepsilon^{-\alpha}$ and $0 < \varepsilon \le 1$
    \begin{multline*}
        \| f^\varepsilon \mathbf{v}_\varepsilon (\cdot, \pm \varepsilon^{-\alpha}) - f_0 \mathbf{v}_0 (\cdot, \pm \varepsilon^{-\alpha}) \|_{L_2(\mathbb{R}^d)}
\\
\le \varepsilon^{2s(1 - \alpha)/3} \left( \mathfrak{C}_3 (s; 1) \| \boldsymbol{\phi} \|_{H^s(\mathbb{R}^d)} +  \varepsilon^{-\alpha} \mathfrak{C}_4 (s; 1) \| \boldsymbol{\psi} \|_{H^s(\mathbb{R}^d)} \right)
\\
+ \varepsilon^{2s(1 - \alpha)/3 -\alpha-\alpha/p'} \mathfrak{C}_4 (s; 1) \|\mathbf{F} \|_{L_p(\mathbb{R}_\pm; H^s(\mathbb{R}^d))}.
    \end{multline*}
    Here $p^{-1} + (p')^{-1} = 1$ and $0 < \alpha < 2s(2s + 3 + 3/p')^{-1}$. The constants $\mathfrak{C}_3 (s; \tau), \mathfrak{C}_4 (s; \tau)$ are defined by~\emph{(\ref{frakC_3_4})}.
\end{theorem}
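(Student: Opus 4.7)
The plan is to mirror the proof of Theorem~\ref{nonhomog_Cauchy_hatA_eps_thrm} (and of the preceding theorem for the nonhomogeneous sandwiched problem with only the $\widehat{N}_Q = 0$ refinement plugged in), by expanding $f^\varepsilon \mathbf{v}_\varepsilon - f_0 \mathbf{v}_0$ into three pieces through the Duhamel formula and applying Theorem~\ref{sndw_cos_sin_thrm2_H^s_L2} to each piece separately. Write
\begin{equation*}
f^\varepsilon \mathbf{v}_\varepsilon(\cdot,\tau) - f_0 \mathbf{v}_0(\cdot,\tau) = \mathrm{I}_\varepsilon(\tau) + \mathrm{II}_\varepsilon(\tau) + \mathrm{III}_\varepsilon(\tau),
\end{equation*}
with
\begin{align*}
\mathrm{I}_\varepsilon(\tau) &= \bigl(f^\varepsilon \cos(\tau \mathcal{A}_\varepsilon^{1/2})(f^\varepsilon)^{-1} - f_0 \cos(\tau (\mathcal{A}^0)^{1/2})f_0^{-1}\bigr)\boldsymbol{\phi},\\
\mathrm{II}_\varepsilon(\tau) &= \bigl(f^\varepsilon \mathcal{A}_\varepsilon^{-1/2}\sin(\tau \mathcal{A}_\varepsilon^{1/2})(f^\varepsilon)^{-1} - f_0 (\mathcal{A}^0)^{-1/2}\sin(\tau (\mathcal{A}^0)^{1/2})f_0^{-1}\bigr)\boldsymbol{\psi},\\
\mathrm{III}_\varepsilon(\tau) &= \int_0^\tau \bigl(f^\varepsilon \mathcal{A}_\varepsilon^{-1/2}\sin((\tau-\tilde\tau) \mathcal{A}_\varepsilon^{1/2})(f^\varepsilon)^{-1} - f_0 (\mathcal{A}^0)^{-1/2}\sin((\tau-\tilde\tau) (\mathcal{A}^0)^{1/2})f_0^{-1}\bigr)\mathbf{F}(\cdot,\tilde\tau)\,d\tilde\tau.
\end{align*}

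Since $\widehat{N}_Q(\boldsymbol{\theta})=0$ for all $\boldsymbol{\theta}$, Theorem~\ref{sndw_cos_sin_thrm2_H^s_L2} applies and gives, for $0\le s\le 3/2$,
\begin{equation*}
\|\mathrm{I}_\varepsilon(\tau)\|_{L_2(\mathbb{R}^d)} \le \mathfrak{C}_3(s;\tau)\,\varepsilon^{2s/3}\|\boldsymbol{\phi}\|_{H^s(\mathbb{R}^d)}, \qquad \|\mathrm{II}_\varepsilon(\tau)\|_{L_2(\mathbb{R}^d)} \le \mathfrak{C}_4(s;\tau)\,\varepsilon^{2s/3}\|\boldsymbol{\psi}\|_{H^s(\mathbb{R}^d)}.
\end{equation*}
For the integral term $\mathrm{III}_\varepsilon(\tau)$, apply the same sine estimate pointwise in $\tilde\tau\in[0,\tau]$ (or $[\tau,0]$) to the integrand, pull the norm inside the integral, and use that $|\tau-\tilde\tau|\le|\tau|$ so that $\mathfrak{C}_4(s;\tau-\tilde\tau)\le\mathfrak{C}_4(s;\tau)$ (this monotonicity in $|\tau|$ is clear from~\eqref{frakC_3_4}). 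This yields
\begin{equation*}
\|\mathrm{III}_\varepsilon(\tau)\|_{L_2(\mathbb{R}^d)} \le \mathfrak{C}_4(s;\tau)\,\varepsilon^{2s/3} \|\mathbf{F}\|_{L_1((0,\tau);H^s(\mathbb{R}^d))},
\end{equation*}
and adding the three bounds gives the first claimed estimate.

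For the large-time version with $\tau=\pm\varepsilon^{-\alpha}$ and $\mathbf{F}\in L_p(\mathbb{R}_\pm;H^s)$, one applies H\"older's inequality to the integral bound, $\|\mathbf{F}\|_{L_1((0,\tau);H^s)}\le|\tau|^{1/p'}\|\mathbf{F}\|_{L_p(\mathbb{R}_\pm;H^s)}$, and then uses the explicit large-time bounds for the cosine and sine families that are already embedded in Theorem~\ref{sndw_cos_sin_thrm2_H^s_L2} (equivalently, in~\eqref{frakC_3_4}). The combined exponent of $\varepsilon$ in the integral term becomes $2s(1-\alpha)/3-\alpha-\alpha/p'$, and the constraint $\alpha<2s(2s+3+3/p')^{-1}$ is exactly the condition ensuring this exponent stays positive, matching the analogous argument in Theorem~\ref{nonhomog_Cauchy_hatA_eps_ench_thrm_1}. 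There is no essential obstacle beyond bookkeeping: the theorem is a direct Duhamel consequence of Theorem~\ref{sndw_cos_sin_thrm2_H^s_L2}, and the only point requiring some care is the monotonicity of the coefficients $\mathfrak{C}_3,\mathfrak{C}_4$ in $|\tau|$ used to absorb the shift $\tau\mapsto\tau-\tilde\tau$ inside the integral.
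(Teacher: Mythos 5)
Your proposal is correct and follows essentially the same route as the paper: the paper deduces this theorem directly from Theorem~\ref{sndw_cos_sin_thrm2_H^s_L2} via the Duhamel representation of $\mathbf{v}_\varepsilon$ and $\mathbf{v}_0$, exactly as in the proof of Theorem~\ref{nonhomog_Cauchy_hatA_eps_thrm}, with the integral term handled by the monotonicity of $\mathfrak{C}_4(s;\cdot)$ and H\"older's inequality for the large-time case. Your write-up just makes explicit the bookkeeping that the paper leaves implicit.
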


Finally, applying Theorem~\ref{sndw_cos_sin_thrm3_H^s_L2},  we deduce the following result.

\begin{theorem}
    \label{nonhomog_Cauchy_A_eps_thrm_3}
    Suppose that the assumptions of Theorem~\emph{\ref{nonhomog_Cauchy_A_eps_thrm_1}} are satisfied. Suppose also that Condition~\emph{\ref{sndw_cond1}} \emph{(}or more restrictive Condition~\emph{\ref{sndw_cond2})} is satisfied. If $\boldsymbol{\phi}, \boldsymbol{\psi} \in H^s (\mathbb{R}^d; \mathbb{C}^n)$ and  $\mathbf{F} \in L_{1, \mathrm{loc}} (\mathbb{R}; H^s (\mathbb{R}^d; \mathbb{C}^n))$, $0 \le s \le 3/2$, then for $\tau \in \mathbb{R}$ and $\varepsilon > 0$ we have
    \begin{multline*}
        \| f^\varepsilon \mathbf{v}_\varepsilon (\cdot, \tau) - f_0 \mathbf{v}_0 (\cdot, \tau) \|_{L_2(\mathbb{R}^d)}
\\
\le \varepsilon^{2s/3} \left( \mathfrak{C}_5 (s; \tau) \| \boldsymbol{\phi} \|_{H^s(\mathbb{R}^d)} + \mathfrak{C}_6 (s; \tau) \| \boldsymbol{\psi} \|_{H^s(\mathbb{R}^d)} \right)
\\
+ \varepsilon^{2s/3} \mathfrak{C}_6 (s; \tau) \|\mathbf{F} \|_{L_1((0,\tau);H^s(\mathbb{R}^d))}.
    \end{multline*}
    Under the additional assumption that $\mathbf{F} \in L_p (\mathbb{R}_\pm; H^s (\mathbb{R}^d; \mathbb{C}^n) )$ \emph{(}where
\hbox{$1 \le p \le \infty$}\emph{)}, for $\tau = \pm \varepsilon^{-\alpha}$ and $0 < \varepsilon \le 1$
    \begin{multline*}
        \| f^\varepsilon \mathbf{v}_\varepsilon (\cdot, \pm \varepsilon^{-\alpha}) - f_0 \mathbf{v}_0 (\cdot, \pm \varepsilon^{-\alpha}) \|_{L_2(\mathbb{R}^d)}
\\
\le \varepsilon^{2s(1 - \alpha)/3} \left( \mathfrak{C}_5 (s; 1) \| \boldsymbol{\phi} \|_{H^s(\mathbb{R}^d)} +  \varepsilon^{-\alpha} \mathfrak{C}_6 (s; 1) \| \boldsymbol{\psi} \|_{H^s(\mathbb{R}^d)} \right)
\\
+ \varepsilon^{2s(1 - \alpha)/3-\alpha-\alpha/p'} \mathfrak{C}_6 (s; 1) \|\mathbf{F} \|_{L_p(\mathbb{R}_\pm; H^s(\mathbb{R}^d))}.
    \end{multline*}
    Here $p^{-1} + (p')^{-1} = 1$ and $0 < \alpha < 2s(2s + 3 + 3/p')^{-1}$. The constants $\mathfrak{C}_5 (s; \tau), \mathfrak{C}_6 (s; \tau)$ are defined by~\emph{(\ref{frakC_5_6})}.
\end{theorem}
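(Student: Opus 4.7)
The plan is to reduce the theorem to a direct application of Theorem \ref{sndw_cos_sin_thrm3_H^s_L2} (which supplies the sharpened $(H^s \to L_2)$-estimates for the sandwiched cosine and sine operators under Condition \ref{sndw_cond1}) via the Duhamel representation for the solutions of \eqref{nonhomog_Cauchy_A_eps} and \eqref{nonhomog_Cauchy_A_0}. Since the $H^s$ norms of the data and the forcing term appear linearly on the right-hand side, it suffices to estimate each summand of $f^\varepsilon \mathbf{v}_\varepsilon - f_0 \mathbf{v}_0$ separately.

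First I would write
\[
f^\varepsilon \mathbf{v}_\varepsilon(\cdot,\tau) - f_0 \mathbf{v}_0(\cdot,\tau) = \mathcal{I}_{\boldsymbol{\phi}}(\tau) + \mathcal{I}_{\boldsymbol{\psi}}(\tau) + \mathcal{I}_{\mathbf{F}}(\tau),
\]
where
\[
\mathcal{I}_{\boldsymbol{\phi}}(\tau) = \bigl( f^\varepsilon \cos(\tau \mathcal{A}_\varepsilon^{1/2})(f^\varepsilon)^{-1} - f_0\cos(\tau (\mathcal{A}^0)^{1/2}) f_0^{-1}\bigr)\boldsymbol{\phi},
\]
\[
\mathcal{I}_{\boldsymbol{\psi}}(\tau) = \bigl( f^\varepsilon \mathcal{A}_\varepsilon^{-1/2}\sin(\tau \mathcal{A}_\varepsilon^{1/2})(f^\varepsilon)^{-1} - f_0 (\mathcal{A}^0)^{-1/2}\sin(\tau (\mathcal{A}^0)^{1/2}) f_0^{-1}\bigr)\boldsymbol{\psi},
\]
\[
\mathcal{I}_{\mathbf{F}}(\tau) = \int_0^\tau \bigl( f^\varepsilon \mathcal{A}_\varepsilon^{-1/2}\sin((\tau-\tilde\tau) \mathcal{A}_\varepsilon^{1/2})(f^\varepsilon)^{-1} - f_0 (\mathcal{A}^0)^{-1/2}\sin((\tau-\tilde\tau) (\mathcal{A}^0)^{1/2}) f_0^{-1}\bigr)\mathbf{F}(\cdot,\tilde\tau)\, d\tilde\tau.
\]
Theorem \ref{sndw_cos_sin_thrm3_H^s_L2} immediately gives $\|\mathcal{I}_{\boldsymbol{\phi}}(\tau)\|_{L_2} \le \varepsilon^{2s/3}\mathfrak{C}_5(s;\tau)\|\boldsymbol{\phi}\|_{H^s}$ and $\|\mathcal{I}_{\boldsymbol{\psi}}(\tau)\|_{L_2} \le \varepsilon^{2s/3}\mathfrak{C}_6(s;\tau)\|\boldsymbol{\psi}\|_{H^s}$. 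For the Duhamel term I would move the $L_2$-norm under the integral and apply the same sine estimate with $\tau$ replaced by $\tau - \tilde\tau$; since $|\tau-\tilde\tau|\le |\tau|$ on the domain of integration, $\mathfrak{C}_6(s;\tau-\tilde\tau) \le \mathfrak{C}_6(s;\tau)$ (by monotonicity of \eqref{frakC_5_6} in $|\tau|$), yielding $\|\mathcal{I}_{\mathbf{F}}(\tau)\|_{L_2} \le \varepsilon^{2s/3}\mathfrak{C}_6(s;\tau)\|\mathbf{F}\|_{L_1((0,\tau);H^s)}$. Summing the three bounds gives the first displayed inequality of the theorem.

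For the large-time estimate with $\tau = \pm\varepsilon^{-\alpha}$ and $0<\varepsilon\le 1$, I would substitute the explicit form of $\mathfrak{C}_5(s;1)$ and $\mathfrak{C}_6(s;1)$ in place of $\mathfrak{C}_5(s;\tau)$ and $\mathfrak{C}_6(s;\tau)$: since $\mathfrak{C}_5(s;\tau)$ is controlled by $\mathfrak{C}_5(s;1)|\tau|^{2s/3}$ and $\mathfrak{C}_6(s;\tau)$ by $\mathfrak{C}_6(s;1)|\tau|^{1+2s/3}$ up to constants absorbed in the notation, the leading $\varepsilon^{2s/3}|\tau|^{2s/3} = \varepsilon^{2s(1-\alpha)/3}$ factor emerges for the data terms, and the $\boldsymbol{\psi}$ term picks up the extra $\varepsilon^{-\alpha}$ power. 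For the forcing term the Hölder inequality gives $\|\mathbf{F}\|_{L_1((0,\tau);H^s)} \le |\tau|^{1/p'}\|\mathbf{F}\|_{L_p(\mathbb{R}_\pm;H^s)} = \varepsilon^{-\alpha/p'}\|\mathbf{F}\|_{L_p(\mathbb{R}_\pm;H^s)}$, which combined with the $\varepsilon^{2s/3}\mathfrak{C}_6(s;\tau)$ factor produces the claimed rate $\varepsilon^{2s(1-\alpha)/3 - \alpha - \alpha/p'}$.

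The only non-mechanical step is recording the admissible range of $\alpha$: requiring the overall exponent $2s(1-\alpha)/3 - \alpha - \alpha/p'$ to be positive amounts to $2s > \alpha(2s + 3 + 3/p')$, giving precisely $0 < \alpha < 2s(2s+3+3/p')^{-1}$. Apart from this bookkeeping of exponents, the argument is essentially a transcription of the proof of Theorem \ref{nonhomog_Cauchy_hatA_eps_thrm} (and its refinement, Theorem \ref{nonhomog_Cauchy_hatA_eps_ench_thrm_1}) into the sandwiched setting, with the stronger cosine/sine bounds of Theorem \ref{sndw_cos_sin_thrm3_H^s_L2} replacing those of Theorem \ref{sndw_cos_sin_thrm1_H^s_L2}; there is no genuine analytic obstacle beyond verifying the monotonicity of $\mathfrak{C}_5$, $\mathfrak{C}_6$ in $|\tau|$ used to pass the supremum over $\tilde\tau \in [0,\tau]$ outside the integral in $\mathcal{I}_{\mathbf{F}}$.
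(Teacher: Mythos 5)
Your proposal is correct and follows essentially the same route as the paper: the paper derives this theorem directly from Theorem~\ref{sndw_cos_sin_thrm3_H^s_L2} via the Duhamel representation, exactly as in the proofs of Theorems~\ref{nonhomog_Cauchy_hatA_eps_thrm} and~\ref{nonhomog_Cauchy_A_eps_thrm_1} that you transcribe. The term-by-term estimates, the monotonicity of $\mathfrak{C}_5(s;\cdot)$, $\mathfrak{C}_6(s;\cdot)$, the H\"older step for the forcing term, and the resulting range of $\alpha$ all check out.
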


If $\mathbf{v}_\varepsilon$  satisfies~\eqref{nonhomog_Cauchy_A_eps}, then 
the function $f^\varepsilon \mathbf{v}_\varepsilon =: \mathbf{z}_\varepsilon$ is the solution of the problem
\begin{equation}
\label{nonhomog_Cauchy_wQ}
\left\{
\begin{aligned}
& Q^\varepsilon (\mathbf{x}) \frac{\partial^2 \mathbf{z}_\varepsilon (\mathbf{x}, \tau)}{\partial \tau^2} = - b(\mathbf{D})^* g^\varepsilon (\mathbf{x}) b(\mathbf{D}) \mathbf{z}_\varepsilon (\mathbf{x}, \tau) + Q^\varepsilon (\mathbf{x}) \mathbf{F} (\mathbf{x}, \tau), \\
& \mathbf{z}_\varepsilon (\mathbf{x}, 0) = \boldsymbol{\phi} (\mathbf{x}), \quad \frac{\partial \mathbf{z}_\varepsilon }{\partial \tau} (\mathbf{x}, 0) = \boldsymbol{\psi}(\mathbf{x}),
\end{aligned}
\right.
\end{equation}
where $Q^\varepsilon (\mathbf{x}) = (f^\varepsilon(\mathbf{x}) (f^\varepsilon(\mathbf{x}))^* )^{-1}$.
Similarly, if $\mathbf{v}_0$ is the solution of problem~\eqref{nonhomog_Cauchy_A_0}, then 
the function $f_0 \mathbf{v}_0 =: \mathbf{z}_0$ satisfies
\begin{equation}
\label{nonhomog_Cauchy_wQ_eff}
\left\{
\begin{aligned}
& \overline{Q} \frac{\partial^2 \mathbf{z}_0 (\mathbf{x}, \tau)}{\partial \tau^2} = - b(\mathbf{D})^* g^0 b(\mathbf{D}) \mathbf{z}_0 (\mathbf{x}, \tau) + \overline{Q}  \mathbf{F} (\mathbf{x}, \tau), \\
& \mathbf{z}_0 (\mathbf{x}, 0) = \boldsymbol{\phi} (\mathbf{x}), \quad \frac{\partial \mathbf{z}_0 }{\partial \tau} (\mathbf{x}, 0) = \boldsymbol{\psi}(\mathbf{x}).
\end{aligned}
\right.
\end{equation}

If~\eqref{nonhomog_Cauchy_wQ} is treated as the initial statement of the problem, then it is assumed that $Q(\mathbf{x})$~is a  $\Gamma$-periodic
matrix-valued function satisfying~\eqref{Q_condition}.

Theorems~\ref{nonhomog_Cauchy_A_eps_thrm_1}--\ref{nonhomog_Cauchy_A_eps_thrm_3} admit the following reformulations.

\begin{theorem}
    \label{nonhomog_Cauchy_wQ_thrm_1}
    Let $\mathbf{z}_\varepsilon$~be the solution of problem~\emph{\eqref{nonhomog_Cauchy_wQ}}, and let $\mathbf{z}_0$~be the
    solution of problem~\emph{\eqref{nonhomog_Cauchy_wQ_eff}}.

    $1^{\circ}.$
         If $\boldsymbol{\phi}, \boldsymbol{\psi} \in H^s (\mathbb{R}^d;\mathbb{C}^n)$ and $\mathbf{F} \in L_{1, \mathrm{loc}} (\mathbb{R}; H^s (\mathbb{R}^d; \mathbb{C}^n))$, $0 \le s \le 2$, then for $\tau \in \mathbb{R}$ and $\varepsilon > 0$ we have
        \begin{multline*}
        \| \mathbf{z}_\varepsilon(\cdot, \tau) - \mathbf{z}_0 (\cdot, \tau) \|_{L_2 (\mathbb{R}^d)}
\le   \varepsilon^{s/2} \left( \mathfrak{C}_1 (s; \tau) \| \boldsymbol{\phi} \|_{H^s(\mathbb{R}^d)} + \mathfrak{C}_2 (s; \tau) \| \boldsymbol{\psi} \|_{H^s(\mathbb{R}^d)}\right)
\\
 +  \varepsilon^{s/2} \mathfrak{C}_2 (s; \tau) \|\mathbf{F} \|_{L_1((0,\tau);H^s(\mathbb{R}^d))}.
        \end{multline*}
        Under the additional assumption that $\mathbf{F} \in L_p (\mathbb{R}_\pm; H^s (\mathbb{R}^d; \mathbb{C}^n) )$ \emph{(}where
\hbox{$1 \le p \le \infty$}\emph{)}, for $\tau = \pm \varepsilon^{-\alpha}$ and $0 < \varepsilon \le 1$
        \begin{multline*}
        \| \mathbf{z}_\varepsilon (\cdot, \pm \varepsilon^{-\alpha}) - \mathbf{z}_0 (\cdot, \pm \varepsilon^{-\alpha}) \|_{L_2(\mathbb{R}^d)}
 \\
\le \varepsilon^{s(1 - \alpha)/2} \left( \mathfrak{C}_1 (s; 1) \| \boldsymbol{\phi} \|_{H^s(\mathbb{R}^d)} +  \varepsilon^{-\alpha} \mathfrak{C}_2 (s; 1) \| \boldsymbol{\psi} \|_{H^s(\mathbb{R}^d)} \right)
\\
+ \varepsilon^{s(1 - \alpha)/2-\alpha-\alpha/p'} \mathfrak{C}_2 (s; 1) \|\mathbf{F} \|_{L_p(\mathbb{R}_\pm; H^s(\mathbb{R}^d))}.
        \end{multline*}
        Here $p^{-1} + (p')^{-1} = 1$ and $0 < \alpha < s(s + 2 + 2/p')^{-1}$. The constants $\mathfrak{C}_1 (s; \tau), \mathfrak{C}_2 (s; \tau)$ are defined by~\emph{(\ref{frakC_1_2})}.

         $2^{\circ}.$
        If $\boldsymbol{\phi}, \boldsymbol{\psi} \in L_2 (\mathbb{R}^d; \mathbb{C}^n)$ and $\mathbf{F} \in L_{1, \mathrm{loc}} (\mathbb{R}; L_2 (\mathbb{R}^d; \mathbb{C}^n) )$, then
        \begin{equation*}
        \lim\limits_{\varepsilon \to 0} \| \mathbf{z}_\varepsilon (\cdot, \tau) -  \mathbf{z}_0 (\cdot, \tau) \|_{L_2(\mathbb{R}^d)} = 0, \quad \tau \in \mathbb{R}.
        \end{equation*}
\end{theorem}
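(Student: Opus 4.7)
The plan is to reduce this statement to Theorem~\ref{nonhomog_Cauchy_A_eps_thrm_1} by a direct change of variables. Given a $\Gamma$-periodic matrix $Q(\mathbf{x})$ satisfying~\eqref{Q_condition}, I put $f(\mathbf{x}) := Q(\mathbf{x})^{-1/2}$; then $f, f^{-1} \in L_\infty$ and $Q = (f f^*)^{-1}$, so we are in the framework of Subsection~\ref{A_oper_subsect}. The key observation is that the substitutions $\mathbf{z}_\varepsilon = f^\varepsilon \mathbf{v}_\varepsilon$ and $\mathbf{z}_0 = f_0 \mathbf{v}_0$ convert the Cauchy problems~\eqref{nonhomog_Cauchy_A_eps} and~\eqref{nonhomog_Cauchy_A_0} into~\eqref{nonhomog_Cauchy_wQ} and~\eqref{nonhomog_Cauchy_wQ_eff}, respectively. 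Indeed, writing $\mathbf{v}_\varepsilon = (f^\varepsilon)^{-1} \mathbf{z}_\varepsilon$ in~\eqref{nonhomog_Cauchy_A_eps}, multiplying on the left by $f^\varepsilon$ (which is time-independent), and using the identity $f^\varepsilon (f^\varepsilon)^* = (Q^\varepsilon)^{-1}$, one obtains $\partial_\tau^2 \mathbf{z}_\varepsilon = -(Q^\varepsilon)^{-1} b(\mathbf{D})^* g^\varepsilon b(\mathbf{D}) \mathbf{z}_\varepsilon + \mathbf{F}$; multiplying once more by $Q^\varepsilon$ yields~\eqref{nonhomog_Cauchy_wQ}. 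The initial data match because $f^\varepsilon \mathbf{v}_\varepsilon(\cdot,0) = \mathbf{z}_\varepsilon(\cdot,0) = \boldsymbol{\phi}$ and similarly for $\boldsymbol{\psi}$. The analogous computation, with $f_0 f_0 = \overline{Q}^{\,-1}$ in place of $f^\varepsilon (f^\varepsilon)^* = (Q^\varepsilon)^{-1}$, converts~\eqref{nonhomog_Cauchy_A_0} into~\eqref{nonhomog_Cauchy_wQ_eff}.

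With these identifications, the estimates of statement~$1^\circ$ follow at once from Theorem~\ref{nonhomog_Cauchy_A_eps_thrm_1}, since $\mathbf{z}_\varepsilon - \mathbf{z}_0 = f^\varepsilon \mathbf{v}_\varepsilon - f_0 \mathbf{v}_0$ and the right-hand sides of the bounds there are exactly those asserted here (both for finite $\tau$ and for the large-time regime $\tau = \pm\varepsilon^{-\alpha}$ under the additional $L_p$-assumption on $\mathbf{F}$). For statement~$2^\circ$ I apply the Banach--Steinhaus theorem in the standard way: the family of operators $(\boldsymbol{\phi}, \boldsymbol{\psi}, \mathbf{F}) \mapsto \mathbf{z}_\varepsilon(\cdot, \tau) - \mathbf{z}_0(\cdot, \tau)$ is uniformly bounded in $\varepsilon$ as an operator from $L_2 \times L_2 \times L_1((0,\tau); L_2)$ into $L_2(\mathbb{R}^d; \mathbb{C}^n)$, by the standard energy estimate for hyperbolic equations with uniformly bounded and positive coefficients, while convergence to zero on the dense subset $H^s \times H^s \times L_1((0,\tau); H^s)$ (with any fixed $0 < s \le 2$) is provided by statement~$1^\circ$.

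No serious obstacle is expected: the argument is a purely algebraic reformulation together with a standard density argument, and all of the nontrivial analytic work has already been carried out in Theorem~\ref{nonhomog_Cauchy_A_eps_thrm_1}. The only point that requires attention is verifying that the change of variables $\mathbf{z}_\varepsilon = f^\varepsilon \mathbf{v}_\varepsilon$ transports the inhomogeneity $(f^\varepsilon)^{-1} \mathbf{F}$ in~\eqref{nonhomog_Cauchy_A_eps} to exactly $Q^\varepsilon \mathbf{F}$ in~\eqref{nonhomog_Cauchy_wQ}; this is precisely why the source term in the $\mathbf{v}_\varepsilon$-formulation was taken in the slightly unusual form $(f^\varepsilon)^{-1}\mathbf{F}$ rather than $\mathbf{F}$ itself, and this correspondence makes the reduction of Theorem~\ref{nonhomog_Cauchy_wQ_thrm_1} to Theorem~\ref{nonhomog_Cauchy_A_eps_thrm_1} completely transparent.
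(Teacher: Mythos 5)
Your proposal is correct and is essentially the paper's own argument: the paper states this theorem as a direct reformulation of Theorem~\ref{nonhomog_Cauchy_A_eps_thrm_1} via the substitution $\mathbf{z}_\varepsilon = f^\varepsilon \mathbf{v}_\varepsilon$, $\mathbf{z}_0 = f_0 \mathbf{v}_0$ with $f = Q^{-1/2}$, exactly as you describe. The only cosmetic difference is that statement $2^\circ$ can be quoted verbatim from statement $2^\circ$ of Theorem~\ref{nonhomog_Cauchy_A_eps_thrm_1} instead of rerunning the Banach--Steinhaus argument, but your version of that step is also valid.
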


\begin{theorem}
    \label{nonhomog_Cauchy_wQ_thrm_2}
    Suppose that the assumptions of Theorem~\emph{\ref{nonhomog_Cauchy_wQ_thrm_1}} are satisfied. Let $\widehat{N}_Q (\boldsymbol{\theta})$ be
    the operator defined by~\emph{\eqref{N_Q(theta)}}. Suppose that $\widehat{N}_Q (\boldsymbol{\theta}) = 0$ for any $\boldsymbol{\theta} \in \mathbb{S}^{d-1}$. If $\boldsymbol{\phi}, \boldsymbol{\psi} \in H^s (\mathbb{R}^d; \mathbb{C}^n)$ and  $\mathbf{F} \in L_{1, \mathrm{loc}} (\mathbb{R}; H^s (\mathbb{R}^d; \mathbb{C}^n))$, $0 \le s \le 3/2$, then for $\tau \in \mathbb{R}$ and $\varepsilon > 0$ we have
    \begin{multline*}
    \| \mathbf{z}_\varepsilon (\cdot, \tau) - \mathbf{z}_0 (\cdot, \tau) \|_{L_2(\mathbb{R}^d)}
\le \varepsilon^{2s/3} \left( \mathfrak{C}_3 (s; \tau) \| \boldsymbol{\phi} \|_{H^s(\mathbb{R}^d)} + \mathfrak{C}_4 (s; \tau) \| \boldsymbol{\psi} \|_{H^s(\mathbb{R}^d)} \right)
\\
+ \varepsilon^{2s/3} \mathfrak{C}_4 (s; \tau) \|\mathbf{F} \|_{L_1((0,\tau);H^s(\mathbb{R}^d))}.
    \end{multline*}
    Under the additional assumption that $\mathbf{F} \in L_p (\mathbb{R}_\pm; H^s (\mathbb{R}^d; \mathbb{C}^n) )$ \emph{(}where
\hbox{$1 \le p \le \infty$}\emph{)}, for $\tau = \pm \varepsilon^{-\alpha}$ and $0 < \varepsilon \le 1$ we have
    \begin{multline*}
    \| \mathbf{z}_\varepsilon (\cdot, \pm \varepsilon^{-\alpha}) - \mathbf{z}_0 (\cdot, \pm \varepsilon^{-\alpha}) \|_{L_2(\mathbb{R}^d)}
\\
\le \varepsilon^{2s(1 - \alpha)/3} \left( \mathfrak{C}_3 (s; 1) \| \boldsymbol{\phi} \|_{H^s(\mathbb{R}^d)} +  \varepsilon^{-\alpha} \mathfrak{C}_4 (s; 1) \| \boldsymbol{\psi} \|_{H^s(\mathbb{R}^d)} \right)
\\
+ \varepsilon^{2s(1 - \alpha)/3 -\alpha-\alpha/p'} \mathfrak{C}_4 (s; 1) \|\mathbf{F} \|_{L_p(\mathbb{R}_\pm; H^s(\mathbb{R}^d))}.
    \end{multline*}
    Here $p^{-1} + (p')^{-1} = 1$ and $0 < \alpha < 2s(2s + 3 + 3/p')^{-1}$. The constants $\mathfrak{C}_3 (s; \tau), \mathfrak{C}_4 (s; \tau)$
    are defined by~\emph{\eqref{frakC_3_4}}.
\end{theorem}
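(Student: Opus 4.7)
The plan is to deduce Theorem~\ref{nonhomog_Cauchy_wQ_thrm_2} directly from its counterpart for the sandwiched Cauchy problem, namely the unlabeled theorem sitting between Theorems~\ref{nonhomog_Cauchy_A_eps_thrm_1} and~\ref{nonhomog_Cauchy_A_eps_thrm_3}, which gives the same estimates under the assumption $\widehat{N}_Q(\boldsymbol{\theta}) = 0$ for the pair $(\mathbf{v}_\varepsilon, \mathbf{v}_0)$ solving~\eqref{nonhomog_Cauchy_A_eps} and~\eqref{nonhomog_Cauchy_A_0}. Indeed, the paper already notes that Theorems~\ref{nonhomog_Cauchy_A_eps_thrm_1}--\ref{nonhomog_Cauchy_A_eps_thrm_3} admit reformulations of the form considered here, so the proof is the routine change of unknowns $\mathbf{z}_\varepsilon := f^\varepsilon \mathbf{v}_\varepsilon$ and $\mathbf{z}_0 := f_0 \mathbf{v}_0$, carried out already in the homogeneous setting to pass from Theorems~\ref{homog_Cauchy_A_eps_thrm_1}--\ref{homog_Cauchy_A_eps_thrm_3} to Theorems~\ref{homog_Cauchy_wQ_thrm_1}--\ref{homog_Cauchy_wQ_thrm_2}.

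First I would verify that the substitution transforms~\eqref{nonhomog_Cauchy_wQ} into~\eqref{nonhomog_Cauchy_A_eps} and~\eqref{nonhomog_Cauchy_wQ_eff} into~\eqref{nonhomog_Cauchy_A_0}. Since $f(\mathbf{x})$ is a square $n\times n$ matrix, the definition $Q(\mathbf{x}) = (f(\mathbf{x})f(\mathbf{x})^*)^{-1}$ yields the algebraic identity $f(\mathbf{x})^* Q(\mathbf{x}) = f(\mathbf{x})^{-1}$, and hence $(f^\varepsilon)^* Q^\varepsilon = (f^\varepsilon)^{-1}$. Multiplying the equation in~\eqref{nonhomog_Cauchy_wQ} from the left by $(f^\varepsilon)^*$ and using $\partial^2_\tau \mathbf{z}_\varepsilon = f^\varepsilon \partial^2_\tau \mathbf{v}_\varepsilon$, the left-hand side becomes $\partial^2_\tau \mathbf{v}_\varepsilon$, while the right-hand side becomes $-(f^\varepsilon)^* b(\mathbf{D})^* g^\varepsilon b(\mathbf{D}) f^\varepsilon \mathbf{v}_\varepsilon + (f^\varepsilon)^{-1}\mathbf{F} = -\mathcal{A}_\varepsilon \mathbf{v}_\varepsilon + (f^\varepsilon)^{-1}\mathbf{F}$. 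The initial conditions translate to $f^\varepsilon \mathbf{v}_\varepsilon(\cdot,0) = \boldsymbol{\phi}$ and $f^\varepsilon \partial_\tau\mathbf{v}_\varepsilon(\cdot,0) = \boldsymbol{\psi}$, which is exactly~\eqref{nonhomog_Cauchy_A_eps}. An identical computation with $f_0$, $\overline{Q}$ in place of $f^\varepsilon$, $Q^\varepsilon$ matches~\eqref{nonhomog_Cauchy_wQ_eff} to~\eqref{nonhomog_Cauchy_A_0}. By uniqueness of solutions for both Cauchy problems (which is guaranteed by selfadjointness of $\mathcal{A}_\varepsilon$ and $\mathcal{A}^0$ together with Duhamel's formula), the correspondences $\mathbf{z}_\varepsilon \leftrightarrow \mathbf{v}_\varepsilon$ and $\mathbf{z}_0 \leftrightarrow \mathbf{v}_0$ are bijective.

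With the correspondence established, we have the literal identity
\begin{equation*}
\mathbf{z}_\varepsilon(\cdot,\tau) - \mathbf{z}_0(\cdot,\tau) = f^\varepsilon \mathbf{v}_\varepsilon(\cdot,\tau) - f_0 \mathbf{v}_0(\cdot,\tau),
\end{equation*}
so the $L_2(\mathbb{R}^d)$-norm on the left of the estimates to be proved coincides with the one already bounded in the sandwiched analog. Since the data $\boldsymbol{\phi}, \boldsymbol{\psi}, \mathbf{F}$ are the same in both formulations, all constants $\mathfrak{C}_3, \mathfrak{C}_4$, exponents $2s/3$ and $2s(1-\alpha)/3 - \alpha - \alpha/p'$, and the admissible range $0 < \alpha < 2s(2s+3+3/p')^{-1}$ transfer verbatim. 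The desired estimates follow immediately.

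There is no substantive obstacle in this argument: the only points requiring care are the algebraic identity $(f^\varepsilon)^* Q^\varepsilon = (f^\varepsilon)^{-1}$ (valid pointwise a.e.\ since $f, f^{-1} \in L_\infty$) and the verification that the substitution respects the natural (generalized) sense in which both Cauchy problems are to be interpreted; the latter is handled by the functional calculus of the selfadjoint operators $\mathcal{A}_\varepsilon$ and $\mathcal{A}^0$ used to write the explicit solution formulas, so that the identification $\mathbf{z}_\varepsilon = f^\varepsilon \mathbf{v}_\varepsilon$ makes sense as an $L_2$-valued function of $\tau \in \mathbb{R}$ and no regularity issue arises.
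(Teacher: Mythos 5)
Your proposal is correct and follows exactly the route the paper takes: the paper introduces $\mathbf{z}_\varepsilon = f^\varepsilon \mathbf{v}_\varepsilon$, $\mathbf{z}_0 = f_0 \mathbf{v}_0$, observes that these solve \eqref{nonhomog_Cauchy_wQ} and \eqref{nonhomog_Cauchy_wQ_eff}, and presents Theorem~\ref{nonhomog_Cauchy_wQ_thrm_2} as a verbatim reformulation of the unlabeled sandwiched theorem for the case $\widehat{N}_Q(\boldsymbol{\theta})=0$. Your verification of the identity $(f^\varepsilon)^* Q^\varepsilon = (f^\varepsilon)^{-1}$ and of the equivalence of the two Cauchy problems merely makes explicit what the paper leaves implicit.
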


\begin{theorem}
    Suppose that the assumptions of Theorem~\emph{\ref{nonhomog_Cauchy_wQ_thrm_1}} are satisfied.
    Suppose also that Condition~\emph{\ref{sndw_cond1}} \emph{(}or more restrictive Condition~\emph{\ref{sndw_cond2})} is satisfied. If $\boldsymbol{\phi}, \boldsymbol{\psi} \in H^s (\mathbb{R}^d; \mathbb{C}^n)$ and  $\mathbf{F} \in L_{1, \mathrm{loc}} (\mathbb{R}; H^s (\mathbb{R}^d; \mathbb{C}^n))$, $0 \le s \le 3/2$, then for $\tau \in \mathbb{R}$ and $\varepsilon > 0$ we have
    \begin{multline*}
    \| \mathbf{z}_\varepsilon (\cdot, \tau) - \mathbf{z}_0 (\cdot, \tau) \|_{L_2(\mathbb{R}^d)} \le \varepsilon^{2s/3} \left( \mathfrak{C}_5 (s; \tau) \| \boldsymbol{\phi} \|_{H^s(\mathbb{R}^d)} + \mathfrak{C}_6 (s; \tau) \| \boldsymbol{\psi} \|_{H^s(\mathbb{R}^d)}\right)
\\
+ \varepsilon^{2s/3}  \mathfrak{C}_6 (s; \tau) \|\mathbf{F} \|_{L_1((0,\tau);H^s(\mathbb{R}^d))}.
    \end{multline*}
    Under the additional assumption that $\mathbf{F} \in L_p (\mathbb{R}_\pm; H^s (\mathbb{R}^d; \mathbb{C}^n) )$ \emph{(}where
\hbox{$1 \le p \le \infty$}\emph{)}, for $\tau = \pm \varepsilon^{-\alpha}$ and $0 < \varepsilon \le 1$ we have
    \begin{multline*}
    \| \mathbf{z}_\varepsilon (\cdot, \pm \varepsilon^{-\alpha}) - \mathbf{z}_0 (\cdot, \pm \varepsilon^{-\alpha}) \|_{L_2(\mathbb{R}^d)}
\\
\le \varepsilon^{2s(1 - \alpha)/3} \left( \mathfrak{C}_5 (s; 1) \| \boldsymbol{\phi} \|_{H^s(\mathbb{R}^d)} +  \varepsilon^{-\alpha} \mathfrak{C}_6 (s; 1) \| \boldsymbol{\psi} \|_{H^s(\mathbb{R}^d)} \right)
\\
+ \varepsilon^{2s(1 - \alpha)/3-\alpha-\alpha/p'} \mathfrak{C}_6 (s; 1) \|\mathbf{F} \|_{L_p(\mathbb{R}_\pm; H^s(\mathbb{R}^d))}.
    \end{multline*}
    Here $p^{-1} + (p')^{-1} = 1$ and $0 < \alpha < 2s(2s + 3 + 3/p')^{-1}$. The constants $\mathfrak{C}_5 (s; \tau), \mathfrak{C}_6 (s; \tau)$
    are defined by~\emph{(\ref{frakC_5_6})}.
\end{theorem}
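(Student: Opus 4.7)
The plan is to reduce this theorem directly to Theorem~\ref{nonhomog_Cauchy_A_eps_thrm_3} via the substitution $\mathbf{z}_\varepsilon = f^\varepsilon \mathbf{v}_\varepsilon$, $\mathbf{z}_0 = f_0 \mathbf{v}_0$, exactly as was done (for the homogeneous problem) in the passage going from problems~\eqref{homog_Cauchy_A_eps}--\eqref{homog_Cauchy_A_0} to problems~\eqref{homog_Cauchy_wQ}--\eqref{homog_Cauchy_wQ_eff}.

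First, given a $\Gamma$-periodic matrix-valued function $Q(\mathbf{x})$ satisfying~\eqref{Q_condition}, I set $f(\mathbf{x}) := Q(\mathbf{x})^{-1/2}$. Then $f$ is $\Gamma$-periodic with $f, f^{-1} \in L_\infty$, so it meets the assumptions of Subsection~\ref{A_oper_subsect}, and by construction $Q(\mathbf{x}) = (f(\mathbf{x}) f(\mathbf{x})^*)^{-1}$; in particular $\overline{Q} = f_0^{-2}$, where $f_0$ is defined by~\eqref{f_0}. The operator $\mathcal{A}_\varepsilon$ from~\eqref{A_eps} and the effective operator $\mathcal{A}^0$ from~\eqref{A0} are then unambiguously defined.

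Next, I introduce $\mathbf{v}_\varepsilon$ as the unique solution of~\eqref{nonhomog_Cauchy_A_eps} with the same data $\boldsymbol{\phi}, \boldsymbol{\psi}, \mathbf{F}$, and analogously $\mathbf{v}_0$ as the solution of~\eqref{nonhomog_Cauchy_A_0}. A direct calculation verifies that $\mathbf{z}_\varepsilon := f^\varepsilon \mathbf{v}_\varepsilon$ satisfies exactly~\eqref{nonhomog_Cauchy_wQ}: multiplying the equation in~\eqref{nonhomog_Cauchy_A_eps} by $f^\varepsilon$ on the left, noting that $f^\varepsilon (f^\varepsilon)^* = (Q^\varepsilon)^{-1}$, and then multiplying by $Q^\varepsilon$, one obtains $Q^\varepsilon \partial_\tau^2 \mathbf{z}_\varepsilon = -b(\mathbf{D})^* g^\varepsilon b(\mathbf{D}) \mathbf{z}_\varepsilon + Q^\varepsilon \mathbf{F}$; the initial conditions are recast into $\mathbf{z}_\varepsilon(\cdot,0) = \boldsymbol{\phi}$ and $\partial_\tau \mathbf{z}_\varepsilon(\cdot,0) = \boldsymbol{\psi}$. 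The same computation with $f_0$ replacing $f^\varepsilon$ and $\overline{Q}$ replacing $Q^\varepsilon$ shows that $\mathbf{z}_0 := f_0 \mathbf{v}_0$ solves~\eqref{nonhomog_Cauchy_wQ_eff}. By uniqueness, these are the solutions from the statement of the theorem, and one has the crucial identity
\begin{equation*}
\| \mathbf{z}_\varepsilon(\cdot,\tau) - \mathbf{z}_0(\cdot,\tau) \|_{L_2(\mathbb{R}^d)} = \| f^\varepsilon \mathbf{v}_\varepsilon(\cdot,\tau) - f_0 \mathbf{v}_0(\cdot,\tau) \|_{L_2(\mathbb{R}^d)}.
\end{equation*}

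With this identification, both asserted estimates follow immediately from the corresponding estimates of Theorem~\ref{nonhomog_Cauchy_A_eps_thrm_3}. The only subtle point, which is the main thing to check rather than a genuine obstacle, is that Condition~\ref{sndw_cond1} (or~\ref{sndw_cond2}) in our present formulation coincides with the hypothesis of Theorem~\ref{nonhomog_Cauchy_A_eps_thrm_3} for this choice of $f$. This is because the operator $\widehat{N}_{0,Q}(\boldsymbol{\theta})$ is constructed via~\eqref{N_Q(theta)}, \eqref{L_Q(theta)}, and~\eqref{equation_for_Lambda_Q} from $b$, $g$, and $Q$ alone, and the generalized spectral problem~\eqref{hatS_gener_spec_problem} involves $b$, $g^0$, and $\overline{Q}$ — none of these objects depend on the specific factorization of $Q$, so Condition~\ref{sndw_cond1} is a condition on $(b,g,Q)$ that transfers verbatim. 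Consequently, the estimates hold with the very same constants $\mathfrak{C}_5(s;\tau)$, $\mathfrak{C}_6(s;\tau)$ from~\eqref{frakC_5_6}, completing the reduction.
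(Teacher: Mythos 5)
Your reduction is exactly the paper's own (implicit) argument: the paper presents this theorem as a reformulation of Theorem~\ref{nonhomog_Cauchy_A_eps_thrm_3} obtained by setting $f = Q^{-1/2}$, identifying $\mathbf{z}_\varepsilon = f^\varepsilon \mathbf{v}_\varepsilon$ and $\mathbf{z}_0 = f_0 \mathbf{v}_0$ as the solutions of \eqref{nonhomog_Cauchy_wQ} and \eqref{nonhomog_Cauchy_wQ_eff}, and carrying the constants over unchanged. Your additional check that Condition~\ref{sndw_cond1} depends only on $(b,g,Q)$ and not on the factorization of $Q$ is correct and harmless.
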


\section{Application of the general results: the acoustics equation}

\subsection{The model example}

In $L_2 (\mathbb{R}^d)$, $d \ge 1$, we consider the operator
\begin{equation}
\label{model_operator}
\widehat{\mathcal{A}} = \mathbf{D}^* g(\mathbf{x}) \mathbf{D} =  - \operatorname{div} g(\mathbf{x}) \nabla.
\end{equation}
Here $g(\mathbf{x})$~is a $\Gamma$-periodic Hermitian $(d \times d)$-matrix-valued function such that
\begin{equation}
  \label{model_operator_g_cond}
  g(\mathbf{x}) > 0; \qquad g, g^{-1} \in L_\infty.
\end{equation}
The operator~\eqref{model_operator} is a particular case of the operator~\eqref{hatA}.
Now we have $n=1$, $m=d$, and $b(\mathbf{D}) = \mathbf{D}$.
Obviously, condition~\eqref{rank_alpha_ineq} is satisfied with $\alpha_0 = \alpha_1 = 1$.
Representation $g(\mathbf{x}) = h(\mathbf{x})^* h(\mathbf{x})$ is valid, for instance, with $h = g^{1/2}$.

By~\eqref{hatA0}, the effective operator for the operator~\eqref{model_operator} is given by
\begin{equation*}
\widehat{\mathcal{A}}^0 = \mathbf{D}^* g^0 \mathbf{D} =  - \operatorname{div} g^0 \nabla.
\end{equation*}
According to~\eqref{equation_for_Lambda} and~\eqref{g0}, the effective matrix is defined as follows.
Let $\mathbf{e}_1, \ldots, \mathbf{e}_d$~be the standard orthonormal basis in $\mathbb{R}^d$.
Let $\Phi_j \in \widetilde{H}^1 (\Omega)$~be the weak  $\Gamma$-periodic solution of the problem
\begin{equation}
\label{model_eff_1}
\operatorname{div} g(\mathbf{x}) (\nabla \Phi_j(\mathbf{x}) + \mathbf{e}_j) = 0, \quad \int_{\Omega} \Phi_j(\mathbf{x}) \, d \mathbf{x} = 0.
\end{equation}
Then $g^0$~is the $(d \times d)$-matrix with the columns
\begin{equation}
\label{model_eff_2}
\mathbf{g}_j^0 = |\Omega|^{-1} \int_{\Omega} g(\mathbf{x}) (\nabla \Phi_j(\mathbf{x}) + \mathbf{e}_j) \, d \mathbf{x}, \quad j = 1, \ldots, d.
\end{equation}
If $d=1$, then $m = n = 1$, whence $g^0 = \underline{g}$.

If $g(\mathbf{x})$~is a symmetric matrix with real entries, then, by Proposition~\ref{N=0_proposit}($1^\circ$),
  $\widehat{N} (\boldsymbol{\theta}) = 0$ for any $\boldsymbol{\theta} \in \mathbb{S}^{d-1}$. If $g(\mathbf{x})$~is a Hermitian matrix with complex entries, then, in general, $\widehat{N} (\boldsymbol{\theta})$ is not zero. Since $n=1$, then $\widehat{N} (\boldsymbol{\theta}) = \widehat{N}_0 (\boldsymbol{\theta})$ is the operator of multiplication by $\widehat{\mu}(\boldsymbol{\theta})$, where  $\widehat{\mu}(\boldsymbol{\theta})$~is the coefficient at $t^3$ in the expansion 
\begin{equation*}
    \widehat{\lambda}(t, \boldsymbol{\theta}) = \widehat{\gamma} (\boldsymbol{\theta}) t^2 + \widehat{\mu} (\boldsymbol{\theta}) t^3 + \ldots
\end{equation*}
 for the first eigenvalue of the operator $\widehat{\mathcal{A}} (\mathbf{k})$. Calculation (see~\cite[Subsection~10.3]{BSu3}) shows that
\begin{align*}
    \widehat{N} (\boldsymbol{\theta}) &= \widehat{\mu} (\boldsymbol{\theta}) = -i \sum_{j,l,k=1}^{d} (a_{jlk} - a_{jlk}^*) \theta_j \theta_l \theta_k, \quad \boldsymbol{\theta} \in \mathbb{S}^{d-1}, \\
    a_{jlk} &= |\Omega|^{-1} \int_{\Omega} \Phi_j (\mathbf{x})^* \left\langle g(\mathbf{x}) (\nabla \Phi_l (\mathbf{x}) + \mathbf{e}_l), \mathbf{e}_k \right\rangle \, d \mathbf{x}, \quad j, l, k = 1, \ldots, d.
\end{align*}

The following example is borrowed from~\cite[Subsection~10.4]{BSu3}.

\begin{example}[\cite{BSu3}]
    \label{model_exmpl}
    Let $d=2$ and $\Gamma = (2 \pi \mathbb{Z})^2$. Suppose that the matrix $g(\mathbf{x})$ is given by
    \begin{equation*}
    g(\mathbf{x}) = \begin{pmatrix}
    1 & i \beta'(x_1) \\
    - i \beta' (x_1) & 1
    \end{pmatrix},
    \end{equation*}
    where $\beta(x_1)$~is a smooth $(2 \pi)$-periodic real-valued function such that \hbox{$1 - (\beta'(x_1))^2 > 0$} and
    $\int_{0}^{2 \pi} \beta(x_1)\, d x_1 = 0$. Then $\widehat{N} (\boldsymbol{\theta}) = - \alpha \pi^{-1} \theta_2^3$, where
    $\alpha = \int_{0}^{2 \pi} \beta (x_1) (\beta' (x_1))^2 dx_1$.
    It is easy to give a concrete example where \hbox{$\alpha \ne 0$}: if $\beta (x_1) = c (\sin x_1 + \cos 2x_1)$ with $0 < c < 1/3$, then
    \hbox{$\alpha = - (3 \pi/2) c^3 \ne 0$}. In this example, $\widehat{N} (\boldsymbol{\theta}) = \widehat{\mu} (\boldsymbol{\theta}) \ne 0$ for all $\boldsymbol{\theta} \in \mathbb{S}^1$ except for the points $(\pm 1 ,0)$.
\end{example}

Consider the Cauchy problem
\begin{equation}
\label{model_Cauchy}
\left\{
\begin{aligned}
& \frac{\partial^2 v_\varepsilon (\mathbf{x}, \tau)}{\partial \tau^2} = - \mathbf{D}^* g^\varepsilon (\mathbf{x}) \mathbf{D} v_\varepsilon (\mathbf{x}, \tau), \\
& v_\varepsilon (\mathbf{x}, 0) = \phi (\mathbf{x}), \quad  \frac{\partial v_\varepsilon }{\partial \tau} (\mathbf{x}, 0) = \psi (\mathbf{x}),
\end{aligned}
\right.
\end{equation}
where $\phi, \psi$~are given functions. Problem~\eqref{model_Cauchy} is of the form~\eqref{homog_Cauchy_hatA_eps}.
The homogenized problem~(see~\eqref{homog_Cauchy_hatA_0}) takes the form
\begin{equation*}
\left\{
\begin{aligned}
& \frac{\partial^2 v_0 (\mathbf{x}, \tau)}{\partial \tau^2} = - \mathbf{D}^* g^0  \mathbf{D} v_0 (\mathbf{x}, \tau), \\
& v_0 (\mathbf{x}, 0) = \phi (\mathbf{x}), \quad  \frac{\partial v_0 }{\partial \tau} (\mathbf{x}, 0) = \psi (\mathbf{x}).
\end{aligned}
\right.
\end{equation*}
By Theorem~\ref{homog_Cauchy_hatA_eps_thrm}, if $\phi, \psi \in H^s (\mathbb{R}^d)$, $0 \le s \le 2$, then,
for fixed  $\tau \in \mathbb{R}$,
the solution $v_\varepsilon$ converges to $v_0$ in $L_2(\mathbb{R}^d)$, as $\varepsilon \to 0$. We have
\begin{equation}
\label{model_general_est}
\| v_\varepsilon (\cdot, \tau) - v_0 (\cdot, \tau) \|_{L_2(\mathbb{R}^d)} \le \varepsilon^{s/2} \left( \widehat{\mathfrak{C}}_1 (s; \tau) \| \phi \|_{H^s(\mathbb{R}^d)} + \widehat{\mathfrak{C}}_2 (s; \tau) \| \psi \|_{H^s(\mathbb{R}^d)} \right),
\end{equation}
where $\widehat{\mathfrak{C}}_1 (s; \tau)$ and $\widehat{\mathfrak{C}}_2 (s; \tau)$ are given by~\eqref{hat_frakC_1_2},
and $\widehat{\mathcal{C}}_1$, $\widehat{\mathcal{C}}_2$ depend only on the norms $\|g\|_{L_\infty}$, $\|g^{-1}\|_{L_\infty}$,
and the parameters of the lattice~$\Gamma$. Example~\ref{model_exmpl} confirms that the result~\eqref{model_general_est} is sharp in the
  general case; see Theorem~\ref{s<2_cos_thrm_Rd}.

If the matrix $g(\mathbf{x})$ has real entries, by Proposition~\ref{N=0_proposit}($1^\circ$), we have
\hbox{$\widehat{N}(\boldsymbol{\theta})=0$} for any $\boldsymbol{\theta} \in {\mathbb S}^{d-1}$.
Then, by Theorem~\ref{homog_Cauchy_hatA_eps_ench_thrm_1},
for $\phi, \psi \in H^s (\mathbb{R}^d)$, $0 \le s \le 3/2$, we have
\begin{equation*}
    \| v_\varepsilon (\cdot, \tau) - v_0 (\cdot, \tau) \|_{L_2(\mathbb{R}^d)} \le \varepsilon^{2s/3} \left( \widehat{\mathfrak{C}}_3 (s; \tau) \| \phi \|_{H^s(\mathbb{R}^d)} + \widehat{\mathfrak{C}}_4 (s; \tau) \| \psi \|_{H^s(\mathbb{R}^d)} \right),
\end{equation*}
where $\widehat{\mathfrak{C}}_3 (s; \tau)$ and $\widehat{\mathfrak{C}}_4 (s; \tau)$ are of the form~\eqref{hat_frakC_3_4}, and  $\widehat{\mathcal{C}}_3$, $\widehat{\mathcal{C}}_4$ depend only on the norms $\|g\|_{L_\infty}$, $\|g^{-1}\|_{L_\infty}$
and the parameters of the lattice $\Gamma$.

One can also apply the statements about the behavior of the solution for large $|\tau|$
(estimates of the form~\eqref{homog_Cauchy_hatA_eps_est_largetime1} in the general case and~\eqref{homog_Cauchy_hatA_eps_est_largetime2}
in the ``real'' case). It is possible to consider more general problem for the nonhomogeneous equation and apply
Theorem~\ref{nonhomog_Cauchy_hatA_eps_thrm} in the general case and Theorem~\ref{nonhomog_Cauchy_hatA_eps_ench_thrm_1}
in the ``real'' case.

\subsection{The acoustics equation}
In $L_2 (\mathbb{R}^d)$, $d \ge 1$, consider the operator $\widehat{\mathcal{A}} = \mathbf{D}^* g(\mathbf{x}) \mathbf{D}$~(see~\eqref{model_operator}), where $g(\mathbf{x})$ is a $\Gamma$-periodic symmetric matrix-valued function with real entries
 satisfying condition~\eqref{model_operator_g_cond}. The matrix  $g(\mathbf{x})$ characterizes the parameters of the acoustical (in general,
anisotropic) medium under study. The effective matrix $g^0$ is defined according to~\eqref{model_eff_1},
 \eqref{model_eff_2}. The effective operator is given by $\widehat{\mathcal{A}}^0 = \mathbf{D}^* g^0 \mathbf{D}$.
 Let $Q (\mathbf{x})$~be a $\Gamma$-periodic function in $\mathbb{R}^d$ such that
\begin{equation*}
    Q(\mathbf{x}) > 0; \qquad Q, Q^{-1} \in L_\infty.
\end{equation*}
The function $Q(\mathbf{x})$ stands for the density of the medium.
Consider the operator $\widehat{\mathcal{A}}_\varepsilon = \mathbf{D}^* g^{\varepsilon} \mathbf{D}$
whose coefficients oscillate rapidly for small $\varepsilon$.

Consider the Cauchy problem for the acoustics equation in a rapidly oscillating medium:
\begin{equation}
    \label{acoustics_Cauchy}
    \left\{
    \begin{aligned}
        & Q^\varepsilon (\mathbf{x}) \frac{\partial^2 z_\varepsilon (\mathbf{x}, \tau)}{\partial \tau^2} = - \widehat{\mathcal{A}}_\varepsilon z_\varepsilon (\mathbf{x}, \tau), \\
        & z_\varepsilon (\mathbf{x}, 0) = \phi (\mathbf{x}), \quad  \frac{\partial z_\varepsilon }{\partial \tau} (\mathbf{x}, 0) = \psi (\mathbf{x}),
    \end{aligned}
    \right.
\end{equation}
where $\phi, \psi$~are given functions.
Problem~\eqref{acoustics_Cauchy} is of the form~\eqref{homog_Cauchy_wQ}.
Then the homogenized problem (see~\eqref{homog_Cauchy_wQ_eff}) takes the form
\begin{equation*}
        \left\{
    \begin{aligned}
        & \overline{Q} \frac{\partial^2 z_0 (\mathbf{x}, \tau)}{\partial \tau^2} = - \widehat{\mathcal{A}}^0 z_0 (\mathbf{x}, \tau), \\
        & z_0 (\mathbf{x}, 0) = \phi (\mathbf{x}), \quad  \frac{\partial z_0 }{\partial \tau} (\mathbf{x}, 0) = \psi (\mathbf{x}).
    \end{aligned}
    \right.
\end{equation*}

By Proposition~\ref{N_Q=0_proposit}($1^\circ$), we have $\widehat{N}_Q (\boldsymbol{\theta}) = 0$ for any $\boldsymbol{\theta} \in \mathbb{S}^{d-1}$. (We put $f = Q^{-1/2}$.)
Then, by Theorem~\ref{homog_Cauchy_wQ_thrm_2}, for $\phi, \psi \in H^s (\mathbb{R}^d)$, $0 \le s \le 3/2$, we have
\begin{equation*}
        \| z_\varepsilon (\cdot, \tau) - z_0 (\cdot, \tau) \|_{L_2(\mathbb{R}^d)} \le \varepsilon^{2s/3} \left( \mathfrak{C}_3 (s; \tau) \| \phi \|_{H^s(\mathbb{R}^d)} + \mathfrak{C}_4 (s; \tau) \| \psi \|_{H^s(\mathbb{R}^d)} \right),
\end{equation*}
where $\mathfrak{C}_3 (s; \tau)$ and $\mathfrak{C}_4 (s; \tau)$ are of the form~\eqref{frakC_3_4},
and $\mathcal{C}_3$, $\mathcal{C}_4$ depend only on the norms $\|g\|_{L_\infty}$, $\|g^{-1}\|_{L_\infty}$, $\|Q\|_{L_\infty}$, $\|Q^{-1}\|_{L_\infty}$, and the parameters of the lattice $\Gamma$.

One can also apply the statement about the behavior of the solution for large $|\tau|$
(estimate~\eqref{homog_Cauchy_wQ_est_largetime2}).
It is also possible to consider more general Cauchy problem for the nonhomogeneous equation and
apply Theorem~\ref{nonhomog_Cauchy_wQ_thrm_2}.

\section{Application of the general results: \\
the system of elasticity theory}

\subsection{The operator of elasticity theory}
Let $d \ge 2$. We represent the operator of elasticity theory as in~\cite[Chapter~5, Section~2]{BSu1}.
Let $\zeta$~be an orthogonal second rank tensor in $\mathbb{R}^d$; in the standard orthonormal basis in $\mathbb{R}^d$,
 it can be represented by a matrix $\zeta = \{\zeta_{jl}\}_{j,l = 1}^d$.
 We shall consider \emph{symmetric} tensors $\zeta$, which will be identified with vectors $\zeta_* \in \mathbb{C}^m$, $2m = d(d+1)$,
 by the following rule. The vector $\zeta_*$ is formed by all components $\zeta_{jl}$,  $j \le l$,
 and the pairs $(j, l)$ are put in order in some fixed way.

For the \emph{displacement vector} $\mathbf{u} \in H^1 (\mathbb{R}^d; \mathbb{C}^n)$, we introduce the deformation tensor
\begin{equation*}
e (\mathbf{u}) = \frac{1}{2} \left\lbrace \frac{\partial u_j}{\partial x_l} + \frac{\partial u_l}{\partial x_j} \right\rbrace.
\end{equation*}
Let $e_* (\mathbf{u})$~be the vector corresponding to the tensor
$e (\mathbf{u})$ in accordance with the rule described above. The relation
\begin{equation*}
b(\mathbf{D}) \mathbf{u} = -i e_* (\mathbf{u})
\end{equation*}
determines an $(m \times d)$-matrix homogeneous DO $b(\mathbf{D})$ uniquely; the symbol of this DO
is a matrix with real entries. For instance, with an appropriate ordering, we have
\begin{equation*}
b(\boldsymbol{\xi}) = \begin{pmatrix}
\xi_1 & 0 \\
\frac{1}{2} \xi_2 & \frac{1}{2} \xi_1 \\
0 & \xi_2
\end{pmatrix}, \quad d=2; \quad
b(\boldsymbol{\xi}) = \begin{pmatrix}
\xi_1 & 0 & 0 \\
\frac{1}{2} \xi_2 & \frac{1}{2} \xi_1 & 0 \\
0 & \xi_2 & 0 \\
0 & \frac{1}{2} \xi_3 & \frac{1}{2} \xi_2 \\
0 & 0 & \xi_3 \\
\frac{1}{2} \xi_3 & 0 & \frac{1}{2} \xi_1
\end{pmatrix}, \quad d=3.
\end{equation*}

Let $\sigma(\mathbf{u})$~be the \emph{stress tensor}, and let $\sigma_*(\mathbf{u})$~be the corresponding vector.
In the accepted way of writing, the \emph{Hooke law} about proportionality of stresses and deformations can be expressed by
the relation
\begin{equation*}
\sigma_*(\mathbf{u}) = g (\mathbf{x}) e_* (\mathbf{u}),
\end{equation*}
where $g (\mathbf{x})$~is an $(m \times m)$-matrix-valued function with real entries
(which gives a \textquotedblleft concise\textquotedblright \ description of the Hooke tensor).
The matrix $g (\mathbf{x})$ characterizes the parameters of the elastic (in general, anisotropic) medium under study.
We assume that the matrix-valued function $g (\mathbf{x})$~is $\Gamma$-periodic and such that
$g (\mathbf{x}) > 0$, and $g, g^{-1} \in L_\infty$.

The energy of elastic deformations is given by the quadratic form
\begin{multline}
\label{elast_energy}
w[\mathbf{u},\mathbf{u}] = \frac{1}{2} \int_{\mathbb{R}^d} \left\langle \sigma_*(\mathbf{u}), e_* (\mathbf{u}) \right\rangle_{\mathbb{C}^m} d \mathbf{x} = \frac{1}{2} \int_{\mathbb{R}^d} \left\langle g(\mathbf{x}) b(\mathbf{D}) \mathbf{u}, b(\mathbf{D}) \mathbf{u} \right\rangle_{\mathbb{C}^m} d \mathbf{x},
\\
\mathbf{u} \in H^1 (\mathbb{R}^d; \mathbb{C}^d).
\end{multline}
The operator $\mathcal{W}$ generated by this form is
the \emph{operator of elasticity theory}. Thus, the operator
\begin{equation*}
2 \mathcal{W} = b(\mathbf{D})^* g b(\mathbf{D}) = \widehat{\mathcal{A}}
\end{equation*}
is of the form~\eqref{hatA} with $n = d$ and $m = d(d + 1)/2$.

In the case of isotropic medium, the matrix $g(\mathbf{x})$ depends only on two functional \emph{Lame parameters} $\lambda(\mathbf{x})$ and $\mu(\mathbf{x})$. The parameter $\mu(\mathbf{x})$~is the \emph{shear modulus}.
Often, another parameter $K(\mathbf{x})$ is introduced instead of $\lambda(\mathbf{x})$;
$K(\mathbf{x})$ is called the  \emph{modulus of volume compression}. We need yet another modulus $\beta(\mathbf{x})$.
Here are the relations:
\begin{equation*}
K(\mathbf{x}) = \lambda(\mathbf{x}) + \frac{2 \mu(\mathbf{x})}{d}, \quad \beta(\mathbf{x}) = \mu(\mathbf{x}) + \frac{\lambda(\mathbf{x})}{2}.
\end{equation*}
The modulus $\lambda(\mathbf{x})$ may be negative. In the isotropic case, the conditions that ensure the positive definiteness of the matrix  $g(\mathbf{x})$ are as follows: $\mu(\mathbf{x}) \ge \mu_0 > 0$, $K(\mathbf{x}) \ge K_0 > 0$.
As an example, we write down the matrix $g$ in the isotropic case for $d = 2, 3$:
\begin{gather*}
g_{\mu, K} (\mathbf{x}) = \begin{pmatrix}
K + \mu & 0 & K - \mu \\
0 & 4 \mu & 0 \\
K - \mu & 0 & K + \mu
\end{pmatrix}, \quad d = 2, \\
g_{\mu, K} (\mathbf{x}) = \frac{1}{3} \begin{pmatrix}
3K + 4\mu & 0 & 3K - 2\mu & 0 & 3K - 2\mu & 0 \\
0 & 12\mu & 0 & 0 & 0 & 0 \\
3K - 2\mu & 0 & 3K + 4\mu & 0 & 3K - 2\mu & 0 \\
0 & 0 & 0 & 12 \mu & 0 & 0 \\
3K - 2\mu & 0 & 3K - 2\mu & 0 & 3K + 4\mu & 0 \\
0 & 0 & 0 & 0 & 0 & 12 \mu
\end{pmatrix}, \quad d = 3.
\end{gather*}

\subsection{Homogenization of the Cauchy problem for the elasticity system}
\label{elast_homog_section}

Consider the operator $\mathcal{W}_\varepsilon = \frac{1}{2} \widehat{\mathcal{A}}_\varepsilon$ with rapidly oscillating
 coefficients. The effective matrix $g^0$ and the effective operator $\mathcal{W}^0 = \frac{1}{2} \widehat{\mathcal{A}}^0$
 are defined by the general rules (see~\eqref{g0}, \eqref{g_tilde},  \eqref{hatA0}).

Let $Q(\mathbf{x})$~be a $\Gamma$-periodic symmetric $(d \times d)$-matrix-valued function with real entries such that
\begin{equation*}
Q(\mathbf{x}) > 0; \quad Q, Q^{-1} \in L_\infty.
\end{equation*}
Usually $Q(\mathbf{x})$~is a scalar function (the density of the medium).
We assume that $Q(\mathbf{x})$ is a matrix-valued function.
Consider the following \emph{Cauchy problem for the elasticity system}:
\begin{equation}
\label{elasticity_Cauchy}
\left\{
\begin{aligned}
& Q^\varepsilon (\mathbf{x}) \frac{\partial^2 \mathbf{u}_\varepsilon (\mathbf{x}, \tau)}{\partial \tau^2} = - \mathcal{W}_\varepsilon \mathbf{u}_\varepsilon (\mathbf{x}, \tau), \\
& \mathbf{u}_\varepsilon (\mathbf{x}, 0) = \boldsymbol{\phi} (\mathbf{x}), \quad  \frac{\partial \mathbf{u}_\varepsilon }{\partial \tau} (\mathbf{x}, 0) = \boldsymbol{\psi}(\mathbf{x}),
\end{aligned}
\right.
\end{equation}
where $\boldsymbol{\phi}, \boldsymbol{\psi} \in L_2 (\mathbb{R}^d; \mathbb{C}^d)$~are given functions.
Problem~\eqref{elasticity_Cauchy} is of the form~\eqref{homog_Cauchy_wQ}. The homogenized problem takes the form
\begin{equation}
\label{elasticity_Cauchy_eff}
\left\{
\begin{aligned}
& \overline{Q} \frac{\partial^2 \mathbf{u}_0 (\mathbf{x}, \tau)}{\partial \tau^2} = - \mathcal{W}^0 \mathbf{u}_0 (\mathbf{x}, \tau), \\
& \mathbf{u}_0 (\mathbf{x}, 0) = \boldsymbol{\phi} (\mathbf{x}), \quad  \frac{\partial \mathbf{u}_0 }{\partial \tau} (\mathbf{x}, 0) = \boldsymbol{\psi}(\mathbf{x}).
\end{aligned}
\right.
\end{equation}

By Theorem~\ref{homog_Cauchy_wQ_thrm_1}, if $\boldsymbol{\phi}, \boldsymbol{\psi} \in H^s (\mathbb{R}^d; \mathbb{C}^d)$, $0 \le s \le 2$,
then, for fixed $\tau \in \mathbb{R}$,  the solution $\mathbf{u}_\varepsilon$ converges to $\mathbf{u}_0$
in $L_2 (\mathbb{R}^d; \mathbb{C}^d)$, as $\varepsilon \to 0$. We have
\begin{equation}
\label{elast_general_est}
\| \mathbf{u}_\varepsilon (\cdot, \tau) - \mathbf{u}_0 (\cdot, \tau) \|_{L_2(\mathbb{R}^d)} \le \varepsilon^{s/2} \left( \mathfrak{C}_1 (s; \tau) \| \boldsymbol{\phi} \|_{H^s(\mathbb{R}^d)} + \mathfrak{C}_2 (s; \tau) \| \boldsymbol{\psi} \|_{H^s(\mathbb{R}^d)} \right),
\end{equation}
where $\mathfrak{C}_1 (s; \tau)$ and $\mathfrak{C}_2 (s; \tau)$ are of the form~\eqref{frakC_1_2}, $f = Q^{-1/2}$, and $\mathcal{C}_1$, $\mathcal{C}_2$ depend only on the norms $\|g\|_{L_\infty}$, $\|g^{-1}\|_{L_\infty}$, $\|Q\|_{L_\infty}$, $\|Q^{-1}\|_{L_\infty}$, and the
 parameters of the lattice $\Gamma$.

Note that, even in the isotropic case, in general, estimate~\eqref{elast_general_est} cannot be improved (see Subsection~\ref{example} below).
Recall also Example~\ref{elast_exmpl_N0_ne_0} which confirms the sharpness of the result in the anisotropic case.

One can also apply the statement about the behavior of the solution for large $|\tau|$
         (estimate~\eqref{homog_Cauchy_wQ_est_largetime1}).
         It is also possible to consider more general Cauchy problem for the nonhomogeneous equation and apply Theorem~\ref{nonhomog_Cauchy_wQ_thrm_1}.

 \subsection{Example}
\label{example}
    Consider the system of isotropic elasticity in the twodimensional case assuming that $K$ and $\mu$ are periodic and depend only on $x_1$.
    Now $d=2$, $m=3$, $n=2$, $\Gamma = (2 \pi \mathbb{Z})^2$, and $Q(\mathbf{x}) = \mathbf{1}$.

    The periodic solution $\Lambda(\mathbf{x})$ of equation~\eqref{equation_for_Lambda} is given by
    \begin{equation*}
    \Lambda (\mathbf{x}) = \begin{pmatrix}
    \Lambda_{11} (x_1) & 0 & \Lambda_{13} (x_1) \\
    0 & \Lambda_{22} (x_1) & 0
    \end{pmatrix},
    \end{equation*}
where $\Lambda_{11}$, $\Lambda_{22}$, and $\Lambda_{13}$ are solutions of the following equations
    \begin{align}
    \notag
    D_1 (K + \mu) (D_1 \Lambda_{11} + 1) &= 0, \qquad \int_{\Omega} \Lambda_{11}(x_1) \, d x_1 = 0, \\
    \label{isotr_elast_exmp_Lambda22_eq}
    D_1 \left(2 \mu \left(\frac{1}{2} D_1 \Lambda_{22} + 1 \right)  \right) &= 0, \qquad \int_{\Omega} \Lambda_{22}(x_1) \, d x_1 = 0,  \\
    \notag
    D_1 \left( (K + \mu) D_1 \Lambda_{13} + K - \mu \right) &= 0, \qquad \int_{\Omega} \Lambda_{13}(x_1) \, d x_1 = 0.
    \end{align}
    Hence,
    \begin{gather*}
    (K + \mu) (D_1 \Lambda_{11} + 1) = \underline{(K + \mu)}, \\
    \mu \left( \frac{1}{2} D_1 \Lambda_{22} + 1 \right) = \underline{\mu}, \\
    (K + \mu) D_1 \Lambda_{13} + K - \mu = \underline{(K + \mu)} \overline{\left(\frac{K - \mu}{K + \mu} \right) }.
    \end{gather*}
        Then the matrix $\widetilde{g} (\mathbf{x})$ (see~\eqref{g_tilde}) is given by
    \begin{equation*}
    \widetilde{g} (\mathbf{x}) = \begin{pmatrix}
    \underline{(K + \mu)} & 0 & \underline{(K + \mu)} \overline{\left(\frac{K - \mu}{K + \mu} \right) } \\
    0 & 4 \underline{\mu} & 0 \\
    \underline{(K + \mu)} \left(\frac{K - \mu}{K + \mu} \right) & 0 & \frac{4 K \mu}{K + \mu} + \frac{K - \mu}{K + \mu} \overline{\left(\frac{K - \mu}{K + \mu} \right) }  \underline{(K + \mu)}
    \end{pmatrix}.
    \end{equation*}
    The effective matrix $g^0$~(see~\eqref{g0}) takes the form
    \begin{equation*}
    g^0 = \begin{pmatrix}
    A & 0 & B \\
    0 & C & 0 \\
    B & 0 & E
    \end{pmatrix},
    \end{equation*}
    where
    \begin{align*}
    A &= \underline{(K + \mu)}, \  B = \underline{(K + \mu)} \overline{\left(\frac{K - \mu}{K + \mu} \right) }, \   C = 4 \underline{\mu},
\\
 E &= 4 \overline{\left( \frac{K \mu}{K + \mu} \right) } + \left(\overline{\frac{K - \mu}{K + \mu}} \right)^2 \underline{(K + \mu)}.
    \end{align*}
    The spectral germ is given by
    \begin{equation}
    \label{isotr_elast_exmp_germ}
    \widehat{S}(\boldsymbol{\theta}) = \begin{pmatrix}
    A\theta_1^2  + \frac{1}{4}C\theta_2^2   &   \left( B + \frac{1}{4}C \right)\theta_1 \theta_2  \\
     \left( B + \frac{1}{4}C \right)\theta_1 \theta_2  & E\theta_2^2  + \frac{1}{4}C\theta_1^2
    \end{pmatrix}.
    \end{equation}
    In order to construct an example where $B + \frac{1}{4}C = 0$, we put
    \begin{align*}
    K(x_1) &= a + 100 \cdot \left\{
    \begin{aligned}
    &-1, & &\quad \text{if} \  x_1 < \frac{\pi}{2}\\
    & 1, & &\quad \text{if} \  x_1 \ge \frac{\pi}{2},
    \end{aligned} \right.\\
    \mu(x_1) &= 1 + 624 \cdot \cos^2 x_1.
    \end{align*}
    Then
    \begin{align*}
    A^{-1} &=  \frac{1}{4 q} + \frac{3}{4 r},
\quad
    B =  \frac{6 (a + b) q + 2(a - b) r - 4 q r}{ r + 3 q },
    \\
    C &= 4\sqrt{c+1},\quad
    E = \frac{6 b r - 6 b q  - 12 b^2 + 4 qr}{r  + 3q }.
    \end{align*}
    Here $b = 100$, $c=624$, $q= \sqrt{(a-b+c+1)(a-b+1)}$,
and $r= \sqrt{(a+b+c+1)(a+b+1)}$. Thus, relation $B + \frac{1}{4} C = 0$ holds if $a$ satisfies the equation
    \begin{equation*}
    \frac{6 (a + b) q  + 2(a - b) r - 4 qr}{ r + 3 q}  = -25.
    \end{equation*}
    The root exists, since the left-hand side is continuous in $a$ and, by calculations,
    is (approximately) equal to $-34.4$ for $a= 130$, and is equal to $-22.1$ for $a=150$.
    The approximate value of the root is $a \approx 145.6581$.
    For such $a$ the eigenvalues of the germ~\eqref{isotr_elast_exmp_germ} coincide if
    \begin{equation*}
    A \theta_1^2 + \frac{1}{4}C \theta_2^2 = E \theta_2^2 + \frac{1}{4} C \theta_1^2.
    \end{equation*}
    Since $ \theta_1^2 +  \theta_2^2 = 1$, this is valid for
    \begin{equation}
    \label{isotr_elast_exmp_theta}
    \theta_1^2 = \frac{E - \frac{1}{4}C}{A + E - \frac{1}{2}C} \approx 0.5394.
    \end{equation}

    Next, we calculate $L(\boldsymbol{\theta})$ and $\widehat{N} (\boldsymbol{\theta})$ (see~\eqref{L(theta)}, \eqref{N(theta)}):
    \begin{gather*}
    L(\boldsymbol{\theta}) =
    \begin{pmatrix}
    0 & S \theta_2 & 0 \\
    S^* \theta_2 & 0 & T^* \theta_2 \\
    0 & T \theta_2 & 0
    \end{pmatrix}, \\
    \widehat{N} (\boldsymbol{\theta}) = \frac{1}{2}\begin{pmatrix}
    0 &  S \theta_1^2 \theta_2 + T^* \theta_2^3 \\
    S^* \theta_1^2 \theta_2 + T \theta_2^3 & 0
    \end{pmatrix},
    \end{gather*}
    where
    \begin{equation*}
    S = \underline{K + \mu} \overline{\left(\frac{K - \mu}{K + \mu} \right) \Lambda_{22}}, \quad T = \overline{\left( \frac{4 K \mu}{K + \mu} +  \frac{K - \mu}{K + \mu} \overline{\left(\frac{K - \mu}{K + \mu} \right)} \underline{K + \mu} \right) \Lambda_{22} },
    \end{equation*}
    and $\Lambda_{22}$ is the solution of the equation~\eqref{isotr_elast_exmp_Lambda22_eq}:
    \begin{equation*}
    \Lambda_{22}(x_1) =  \left\{
    \begin{aligned}
    & 2i \arctan\left(\frac{1}{25} \tan(x_1) \right) - 2 i x_1 , & &\quad \text{if} \; 0 \le x_1 < \frac{\pi}{2}\\
    & 2i \arctan\left(\frac{1}{25} \tan(x_1) \right) - 2 i x_1 + 2 \pi i , & &\quad \text{if} \; \frac{\pi}{2} \le x_1 < \frac{3\pi}{2}\\
    & 2i \arctan\left(\frac{1}{25} \tan(x_1) \right) - 2 i x_1 + 4 \pi i , & &\quad \text{if} \; \frac{3 \pi}{2} \le  x_1 \le 2 \pi
    \end{aligned} \right. .
    \end{equation*}
    Approximately, we have $S \approx 65.6650i$, $T \approx 76.2833i$.

    For the points $\boldsymbol{\theta}^{(j)}, j=1,2,3,4$, satisfying~\eqref{isotr_elast_exmp_theta}, we have $\widehat{\gamma}_1 (\boldsymbol{\theta}^{(j)}) = \widehat{\gamma}_2 (\boldsymbol{\theta}^{(j)})$ and $\widehat{N} (\boldsymbol{\theta}^{(j)}) = \widehat{N}_0 (\boldsymbol{\theta}^{(j)}) \ne 0$. The numbers $\pm \widehat{\mu}$, where $\widehat{\mu}$ is approximately equal to $0.09850$, are the
eigenvalues of the operator  $\widehat{N} (\boldsymbol{\theta}^{(j)})$.
    Applying Theorem~\ref{s<2_cos_thrm_Rd}, we confirm that the result~\eqref{elast_general_est} is sharp.

\subsection{The Hill body}
In mechanics (see,~e.g.,~\cite{ZhKO}), the elastic isotropic medium with constant shear modulus
$\mu(\mathbf{x}) = \mu_0 = \mathrm{const}$ is called the Hill body.
In this case, a simpler factorization for the operator $\mathcal{W}$ is possible (see~\cite[Chapter~5, Subsection~2.3]{BSu1}).
Now the energy form~\eqref{elast_energy} can be written as
\begin{equation}
\label{elast_energy_Hill}
w[\mathbf{u},\mathbf{u}] = \int_{\mathbb{R}^d} \left\langle g_\wedge(\mathbf{x}) b_\wedge(\mathbf{D}) \mathbf{u}, b_\wedge(\mathbf{D}) \mathbf{u} \right\rangle_{\mathbb{C}^{m_\wedge}} d \mathbf{x}.
\end{equation}
Here $m_\wedge = 1 + d(d-1)/2$. The $(m_\wedge \times d)$-matrix $b_\wedge(\boldsymbol{\xi})$ can be described as follows.
The first row of $b_\wedge(\boldsymbol{\xi})$ is $(\xi_1, \xi_2, \ldots, \xi_d)$.
The other rows correspond to pairs of indices $(j, l), 1 \le j <l \le d$.
The element in the $(j, l)$th row and the $j$th column is $\xi_l$, and
the element in the $(j, l)$th row and the $l$th column is $(-\xi_j)$; all other elements
of the $(j, l)$th row are equal to zero. The order of the rows is irrelevant. Finally,
\begin{equation}
g_\wedge (\mathbf{x}) = \mathrm{diag} \{\beta(\mathbf{x}), \mu_0/2, \mu_0/2, \ldots, \mu_0/2\}.
\end{equation}
Thus, by~\eqref{elast_energy_Hill},
\begin{equation*}
\mathcal{W} = b_\wedge(\mathbf{D})^* g_\wedge (\mathbf{x}) b_\wedge(\mathbf{D}).
\end{equation*}
According to~\cite[Chapter~5, Subsection~2.3]{BSu1}, the effective matrix $g_\wedge^0$ coincides with $\underline{g_\wedge}$:
\begin{equation}
\label{g_wedge^0}
g_\wedge^0 = \underline{g_\wedge} = \mathrm{diag} \{\underline{\beta}, \mu_0/2, \mu_0/2, \ldots, \mu_0/2\}.
\end{equation}
The effective operator is given by
\begin{equation}
\label{elast_W^0_Hill}
\mathcal{W}^0 = b_\wedge(\mathbf{D})^* g_\wedge^0 b_\wedge(\mathbf{D}).
\end{equation}
For the new factorization of the operator $\mathcal{W}_\varepsilon = b_\wedge(\mathbf{D})^* g_\wedge^\varepsilon (\mathbf{x}) b_\wedge(\mathbf{D})$, the statement of the Cauchy problem~\eqref{elasticity_Cauchy} remains the same.
In the homogenized problem~\eqref{elasticity_Cauchy_eff}, the effective operator $\mathcal{W}^0$ is given by~\eqref{elast_W^0_Hill}
 and, due to~\eqref{g_wedge^0}, is calculated explicitly.
 For the Hill body, the solutions of problem~\eqref{elasticity_Cauchy}
 satisfy all the statements of Subsection~\ref{elast_homog_section}.

Now, suppose that $Q(\mathbf{x}) = \mathbf{1}$. Since $g_\wedge^0 = \underline{g_\wedge}$, using Proposition~\ref{N=0_proposit}($3^\circ$),
we see that $\widehat{N}(\boldsymbol{\theta}) = 0$ for any $\boldsymbol{\theta} \in \mathbb{S}^{d-1}$.
Then, by Theorem~\ref{homog_Cauchy_hatA_eps_ench_thrm_1}, for
$\boldsymbol{\phi}, \boldsymbol{\psi} \in H^s (\mathbb{R}^d; \mathbb{C}^d)$, $0 \le s \le 3/2$, we have
\begin{equation}
\| \mathbf{u}_\varepsilon (\cdot, \tau) - \mathbf{u}_0 (\cdot, \tau) \|_{L_2(\mathbb{R}^d)} \le \varepsilon^{2s/3} \left( \widehat{\mathfrak{C}}_3 (s; \tau) \| \boldsymbol{\phi} \|_{H^s(\mathbb{R}^d)} + \widehat{\mathfrak{C}}_4 (s; \tau) \| \boldsymbol{\psi} \|_{H^s(\mathbb{R}^d)} \right),
\end{equation}
where $\widehat{\mathfrak{C}}_3 (s; \tau)$ and $\widehat{\mathfrak{C}}_4 (s; \tau)$ are given by~\eqref{hat_frakC_3_4}, and  $\widehat{\mathcal{C}}_3$, $\widehat{\mathcal{C}}_4$ depend on $\|\beta\|_{L_\infty}$, $\|\beta^{-1}\|_{L_\infty}$, $\mu_0$, and the
 parameters of the lattice $\Gamma$.

\end{document}